\numberwithin{equation}{section}
\newtheorem{thm}{Theorem}[section]
\newtheorem{lemma}[thm]{Lemma}
\newtheorem{prop}[thm]{Proposition}
\newtheorem{cor}[thm]{Corollary}
\newtheorem{claim}[thm]{Claim}
\newtheorem{conj}[thm]{Conjecture}
\theoremstyle{remark}
\newtheorem{rem}[thm]{Remark}
\newcommand{\ind}{\mathbf 1}
\newcommand{\C}{\mathbb C}
\newcommand{\D}{\mathbb D}
\newcommand{\E}{\mathbb E}
\newcommand{\N}{\mathbb N}
\newcommand{\R}{\mathbb R}
\newcommand{\Z}{\mathbb Z}
\newcommand{\PP}{\mathbb P}
\newcommand{\LC}{\mathcal C}
\newcommand{\LL}{\mathcal L}
\newcommand{\Ns}{\mathsf N}
\newcommand{\LK}{\mathcal K}
\newcommand{\LP}{\mathcal P}
\newcommand{\He}{\mathtt H}
\newcommand{\ED}{\operatorname{ED}}
\newcommand{\Var}{\operatorname{Var}}
\newcommand{\diam}{\operatorname{diam}}
\newcommand{\CR}{\operatorname{CR}}
\newcommand{\Supp}{\operatorname{Supp}}
\newcommand{\cst}{\mathtt{C}}
\newcommand{\clus}{\mathcal{C}}
\newcommand{\inter}{\operatorname{Int}}
\newcommand{\Gen}{\operatorname{Gen}}
\newcommand{\fld}{\mathtt{h}}
\renewcommand{\Re}{\operatorname{Re}}
\renewcommand{\Im}{\operatorname{Im }}
\title{Relation between Wick powers and excursion clusters of the 2D GFF}
\author{Titus Lupu}
\address {CNRS and LPSM, UMR 8001,
Sorbonne Université,
4 place Jussieu,
75252 Paris cedex 05,
France}
\email
{titus.lupu@sorbonne-unversite.fr}
\keywords{Brownian motion, Brownian loop soups, Conformal Loop Ensembles, excursion clusters, first passage sets, Gaussian free field, Gaussian multiplicative chaos, generalized Laguerre polynomials, Hermite polynomials, local sets, 
Malliavin–Kontsevich–Suhov measures, renormalized self-intersection local times, two-valued sets, Wick renormalization, umbral calculus, umbral composition}
\begin{document}

\begin{abstract}
We study the decomposition of the Wick powers of the continuum GFF in dimension $2$ via the first passage sets (FPS) and the excursion clusters (sign components) of the GFF.
These sets are non-thin for the GFF, that is to say the field has non-trivial restriction to such a set,
which is a measure, negative or positive depending on the sign.
In this work we show that all the odd Wick powers of the GFF can be restricted to the FPS and the excursion clusters,
and the restrictions are generalized functions supported on these fractal sets.
By contrast, the restriction of an even Wick power to an FPS or excursion cluster is diverging,
and to get something converging an additional compensation is required, which is provided by a smooth function living outside of the set and blowing up in a non-integrable way when approaching the set.
We further provide expressions of restricted odd Wick powers and
restricted-compensated even Wick powers as limits of functions living outside the FPS/excursion cluster.

Then, we study the $\varepsilon$-neighborhoods, in the sense of conformal radius, of first passage sets and excursion clusters.
We show that such $\varepsilon$-neighborhoods admit asymptotic expansions in $L^2$ into
half-integer powers
$\vert\log \varepsilon\vert^{-(n+1/2)}$, $n\in\N$, of
$1/\vert\log \varepsilon\vert$.
The coefficients of the expansion involve the restrictions of the odd Wick powers.
By contrast, the even Wick powers do not appear in the expansion.
Our expansion is reminiscent of Le Gall's expansion for the Wiener sausage in dimension 2,
with however some important differences.
The most important one is that the powers of $1/\vert\log \varepsilon\vert$ are different.
In the case of the Wiener sausage the powers are integer,
$\vert\log \varepsilon\vert^{-n}$, $n\in\N\setminus \{0\}$.
\end{abstract}

\maketitle

\tableofcontents

\section{Introduction}
\label{Sec intro}

In a series of works \cite{ALS1,ALS2,ALS4}, Aru, Lupu and Sep\'ulveda 
developed the notion of connected components of the level sets of the continuum Gaussian free field (GFF) in dimension 2.
Despite the continuum GFF $\Phi$ being a generalized function, not defined pointwise, one can still construct such
connected components of the level sets by iterating SLE$_{4}$ type processes.
Specifically, Aru, Lupu and Sep\'ulveda constructed the \textit{first passage sets}  (FPS)
and the \textit{excursion clusters} (or excursion sets or sign clusters) of the GFF.
Informally, an FPS of level $a$ can be thought of as the set of points in the domain that are a connected
to the boundary by a continuous path along which the GFF is above the value $a$.
Excursion sets are a closely related notion and can be thought of as connected components of constant sign for 
$\Phi$.
These informal descriptions make precise sense by taking the scaling limit of the GFF on metric graphs.
The FPS and the excursion sets have $0$ Lebesgue measure. 
More precisely, their $\varepsilon$-neighborhoods have an area of order
$\vert\log\varepsilon\vert^{-1/2}$.
However, these sets are non-thin for the GFF,
that is to say, the GFF admits a non-trivial restriction to these sets.
Depending on the sign, such a restriction is a positive or negative measure,
more precisely a Minkowski content measure in the gauge
$\vert\log r\vert^{1/2} r^{2}$.

\bigskip

In this work we investigate how the Wick powers $:\Phi^{n}:$ of the GFF relate to the FPS and the excursion clusters.
For $n\geq 2$, one cannot directly define the powers $\Phi^{n}$,
and a renormalization procedure involving Hermite polynomials is required.
In particular, because of the compensation terms,
the even powers $:\Phi^{2k}:$ are not positive, are not measures, but are more complicated generalized functions.
We start by considering an FPS $A$ of $\Phi$, and we take the conditional expectation field
\begin{displaymath}
\psi_{n,A} = \E\big[:\Phi^{n}:\vert A\big]\,.
\end{displaymath}
In Section \ref{Sec Wick FPS}
we investigate both the nature of the field $\psi_{n,A}$
and identify the difference $:\Phi^{n}:\,-\psi_{n,A}$
(Theorem \ref{Thm decomp FPS Wick}).
One remarkable point is that the odd powers $\psi_{2k+1,A}$ and the even powers $\psi_{2k,A}$
behave differently (Corollary \ref{Cor support psi}).
The odd powers $\psi_{2k+1,A}$ are generalized functions supported on $A$ itself,
and are actually restrictions of $:\Phi^{2k+1}:$ to $A$
(Corollary \ref{Cor restr A}).
The fact that such restrictions exist is already a remarkable fact, since in general one cannot restrict a generalized
function to an arbitrary closed subset. 
But the even powers $\psi_{2k,A}$ behave differently.
They live both on $A$ and on the complementary
$D\setminus A$ of $A$ in the domain $D$.
On $D\setminus A$, $\psi_{2k,A}$ coincides with a smooth function obtained through the ratio of conformal radii:
\begin{displaymath}
(-1)^{k}\dfrac{(2k)!}{2^{k} k!}
\Big(\dfrac{1}{2\pi}\log\Big(
\dfrac{\CR(z,D)}{\CR(z,D\setminus A)}
\Big)\Big)^{k}.
\end{displaymath}
However, one cannot separate $\psi_{2k,A}$ into a part on $A$ and a part on $D\setminus A$,
as the function above, blowing up near $A$, is not integrable on $D\setminus A$.
In particular, the difference
\begin{displaymath}
\psi_{2k,A}-\ind_{D\setminus A}(-1)^{k}\dfrac{(2k)!}{2^{k} k!}
\Big(\dfrac{1}{2\pi}\log\Big(
\dfrac{\CR(z,D)}{\CR(z,D\setminus A)}
\Big)\Big)^{k}
\end{displaymath}
does not make sense.
This phenomenon is reminiscent, in a more sophisticated setting,
of the finite part of $1/x^{2}$,
$\operatorname{F.P.}(x^{-2})$.
There one compensates the non-integrable function $1/x^{2}$ on $\R$
with a Dirac in $\{0\}$ with an infinite negative mass.
The end result, $\operatorname{F.P.}(x^{-2})$,
can be tested against any smooth compactly supported function on $\R$,
coincides on $\R\setminus \{0\}$ with $1/x^{2}$,
but cannot be separated into a function on $\R$ and some generalized function on $\{0\}$.
See Remark \ref{Rem non sep}.

The fields $\psi_{n,A}$, for $n$ both odd and even,
admit explicit approximations through smooth functions
living outside $A$.
In Theorem \ref{Thm psi GMC} we provide such an explicit expression,
obtained through the germs of the Gaussian multiplicative chaos at the
intermittency parameter $\gamma$ tending to $0$.
We also state it below.
Let $\He_n$ denote the $n$-th Hermite polynomial,
and let $V_A$ be the function on $D\setminus A$ given by
\begin{displaymath}
V_A(z) =
\dfrac{1}{2\pi}\log\Big(
\dfrac{\CR(z,D)}{\CR(z,D\setminus A)}
\Big).
\end{displaymath}

\begin{thm}[Theorem \ref{Thm psi GMC}]
\label{Thm psi GMC intro}
Let $n\geq 1$.
Then the field $\psi_{n,A}$ is the limit, as $\gamma\to 0^{+}$,
of the following continuous function supported of $D\setminus A$:
\begin{displaymath}
\ind_{D\setminus A}
\He_{n}(\gamma V_{A}^{1/2})V_{A}^{n/2} \exp\Big(-\dfrac{\gamma^{2}}{2} V_{A}\Big).
\end{displaymath}
The convergence holds both in
$L^{2}(d\PP,\sigma(\Phi),H^{-\eta}(\C))$ 
and almost surely in $H^{-\eta}(\C)$ (for the Sobolev norm) for every $\eta>0$.
\end{thm}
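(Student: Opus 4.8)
\emph{Proof strategy.}
Write $M_s$, $s\in(-s_0,s_0)$, for the Gaussian multiplicative chaos of $\Phi$ in $D$; it is analytic in $s$, in $L^2(d\PP,\sigma(\Phi),H^{-\eta}(\C))$ near $0$, with germ expansion $M_s=\sum_{m\ge0}\frac{s^m}{m!}\,{:}\Phi^m{:}$, so that the $n$-th germ at a parameter $\sigma$, $W_n^{(\sigma)}:=(\partial_s^nM_s)|_{s=\sigma}=\sum_{\ell\ge0}\frac{\sigma^\ell}{\ell!}{:}\Phi^{n+\ell}{:}$, satisfies $W_n^{(0)}={:}\Phi^n{:}$, and $\E[M_s\mid A]=\sum_{m\ge0}\frac{s^m}{m!}\psi_{m,A}$. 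By the Markov decomposition of the first passage set (Theorem~\ref{Thm decomp FPS Wick}), conditionally on $A$ one has $\Phi=\Phi_A+\Phi^{D\setminus A}$ with $\Phi^{D\setminus A}$ a zero-boundary GFF in $D\setminus A$ independent of $A$ and with $\Phi_A=\E[\Phi\mid A]$ \emph{carried by $A$}; hence a circle average centred at $z\in D\setminus A$ does not feel $\Phi_A$, and a direct Gaussian computation gives $\E[M_s\mid A]|_{D\setminus A}=\ind_{D\setminus A}e^{-s^2V_A/2}\,dz$. The umbral identity $\sum_{n\ge0}\frac{t^n}{n!}\He_n(\gamma\sqrt v)v^{n/2}e^{-\gamma^2 v/2}=e^{-\frac v2(t-\gamma)^2}$ then identifies the function in the statement with a conditional \emph{shifted germ} of $M$:
\begin{equation*}
\Psi_{n,\gamma}\;:=\;\ind_{D\setminus A}\He_n(\gamma V_A^{1/2})V_A^{n/2}e^{-\gamma^2 V_A/2}\;=\;\partial_t^n\big|_{t=0}\big(\E[M_{t-\gamma}\mid A]|_{D\setminus A}\big)\;=\;\E\big[W_n^{(-\gamma)}\mid A\big]\big|_{D\setminus A}.
\end{equation*}

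The plan has two steps. First, since $\Vert{:}\Phi^m{:}\Vert_{L^2(H^{-\eta})}\le C^m m!$ — which follows from $\Vert{:}\Phi^m{:}\Vert_{L^2(H^{-\eta})}^2=m!\iint K_\eta(z,w)G(z,w)^m\,dz\,dw$ and the integrability of powers of $\log\frac1{|z-w|}$ against the kernel $K_\eta$ of $(1-\Delta)^{-\eta}$ — the series for $W_n^{(-\gamma)}$ has tail $\sum_{\ell\ge1}$ of size $O(\gamma)$ for $\gamma$ below an $\eta$-dependent threshold, so $W_n^{(-\gamma)}\to{:}\Phi^n{:}$ as $\gamma\to0^+$ in $L^2(H^{-\eta})$ and, along a fast sequence by Borel--Cantelli and then for the full limit by a.s.\ continuity of $\gamma\mapsto W_n^{(-\gamma)}$, a.s.\ in $H^{-\eta}$. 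Applying $\E[\,\cdot\mid A]$ and the conditional contraction $\Vert\E[X\mid A]\Vert_{H^{-\eta}}\le\E[\Vert X\Vert_{H^{-\eta}}\mid A]$, we obtain $\E[W_n^{(-\gamma)}\mid A]\to\psi_{n,A}$ in $L^2(H^{-\eta})$ and a.s.\ in $H^{-\eta}$. Second, because $\Psi_{n,\gamma}$ is a bounded function vanishing at $\partial A$, its extension by $0$ is a distribution on $D$ whose restriction to $D\setminus A$ equals that of $\E[W_n^{(-\gamma)}\mid A]$; hence
\begin{equation*}
B_\gamma\;:=\;\E\big[W_n^{(-\gamma)}\mid A\big]\;-\;\Psi_{n,\gamma}
\end{equation*}
is a distribution supported on $A$, and the theorem reduces to showing $B_\gamma\to0$ in $L^2(H^{-\eta})$ and a.s.\ in $H^{-\eta}$.

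The vanishing of $B_\gamma$ is the crux and the only serious obstacle: it says that the mass which $\Psi_{n,\gamma}$ concentrates in shrinking neighbourhoods of $A$ as $\gamma\to0$ reconstructs, in $H^{-\eta}$, precisely the part of $\psi_{n,A}$ living on $A$ — all of $\psi_{2k+1,A}$ in the odd case, the finite-part contribution in the even case — and not a competing distribution on $A$ differing by, say, a multiple of the Minkowski-content measure $\Phi_A$. I would establish it by a second-moment estimate on $\Vert B_\gamma\Vert_{H^{-\eta}}$, comparing the $\gamma$-regularization $\Psi_{n,\gamma}$ with the circle-average regularization $\E[{:}\Phi_\varepsilon^n{:}\mid A](z)=V_{A,\varepsilon}(z)^{n/2}\He_n\!\big((\Phi_A)_\varepsilon(z)/\sqrt{V_{A,\varepsilon}(z)}\big)$ (with $V_{A,\varepsilon}\to V_A$), which also converges to $\psi_{n,A}$. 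Through the FPS decomposition the relevant conditional covariances reduce to integrals over $D\setminus A$ of powers of $G_{D\setminus A}(z,w)$, of $V_A(z),V_A(w)$, and of the germs of $\Phi_A$, tested against $K_\eta$; the decisive inputs are the area estimate $|A^\varepsilon|\asymp|\log\varepsilon|^{-1/2}$ for conformal-radius neighbourhoods — equivalently $|\{z:V_A(z)>v\}|\asymp v^{-1/2}$, which makes the near-$A$ layer quantitatively thin — the description of $\Phi_A$ as a Minkowski-content measure in the gauge $|\log r|^{1/2}r^2$, and the analyticity of $s\mapsto M_s$, which together pin down $e^{-\gamma^2 V_A/2}$ as the cut-off of $V_A$ consistent with Wick renormalization (another cut-off would produce a different finite part). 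Coupling $\varepsilon$ and $\gamma$ suitably then gives $\E\Vert B_\gamma\Vert_{H^{-\eta}}^2\to0$, and the a.s.\ statement follows along a sequence by Borel--Cantelli and for the full limit by the regularity in $\gamma$ of the approximants.
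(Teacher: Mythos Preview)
Your overall strategy---use the analyticity of the multiplicative chaos $\gamma\mapsto{:}e^{-\gamma\Phi}{:}$ near $0$, take the conditional expectation given $A$, and identify the $n$-th Taylor coefficient as $\psi_{n,A}$---is exactly the paper's approach. Where you diverge is the identification of $\E[{:}e^{-\gamma\Phi}{:}\mid A]$. You only claim to compute this \emph{restricted to $D\setminus A$}, obtaining $\ind_{D\setminus A}e^{-\gamma^2 V_A/2}$, and then carry a potential ``boundary term'' $B_\gamma$ supported on $A$ whose vanishing you defer to a second-moment comparison with the mollified approximation $Q_n(\nu_{A,\varepsilon},V_{A,\varepsilon})$.

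The gap is that this last step is the entire content of the theorem, and your sketch for it is not a proof. The two approximants you propose to compare, $V_A^{n/2}\He_n(\gamma V_A^{1/2})e^{-\gamma^2 V_A/2}$ and $V_{A,\varepsilon}^{n/2}\He_n(\nu_{A,\varepsilon}/V_{A,\varepsilon}^{1/2})$, have the argument of $\He_n$ built from genuinely different objects (a deterministic function of $V_A$ versus the random mollified measure $\nu_{A,\varepsilon}$), and ``coupling $\varepsilon$ and $\gamma$ suitably'' does not by itself produce the cancellation you need. Worse, your claim that $B_\gamma$ is a distribution supported on $A$ presupposes that restricting $\E[W_n^{(-\gamma)}\mid A]=\sum_\ell\frac{(-\gamma)^\ell}{\ell!}\psi_{n+\ell,A}$ to $D\setminus A$ makes sense term by term; but for the even-degree $\psi_{2k,A}$ this is exactly what fails (the paper's Remark on non-separability of $\psi_{2k,A}$).

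The paper avoids all of this with one clean input you are missing: by a result of Aru--Powell--Sep\'ulveda, for $\gamma>0$ the full measure ${:}e^{-\gamma\Phi}{:}$ assigns zero mass to $A$, because the positivity of $\nu_A$ forces $e^{-\gamma\nu_{A,\varepsilon}}e^{-\gamma^2 V_{A,\varepsilon}/2}\to 0$ on the $\varepsilon$-neighbourhood of $A$. Hence $\E[{:}e^{-\gamma\Phi}{:}\mid A]=\ind_{D\setminus A}e^{-\gamma^2 V_A/2}$ as distributions on all of $D$, not merely on $D\setminus A$. This makes your $B_\gamma$ identically zero for every $\gamma>0$, and the theorem follows immediately from the analyticity step (which you have) and the Hermite identity $\frac{d^n}{d\gamma^n}e^{-\gamma^2 v/2}=(-1)^n v^{n/2}\He_n(\gamma v^{1/2})e^{-\gamma^2 v/2}$. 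Note this uses $\gamma>0$ essentially: the identity fails for $\gamma<0$ (the right-hand side is not analytic across $0$ even though the left-hand side is), which is why the limit in the statement is one-sided.
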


\bigskip

We also consider the $\varepsilon$-neighborhoods of $A$ defined in terms of conformal radii:
\begin{displaymath}
\Ns_{\varepsilon}(A) = \{ z\in D\setminus A \vert \CR(z,D\setminus A)<\varepsilon \CR(z,D)\}.
\end{displaymath}
We derive an asymptotic expansion, in the functional sense,
of the indicator function $\ind_{\Ns_{\varepsilon}(A)}$, as $\varepsilon\to 0$.
The leading term is of course $\vert\log\varepsilon\vert^{-1/2}$
times the Minkowski content measure of $A$ in the gauge $\vert\log r\vert^{1/2} r^{2}$.
With the notations above, the Minkowski content measure is also $\psi_{1,A}$.
We show that the higher order correction terms come into half-integer powers
$\vert\log\varepsilon\vert^{-(k+1/2)}$, $k\in\N$,
and involve the restricted odd Wick powers $\psi_{2k+1,A}$.
By contrast, the even Wick powers do not appear in the asymptotic expansions,
which is consistent with the above mentioned fact that one cannot restrict $:\Phi^{2k}:$
to $A$ in the first place.
The expansion is derived in Section \ref{Sec A E}.
It is Theorem \ref{Thm A E FPS} there.
Here we state a shorter version of it.

\begin{thm}[Theorem \ref{Thm A E FPS}]
\label{Thm A E FPS intro}
The indicator function $\ind_{\Ns_{\varepsilon}(A)}$ satisfies the following asymptotic expansion:
for every $N\geq 0$,
\begin{displaymath}
\ind_{\Ns_{\varepsilon}(A)}
=
\dfrac{1}{\sqrt{2\pi}}
\sum_{k=0}^{N} (-1)^{k}
\dfrac{1}{2^{k} k! (k+1/2)}
\dfrac{\psi_{2k+1,A}}{\big(\frac{1}{2\pi}\vert\log \varepsilon\vert\big)^{k + 1/2}}
~+~R_{N,\varepsilon},
\end{displaymath}
where the error term $R_{N,\varepsilon}$ satisfies, as $\varepsilon \to 0$,
\begin{displaymath}
\forall \eta >0,~
\E\big[\Vert R_{N,\varepsilon} \Vert_{H^{-\eta}(\C)}^{2}\big]^{1/2}
=\, o (\vert\log \varepsilon\vert^{-(N+1/2)}).
\end{displaymath}
\end{thm}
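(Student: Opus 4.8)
The plan is to transfer the statement onto the single scalar field $V_A$ and to read off the expansion from the Gaussian multiplicative chaos germs of Theorem~\ref{Thm psi GMC intro}, the only genuinely new analytic input being a renewal analysis of the first passage set carried out to order $N$. First I reduce to $V_A$. The defining condition $\CR(z,D\setminus A)<\varepsilon\,\CR(z,D)$ of $\Ns_{\varepsilon}(A)$ is exactly $V_A(z)>\frac{1}{2\pi}|\log\varepsilon|$, so with $t:=\frac{1}{2\pi}|\log\varepsilon|$ one has $\ind_{\Ns_{\varepsilon}(A)}=\ind_{D\setminus A}\,\ind_{\{V_A>t\}}$ and, for a test function $g$,
\begin{equation*}
\langle\ind_{\Ns_{\varepsilon}(A)},g\rangle=\mu_g\big((t,\infty)\big),\qquad \mu_g:=(V_A)_{*}\big(\ind_{D\setminus A}\,g\,dz\big),
\end{equation*}
so everything becomes a statement on the decay of the finite measure $\mu_g$ at $+\infty$.

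Second, the expansion against a fixed $g$. I would show, from the spatial Markov (renewal) structure governing how the conformal-radius defect grows as the depth of the FPS increases --- the mechanism behind the Minkowski content in the gauge $|\log r|^{1/2}r^{2}$ in \cite{ALS1,ALS4}, pushed to higher order --- that $\mu_g$ has, for large $s$, a density with an asymptotic expansion
\begin{equation*}
\rho_g(s)=\sum_{k=0}^{N}b_k(g)\,s^{-(k+3/2)}+o\big(s^{-(N+3/2)}\big),\qquad\text{hence}\qquad \mu_g\big((t,\infty)\big)=\sum_{k=0}^{N}\frac{b_k(g)}{k+1/2}\,t^{-(k+1/2)}+o\big(t^{-(N+1/2)}\big),
\end{equation*}
with $\sigma(A)$-measurable coefficients $b_k(g)$. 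The exponents are pinned down, and the coefficients identified, through Theorem~\ref{Thm psi GMC intro}: substituting $r=\gamma^{2}s$ in $\int_0^\infty\He_m(\gamma\sqrt s)\,s^{m/2}e^{-\gamma^{2}s/2}\,d\mu_g(s)=\langle\psi_{m,A},g\rangle+o(1)$, an $s^{-c}$-term of $\rho_g$ produces a factor $\gamma^{2c-m-2}$ times a combinatorial sum over the monomials of $\He_m$; the $\gamma\to0^{+}$ limit can be finite for every $m\ge1$ only if $c$ is a half-integer $\ge 3/2$ (with no $\log s$ correction), the renormalizing combinations inside the $\He_m$ killing the would-be divergences exactly as in Wick renormalization, and for $m=2k+1$ the surviving term $c=k+3/2$ gives
\begin{equation*}
\langle\psi_{2k+1,A},g\rangle=b_k(g)\int_0^\infty\He_{2k+1}(\sqrt r)\,r^{-1}e^{-r/2}\,dr=\sqrt{2\pi}\,(-1)^{k}\,2^{k}\,k!\;b_k(g),
\end{equation*}
using $\E[\He_{2k+1}(Z)/Z]=(-1)^{k}2^{k}k!$ for $Z\sim\mathcal N(0,1)$. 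Thus $b_k(g)=\frac{(-1)^{k}}{\sqrt{2\pi}\,2^{k}k!}\langle\psi_{2k+1,A},g\rangle$, which is precisely the claimed expansion tested against $g$. The even Wick powers never enter: the corresponding $m=2k$ integral already receives a nonzero contribution from the leading $s^{-3/2}$-term, i.e.\ from the non-integrability of $V_A^{k}$ near $A$ that is the very obstruction to restricting $:\Phi^{2k}:$ to $A$.

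Third, the upgrade to $H^{-\eta}(\C)$ and to $L^{2}(d\PP)$, which is the main obstacle. Writing $\|R_{N,\varepsilon}\|_{H^{-\eta}(\C)}^{2}=\iint R_{N,\varepsilon}(z)R_{N,\varepsilon}(w)K_\eta(z,w)\,dz\,dw$ with $K_\eta$ the Bessel potential kernel of $(1-\Delta)^{-\eta}$, the bound $\E\big[\|R_{N,\varepsilon}\|_{H^{-\eta}}^{2}\big]=o(t^{-(2N+1)})$ is equivalent to a pointwise-in-$(z,w)$ estimate on $\E[R_{N,\varepsilon}(z)R_{N,\varepsilon}(w)]$ that is $o(t^{-(2N+1)})$ and integrable against the diagonal singularity of $K_\eta$. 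Since $R_{N,\varepsilon}$ is $\sigma(A)$-measurable this comes down to the joint law of the pair of conformal-radius defects $(V_A(z),V_A(w))$ --- two one-dimensional ``absorbed variance'' processes coupled until the exploration of $A$ disconnects $z$ from $w$ and run independently afterwards --- together with the correlation kernels $\E[\psi_{2j+1,A}\otimes\psi_{2k+1,A}]$, accessible from Section~\ref{Sec Wick FPS} as restrictions of $:\Phi^{2j+1}:\otimes:\Phi^{2k+1}:$. Running the Step~2 renewal analysis on this coupled process yields a two-point asymptotic expansion whose first $N+1$ coefficients match those forced by the one-point expansion and by those correlation kernels, so that in $\E[R_{N,\varepsilon}(z)R_{N,\varepsilon}(w)]$ all contributions of order $\ge t^{-(2N+1)}$ cancel. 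Obtaining this uniformly down to $z,w$ at distance comparable to $\operatorname{dist}(z,A)$, and checking $K_\eta$-integrability of the remainder, is the technical heart of the argument; once it is in place the $L^{2}(d\PP,\sigma(\Phi),H^{-\eta}(\C))$ statement is immediate, and the almost-sure version, if desired, follows by Borel--Cantelli along $\varepsilon=2^{-n}$ using the monotonicity of $\varepsilon\mapsto\Ns_{\varepsilon}(A)$.
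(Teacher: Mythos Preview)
Your proposal has a genuine gap at Step~2 and Step~3, and in fact reproduces precisely the circular reasoning that the paper warns against.

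In Step~2 you assert that a ``renewal analysis'' of the FPS gives the density expansion $\rho_g(s)=\sum_k b_k(g)\,s^{-(k+3/2)}+o(\cdot)$. But this \emph{is} the theorem: the existence of such an expansion, with those particular half-integer exponents and in the right functional sense, is exactly what has to be proved. There is no renewal structure available for $V_A$ that makes this automatic; the law of $V_A(z)$ is that of the Brownian first hitting time $T_0$ (Theorem~\ref{Thm law CR FPS}), which gives the one-point tail $\PP(V_A(z)>t)$ and hence the expansion \emph{in expectation}, but nothing more. Your identification of the coefficients via Theorem~\ref{Thm psi GMC intro} is then conditional on an unproved assumption, and the paper itself flags this explicitly (end of Section~\ref{Subsec psi GMC multiscale}): ``if one assumes an asymptotic expansion of form~\eqref{Eq form expansion}, then Theorem~\ref{Thm psi GMC} implies that [the coefficients are as stated]. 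However, Theorem~\ref{Thm psi GMC} does not directly provide the existence of an expansion of form~\eqref{Eq form expansion}.''

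Step~3 is where the real obstruction lies. You propose to control $\E[R_{N,\varepsilon}(z)R_{N,\varepsilon}(w)]$ through the joint law of $(V_A(z),V_A(w))$ and then integrate against the Bessel kernel. But the two-point law of $V_A$ is \emph{not known}, and neither are the correlation kernels $\E[\psi_{2j+1,A}(z)\psi_{2k+1,A}(w)]$ in closed form. The paper says this in so many words (after Theorem~\ref{Thm Le Gall}): ``we do not know explicit expressions for the two-point correlations of the first passage sets of the GFF. Therefore, the proof of our Theorem~\ref{Thm A E FPS} proceeds in a very different way, relying on explicit expressions only for the conditional expectations (first moments), and at the level of second moments, only using upper bounds but no explicit expressions or expansions.'' Your ``coupled process'' description is heuristic; turning it into the uniform-in-$(z,w)$ estimates needed near the diagonal, with the right $K_\eta$-integrability, is exactly the content you are assuming away.

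The paper's actual route avoids both problems by working with \emph{conditional expectations given the two-valued set} $A_{0,b}$. Conditionally on $(A_{0,b},\ell_{0,b})$, the law of $V_A(z)$ on the $b$-labeled holes is again a Brownian hitting time, so $\PP(z\in\Ns_\varepsilon(A)\mid A_{0,b},\ell_{0,b})$ has an \emph{exact} convergent series expansion whose coefficients are $Q_{2k+1}(\ell_{0,b},V_{A_{0,b}})=\E[\psi_{2k+1,A}\mid A_{0,b},\ell_{0,b}]$ (Proposition~\ref{Prop cond series}). The rest of Section~\ref{Sec A E} bounds, in $L^2(d\PP,H^{-\eta})$, the four gaps between this conditional identity and the statement of the theorem: the $0$-labeled region near $A_{0,b}$ (Lemmas~\ref{Lem area 0}--\ref{Lem V area 0}), the difference $\ind_{\Ns_\varepsilon(A)}-\E[\ind_{\Ns_\varepsilon(A)}\mid A_{0,b},\ell_{0,b}]$ (via the extremal-distance estimate of Lemmas~\ref{Lem bound ED}--\ref{Lem ED Gauss}, which needs Theorem~\ref{Thm ED ALS3}), the differences $\psi_{2k+1,A}-\E[\psi_{2k+1,A}\mid A_{0,b},\ell_{0,b}]$ (Section~\ref{Subsec error fields}), and the tail of the series (Lemma~\ref{Lem bound series}). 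Crucially, every one of these bounds is an \emph{upper} bound, never an explicit two-point computation, and they all decay like $e^{-cb}$ or $\delta^{c/b^2}$. The proof finishes by choosing $b=b(\varepsilon)\asymp(\log|\log\varepsilon|)^2$ and an intermediate scale $\delta(\varepsilon)$ so that all terms are $o(|\log\varepsilon|^{-(N+1/2)})$; this is an interchange-of-limits argument (level of the TVS versus $\varepsilon$) and is the technical heart.
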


With this asymptotic expansion we derive a multi-scale description of the fields
$\psi_{2n+1,A}$, where they appear as limits of linear combinations of indicator functions
$\ind_{\Ns_{\varepsilon^{\alpha_{i}}}(A)}$,
at different scales $\varepsilon^{\alpha_{i}}$.

\begin{cor}[Corollary \ref{Cor psi Vandermonde}]
\label{Cor psi Vandermonde intro}
Let $n\geq 1$
and let $\alpha_{n}>\alpha_{n-1}>\dots >\alpha_{1}>\alpha_{0}=1$.
Let $(c_{0}, c_{1}, \dots, c_{n})$
be the unique solution to the linear system
\begin{displaymath}
\forall k\in \{0,\dots, n-1\},
\sum_{i=0}^{n} c_{i} \alpha_{i}^{-(k+1/2)} = 0,
\qquad
\sum_{i=0}^{n} c_{i} \alpha_{i}^{-(n+1/2)} = 1.
\end{displaymath}
Then
\begin{displaymath}
\psi_{2n+1, A} = \lim_{\varepsilon \to 0} (-1)^{n}\dfrac{2^{n} n! (n+1/2)}{(2\pi)^{n}}\vert\log\varepsilon\vert^{n+1/2}
\sum_{i=0}^{n} c_{i} \ind_{\Ns_{\varepsilon^{\alpha_{i}}}(A)},
\end{displaymath}
with convergence in $L^{2}(d\PP,\sigma(\Phi),H^{-\eta}(\C))$ for $\eta>0$.
\end{cor}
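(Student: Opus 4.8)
\emph{Proof sketch.} The plan is to read the claim off directly from the asymptotic expansion of Theorem~\ref{Thm A E FPS intro}. I would apply that theorem with $N=n$, once for each of the $n+1$ scales $\varepsilon^{\alpha_0},\dots,\varepsilon^{\alpha_n}$, take the linear combination $\sum_{i=0}^n c_i\,\ind_{\Ns_{\varepsilon^{\alpha_i}}(A)}$, and use the defining linear system for $(c_0,\dots,c_n)$ to kill every term except the one carrying $\psi_{2n+1,A}$. Since $\vert\log\varepsilon^{\alpha_i}\vert=\alpha_i\vert\log\varepsilon\vert$, Theorem~\ref{Thm A E FPS intro} gives
\begin{displaymath}
\ind_{\Ns_{\varepsilon^{\alpha_i}}(A)}
=\frac{1}{\sqrt{2\pi}}\sum_{k=0}^{n}(-1)^k\frac{1}{2^k k!(k+1/2)}
\frac{\psi_{2k+1,A}}{\alpha_i^{k+1/2}\big(\frac{1}{2\pi}\vert\log\varepsilon\vert\big)^{k+1/2}}
+R_{n,\varepsilon^{\alpha_i}},
\end{displaymath}
and after multiplying by $c_i$, summing over $i$, exchanging the two finite sums, and invoking $\sum_{i=0}^n c_i\alpha_i^{-(k+1/2)}=0$ for $k\le n-1$ together with $\sum_{i=0}^n c_i\alpha_i^{-(n+1/2)}=1$, every contribution with $k<n$ cancels and only the $k=n$ term remains. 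Solving for $\psi_{2n+1,A}$ and simplifying the scalar via $\sqrt{2\pi}\,(2\pi)^{-(n+1/2)}=(2\pi)^{-n}$ produces precisely the prefactor $(-1)^n 2^n n!(n+1/2)(2\pi)^{-n}\vert\log\varepsilon\vert^{n+1/2}$ in front of $\sum_{i=0}^n c_i\,\ind_{\Ns_{\varepsilon^{\alpha_i}}(A)}$, plus the error $(-1)^n 2^n n!(n+1/2)(2\pi)^{-n}\vert\log\varepsilon\vert^{n+1/2}\sum_{i=0}^n c_i R_{n,\varepsilon^{\alpha_i}}$.

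Two auxiliary points remain. First, I would verify that $(c_0,\dots,c_n)$ is well defined: the coefficient matrix $\big(\alpha_i^{-(k+1/2)}\big)_{0\le i,k\le n}$ factors as $\operatorname{diag}(\alpha_0^{-1/2},\dots,\alpha_n^{-1/2})$ times the Vandermonde matrix in the nodes $\alpha_0^{-1}>\dots>\alpha_n^{-1}$, which are pairwise distinct and nonzero, so the matrix is invertible — this is the Vandermonde structure referred to in the statement. Second, I would show that the error contribution vanishes in $L^2(d\PP,\sigma(\Phi),H^{-\eta}(\C))$: for each fixed $i$ the bound in Theorem~\ref{Thm A E FPS intro} gives $\E[\Vert R_{n,\varepsilon^{\alpha_i}}\Vert_{H^{-\eta}(\C)}^2]^{1/2}=o(\vert\log\varepsilon^{\alpha_i}\vert^{-(n+1/2)})=o(\vert\log\varepsilon\vert^{-(n+1/2)})$ because $\alpha_i$ is a fixed constant, and then the triangle inequality in $L^2$ with the fixed coefficients $c_i$ yields $\E[\Vert\sum_{i=0}^n c_i R_{n,\varepsilon^{\alpha_i}}\Vert_{H^{-\eta}(\C)}^2]^{1/2}=o(\vert\log\varepsilon\vert^{-(n+1/2)})$. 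Multiplying by $\vert\log\varepsilon\vert^{n+1/2}$ sends this to $0$, and since $\eta>0$ was arbitrary the stated $L^2$ limit follows.

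The argument is essentially bookkeeping once Theorem~\ref{Thm A E FPS intro} is available, so I do not expect a genuine analytic obstacle here. The only points that need mild care are keeping track of the combinatorial constants and checking that the \emph{half-integer} exponents $k+1/2$ still yield a nonsingular (generalized) Vandermonde system; the row rescaling above reduces the latter to an ordinary Vandermonde matrix in the variables $\alpha_i^{-1}$, which settles it.
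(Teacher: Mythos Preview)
Your proof is correct and follows essentially the same route as the paper: apply the asymptotic expansion of Theorem~\ref{Thm A E FPS intro} at each scale $\varepsilon^{\alpha_i}$, form the linear combination $\sum_i c_i\,\ind_{\Ns_{\varepsilon^{\alpha_i}}(A)}$, use the linear system to cancel all but the $k=n$ term, and observe that the coefficient matrix is invertible because after extracting a factor $\alpha_i^{-1/2}$ from each row it is a Vandermonde matrix in the $\alpha_i^{-1}$. The paper's proof is terser but identical in substance; your extra care with the error terms via the triangle inequality in $L^2$ is a welcome but routine addition.
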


We see our asymptotic expansion, as an analogue, in the GFF setting,
of Le Gall's expansion of the 2D Wiener sausage
($\varepsilon$-neighborhood of a Brownian trajectory)
into powers of $1/\vert\log\varepsilon\vert$ \cite{LeGallLocTime,LeGallStFlour}.
To have a point of comparison, we recall Le Gall's result in 
Section \ref{Subsec Le Gall sausage} (Theorem \ref{Thm Le Gall}).
The indicator function of a Wiener sausage expands in an $L^2$ sense into integer powers
$\vert\log\varepsilon\vert^{-n}$, $n\in\N\setminus \{0\}$,
and the expansion involves Brownian renormalized self-intersection local times,
which are generalized functions supported on the range of the Brownian motion.
The leading coefficient attached to $\vert\log\varepsilon\vert^{-1}$
is a positive measure, the occupation measure of the Brownian motion.
The analogy between the two expansions is clear, but there are some important differences.
First of all, our $\varepsilon$-neighborhood is defined through conformal radii,
and not the Euclidean distance, as for the Wiener sausage.
But this is a minor point.
Most importantly, the powers of $1/\vert \log\varepsilon\vert$ are not the same,
although the increment is $1$ in both cases.
Moreover, the renormalization for Wick powers is done with Hermite polynomials,
whereas for self-intersection local times it is done with generalized Laguerre polynomials of order $-1$.
The analogy between the two expansion becomes even more intriguing if one reminds that
via the isomorphism theorems (Brownian motion representations of the GFF),
the FPS $A$ can be represented as a cluster of Brownian trajectories (loops and boundary excursions)
\cite{ALS2}.
The difference in exponents reflects the accumulation of small Brownian loops (ultraviolet divergence)
in the cluster.
Note however that we do not derive our Theorem \ref{Thm A E FPS intro} from Le Gall's expansion,
and do not use the Brownian motion representations at all.
Deriving one expansion from the other is actually far from being straightforward,
and there is a heuristic but in-depths discussion on this point in Section \ref{Subsec Wiener to FPS}.

To prove Theorem \ref{Thm A E FPS intro} we ultimately rely on SLE theory.
More precisely, we approximate the first passage sets by the two-valued sets (TVS), 
the latter having the advantage of being thin for the GFF.
We start by observing that the asymptotic expansion holds if one replaces the indicator function and the fields by their conditional expectations given a TVS and its labels (Proposition \ref{Prop cond series}).
This observation is key in the proof,
but there is still a lot of technical work after this point.
This is because there is an interchange of limits involved:
limit in $\varepsilon$ versus the limit in the level of the TVS.

\bigskip

In Section \ref{Sec Wick exc decomp}, we explain how the results for the FPS translate into results for the excursion clusters.
The excursion clusters are closely related to first passage sets,
since conditionally on its outer boundary $\Gamma$, an excursion cluster is an FPS in the domain
$\inter(\Gamma)$ delimited by $\Gamma$,
from a level $\pm 2\lambda$ to level $0$.
Here $2\lambda$ is the Schramm-Sheffield height gap of the GFF \cite{SchSh,SchSh2}.
In particular,
Theorems \ref{Thm psi GMC intro} and \ref{Thm A E FPS intro},
and Corollary \ref{Cor psi Vandermonde intro}
have their analogues for the excursion clusters.
These are Theorem \Ref{Thm fields on C j},
Theorem \ref{Thm A E C j} and Corollary \ref{Cor psi Vandermonde indiv exc}.
Further, we derive a full decomposition of the Wick powers
$:\Phi^{n}:$ through the full collection of excursion clusters.
To this end we first group the excursion clusters into generations,
corresponding to the number of other cluster surrounding a cluster,
then inside each generation we order the clusters according to the diameter.
The latter ordering inside a generation is not essential, but just a convenience.
See Theorem \ref{Thm decomp gen Wick},
Theorem \ref{Thm per cluster enum},
and Corollary \ref{Cor decomp gen plus enum}.

\bigskip

Here we would also like to mention an important Leitmotiv that will be recurring in this article.
It is related to the fact that the Wick powers $:\Phi^{n}:$
are not exactly local functions of the GFF $\Phi$.
Given a point $z_{0}\in D$, to know $:\Phi^{n}:$ in the neighborhood of $z_{0}$,
it is not enough to know $\Phi$ in the neighborhood of $z_{0}$.
One also needs to know the global shape of the domain $D$.
This global shape enters the definition $:\Phi^{n}:$ through the Green's function
$G_{D}$ used in the renormalization.
In particular, what matters is the constant order term in $G_{D}$
after the logarithmic singularity on the diagonal.
Should this constant order term change, then so would change $:\Phi^{n}:$\,.
There is a rule describing how $:\Phi^{n}:$ transforms under such circumstances.
It is given by the change of variance identity \eqref{Eq change var},
which is a remarkable identity for Hermite polynomials.
Typically, we need this identity when we cut our domain $D$ by removing for instance an FPS $A$.
In the remaining random domain $D\setminus A$,
the Green's function $G_{D\setminus A}$ has a constant order term after the singularity
which differs from that of $G_{D}$.
Therefore, renormalizing fields in $D\setminus A$ with $G_{D\setminus A}$
returns a different outcome from 
renormalizing with $G_{D}$.
In this situations we need the change of variance identity \eqref{Eq change var}
to relate the two renormalizations.

The change of variance identity \eqref{Eq change var} has a more abstract group-theoretic interpretation
that originates from the \textit{umbral calculus} \cite{RotaAll73Umbral,Roman84Umbral}.
In this interpretation, the families of Hermite polynomials
with different normalizations (corresponding to different variances for the Gaussian density on $\R$)
form a one-parameter subgroup of the group of polynomial sequences under the \textit{umbral composition}.
The generalized Laguerre polynomials of order $-1$, used in the renormalization of Brownian self-intersection local times, also have a similar property \eqref{Eq change norm Laguerre}.
This group-theoretic point of view is detailed in Section \ref{Subsec umbral}.
Further, the asymptotic expansions, both in the GFF setting (Theorem \ref{Thm A E FPS intro})
and in the Brownian setting (Theorem \ref{Thm Le Gall} by Le Gall) are closely related to
these umbral one-parameter subgroup identities.
Indeed, one can globally modify the setting 
(perturbation of domain in the GFF case, change of life-time in the Brownian case)
without modifying, with positive probability and in a small region, the set to asymptotically expand.
The asymptotic expansion being local, it should remain locally unchanged if the set to expand is locally unchanged.
But the Wick powers and the renormalized self-intersection local times do change under a global change of parameters,
according to the above mentioned identities \eqref{Eq change var} and \eqref{Eq change norm Laguerre}.
This discrepancy between the purely local nature of the asymptotic expansions and the more global nature of the renormalized fields puts hard algebraic \textit{a priori} constraints on the form of  the asymptotic expansions.
This point of view is developped in Section \ref{Sec algeb}.
The fact that the asymptotic expansions satisfy these constraints comes from what we call
\textit{reexpansion identities},
which are purely identities on Hermite polynomials (Proposition \ref{Prop reexp})
and on generalized Laguerre polynomials of order $-1$ (Proposition \ref{Prop reexp Laguerre}).
Moreover, one can already read the combinatorial coefficients appearing in the asymptotic expansions
from the reexpansion identities.

\bigskip

We conclude our work by a series of conjectures and open questions in Section \ref{Sec open}.
In Section \ref{Subsec FPS Euclid} we give a very precise conjecture on the asymptotic expansion of
first passage sets when the $\varepsilon$-neighborhood is defined not through the conformal radius but through the Euclidean distance.
This conjecture involves expectations w.r.t. a measure on SLE$_{4}$ loops up to change of scale,
which is a ``quotient" of the Malliavin–Kontsevich–Suhov (MKS) measure on SLE$_{4}$ loops.
Indeed, the measure on SLE$_{4}$ loops is supposed to describe, up to scaling,
the microscopic holes inside the FPS.
In Section \ref{Subsec Wiener CR} we wonder what would become Le Gall's expansion for the Brownian motion if one replaces the Euclidean distance by the conformal radius.
We provide a conjecture for this, which again involves an MKS measure,
but this time on SLE$_{8/3}$ loops.
In Section \ref{Subsec Wiener to FPS} we discuss about relating the expansion for the FPS
to Le Gall's expansion for the Wiener sausage via Brownian motion representations of the GFF.
We conclude that this might be possible,
but in a sophisticated highly non-obvious way, as one needs a mechanism that accounts for the change of exponents.
In Section \ref{Subsec other c}
we consider clusters in a Brownian loop soup of central charge 
$c\in (0,1)$.
The case $c=1$ corresponds to the GFF setting via the Brownian motion representations,
and the limit $c\to 0$,
corresponds, loosely speaking, to a single Brownian trajectory.
We ask the question of the asymptotic expansion of the $\varepsilon$-neighborhoods of clusters
for $c\in (0,1)$, and provide a conjecture.
Although our conjecture is not entirely precise and does not contain the exact coefficients,
we still conjecture that the powers of $1/\vert\log \varepsilon\vert$ that appear are
$\vert\log \varepsilon\vert^{-(n-c/2)}$, $n\in\N\setminus\{0\}$.
So the conjectured exponents form an interpolation between the GFF setting (Theorem \ref{Thm A E FPS intro})
and Le Gall's expansion for the Wiener sausage (Theorem \ref{Thm Le Gall}).
We further conjecture that the fields that appear are the hypothetical
renormalized signed \textbf{fractional} powers of the occupation field of the loop soup,
that were conjectured to exist by Jego, Lupu and Qian in \cite{JegoLupuQianFields}.
In particular, the more obvious renormalized intersection local times of the Brownian loop soup that were constructed by Le Jan \cite{LeJan2011Loops} should \textbf{not} appear in the expansion,
just like the even Wick powers of the GFF do not appear in Theorem \ref{Thm A E FPS intro}.
Finally, Section \ref{Subsec deep umbral} contains a broad question on the extent of the relation between the umbral calculus and the renormalization in dimension $2$.

\subsection*{Organization of the article}

In Section \ref{Sec prelim} we review some known prerequisites for our work.
It covers many different points.
In Section \ref{Subsec Hermite} we recall some remarkable identities satisfied by the Hermite polynomials,
in particular the change of variance identity \eqref{Eq change var},
which will be used countless times in this work.
In Section \ref{Subsec Wick},
we review the Sobolev spaces $H^{-\eta}(\C)$ and the Wick powers $:\Phi^{n}:$\,.
One take-away from this would be the following.
Although the Sobolev spaces $H^{s}(\C)$ are defined in Fourier,
for $s= -\eta <0$,
one can express the Sobolev norm-squared
$\Vert\cdot\Vert_{H^{-\eta}(\C)}^{2}$ as a double integral with respect to a kernel $\LK_{\eta}$.
The kernels $\LK_{\eta}$ have an explicit expression through Bessel functions and are known as 
\textit{Bessel potentials}.
Therefore, when dealing with random fields in $H^{-\eta}(\C)$,
one can use a moment method and never ever needs to go into Fourier or a spectral decomposition of the Laplacian.
In Section \ref{Subsec TVS FPS},
we review the notions of local set, two-valued set, first passage set and excursion cluster of the GFF,
and emphasize the tools that we need from this theory.
In Section \ref{Subsec Le Gall sausage},
we recall the notion of renormalized self-intersection local times of the planar Brownian motion
and recall Le Gall's asymptotic expansion of the Wiener sausage in dimension 2.
This will not be used as a tool in our work,
but will serve as a point of comparison with our 
Theorem \ref{Thm A E FPS intro} (Theorem \ref{Thm A E FPS} in Section \ref{Sec A E}).
In Section \ref{Subsec umbral} we present some elements of umbral calculus,
in particular the umbral composition which endows the set of polynomial sequences with a group structure.
We observe that the change of variance identity \eqref{Eq change var} for Hermite polynomials
can be naturally expressed in terms of the umbral composition,
and is related to a one-parameter subgroup of the group of polynomial sequences.
Further, the generalized Laguerre polynomials used in the renormalization of Brownian
self-intersection local times also satisfy an analogous change of normalization identity \eqref{Eq change norm Laguerre},
which in turn is also related to a (different) one-parameter subgroup of the group of polynomial sequences.

Section \ref{Sec estimates} contains some estimates on Green's function and conformal radius that would be further used in this work.
In Section \ref{Subsec estim Green} we provide an upper bound on $G_{D}(z,w)$ useful in a situation when
$z$ and $w$ are simultaneously close to each other and close to the boundary $\partial D$.
The question is who wins, the logarithmic singularity on the diagonal or the $0$ boundary condition.
Proposition \ref{Prop Green} provides in this case a sufficient condition for $G_{D}(z,w)$ to be small.
Importantly, the dependence on the domain of the upper bound is explicit, as we will further apply the bound to random domains.
Section \ref{Subsec estim CR} contains estimates on the variations of Green's functions and conformal radii,
with again an explicit dependence on the domain.

In Section \ref{Sec Wick FPS} we take $A$ an FPS of a GFF $\Phi$ and introduce the fields
$\psi_{n,A}$:
\begin{displaymath}
\psi_{n,A} = \E\big[:\Phi^{n}:\vert A\big]\,.
\end{displaymath}
We study the properties of these fields, and in particular the distinction between $n$ odd and $n$ even.
In Theorem \ref{Thm decomp FPS Wick}
we give the decomposition of the Wick powers $:\Phi^{n}:$ induced by the FPS $A$,
that involves $\psi_{n,A}$.
In the Subsection \ref{Subsec Wick TVS},
we consider a two-valued set (TVS) instead of an FPS and provide the decomposition of
the Wick powers $:\Phi^{n}:$ induced by the TVS.
It is similar but simpler, since unlike the FPS, a TVS is thin for the GFF. 

Section \ref{Sec A E} is dedicated to the asymptotic expansion of $\varepsilon$-neighborhoods of an FPS.
There we state and prove Theorem \ref{Thm A E FPS intro} (Theorem \ref{Thm A E FPS}) and 
Corollary \ref{Cor psi Vandermonde intro} (Corollary \ref{Cor psi Vandermonde}).
The Subsection \ref{Subsec cond TVS} contains the key observation on what happens if we additionally condition the expansion by the TVS and its labels (Proposition \ref{Prop cond series}).
In the rest of the section we derive a proof of Theorem \ref{Thm A E FPS intro} (Theorem \ref{Thm A E FPS})
out of the above observation.
The main effort goes into
justifying an interchange of limits,
limit in $\varepsilon$ versus limit in the upper level of the TVS.

In Section \ref{Sec algeb} we discuss how there are \textit{a priori} constraints
to be satisfied by the asymptotic expansions,
both in the case of the FPS (Section \ref{Subsec algeb})
and in the case of the Wiener sausage (Section \ref{Subsec BM algeb}),
and there is a parallel between the two settings.
The fact that the constraints are satisfied comes from reexpansion identities
for Hermite polynomials (Proposition \ref{Prop reexp}, already used to prove Theorem \ref{Thm A E FPS intro}),
respectively for generalized Laguerre polynomials of order $-1$ (Proposition \ref{Prop reexp Laguerre}).

In Section \ref{Sec psi GMC} we explain what the expansion of the Gaussian multiplicative chaos into power
series w.r.t. the intermittency parameter $\gamma$ tells us on the fields $\psi_{n,A}$.
In Section \ref{Subsec psi GMC 1} we state and prove Theorem \ref{Thm psi GMC intro}
(Theorem \ref{Thm psi GMC}).
In Section \ref{Subsec psi GMC multiscale} we explain how Theorem \ref{Thm A E FPS intro}
and Theorem \ref{Thm psi GMC intro} are actually related, although one does not need the one to prove the other.

In Section \ref{Sec Wick exc decomp} we explain how our previous results translate to the excursion clusters.
In Subsection \ref{Subsec Wick exc gen} we consider the excursion clusters grouped into generations
according to the number of other clusters surrounding a cluster.
In Theorem \ref{Thm decomp gen Wick} we provide a decomposition of Wick powers $:\Phi^{n}:$
w.r.t. this generational picture.
In Subsection \ref{Subsec fields indiv} we consider what happens at the level of an individual 
excursion cluster and give the analogues of Theorem \ref{Thm psi GMC intro}, Theorem \ref{Thm A E FPS intro}
and Corollary \ref{Cor psi Vandermonde intro}.
In Subsection \ref{Subsec Wick exc indiv} we explain how inside each generation the fields
decompose along individual excursion clusters (Theorem \ref{Thm per cluster enum}).
The Subsection \ref{Subsec remark nestes CLE 4} contains more of a side remark, outside our main focus.
It presents the decomposition of Wick powers induced by bounded-type thin local sets of the GFF \cite{ASW},
and derives out of it a description of the Wick powers $:\Phi^{n}:$
through the Miller-Sheffield coupling between the GFF and the nested CLE$_{4}$.

The final Section \ref{Sec open} is dedicated to open questions and conjectures.
Section \ref{Subsec FPS Euclid} contains a precise conjecture for the asymptotic expansion of an FPS
when the $\varepsilon$-neighborhood is defined through Euclidean distance.
Section \ref{Subsec Wiener CR} contains a precise conjecture for the asymptotic expansion of a Wiener
sausage defined via the conformal radius.
Both conjectures involve Malliavin–Kontsevich–Suhov measures on SLE loops.
Section \ref{Subsec Wiener to FPS} contains a discussion on a possible relation between the expansion for the Wiener sausage and the expansion for FPS/excursion clusters.
Section \ref{Subsec other c} contains a conjecture on the asymptotic expansion for
clusters in a Brownian loop soup of central charge
(intensity parameter) $c\in(0,1)$.
Section \ref{Subsec deep umbral} contains a very broad question on the relation between renormalization and umbral calculus.

\section{Preliminaries}
\label{Sec prelim}

\subsection{Hermite polynomials}
\label{Subsec Hermite}

Here we recall some basic facts about the Hermite polynomials.
For references, see \cite{RotaAll73Umbral,AbramowitzStegun84,Roman84Umbral}.

Let $(\He_{n})_{n\geq 0}$ be the family of monic polynomials
which are orthogonal w.r.t. the Gaussian measure on $\R$,
$e^{-x^{2}/2} dx$.
These are the probabilistic Hermite polynomials.
There are several equivalent expressions for these polynomials.
First,
\begin{equation}
\label{Eq Herm 1}
\He_{n}(x) = (-1)^{n}e^{x^{2}/2} \dfrac{d^{n}}{dx^{n}} e^{-x^{2}/2}.
\end{equation}
Further, let $Y$ be a Gaussian r.v. distributed according to
$\mathcal{N}(0,1)$. Then,
\begin{equation}
\label{Eq Herm 2}
\He_{n}(x) = \E[(x+ i Y)^{n}],
\end{equation}
where $i=\sqrt{-1}$. 
Note that the coefficients of $\He_{n}$ are real (and actually relative integers)
since the odd powers of $i$ in \eqref{Eq Herm 2} vanish.
Finally, one can express the coefficient of $\He_{n}$ explicitly:
\begin{equation}
\label{Eq Herm 3}
\He_{n}(x) = \sum_{0\leq k\leq \lfloor n/2\rfloor}
(-1)^{k} \dfrac{n!}{2^{k} k! (n-2k)!} x^{n-2k}.
\end{equation}
We have
\begin{displaymath}
\He_{0}(x) = 1,
\qquad
\He_{1}(x) = x,
\qquad
\He_{2}(x) = x^{2}-1,
\qquad
\He_{3}(x) = x^{3}-3x,
\qquad
\He_{4}(x) = x^{4}-6 x^{2} + 3,
\text{ etc.}
\end{displaymath}

Next we recall some classical identities satisfied by Hermite polynomials.
First, a binomial identity:
\begin{equation}
\label{Eq Herm bin}
\He_{n}(x + y) = \sum_{j=0}^{n} \dfrac{n!}{j! (n-j)!} \He_{j}(x) y^{n-j}.
\end{equation}
It follows for instance directly from \eqref{Eq Herm 2}.
From \eqref{Eq Herm 2} also follows an expression for the exponential generating function of Hermite polynomials:
\begin{equation}
\label{Eq Herm exp}
\sum_{n\geq 0}\dfrac{\gamma^{n}}{n!} \He_{n}(x) =
e^{\gamma x - \gamma^{2}/2}.
\end{equation}

For the purpose of the Wick renormalisation (Section \ref{Subsec Wick}),
we will further need polynomials in two variables:
\begin{equation}
\label{Eq def Q}
Q_{n}(x,u) = u^{n/2}\He_{n}(x u^{-1/2}) = 
\sum_{0\leq k\leq \lfloor n/2\rfloor}
(-1)^{k} \dfrac{n!}{2^{k} k! (n-2k)!} x^{n-2k} u^{k}.
\end{equation}
The variable $u$ represents a variance.
The identities \eqref{Eq Herm bin} and \eqref{Eq Herm exp}
immediately generalize to $Q_{n}$:
\begin{equation}
\label{Eq Q bin}
Q_{n}(x+y,u) =
\sum_{j=0}^{n} \dfrac{n!}{j! (n-j)!} Q_{j}(x,u) y^{n-j},
\end{equation}
\begin{equation}
\label{Eq Q exp}
\sum_{n\geq 0}\dfrac{\gamma^{n}}{n!} Q_{n}(x,u) =
e^{\gamma x - \gamma^{2} u/2}.
\end{equation}
We will also need the following change of variance formula:
\begin{equation}
\label{Eq change var}
Q_{n}(x,u_1+u_2) =
\sum_{0\leq k\leq \lfloor n/2\rfloor}
(-1)^{k} \dfrac{n!}{2^{k} k! (n-2k)!} Q_{n-2k}(x,u_1) u_{2}^{k}.
\end{equation}
Note that the coefficients appearing in \eqref{Eq change var}
are exactly the same as in \eqref{Eq Herm 3} and \eqref{Eq def Q}.
This is related to a group structure on the space of polynomial sequences and to the so-called \textit{umbral calculus} \cite{RotaAll73Umbral,Roman84Umbral}.
This will be further detailed in Section \ref{Subsec umbral}.

\subsection{Wick powers of the Gaussian free field}
\label{Subsec Wick}

Let $D\subset \C$ be an open, non-empty, bounded, connected and simply connected domain.
Let $\Phi_{(0)}$ be a continuum Gaussian free field (GFF) on $D$ with $0$ boundary conditions.
It's covariance kernel is given by the Green's function $G_{D}(z,w)$ of $-\Delta$ on $D$,
with $0$ boundary conditions. We chose the normalization so that
\begin{equation}
\label{Eq G CR}
G_{D}(z,w) = \dfrac{1}{2\pi}
\log \dfrac{1}{\vert w-z\vert}
+
g_{D}(z,w),
\end{equation}
where $g_{D}(z,w)$ is harmonic on $D$ w.r.t. both variables.
The function $g_{D}(z,w)$ is bounded on $D^{2}$ from above by
$\frac{1}{2\pi}\log \diam(D)$.
Note that for $z\in D$,
\begin{displaymath}
g_{D}(z,z) = \dfrac{1}{2\pi}
\log \CR (z,D),
\end{displaymath}
where $\CR (z,D)$ is the conformal radius of $D$ seen from $z$.
By distortion inequalities,
\begin{equation}
\label{Eq Koebe}
d(z,\partial D)\leq \CR (z,D) \leq 4 d(z,\partial D).
\end{equation}
See \cite{Ahlfors2010ConfInv}.

Usually, on views $\Phi_{(0)}$ as a random element of Sobolev spaces
$H^{-\eta}(D)$ for $\eta>0$ \cite{Sheffield07GFF,Dubedat09PartitionFunc,PowWernerGFF}.
However, these functional spaces do not provide enough control on the behavior of 
$\Phi_{(0)}$ near the boundary $\partial D$.
Therefore, we will use the Sobolev spaces $H^{-\eta}(\C)$.
Alternatively, we could have used $H^{-\eta}(\widetilde{D})$
for $\widetilde{D}$ an open domain containing $\overline{D}$, the topological closure of $D$.

The Sobolev spaces $H^{s}(\C)$ are defined in terms of the Fourier transform.
We will use the following convention for the Fourier transform:
\begin{displaymath}
\hat{f}(\xi) = \dfrac{1}{2\pi}\int_{\C} f(z) e^{-i \xi\cdot z} d^{2} z,
\end{displaymath}
where $\xi\cdot z$ is the inner product
\begin{displaymath}
\xi\cdot z = \Re (\xi) \Re (z) + \Im (\xi) \Im (z)
=\dfrac{1}{2} (\xi \bar{z} + \bar{\xi} z).
\end{displaymath}
Then $H^{s}(\C)$ is the completion of $\mathcal{S}(\C)$, the Schwartz space of smooth functions with rapidly decreasing derivatives,
for the Sobolev norm
\begin{equation}
\label{Eq def Sobo norm}
\Vert f\Vert_{H^{s}(\C)}^{2}
= \int_{\C}(1 + \vert\xi\vert^{2})^{s}\vert \hat{f}(\xi)\vert^{2} d^{2} \xi.
\end{equation}
The spaces $H^{s}(\C)$ are separable Hilbert spaces, 
decreasing in $s$.
We refer to \cite[Section 7.62]{AdamsFournierSobo}.
For $s= -\eta <0$, the Sobolev norm has a translation and rotation invariant kernel:
\begin{displaymath}
\forall f\in \mathcal{S}(\C),~
\Vert f\Vert_{H^{-\eta}(\C)}^{2}
= \int_{\C}(1 + \vert\xi\vert^{2})^{-\eta}\vert \hat{f}(\xi)\vert^{2} d^{2} \xi
= \int_{\C^{2}} f(z)\LK_{\eta}(\vert w-z\vert)\overline{f(w)}\,d^{2} z\,d^{2} w,
\end{displaymath}
where
\begin{equation}
\label{Eq Bessel potential}
\LK_{\eta}(r) = \lim_{R\to +\infty}\dfrac{1}{4\pi^{2}}
\int_{\vert \xi\vert < R}(1 + \vert\xi\vert^{2})^{-\eta} \cos (r\Re(\xi))\, d^{2}\xi =
\dfrac{2^{1-\eta}}{2\pi \Gamma(\eta)}\dfrac{K_{\eta - 1}(r)}{r^{1-\eta}},
\end{equation}
with $K_{\eta - 1}$ being the modified Bessel function of the second kind
of order $\eta - 1$.
The kernels $\LK_{\eta}$ are the so-called \textit{Bessel potentials}.
See \cite[Section 7.63]{AdamsFournierSobo} and
\cite{AronszajnSmith61Bessel1,AronszajnMullaSzeptycki63Bessel}.
For $\eta>0$, the kernel $\LK_{\eta}(r)$ is continuous positive on
$(0,+\infty)$.
If $\eta\in (0,1)$, $\LK_{\eta}(r)$ diverges as a power near $0$:
\begin{equation}
\label{Eq asymp kernel}
\LK_{\eta}(r) \asymp \dfrac{1}{r^{2 - 2\eta}}~~
\text{as } r\to 0.
\end{equation}
If $\eta = 1$, $\LK_{\eta}(r)$ has a logarithmic divergence at $r=0$,
and if $\eta>1$, $\LK_{\eta}$ is continuous on $[0,+\infty)$.

On can extend the GFF $\Phi_{(0)}$ to $\C$ by setting it to $0$ outside $D$.
Then it is a random element of $H^{-\eta}(\C)$ for all $\eta >0$,
and
\begin{displaymath}
\E\big[\Vert\Phi_{(0)}\Vert_{H^{-\eta}(\C)}^{2}\big]
=\int_{D^{2}} G_{D}(z,w)\LK_{\eta}(\vert w-z\vert)
\,d^{2} z\,d^{2} w
< +\infty,
\end{displaymath}
the convergence following from \eqref{Eq asymp kernel}.

\medskip

Classically, in dimension $2$ one can define the renormalized (Wick) powers of the GFF.
See \cite{Simon74EQFT} for the use of Wick powers in Quantum Field Theory and
\cite{KangMakarovGFFCFT} for the use in Conformal Field Theory.
We will denote by $:\Phi_{(0)}^{n}:$ the $n$-th Wick power of $\Phi_{(0)}$.
Formally, it is given by
\begin{displaymath}
:\Phi_{(0)}^{n}:(z)~= Q_{n}(\Phi_{(0)}(z),\Var(\Phi_{(0)}(z)))
=Q_{n}(\Phi_{(0)}(z),G_{D}(z,z)),
\end{displaymath}
where $Q_{n}$ is given by \eqref{Eq def Q}.
Rigorously, $:\Phi_{(0)}^{n}:$ can be defined through a regularization procedure.

Let $\rho$ be a $\LC^{\infty}$-smooth non-negative function on $\C$, with compact support contained in the unit disk
$\D = \{z\in\C\vert ~\vert z\vert<1\}$,
and satisfying
\begin{displaymath}
\int_{\C}\rho(z)\, d^{2} z = 1.
\end{displaymath}
For $\varepsilon>0$, define
\begin{displaymath}
\rho_{\varepsilon}(z) = \varepsilon^{-2}\rho(\varepsilon^{-1} z).
\end{displaymath}
Then, as $\varepsilon\to 0$, $\rho_{\varepsilon}$ approximates the Dirac mass at $0$.
Let $\Phi_{(0),\varepsilon}$ be the regularization by convolution
\begin{displaymath}
\Phi_{(0),\varepsilon}(z) = (\Phi_{(0)}\ast\rho_{\varepsilon})(z)
= \int_{\C}\Phi_{(0)}(w)\rho_{\varepsilon}(z-w)\,d^{2}w.
\end{displaymath}
Then $\Phi_{(0),\varepsilon}$ is $\LC^{\infty}$-smooth,
belongs to all the Sobolev spaces $H^{s}(\C)$ for $s\in\R$,
and importantly, is still Gaussian.
Denote
\begin{displaymath}
G_{D,\varepsilon,\varepsilon'}(z,w)
=\int_{\C^{2}} \rho_{\varepsilon}(z-x)G_{D}(x,y)\rho_{\varepsilon'}(w-y)
\,d^{2}x\,d^{2}y
=\E\big[\Phi_{(0),\varepsilon}(z)\Phi_{(0),\varepsilon'}(w)\big],
\end{displaymath}
where $G_{D}$ is extended by $0$ outside $D^{2}$.

\begin{lemma}
\label{Lem G eps}
For every $z,w\in\C$ and $\varepsilon, \varepsilon'\in (0,1]$,
\begin{displaymath}
G_{D,\varepsilon,\varepsilon'}(z,w)
\leq\Big(\dfrac{1}{2\pi}\log (\varepsilon^{-1}) + c\Big)^{1/2}
\Big(\dfrac{1}{2\pi}\log (\varepsilon'^{-1}) + c\Big)^{1/2},
\end{displaymath}
where
\begin{displaymath}
c = \dfrac{1}{2\pi}
\int_{\C^{2}} 
\log (\vert y-x\vert^{-1})
\rho(x)\rho(y)
\,d^{2}x\,d^{2}y
+
\dfrac{1}{2\pi}
0\vee \log(\diam(D))
< +\infty.
\end{displaymath}
Moreover, for every $\varepsilon, \varepsilon'\in (0,1]$,
and $z,w\in\C$ such that $\vert w-z\vert\geq 2\,\varepsilon\vee\varepsilon'$,
\begin{displaymath}
G_{D,\varepsilon,\varepsilon'}(z,w)
\leq
\dfrac{1}{2\pi}
\Big(
0\vee\log \dfrac{2}{\vert w-z\vert}
+
0\vee\log(\diam(D))
\Big).
\end{displaymath}
\end{lemma}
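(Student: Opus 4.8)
The plan is to treat the two inequalities separately: the first by Cauchy--Schwarz, which reduces it to a bound on the variance of the mollified field, and the second by a direct pointwise estimate on $G_D$, exploiting that the supports of the mollifiers keep $x$ and $y$ at a definite distance. Two elementary facts are used throughout. Since $G_D$ is extended by $0$ off $D^{2}$, we have $0\leq G_D$ on $\C^{2}$; and on $D^{2}$ the splitting \eqref{Eq G CR} gives $G_D(x,y)=\tfrac{1}{2\pi}\log\tfrac{1}{|y-x|}+g_D(x,y)$ with $g_D(x,y)\leq\tfrac{1}{2\pi}\log\diam(D)\leq\tfrac{1}{2\pi}\bigl(0\vee\log\diam(D)\bigr)$. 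Since moreover $|y-x|\leq\diam(D)$ for $x,y\in D$, the bound $\tfrac{1}{2\pi}\log\tfrac{1}{|y-x|}+\tfrac{1}{2\pi}\bigl(0\vee\log\diam(D)\bigr)$ is non-negative on $D^{2}$, and it follows that the globally valid inequality $G_D(x,y)\leq\tfrac{1}{2\pi}\bigl(0\vee\log\tfrac{1}{|y-x|}\bigr)+\tfrac{1}{2\pi}\bigl(0\vee\log\diam(D)\bigr)$ holds for all $x,y\in\C$. The second fact is that the logarithmic energy $\iint_{\C^{2}}\log(|y-x|^{-1})\rho(x)\rho(y)\,d^{2}x\,d^{2}y$ is non-negative, because $\rho$ is a probability density supported in the closed unit disk, whose logarithmic capacity equals $1$; this guarantees $c\geq\tfrac{1}{2\pi}\bigl(0\vee\log\diam(D)\bigr)\geq 0$, so the square roots appearing in the first estimate are of non-negative numbers.

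For the first inequality, since $G_{D,\varepsilon,\varepsilon'}(z,w)=\E[\Phi_{(0),\varepsilon}(z)\,\Phi_{(0),\varepsilon'}(w)]$ is a covariance, Cauchy--Schwarz gives $G_{D,\varepsilon,\varepsilon'}(z,w)\leq G_{D,\varepsilon,\varepsilon}(z,z)^{1/2}\,G_{D,\varepsilon',\varepsilon'}(w,w)^{1/2}$, so it suffices to show $G_{D,\varepsilon,\varepsilon}(z,z)\leq\tfrac{1}{2\pi}\log(\varepsilon^{-1})+c$ for every $z\in\C$ and $\varepsilon\in(0,1]$. I would write $G_{D,\varepsilon,\varepsilon}(z,z)$ as a double integral over $D^{2}$ (the integrand vanishes as soon as $x\notin D$ or $y\notin D$), bound $G_D$ on $D^{2}$ by $\tfrac{1}{2\pi}\log\tfrac{1}{|y-x|}+\tfrac{1}{2\pi}\bigl(0\vee\log\diam(D)\bigr)$, peel off the constant term (its contribution is at most $\tfrac{1}{2\pi}\bigl(0\vee\log\diam(D)\bigr)$, since that term is non-negative and $\iint\rho_\varepsilon\,\rho_\varepsilon\leq 1$), and rescale the logarithmic term via $x=z+\varepsilon x'$, $y=z+\varepsilon y'$. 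On $\Supp\rho\times\Supp\rho$ one has $\varepsilon|y'-x'|<2\varepsilon$, so for $\varepsilon\leq\tfrac12$ the rescaled integrand $\log\tfrac{1}{\varepsilon|y'-x'|}$ is non-negative and the domain of integration may be enlarged to $\C^{2}$; splitting $\log\tfrac{1}{\varepsilon|y'-x'|}=\log(\varepsilon^{-1})+\log\tfrac{1}{|y'-x'|}$ then produces exactly $\tfrac{1}{2\pi}\log(\varepsilon^{-1})+\tfrac{1}{2\pi}\iint\log(|y-x|^{-1})\rho(x)\rho(y)\,d^{2}x\,d^{2}y+\tfrac{1}{2\pi}\bigl(0\vee\log\diam(D)\bigr)=\tfrac{1}{2\pi}\log(\varepsilon^{-1})+c$. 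The remaining compact range $\varepsilon\in(\tfrac12,1]$ is handled by the same computation with $\log\tfrac{1}{\varepsilon|y'-x'|}$ replaced by its positive part.

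For the second inequality, assume $|w-z|\geq 2\,\varepsilon\vee\varepsilon'$. On $\Supp\bigl(\rho_\varepsilon(z-\cdot)\bigr)\times\Supp\bigl(\rho_{\varepsilon'}(w-\cdot)\bigr)$ one has $|x-z|\leq\varepsilon$ and $|y-w|\leq\varepsilon'$, hence $|y-x|\geq|w-z|-\varepsilon-\varepsilon'\geq\tfrac12|w-z|$. Since $t\mapsto 0\vee\log(1/t)$ is non-increasing, the globally valid bound above gives, uniformly over these supports, $G_D(x,y)\leq\tfrac{1}{2\pi}\bigl(0\vee\log\tfrac{2}{|w-z|}\bigr)+\tfrac{1}{2\pi}\bigl(0\vee\log\diam(D)\bigr)$; integrating this constant against the probability kernel $\rho_\varepsilon(z-\cdot)\otimes\rho_{\varepsilon'}(w-\cdot)$ gives the second assertion.

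I do not expect a substantial analytic obstacle here; the one point requiring care is the bookkeeping around the $0$-extension of $G_D$ off $D^{2}$. The splitting \eqref{Eq G CR} into a logarithmic singularity plus a harmonic remainder is valid only inside $D^{2}$, so whenever the supports of the mollifiers poke outside $D$ one must either restrict the integrals to $D^{2}$ and invoke the non-negativity of the integrand there, or else pass to the globally valid estimate $G_D(x,y)\leq\tfrac{1}{2\pi}\bigl(0\vee\log\tfrac{1}{|y-x|}\bigr)+\tfrac{1}{2\pi}\bigl(0\vee\log\diam(D)\bigr)$; keeping these two forms straight is the main source of case distinctions in the write-up.
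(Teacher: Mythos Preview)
Your proof is correct and takes essentially the same approach as the paper: Cauchy--Schwarz to reduce to the diagonal, a global pointwise bound on $G_D$ in terms of $\log\tfrac{1}{|y-x|}$ and $\log\diam(D)$, then rescaling by $\varepsilon$. The paper packages the bound as the single expression $\tfrac{1}{2\pi}\bigl(0\vee\log\tfrac{\diam(D)}{|y-x|}\bigr)$, which sidesteps your case split at $\varepsilon=1/2$, and for the second inequality simply declares the result obvious where you spell out the triangle-inequality step $|y-x|\geq\tfrac12|w-z|$.
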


\begin{proof}
We have
$G_{D,\varepsilon,\varepsilon'}(z,w)\leq 
G_{D,\varepsilon,\varepsilon}(z,z)^{1/2}
G_{D,\varepsilon',\varepsilon'}(w,w)^{1/2}$.
Further,
\begin{eqnarray*}
G_{D,\varepsilon,\varepsilon}(z,z)
&\leq & 
\dfrac{1}{2\pi}
\int_{\C^{2}} 
0\vee\log \Big(\dfrac{\diam (D)}{\vert y-x \vert}\Big)
\rho_{\varepsilon}(x)\rho_{\varepsilon}(y)
\,d^{2}x\,d^{2}y
\\
& = &
\dfrac{1}{2\pi}
\int_{\C^{2}} 
0\vee\log \Big(\dfrac{\diam (D)}{\varepsilon\vert y-x \vert}\Big)
\rho(x)\rho(y)
\,d^{2}x\,d^{2}y.
\end{eqnarray*}
For $\varepsilon\in (0,1]$,
\begin{displaymath}
G_{D,\varepsilon,\varepsilon}(z,z)
\leq 
\dfrac{1}{2\pi}\log (\varepsilon^{-1})
+
\dfrac{1}{2\pi}
\int_{\C^{2}} 
\log (\vert y-x\vert^{-1})
\rho(x)\rho(y)
\,d^{2}x\,d^{2}y
+
\dfrac{1}{2\pi}
0\vee \log(\diam(D)).
\end{displaymath}
The second bound is obvious.
\end{proof}

Set
\begin{displaymath}
:\Phi_{(0),\varepsilon}^{n}:(z)~= 
Q_{n}(\Phi_{(0),\varepsilon}(z),\Var(\Phi_{(0),\varepsilon}(z)))
=Q_{n}(\Phi_{(0),\varepsilon}(z),G_{D,\varepsilon,\varepsilon}(z,z)).
\end{displaymath}
Then
\begin{equation}
\label{Eq cov Wick eps}
\E\big[:\Phi_{(0),\varepsilon}^{n}:(z):\Phi_{(0),\varepsilon'}^{m}:(w)\big]
=n!G_{D,\varepsilon,\varepsilon'}(z,w)^{n}
\text{ if } n=m, \text{ and } 0 \text{ otherwise}.
\end{equation}
The Wick renormalization procedure actually removes all the diagonal terms in the covariance.
We refer to \cite[Theorem 3.12]{Janson1997GaussHilbSpaces} for the
general expression of moments of Wick powers and their representation through Feynman diagrams without self-loop.
See also \cite[Theorem I.3]{Simon74EQFT}.

Fix $\eta >0$ and consider the space
$L^{2}(d\PP,\sigma(\Phi_{(0)}),H^{-\eta}(\C))$,
i.e. the space of random variables $X$ with values in the Sobolev space
$H^{-\eta}(\C)$, measurable w.r.t. $\Phi_{(0)}$,
such that $\E[\Vert X\Vert_{H^{-\eta}(\C)}^{2}]<+\infty$.
This space is complete; see \cite[Section 1.2.b]{HNVW16AnBanach1}.
Further, \eqref{Eq cov Wick eps} ensures that 
$(:\Phi_{(0),\varepsilon}^{n}:)_{\varepsilon>0}$
forms a Cauchy family in $L^{2}(d\PP,\sigma(\Phi_{(0)}),H^{-\eta}(\C))$
as $\varepsilon>0$.
Therefore, there is a limit random field $:\Phi_{(0)}^{n}:$ with values in
$H^{-\eta}(\C)$, measurable w.r.t. $\Phi_{(0)}$,
such that
\begin{displaymath}
\lim_{\varepsilon\to 0}
\E\big[\Vert :\Phi_{(0)}^{n}: - :\Phi_{(0),\varepsilon}^{n}:
\Vert_{H^{-\eta}(\C)}^{2}\big] = 0.
\end{displaymath}
It is easy to check that there is consistency for different values of $\eta$,
and that the limit does not depend on particular choice of $\rho$.
Similarly, for every $f$ continuous function on $\C$,
\begin{displaymath}
\lim_{\varepsilon\to 0}
\E\big[\vert (:\Phi_{(0)}^{n}:,f) - (:\Phi_{(0),\varepsilon}^{n}:,f)
\vert^{2}\big] = 0.
\end{displaymath}
Moreover, for every $\eta>0$,
\begin{displaymath}
\lim_{\varepsilon\to 0}
\E\Big[\sup_{\substack{f\in H^{+\eta}(\C)\\ \Vert f\Vert_{H^{\eta}(\C)}\leq 1}}
\vert (:\Phi_{(0)}^{n}:,f) - (:\Phi_{(0),\varepsilon}^{n}:,f)
\vert^{2}\Big] 
\leq \lim_{\varepsilon\to 0}
\E\big[\Vert :\Phi_{(0)}^{n}: - :\Phi_{(0),\varepsilon}^{n}:
\Vert_{H^{-\eta}(\C)}^{2}\big]= 0.
\end{displaymath}
From \eqref{Eq cov Wick eps}, by taking the limit,
follows that the family $(:\Phi_{(0)}^{n}:)_{n\geq 0}$
is orthogonal in $L^{2}(d\PP,\sigma(\Phi_{(0)}),H^{-\eta}(\C))$,
and that
\begin{displaymath}
\E\big[\Vert :\Phi_{(0)}^{n}:\Vert_{H^{-\eta}(\C)}^{2}\big] = 
n!\int_{D^{2}} G_{D}(z,w)^{n}\LK_{\eta}(\vert w-z\vert)
\,d^{2} z\,d^{2} w
< +\infty.
\end{displaymath}

\medskip

For $\gamma\in (-2\sqrt{2\pi},2\sqrt{2\pi})$, 
one can define the Gaussian multiplicatif chaos (GMC)
$: e^{\gamma \Phi_{(0)}} :$,
which is a renormalization of $e^{\gamma \Phi_{(0)}}$.
Indeed,
\begin{equation}
\label{Eq def GMS}
: e^{\gamma \Phi_{(0)}} :~=
\lim_{\varepsilon\to 0}
\exp\Big(\gamma \Phi_{(0),\varepsilon}(z)
-\dfrac{\gamma^{2}}{2}\Var(\Phi_{(0),\varepsilon}(z))\Big)~
d^{2}z
=
\lim_{\varepsilon\to 0}
\exp\Big(\gamma \Phi_{(0),\varepsilon}(z)
-\dfrac{\gamma^{2}}{2}G_{D,\varepsilon,\varepsilon}(z,z)\Big)~
d^{2}z
.
\end{equation}
The field $: e^{\gamma \Phi_{(0)}} :$ is a random positive finite measure on $D$,
and the convergence in \eqref{Eq def GMS} holds in probability for the weak convergence of measures.
For the references, see \cite{HoeghKrohn71,Kahane85GMC,DuplantierSheffield,RhodesVargas14GMCReview,
BerestyckiGMC,Aru20GMCreview}.
Note that our normalization for the GFF differs from the Liouville field theory literature, and so does our range for the intermittency parameter $\gamma$.
The two-point correlation function of $: e^{\gamma \Phi_{(0)}} :$ equals
\begin{equation}
\label{Eq two point GMC}
e^{\gamma^{2} G_{D}(z,w)}
=
\vert w-z\vert^{-\frac{\gamma^{2}}{2\pi}}
\,e^{\gamma^{2} g_{D}(z,w)}.
\end{equation}
For $\vert\gamma\vert < 2\sqrt{\pi}$,
\eqref{Eq two point GMC} is integrable on $D^{2}$.
Moreover, for $\vert\gamma\vert < 2\sqrt{\pi}$
and $\eta > \gamma^{2}/(4\pi)$,
$: e^{\gamma \Phi_{(0)}} :$ belongs to 
$L^{2}(d\PP,\sigma(\Phi_{(0)}),H^{-\eta}(\C))$
and the convergence \eqref{Eq def GMS} holds in
$L^{2}(d\PP,\sigma(\Phi_{(0)}),H^{-\eta}(\C))$.

The identity \eqref{Eq Q exp} translates into an expansion
of the GMC $: e^{\gamma \Phi_{(0)}} :$ into the Wick powers $:\Phi_{(0)}^{n}:$.
Thus, for $\vert\gamma\vert < 2\sqrt{\pi}$ (the $L^2$ regime),
\begin{equation}
\label{Eq GMC Wick}
\ind_{D}: e^{\gamma \Phi_{(0)}} :
~=\ind_{D}\sum_{n\geq 0}\dfrac{\gamma^{n}}{n!}:\Phi_{(0)}^{n}:
~=\ind_{D} + \sum_{n\geq 1}\dfrac{\gamma^{n}}{n!}:\Phi_{(0)}^{n}:\,.
\end{equation}
The convergence holds in 
$L^{2}(d\PP,\sigma(\Phi_{(0)}),H^{-\eta}(\C))$
for $\eta > \gamma^{2}/(4\pi)$.
In other words, for $\eta\geq 1$,
the radius of convergence of the power series \eqref{Eq GMC Wick}
in $L^{2}(d\PP,\sigma(\Phi_{(0)}),H^{-\eta}(\C))$
equals $2\sqrt{\pi}$,
and for $\eta\in (0,1)$,
this radius of convergence equals $2\sqrt{\eta\pi}$.
Note that for the aforementioned ranges of $\gamma$ and $\eta$,
the convergence \eqref{Eq GMC Wick} also holds almost surely in
$H^{-\eta}(\C)$, not just in the $L^2$ sense,
and as a stochastic process depending on $\gamma$ (i.e. simultaneously for all admissible $\gamma$-s).
This is because then we have
\begin{displaymath}
\sum_{n\geq 1}\dfrac{\gamma^{n}}{n!}
\Vert :\Phi_{(0)}^{n}:\Vert_{H^{-\eta}(\C)}
< + \infty~\text{ a.s.}
\end{displaymath}

\medskip

In this paper we will also need to consider the case of constant non-zero boundary condition for the GFF. 
One can also handle non-constant sufficiently regular boundary conditions, but we will not pursue this here.
So let be a constant $v\geq 0$ and let $\Phi$ be the GFF on $D$ with boundary
condition $v$ on $\partial D$.
As previously, we will extend $\Phi$ by $0$ outside $D$.
Then $\Phi$ has the same distribution as
$\Phi_{(0)} + v \ind_{D}$.
Again, it is a random element of $H^{-\eta}(\C)$ for $\eta>0$.
As previously, one can define its Wick powers and multiplicative chaos:
\begin{displaymath}
:\Phi^{n}: =
\lim_{\varepsilon\to 0}
Q_{n}(\Phi_{\varepsilon}(z),G_{D,\varepsilon,\varepsilon}(z,z))\,d^{2}z,
\qquad
: e^{\gamma \Phi} :~
=
\lim_{\varepsilon\to 0}
\exp\Big(\gamma \Phi_{\varepsilon}(z)
-\dfrac{\gamma^{2}}{2}G_{D,\varepsilon,\varepsilon}(z,z)\Big)~
d^{2}z
,
\end{displaymath}
with the same type and range of convergences as previously.
Through the binomial formula \eqref{Eq Q bin},
$:\Phi^{n}:$ identifies, for $n\geq 1$, to
\begin{displaymath}
\ind_{D}\sum_{j=0}^{n} \dfrac{n!}{j! (n-j)!} v^{n-j} :\Phi_{(0)}^{j}:,
\end{displaymath}
and $: e^{\gamma \Phi} :$ identifies to
$e^{\gamma v \ind_{D}} : e^{\gamma \Phi_{(0)}} :\,$.
The analogue of the expansion \eqref{Eq GMC Wick} holds,
\begin{equation}
\label{Eq GMC Wick gen}
\ind_{D}: e^{\gamma \Phi} :
~=\ind_{D}\sum_{n\geq 0}\dfrac{\gamma^{n}}{n!}:\Phi^{n}:
~=\ind_{D}+\sum_{n\geq 1}\dfrac{\gamma^{n}}{n!}:\Phi^{n}:\,,
\end{equation}
for the same radii of convergence.

\subsection{Two-valued sets, first passage sets, and excursion decomposition of the GFF}
\label{Subsec TVS FPS}

\subsubsection{Two-valued sets}
\label{Subsubsec TVS}

We will use the local sets and the local set decompositions of the GFF.
For references on this theory, see \cite{SchSh2,MS1,Sepulveda19ThinLocalSet,ASW,ALS1,ALS3}.
We start we the two-valued sets, introduced in \cite{ASW},
and further developed in \cite{AruSepulveda18TVS,SchougSepulvedaViklund22TVS,ALS3}.

First, recall the notions of height gap and level lines of the 2D continuum GFF,
introduced by Schramm and Sheffield in \cite{SchSh,SchSh2}.
See also \cite{WaWu17LLGFF} for an overview.
One can think of the 2D continuum GFF, which is only a generalized function,
as spanned by cliffs of constant height.
The height of these cliffs is a deterministic universal constant,
called \textit{height gap}, and following \cite{SchSh,SchSh2} denoted $2\lambda$.
With our normalization of the GFF, $2\lambda=\sqrt{\pi/2}$.
The cliffs themselves are called \textit{level lines}
and are locally SLE$_4$-type curves (Schramm-Loewner Evolution with $\kappa=4$).
On each side of the level line, the GFF has a well defined constant value 
(and this despite not being a point-wise defied function), the two values from both sides differing by $\pm 2\lambda$.

As previously, let $D\subset \C$ be an open, non-empty, bounded, connected and simply connected domain.
Let be a constant $v\geq 0$ and $\Phi$ a GFF on $D$ with constant
boundary condition $v$ on $\partial D$.
Let $b>v$ and $a<v$. 
Informally, a two-valued set (TVS) of the GFF $\Phi$ with levels $a$ and $b$ can be thought of as the subset of points of 
$\overline{D}$ that can be connected to the boundary $\partial D$
by a path along which the values of $\Phi$ are contained in the interval $[a,b]$.
This however, does not make immediate sense since $\Phi$ does not have point values.
Yet, the two-valued sets can be constructed as soon as $b-a\geq 2\lambda$.
The construction proceeds by iterating level lines of the GFF;
see \cite{ASW}.
One obtains a random compact connected subset $A_{a,b}$ of $\overline{D}$
such that $\partial D\subset A_{a,b}$.
Further, the GFF $\Phi$ admits a decomposition
\begin{equation}
\label{Eq decomps TVS}
\Phi = \ell_{a,b} + \Phi_{D\setminus A_{a,b}},
\end{equation}
where $\ell_{a,b}$ is a random function from $D\setminus A_{a,b}$ to $\{a,b\}$
that is constant on each connected component of $D\setminus A_{a,b}$,
and where conditionally on $(A_{a,b}, \ell_{a,b})$,
the field $\Phi_{D\setminus A_{a,b}}$ is distributed as a GFF on $D\setminus A_{a,b}$.
Note that $D\setminus A_{a,b}$ has infinitely many connected components,
that is to say $A_{a,b}$ has infinitely many holes.
The function $\ell_{a,b}$ is referred to as the \textit{label function}.
Outside $D\setminus A_{a,b}$, we extend $\ell_{a,b}$ by $0$.
The couple TVS-label function $(A_{a,b}, \ell_{a,b})$ is measurable w.r.t. the GFF $\Phi$ \cite{ASW}.
The Hausdorff dimension of $A_{a,b}$ is a.s.
$2-2\lambda^2/(b-a)^{2}\in [3/2,2)$ \cite{SchougSepulvedaViklund22TVS}.
The TVS $A_{a,b}$ is a \textit{thin} local set (see \cite{Sepulveda19ThinLocalSet, ASW} for this notion): 
the GFF $\Phi$ does not charge $A_{a,b}$ itself.
If one looks at an $\varepsilon$-neighborhood of $A_{a,b}$,
then the decomposition \eqref{Eq decomps TVS} implies that
\begin{displaymath}
\ind_{\{z\in\C\vert d(z,A_{a,b})<\varepsilon\}}\Phi
\end{displaymath}
converges to $0$, as $\varepsilon\to 0$,
in $L^{2}(d\PP,\sigma(\Phi),H^{-\eta}(\C))$ for $\eta >0$.
Also note that there is a monotonicity of the TVS:
if $a'\leq a$ and $b'\geq b$, then
$A_{a,b}\subset A_{a',b'}$ a.s.

Fix $z\in D$. Note that a.s., $z\not\in A_{a,b}$.
The joint law of the ratio of conformal radii
$\CR(z,D)/\CR(z,D\setminus A_{a,b})$ and the label $\ell_{a,b}(z)$ can be expressed through a
one-dimensional Brownian motion.
Note that by conformal invariance,
this joint law depends neither on $z$ nor $D$.
We will use this law in the proof of Lemma \ref{Lem area 0}.
So let $(W_{t})_{t\geq 0}$ be a standard one-dimensional Brownian motion
starting from $W_{0} = v$.
Let
\begin{displaymath}
T_{a,b} = \min \{t\geq 0\vert W_t\in\{a,b\}\}.
\end{displaymath}

\begin{thm}[Aru-Sep\'ulveda-Werner \cite{ASW}]
\label{Thm ASW CR ell a b}
Let $b>v$ and $a<v$, such that $b-a\geq 2\lambda$.
Fix $z\in D$.
Then
\begin{displaymath}
\Big(\dfrac{1}{2\pi}\log\Big(\dfrac{\CR(z,D)}{\CR(z,D\setminus A_{a,b})}\Big), \ell_{a,b}(z)\Big)
\end{displaymath}
has the same law as $(T_{a,b},W_{T_{a,b}})$ under $W_{0} = v$.
\end{thm}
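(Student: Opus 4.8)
The plan is to realize the two-valued set $A_{a,b}$ as an increasing limit of two-valued sets whose iterative construction matches, level by level, a discrete approximation of the Brownian motion $(W_t)_{t\geq 0}$, and then to pass to the limit. The starting point is the well-known fact from \cite{ASW,SchSh2} that a single level line of the GFF, seen from the fixed point $z$, induces a Brownian-type change of the conformal radius: if one draws the level line at height $v\pm\lambda$ that separates $z$ from the boundary, then $\frac{1}{2\pi}\log(\CR(z,D)/\CR(z,D'))$, where $D'$ is the connected component of the complement containing $z$, is distributed (for SLE$_4$) as the exit time of a standard one-dimensional Brownian motion from an interval of width $2\lambda$, and the label picked up on the side of $z$ is the corresponding endpoint $v\pm\lambda$. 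This is the infinitesimal version of the statement; the TVS is built by iterating exactly this move, stopping a given ``branch'' near $z$ once its label leaves the open interval $(a,b)$.

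Concretely, first I would recall the level-line construction of $A_{a,b}$ from \cite{ASW}: one explores level lines at heights $v+\lambda$ and $v-\lambda$, and in each newly created component whose boundary label still lies in $(a,b)$ one iterates, at each step adding or subtracting $\lambda$ (or more precisely $2\lambda$ across a level line, with the side toward $z$ recorded by the label function $\ell_{a,b}$). Tracking only the component $D_k$ containing $z$ and its boundary label $v_k:=\ell$ after $k$ iterations, the sequence $\big(\tfrac{1}{2\pi}\log(\CR(z,D)/\CR(z,D_k)),\,v_k\big)$ is, by the strong Markov property of the GFF restricted to local sets and the conformal invariance and domain Markov property of SLE$_4$ level lines, a random walk started at $(0,v)$ whose label increments are $\pm 2\lambda$ and whose log-conformal-radius increments are the SLE$_4$ exit times of width-$2\lambda$ intervals — in other words, it is exactly a Brownian motion observed at successive exit times of a nested sequence of $2\lambda$-windows, stopped when the label first exits $(a,b)$. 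By the standard skew-product / optional-stopping description of Brownian motion observed at such a sequence of exit times, the limiting pair is precisely $(T_{a,b}, W_{T_{a,b}})$ with $W_0=v$. The iteration terminates a.s.\ (the label performs a simple random walk on $\{a, a+2\lambda,\dots,b\}\cap(v+2\lambda\Z)$-type lattice and exits in finitely many steps when $b-a$ is a multiple of $2\lambda$; for general $b-a\ge 2\lambda$ one uses the finer construction of \cite{ASW} with a first, possibly smaller, jump, which does not change the Brownian description), and one checks that $\CR(z,D_k)\downarrow \CR(z,D\setminus A_{a,b})$ because $D_k\downarrow$ the component of $D\setminus A_{a,b}$ containing $z$, using that $A_{a,b}$ is the closure of $\bigcup_k A^{(k)}$ where $A^{(k)}$ is the local set after $k$ steps.

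The main obstacle is the justification that the iterated level-line exploration, localized at $z$, converges to $A_{a,b}$ in the sense that $\CR(z,D_k)\to\CR(z,D\setminus A_{a,b})$ and $v_k$ stabilizes to $\ell_{a,b}(z)$ — i.e., that the ``local'' Markov-chain description genuinely captures the full TVS seen from $z$, with no loss of mass in the limit. This is where one needs the results of \cite{ASW}: that $A_{a,b}$ is a thin local set, that the construction converges (the $\varepsilon$-neighborhood statement and the decomposition \eqref{Eq decomps TVS}), and that the conformal radius is continuous under the increasing limit of local sets. Granting these, the identification of the law is then a soft consequence of the strong Markov property of the GFF along local sets together with Schramm--Sheffield's level-line/Brownian coupling, and conformal invariance removes any dependence on $z$ and $D$. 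I would therefore structure the write-up as: (i) recall the iterative construction and the single-level-line conformal-radius law; (ii) set up the Markov chain $(\log\text{-ratio}_k, v_k)$ and identify it with Brownian motion sampled at nested exit times, stopped at $T_{a,b}$; (iii) invoke \cite{ASW} to pass to the limit and conclude.
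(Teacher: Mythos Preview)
The paper does not give its own proof of this theorem: it is stated as a cited result from \cite{ASW} (Aru--Sep\'ulveda--Werner) and used as a black box. There is therefore nothing in the present paper to compare your proposal against.

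That said, your sketch is essentially the argument one finds in \cite{ASW}: iterate the level-line exploration, track the component containing $z$ together with its boundary label, and identify the resulting Markov chain with a one-dimensional Brownian motion observed at successive exit times of nested intervals of width $2\lambda$, stopped upon exiting $(a,b)$. The single-step input (one level line gives a $\pm 2\lambda$ label jump and a log-conformal-radius increment distributed as the corresponding Brownian exit time) is exactly the Schramm--Sheffield coupling, and the passage to the limit uses the convergence of the iterated construction to $A_{a,b}$ and the thinness of the TVS, both established in \cite{ASW}. Your identification of the main technical point --- that $\CR(z,D_k)\downarrow \CR(z,D\setminus A_{a,b})$ and the label stabilizes --- is correct, and this is indeed what requires the local-set machinery. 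So your proposal is a faithful outline of the original proof, not an alternative route.
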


For $z\in D\setminus A_{a,b}$,
we will denote by $\Gamma_{a,b}(z)$
the loop of $A_{a,b}$ that surrounds $z$,
that is to say the boundary of the connected component of $z$ in $D\setminus A_{a,b}$.
It is a continuous simple closed curve (Jordan curve)
that locally looks like SLE$_4$.
We will consider the extremal distance (or extremal length)
between $\Gamma_{a,b}(z)$ and the boundary $\partial D$,
which we will denote by $\ED(\partial D,\Gamma_{a,b}(z))$.
This is the same as the electrical resistance,
and is also a conformal invariant.
Actually, for annular domains (one hole),
the extremal distance between the two boundary components parametrizes the moduli space
for the conformal equivalence.
We refer to \cite{Ahlfors2010ConfInv}.
Recall $(W_{t})_{t\geq 0}$ the above one-dimensional Brownian motion starting from
$W_0 = v$.
To express the law of $\ED(\partial D,\Gamma_{a,b}(z))$, we need to introduce the following time.
On the event $\{W_{T_{a,b}}=a\}$, set $\tau_{a,b}$ be equal to
\begin{displaymath}
0\vee \sup\{t\in[0, T_{a,b}]\vert W_t=a+2\lambda\},
\end{displaymath}
and on the event $\{W_{T_{a,b}}=b\}$, set $\tau_{a,b}$ be equal to
\begin{displaymath}
0\vee \sup\{t\in[0, T_{a,b}]\vert W_t=b-2\lambda\}.
\end{displaymath}
Note that $\tau_{a,b}$ is not a stopping time.

\begin{thm}[Aru-Lupu-Sep\'ulveda \cite{ALS3}]
\label{Thm ED ALS3}
Let $b>v$ and $a<v$, such that $b-a\geq 2\lambda$.
Also assume that $b-a\in 2\lambda \N$ (integer multiple of $2\lambda$).
Fix $z\in D$.
Then
\begin{displaymath}
\Big(\dfrac{1}{2\pi}\log\Big(\dfrac{\CR(z,D)}{\CR(z,D\setminus A_{a,b})}\Big), \ell_{a,b}(z), 
\ED(\partial D,\Gamma_{a,b}(z))\Big)
\end{displaymath}
has the same law as $(T_{a,b},W_{T_{a,b}},\tau_{a,b})$ under $W_{0} = v$.
\end{thm}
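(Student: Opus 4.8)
The plan is to build all three quantities inside the radial GFF exploration of $A_{a,b}$ targeted at $z$, where the first two appear automatically, and then to reduce the statement for the third to a conformal-geometric identity for the ``last loop'' around $z$.

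\emph{Reductions.} All three left-hand quantities are conformally invariant, so we may take $D=\D$ and $z=0$. The reflection $\Phi\mapsto(a+b)-\Phi$ exchanges $a\leftrightarrow b$, sends $v\mapsto a+b-v$, preserves $\CR(0,\D)/\CR(0,\D\setminus A_{a,b})$ and $\ED(\partial\D,\Gamma_{a,b}(0))$, and on the Brownian side conjugates $(T_{a,b},W_{T_{a,b}},\tau_{a,b})$ to its analogue with $a$ and $b$ swapped; so it suffices to prove the identity in law restricted to $\{\ell_{a,b}(0)=a\}$, which corresponds to $\{W_{T_{a,b}}=a\}$.

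\emph{The exploration and the first two coordinates.} Recall from \cite{ASW} (which underlies Theorem \ref{Thm ASW CR ell a b}) that $A_{a,b}$ can be discovered by a continuous increasing family of targeted local sets $(A_t)_{0\le t\le T_{a,b}}$, $A_0=\partial\D$, having at time $T_{a,b}$ the same connected component $U_0$ of $0$ as $A_{a,b}$, so that $\Gamma_{a,b}(0)=\partial U_0$; parametrized so that $t=\tfrac{1}{2\pi}\log\big(\CR(0,\D)/\CR(0,U_0(t))\big)$, where $U_0(t)$ is the component of $0$ in $\D\setminus A_t$; and with label process $W_t$ equal to the value of $\Phi$ on $U_0(t)$, a Brownian motion from $v$ whose first hitting time of $\{a,b\}$ is $T_{a,b}$. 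This realizes the first two coordinates in law; what remains is to show that, in this coupling, $\ED(\partial\D,\Gamma_{a,b}(0))=\tau_{a,b}$ almost surely, where $\tau_{a,b}=0\vee\sup\{t\le T_{a,b}:W_t=a+2\lambda\}$.

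\emph{The last leg.} On $\{W_{T_{a,b}}=a\}$, for $t>\tau_{a,b}$ the label stays in $[a,a+2\lambda)$ and is absorbed at $a$. Hence, conditionally on $A_{\tau_{a,b}}$ and on $\Omega:=U_0(\tau_{a,b})$ (which carries boundary value $a+2\lambda$ when $\tau_{a,b}>0$, and is $\D$ with boundary value $v\in(a,a+2\lambda)$ when $\tau_{a,b}=0$), the remaining exploration inside $\Omega$ builds the first passage set of level $a$ in $\Omega$, and $\Gamma_{a,b}(0)$ is the boundary of the component of $0$ in its complement. Since $\tau_{a,b}$ is a last-exit time rather than a stopping time, this conditional description must be justified by a standard last-exit / $h$-transform argument (or by approximating $\tau_{a,b}$ by stopping times); this is routine but necessary. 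Two features of the last leg will be used: the first-passage analogue of Theorem \ref{Thm ASW CR ell a b} gives $\tfrac{1}{2\pi}\log\big(\CR(0,\Omega)/\CR(0,U_0)\big)\overset{d}{=}T_{a,b}-\tau_{a,b}$; and, because the boundary value of $\Omega$ is exactly $a+2\lambda$, the level line $\Gamma_{a,b}(0)=\partial U_0$ emanates in part from $\partial\Omega$, so $\ED(\partial\Omega,\Gamma_{a,b}(0))=0$.

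\emph{The extremal-distance identity, and the main obstacle.} The crux is the identity, in the coupling above,
\begin{displaymath}
\ED(\partial\D,\Gamma_{a,b}(0))=\tau_{a,b}=\tfrac{1}{2\pi}\log\frac{\CR(0,\D)}{\CR(0,\Omega)},
\end{displaymath}
i.e.\ the last leg, although it strictly shrinks the conformal radius seen from $0$ (by $T_{a,b}-\tau_{a,b}$), adds nothing to the extremal distance from $\partial\D$ to the boundary of the region around $0$. This is delicate: $\Gamma_{a,b}(0)$ touches $\partial\Omega$, yet the modulus of $\D\setminus\overline{U_0}$ must equal the full radial capacity of $\Omega$ at $0$, and not merely the modulus of $\D\setminus\overline{\Omega}$ (which is smaller), so the needed input is a genuine conformal-geometric fact about GFF level lines — the ``last loop'' $\Gamma_{a,b}(0)$, while spiralling inward inside $\Omega$, must remain conformally as far from $\partial\D$ as $\partial\Omega$ is — and the extremal distance is not additive across the (touching) curves $\partial\Omega$ and $\Gamma_{a,b}(0)$. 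I would attempt this by induction on $N=(b-a)/2\lambda$: the base case $N=1$ forces $\tau_{a,b}=0$ and reduces to showing that every complementary component of $A_{a,a+2\lambda}$ has boundary meeting $\partial\D$ (hence $\ED(\partial\D,\Gamma_{a,b}(0))=0$); the inductive step uses the recursive description of two-valued sets, the conditional Markov property of the GFF under removal of $A_{\tau_{a,b}}$, conformal invariance in passing from $\D$ to $\Omega$, and the composition rules for extremal distance. Equivalently, one could compute directly — via the radial SLE$_4$/Loewner description — the joint law of the conformal radius and the extremal distance of a single level-line loop around $0$, and reassemble. In either route the main difficulty is the exact (not merely inequality-type) control of how the extremal distance accumulates across touching level lines; this is where the quantization hypothesis $b-a\in 2\lambda\N$ and the fine structure of SLE$_4$-type level lines of the GFF must be exploited.
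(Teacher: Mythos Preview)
This theorem is not proved in the present paper; it is quoted from \cite{ALS3} and used as a black box (in the proof of Lemma~\ref{Lem ED Gauss}). There is therefore no proof here to compare your proposal against.

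On your proposal itself: the conformal reduction to $\D$, the symmetry reduction to $\{\ell_{a,b}(0)=a\}$, and the a.s.\ identification of the first two coordinates via the radial exploration are correct and standard. You also correctly isolate the crux --- the identification of $\ED(\partial\D,\Gamma_{a,b}(0))$ with $\tau_{a,b}$ --- and you are honest that this is where the real content lies. But what follows is a plan, not a proof, and there is a structural problem with it. You aim for an \emph{almost-sure} identity $\ED=\tau_{a,b}$ in the radial coupling; since $\tau_{a,b}$ is measurable with respect to the one-dimensional process $(W_t)$ while $\ED(\partial\D,\Gamma_{a,b}(0))$ depends on the full two-dimensional geometry of the loop (which the radial process does not determine), such an identity would force the extremal distance to be a function of the radial data alone. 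You offer no mechanism for this beyond ``$\Gamma_{a,b}(0)$ touches $\partial\Omega$'', which yields only $\ED(\partial\Omega,\Gamma_{a,b}(0))=0$ and says nothing exact about $\ED(\partial\D,\Gamma_{a,b}(0))$. Your induction on $N=(b-a)/(2\lambda)$ does not close for the same reason: even granting the base case, the inductive step requires that the extremal distance from $\partial\D$ to the final loop equal the \emph{log-conformal-radius} (not the extremal distance) of the intermediate domain $\Omega$, and you have no inductive handle on that quantity. You yourself note that extremal distance is not additive across touching curves --- that is exactly why the argument as written does not go through. The substance of \cite{ALS3} is precisely the SLE$_4$/CLE$_4$ input needed at this point, and your proposal locates where that input must go without supplying it.
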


It is believed that the above identity in law is true without the condition $b-a\in 2\lambda \N$,
and that this is just a technical limitation of the proof provided in \cite{ALS3}.
Theorem \ref{Thm ED ALS3} will play a key role in the proof of Lemma \ref{Lem ED Gauss},
and by extension in that of our main Theorem \ref{Thm A E FPS}, at the technical level.

\subsubsection{First passage sets}
\label{Subsubsec FPS}

The theory of first passage sets (FPS) of the GFF has been introduced in \cite{ALS1,ALS2}.
As previously, let be a constant $v\geq 0$ and $\Phi$ a GFF on $D$ with constant
boundary condition $v$ on $\partial D$.
Let $a<v$.
The first passage set of $\Phi$ of level $a$, denoted $A_{a}$, is
\begin{displaymath}
A_{a} = \overline{\bigcup_{b\geq a+ 2\lambda}A_{a,b}}.
\end{displaymath}
One can think of $A_{a}$ as the set of points in $\overline{D}$
that can be connected to the boundary $\partial D$ by a path along which $\Phi$ takes values larger or equal to $a$.
Again, this picture is informal, since $\Phi$ is not defined pointwise,
but can be made precise through metric graph approximation \cite{ALS2}.
The FPS $A_{a}$ is a random compact connected subset of $\overline{D}$,
containing $\partial{D}$.
It has infinitely many holes.
It also has a.s. Hausdorff dimension $2$, but $0$ Lebesgue measure.
As a random variable, it is measurable w.r.t. the field $\Phi$.
The GFF $\Phi$ admits the following decomposition via $A_{a}$:
\begin{displaymath}
\Phi = \nu_{A_a} + a\ind_{D} + \Phi_{D\setminus A_{a}},
\end{displaymath}
where $\nu_{A_a}$ is a finite positive measure supported on $A_{a}$,
and, as a random variable, measurable w.r.t. $A_{a}$,
and $\Phi_{D\setminus A_{a}}$ is a random field,
distributed conditionally on $A_{a}$,
as a GFF on $D\setminus A_{a}$ with $0$ boundary conditions.
The sum $a\ind_{D} + \Phi_{D\setminus A_{a}}$ is distributed, conditionally on $A_{a}$,
as a GFF on $D\setminus A_{a}$ with boundary condition $a$.
The FPS $A_{a}$ is a local set of the GFF $\Phi$,
but unlike the TVS, it is \textit{non-thin}.
This means that the restriction of $\Phi$ to $A_{a}$ itself is non-trivial,
and in this case it equals the measure $\nu_{A_a}$.
This measure has been identified in \cite{ALS1} as a Minkowski content.

\begin{thm}[Aru-Lupu-Sep\'ulveda \cite{ALS1}]
\label{Thm Mink ALS1}
Fix $a<v$.
The measure $\nu_{A_a}$ is a.s. finite, positive and supported on $A_{a}$.
Moreover, its compact support equals a.s. the whole $A_{a}$,
that is to say a.s., for every $U$ open subset of $\C$
such that $A_a\cap U\neq \emptyset$,
$\nu_{A_a}(U)>0$.
The measure $\nu_{A_a}$ can be described as follows:
\begin{eqnarray*}
\nu_{A_a} &=& \lim_{\varepsilon \to 0}
\dfrac{1}{2} \vert \log\varepsilon\vert^{1/2} \ind_{z\in D\setminus A_{a}, \CR(z,D\setminus A_{a})<\varepsilon \CR(z,D)} d^{2} z
\\
&=& \lim_{\varepsilon \to 0}
\dfrac{1}{2} \vert \log\varepsilon\vert^{1/2} \ind_{z\in D\setminus A_{a},d(z,A_{a})<\varepsilon d(z,\partial D)} d^{2} z
,
\end{eqnarray*}
where the convergence is a.s., for the weak topology on measures.
Fix a deterministic continuous cutoff function $f_{0}:D\rightarrow [0,1]$, compactly supported in $D$.
Then
\begin{eqnarray*}
f_0\,\nu_{A_a} &=& \lim_{\varepsilon \to 0}
\dfrac{1}{2} \vert \log\varepsilon\vert^{1/2} \ind_{z\in D\setminus A_{a}, \CR(z,D\setminus A_{a})<\varepsilon} f_0(z)d^{2} z
\\
&=& \lim_{\varepsilon \to 0}
\dfrac{1}{2} \vert \log\varepsilon\vert^{1/2} \ind_{z\in D\setminus A_{a},d(z,A_{a})<\varepsilon} 
f_0(z) d^{2} z
.
\end{eqnarray*}
Moreover, if 
\begin{equation}
\label{Eq cond Leb boundary ALS1}
\operatorname{Leb}(\{ z\in D\vert d(z,\partial D)<\varepsilon\})
=
o(\vert \log\varepsilon\vert^{-1/2}),
\end{equation}
then
\begin{eqnarray}
\label{Eq nu a Mink non norm}
\nu_{A_a} &=& \lim_{\varepsilon \to 0}
\dfrac{1}{2} \vert \log\varepsilon\vert^{1/2} \ind_{z\in D\setminus A_{a}, \CR(z,D\setminus A_{a})<\varepsilon} d^{2} z
\\
\nonumber
&=& \lim_{\varepsilon \to 0}
\dfrac{1}{2} \vert \log\varepsilon\vert^{1/2} \ind_{z\in D\setminus A_{a},d(z,A_{a})<\varepsilon} d^{2} z
.
\end{eqnarray}
\end{thm}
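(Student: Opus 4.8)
The plan is to prove the Minkowski-content description by a moment method applied to the random measures $X_{\varepsilon}^{a}=\tfrac12|\log\varepsilon|^{1/2}\,\ind_{\{z\in D\setminus A_{a},\ \CR(z,D\setminus A_{a})<\varepsilon\,\CR(z,D)\}}\,d^{2}z$, tested against a fixed continuous function $f$, and then to transfer the result to the Euclidean, cutoff and non-normalised forms using the Koebe bound \eqref{Eq Koebe}. Positivity of $\nu_{A_{a}}$ and the fact that it is carried by $A_{a}$ are immediate, since each $X_{\varepsilon}^{a}$ is a non-negative measure supported in the $\varepsilon$-neighbourhood of $A_{a}$; the real content is the identification of the limit together with finiteness and full support. \emph{One-point function.} For Lebesgue-a.e.\ $z$ one has $z\notin A_{a}$, and letting $b\to\infty$ in Theorem~\ref{Thm ASW CR ell a b} — using $A_{a,b}\uparrow A_{a}$ (so $\CR(z,D\setminus A_{a,b})\downarrow\CR(z,D\setminus A_{a})$ by Carath\'eodory convergence of the component of $z$) and $T_{a,b}\to T_{a}:=\inf\{t:W_{t}=a\}$, $W_{T_{a,b}}\to a$ — shows that $\tfrac1{2\pi}\log\bigl(\CR(z,D)/\CR(z,D\setminus A_{a})\bigr)$ has the law of the first hitting time $T_{a}$ of level $a$ by a standard Brownian motion started from $v$. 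Hence $\PP\bigl(\CR(z,D\setminus A_{a})<\varepsilon\CR(z,D)\bigr)=\PP\bigl(T_{a}>\tfrac1{2\pi}|\log\varepsilon|\bigr)$, and the reflection-principle tail estimate $\PP(T_{a}>t)\sim\sqrt{2/\pi}\,(v-a)\,t^{-1/2}$ gives $\tfrac12|\log\varepsilon|^{1/2}\PP\bigl(\CR(z,D\setminus A_{a})<\varepsilon\CR(z,D)\bigr)\to v-a$, uniformly for $z$ in compacts of $D$. Therefore $\E[(X_{\varepsilon}^{a},f)]\to(v-a)\int_{D}f\,d^{2}z$, consistent with $\E[\nu_{A_{a}}]=(v-a)\ind_{D}$ read off from the decomposition $\Phi=\nu_{A_{a}}+a\ind_{D}+\Phi_{D\setminus A_{a}}$.

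\emph{Two-point function and a.s.\ convergence.} The core of the argument is the two-point estimate: one must show that $|\log\varepsilon|\,\PP\bigl(\CR(z,D\setminus A_{a})<\varepsilon\CR(z,D),\ \CR(w,D\setminus A_{a})<\varepsilon\CR(w,D)\bigr)$ converges, as $\varepsilon\to0$, to a kernel $k_{a}(z,w)$ that is locally integrable on $D\times D$ (the diagonal singularity being mild enough to remain locally integrable). The strategy is a staged exploration of $A_{a}$ through the nested TVS $A_{a,a+2\lambda}\subset A_{a,a+4\lambda}\subset\cdots$: reveal successive levels until, on an event whose complementary probability is controlled in $|z-w|/\diam D$, the two points lie in distinct connected components of the complement; conditionally on the revealed local set the remaining field splits into independent GFFs in the two components (with boundary values $a$ or a larger multiple of $2\lambda$), so that the events at $z$ and at $w$ become conditionally independent, and the one-point asymptotics apply inside each component with the appropriate conformal radii and an excursion-theoretic reading of the Brownian hitting time. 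This yields $\sup_{\varepsilon}\E[(X_{\varepsilon}^{a},f)^{2}]<\infty$, convergence of $\E[(X_{\varepsilon}^{a},f)^{2}]$, and (via the analogous mixed two-scale estimate) that $(X_{2^{-n}}^{a},f)$ is $L^{2}$-Cauchy; a sharpened form of the tail asymptotics makes the $L^{2}$ increments summable, so $(X_{2^{-n}}^{a},f)$ converges a.s. For non-dyadic $\varepsilon$ one uses, for $f\ge0$, the two-sided bound $\sqrt{n/(n+1)}\,(X_{2^{-(n+1)}}^{a},f)\le(X_{\varepsilon}^{a},f)\le\sqrt{(n+1)/n}\,(X_{2^{-n}}^{a},f)$ for $2^{-(n+1)}\le\varepsilon\le2^{-n}$ (the indicator is monotone in $\varepsilon$ and $|\log\varepsilon|^{1/2}$ is slowly varying), then linearity; together with tightness of $(X_{\varepsilon}^{a})$ on compacts, uniform in $\varepsilon$ from the one-point bound, this gives a.s.\ weak convergence to a measure $\nu$.

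\emph{Identification and support.} The limit $\nu$ is $\sigma(A_{a})$-measurable and has mean $(v-a)\ind_{D}$; to see $\nu=\nu_{A_{a}}$ one identifies it with the $\sigma(A_{a})$-measurable singular part of the local-set decomposition of $\Phi$ associated with $A_{a}$, either by testing against that decomposition or, more robustly, via the metric-graph approximation of \cite{ALS2}, in which the analogous rescaled neighbourhood of the discrete FPS converges to the restriction of the discrete GFF to it, and the latter converges to $\nu_{A_{a}}$. Finiteness of $\nu_{A_{a}}$ follows by letting $f\uparrow\ind_{D}$ in the one-point bound (the uniform convergence near $\partial D$ controls the boundary layer), and full support follows from the full support of $X_{\varepsilon}^{a}$ in the $\varepsilon$-neighbourhood of $A_{a}$ together with a zero--one argument: if $\nu_{A_{a}}$ vanished on an open $U$ meeting $A_{a}$, the local-set Markov property of $\Phi$ would be contradicted (alternatively one reads this off \cite{ALS2}).

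\emph{The remaining forms.} Since $\partial D\subset A_{a}$, one has $d(z,A_{a})=d\bigl(z,A_{a}\cup\partial D\bigr)$, hence $d(z,A_{a})\le\CR(z,D\setminus A_{a})\le4\,d(z,A_{a})$ and $d(z,\partial D)\le\CR(z,D)\le4\,d(z,\partial D)$ by \eqref{Eq Koebe}; the events $\{\CR(z,D\setminus A_{a})<\varepsilon\CR(z,D)\}$ and $\{d(z,A_{a})<\varepsilon\,d(z,\partial D)\}$ therefore differ only by replacing $\varepsilon$ with a bounded multiple of itself, which is absorbed in $|\log(c\varepsilon)|^{1/2}/|\log\varepsilon|^{1/2}\to1$ (and the estimates above are uniform under such perturbations), giving the Euclidean statement. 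The $f_{0}$-cutoff versions follow because on $\Supp f_{0}\Subset D$ one has $d(z,\partial D)\ge\delta>0$, so $d(z,A_{a})<\varepsilon\,d(z,\partial D)$ and $d(z,A_{a})<\varepsilon$ agree up to a bounded rescaling of $\varepsilon$; and for the non-normalised identity \eqref{Eq nu a Mink non norm} the only extra input is that the boundary layer is negligible, $\tfrac12|\log\varepsilon|^{1/2}\operatorname{Leb}\bigl(\{d(z,\partial D)<\delta+\varepsilon\}\bigr)\to0$ by \eqref{Eq cond Leb boundary ALS1}, so the cutoff may be removed. I expect the main obstacle to be the two-point estimate of the second paragraph: getting both the existence of $k_{a}$ and its local integrability requires genuine control of the \emph{joint} law of the two conformal radii relative to $A_{a}$, which is delicate because of the correlation created by the shared exploration of $A_{a}$ near $z$ and $w$; by contrast the one-point computation is, once Theorem~\ref{Thm ASW CR ell a b} is in hand, just a Brownian hitting-time asymptotics, and the remaining forms are routine distortion bookkeeping.
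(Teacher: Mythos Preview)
This theorem is not proved in the present paper: it is quoted from \cite{ALS1} as a preliminary result (Section~\ref{Subsubsec FPS}), so there is no in-paper proof to compare against directly. That said, the paper does indicate the method of \cite{ALS1} when it writes, at the start of the proof of Theorem~\ref{Thm psi GMC}, ``We will follow a similar argument used to derive Theorem~\ref{Thm Mink ALS1} in \cite{ALS1}.'' That argument is \emph{not} a direct second-moment computation on the approximating measures, but rather a GMC-based identification: one starts from the abstract definition of $\nu_{A_a}$ as the singular part in the local-set decomposition $\Phi=\nu_{A_a}+a\ind_D+\Phi_{D\setminus A_a}$, uses the Aru--Powell--Sep\'ulveda identity $\ind_D:e^{-\gamma\Phi}:\,=\ind_{D\setminus A_a}e^{-\gamma^2 V_{A_a}/2}:e^{-\gamma\Phi_{D\setminus A_a}}:$ for $\gamma>0$, takes conditional expectation given $A_a$, and reads off $\nu_{A_a}=\psi_{1,A_a}$ as the $\gamma$-derivative at $0^+$ of $\ind_{D\setminus A_a}e^{-\gamma^2 V_{A_a}/2}$. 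The almost-sure convergence in Sobolev norm (hence weak convergence of measures) then comes for free from the absolute convergence of the GMC power series \eqref{Eq GMC Wick gen}, bypassing any two-point estimate.

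Your approach is genuinely different: you attempt to build the limit from scratch by a moment method on $X^a_\varepsilon$. The one-point step is fine (and note you could cite Theorem~\ref{Thm law CR FPS} directly rather than deriving it as a $b\to\infty$ limit of Theorem~\ref{Thm ASW CR ell a b}). But the two-point step is, as you yourself flag, the real obstacle, and your sketch of ``staged exploration through TVS until $z,w$ separate'' is not enough to close it: you need quantitative control on the probability that $z$ and $w$ remain in the same component after $k$ stages, uniform bounds on the conditional one-point tails inside the revealed components, and integrability of the resulting kernel near the diagonal---essentially the machinery of Sections~\ref{Subsec diff ind cond}--\ref{Subsec error fields} of this paper, which is developed there for a harder problem. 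The identification step (``$\nu=\nu_{A_a}$'') is also underspecified: appealing to metric-graph approximation \cite{ALS2} is circular if that is where the theorem is proved, and ``testing against the decomposition'' needs the limit to be identified with $\E[\Phi\mid A_a]-a\ind_D$, which requires knowing that $X^a_\varepsilon$ approximates $\Phi$ restricted near $A_a$---not obvious from your construction alone. The GMC route avoids both of these difficulties by starting from $\nu_{A_a}$ as a given object and exhibiting the approximants as analytic derivatives of an already-convergent family.
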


\begin{rem}
The \eqref{Eq cond Leb boundary ALS1}
ensures that in \eqref{Eq nu a Mink non norm}
there are no deterministic terms coming from the boundary $\partial D$, on top of $\nu_{A_a}$.
\end{rem}

Fix $z\in D$. Then a.s., $z\not\in A_{a}$.
Next we will describe the law of the ratio of conformal radii
$\CR(z,D)/\CR(z,D\setminus A_{a})$.
As previously, let let $(W_{t})_{t\geq 0}$ be a standard one-dimensional Brownian motion
starting from $W_{0} = v$.
Let $T_a$ be the first hitting time
\begin{displaymath}
T_a = \min\{t\geq 0\vert W_t = a\}.
\end{displaymath}

\begin{thm}[Aru-Lupu-Sep\'ulveda \cite{ALS1}]
\label{Thm law CR FPS}
Let $a<v$. Fix $z\in D$.
Then the r.v. 
\begin{displaymath}
\dfrac{1}{2\pi}\log\Big(\dfrac{\CR(z,D)}{\CR(z,D\setminus A_{a})}\Big)
\end{displaymath}
has the same distribution as $T_a$ under $W_0 = v$.
\end{thm}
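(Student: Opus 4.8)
The plan is to realise the first passage set $A_a$ as the increasing limit of the two-valued sets $A_{a,b}$ as $b\to+\infty$ and to transport the identity in law of Theorem~\ref{Thm ASW CR ell a b} through this limit, matched on the Brownian side by the convergence $T_{a,b}\to T_a$.

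First I would fix $z\in D$ and use that, by construction, $A_a=\overline{\bigcup_{b\geq a+2\lambda}A_{a,b}}$, and that by the monotonicity of two-valued sets $A_{a,b}\subseteq A_{a,b'}$ whenever $a+2\lambda\leq b\leq b'$. Writing $U_b$ for the connected component of $z$ in $D\setminus A_{a,b}$ --- a bounded simply connected domain containing $z$, since a.s.\ $z\notin A_{a,b}$ --- the domains $U_b$ are decreasing in $b$, hence $\CR(z,D\setminus A_{a,b})=\CR(z,U_b)$ is nonincreasing and converges a.s.\ as $b\to+\infty$; equivalently, $\frac{1}{2\pi}\log\big(\CR(z,D)/\CR(z,D\setminus A_{a,b})\big)$ is nondecreasing and converges a.s.

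The core step is the continuity of the conformal radius along this exhaustion, i.e.\ identifying the above limit with $\CR(z,D\setminus A_a)$. Let $V$ be the connected component of $z$ in $D\setminus A_a$; since $A_a$ is compact connected, contains $\partial D$, and a.s.\ $z\notin A_a$, the set $V$ is a bounded simply connected domain containing $z$, so $0<\CR(z,V)<+\infty$. From $A_{a,b}\subseteq A_a$ we get $V\subseteq U_b$ for every $b$. Let $U_\infty$ be the Carath\'eodory kernel of the decreasing sequence $(U_b)_b$ with respect to $z$. Any point of the open set $U_\infty$ has an open neighbourhood contained in $U_\infty\subseteq\bigcap_b U_b$, hence disjoint from every $A_{a,b}$, hence disjoint from $A_a=\overline{\bigcup_b A_{a,b}}$; therefore $U_\infty\subseteq D\setminus A_a$, and being open, connected and containing $z$ it is contained in $V$. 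Conversely, every compact subset of the open connected set $V\subseteq\bigcap_b U_b$ lies in all the $U_b$, so $V\subseteq U_\infty$; thus $U_\infty=V$. The Carath\'eodory kernel theorem (convergence of the uniformising maps $f_b\colon\D\to U_b$ with $f_b(0)=z$, $f_b'(0)=\CR(z,U_b)>0$) then yields $\CR(z,U_b)\to\CR(z,U_\infty)=\CR(z,D\setminus A_a)$ a.s.

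On the Brownian side, $T_{a,b}=\min\{t\geq0:W_t\in\{a,b\}\}$ satisfies $T_{a,b}\leq T_a$, with equality once $b>\max_{[0,T_a]}W$; since $W$ is continuous and $T_a<+\infty$ a.s.\ (as $a<v$ and $W$ is a one-dimensional Brownian motion started at $v$), this holds for all large $b$, so $T_{a,b}\nearrow T_a$ a.s. For each $b\geq a+2\lambda$ the first-coordinate marginal of Theorem~\ref{Thm ASW CR ell a b} gives $\frac{1}{2\pi}\log\big(\CR(z,D)/\CR(z,D\setminus A_{a,b})\big)\overset{d}{=}T_{a,b}$; letting $b\to+\infty$, the left-hand side converges a.s., hence in distribution, to $\frac{1}{2\pi}\log\big(\CR(z,D)/\CR(z,D\setminus A_a)\big)$ by the previous step, while the right-hand side converges in distribution to $T_a$, so by uniqueness of weak limits the two have the same law. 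The main obstacle I anticipate is precisely the conformal-radius continuity step: controlling the Carath\'eodory kernel of the shrinking domains $U_b$ and identifying it with the component of $z$ in $D\setminus A_a$, which relies on the a.s.\ facts $z\notin A_a$ and $\CR(z,D\setminus A_a)>0$; the remaining ingredients are a direct application of Theorem~\ref{Thm ASW CR ell a b} and an elementary property of one-dimensional Brownian motion.
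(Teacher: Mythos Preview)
The paper does not give its own proof of this statement: it is quoted as a result of \cite{ALS1} and used as input in Section~\ref{Subsubsec FPS}. So there is nothing to compare against directly. That said, your argument is correct and is the natural one given the presentation here: the FPS is \emph{defined} as $A_a=\overline{\bigcup_{b\geq a+2\lambda}A_{a,b}}$, so deducing Theorem~\ref{Thm law CR FPS} from Theorem~\ref{Thm ASW CR ell a b} by letting $b\to+\infty$ is exactly what one expects, and this is also the route taken in \cite{ALS1}.

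Two small remarks. First, a cosmetic point: the TVS $A_{a,b}$ requires $b>v$ as well as $b-a\geq 2\lambda$, so the exhaustion should run over $b>\max(v,a+2\lambda)$; this is irrelevant for the limit. Second, your handling of the conformal-radius continuity is clean: for a decreasing family of simply connected domains $U_b\ni z$, the Carath\'eodory kernel is the $z$-component of $\operatorname{int}\bigl(\bigcap_b U_b\bigr)$, and your two inclusions showing this equals the $z$-component $V$ of $D\setminus A_a$ are correct --- the key observation being that an open set disjoint from $\bigcup_b A_{a,b}$ is automatically disjoint from its closure $A_a$. The kernel theorem then gives $\CR(z,U_b)\to\CR(z,V)\in(0,+\infty)$ since a.s.\ $z\notin A_a$. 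On the Brownian side, $T_{a,b}\nearrow T_a$ a.s.\ is immediate, and passing to the limit in the identity in law from Theorem~\ref{Thm ASW CR ell a b} concludes.
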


The above identity in law plays a key role for figuring out the precise form of the asymptotic expansion for first passage sets (Theorem \ref{Thm A E FPS}).
See the explanation at the end of Section \ref{Subsec Pres A E} and in Section \ref{Subsec cond TVS}.

\medskip

In our proof of Theorem \ref{Thm A E FPS},
we will require further properties relating the TVS and the FPS.
Let $b\geq a + 2\lambda$, with $b>v$.
\begin{itemize}
\item A.s., $A_{a,b}\subset A_a$.
\item The couple TVS-label function $(A_{a,b},\ell_{a,b})$ is measurable, as a random variable,
w.r.t. $A_a$.
\item A.s., $A_a \subset A_{a,b}\cup \{z\in D\setminus A_{a,b}\vert \ell_{a,b}(z)=b\}$.
\item Conditionally on $(A_{a,b},\ell_{a,b})$,
the FPS $A_a$ is obtained by sampling inside each connected component of
$\{z\in D\setminus A_{a,b}\vert \ell_{a,b}(z)=b\}$
an independent first passage set from boundary value $b$ to level $a$,
and adding them all to $A_{a,b}$.
\end{itemize}

\medskip

Finally, let us mention that the first passage set $A_{a}$ is distributed as a topological closure of
the cluster connected to $\partial D$ in a Poisson point process of Brownian loops inside $D$
(Brownian loop soup) and boundary-to-boundary Brownian excursions,
the intensity of boundary excursions being proportional to $(v-a)^{2}$.
The Brownian loop soup is the one appearing in Lawler-Werner \cite{LawlerWerner2004ConformalLoopSoup}
and in Sheffield-Werner \cite{SheffieldWerner2012CLE}.
This \textit{a priori} completely different description of first passage sets
originates from random walk (Brownian motion in continuum) representations of the GFF,
and is obtained by metric graph approximation, where an analogous picture holds.
We refer to \cite{ALS2}.
While we will not use this fact in the sequel, 
it was the original hint that Le Gall's asymptotic expansion for the 2D Wiener sausage
(Theorem \ref{Thm Le Gall})
should have an analogue for the first passage sets
(Theorem \ref{Thm A E FPS}).
Note however that our Theorem \ref{Thm A E FPS} is obtained not as consequence of 
Le Gall's expansion, but in a completely different way, from Gaussian considerations.
The main obstacle for using Le Gall's expansion is the ultraviolet divergence
induced by the proliferation of tiny loops in the Brownian loop soup.
Though there should be a way to relate the two expansions beyond a structural analogy,
this is not at all straightforward.
See also Remark \ref{Rem an Le Gall} and Section \ref{Subsec Wiener to FPS}.

\subsubsection{Excursion clusters}
\label{Subsubsec exc}

Despite the fact that the 2D continuum GFF is not defined point-wise, it has well-defined
\textit{sign components}, which we will refer to as \textit{excursion clusters},
but could also be called \textit{excursion sets} or \textit{sign clusters}.
The construction of these is given in \cite{ALS4},
and is closely related to the theory of first passage sets
\cite{ALS1,ALS2} (see Section \ref{Subsubsec FPS}) above.
Here we will recall the main points.

As previously, let $D\subset \C$ be an open, non-empty, bounded, connected and simply connected domain.
Let $\Phi$ a GFF on $D$ with $0$ boundary condition on $\partial D$.
As in \cite{ALS4}, we will order the excursion clusters in the decreasing order of the diameter.
So we have a random countably infinite collection $(\clus_{j})_{j\geq 0}$
of compact, connected, two-by-two disjoint subsets of $\D$.
These are the excursion clusters.
Each comes with a random sign $\sigma_{j}\in\{-1,1\}$.
Moreover, the whole collection of random variables $(\clus_{j},\sigma_{j})_{j\geq 0}$
is measurable w.r.t. the GFF $\Phi$.
Conditionally on $(\clus_{j})_{j\geq 0}$,
the sequence of signs $(\sigma_{j})_{j\geq 0}$ is i.i.d. and uniform (plus or minus with probability $1/2$ each).
Each cluster $\clus_{j}$ caries a positive and finite measure $\nu_{j}$,
given by
\begin{displaymath}
\nu_{j} = 
\lim_{\varepsilon \to 0}
\dfrac{1}{2} \vert \log\varepsilon\vert^{1/2} \ind_{d(z,\clus_{j})<\varepsilon} d^{2} z,
\end{displaymath}
with a.s. convergence for the weak topology of measures.
The measure $\nu_{j}$ is a Minkowski content measure,
with the same gauge as for the first passage sets
(see Theorem \ref{Thm Mink ALS1}).
The whole GFF $\Phi$ can be decomposed
\begin{equation}
\label{Eq exc GFF}
\Phi = \sum_{j\geq 0} \sigma_{j} \nu_{j},
\end{equation}
where the convergence of the infinite sum holds in $L^{2}(d\PP,\sigma(\Phi),H^{-\eta}(\C))$
for every $\eta >0$.
Note that this does not at all mean that the GFF $\Phi$ is a signed measure.
The signs $\sigma_{j}$ are important for the convergence in \eqref{Eq exc GFF},
and the non-signed sum of measures
\begin{displaymath}
\sum_{j\geq 0} \nu_{j}
\end{displaymath}
diverges in every open subset of $D$.
For clarity, let us also point out that the clusters $\clus_{j}$ present a nested structure:
each cluster has countably infinitely many holes,
and inside each hole there are infinitely many other clusters.
This nested structure will be used in the upcoming Section \ref{Subsubsec gen exc}.

Next we explain the relation between excursion clusters and first passage sets.
Denote by $\Gamma_{j}$ the outer boundary of $\clus_{j}$.
It is a CLE$_4$ type loop inside $D$.
Denote by $\inter(\Gamma_{j})$ the interior surrounded by the Jordan loop $\Gamma_{j}$.
It is an open simply-connected subdomain of $D$.
Let $\mathcal{F}_{{\rm ext}, j}$ be the sigma-algebra containing the information of
$(\clus_{i},\sigma_{i})_{0\leq i\leq j-1}$, as well as that of
$\Gamma_{j}$ and $\sigma_{j}$.
Then, conditionally on $\mathcal{F}_{{\rm ext}, j}$, the field
\begin{displaymath}
\ind_{\inter(\Gamma_{j})} \sigma_{j} \Phi
\end{displaymath}
is distributed as a GFF in the domain $\inter(\Gamma_{j})$,
with boundary condition $2\lambda$ on $\Gamma_{j}$,
$2\lambda$ being the height gap.
Moreover, $\clus_{j}$ is the first passage set of this field from level $2\lambda$ on $\Gamma_{j}$
to level $0$.
Note that the fact that we ordered the clusters $\clus_{j}$
in the inverse order of the diameter is not an issue for the conditional laws given
$\mathcal{F}_{{\rm ext}, j}$.
Indeed, $\diam ( \clus_{j}) = \diam (\Gamma_{j})$
and all the $(\clus_{i})_{0\leq i\leq j-1}$ are necessarily outside $\Gamma_{j}$.

The clusters $\clus_{j}$ can be described through the nested CLE$_{4}$ and the Miller-Sheffield coupling.
The labels of the nested CLE$_{4}$ form a branching random walk with step $2\lambda$,
and countable infinite offspring each time.
A first generation CLE$_{4}$ loop is an outer boundary $\Gamma_{j}$ of an excursion cluster $\clus_{j}$
that is not surrounded by any other.
The corresponding value of the branching random walk is $2\lambda \sigma_{j}$.
To get the inner boundary components of $\clus_{j}$,
one has to follow the branching random walk, and along each branch stop when the walk hits $0$.
So the inner boundary components of $\clus_{j}$ are CLE$_{4}$ loops of random generation,
and their corresponding labels are all $0$.
Of course, there are exceptional branches along which the branching random walk never returns to $0$.
These branches are important because they give rise to the measure $\sigma_{j} \nu_{j}$.
Since on each inner boundary component of $\clus_{j}$ the value of the GFF is $0$,
one iterates the above procedure inside each hole of $\clus_{j}$,
with independence between holes, and so on.
This is explained in \cite[Remark 17]{ALS4}.

A second description of the excursion clusters $\clus_{j}$ is that of
topological closures of clusters in a Brownian loop soup. 
This comes from Brownian motion representations of the GFF.
This precise identity is obtained by  combining the description of 
first passage sets as clusters of Brownian loops and boundary excursions
(\cite{ALS2}, see Section \ref{Subsubsec FPS})
with the decomposition of boundary-touching loops into a Poisson point process of boundary excursions obtained by Qian and Werner \cite{QianWerner19Clusters}.
We refer to \cite[Corollary 5.4]{ALS2}.
We will not use this description in the present work,
but this is something to keep in mind in view of Le Gall's asymptotic expansion
(Theorem \ref{Thm Le Gall}).

\subsubsection{Grouping the excursion clusters into generations}
\label{Subsubsec gen exc}

This is a continuation of the previous Section \ref{Subsubsec exc},
with the same setting and notations.
There is a disadvantage in having the excursion clusters simply enumerated 
$(\clus_j)_{j\geq 0}$ as previously.
First, for any $j\geq 0$, the cut domain
\begin{equation}
\label{Eq D minus clus}
D\setminus \bigcup_{i=0}^{j}\clus_i
\end{equation}
does not have all its connected components simply connected.
On top of infinitely many simply connected connected components, 
it has finitely many connected components with holes.
This is just a minor inconvenience,
since the theory we are developing can be extended to
multiply connected domains.
There is however substantial issue.
Conditionally on $(\clus_{i},\sigma_{i})_{0\leq i\leq j}$,
the restriction of the GFF $\Phi$
to the cut domain \eqref{Eq D minus clus}
is not a GFF on this domain,
but a conditioned GFF,
containing the conditioning on not having excursion clusters of larger diameter
than any of the $(\clus_{i})_{0\leq i\leq j}$.
This field is only mildly different from a GFF and is absolutely continuous w.r.t. the latter,
since the conditioning is done on an event of positive probability.
Still, it is not Gaussian, or at least not supposed to be so.
Also, there is no other way to enumerate the excursion clusters
which would avoid dealing with conditioned fields in the cut domains.

To avoid the complications explained above, 
we will regroup the excursion clusters into generations,
which is a standard procedure in the literature.
Let $m\geq 0$. The $m$-th generation is
\begin{displaymath}
\Gen(m) = \{j\geq 0 \vert \clus_{j} \text{ is surrounded by exactly } m \text{ other excursion clusters }\}.
\end{displaymath}
Then $(\Gen(m))_{m\geq 0}$ forms a partition of the set of indices $\N$.
A.s., each $\Gen(m)$ contains infinitely many indices.
We will denote by
\begin{displaymath}
A^{\rm gen}_{m} = \overline{D}\setminus\bigcup_{j\in \Gen(m)} \inter(\Gamma_{j}),
\qquad
A^{\rm gen, +}_{m} = 
A^{\rm gen}_{m}\cup
\bigcup_{j\in \Gen(m)} \clus_{j}.
\end{displaymath}
The $A^{\rm gen}_{m}$ and $A^{\rm gen, +}_{m}$
are compact connected subsets of $\overline{D}$
containing $\partial D$.
Moreover, all the connected components of $D\setminus A^{\rm gen}_{m}$ and
$D\setminus A^{\rm gen, +}_{m}$ are simply connected.
We have the inclusions $A^{\rm gen}_{m}\subset A^{\rm gen, +}_{m}\subset A^{\rm gen}_{m+1}$.

Denote by $\mathcal{F}_{m}^{\rm gen}$
the sigma-algebra containing the information of the
clusters $\clus_j$
for $j\in \cup_{0\leq k\leq m-1} \Gen(k)$
(so $\Gen(m)$ not included),
and of the signs $\sigma_{j}$ for $j\in \cup_{0\leq k\leq m} \Gen(k)$
($\Gen(m)$ included).
Let $\mathcal{F}_{m}^{\rm gen, +}$
denote the sigma-algebra containing the information of the
clusters and signs $(\clus_j,\sigma_{j})$
for $j\in \cup_{0\leq k\leq m} \Gen(k)$
($\Gen(m)$ included both for clusters and signs).
By construction, we have the inclusions
$\mathcal{F}_{m}^{\rm gen}\subset \mathcal{F}_{m}^{\rm gen, +}\subset \mathcal{F}_{m+1}^{\rm gen}$.
Note that, importantly,
neither $\mathcal{F}_{m}^{\rm gen}$ nor $\mathcal{F}_{m}^{\rm gen, +}$
contains the knowledge of the set of indices $\Gen(k)$ for $k\leq m$:
clusters, signs, but not the associated indices.
This point is perhaps not clear from the notations,
and is an artifact of using the enumeration indices.

We will denote by
$\Phi_{D\setminus A^{\rm gen}_{m}}$ the field
\begin{displaymath}
\ind_{D\setminus A^{\rm gen}_{m}} \Phi,
\end{displaymath}
and by $\Phi_{D\setminus A^{\rm gen, +}_{m}}$ the field
\begin{displaymath}
\ind_{D\setminus A^{\rm gen, +}_{m}} \Phi .
\end{displaymath}
Then, conditionally on $\mathcal{F}_{m}^{\rm gen, +}$,
the field $\Phi_{D\setminus A^{\rm gen, +}_{m}}$
is distributed as a GFF on $D\setminus A^{\rm gen, +}_{m}$
with $0$ boundary conditions.
Conditionally on $\mathcal{F}_{m}^{\rm gen}$,
the field $\Phi_{D\setminus A^{\rm gen}_{m}}$
is distributed as a GFF on $D\setminus A^{\rm gen}_{m}$
with boundary conditions $2\lambda \sigma_{j}$ on $\Gamma_{j}$
for $j\in \Gen(m)$.
Conditionally on $\mathcal{F}_{m}^{\rm gen, +}$,
the set $\overline{A^{\rm gen}_{m+1}\setminus A^{\rm gen, +}_{m}}$
(actually same as $A^{\rm gen}_{m+1}$ by density properties)
is the TVS for levels $-2\lambda, 2\lambda$
of the conditional GFF $\Phi_{D\setminus A^{\rm gen, +}_{m}}$.
In particular,
$A^{\rm gen}_{0}$ is the TVS $A_{-2\lambda, 2\lambda}$
of the full GFF $\Phi$,
that is to say the CLE$_{4}$ gasket.
Conditionally on $\mathcal{F}_{m}^{\rm gen}$,
the set $\overline{A^{\rm gen, +}_{m}\setminus A^{\rm gen}_{m}}$
(actually same as $A^{\rm gen, +}_{m}$ by density properties)
is distributed as an FPS from level $2\lambda$ to level $0$
of a GFF on $D\setminus A^{\rm gen}_{m}$ with boundary condition $2\lambda$.
This is because of the symmetry in law between boundary conditions $2\lambda$
and $-2\lambda$.
So the whole sequence $(A^{\rm gen}_{m}, A^{\rm gen, +}_{m})_{m\geq 0}$
is obtained by alternating TVS and FPS,
and always having in the remaining unexplored domain a field conditionally distributed as a GFF.

Now, let us denote
\begin{displaymath}
\vert \nu^{\rm gen}_{m}\vert
=
\sum_{j\in \Gen (m)} \nu_j.
\end{displaymath}
Then $\vert \nu^{\rm gen}_{m}\vert$ is a positive and a.s. finite measure,
despite the sum having infinitely many terms.
Moreover,
\begin{displaymath}
\E[\vert \nu^{\rm gen}_{m}\vert]
= 
2\lambda\E[\ind_{D\setminus A^{\rm gen}_{m}}]
=
2\lambda \ind_{D} <+\infty,
\end{displaymath}
where the first equality is due to $\overline{A^{\rm gen, +}_{m}\setminus A^{\rm gen}_{m}}$
being a conditional FPS in $D\setminus A^{\rm gen}_{m}$,
and the second equality is simply because $A^{\rm gen}_{m}$ has a.s.
$0$ Lebesgue measure.
Let be
\begin{displaymath}
\nu^{\rm gen}_{m}
=
\sum_{j\in \Gen (m)} \sigma_{j}\nu_j.
\end{displaymath}
Then $\nu^{\rm gen}_{m}$ is a \textit{bona fide}
signed measure with finite total variation measure given by $\vert \nu^{\rm gen}_{m}\vert$.
Further, we have the decomposition
\begin{displaymath}
\Phi_{D\setminus A^{\rm gen}_{m}}
=
\nu^{\rm gen}_{m}
+
\Phi_{D\setminus A^{\rm gen, +}_{m}},
\end{displaymath}
and in particular,
\begin{displaymath}
\Phi = \sum_{k=0}^{m}\nu^{\rm gen}_{k}
\, + \Phi_{D\setminus A^{\rm gen, +}_{m}}.
\end{displaymath}
Note that if instead one considers the total variation $\vert \nu^{\rm gen}_{m}\vert$,
then conditionally on $\mathcal{F}_{m}^{\rm gen}$,
the field
\begin{displaymath}
\vert \nu^{\rm gen}_{m}\vert
+
\Phi_{D\setminus A^{\rm gen, +}_{m}},
\end{displaymath}
is distributed as a GFF on $D\setminus A^{\rm gen}_{m}$
with boundary conditions $2\lambda$.
We will rely on this fact in Lemma \ref{Lem moment nu vert eps}.

\subsection{Le Gall's asymptotic expansion for the Wiener sausage in dimension 2}
\label{Subsec Le Gall sausage}

In this section we will recall Le Gall's asymptotic expansion for the Wiener sausage in dimension 2
\cite{LeGallLocTime,LeGallStFlour}.
It was a major inspiration for our expansion for first passage sets (Theorem \ref{Thm A E FPS}).
Note however, that on a technical level, the proofs are very different.

Let us first introduce the setting. Let $M>0$ be a parameter corresponding to a square-mass.
We consider the massive Laplacian $\frac{1}{2}\,\Delta - M$ on $\C$,
and let $G_{M}(z,w)$ be its Green's function:
\begin{displaymath}
G_{M} = \Big(-\dfrac{1}{2}\Delta+M\Big)^{-1}.
\end{displaymath}
This massive Green's function is symmetric, and translation an rotation invariant:
\begin{displaymath}
G_{M}(z,w) = G_{M}(0,\vert w - z\vert).
\end{displaymath}
By writing the Laplacian in polar coordinates,
we see that the function $r\mapsto G_{M}(0,r)$ is solution to a Bessel differential equation.
In particular, $G_{M}$ can be expressed via Bessel functions:
\begin{displaymath}
G_{M}(z,w) = \dfrac{1}{\pi} K_{0}\big(\sqrt{2M} \vert w - z\vert\big),
\end{displaymath}
where $K_0$ is the modified Bessel function of the second kind of order $0$.
See \cite[Equation (2.a)]{LeGallLocTime} and \cite[Section 7.2]{ItoMcKean74Diffusions}.
From this we get, as $\vert w - z\vert\to 0$ the asymptotic expansion
\begin{equation}
\label{Eq G M sing}
G_{M}(z,w) = \dfrac{1}{\pi}\log \dfrac{1}{\vert w - z\vert} + \cst (M) + 
O(\vert w - z\vert^{2} \vert (\log\vert w - z\vert)\vert),
\end{equation}
where
\begin{displaymath}
\cst (M) = \dfrac{1}{\pi}\Big(\dfrac{\log(2) - \log(M)}{2} - \gamma_{\rm EM}\Big),
\end{displaymath}
and $\gamma_{\rm EM}$ is the Euler–Mascheroni constant,
\begin{displaymath}
\gamma_{\rm EM} = \int_{0}^{1}\dfrac{1 - e^{-u}}{u} \, du \, 
-\int_{1}^{+\infty}\dfrac{e^{-u}}{u}\, du.
\end{displaymath}
See \cite[Equation (2.b)]{LeGallLocTime}.

Let $(B_{t})_{t\geq 0}$ be a standard Brownian motion on $\C$ with $B_{0} = 0$.
The $\zeta_M$ be an independent exponential time with mean $M^{-1}$.
Then $(B_{t})_{0\leq t< \zeta_M}$ is the killed Brownian motion,
with infinitesimal generator $1/2\,\Delta - M$.
Let $\Theta_{\zeta_M}$ denote the occupation measure of $(B_{t})_{0\leq t< \zeta_M}$:
\begin{equation}
\label{Eq occup meas}
(\Theta_{\zeta_M},f) = \int_{0}^{\zeta_M} f(B_t)\, dt.
\end{equation}
It is a measure supported on the range $B([0, \zeta_M])$.
For $\varepsilon>0$, let $S_\varepsilon(\zeta_M)$
denote the $\varepsilon$-neighborhood of the trajectory
$B([0, \zeta_M])$:
\begin{displaymath}
S_\varepsilon(\zeta_M) = \{ z\in\C\vert \, d(z, B([0, \zeta_M]))<\varepsilon\}.
\end{displaymath}
This is the so-called \textit{Wiener sausage}.
It standard and easy to show that the renormalized indicator function
\begin{displaymath}
\dfrac{1}{\pi} \vert \log\varepsilon \vert \, \ind_{S_\varepsilon(\zeta_M)}
\end{displaymath}
converges, as $\varepsilon\to 0$, 
to the occupation measure $\Theta_{\zeta_M}$,
in $L^{2}(d\PP,\sigma((B_{t})_{0\leq t< \zeta_M}),H^{-\eta}(\C))$
for $\eta >0$.
So, in a sense, as $\varepsilon\to 0$,
\begin{displaymath}
\ind_{S_\varepsilon(\zeta_M)}
\sim \dfrac{\pi}{\vert \log\varepsilon \vert}\,\Theta_{\zeta_M}.
\end{displaymath}
Le Gall in \cite{LeGallLocTime} pushed beyond this main order equivalent,
and gave a full asymptotic expansion of $\ind_{S_\varepsilon(\zeta_M)}$
into integer powers of $1/\vert \log\varepsilon \vert$.

The next terms in Le Gall's expansion involve the \textit{renormalized self-intersection local times}.
For $n\geq 2$, the powers $\Theta_{\zeta_M}^{n}$ are not defined,
since a.s., the measure $\Theta_{\zeta_M}$ is singular w.r.t. Lebesgue measure.
However, one can define a renormalized version $:\Theta_{\zeta_M}^{n}:$
via a polynomial renormalization procedure. 
These are the Brownian analogues of Wick powers of the GFF; see Section \ref{Subsec Wick}.
However, the combinatorics of the counterterms differs from the Gaussian case. 
Next we introduce it.

We will denote by $(L_{n}^{(-1)})_{n\geq 0}$
the family of generalized Laguerre polynomials of order $-1$.
We refer to \cite{RotaAll73Umbral,AbramowitzStegun84,Roman84Umbral} for references.
The coefficients of $L_{n}^{(-1)}$ are explicit:
$L_{0}^{(-1)}(x)=1$ and for $n\geq 1$,
\begin{displaymath}
L_{n}^{(-1)}(x) = \sum_{k=1}^{n}(-1)^{n-k} \dfrac{n! (n-1)!}{(n-k)! k! (k-1)! }x^{k} .
\end{displaymath}
In this way, $L_{1}^{(-1)}(x) = x$ and
\begin{displaymath}
L_{2}^{(-1)}(x) = x^{2} - 2 x,
\qquad
L_{3}^{(-1)}(x) = x^{3} - 6 x^{2} + 6 x,
\qquad
L_{4}^{(-1)}(x) = x^{4} - 12 x^{3} + 36 x^{2} - 24 x,
\text{ etc.}
\end{displaymath}
We normalized our polynomials so that the leading term of $L_{n}^{(-1)}(x)$ is $x^{n}$.
In literature one often finds different conventions.
The polynomials $(L_{n}^{(-1)})_{n\geq 1}$ are orthogonal for the infinite measure
$\ind_{x>0}\, x^{-1} e^{-x}\, dx$.
Note that for $n\geq 1$, $L_{n}^{(-1)}(0) = 0$. 
In the context of renormalization, we will need the following two-variable homogeneous version of generalized Laguerre polynomials:
\begin{equation}
\label{Eq Lambda}
\Lambda_{n}(x,u) = u^{n} L_{n}^{(-1)}(x u^{-1}) = 
\sum_{k=1}^{n}(-1)^{n-k} \dfrac{n! (n-1)!}{(n-k)! k! (k-1)! }x^{k} u^{n-k}.
\end{equation} 

Further, one regularizes the occupation field $\Theta_{\zeta_M}$ by circle averages.
For $r>0$ and $z\in\C$, let be
\begin{displaymath}
\Theta_{\zeta_M, r}(z) = \lim_{\delta\to 0} \dfrac{1}{4\pi r \delta}\Theta_{\zeta_M}(\{w\in\C\vert\,\vert w-z\vert\in (r-\delta,r+\delta)\}).
\end{displaymath}
Then $z\mapsto \Theta_{\zeta_M, r}(z)$ is a continuous function on $\C$.
For the existence and the continuity of $\Theta_{\zeta_M, r}$,
we refer to \cite[Proposition 1.1]{jegoBMC}.
Further, as $r\to 0$, $\Theta_{\zeta_M, r}$ converges to $\Theta_{\zeta_M}$
in $L^{2}(d\PP,\sigma((B_{t})_{0\leq t< \zeta_M}),H^{-\eta}(\C))$ for $\eta >0$.
Denote by $h(r)$ the (deterministic) quantity
\begin{displaymath}
h_M(r) = \E[\Theta_{\zeta_M, r}(0)] = G_{M}(0,r).
\end{displaymath}
According to \eqref{Eq G M sing}, as $r\to 0$,
\begin{equation}
\label{Eq h M exp}
h_M(r) = \dfrac{1}{\pi} \vert \log r\vert +
\cst (M) + 
O(r^{2} \vert \log r\vert).
\end{equation}
The renormalized self-intersection local time field of multiplicity $n$,
denoted $:\Theta_{\zeta_M}^{n}:$, is
\begin{equation}
\label{Eq renorm mult loc time}
:\Theta_{\zeta_M}^{n}: \, = \lim_{r\to 0} 
\Lambda_{n}(\Theta_{\zeta_M, r},h_M(r)),
\end{equation}
with convergence in $L^{2}(d\PP,\sigma((B_{t})_{0\leq t< \zeta_M}),H^{-\eta}(\C))$ for $\eta >0$.
We refer to Le Gall \cite{LeGallLocTime,LeGallStFlour} for details.
We have $:\Theta_{\zeta_M}^{1}: \, = \Theta_{\zeta_M}$,
and for $n\geq 2$, the $:\Theta_{\zeta_M}^{n}:$ are more complicated generalized functions,
with compact support equal to $B([0, \zeta_M])$.

Note that unlike the case of Gaussian Wick powers,
regularizing by circle averages is important,
and one cannot freely replace these in \eqref{Eq renorm mult loc time}
by other types of regularization.
The fact that Gaussianity is preserved by most reasonable regularizations
(convolution, spectral cutoff, white noise cutoff, etc.) 
allows for much more flexibility in constructing Gaussian Wick powers.
By contrast, in the case of $:\Theta_{\zeta_M}^{n}:$,
the combinatorics of the counterterms depend on the way $\Theta_{\zeta_M}$
is regularized.
In general, if one replaces in \eqref{Eq renorm mult loc time}
the circle averages by other regularizations,
the leading diverging term will still cancel out,
but not necessarily all the lower diverging terms.
To get rid of all the diverging terms,
one needs in general to replace the generalized Laguerre polynomials
by scale-dependent perturbations of these polynomials.
A more general theory of the renormalization of self-intersection local times is presented in
Dynkin \cite{Dynkin1984Polynomials}.

Now we are ready to state the Le Gall's result.

\begin{thm}[Le Gall \cite{LeGallLocTime,LeGallStFlour}]
\label{Thm Le Gall}
Let $N\geq 1$. 
Fix $f$ a smooth function with compact support in $\C$.
Then
\begin{equation}
\label{Eq Le Gal exp f}
\int_{S_\varepsilon(\zeta_M)} f(z) d^2 z\,
=
\sum_{n=1}^{N} (-1)^{n-1} \dfrac{1}{n!}\dfrac{(:\Theta_{\zeta_M}^{n}:,f)}{h_M(\varepsilon)^{n}}
\, + R_{N,\varepsilon}(f),
\end{equation}
where the error term $R_{N,\varepsilon}(f)$ satisfies, as $\varepsilon\to 0$,
\begin{displaymath}
\E[R_{N,\varepsilon}(f)^{2}]^{1/2} = o(\vert \log\varepsilon\vert^{-N}).
\end{displaymath}
\end{thm}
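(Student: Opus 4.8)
\emph{Proof strategy.} Since each summand on the right-hand side of \eqref{Eq Le Gal exp f} already lies in $L^{2}$, the remainder $R_{N,\varepsilon}(f)$ is a well-defined $L^{2}$ random variable, and it suffices to prove $\E[R_{N,\varepsilon}(f)^{2}]=o(\vert\log\varepsilon\vert^{-2N})$. Writing $I_{\varepsilon}(f)=\int_{S_{\varepsilon}(\zeta_{M})}f$ and expanding the square, the statement reduces to an asymptotic analysis, to order $h_{M}(\varepsilon)^{-2N}$, of the three quantities $\E[I_{\varepsilon}(f)^{2}]$, $\E[I_{\varepsilon}(f)\,(:\Theta_{\zeta_{M}}^{n}:,f)]$ for $1\le n\le N$, and $\E[(:\Theta_{\zeta_{M}}^{m}:,f)(:\Theta_{\zeta_{M}}^{n}:,f)]$ for $1\le m,n\le N$, the last of which is $\varepsilon$-independent and classical (a Feynman-diagram computation). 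I would then show that, after multiplication by the rational coefficients in \eqref{Eq Le Gal exp f}, all the contributions of order $h_{M}(\varepsilon)^{-2},\dots,h_{M}(\varepsilon)^{-2N}$ cancel.

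The analytic backbone is the exact hitting-probability identity: for $\vert x-z\vert\ge\varepsilon$,
\[
\PP_{x}\big(T_{D(z,\varepsilon)}<\zeta_{M}\big)=\dfrac{G_{M}(x,z)}{h_{M}(\varepsilon)},
\]
valid because both sides solve $(\tfrac12\Delta-M)u=0$ off $\overline{D(z,\varepsilon)}$, equal $1$ on $\partial D(z,\varepsilon)$, and decay at infinity (uniqueness by the maximum principle for $\tfrac12\Delta-M$ with $M>0$), together with $h_{M}(\varepsilon)=G_{M}(0,\varepsilon)$. Combining this with the strong Markov property at successive hitting times of the $\varepsilon$-disks, and with the smoothness of $G_{M}(\cdot,z)$ away from $z$ — which controls, up to $O(\varepsilon)$, the replacement of $G_{M}(y,z)$ by $G_{M}(z',z)$ for $y\in\partial D(z',\varepsilon)$ — one obtains for the two-point function: for every $P$,
\[
\PP_{0}\big(z,z'\in S_{\varepsilon}(\zeta_{M})\big)=\sum_{p=2}^{P}\dfrac{c_{p}(z,z')}{h_{M}(\varepsilon)^{p}}+O\!\big(h_{M}(\varepsilon)^{-P-1}\big)+O(\varepsilon),
\]
uniformly for $z,z'$ in the support of $f$ at mutual distance $\gg\varepsilon$, where $c_{p}$ is an explicit finite sum of products of values of $G_{M}$ along chains through $0,z,z'$. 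The coincidence region (where $z,z'$, or either of them and $0$, are within $O(\varepsilon)$) contributes only $O(\varepsilon^{2}\vert\log\varepsilon\vert^{k})$ to $\E[I_{\varepsilon}(f)^{2}]$, using local integrability of $G_{M}(z,\cdot)^{k}$. The mixed moments are treated identically after unfolding the definition \eqref{Eq renorm mult loc time}: $\E[\ind_{z\in S_{\varepsilon}}\Lambda_{n}(\Theta_{\zeta_{M},r}(w),h_{M}(r))]$ is again a sum over orderings of products of $G_{M}$'s carrying $h_{M}(\varepsilon)^{-1}$ and $h_{M}(r)^{-1}$ weights, and as $r\to0$ the $\Lambda_{n}$-counterterms (the Laguerre renormalization, cf. \eqref{Eq Lambda}) absorb all the $h_{M}(r)$-divergences, leaving an expansion in $h_{M}(\varepsilon)^{-1}$ whose coefficients are again chains of $G_{M}$'s.

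Once the three quantities are expanded, the vanishing of the coefficients of $h_{M}(\varepsilon)^{-p}$, $2\le p\le2N$, in $\E[R_{N,\varepsilon}(f)^{2}]$ becomes a purely algebraic statement. It follows, exactly as in the umbral picture of Section \ref{Subsec umbral}, from the reexpansion identity for the generalized Laguerre polynomials of order $-1$ (Proposition \ref{Prop reexp Laguerre}) together with the change-of-normalization identity \eqref{Eq change norm Laguerre}; concretely, it amounts to resumming the chain-of-$G_{M}$ expansions through the exponential generating function of the $\Lambda_{n}$ and the elementary identity $\sum_{n\ge1}(-1)^{n-1}t^{n}/n!=1-e^{-t}$, which reflects the heuristic that, conditionally on the occupation field, $\ind_{z\in S_{\varepsilon}}$ is the $r\to0$ germ of $\sum_{n\ge1}\frac{(-1)^{n-1}}{n!}h_{M}(\varepsilon)^{-n}\Lambda_{n}(\Theta_{\zeta_{M},r}(z),h_{M}(r))$.

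\emph{Main obstacle.} The algebra is not where the difficulty lies; the work is in making the two expansions uniform enough to integrate against $f\otimes f$. Two points require care. First, the chain-of-$G_{M}$ expansions degenerate near coincidences of the points (scale $\varepsilon$, respectively $r$); one must split the integration domain and estimate the degenerate part crudely, exploiting the mild (logarithmic) singularity of $G_{M}$ and the $O(\varepsilon^{2})$, respectively $O(r^{2})$, Lebesgue measure of the coincidence regions. Second, the terms containing $:\Theta_{\zeta_{M}}^{n}:$ force an interchange of the limit $r\to0$ defining the renormalized self-intersection local times with the asymptotics $\varepsilon\to0$; this is handled by establishing the $\varepsilon$-expansion with error terms that are uniform in $r$ for $r$ small, using the $L^{2}$-convergence in \eqref{Eq renorm mult loc time} and the bound $h_{M}(r)=\tfrac1\pi\vert\log r\vert+\cst(M)+O(r^{2}\vert\log r\vert)$ from \eqref{Eq h M exp}. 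Assembling the pieces yields $\E[R_{N,\varepsilon}(f)^{2}]=o(\vert\log\varepsilon\vert^{-2N})$, which is the claim.
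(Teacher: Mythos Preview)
The paper does not prove this theorem: it is quoted as a result of Le Gall, with only a brief paragraph of remarks sketching the method (``this expansion is obtained by a second moment method. The two-point mixed correlations $\langle :\Theta_{\zeta_M}^{m}:(z) :\Theta_{\zeta_M}^{n}:(w)\rangle$ are explicit. Then one proceeds with an expansion of the two-point correlations of the Wiener sausage, and more technically, with the expansion of mixed correlations $\langle \ind_{S_\varepsilon(\zeta_M)}(z) :\Theta_{\zeta_M}^{n}:(w)\rangle$''). Your proposal is precisely an elaboration of this second-moment strategy, and at that level it matches both the paper's description and Le Gall's actual argument: expand $\E[R_{N,\varepsilon}(f)^{2}]$, compute the three families of two-point functions via the hitting identity $\PP_{x}(T_{D(z,\varepsilon)}<\zeta_{M})=G_{M}(x,z)/h_{M}(\varepsilon)$ and strong Markov, and verify the cancellations.

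One point worth flagging. You attribute the algebraic cancellation to the Laguerre reexpansion identity (Proposition~\ref{Prop reexp Laguerre}) and the umbral viewpoint of Section~\ref{Subsec umbral}. That identity does explain, \emph{a posteriori}, why the coefficients in Le Gall's expansion are what they are and why the expansion is consistent under a change of mass (this is exactly the content of Section~\ref{Subsec BM algeb}); but it is not the mechanism Le Gall uses, nor is it sufficient on its own to prove that $\E[R_{N,\varepsilon}(f)^{2}]$ has the claimed decay. In Le Gall's proof the cancellation is obtained by directly matching the combinatorics of the chain-of-$G_{M}$ expansions for the sausage two-point function against those of the renormalized self-intersection local times --- essentially an inclusion--exclusion over the orderings of visits to the $\varepsilon$-disks. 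Your heuristic $\ind_{z\in S_{\varepsilon}}\sim\sum_{n\ge1}\frac{(-1)^{n-1}}{n!}h_{M}(\varepsilon)^{-n}\Lambda_{n}(\Theta_{\zeta_{M},r}(z),h_{M}(r))$ is suggestive but not a substitute for that bookkeeping, since (as the paper itself stresses in Proposition~\ref{Prop 0 rc BM}) the series on the right has zero radius of convergence. So the sketch is correct in spirit, but the actual work --- the uniform expansions near coincidences and the explicit term-by-term matching --- still has to be done as in \cite{LeGallLocTime}.
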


Next are some remarks concerning the Le Gall's expansion.
First, this expansion is obtained by a second moment method.
The two-point mixed correlations
$\langle :\Theta_{\zeta_M}^{m}:(z) :\Theta_{\zeta_M}^{n}:(w)\rangle$
are explicit.
Then one proceeds with an expansion of the two-point correlations of the Wiener sausage,
and more technically,
with the expansion of mixed correlations
$\langle \ind_{S_\varepsilon(\zeta_M)}(z) :\Theta_{\zeta_M}^{n}:(w)\rangle$.
By contrast, we do not know explicit expressions for the two-point correlations of the
first passage sets of the GFF.
Therefore, the proof of our Theorem \ref{Thm A E FPS}
proceeds in a very different way, 
relying on explicit expressions only for the conditional expectations (first moments),
and at the level of second moments, only using upper bounds but no explicit expressions or expansions.

Then, although this is not in \cite{LeGallLocTime,LeGallStFlour},
one can show with minimal additional effort a slightly stronger version
of the expansion \ref{Eq Le Gal exp f}:
\begin{equation}
\label{Eq Le Gall exp norm}
\ind_{S_\varepsilon(\zeta_M)}
=
\sum_{n=1}^{N} (-1)^{n-1} \dfrac{1}{n!}\dfrac{:\Theta_{\zeta_M}^{n}:}{h_M(\varepsilon)^{n}}
\, + R_{N,\varepsilon},
\end{equation}
where the error term $R_{N,\varepsilon}$ satisfies, as $\varepsilon\to 0$,
\begin{displaymath}
\E[\Vert R_{N,\varepsilon}\Vert_{H^{-\eta}(\C)}^{2}]^{1/2} = o(\vert \log\varepsilon\vert^{-N}),
\end{displaymath}
for every $\eta>0$.
This is doable by following Le Gall's proof step by step,
and plunging in everywhere the Bessel potentials $\LK_{\eta}$ \eqref{Eq Bessel potential}.

Further, according to \eqref{Eq h M exp}, the difference
\begin{displaymath}
h_M(\varepsilon) - \dfrac{1}{\pi} \vert \log \varepsilon\vert -
\cst (M)
\end{displaymath}
is negligible compared to any power of $1/ \vert \log \varepsilon\vert$.
Therefore, one can also rewrite the expansion as
\begin{displaymath}
\ind_{S_\varepsilon(\zeta_M)}
=
\sum_{n=1}^{N} (-1)^{n-1} \dfrac{1}{n!}\dfrac{:\Theta_{\zeta_M}^{n}:}
{\big(\frac{1}{\pi}\vert \log \varepsilon\vert + \cst (M)\big)^{n}}
\, + o(\vert \log\varepsilon\vert^{-N}),
\end{displaymath}
where $o(\vert \log\varepsilon\vert^{-N})$ is in the sense of $L^{2}(d\PP,\sigma((B_{t})_{0\leq t< \zeta_M}),H^{-\eta}(\C))$ for $\eta >0$.
One can also reexpand $\big(\frac{1}{\pi}\vert \log \varepsilon\vert + \cst (M)\big)^{-n}$
into powers of $1/ \vert \log \varepsilon\vert$ and get
\begin{displaymath}
\ind_{S_\varepsilon(\zeta_M)}
=
\sum_{n=1}^{N} (-1)^{n-1} 
\Big(\sum_{k=1}^{n} 
\dfrac{(n-1)!}{(n-k)! k! (k-1)!} \cst(M)^{n-k}\,
:\Theta_{\zeta_M}^{k}:\Big)\dfrac{1}{\big(\frac{1}{\pi} \vert \log \varepsilon\vert\big)^{n}}
\, + o(\vert \log\varepsilon\vert^{-N}).
\end{displaymath}
For the details of the computations, we refer to
Proposition \ref{Prop reexp Laguerre}.

Finally, a very important remark.
The expansions \eqref{Eq Le Gal exp f}
or \eqref{Eq Le Gall exp norm} do \textbf{not} at all imply, in any way, an equality,
for fixed $\varepsilon$, between $\ind_{S_\varepsilon(\zeta_M)}$
and
\begin{equation}
\label{Eq sausage inf sum}
\sum_{n=1}^{+\infty} (-1)^{n-1} \dfrac{1}{n!}\dfrac{:\Theta_{\zeta_M}^{n}:}{h_M(\varepsilon)^{n}}.
\end{equation}
Indeed, the fields $:\Theta_{\zeta_M}^{n}:$ do not charge the immediate neighborhood
of the Brownian trajectory, while $\ind_{S_\varepsilon(\zeta_M)}$ does.
Most likely, the infinite sum \eqref{Eq sausage inf sum},
which is a power series in $h_M(\varepsilon)^{-1}$,
does not converge in any meaningful sense.
Actually, one can show the following.

\begin{prop}
\label{Prop 0 rc BM}
Let $f$ be a smooth non-negative function with compact support on $\C$,
and not identically zero.
Then the power series
\begin{displaymath}
\sum_{n=1}^{+\infty} \dfrac{\alpha^{n}}{n!} \E[(:\Theta_{\zeta_M}^{n}:\,,f)^{2}]^{1/2}
\end{displaymath}
has zero radius of convergence.
Similarly, for ever $\eta>0$, the power series
\begin{displaymath}
\sum_{n=1}^{+\infty} \dfrac{\alpha^{n}}{n!} \E[\Vert:\Theta_{\zeta_M}^{n}:\Vert_{H^{-\eta}(\C)}^{2}]^{1/2}
\end{displaymath}
has zero radius of convergence.
\end{prop}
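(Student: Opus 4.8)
\emph{Proof plan.} The plan is first to reduce both assertions to a single super\nobreakdash-factorial lower bound on a second moment, then to establish that bound. For the reduction: since $f\in C^\infty_c(\C)\subset H^{\eta}(\C)$ and $f\not\equiv 0$, for every $X\in H^{-\eta}(\C)$ one has $\Vert X\Vert_{H^{-\eta}(\C)}\ge (X,f)/\Vert f\Vert_{H^{\eta}(\C)}$, so $\E[\Vert :\Theta_{\zeta_M}^{n}:\Vert_{H^{-\eta}(\C)}^{2}]^{1/2}\ge \E[(:\Theta_{\zeta_M}^{n}:,f)^{2}]^{1/2}/\Vert f\Vert_{H^{\eta}(\C)}$, and it is enough to treat $a_n:=\E[(:\Theta_{\zeta_M}^{n}:,f)^{2}]^{1/2}$. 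By the Cauchy--Hadamard formula the radius of convergence of $\sum_n \frac{\alpha^n}{n!}a_n$ is $\big(\limsup_n (a_n/n!)^{1/n}\big)^{-1}$, and by Stirling $(n!)^{1/n}\sim n/e$, so this radius vanishes if and only if $\limsup_n a_n^{1/n}/n=+\infty$. Hence it suffices to show, say, $\E[(:\Theta_{\zeta_M}^{n}:,f)^{2}]\ge c^{\,n}\,(n!)^{2}\,(n-1)!$ for all large $n$ and some $c>0$ (this forces $a_n/n!\ge c^{n/2}(n!)^{1/2}/\sqrt n\to\infty$, hence $(a_n/n!)^{1/n}\to\infty$).

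The reason such a bound should hold --- in contrast with the Gaussian Wick powers, for which the analogous series converges with radius $2\sqrt{\eta\pi}$ (cf.\ \eqref{Eq GMC Wick}), because $\E[\Vert :\Phi_{(0)}^{n}:\Vert_{H^{-\eta}(\C)}^{2}]=n!\int_{D^{2}}G_D(z,w)^{n}\LK_\eta(\vert w-z\vert)\,d^{2}z\,d^{2}w\asymp (n!)^{2}c^{n}$ --- is that the renormalization of self\nobreakdash-intersection local times is governed by generalized Laguerre polynomials of order $-1$, whose combinatorics is far more explosive than that of the Hermite polynomials. In our normalization $\int_0^{+\infty}L_n^{(-1)}(x)^{2}x^{-1}e^{-x}\,dx=n!\,(n-1)!$, to be compared with $\int_{\R}\He_n(x)^{2}e^{-x^{2}/2}\,dx/\sqrt{2\pi}=n!$; concretely the coefficient of $x$ in $\Lambda_n(x,u)$ is $(-1)^{n-1}n!\,u^{n-1}$ (of size $n!$), whereas the coefficients of $Q_n$ are only of size $\asymp\sqrt{n!}$. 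Squaring this extra factorial and combining it with the logarithmic divergence of the massive Green's function --- $G_M(z,w)\sim\frac1\pi\log\frac1{\vert w-z\vert}$ near the diagonal, so $\int_{\C^{2}}f(z)f(w)G_M(z,w)^{n}\,d^{2}z\,d^{2}w\asymp n!\,c_0^{\,n}$ with $c_0$ depending on $f$ --- leads one to expect $\E[(:\Theta_{\zeta_M}^{n}:,f)^{2}]$ of order $(n!)^{2}(n-1)!\,c^{n}$ or larger.

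To make this rigorous I would work with the circle\nobreakdash-average regularization. By the $L^{2}$\nobreakdash-convergence in \eqref{Eq renorm mult loc time},
\begin{displaymath}
\E[(:\Theta_{\zeta_M}^{n}:,f)^{2}]=\lim_{r\to 0}\int_{\C^{2}} f(z)f(w)\,\E\big[\Lambda_n(\Theta_{\zeta_M,r}(z),h_M(r))\,\Lambda_n(\Theta_{\zeta_M,r}(w),h_M(r))\big]\,d^{2}z\,d^{2}w .
\end{displaymath}
One then expands each mixed moment $\E[\Theta_{\zeta_M,r}(z)^{a}\Theta_{\zeta_M,r}(w)^{b}]$ by the strong Markov property as a finite sum over interleavings (``shuffles'') of the $a$ visits to the circle about $z$ and the $b$ visits to the circle about $w$, each interleaving contributing a product of massive Green's functions, the ``self\nobreakdash-return'' factors $\asymp h_M(r)\to\infty$ being the divergent ones. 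The counterterms packaged into $\Lambda_n$ through \eqref{Eq Lambda} are arranged precisely so that all terms blowing up as $h_M(r)\to\infty$ cancel; what survives as $r\to 0$ is the contribution of the fully alternating interleavings, and a bookkeeping of its combinatorial weight gives $\E[(:\Theta_{\zeta_M}^{n}:,f)^{2}]\ge c^{\,n}(n!)^{2}(n-1)!$ for $n$ large. (Alternatively one may invoke the explicit diagrammatic formula for the two\nobreakdash-point correlation $\E[(:\Theta_{\zeta_M}^{n}:,f)(:\Theta_{\zeta_M}^{m}:,g)]$, going back to Le Gall and Dynkin, and bound the diagonal $n=m$ from below by a single sign\nobreakdash-definite sub\nobreakdash-family of ``chain'' diagrams.) \textbf{The main difficulty} is exactly this sign bookkeeping: the coefficients of $\Lambda_n$ alternate, so one must show that the cancellation which removes the divergences does not also destroy the super\nobreakdash-factorial lower bound, i.e.\ one must isolate a non\nobreakdash-cancelling, sign\nobreakdash-definite sub\nobreakdash-family of configurations still carrying combinatorial weight of order $(n!)^{2}(n-1)!$. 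Once this is in place, $\limsup_n a_n^{1/n}/n=+\infty$ and both power series in the statement have radius of convergence $0$.
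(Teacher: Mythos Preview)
Your reduction of the Sobolev\nobreakdash-norm case to the test\nobreakdash-function case via duality, and the Cauchy--Hadamard reformulation, are clean and correct. But your primary route is far more laborious than the paper's, and the ``main difficulty'' you flag --- the sign bookkeeping after expanding $\Lambda_n$ --- in fact does not arise.

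The paper simply quotes the explicit closed\nobreakdash-form two\nobreakdash-point correlation
\[
\langle :\Theta_{\zeta_M}^{n}:(z)\, :\Theta_{\zeta_M}^{n}:(w)\rangle \;=\; (n!)^{2}\,\bigl(G_M(0,z) + G_M(0,w)\bigr)\,G_M(z,w)^{2n-1},
\]
a single, manifestly non\nobreakdash-negative expression (from Le~Gall's work). There is no sum over diagrams to control and no cancellation to survive: the renormalization has already been carried out in producing this formula. Integrating against $f(z)f(w)$ and using the logarithmic singularity of $G_M$ near the diagonal gives $\E[(:\Theta_{\zeta_M}^{n}:,f)^{2}]\ge c\,(n!)^{2}\theta^{n}(2n-1)!$ directly. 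Note the exponent of $G_M$ is $2n-1$, not $n$, so the factorial from the spatial integral is $(2n-1)!$, not the $(n-1)!$ you aimed for; your target bound is a substantial underestimate, though either suffices for zero radius.

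Your ``alternative'' route --- invoking the Le~Gall--Dynkin two\nobreakdash-point formula and ``bounding from below by a sign\nobreakdash-definite sub\nobreakdash-family of chain diagrams'' --- is essentially the paper's proof, except that the phrasing suggests you had not realized the full two\nobreakdash-point function \emph{is} already a single chain term. Once that is known, the proof is three lines; the regularization\nobreakdash-and\nobreakdash-moment\nobreakdash-expansion programme you outline as primary is unnecessary.
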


\begin{proof}
We will consider only the first part, as the second is similar.
The two-point correlation function of $:\Theta_{\zeta_M}^{n}:$ is
\begin{displaymath}
\langle :\Theta_{\zeta_M}^{n}:(z) :\Theta_{\zeta_M}^{n}:(w)\rangle
= (n!)^{2}
(G_M(0,z) + G_M(0,w))G_M(z,w)^{2n-1} .
\end{displaymath}
Taking into account the logarithmic singularity of $G_M(z,w)$ 
on the diagonal, we get that
\begin{displaymath}
 \E[(:\Theta_{\zeta_M}^{n}:\,,f)^{2}]
 \geq 
 c (n!)^{2} \theta^{n} (2n-1)!\, .
\end{displaymath}
for constants $c,\theta>0$ not depending on $n$.
Then indeed, the power series in the variable $\alpha$,
\begin{displaymath}
\sum_{n=1}^{+\infty} \dfrac{\alpha^{n}}{n!} [\theta^{n} (n!)^{2} (2n-1)!]^{1/2}\, ,
\end{displaymath}
has zero radius of convergence.
\end{proof}

\subsection{Umbral composition of polynomial sequences}
\label{Subsec umbral}

The change of variance identity for Hermite polynomials \eqref{Eq change var}
plays a key role in the context of Wick renormalization for Gaussians.
Further in this paper, it will be ubiquitous.
It appears behind Lemma \ref{Lem decomp Phi eps},
Corollary \ref{Cor cond exp eps},
Theorem \ref{Thm decomp FPS Wick} and
Proposition \ref{Prop decomp Wick TVS}
in Section \ref{Sec Wick FPS},
Propositions \ref{Prop reexp} and \ref{Prop cond series} in Section \ref{Sec A E},
Lemma \ref{Lem psi hat psi} and Section \ref{Subsec algeb} in general,
and Lemma \ref{Lem decomp Phi gen 1}, Proposition \ref{Prop decomp gen 0 Wick},
Proposition \ref{Prop decomp gen Wick iter},
Theorem \ref{Thm decomp gen Wick},
Proposition \ref{Prop gen to indiv}
and Proposition \ref{Prop decomp Wick BTLS} in Section \ref{Sec Wick exc decomp}.
Given this importance,
we would like to present an algebraic viewpoint on this identity,
related to the umbral calculus and the umbral composition of polynomial sequences.
Moreover, this point of view also encompasses the generalized Laguerre polynomials
used for renormalizing the Brownian self-intersection local times (Section \ref{Subsec Le Gall sausage}).
What we will present here is completely standard in the field of umbral calculus \cite{RotaAll73Umbral,Roman84Umbral}
(algebra and combinatorics),
but we have not encountered it the literature dealing with renormalization.
We hope that our paper will contribute to strengthening the link between the two domains.

In the sequel, we will consider polynomials with coefficients in $\R$.
However, what we present holds if one replaces $\R$ by any other number field
($\C$, $\mathbb{Q}$, $\mathbb{Q}[\sqrt{2}]$, $\Z/ p\Z$, etc.).
For us, a \textit{polynomial sequence} will be a family of
polynomials $(P_{n}(x))_{n\geq 0}$
such that for every $n\geq 0$, $\deg P_{n} = n$.
In particular, $(P_{n}(x))_{n\geq 0}$ forms a basis of $\R[x]$.
We will denote by $\mathtt{S}_{\R}$ the set of all polynomial sequences 
with coefficients in $\R$. 
A polynomial sequence can be encoded by its matrix of coefficients:
\begin{displaymath}
P_{n}(x) = \sum_{k=0}^{n} a_{k,n} x^{k},
\end{displaymath}
with $a_{n,n}\neq 0$.
In this way, an element of $\mathtt{S}_{\R}$
can be seen as an upper-triangular matrix of infinite size, 
with non-zero coefficients on the diagonal.

The set $\mathtt{S}_{\R}$ can be endowed with a composition law,
called \textit{umbral composition},
and which we will denote here by $\odot$.
Given two polynomial sequences $(P_{n}(x))_{n\geq 0}$ and
$(\widetilde{P}_{n}(x))_{n\geq 0}$ with coefficients
\begin{displaymath}
P_{n}(x) = \sum_{k=0}^{n} a_{k,n} x^{k},
\qquad
\widetilde{P}_{n}(x) = \sum_{k=0}^{n} \tilde{a}_{k,n} x^{k},
\end{displaymath}
the umbral composition $P\odot\widetilde{P}$
is the polynomials sequence
\begin{equation}
\label{Eq def umbral}
(P\odot\widetilde{P})_{n}(x) = 
\sum_{k=0}^{n} \tilde{a}_{k,n}P_{k}(x)
=
\sum_{j=0}^{n}\Big(\sum_{k=j}^{n} a_{j,k}\tilde{a}_{k,n}\Big) x^{j}.
\end{equation}

\begin{prop}
\label{Prop group}
The composition law $\odot$ is associative,
but it is not commutative.
The monomial sequence $(x^{n})_{n\geq 0}$
is the unique left-neutral and the unique right-neutral element for $\odot$.
Endowed with this composition law, $(\mathtt{S}_{\R},\odot)$ forms a group,
and in particular every element of $\mathtt{S}_{\R}$ has an inverse for $\odot$.
\end{prop}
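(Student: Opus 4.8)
The plan is to exploit the matrix encoding of polynomial sequences described just above the statement. To a polynomial sequence $P = (P_n)_{n\geq 0}$ with $P_n(x) = \sum_{k=0}^n a_{k,n} x^k$ I associate the infinite matrix $M_P = (a_{k,n})_{k,n\geq 0}$, which is upper-triangular ($a_{k,n}=0$ for $k>n$) with nonzero diagonal entries $a_{n,n}\neq 0$. The key observation, which I would state first, is that umbral composition corresponds to matrix multiplication under this encoding: unwinding \eqref{Eq def umbral}, the $(j,n)$ entry of $M_{P\odot\widetilde P}$ is $\sum_{k=j}^n a_{j,k}\tilde a_{k,n} = (M_P M_{\widetilde P})_{j,n}$, i.e. $M_{P\odot\widetilde P} = M_P\, M_{\widetilde P}$. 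Because each column of an upper-triangular matrix has only finitely many nonzero entries, this product is well-defined with no convergence issue, and the product of two upper-triangular matrices with nonzero diagonal is again upper-triangular with nonzero diagonal (the $(n,n)$ entry of the product is $a_{n,n}\tilde a_{n,n}\neq 0$), so $\mathtt S_{\R}$ is closed under $\odot$.

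Once this dictionary is in place, the group axioms transfer immediately from known facts about matrices. Associativity of $\odot$ follows from associativity of matrix multiplication, $M_{(P\odot\widetilde P)\odot\widehat P} = (M_P M_{\widetilde P})M_{\widehat P} = M_P(M_{\widetilde P}M_{\widehat P}) = M_{P\odot(\widetilde P\odot\widehat P)}$ — again all products make sense columnwise. The monomial sequence $(x^n)_{n\geq 0}$ has matrix the identity $I$, and $M_P I = I M_P = M_P$ gives that it is both a left- and a right-neutral element; uniqueness of the neutral element is the standard semigroup argument (if $E$ is left-neutral and $E'$ is right-neutral then $E = E\odot E' = E'$). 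Non-commutativity I would settle with one explicit small example, e.g. comparing $(x^n)\odot(\text{shifted sequence})$ with the reverse, using $2\times 2$ truncations; equivalently, exhibit two upper-triangular matrices that do not commute.

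For the existence of inverses, I would invert $M_P$ directly: an upper-triangular matrix with nonzero diagonal entries is invertible, and its inverse is again upper-triangular with nonzero (reciprocal) diagonal entries, computed by forward substitution column by column — each entry of $M_P^{-1}$ depends on only finitely many entries of $M_P$, so there is no subtlety with the infinite size. The resulting matrix is of the form $M_Q$ for a (unique) polynomial sequence $Q\in\mathtt S_{\R}$, and $M_P M_Q = M_Q M_P = I$ translates to $P\odot Q = Q\odot P = (x^n)_{n\geq 0}$. I do not anticipate a genuine obstacle here; the only point that needs care — and which I would make explicit rather than gloss over — is the verification that all the infinite matrix products and the inversion are legitimate, which comes down entirely to the upper-triangularity ensuring finite column support. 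After that the proposition is just the statement that the invertible upper-triangular infinite matrices with the natural "finite below the diagonal" structure form a group, pulled back along the faithful encoding $P\mapsto M_P$.
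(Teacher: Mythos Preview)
Your proposal is correct and follows essentially the same approach as the paper: encode each polynomial sequence by its upper-triangular coefficient matrix, observe that $\odot$ becomes matrix multiplication, and read off associativity, the identity, non-commutativity, and invertibility from standard facts about upper-triangular matrices with nonzero diagonal. The paper's proof is terser (it mentions Gauss--Jordan elimination for the inverse rather than forward substitution, and does not spell out the non-commutativity example), but the underlying argument is the same.
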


\begin{proof}
From the second equality in \eqref{Eq def umbral}
we see that the composition law $\odot$ is the same as matrix multiplication of
two upper-triangular matrices of infinite size.
Since both matrices are upper-triangular, 
this multiplication makes sense despite the infinite size.
One can deduce from this all the above elementary properties.
The monomial sequence $(x^{n})_{n\geq 0}$
corresponds to an infinite-size matrix with $1$ on the diagonal and $0$ elsewhere.
The invertibility for $\odot$ comes from the fact that the diagonal coefficients are non-zero,
and one can for instance use the Gaussian-Jordan elimination, as the matrices are already in row-echelon form.
\end{proof}

Now recall $Q_{n}(x,u)$ \eqref{Eq def Q} the Hermite polynomials with two variables.
For fixed $u$, we will see $(Q_{n}(x,u))_{n\geq 0}$ as a polynomial sequence
with respect to the polynomial variable $x$,
and we will use the short notation $Q(\cdot,u)$ for it.
By using the composition law $\odot$,
the change of variance identity \eqref{Eq change var} can be rewritten as
\begin{equation}
\label{Eq change var umbral}
Q(\cdot,u_{1} + u_{2}) = Q(\cdot,u_1)\odot Q(\cdot,u_2).
\end{equation}
Note that at an algebraic level, $u_{1}$ and $u_{2}$ can assume arbitrary values in $\R$,
although in a renormalization context, the variances are positive and blow up to infinity.
One can further restate \eqref{Eq change var umbral} as follows.

\begin{cor}
\label{Cor Hermite subgroup}
The map $u\mapsto Q(\cdot,u)$ is an injective group homomorphism from
$(\R,+)$ to $(\mathtt{S}_{\R},\odot)$.
In particular, $(Q(\cdot,u))_{u\in\R}$ forms a one-parameter subgroup of $(\mathtt{S}_{\R},\odot)$.
\end{cor}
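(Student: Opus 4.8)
The plan is to extract everything from the umbral form of the change of variance identity \eqref{Eq change var umbral} together with the group structure of Proposition \ref{Prop group}; no genuinely new computation is needed. First I would note that the homomorphism property is nothing but \eqref{Eq change var umbral}: for all $u_1,u_2\in\R$ one has $Q(\cdot,u_1+u_2)=Q(\cdot,u_1)\odot Q(\cdot,u_2)$, which is the umbral restatement of \eqref{Eq change var} observed just above the corollary.

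Second, I would verify that the map $u\mapsto Q(\cdot,u)$ sends the additive identity $0$ to the neutral element of $(\mathtt{S}_{\R},\odot)$, as any group homomorphism must. Setting $u=0$ in the explicit coefficient formula \eqref{Eq def Q} kills every term with $k\geq 1$, so $Q_n(x,0)=x^n$ for all $n\geq 0$; thus $Q(\cdot,0)=(x^n)_{n\geq 0}$, which by Proposition \ref{Prop group} is exactly the two-sided neutral element for $\odot$. (This is the only place where the normalization conventions could in principle intervene, and \eqref{Eq def Q} settles it at once.)

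Third, for injectivity: since the map is already known to be a group homomorphism, it suffices to show its kernel is trivial. If $Q(\cdot,u)=(x^n)_{n\geq 0}$, then comparing the degree-$2$ members gives $Q_2(x,u)=x^2-u=x^2$, forcing $u=0$; equivalently, one may argue directly that $Q(\cdot,u)=Q(\cdot,u')$ implies $x^2-u=x^2-u'$, hence $u=u'$. Either way the homomorphism is injective.

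Finally, the one-parameter subgroup assertion is immediate: the image of a group homomorphism is a subgroup, so $\{Q(\cdot,u)\mid u\in\R\}$ is a subgroup of $(\mathtt{S}_{\R},\odot)$, carrying the structure of $(\R,+)$ via the injective homomorphism just established. I do not anticipate any real obstacle here; the whole statement is a bookkeeping consequence of \eqref{Eq change var umbral}, \eqref{Eq def Q}, and Proposition \ref{Prop group}.
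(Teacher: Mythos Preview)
Your proposal is correct and matches the paper's approach: the corollary is stated without proof in the paper, as an immediate restatement of \eqref{Eq change var umbral} together with Proposition~\ref{Prop group}. You have simply filled in the routine verifications (neutral element via $Q_n(x,0)=x^n$, injectivity via the degree-$2$ coefficient), which is exactly what the paper leaves implicit.
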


A similar property holds for the  two-variable generalized Laguerre polynomials 
$\Lambda_{n}(x,u)$ \eqref{Eq Lambda} that are used for renormalizing the Brownian self-intersection local times 
(Section \ref{Subsec Le Gall sausage}).
One has a change of normalization identity:
\begin{equation}
\label{Eq change norm Laguerre}
\Lambda_{n}(x,u_{1}+u_{2}) = 
\sum_{k=1}^{n}(-1)^{n-k} \dfrac{n! (n-1)!}{(n-k)! k! (k-1)! }\Lambda_{k}(x,u_{1}) u_{2}^{n-k},
\end{equation}
where the coefficients appearing are the same as in the definition of $\Lambda_{n}(x,u)$ \eqref{Eq Lambda}.
For the importance of this identity in a renormalization context, we refer to 
Proposition \ref{Prop powers loc time change norm} and Section \ref{Subsec BM algeb} in general,
as well as to Section \ref{Subsec Wiener to FPS}.
This identity can be rewritten as 
\begin{displaymath}
\Lambda(\cdot,u_{1}+u_{2}) = \Lambda(\cdot,u_{1})\odot \Lambda(\cdot,u_{2}).
\end{displaymath}
One can further restate this as follows.

\begin{cor}
\label{Cor Laguerre subgroup}
The map $u\mapsto \Lambda(\cdot,u)$ is an injective group homomorphism from
$(\R,+)$ to $(\mathtt{S}_{\R},\odot)$.
In particular, $(\Lambda(\cdot,u))_{u\in\R}$ forms a one-parameter subgroup of $(\mathtt{S}_{\R},\odot)$.
\end{cor}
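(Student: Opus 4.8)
The plan is to mimic the proof of Corollary \ref{Cor Hermite subgroup} verbatim, the only non-formal ingredient being the change of normalization identity \eqref{Eq change norm Laguerre}, which we take as already established.

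First I would verify that $\Lambda(\cdot,u):=(\Lambda_n(\cdot,u))_{n\ge 0}$ genuinely belongs to $\mathtt{S}_{\R}$ for every $u\in\R$, i.e.\ that $\deg\Lambda_n(\cdot,u)=n$. This is immediate from \eqref{Eq Lambda}: the $k=n$ term of $\Lambda_n(x,u)=\sum_{k=1}^n(-1)^{n-k}\frac{n!(n-1)!}{(n-k)!\,k!\,(k-1)!}x^k u^{n-k}$ is exactly $x^n$, so $\Lambda_n(\cdot,u)$ is monic of degree $n$. In particular $\Lambda(\cdot,0)=(x^n)_{n\ge 0}$, the neutral element of $(\mathtt{S}_{\R},\odot)$ identified in Proposition \ref{Prop group}, consistently with a homomorphism sending $0$ to the identity.

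Next comes the homomorphism property. Expanding $\Lambda_n(x,u_2)=\sum_{k=0}^n\tilde a_{k,n}x^k$, we read off from \eqref{Eq Lambda} that $\tilde a_{k,n}=(-1)^{n-k}\frac{n!(n-1)!}{(n-k)!\,k!\,(k-1)!}u_2^{n-k}$ for $1\le k\le n$, while $\tilde a_{0,n}=0$ for $n\ge 1$ (since $L_n^{(-1)}(0)=0$) and $\tilde a_{0,0}=1$. Feeding this into the first equality of \eqref{Eq def umbral} with $P_k=\Lambda_k(\cdot,u_1)$ gives, for $n\ge 1$,
\begin{displaymath}
(\Lambda(\cdot,u_1)\odot\Lambda(\cdot,u_2))_n(x)=\sum_{k=0}^n\tilde a_{k,n}\,\Lambda_k(x,u_1)=\sum_{k=1}^n(-1)^{n-k}\dfrac{n!(n-1)!}{(n-k)!\,k!\,(k-1)!}\,\Lambda_k(x,u_1)\,u_2^{n-k},
\end{displaymath}
and trivially $(\Lambda(\cdot,u_1)\odot\Lambda(\cdot,u_2))_0=1$. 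By \eqref{Eq change norm Laguerre} the right-hand side equals $\Lambda_n(x,u_1+u_2)$, so $\Lambda(\cdot,u_1+u_2)=\Lambda(\cdot,u_1)\odot\Lambda(\cdot,u_2)$; this is precisely the assertion that $u\mapsto\Lambda(\cdot,u)$ is a group homomorphism $(\R,+)\to(\mathtt{S}_{\R},\odot)$. For injectivity, note from \eqref{Eq Lambda} that $\Lambda_2(x,u)=x^2-2ux$, whose coefficient of $x$ is $-2u$; hence $\Lambda(\cdot,u)=\Lambda(\cdot,u')$ forces $u=u'$. Therefore the map is an injective homomorphism and its image $(\Lambda(\cdot,u))_{u\in\R}$ is a one-parameter subgroup of $(\mathtt{S}_{\R},\odot)$, isomorphic to $(\R,+)$.

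I do not expect any genuine obstacle in this corollary: once \eqref{Eq change norm Laguerre} is in hand, everything is pure bookkeeping. The only point needing a moment's care is reconciling the summation range $k\ge 1$ in \eqref{Eq change norm Laguerre} with the range $k\ge 0$ in the umbral composition \eqref{Eq def umbral}, which is exactly what $L_n^{(-1)}(0)=0$ (equivalently $\Lambda_n(0,u)=0$ for $n\ge 1$) takes care of. The substance of the statement lies entirely in the Laguerre change of normalization identity \eqref{Eq change norm Laguerre}, which is the Laguerre counterpart of the Hermite change of variance identity \eqref{Eq change var}.
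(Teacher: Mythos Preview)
Your proposal is correct and follows the same approach as the paper, which in fact provides no explicit proof: the paper simply rewrites \eqref{Eq change norm Laguerre} as $\Lambda(\cdot,u_1+u_2)=\Lambda(\cdot,u_1)\odot\Lambda(\cdot,u_2)$ and states the corollary as an immediate restatement. Your write-up supplies the bookkeeping details (verifying $\Lambda(\cdot,u)\in\mathtt{S}_{\R}$, reconciling the summation ranges via $\Lambda_n(0,u)=0$, and checking injectivity from $\Lambda_2$) that the paper leaves implicit.
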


Finally, we would like to point out that both the Hermite polynomials and the generalized Laguerre polynomials
belong to a more specific subgroup of $(\mathtt{S}_{\R},\odot)$,
with more structure. 
This is the subgroup of Sheffer sequences.
We refer to \cite{RotaAll73Umbral,Roman84Umbral}
and in particular to \cite[Theorem 7]{RotaAll73Umbral}.
We do not know whether this fact is relevant in the context of renormalization,
but possibly it is.

\section{Estimates on Green's function and conformal radius}
\label{Sec estimates}

\subsection{An estimate for the Green's function both near the boundary and diagonal}
\label{Subsec estim Green}

The Green's function $G_{D}(z,w)$ has on one hand a logarithmic divergence when $z$ and $w$ are close,
and on the other hand converges to $0$ when one of the two points is close to the boundary $\partial D$.
The following estimate will be useful in the case when $z$ and $w$ are both close
to each other and close to the boundary,
and will give a sufficient condition for $G_{D}(z,w)$ to be small nevertheless.
It is used in the proof of Lemmas \ref{Lem key estimate}, \ref{Lem f q}, \ref{Lem a s bound},
\ref{Lem a s bound q} and \ref{Lem conv 0 remainder Sob}.

\begin{prop}
\label{Prop Green}
There is a universal constant $C>0$,
such that for every $D\subset \C$ open, non-empty, bounded, connected and simply connected domain,
and for every $z\neq w\in D$,
\begin{equation}
\label{Eq est Green}
G_{D}(z,w) \leq C\Big( 1\wedge \Big(
\dfrac{d(z,\partial D)\wedge d(w,\partial D)}{\vert w-z\vert}\Big)^{1/2}
\Big)
\log\Big(\dfrac{2 \diam (D)}{\vert w-z\vert}\Big).
\end{equation}
\end{prop}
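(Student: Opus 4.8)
The plan is to transport the problem to the unit disk by the Riemann map and to read off the square-root factor from Koebe's growth theorem. Throughout set $r=|w-z|$ and $\Delta=\diam(D)$, and, using $G_D(z,w)=G_D(w,z)$, assume without loss of generality that $d(z,\partial D)\le d(w,\partial D)$, so that $d:=d(z,\partial D)\wedge d(w,\partial D)=d(z,\partial D)$. First I would record the crude bound that follows at once from \eqref{Eq G CR} together with $g_D\le\frac1{2\pi}\log\Delta$: for all $z\neq w\in D$ one has $G_D(z,w)\le\frac1{2\pi}\log(\Delta/r)<\frac1{2\pi}\log(2\Delta/r)$, and since $r<\Delta$ we also have $\log(2\Delta/r)>\log 2>0$. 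This already proves \eqref{Eq est Green} whenever $1\wedge(d/r)^{1/2}=1$, i.e. whenever $d\ge r$, and it will be reused below.

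Assume now $d<r$. Choose a conformal map $\phi\colon D\to\D$ with $\phi(z)=0$ — this is the only place where simple connectedness enters. By conformal invariance of the Green's function, $G_D(z,w)=G_\D(0,\phi(w))=\frac1{2\pi}\log\frac1{|\phi(w)|}$. The key estimate is $1-|\phi(w)|\le 2(d/r)^{1/2}$. To obtain it I would apply Koebe's growth theorem to the univalent function $g(\xi)=\big(\phi^{-1}(\xi)-z\big)/(\phi^{-1})'(0)$ on $\D$, which satisfies $g(0)=0$ and $g'(0)=1$: it gives $|\phi^{-1}(\xi)-z|\le|(\phi^{-1})'(0)|\,\frac{|\xi|}{(1-|\xi|)^2}$ for every $\xi\in\D$. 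Now $|(\phi^{-1})'(0)|=\CR(z,D)$ by the definition of the conformal radius, and $\CR(z,D)\le 4d$ by the distortion inequality \eqref{Eq Koebe}. Taking $\xi=\phi(w)$, so that $\phi^{-1}(\xi)=w$, and bounding $|\phi(w)|<1$, we get $r\le 4d\,(1-|\phi(w)|)^{-2}$, which rearranges to $1-|\phi(w)|\le 2(d/r)^{1/2}$.

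It remains to combine this with the logarithm. If $d/r\le\frac1{16}$ then $1-|\phi(w)|\le\frac12$, hence $|\phi(w)|\ge\frac12$ and, since $-\log(1-t)\le 2t$ for $t\in[0,\frac12]$, $\log\frac1{|\phi(w)|}=-\log\big(1-(1-|\phi(w)|)\big)\le 2\big(1-|\phi(w)|\big)\le 4(d/r)^{1/2}$; thus $G_D(z,w)\le\frac2\pi(d/r)^{1/2}\le\frac2{\pi\log 2}(d/r)^{1/2}\log(2\Delta/r)$, using $\log(2\Delta/r)\ge\log 2$. If instead $\frac1{16}<d/r<1$ then $(d/r)^{1/2}>\frac14$, and the crude bound from the first paragraph gives $G_D(z,w)\le\frac1{2\pi}\log(2\Delta/r)<\frac2\pi(d/r)^{1/2}\log(2\Delta/r)$. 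Together with the already-treated case $d\ge r$, this establishes \eqref{Eq est Green} with a universal constant (one may take $C=1$).

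The argument is short once the right tool is in hand, and I do not expect a genuine obstacle; the one point deserving attention is that the exponent $1/2$ in \eqref{Eq est Green} is exactly the exponent manufactured by the $(1-|\xi|)^{-2}$ in Koebe's bound, so that the logarithmic factor is needed only in the ``interior'' regime where $d/r$ is bounded away from $0$. (In fact the same computation yields the sharper estimate $G_D(z,w)\le\frac2\pi\big((d(z,\partial D)\wedge d(w,\partial D))/|w-z|\big)^{1/2}$, with no logarithm, as soon as this ratio is $\le\frac1{16}$; but only the form stated in \eqref{Eq est Green} will be used in the sequel.)
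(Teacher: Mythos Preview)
Your proof is correct and takes a genuinely different route from the paper's. The paper argues probabilistically: it embeds $D$ in a large disk $\widetilde D=\D(w,2\diam D)$, writes $G_D(z,w)/G_{\widetilde D}(z,w)$ as the probability that a Brownian excursion from $z$ to $w$ in $\widetilde D$ stays in $D$, stops the excursion at the exit from $\D(z,|w-z|/2)$, changes measure to an unconditioned Brownian motion (picking up a bounded Radon--Nikodym factor), and then invokes Beurling's estimate to obtain the $(d/r)^{1/2}$. Your argument is purely complex-analytic: you uniformise $D$ by the Riemann map sending $z$ to $0$, and read the square-root off Koebe's growth bound $|g(\xi)|\le|\xi|/(1-|\xi|)^2$ combined with $\CR(z,D)\le 4d(z,\partial D)$.

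What each approach buys: your method is more self-contained (no Beurling constant, no excursion measures), yields an explicit constant (you may indeed take $C=1$), and as you observe gives the sharper bound $G_D(z,w)\le\frac{2}{\pi}(d/r)^{1/2}$ without the logarithm once $d/r\le 1/16$. The paper's probabilistic route, on the other hand, is closer in spirit to how the estimate is later applied (to random subdomains $D\setminus A$), and Beurling's estimate would still deliver the bound if $D$ were merely assumed to have connected complement rather than simply connected---though that generality is not needed here. Both arguments use simple connectedness in an essential way (you for the Riemann map, the paper to ensure $\C\setminus D$ is connected so that Beurling applies).
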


\begin{proof}
Fix a domain $D$ as above and two points $z\neq w\in D$.
Let $R=2 \diam (D)$.
By symmetry, we may assume that
$d(z,\partial D)\leq d(w,\partial D)$.
Let $\widetilde{D} = \D(w,R)$ be
the open disk of center $w$ and radius $R$.
The for every $x\in \widetilde{D}$,
\begin{displaymath}
G_{\widetilde{D}}(w,x) = \dfrac{1}{2\pi}\log(R/\vert w-x\vert).
\end{displaymath}
By construction, 
$D\subset \widetilde{D}$,
and in particular, $G_{D}(z,w)\leq G_{\widetilde{D}}(z,w)$.
Let $(\wp_{\widetilde{D}}^{z,w}(t))_{0\leq t\leq t_{\rm max}}$
be the Brownian excursions in $\widetilde{D}$ from $z$ to $w$.
Then
\begin{displaymath}
\dfrac{G_{D}(z,w)}{G_{\widetilde{D}}(z,w)}
=\PP(\wp_{\widetilde{D}}^{z,w} \text{ stays in } D).
\end{displaymath}
Consider $\D(z,\vert w-z\vert/2)$ the open disk centered in $z$ of radius
$\vert w-z\vert/2$,
which is again contained in $\widetilde{D}$.
Let $\widetilde{T}\in (0, t_{\rm max})$
be the first time $\wp_{\widetilde{D}}^{z,w}(t)$
exits $\D(z,\vert w-z\vert/2)$.
Then
\begin{displaymath}
\PP(\wp_{\widetilde{D}}^{z,w} \text{ stays in } D)
\leq\PP(\wp_{\widetilde{D}}^{z,w} \text{ stays in } D \text{ on time interval }
[0, \widetilde{T}]).
\end{displaymath}
Let $(B_{t})_{t\geq 0}$ be a Brownian motion starting from $z$,
and let $T$ be its first exit time from $\D(z,\vert w-z\vert/2)$.
Then the law of 
$(\widetilde{T},(\wp_{\widetilde{D}}^{z,w}(t))_{0\leq t\leq \widetilde{T}})$
is absolutely continuous with respect to that of
$(T,(B_{t})_{0\leq t\leq T})$,
and the Radon-Nikodym derivative is given by
\begin{displaymath}
\dfrac{G_{\widetilde{D}}(B_{T},w)}{G_{\widetilde{D}}(z,w)}.
\end{displaymath}
This Radon-Nikodym derivative is bounded from above by
\begin{displaymath}
\dfrac{G_{\widetilde{D}}((z+w)/2,w)}{G_{\widetilde{D}}(z,w)}
=\dfrac{\log(2 R/ \vert w-z\vert)}{\log(R/ \vert w-z\vert)}
= 1 + \dfrac{\log 2}{\log(R/ \vert w-z\vert)}
\leq 2.
\end{displaymath}
Thus,
\begin{displaymath}
\PP(\wp_{\widetilde{D}}^{z,w} \text{ stays in } D \text{ on }
[0, \widetilde{T}])
\leq
2 \mathbb{P}(B_{t}\text{ stays in } D \text{ on } [0,T]).
\end{displaymath}
Assume that $d(z,\partial D)<\vert w-z\vert/2$.
Then, by the Beurling's estimate,
\begin{displaymath}
\mathbb{P}(B_{t}\text{ stays in } D \text{ on } [0,T])
\leq C_{\rm Beurling}
\Big(\dfrac{2 d(z,\partial D)}{\vert w-z\vert}\Big)^{1/2},
\end{displaymath}
where $C_{\rm Beurling}\geq 1$ is a universal constant not depending
$D$, $z$ or $w$.
We refer to \cite[Section 3.8]{LawlerConformallyInvariantProcesses}
and \cite{Oksendal83Beurling}.
In case $d(z,\partial D)\geq \vert w-z\vert/2$,
then trivially $\PP(\wp_{\widetilde{D}}^{z,w} \text{ stays in } D)$
can be always bounded by $1$.
Thus, \eqref{Eq est Green} holds by taking
\begin{displaymath}
C = \dfrac{1}{2\pi} 2\sqrt{2} C_{\rm Beurling}.
\qedhere
\end{displaymath}

\end{proof}

\subsection{An estimate for the variations of the Green's function and conformal radius}
\label{Subsec estim CR}

Here we derive estimates on the variation of Green's function and conformal radius,
that will be used in the proof of Proposition \ref{Prop V G eps}.

Recall that by \eqref{Eq G CR}, for $D\subsetneq \C$ and open simply connected domain,
\begin{displaymath}
g_{D}(z,w) = G_{D}(z,w) - 
\dfrac{1}{2\pi}\log \dfrac{1}{\vert w-z\vert}.
\end{displaymath}

\begin{prop}
\label{Prop var g D}
For every $D\subsetneq \C$ open simply connected domain,
for every $z,z',w,w'\in D$ such that
$\vert z' - z\vert \leq \frac{1}{2}d(z,\partial D)$ and
$\vert w' - w\vert \leq \frac{1}{2}d(w,\partial D)$,
we have
\begin{displaymath}
\vert g_{D}(z',w') - g_{D}(z,w)\vert
\leq 16\dfrac{\vert z' - z\vert}{d(z,\partial D)}
\Big(g_{D}(z,w)-\dfrac{1}{2\pi}\log d(w,\partial D)\Big)
+ \dfrac{\log(2)}{\pi}\dfrac{\vert w' - w\vert}{d(w,\partial D)}.
\end{displaymath}
\end{prop}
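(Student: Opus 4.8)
The plan is to split the increment across the two variables and estimate the two pieces by different mechanisms, reflecting the asymmetry of the statement. I would write
\begin{displaymath}
g_{D}(z',w') - g_{D}(z,w)
= \big(g_{D}(z',w) - g_{D}(z,w)\big) + \big(g_{D}(z',w') - g_{D}(z',w)\big),
\end{displaymath}
so that the first bracket moves only the first variable, the second fixed, and the second bracket moves only the second variable, the first fixed.

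For the first bracket I would use a multiplicative, Harnack-type estimate anchored at the right normalization. The function $\zeta\mapsto g_{D}(\zeta,w)$ is harmonic on $D$, and I claim $u(\zeta):=g_{D}(\zeta,w)-\tfrac{1}{2\pi}\log d(w,\partial D)\geq 0$ on $D$: by monotonicity of Green's functions under domain inclusion $G_{D}(\zeta,w)\geq G_{\D(w,d(w,\partial D))}(\zeta,w)$, and adding $\tfrac{1}{2\pi}\log|\zeta-w|$ to both sides gives $g_{D}(\zeta,w)\geq\tfrac{1}{2\pi}\log d(w,\partial D)$ (for $\zeta\in\D(w,d(w,\partial D))$ this uses the explicit disk Green's function; for $\zeta\notin\D(w,d(w,\partial D))$ it is trivial since then $|\zeta-w|\geq d(w,\partial D)$ and $G_{D}\geq 0$). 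Since $\D(z,d(z,\partial D))\subset D$ and $|z'-z|\leq\tfrac12 d(z,\partial D)$, Harnack's inequality applied to the nonnegative harmonic $u$ on this ball yields
\begin{displaymath}
|g_{D}(z',w) - g_{D}(z,w)| = |u(z')-u(z)| \leq \frac{2|z'-z|}{d(z,\partial D)-|z'-z|}\,u(z) \leq \frac{4|z'-z|}{d(z,\partial D)}\Big(g_{D}(z,w)-\frac{1}{2\pi}\log d(w,\partial D)\Big),
\end{displaymath}
which is comfortably within the first term of the claim.

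For the second bracket I would instead use the boundary representation of $g_{D}$ and a pointwise estimate. For fixed first argument, $\zeta\mapsto g_{D}(\zeta,w)$ is the bounded harmonic function on $D$ with boundary value $\xi\mapsto\tfrac{1}{2\pi}\log|\xi-w|$ on $\partial D$ (since $G_{D}\to 0$ at the boundary); by symmetry of $g_{D}$ this reads $g_{D}(z',w)=\tfrac{1}{2\pi}\E_{z'}[\log|w-B_{\tau_{D}}|]$ for a Brownian motion started at $z'$ and exiting $D$ at time $\tau_{D}$ (equivalently, an integral of $\tfrac{1}{2\pi}\log|w-\cdot|$ against harmonic measure from $z'$). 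Hence $g_{D}(z',w')-g_{D}(z',w)=\tfrac{1}{2\pi}\E_{z'}\big[\log\tfrac{|w'-B_{\tau_{D}}|}{|w-B_{\tau_{D}}|}\big]$, and since $B_{\tau_{D}}\in\partial D$ we have $|w-B_{\tau_{D}}|\geq d(w,\partial D)\geq 2|w'-w|$, so $\big||w'-B_{\tau_{D}}|-|w-B_{\tau_{D}}|\big|/|w-B_{\tau_{D}}|\leq\tfrac12$; the elementary inequality $|\log(1+t)|\leq 2(\log 2)\,|t|$ for $|t|\leq\tfrac12$ then gives $\big|\log\tfrac{|w'-B_{\tau_{D}}|}{|w-B_{\tau_{D}}|}\big|\leq 2\log 2\cdot\tfrac{|w'-w|}{|w-B_{\tau_{D}}|}\leq 2\log 2\cdot\tfrac{|w'-w|}{d(w,\partial D)}$, whence $|g_{D}(z',w')-g_{D}(z',w)|\leq\tfrac{\log 2}{\pi}\tfrac{|w'-w|}{d(w,\partial D)}$. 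Adding the two bounds gives the proposition.

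The argument is essentially routine once one commits to this asymmetric strategy; the one genuine point is precisely that asymmetry — a multiplicative Harnack estimate for the first variable, so that the bound vanishes together with $g_{D}(z,w)-\tfrac{1}{2\pi}\log d(w,\partial D)$ (which is what the downstream use in the proof of Proposition~\ref{Prop V G eps} needs), versus a crude but clean pointwise estimate through the boundary integral for the second variable, where no nonnegative normalization is available uniformly in $z'$ because $d(z',\partial D)$ is not controlled. The only other thing needing a line is the nonnegativity of $u$ via Green's function monotonicity; the stated constant $16$ is a generous over-estimate of what the Harnack step actually yields.
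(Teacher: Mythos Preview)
Your proof is correct and follows essentially the same strategy as the paper: both handle the $w$-variation through the harmonic-measure representation $g_{D}(\cdot,w)=\tfrac{1}{2\pi}\E_{\cdot}[\log|B_{\tau_D}-w|]$ together with the elementary bound $|\log(1+t)|\leq 2\log 2\,|t|$ for $|t|\leq 1/2$, and both handle the $z$-variation via the nonnegativity of $g_{D}(\cdot,w)-\tfrac{1}{2\pi}\log d(w,\partial D)$ combined with a multiplicative estimate on the disk $\D(z,d(z,\partial D))$. The only difference is cosmetic: the paper treats both variables in a single integral and bounds the Poisson kernel difference $|P_{\D}(0,e^{i\theta})-P_{\D}((z'-z)/R,e^{i\theta})|\leq 16r\,P_{\D}(0,e^{i\theta})$ explicitly, whereas you split the variables first and invoke the abstract Harnack inequality on the shifted nonnegative harmonic function --- which is of course the same Poisson-kernel estimate in disguise, and indeed yields the sharper constant $4$ in place of $16$.
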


\begin{proof}
Let $(B_{t})_{t\geq 0}$ be a Brownian motion on $\C$.
Let $T_{\C\setminus D}$ denote its first exit time from $D$.
By $\E_{x}[\cdot]$ we will denote the expectation under the initial condition $B_{0}=x$.
Then
\begin{displaymath}
g_{D}(z,w) = \dfrac{1}{2\pi}\E_{z}\Big[\log\vert B_{T_{\C\setminus D}} - w\vert\Big],
\qquad
g_{D}(z',w') = \dfrac{1}{2\pi}\E_{z'}\Big[\log\vert B_{T_{\C\setminus D}} - w'\vert\Big].
\end{displaymath}
Let us denote by $P_{\D}$ the Poisson kernel of the unit disk $\D$.
We have
\begin{displaymath}
P_{\D}(\xi,e^{i\theta}) = \dfrac{1}{2\pi}\dfrac{1-\vert \xi\vert^{2}}{\vert e^{i\theta} - \xi\vert^{2}}.
\end{displaymath}
Let $R = d(z,\partial D)$.
By first stopping the Brownian motion when it exists $\D(z,R)$,
the disk of radius R and center $z$, we get
\begin{eqnarray*}
g_{D}(z,w) &=&
\dfrac{1}{2\pi}\int_{0}^{2\pi}
d\theta
\,P_{\D}(0,e^{i\theta})
\E_{z+R e^{i\theta}}\Big[\log\vert B_{T_{\C\setminus D}} - w\vert\Big],
\\
g_{D}(z',w') &=&
\dfrac{1}{2\pi}\int_{0}^{2\pi}
d\theta
\,P_{\D}((z' - z) R^{-1},e^{i\theta})
\E_{z+R e^{i\theta}}\Big[\log\vert B_{T_{\C\setminus D}} - w'\vert\Big].
\end{eqnarray*}
Let $\widehat{R} = d(w,\partial D)$.
By rescaling all the distances by $\widehat{R}$, we get 
\begin{eqnarray*}
g_{D}(z,w) &=&
\dfrac{1}{2\pi}\int_{0}^{2\pi}
d\theta
\,P_{\D}(0,e^{i\theta})
\E_{z+R e^{i\theta}}\Big[\log (\widehat{R}^{-1}\vert B_{T_{\C\setminus D}} - w\vert)\Big],
\\
g_{D}(z',w') &=&
\dfrac{1}{2\pi}\int_{0}^{2\pi}
d\theta
\,P_{\D}((z' - z) R^{-1},e^{i\theta})
\E_{z+R e^{i\theta}}\Big[\log(\widehat{R}^{-1}\vert B_{T_{\C\setminus D}} - w'\vert)\Big].
\end{eqnarray*}
By doing that, we get now that
$\log (\widehat{R}^{-1}\vert B_{T_{\C\setminus D}} - w\vert)\geq 0$.
In this way,
\begin{multline*}
\vert g_{D}(z',w') - g_{D}(z,w)\vert 
\leq 
\\
\dfrac{1}{2\pi}\int_{0}^{2\pi}
d\theta
\,
\vert P_{\D}(0,e^{i\theta}) - P_{\D}((z' - z) R^{-1},e^{i\theta})\vert
\E_{z+R e^{i\theta}}\Big[\log (\widehat{R}^{-1}\vert B_{T_{\C\setminus D}} - w\vert)\Big]
\\
+
\dfrac{1}{2\pi}\int_{0}^{2\pi}
d\theta
\,
P_{\D}((z' - z) R^{-1},e^{i\theta})
\E_{z+R e^{i\theta}}
\Big[
\Big\vert
\log
\dfrac{\vert B_{T_{\C\setminus D}} - w'\vert}{\vert B_{T_{\C\setminus D}} - w\vert}
\Big\vert
\Big]
.
\end{multline*}
Further,
\begin{displaymath}
1 -
\dfrac{\vert w' -w\vert}{\vert B_{T_{\C\setminus D}} - w\vert} 
\leq
\dfrac{\vert B_{T_{\C\setminus D}} - w'\vert}{\vert B_{T_{\C\setminus D}} - w\vert}
\leq 1 + 
\dfrac{\vert w' -w\vert}{\vert B_{T_{\C\setminus D}} - w\vert},
\end{displaymath}
and
\begin{displaymath}
-
2\log(2)
\dfrac{\vert w' -w\vert}{\vert B_{T_{\C\setminus D}} - w\vert}
\leq
\log
\dfrac{\vert B_{T_{\C\setminus D}} - w'\vert}{\vert B_{T_{\C\setminus D}} - w\vert}
\leq
\dfrac{\vert w' -w\vert}{\vert B_{T_{\C\setminus D}} - w\vert}.
\end{displaymath}
For the lower bound we used the concavity of the $\log$ function and
the fact that 
\begin{displaymath}
\dfrac{\vert w' -w\vert}{\vert B_{T_{\C\setminus D}} - w\vert}
\leq
\dfrac{\vert w' -w\vert}{d(w,\partial D)}
\leq 1/2.
\end{displaymath}
Thus,
\begin{displaymath}
\int_{0}^{2\pi}
d\theta
\,
P_{\D}((z' - z) R^{-1},e^{i\theta})
\E_{z+R e^{i\theta}}
\Big[
\Big\vert
\log
\dfrac{\vert B_{T_{\C\setminus D}} - w'\vert}{\vert B_{T_{\C\setminus D}} - w\vert}
\Big\vert
\Big]
\leq 
2 \log(2) \dfrac{\vert w' -w\vert}{d(w,\partial D)}.
\end{displaymath}
Moreover, for every $\theta\in[0,2\pi]$,
\begin{displaymath}
\vert P_{\D}(0,e^{i\theta}) - P_{\D}((z' - z) R^{-1},e^{i\theta})\vert
\leq \dfrac{2r(1+r)}{(1-r)^{2}} P_{\D}(0,e^{i\theta})
\leq 16 r P_{\D}(0,e^{i\theta}),
\end{displaymath}
where $r= \vert z'-z\vert /R$.
This concludes.
\end{proof}

\begin{cor}
\label{Cor estim CR}
For every $D\subsetneq \C$ open simply connected domain and
for every $z,z'\in D$ such that
$\vert z' - z\vert \leq \frac{1}{2}d(z,\partial D)$,
we have
\begin{displaymath}
\dfrac{1}{2\pi}
\Big\vert
\log
\dfrac{\CR(z',D)}{\CR(z,D)}
\Big\vert
\leq
\dfrac{17\log(2)}{\pi}\dfrac{\vert z' - z\vert}{d(z,\partial D)}.
\end{displaymath}
\end{cor}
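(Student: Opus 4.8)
The plan is to deduce Corollary \ref{Cor estim CR} directly from Proposition \ref{Prop var g D} by specializing the second pair of points to the diagonal and identifying the conformal radius with the diagonal values of $g_D$. Recall from \eqref{Eq G CR} and the remark following it that $g_D(x,x) = \frac{1}{2\pi}\log\CR(x,D)$ for every $x\in D$. Hence
\[
\frac{1}{2\pi}\log\frac{\CR(z',D)}{\CR(z,D)} = g_D(z',z') - g_D(z,z),
\]
so it suffices to bound the right-hand side.

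I would apply Proposition \ref{Prop var g D} with the choice $w = z$ and $w' = z'$. Then the two hypotheses $\vert z'-z\vert\le\frac12 d(z,\partial D)$ and $\vert w'-w\vert\le\frac12 d(w,\partial D)$ coincide and are exactly the assumption of the corollary. The proposition gives
\[
\vert g_D(z',z') - g_D(z,z)\vert \le 16\,\frac{\vert z'-z\vert}{d(z,\partial D)}\Big(g_D(z,z) - \tfrac{1}{2\pi}\log d(z,\partial D)\Big) + \frac{\log 2}{\pi}\,\frac{\vert z'-z\vert}{d(z,\partial D)}.
\]
The only remaining point, which is routine rather than an obstacle, is to control the bracketed factor. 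Using $g_D(z,z) = \frac{1}{2\pi}\log\CR(z,D)$ again, this factor equals $\frac{1}{2\pi}\log\big(\CR(z,D)/d(z,\partial D)\big)$, which by the Koebe distortion inequality \eqref{Eq Koebe} lies in $[0,\tfrac{1}{2\pi}\log 4]=[0,\tfrac{\log 2}{\pi}]$.

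Substituting this into the previous display yields
\[
\vert g_D(z',z') - g_D(z,z)\vert \le 16\cdot\frac{\log 2}{\pi}\cdot\frac{\vert z'-z\vert}{d(z,\partial D)} + \frac{\log 2}{\pi}\cdot\frac{\vert z'-z\vert}{d(z,\partial D)} = \frac{17\log 2}{\pi}\cdot\frac{\vert z'-z\vert}{d(z,\partial D)},
\]
which is the claimed estimate. In short, there is no genuinely hard step here: all the analytic content sits in Proposition \ref{Prop var g D}, and the corollary is a one-line specialization combined with the elementary Koebe bound \eqref{Eq Koebe}.
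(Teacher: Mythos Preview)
Your proof is correct and follows exactly the same route as the paper: apply Proposition~\ref{Prop var g D} with $w=z$, $w'=z'$, identify the bracketed factor as $\tfrac{1}{2\pi}\log(\CR(z,D)/d(z,\partial D))$, and bound it by $\tfrac{\log 2}{\pi}$ via the Koebe inequality~\eqref{Eq Koebe}. The paper's presentation differs only cosmetically (it writes the intermediate bound with the constant $\tfrac{8}{\pi}$ already absorbed).
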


\begin{proof}
We apply Proposition \ref{Prop var g D} with $w=z$ and $w'= z'$, to get
\begin{displaymath}
\dfrac{1}{2\pi}
\Big\vert
\log
\dfrac{\CR(z',D)}{\CR(z,D)}
\Big\vert
\leq
\Big(\dfrac{8}{\pi}
\log
\dfrac{\CR(z,D)}{d(z,\partial D)} + \dfrac{\log(2)}{\pi}\Big)\dfrac{\vert z' - z\vert}{d(z,\partial D)}.
\end{displaymath}
Further, we use the distortion inequality \eqref{Eq Koebe}.
\end{proof}

\begin{cor}
\label{Cor var G D}
For every $D\subsetneq \C$ open simply connected domain,
for every $z,z',w,w'\in D$ such that
$\vert z' - z\vert \leq \frac{1}{2}d(z,\partial D)$,
$\vert w' - w\vert \leq \frac{1}{2}d(w,\partial D)$,
$\vert z' - z\vert\leq \frac{1}{4}\vert z-w\vert$,
and
$\vert w' - w\vert\leq \frac{1}{4}\vert z-w\vert$,
we have
\begin{eqnarray*}
\vert G_{D}(z',w') - G_{D}(z,w)\vert &\leq &
\dfrac{\log(2)}{\pi}\dfrac{\vert z' - z\vert +\vert w' - w\vert}{\vert z-w\vert}
\\  && +
16\dfrac{\vert z' - z\vert}{d(z,\partial D)}
\Big(G_{D}(z,w)+\dfrac{1}{2\pi}\log \dfrac{\vert z-w\vert}{d(w,\partial D)}\Big)
+ \dfrac{\log(2)}{\pi}\dfrac{\vert w' - w\vert}{d(w,\partial D)}.
\end{eqnarray*}
\end{cor}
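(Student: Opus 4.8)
The plan is to peel off the logarithmic singularity and handle it separately from the smooth remainder $g_{D}$. Using the decomposition \eqref{Eq G CR} for both $G_{D}(z',w')$ and $G_{D}(z,w)$ one gets
\begin{displaymath}
G_{D}(z',w') - G_{D}(z,w) = \dfrac{1}{2\pi}\log\dfrac{\vert w-z\vert}{\vert w'-z'\vert} + \big(g_{D}(z',w') - g_{D}(z,w)\big),
\end{displaymath}
so it suffices to bound the first summand by $\frac{\log(2)}{\pi}\frac{\vert z'-z\vert+\vert w'-w\vert}{\vert z-w\vert}$ and the second by the remaining two terms of the claimed inequality.

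For the smooth part I would simply invoke Proposition \ref{Prop var g D}: its hypotheses $\vert z'-z\vert\leq\frac12 d(z,\partial D)$ and $\vert w'-w\vert\leq\frac12 d(w,\partial D)$ are among the present assumptions, so it applies verbatim. To match the parenthesis in the corollary one rewrites $g_{D}(z,w)-\frac{1}{2\pi}\log d(w,\partial D) = G_{D}(z,w)+\frac{1}{2\pi}\log\frac{\vert z-w\vert}{d(w,\partial D)}$, using $g_{D}(z,w)=G_{D}(z,w)+\frac{1}{2\pi}\log\vert w-z\vert$ from \eqref{Eq G CR}. (Incidentally this quantity is nonnegative, since the harmonic measure representation used in the proof of Proposition \ref{Prop var g D} forces $g_{D}(z,w)\geq\frac{1}{2\pi}\log d(w,\partial D)$, which is consistent with the right-hand side being a genuine upper bound.)

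For the singular part, the two extra hypotheses $\vert z'-z\vert\leq\frac14\vert z-w\vert$ and $\vert w'-w\vert\leq\frac14\vert z-w\vert$ give, by the triangle inequality, $\big\vert\,\vert w'-z'\vert-\vert w-z\vert\,\big\vert\leq\vert z'-z\vert+\vert w'-w\vert\leq\frac12\vert z-w\vert$, and in particular $\vert w'-z'\vert\geq\frac12\vert z-w\vert$. Setting $s=(\vert w'-z'\vert-\vert w-z\vert)/\vert w-z\vert$ we have $\vert s\vert\leq\frac12$ and $\frac{1}{2\pi}\log\frac{\vert w-z\vert}{\vert w'-z'\vert}=-\frac{1}{2\pi}\log(1+s)$; the elementary inequality $\vert\log(1+s)\vert\leq 2\log(2)\vert s\vert$ for $\vert s\vert\leq\frac12$ (the lower bound from concavity of $\log$ on $[\frac12,1]$, the upper bound from $\log(1+s)\leq s\leq 2\log(2)s$), which is the very same estimate already used inside the proof of Proposition \ref{Prop var g D}, then yields
\begin{displaymath}
\dfrac{1}{2\pi}\Big\vert\log\dfrac{\vert w-z\vert}{\vert w'-z'\vert}\Big\vert \leq \dfrac{\log(2)}{\pi}\dfrac{\big\vert\,\vert w'-z'\vert-\vert w-z\vert\,\big\vert}{\vert w-z\vert} \leq \dfrac{\log(2)}{\pi}\dfrac{\vert z'-z\vert+\vert w'-w\vert}{\vert z-w\vert}.
\end{displaymath}
Adding the two bounds gives the corollary. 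There is no genuine obstacle here: the argument is pure bookkeeping around Proposition \ref{Prop var g D} plus a one-line logarithm inequality. The only point where the stronger hypotheses are used is to guarantee that $\vert w'-z'\vert$ stays comparable to $\vert w-z\vert$, so that the logarithm of their ratio can be linearized.
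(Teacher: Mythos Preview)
Your proof is correct and follows essentially the same route as the paper: split $G_D$ via \eqref{Eq G CR} into the logarithmic singularity plus $g_D$, apply Proposition~\ref{Prop var g D} to the $g_D$ part (rewriting $g_D(z,w)-\tfrac{1}{2\pi}\log d(w,\partial D)$ in terms of $G_D$), and bound the logarithm of the ratio of distances by the same $2\log(2)|s|$ estimate for $|s|\le\tfrac12$. The only cosmetic difference is that you made the linearization step for the logarithm slightly more explicit.
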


\begin{proof}
By the decomposition \eqref{Eq G CR} and Proposition \ref{Prop var g D}, we get
that $\vert G_{D}(z',w') - G_{D}(z,w)\vert$ is bounded by
\begin{multline*}
\dfrac{1}{2\pi}\Big\vert\log\dfrac{\vert z'-w'\vert}{\vert z-w\vert}\Big\vert
+
16\dfrac{\vert z' - z\vert}{d(z,\partial D)}
\Big(g_{D}(z,w)-\dfrac{1}{2\pi}\log d(w,\partial D)\Big)
+ \dfrac{\log(2)}{\pi}\dfrac{\vert w' - w\vert}{d(w,\partial D)}
\\  =
\dfrac{1}{2\pi}\Big\vert\log\dfrac{\vert z'-w'\vert}{\vert z-w\vert}\Big\vert +
16\dfrac{\vert z' - z\vert}{d(z,\partial D)}
\Big(G_{D}(z,w)+\dfrac{1}{2\pi}\log \dfrac{\vert z-w\vert}{d(w,\partial D)}\Big)
+ \dfrac{\log(2)}{\pi}\dfrac{\vert w' - w\vert}{d(w,\partial D)}.
\end{multline*}
Further,
\begin{displaymath}
\Big\vert\log\dfrac{\vert z'-w'\vert}{\vert z-w\vert}\Big\vert
\leq 2\log (2) \dfrac{\vert z' - z\vert +\vert w' - w\vert}{\vert z-w\vert},
\end{displaymath}
where we also used the concavity of $\log$.
\end{proof}

\section{Decomposition of Wick powers induced by first passage sets}
\label{Sec Wick FPS}

\subsection{Conditional expectation of Wick powers given an FPS}
\label{Subsec cond Wick FPS}

Let $D\subset \C$ be an open, non-empty, bounded, connected and simply connected domain.
Let be a constant $v>0$ and $\Phi$ a GFF on $D$ with constant
boundary condition $v$ on $\partial D$.
Let $A$ be the first passage set (FPS) of $\Phi$
from level $v$ to level $0$;
see Section \ref{Subsubsec FPS}.
Note that the level $0$ is not essential, but it simplifies the upcoming computations and expressions.
The GFF $\Phi$ can be decomposed
\begin{equation}
\label{Eq decomp Phi}
\Phi = \nu_{A} + \Phi_{D\setminus A},
\end{equation}
where $\nu_{A}$ is a positive measure supported on A,
measurable w.r.t. $A$
(Minkowski content, Theorem \ref{Thm Mink ALS1})
and conditionally on $A$,
$\Phi_{D\setminus A}$ is distributed as a GFF on $D\setminus A$
with $0$ boundary condition.

Denote $\psi_{n,A}$ the conditional expectation
\begin{displaymath}
\psi_{n,A} = \E\big[:\Phi^{n}:\vert A\big],
\end{displaymath}
where $:\Phi^{n}:$ is the $n$-th Wick power of $A$.
Since $\E\big[\Vert :\Phi^{n}:\Vert_{H^{-\eta}(\C)}^{2}\big] < +\infty$,
$\psi_{n,A}$ is a well defined random element of $H^{-\eta}(\C)$, and
\begin{displaymath}
\Vert\psi_{n,A}\Vert_{H^{-\eta}(\C)}^{2}
\leq
\E\big[\Vert :\Phi^{n}:\Vert_{H^{-\eta}(\C)}^{2}\vert A\big]
~~\text{a.s.}
\end{displaymath}
We refer to \cite[Section 2.6]{HNVW16AnBanach1}.
Note that $\psi_{1,A} = \nu_{A}$.

The random fields $\psi_{n,A}$ are well defined from abstract considerations,
and measurable w.r.t. $A$.
However, one of our goals here is to give a more explicit description of these fields.

Let $(\rho_{\varepsilon})_{\varepsilon>0}$ be a family of mollificators as in
Section \ref{Subsec Wick}. Then
\begin{displaymath}
\Phi_{\varepsilon} = \nu_{A,\varepsilon} + \Phi_{D\setminus A,\varepsilon},
\end{displaymath}
where $\Phi_{D\setminus A,\varepsilon} = \Phi_{D\setminus A}\ast\rho_{\varepsilon}$,
and
\begin{displaymath}
\nu_{A,\varepsilon}(z) = \int_{\C} \rho_{\varepsilon}(z-w)\,d\nu_{A}(w).
\end{displaymath}

\begin{lemma}
\label{Lem cond Psi eps}
For every $\eta>0$,
\begin{displaymath}
\lim_{\varepsilon\to 0}
\E\Big[
\big\Vert
\psi_{n,A}
-
\E\big[:\Phi^{n}_{\varepsilon}:\vert A\big]
\big\Vert_{H^{-\eta}(\C)}^{2}
\Big] = 0.
\end{displaymath}
\end{lemma}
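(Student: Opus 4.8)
The plan is to deduce this directly from the construction of $:\Phi^{n}:$ recalled in Section \ref{Subsec Wick}, namely from the convergence
\[
\lim_{\varepsilon\to 0}\E\big[\Vert :\Phi^{n}: - :\Phi^{n}_{\varepsilon}:\Vert_{H^{-\eta}(\C)}^{2}\big]=0,
\]
combined with the elementary fact that conditioning on $A$ is a contraction on the Bochner space $L^{2}(d\PP,\sigma(\Phi),H^{-\eta}(\C))$.

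First I would check that all the objects are well defined. The regularized field $\Phi_{\varepsilon}=\Phi\ast\rho_{\varepsilon}$ is $\sigma(\Phi)$-measurable, hence so is $:\Phi^{n}_{\varepsilon}:$, and by \eqref{Eq cov Wick eps} together with Lemma \ref{Lem G eps} one has
\[
\E\big[\Vert :\Phi^{n}_{\varepsilon}:\Vert_{H^{-\eta}(\C)}^{2}\big]
= n!\int_{\C^{2}} G_{D,\varepsilon,\varepsilon}(z,w)^{n}\,\LK_{\eta}(\vert w-z\vert)\,d^{2}z\,d^{2}w<+\infty,
\]
the finiteness coming from the fact that, for fixed $\varepsilon>0$, $G_{D,\varepsilon,\varepsilon}$ is bounded and essentially compactly supported, while $\LK_{\eta}$ is locally integrable on $\C$. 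So $:\Phi^{n}_{\varepsilon}:\in L^{2}(d\PP,\sigma(\Phi),H^{-\eta}(\C))$, and since $\sigma(A)\subset\sigma(\Phi)$ and $H^{-\eta}(\C)$ is a separable Hilbert space, the conditional expectation $\E[:\Phi^{n}_{\varepsilon}:\vert A]$ is a well-defined element of $L^{2}(d\PP,\sigma(A),H^{-\eta}(\C))$; likewise $\psi_{n,A}=\E[:\Phi^{n}:\vert A]$ is.

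The main step is then a two-line argument. By linearity of conditional expectation, $\psi_{n,A}-\E[:\Phi^{n}_{\varepsilon}:\vert A]=\E\big[:\Phi^{n}:-:\Phi^{n}_{\varepsilon}:\,\big\vert\,A\big]$, and by the Jensen inequality for vector-valued conditional expectations (\cite[Section 2.6]{HNVW16AnBanach1}) one has, almost surely,
\[
\big\Vert\E\big[:\Phi^{n}:-:\Phi^{n}_{\varepsilon}:\,\big\vert\,A\big]\big\Vert_{H^{-\eta}(\C)}^{2}
\leq \E\big[\Vert :\Phi^{n}:-:\Phi^{n}_{\varepsilon}:\Vert_{H^{-\eta}(\C)}^{2}\,\big\vert\,A\big].
\]
Taking expectations and using the tower property gives
\[
\E\big[\Vert\psi_{n,A}-\E[:\Phi^{n}_{\varepsilon}:\vert A]\Vert_{H^{-\eta}(\C)}^{2}\big]
\leq \E\big[\Vert :\Phi^{n}:-:\Phi^{n}_{\varepsilon}:\Vert_{H^{-\eta}(\C)}^{2}\big],
\]
and the right-hand side tends to $0$ as $\varepsilon\to 0$ by the definition of $:\Phi^{n}:$.

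Honestly, there is no real obstacle here: the statement is a soft consequence of the $L^{2}$-convergence defining the Wick powers and of the contractivity of conditional expectation. The only points requiring a word of justification — the $\sigma(\Phi)$-measurability and square-integrability of $:\Phi^{n}_{\varepsilon}:$, and the legitimacy of taking vector-valued conditional expectations in a separable Hilbert space — are routine and covered by the cited references. The more substantial work concerning the explicit description of $\psi_{n,A}$ is postponed to the subsequent statements.
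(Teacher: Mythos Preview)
Your proof is correct and follows essentially the same route as the paper: the paper's proof is the single displayed inequality
\[
\E\Big[\big\Vert\psi_{n,A}-\E\big[:\Phi^{n}_{\varepsilon}:\vert A\big]\big\Vert_{H^{-\eta}(\C)}^{2}\Big]\leq\E\Big[\big\Vert:\Phi^{n}:-:\Phi^{n}_{\varepsilon}:\big\Vert_{H^{-\eta}(\C)}^{2}\Big],
\]
which is exactly your contractivity-of-conditional-expectation step. You have simply spelled out the well-definedness checks and the Jensen/tower justification more carefully than the paper does.
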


\begin{proof}
Indeed,
For every $\eta>0$,
\begin{displaymath}
\E\Big[
\big\Vert
\psi_{n,A}
-
\E\big[:\Phi^{n}_{\varepsilon}:\vert A\big]
\big\Vert_{H^{-\eta}(\C)}^{2}
\Big]
\leq
\E\Big[
\big\Vert
:\Phi^{n}:
-
:\Phi^{n}_{\varepsilon}:
\big\Vert_{H^{-\eta}(\C)}^{2}
\Big] = 0.
\qedhere
\end{displaymath}
\end{proof}

Denote
\begin{displaymath}
V_{A}(z) = g_{D}(z,z) - g_{D\setminus A}(z,z),
\end{displaymath}
\begin{displaymath}
V_{A,\varepsilon}(z) = 
G_{D,\varepsilon,\varepsilon}(z,z) - 
G_{D\setminus A,\varepsilon,\varepsilon}(z,z)
=
g_{D,\varepsilon,\varepsilon}(z,z) - 
g_{D\setminus A,\varepsilon,\varepsilon}(z,z).
\end{displaymath}
For $z\in\C\setminus D$, $V_{A}(z)=0$,
and for $z\in D\setminus A$,
\begin{equation}
\label{Eq V A CR}
V_{A}(z) = \dfrac{1}{2\pi}\log\Big(
\dfrac{\CR(z,D)}{\CR(z,D\setminus A)}
\Big),
\end{equation}
where $\CR(z,D\setminus A)$ denotes the conformal radius of the connected
component of $z$ in $D\setminus A$.
On $A\setminus\partial D$, $V_{A}$ is not defined and morally values
$+\infty$, but we will not need that.
By contrast, the function $V_{A,\varepsilon}$ is smooth, with compact support on
$\C$.

\begin{lemma}
\label{Lem decomp Phi eps}
For $n\geq 1$ and $z\in\C$,
\begin{eqnarray*}
:\Phi^{n}_{\varepsilon}:(z)&=&
Q_{n}(\nu_{A,\varepsilon}(z),V_{A,\varepsilon}(z)) +
Q_{n}(\Phi_{D\setminus A,\varepsilon}(z),G_{D,\varepsilon,\varepsilon}(z,z))
\\
&&-\ind_{n \text{ even}}(-1)^{n/2}
\dfrac{n!}{2^{n/2} (n/2)!}
V_{A,\varepsilon}(z)^{n/2}
\\&& +
\sum_{\substack{0\leq j\leq n-1\\0\leq k <\lfloor j/2\rfloor}}
(-1)^{k}\dfrac{n!}{2^{k} (n-j)! k! (j-2k)!}
V_{A,\varepsilon}(z)^{k}
\nu_{A,\varepsilon}(z)^{n-j}
\,
:\Phi_{D\setminus A,\varepsilon}^{j-2k}:(z).
\end{eqnarray*}
\end{lemma}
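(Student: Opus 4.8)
This statement is, for fixed $\varepsilon>0$, an identity between smooth functions on $\C$, and the plan is to prove it purely algebraically, conditionally on $A$ and pointwise in $z$. The ingredients are: (i) the decomposition $\Phi_{\varepsilon}=\nu_{A,\varepsilon}+\Phi_{D\setminus A,\varepsilon}$ obtained by convolving \eqref{Eq decomp Phi} with $\rho_{\varepsilon}$, in which, conditionally on $A$, the function $\nu_{A,\varepsilon}$ is deterministic and $\Phi_{D\setminus A,\varepsilon}$ is a smooth centered Gaussian field with $\E[\Phi_{D\setminus A,\varepsilon}(z)^{2}\mid A]=G_{D\setminus A,\varepsilon,\varepsilon}(z,z)$; (ii) the binomial identity \eqref{Eq Q bin} for $Q_{n}$; (iii) the change-of-variance identity \eqref{Eq change var}. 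The only relation between variances that will be used is $G_{D,\varepsilon,\varepsilon}(z,z)=G_{D\setminus A,\varepsilon,\varepsilon}(z,z)+V_{A,\varepsilon}(z)$, which is the very definition of $V_{A,\varepsilon}$.

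First I would write $:\Phi^{n}_{\varepsilon}:(z)=Q_{n}(\Phi_{D\setminus A,\varepsilon}(z)+\nu_{A,\varepsilon}(z),\,G_{D,\varepsilon,\varepsilon}(z,z))$ and expand it by \eqref{Eq Q bin}, with $x=\Phi_{D\setminus A,\varepsilon}(z)$ and $y=\nu_{A,\varepsilon}(z)$:
\[
:\Phi^{n}_{\varepsilon}:(z)=\sum_{j=0}^{n}\frac{n!}{j!\,(n-j)!}\,Q_{j}(\Phi_{D\setminus A,\varepsilon}(z),G_{D,\varepsilon,\varepsilon}(z,z))\,\nu_{A,\varepsilon}(z)^{n-j}.
\]
Here $Q_{j}$ is evaluated at the ``wrong'' variance $G_{D,\varepsilon,\varepsilon}(z,z)=G_{D\setminus A,\varepsilon,\varepsilon}(z,z)+V_{A,\varepsilon}(z)$, so next I would apply \eqref{Eq change var} with $u_{1}=G_{D\setminus A,\varepsilon,\varepsilon}(z,z)$ and $u_{2}=V_{A,\varepsilon}(z)$; since $Q_{j-2k}(\Phi_{D\setminus A,\varepsilon}(z),G_{D\setminus A,\varepsilon,\varepsilon}(z,z))$ is by definition $:\Phi_{D\setminus A,\varepsilon}^{\,j-2k}:(z)$, this rewrites each $Q_{j}$ as $\sum_{0\le k\le\lfloor j/2\rfloor}(-1)^{k}\frac{j!}{2^{k}k!(j-2k)!}\,:\Phi_{D\setminus A,\varepsilon}^{\,j-2k}:(z)\,V_{A,\varepsilon}(z)^{k}$. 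Using $\frac{n!}{j!(n-j)!}\cdot\frac{j!}{2^{k}k!(j-2k)!}=\frac{n!}{(n-j)!\,2^{k}k!(j-2k)!}$, everything collapses into the single double sum
\[
:\Phi^{n}_{\varepsilon}:(z)=\sum_{j=0}^{n}\ \sum_{0\le k\le\lfloor j/2\rfloor}(-1)^{k}\frac{n!}{(n-j)!\,2^{k}k!(j-2k)!}\,V_{A,\varepsilon}(z)^{k}\,\nu_{A,\varepsilon}(z)^{n-j}\,:\Phi_{D\setminus A,\varepsilon}^{\,j-2k}:(z).
\]

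The final step is to regroup this double sum according to whether the $:\Phi_{D\setminus A,\varepsilon}:$-factor or the $\nu_{A,\varepsilon}$-factor degenerates. The terms with $j-2k=0$ (so that $:\Phi_{D\setminus A,\varepsilon}^{\,0}:=1$), indexed by $j=2m$, $k=m$, sum to $\sum_{m=0}^{\lfloor n/2\rfloor}(-1)^{m}\frac{n!}{2^{m}m!(n-2m)!}\nu_{A,\varepsilon}(z)^{n-2m}V_{A,\varepsilon}(z)^{m}$, which is $Q_{n}(\nu_{A,\varepsilon}(z),V_{A,\varepsilon}(z))$ by the explicit formula \eqref{Eq def Q}. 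The terms with $n-j=0$, i.e.\ $j=n$ (so that $\nu_{A,\varepsilon}(z)^{0}=1$), sum to $\sum_{0\le k\le\lfloor n/2\rfloor}(-1)^{k}\frac{n!}{2^{k}k!(n-2k)!}\,:\Phi_{D\setminus A,\varepsilon}^{\,n-2k}:(z)\,V_{A,\varepsilon}(z)^{k}$, which is $Q_{n}(\Phi_{D\setminus A,\varepsilon}(z),G_{D,\varepsilon,\varepsilon}(z,z))$ by a second application of \eqref{Eq change var}. These two sub-sums overlap in exactly one term, and only when $n$ is even, namely $j=n$, $k=n/2$, equal to $(-1)^{n/2}\frac{n!}{2^{n/2}(n/2)!}V_{A,\varepsilon}(z)^{n/2}$; this therefore has to be subtracted once. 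Everything that remains has $j\le n-1$ together with $j-2k\ge 1$, which is precisely the last sum in the statement. I expect the only delicate point to be this bookkeeping — isolating the unique doubly-counted term and checking the coefficient identity above — since at fixed $\varepsilon$ there is no analytic content at all: all the fields involved are smooth, the identity is pointwise, and it holds trivially outside a bounded set where every term vanishes.
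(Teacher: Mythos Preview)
Your proof is correct and follows essentially the same route as the paper: apply the binomial identity \eqref{Eq Q bin} to split off $\nu_{A,\varepsilon}$, then the change-of-variance identity \eqref{Eq change var} to pass from $G_{D,\varepsilon,\varepsilon}$ to $G_{D\setminus A,\varepsilon,\varepsilon}$, and finally regroup. The only organizational difference is that the paper separates the $j=n$ term \emph{before} applying \eqref{Eq change var} and leaves it as $Q_{n}(\Phi_{D\setminus A,\varepsilon},G_{D,\varepsilon,\varepsilon})$, whereas you expand all $j$ uniformly and then re-identify the $j=n$ sub-sum with $Q_{n}(\Phi_{D\setminus A,\varepsilon},G_{D,\varepsilon,\varepsilon})$ via a second use of \eqref{Eq change var}; this is a matter of taste. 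Your description of the residual range as ``$j\le n-1$ together with $j-2k\ge 1$'' is in fact the correct one (the strict inequality $k<\lfloor j/2\rfloor$ written in the statement misses, for odd $j$, the value $k=\lfloor j/2\rfloor$ giving $j-2k=1$).
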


\begin{proof}
By applying the binomial identity \eqref{Eq Q bin},
we get that $:\Phi^{n}_{\varepsilon}:(z)$ equals
\begin{multline*}
\sum_{j=0}^{n} \dfrac{n!}{j! (n-j)!} 
Q_{j}(\Phi_{D\setminus A,\varepsilon}(z),G_{D,\varepsilon,\varepsilon}(z,z))\, 
\nu_{A,\varepsilon}(z)^{n-j}
=
\\
Q_{n}(\Phi_{D\setminus A,\varepsilon}(z),G_{D,\varepsilon,\varepsilon}(z,z))
+
\sum_{j=0}^{n-1} \dfrac{n!}{j! (n-j)!} 
Q_{j}(\Phi_{D\setminus A,\varepsilon}(z),G_{D,\varepsilon,\varepsilon}(z,z))\, 
\nu_{A,\varepsilon}(z)^{n-j}
.
\end{multline*}
Further, by applying the change of variance formula \eqref{Eq change var},
this in turn equals
\begin{multline*}
Q_{n}(\Phi_{D\setminus A,\varepsilon}(z),G_{D,\varepsilon,\varepsilon}(z,z))
+
\\
\sum_{j=0}^{n-1} \dfrac{n!}{j! (n-j)!} 
\nu_{A,\varepsilon}(z)^{n-j}
\sum_{0\leq k\leq\lfloor j/2\rfloor}
(-1)^{k}\dfrac{j!}{2^{k} k! (j-2k)!}
V_{A,\varepsilon}(z)^{k}\,
:\Phi_{D\setminus A,\varepsilon}^{j-2k}:(z)
.
\end{multline*}
The term corresponding to $j$ even and $k=j/2$ is
\begin{multline*}
\sum_{0\leq k <\lfloor n/2\rfloor} 
(-1)^{k}\dfrac{n!}{2^{k} k! (n-2k)!}
\nu_{A,\varepsilon}(z)^{n-2k}
V_{A,\varepsilon}(z)^{k}
\\=
Q_{n}(\nu_{A,\varepsilon},V_{A,\varepsilon})
-\ind_{n \text{ even}}(-1)^{n/2}
\dfrac{n!}{2^{n/2} (n/2)!}
V_{A,\varepsilon}(z)^{n/2}
\end{multline*}
So one gets the desired formula.
\end{proof}

\begin{cor}
\label{Cor cond exp eps}
For $\varepsilon>0$,
\begin{displaymath}
\E\big[:\Phi^{n}_{\varepsilon}:\vert A\big] =
Q_{n}(\nu_{A,\varepsilon},V_{A,\varepsilon}).
\end{displaymath}
\end{cor}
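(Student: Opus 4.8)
The plan is to take the conditional expectation $\E[\,\cdot\mid A]$ of the pointwise identity established in Lemma \ref{Lem decomp Phi eps} and treat its four groups of terms separately. The starting point is that, conditionally on $A$, the field $\Phi_{D\setminus A}$ is a centered GFF on $D\setminus A$, hence its mollification $\Phi_{D\setminus A,\varepsilon}$ is, conditionally on $A$, a smooth centered Gaussian field with $\Var(\Phi_{D\setminus A,\varepsilon}(z)\mid A)=G_{D\setminus A,\varepsilon,\varepsilon}(z,z)=G_{D,\varepsilon,\varepsilon}(z,z)-V_{A,\varepsilon}(z)$. Consequently, by the defining orthogonality property of the Hermite polynomials (equivalently \eqref{Eq Herm 2} applied conditionally on $A$), for every $m\geq 1$ one has $\E\big[Q_{m}(\Phi_{D\setminus A,\varepsilon}(z),G_{D\setminus A,\varepsilon,\varepsilon}(z,z))\mid A\big]=0$ almost surely, while for $m=0$ this quantity is the constant $1$.

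Next I would run through the terms of Lemma \ref{Lem decomp Phi eps}. The term $Q_{n}(\nu_{A,\varepsilon}(z),V_{A,\varepsilon}(z))$ and the compensation term $-\ind_{n\text{ even}}(-1)^{n/2}\tfrac{n!}{2^{n/2}(n/2)!}V_{A,\varepsilon}(z)^{n/2}$ are $\sigma(A)$-measurable, since $\nu_{A}$ is measurable w.r.t. $A$ (Theorem \ref{Thm Mink ALS1}), $V_{A}$ is a deterministic function of $A$, and mollification preserves this; so these two terms are unaffected by the conditioning. Each summand in the last double sum carries a factor $:\Phi_{D\setminus A,\varepsilon}^{j-2k}:(z)=Q_{j-2k}(\Phi_{D\setminus A,\varepsilon}(z),G_{D\setminus A,\varepsilon,\varepsilon}(z,z))$ with $j-2k\geq 2$ (because $k<\lfloor j/2\rfloor$), multiplied by the $\sigma(A)$-measurable quantity $V_{A,\varepsilon}(z)^{k}\nu_{A,\varepsilon}(z)^{n-j}$, so its conditional expectation vanishes by the previous paragraph. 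It remains to handle $Q_{n}(\Phi_{D\setminus A,\varepsilon}(z),G_{D,\varepsilon,\varepsilon}(z,z))$: here I would apply the change of variance identity \eqref{Eq change var} once more, with the splitting $G_{D,\varepsilon,\varepsilon}(z,z)=G_{D\setminus A,\varepsilon,\varepsilon}(z,z)+V_{A,\varepsilon}(z)$, to rewrite it as $\sum_{0\leq k\leq\lfloor n/2\rfloor}(-1)^{k}\tfrac{n!}{2^{k}k!(n-2k)!}Q_{n-2k}(\Phi_{D\setminus A,\varepsilon}(z),G_{D\setminus A,\varepsilon,\varepsilon}(z,z))V_{A,\varepsilon}(z)^{k}$; taking conditional expectation kills every term except, when $n$ is even, the $k=n/2$ term, which equals $\ind_{n\text{ even}}(-1)^{n/2}\tfrac{n!}{2^{n/2}(n/2)!}V_{A,\varepsilon}(z)^{n/2}$.

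Finally I would sum the four contributions: the leftover $\ind_{n\text{ even}}$ term produced by $Q_{n}(\Phi_{D\setminus A,\varepsilon}(z),G_{D,\varepsilon,\varepsilon}(z,z))$ exactly cancels the explicit $-\ind_{n\text{ even}}$ compensation term from Lemma \ref{Lem decomp Phi eps}, the double sum contributes $0$, and what survives is precisely $Q_{n}(\nu_{A,\varepsilon}(z),V_{A,\varepsilon}(z))$, which is the claim pointwise in $z$. To promote this from a pointwise almost sure identity to an identity of random fields (in $H^{-\eta}(\C)$, or tested against any continuous $f$), I would pair both sides with an arbitrary test function and interchange conditional expectation and integration by Fubini; this is legitimate since $\Phi_{\varepsilon}$ is smooth with locally uniformly bounded moments and $\nu_{A,\varepsilon},V_{A,\varepsilon}$ are a.s.\ continuous and compactly supported. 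I do not expect any genuine difficulty here; the only point requiring attention is bookkeeping — keeping straight which Wick powers are renormalized with $G_{D,\varepsilon,\varepsilon}$ and which with $G_{D\setminus A,\varepsilon,\varepsilon}$, and invoking \eqref{Eq change var} a second time to pass between the two normalizations.
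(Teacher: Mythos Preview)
Your proposal is correct and follows essentially the same approach as the paper's proof: start from the decomposition of Lemma~\ref{Lem decomp Phi eps}, use that $\E[:\Phi_{D\setminus A,\varepsilon}^{m}:\mid A]=0$ for $m\geq 1$ to kill the mixed terms, and apply the change of variance identity \eqref{Eq change var} to $Q_{n}(\Phi_{D\setminus A,\varepsilon},G_{D,\varepsilon,\varepsilon})$ so that its conditional expectation reduces to the $\ind_{n\text{ even}}$ term, which then cancels. The paper's proof is terser and omits the Fubini justification you added at the end, but the argument is the same.
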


\begin{proof}
We use the decomposition of Lemma \ref{Lem decomp Phi eps}.
For $j>2k$,
$\E\big[:\Phi_{D\setminus A,\varepsilon}^{j-2k}:\vert A\big] = 0$.
With the change of variance formula \eqref{Eq change var},
we also have
\begin{displaymath}
Q_{n}(\Phi_{D\setminus A,\varepsilon}(z),G_{D,\varepsilon,\varepsilon}(z,z))
=
\sum_{0\leq k\leq\lfloor n/2\rfloor}
(-1)^{k}\dfrac{n!}{2^{k} k! (n-2k)!}
V_{A,\varepsilon}(z)^{k}\,
:\Phi_{D\setminus A,\varepsilon}^{n-2k}:(z)
\end{displaymath}
From this we get
\begin{displaymath}
\E\big[
Q_{n}(\Phi_{D\setminus A,\varepsilon}(z),G_{D,\varepsilon,\varepsilon}(z,z))
\big\vert A
\big] =
\ind_{n \text{ even}}(-1)^{n/2}
\dfrac{n!}{2^{n/2} (n/2)!}
V_{A,\varepsilon}(z)^{n/2},
\end{displaymath}
which gives the desired result.
\end{proof}

By combining Lemma \ref{Lem cond Psi eps} and
Corollary \ref{Cor cond exp eps}, we get a first description for $\psi_{n,A}$.

\begin{prop}
\label{Prop psi eps}
For every $\eta>0$ and $n\geq 1$,
\begin{displaymath}
\lim_{\varepsilon\to 0}
\E\Big[
\big\Vert
\psi_{n,A}
-
Q_{n}(\nu_{A,\varepsilon},V_{A,\varepsilon})
\big\Vert_{H^{-\eta}(\C)}^{2}
\Big] = 0.
\end{displaymath}
\end{prop}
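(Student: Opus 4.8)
The proof will be immediate once the two preceding results are combined. The plan is as follows. By Corollary \ref{Cor cond exp eps}, for every fixed $\varepsilon>0$ one has the identity
\begin{displaymath}
\E\big[:\Phi^{n}_{\varepsilon}:\vert A\big] = Q_{n}(\nu_{A,\varepsilon},V_{A,\varepsilon}),
\end{displaymath}
which should be read as an almost sure equality of $H^{-\eta}(\C)$-valued random variables: since $V_{A,\varepsilon}$ and $\nu_{A,\varepsilon}$ are smooth and compactly supported, $Q_{n}(\nu_{A,\varepsilon},V_{A,\varepsilon})$ is a genuine smooth compactly supported function, hence lies in $H^{-\eta}(\C)$. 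Substituting this equality into the conclusion of Lemma \ref{Lem cond Psi eps} gives directly $\E[\Vert\psi_{n,A} - Q_{n}(\nu_{A,\varepsilon},V_{A,\varepsilon})\Vert_{H^{-\eta}(\C)}^{2}]\to 0$ as $\varepsilon\to 0$, which is the assertion.

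It is worth recalling where the content of the two inputs sits, since the present proposition simply inherits it. Lemma \ref{Lem cond Psi eps} rests on the $L^{2}$ convergence $:\Phi^{n}_{\varepsilon}:\to :\Phi^{n}:$ in $L^{2}(d\PP,\sigma(\Phi),H^{-\eta}(\C))$ recalled in Section \ref{Subsec Wick}, combined with the fact that the conditional expectation $X\mapsto\E[X\vert A]$ is a linear contraction on this Bochner space. Corollary \ref{Cor cond exp eps} in turn uses the explicit decomposition of $:\Phi^{n}_{\varepsilon}:$ from Lemma \ref{Lem decomp Phi eps}, obtained from the binomial identity \eqref{Eq Q bin} and the change of variance identity \eqref{Eq change var}, together with the vanishing of $\E[:\Phi_{D\setminus A,\varepsilon}^{j}:\vert A]$ for $j\geq 1$, valid because conditionally on $A$ the field $\Phi_{D\setminus A}$ is a centered GFF on $D\setminus A$.

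Since both ingredients are already established, there is no genuine obstacle in the proof of the proposition. The only point requiring a word of care is the one noted above, namely that Corollary \ref{Cor cond exp eps} is an equality of $H^{-\eta}(\C)$-valued random fields rather than a merely formal pointwise identity, so that the substitution into Lemma \ref{Lem cond Psi eps} is legitimate; this is clear from the smoothness and compact support of $\nu_{A,\varepsilon}$ and $V_{A,\varepsilon}$.
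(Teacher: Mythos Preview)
Your proof is correct and follows exactly the paper's approach: the paper itself introduces Proposition \ref{Prop psi eps} with the sentence ``By combining Lemma \ref{Lem cond Psi eps} and Corollary \ref{Cor cond exp eps}, we get a first description for $\psi_{n,A}$,'' and gives no further argument. Your additional remarks on why the substitution is legitimate are accurate but not required.
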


So formally,
\begin{displaymath}
\psi_{n,A} = Q_{n}(\nu_{A},V_{A}),
\end{displaymath}
except that the powers $\nu_{A}^{j}$ for $j\geq 2$ are not defined
and $V_{A}$ is infinite on $A\setminus\partial D$.

\begin{cor}
\label{Cor support psi}
For $n = 2k+1$ odd, a.s. $\psi_{n,A}$ is a generalized function with
compact support contained in $A$.
For $n=2k\geq 2$ even, $\psi_{n,A}$ on $D\setminus A$ (more precisely when tested against functions compactly supported on
$D\setminus A$) coincides with
\begin{equation}
\label{Eq rest D A}
(-1)^{k}\dfrac{(2k)!}{2^{k} k!}
\Big(\dfrac{1}{2\pi}\log\Big(
\dfrac{\CR(z,D)}{\CR(z,D\setminus A)}
\Big)\Big)^{k}.
\end{equation}
\end{cor}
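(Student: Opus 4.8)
The plan is to read both statements off the $L^{2}$-approximation $\psi_{n,A}=\lim_{\varepsilon\to0}Q_n(\nu_{A,\varepsilon},V_{A,\varepsilon})$ provided by Proposition \ref{Prop psi eps}, using two elementary facts. First, from the explicit coefficients \eqref{Eq def Q}: for $n=2k+1$ odd every monomial of $Q_n(x,u)$ carries a positive power of $x$, so $Q_{2k+1}(x,u)=x\,\widehat R(x,u)$ for some polynomial $\widehat R$; for $n=2k$ even the unique $x$-free monomial is the middle one, whence $Q_{2k}(x,u)=(-1)^{k}\frac{(2k)!}{2^{k}k!}\,u^{k}+x^{2}\,\widetilde R(x,u)$ for some polynomial $\widetilde R$. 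Second, since $\rho_\varepsilon$ is supported in the disk of radius $\varepsilon$ and $\nu_A$ lives on $A$, the smooth density $\nu_{A,\varepsilon}$ vanishes outside the $\varepsilon$-neighbourhood of $A$.

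Next I would fix an open set $U\subset\C$ with compact closure such that $\overline U\cap A=\emptyset$, and set $\delta=d(\overline U,A)>0$. For $\varepsilon<\delta$ one has $\nu_{A,\varepsilon}\equiv0$ on $U$, so there the approximant $Q_n(\nu_{A,\varepsilon},V_{A,\varepsilon})$ is identically $0$ when $n=2k+1$ and equals $(-1)^{k}\frac{(2k)!}{2^{k}k!}V_{A,\varepsilon}^{k}$ when $n=2k$. For a fixed smooth $g$ with compact support in $U$, Proposition \ref{Prop psi eps} together with Cauchy--Schwarz gives $(Q_n(\nu_{A,\varepsilon},V_{A,\varepsilon}),g)\to(\psi_{n,A},g)$ in $L^{2}(d\PP)$, hence almost surely along a subsequence. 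In the odd case this forces $(\psi_{2k+1,A},g)=0$ a.s.\ on the event $\{\overline U\cap A=\emptyset\}$. Quantifying over a countable basis of balls $U$ with rational data, and for each $U$ over a countable $\LC^\infty_c(U)$-dense family of test functions, I obtain that a.s.\ $\Supp(\psi_{2k+1,A})\subset A$; since $A$ is compact this proves the first assertion.

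For the even case it remains to compute $\lim_{\varepsilon\to0}(V_{A,\varepsilon}^{k},g)$. Conditionally on $A$, for $z$ in a compact $K\subset D\setminus A$ and $\varepsilon$ small (depending on $K$ and $A$) one has $G_{D,\varepsilon,\varepsilon}(z,z)=\frac{1}{2\pi}\log\varepsilon^{-1}+c_{\rho}+\overline g_{D,\varepsilon}(z)$ and the analogous identity for $D\setminus A$, where $c_{\rho}=\frac{1}{2\pi}\int_{\C^{2}}\rho(u)\rho(v)\log|u-v|^{-1}\,d^{2}u\,d^{2}v$ and $\overline g_{D,\varepsilon}(z)=\int_{\C^{2}}\rho_{\varepsilon}(z-x)\rho_{\varepsilon}(z-y)g_D(x,y)\,d^{2}x\,d^{2}y$, similarly $\overline g_{D\setminus A,\varepsilon}$. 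Since $g_D$ and $g_{D\setminus A}$ are continuous (indeed harmonic in each variable), $\overline g_{D,\varepsilon}(z)\to g_D(z,z)$ and $\overline g_{D\setminus A,\varepsilon}(z)\to g_{D\setminus A}(z,z)$ uniformly on $K$; subtracting, the $\log\varepsilon^{-1}$ and $c_\rho$ cancel and $V_{A,\varepsilon}\to g_D(\cdot,\cdot)-g_{D\setminus A}(\cdot,\cdot)=V_A$ uniformly on $K$, so $(V_{A,\varepsilon}^{k},g)\to(V_A^{k},g)$. Combined with the previous paragraph this gives $(\psi_{2k,A},g)=(-1)^{k}\frac{(2k)!}{2^{k}k!}(V_A^{k},g)$ a.s.\ on $\{\overline U\cap A=\emptyset\}$, and the same countable quantification shows that on $D\setminus A$ the field $\psi_{2k,A}$ agrees with \eqref{Eq rest D A}, where I invoke \eqref{Eq V A CR}.

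The delicate point is the interchange of the limit $\varepsilon\to0$ with testing against $g$ over the \emph{random} cut domain $D\setminus A$; the cure is to condition on $A$, turning everything into deterministic mollification estimates on a fixed domain, and to run the almost sure statements over a fixed countable collection of pairs $(U,g)$. I would also emphasize what is \emph{not} being claimed: nothing is said about $\psi_{2k,A}$ tested against functions charging $A$, because $V_A^{k}$ is non-integrable near $A$ — precisely the obstruction that prevents splitting $\psi_{2k,A}$ into an $A$-part and a $(D\setminus A)$-part (cf.\ Remark \ref{Rem non sep}).
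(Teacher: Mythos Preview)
Your proof is correct and follows essentially the same approach as the paper: both rely on Proposition~\ref{Prop psi eps} and the observation that, by \eqref{Eq def Q}, the polynomial $Q_n(x,u)$ has no $x$-free term when $n$ is odd, and has $(-1)^{k}\frac{(2k)!}{2^{k}k!}u^{k}$ as its unique $x$-free term when $n=2k$, so that $Q_n(\nu_{A,\varepsilon},V_{A,\varepsilon})$ is supported in the $\varepsilon$-neighbourhood of $A$ in the odd case and reduces to $(-1)^{k}\frac{(2k)!}{2^{k}k!}V_{A,\varepsilon}^{k}$ away from $A$ in the even case. The paper's proof is much terser and omits the details you supply (the uniform convergence $V_{A,\varepsilon}\to V_A$ on compacts of $D\setminus A$, the passage from $L^2$ to a.s.\ via subsequences, and the countable-basis argument to handle the randomness of $A$), but the underlying idea is identical.
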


\begin{proof}
For $n$ odd, $Q_{n}(\nu_{A,\varepsilon},V_{A,\varepsilon})$
does not contain terms of degree $0$ in $\nu_{A,\varepsilon}$.
Therefore, $Q_{n}(\nu_{A,\varepsilon},V_{A,\varepsilon})$
is supported on the $\varepsilon$-neighborhood of $A$.

For $n=2k$ even, the term in $Q_{n}(\nu_{A,\varepsilon},V_{A,\varepsilon})$
of degree $0$ in $\nu_{A,\varepsilon}$ is
\begin{displaymath}
(-1)^{k}\dfrac{(2k)!}{2^{k} k!}V_{A,\varepsilon}^{k}.
\qedhere
\end{displaymath}
\end{proof}

\begin{rem}
\label{Rem non sep}
Note that for $n=2k$ even, one cannot separate $\psi_{n,A}$ into a part on $A$
and a part on $D\setminus A$.
Indeed, \eqref{Eq rest D A} blows up near $A$ and actually is not
integrable on $D\setminus A$.
We have
\begin{multline}
\label{Eq non int}
\int_{D\setminus A}
\log\Big(
\dfrac{\CR(z,D)}{\CR(z,D\setminus A)}
\Big)^{k}\, d^{2}z
\\=
k
\int_{0}^{1}
\operatorname{Leb}(\{ z\in D\setminus A \vert 
\CR(z,D\setminus A)<u\CR(z,D)\})
\vert\log(u)\vert^{k-1} \dfrac{du}{u},
\end{multline}
Since the Lebesgue measure 
$\operatorname{Leb}(\{ z\in D\setminus A \vert 
\CR(z,D\setminus A)<u\CR(z,D)\})$
is of order $\vert\log(u)\vert^{-1/2}$ (Theorem \ref{Thm Mink ALS1}),
we get that the integral \eqref{Eq non int} equals $+\infty$.
To give an elementary analogue,
this phenomenon reminds the finite part of $1/x^{2}$.
This finite part is defined as the generalized function on $\R$
obtained as the limit
\begin{displaymath}
\operatorname{F.P.}\Big(\dfrac{1}{x^{2}}\Big)
=
\lim_{\varepsilon\to 0}
\Big(
\ind_{\vert x\vert>\varepsilon}\dfrac{1}{x^{2}}
-\dfrac{2}{\varepsilon} \delta_{0}\Big).
\end{displaymath}
Then $\operatorname{F.P.}(1/x^{2})$ cannot be separated into a function on
$\R\setminus\{0\}$ and a generalized function supported on $\{0\}$.
\end{rem}
 
\subsection{Convergence to 0 of mixed terms}
\label{Subsec prod 0}

Here we will deal with the mixed terms in Lemma \ref{Lem decomp Phi eps},
of form $V_{A,\varepsilon}^{k}\,
\nu_{A,\varepsilon}^{n-j}
\,
:\Phi_{D\setminus A,\varepsilon}^{j-2k}:$,
and show that they converge to $0$.

\begin{prop}
\label{Prop conv 0}
Fix $k\geq 0$ and $l,m\geq 1$.
Then for every $\eta>0$,
\begin{displaymath}
\lim_{\varepsilon\to 0}\E\Big[
\big\Vert
V_{A,\varepsilon}^{k}
\,\nu_{A,\varepsilon}^{l}
\, :\Phi_{D\setminus A,\varepsilon}^{m}:
\big\Vert_{H^{-\eta}(\C)}^{2}
\Big] = 0.
\end{displaymath}
\end{prop}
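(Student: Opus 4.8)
The plan is to reduce the statement, by conditioning on $A$, to a deterministic estimate on $G_{D\setminus A,\varepsilon,\varepsilon}$, and then to extract from Proposition~\ref{Prop Green} a Beurling-type decay that beats the logarithmic blow-up of $V_{A,\varepsilon}$ near $A$. Since $\Vert\cdot\Vert_{H^{-\eta}(\C)}$ is non-increasing in $\eta$, it is enough to treat $\eta\in(0,1)$, where $\LK_{\eta}(r)\le C_{\eta}\,r^{-(2-2\eta)}$ for $r\le 1$ (by \eqref{Eq asymp kernel}) and $\LK_{\eta}$ decays rapidly at infinity. First I would use the kernel representation of $\Vert\cdot\Vert_{H^{-\eta}(\C)}^{2}$, the $A$-measurability of $V_{A,\varepsilon}$ and $\nu_{A,\varepsilon}$, and the conditional Wick covariance (the analogue of \eqref{Eq cov Wick eps} applied to $\Phi_{D\setminus A}$ given $A$, namely $\E[:\Phi_{D\setminus A,\varepsilon}^{m}:(z)\,:\Phi_{D\setminus A,\varepsilon}^{m}:(w)\mid A]=m!\,G_{D\setminus A,\varepsilon,\varepsilon}(z,w)^{m}$), to write
\begin{displaymath}
\E\Big[\big\Vert V_{A,\varepsilon}^{k}\nu_{A,\varepsilon}^{l}:\Phi_{D\setminus A,\varepsilon}^{m}:\big\Vert_{H^{-\eta}(\C)}^{2}\Big]
=m!\int_{\C^{2}}\E\Big[(V_{A,\varepsilon}^{k}\nu_{A,\varepsilon}^{l})(z)\,(V_{A,\varepsilon}^{k}\nu_{A,\varepsilon}^{l})(w)\,G_{D\setminus A,\varepsilon,\varepsilon}(z,w)^{m}\Big]\LK_{\eta}(|w-z|)\,d^{2}z\,d^{2}w .
\end{displaymath}
For fixed $\varepsilon>0$ every factor here is bounded with compact support and $\LK_{\eta}\in L^{1}$, which both justifies the Fubini steps and gives finiteness.

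Next I would record the elementary a priori bounds. By monotonicity of Green's functions under domain inclusion (preserved by mollification) and Lemma~\ref{Lem G eps}, one has $0\le V_{A,\varepsilon}(z)\le G_{D,\varepsilon,\varepsilon}(z,z)\le\tfrac1{2\pi}|\log\varepsilon|+c$ for all $z$, and $\nu_{A,\varepsilon}$ is supported in $\{z:d(z,A)<\varepsilon\}$, itself contained in a fixed bounded set. For the moments of $\nu_{A,\varepsilon}$ I would use $\nu_{A,\varepsilon}(z)=\Phi_{\varepsilon}(z)-\Phi_{D\setminus A,\varepsilon}(z)$: the field $\Phi_{\varepsilon}(z)$ is Gaussian with bounded mean and variance $\le\tfrac1{2\pi}|\log\varepsilon|+c$, while $\Phi_{D\setminus A,\varepsilon}(z)$ is, conditionally on $A$, centered Gaussian with variance $G_{D\setminus A,\varepsilon,\varepsilon}(z,z)\le\tfrac1{2\pi}|\log\varepsilon|+c$; hence $\E[\nu_{A,\varepsilon}(z)^{2l}]\le C_{l}(1+|\log\varepsilon|)^{l}$ uniformly in $z$, and therefore $\E[\int_{\C}\nu_{A,\varepsilon}^{2l}]\le C_{l}'(1+|\log\varepsilon|)^{l}$.

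The heart of the argument is a decaying bound on $G_{D\setminus A,\varepsilon,\varepsilon}(z,w)$ valid when $z$ lies in the $\varepsilon$-neighborhood of $A$. Each connected component $U$ of $D\setminus A$ is open, bounded and simply connected (a component of the complement of the connected compact set $A$), with $\diam U\le\diam D$; and if $d(z,A)<\varepsilon$ then every $x\in\D(z,\varepsilon)$ satisfies $d(x,\partial U)\le d(x,A)<2\varepsilon$ (since $A\subset\partial(D\setminus A)$ and a segment from $x$ to its nearest point of $A$ must exit $U$ within distance $d(x,A)$). Applying Proposition~\ref{Prop Green} componentwise and mollifying at scale $\varepsilon$, I would deduce that for $d(z,A)<\varepsilon$ and all $w$,
\begin{displaymath}
G_{D\setminus A,\varepsilon,\varepsilon}(z,w)\ \le\ \sigma_{\varepsilon}(|w-z|):=C_{D}\Big(\frac{\varepsilon}{\varepsilon\vee|w-z|}\Big)^{1/2}\big(1+\big|\log(\varepsilon\vee|w-z|)\big|\big),
\end{displaymath}
with $G_{D\setminus A,\varepsilon,\varepsilon}(z,w)=0$ once $|w-z|>\diam D+2\varepsilon$. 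The decisive feature is the Beurling factor $(\varepsilon/|w-z|)^{1/2}$; checking that $d(x,\partial U)$ genuinely stays of order $\varepsilon$ over the whole mollification ball, so that this factor can be extracted, is the step I expect to require the most care.

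Finally I would combine everything. On the support of $\nu_{A,\varepsilon}^{l}\otimes\nu_{A,\varepsilon}^{l}$ both $z$ and $w$ are within $\varepsilon$ of $A$, so the integrand above is at most $(\tfrac1{2\pi}|\log\varepsilon|+c)^{2k}\,\nu_{A,\varepsilon}(z)^{l}\nu_{A,\varepsilon}(w)^{l}\,\sigma_{\varepsilon}(|w-z|)^{m}\LK_{\eta}(|w-z|)$. Setting $\kappa_{\varepsilon}(r):=\sigma_{\varepsilon}(r)^{m}\LK_{\eta}(r)$, and using $ab\le\tfrac12(a^{2}+b^{2})$ together with $\int_{\C}\kappa_{\varepsilon}(|w-z|)\,d^{2}w=\Vert\kappa_{\varepsilon}\Vert_{L^{1}(\C)}$, the double integral is $\le\Vert\kappa_{\varepsilon}\Vert_{L^{1}(\C)}\int_{\C}\nu_{A,\varepsilon}^{2l}$. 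A direct computation with $\LK_{\eta}(r)\le C_{\eta}r^{-(2-2\eta)}$ on $r\le1$ and the explicit form of $\sigma_{\varepsilon}$ gives $\Vert\kappa_{\varepsilon}\Vert_{L^{1}(\C)}\le C\,\varepsilon^{\min(2\eta,\,m/2)}(1+|\log\varepsilon|)^{m+1}$. Together with the moment bound on $\int\nu_{A,\varepsilon}^{2l}$ this yields
\begin{displaymath}
\E\Big[\big\Vert V_{A,\varepsilon}^{k}\nu_{A,\varepsilon}^{l}:\Phi_{D\setminus A,\varepsilon}^{m}:\big\Vert_{H^{-\eta}(\C)}^{2}\Big]\ \le\ C\,\varepsilon^{\min(2\eta,\,m/2)}\,(1+|\log\varepsilon|)^{2k+l+m+1}\ \xrightarrow[\varepsilon\to0]{}\ 0,
\end{displaymath}
since $\min(2\eta,m/2)>0$. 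This proves the claim for $\eta\in(0,1)$, and the case $\eta\ge1$ follows from the monotonicity of $\Vert\cdot\Vert_{H^{-\eta}(\C)}$ in $\eta$.
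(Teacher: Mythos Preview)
Your proof is correct and follows essentially the same route as the paper: express the Sobolev norm via the Bessel kernel $\LK_{\eta}$, condition on $A$ to turn the Wick product into $m!\,G_{D\setminus A,\varepsilon,\varepsilon}(z,w)^{m}$, bound $V_{A,\varepsilon}\le G_{D,\varepsilon,\varepsilon}(z,z)=O(|\log\varepsilon|)$ and $\E[\nu_{A,\varepsilon}^{2l}]=O(|\log\varepsilon|^{l})$ (the paper isolates these as Lemmas~\ref{Lem moment nu eps}--\ref{Lem cond exp phi D A}), and exploit the Beurling estimate of Proposition~\ref{Prop Green} for the small factor (the paper's Lemma~\ref{Lem key estimate}). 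The only difference is cosmetic: in the final assembly the paper splits the double integral at $|w-z|=\varepsilon^{1/2}$, obtaining $O(\varepsilon^{m/4})$ from the Beurling factor on the far part and $O(\varepsilon^{\eta})$ from the $\LK_{\eta}$-integral on the near part, whereas you package the same competition as a single bound $\Vert\sigma_{\varepsilon}^{m}\LK_{\eta}\Vert_{L^{1}(\C)}=O(\varepsilon^{\min(2\eta,m/2)})$ up to logs. Your direct $L^{1}$ computation is slightly cleaner and yields a sharper power of $\varepsilon$, but both are the same argument.
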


We start with a couple of lemmas.

\begin{lemma}
\label{Lem moment nu eps}
Let $l\geq 1$.
There are constants $c_{l},c'_{l}>0$,
depending only on $l$,
such that for every $\varepsilon>0$ and $z\in\C$,
\begin{displaymath}
\E[\nu_{A,\varepsilon}(z)^{2 l}]
= c_{l} v^{2l} + c'_{l} G_{D,\varepsilon,\varepsilon}(z,z)^{l}.
\end{displaymath}
In particular, there is a constant $C_{l}>0$,
depending on $l$ and $v$,
such that for every
$\varepsilon\in (0,1/2]$ and every $z\in\C$,
\begin{equation}
\label{Eq bound nu A eps}
\E[\nu_{A,\varepsilon}(z)^{2 l}]
\leq C_{l} \vert\log \varepsilon\vert^{l}.
\end{equation}
\end{lemma}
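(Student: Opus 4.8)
The plan is to reduce the claim to one structural identity coming from conditioning on $A$, and then to bound the resulting terms using only estimates already available above.

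\emph{Step 1: a ``re-Wicking'' identity.} Conditionally on $A$, the Gaussian $\Phi_\varepsilon(z)$ has mean $\nu_{A,\varepsilon}(z)$ (which is $A$-measurable) and variance $G_{D\setminus A,\varepsilon,\varepsilon}(z,z)$. From the generating function \eqref{Eq Q exp} one reads off that $Q_n$ satisfies $\E_{G\sim\mathcal{N}(0,u)}[Q_n(x+G,u)]=x^{n}$ for every $x$ and every $u\geq 0$; applying this conditionally on $A$ with $x=\nu_{A,\varepsilon}(z)$ and $u=G_{D\setminus A,\varepsilon,\varepsilon}(z,z)$ gives $\E[Q_{2l}(\Phi_\varepsilon(z),G_{D\setminus A,\varepsilon,\varepsilon}(z,z))\mid A]=\nu_{A,\varepsilon}(z)^{2l}$, hence
\[
\E[\nu_{A,\varepsilon}(z)^{2l}]=\E\big[Q_{2l}(\Phi_\varepsilon(z),G_{D\setminus A,\varepsilon,\varepsilon}(z,z))\big].
\]
Now write $G_{D\setminus A,\varepsilon,\varepsilon}(z,z)=G_{D,\varepsilon,\varepsilon}(z,z)-V_{A,\varepsilon}(z)$ and apply the change of variance identity \eqref{Eq change var} with the two variances $G_{D,\varepsilon,\varepsilon}(z,z)$ and $-V_{A,\varepsilon}(z)$; the factors $(-1)^{k}(-V_{A,\varepsilon}(z))^{k}$ combine into $V_{A,\varepsilon}(z)^{k}$, so one obtains
\[
\E[\nu_{A,\varepsilon}(z)^{2l}]=\sum_{k=0}^{l}\dfrac{(2l)!}{2^{k}k!(2l-2k)!}\,\E\big[:\Phi_{\varepsilon}^{2l-2k}:(z)\,V_{A,\varepsilon}(z)^{k}\big],
\]
the coefficients being exactly those of \eqref{Eq Herm 3}.

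\emph{Step 2: bounding the terms.} The only input needed here is $0\le V_{A,\varepsilon}(z)\le G_{D,\varepsilon,\varepsilon}(z,z)$, which follows from $0\le G_{D\setminus A,\varepsilon,\varepsilon}(z,z)\le G_{D,\varepsilon,\varepsilon}(z,z)$: a (conditional) variance on the left, monotonicity of the Green's function under the inclusion $D\setminus A\subset D$ on the right. The term $k=l$ equals $\tfrac{(2l)!}{2^{l}l!}\E[V_{A,\varepsilon}(z)^{l}]\le\tfrac{(2l)!}{2^{l}l!}G_{D,\varepsilon,\varepsilon}(z,z)^{l}$. For $k<l$, Cauchy--Schwarz gives $\E[:\Phi_{\varepsilon}^{2l-2k}:(z)V_{A,\varepsilon}(z)^{k}]\le\E[:\Phi_{\varepsilon}^{2l-2k}:(z)^{2}]^{1/2}G_{D,\varepsilon,\varepsilon}(z,z)^{k}$. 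To control the Wick second moment I would use \eqref{Eq Q bin} to write $:\Phi_{\varepsilon}^{n}:(z)=\sum_{j=0}^{n}\binom{n}{j}\big(v(\ind_{D}\ast\rho_{\varepsilon})(z)\big)^{n-j}:\Phi_{(0),\varepsilon}^{j}:(z)$ and then the orthogonality relation \eqref{Eq cov Wick eps} for the zero-boundary field, which yields $\E[:\Phi_{\varepsilon}^{n}:(z)^{2}]\le\kappa_{n}\big(v^{2n}+G_{D,\varepsilon,\varepsilon}(z,z)^{n}\big)$ for a combinatorial constant $\kappa_{n}$. A couple of elementary inequalities ($\sqrt{a+b}\le\sqrt a+\sqrt b$ and weighted AM--GM, using $(l-k)+k=l$) then collapse the bound into the announced shape
\[
\E[\nu_{A,\varepsilon}(z)^{2l}]\le c_{l}\,v^{2l}+c_{l}'\,G_{D,\varepsilon,\varepsilon}(z,z)^{l}.
\]

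Finally, for the ``in particular'' part one substitutes $G_{D,\varepsilon,\varepsilon}(z,z)\le\tfrac1{2\pi}\log(\varepsilon^{-1})+c$ from Lemma \ref{Lem G eps}: for $\varepsilon\in(0,1/2]$ this is at most $C|\log\varepsilon|$, and the constant term $v^{2l}$ is itself $\le C_{l}|\log\varepsilon|^{l}$, giving $\E[\nu_{A,\varepsilon}(z)^{2l}]\le C_{l}|\log\varepsilon|^{l}$ with $C_{l}$ depending on $l$ and $v$. The argument is essentially routine once Step 1 is in place; the only point asking for a little care is the non-zero boundary value $v$, which is precisely what produces the $v^{2l}$ term and what forces one to pass through the zero-boundary Wick powers when estimating $\E[:\Phi_{\varepsilon}^{n}:(z)^{2}]$. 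If one wanted the displayed relation as a genuine equality rather than the upper bound that is all that is used in the sequel, one would additionally have to evaluate the cross terms $\E[:\Phi_{\varepsilon}^{2l-2k}:(z)V_{A,\varepsilon}(z)^{k}]$ for $0<k<l$ explicitly, which seems to require the Brownian description of $\tfrac1{2\pi}\log(\CR(z,D)/\CR(z,D\setminus A))$ from Theorem \ref{Thm law CR FPS}; Proposition \ref{Prop conv 0} only needs the $O(|\log\varepsilon|^{l})$ bound.
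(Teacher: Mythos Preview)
Your argument is correct, and you rightly observe that what is actually proved (and all that is used later) is the upper bound, not a genuine equality. However, your route is considerably more elaborate than the paper's. The paper simply writes $\nu_{A,\varepsilon}(z)=\Phi_{\varepsilon}(z)-\Phi_{D\setminus A,\varepsilon}(z)$ and applies Minkowski's inequality in $L^{2l}$:
\[
\E[\nu_{A,\varepsilon}(z)^{2l}]^{1/(2l)}\le \E[\Phi_\varepsilon(z)^{2l}]^{1/(2l)}+\E[\Phi_{D\setminus A,\varepsilon}(z)^{2l}]^{1/(2l)}\le v+2\,\E[\Phi_{(0),\varepsilon}(z)^{2l}]^{1/(2l)},
\]
then expands the right-hand side and invokes Wick's formula for the $2l$-th Gaussian moment. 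No conditioning identity, no change of variance, no Cauchy--Schwarz on Wick powers. Your Step~1 identity is elegant and reveals more structure---it exhibits $\E[\nu_{A,\varepsilon}(z)^{2l}]$ as a genuine polynomial in the conditional expectations $\E[:\Phi_\varepsilon^{2l-2k}:(z)\,V_{A,\varepsilon}(z)^{k}]$, which in principle could yield the exact equality---but for the purpose at hand (feeding the $O(|\log\varepsilon|^{l})$ bound into Proposition~\ref{Prop conv 0}) the paper's three-line Minkowski argument is all that is needed.
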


\begin{proof}
We have that
\begin{displaymath}
\E[\nu_{A,\varepsilon}(z)^{2 l}]
=
\E\Big[\big(\Phi_{\varepsilon}(z) - 
\Phi_{D\setminus A,\varepsilon}(z)\big)^{2 l}\Big]
.
\end{displaymath}
By the Brunn-Minkowski inequality,
\begin{eqnarray*}
\E[\nu_{A,\varepsilon}(z)^{2 l}]
&\leq &\Big(\E\big[\Phi_{\varepsilon}(z)^{2 l}\big]^{1/(2 l)}
+\E\big[\Phi_{D\setminus A,\varepsilon}(z)^{2 l}\big]^{1/(2 l)}
\Big)^{2l}
\\
&\leq &
\Big(v + 2\E\big[\Phi_{(0),\varepsilon}(z)^{2 l}\big]^{1/(2 l)}
\Big)^{2l}
\\
&\leq &
2^{2 l - 1}\Big(v^{2l} + 2^{2l}\E\big[\Phi_{(0),\varepsilon}(z)^{2 l}\big]\Big).
\end{eqnarray*}
We conclude by the Wick's formula applied to $\Phi_{(0),\varepsilon}(z)$.
\end{proof}

\begin{lemma}
\label{Lem second moment}
Fix $k\geq 0$ and $l,m\geq 1$.
Let $\eta>0$ and $\varepsilon>0$.
Then
\begin{multline*}
\E\Big[
\big\Vert
V_{A,\varepsilon}^{k}
\,\nu_{A,\varepsilon}^{l}
\, :\Phi_{D\setminus A,\varepsilon}^{m}:
\big\Vert_{H^{-\eta}(\C)}^{2}
\Big]
=
\\
\int_{\substack{d(z,D)<\varepsilon\\ d(w,D)<\varepsilon}}
\LK_{\eta}(\vert w-z\vert)
\E\Big[V_{A,\varepsilon}(z)^{k}
V_{A,\varepsilon}(w)^{k}\nu_{A,\varepsilon}(z)^{l}\nu_{A,\varepsilon}(w)^{l}
:\Phi_{D\setminus A,\varepsilon}^{m}:(z)
:\Phi_{D\setminus A,\varepsilon}^{m}:(w)
\Big]
\,d^{2}z\,d^{2}w.
\end{multline*}
\end{lemma}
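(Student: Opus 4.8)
The plan is to apply the kernel representation of $\Vert\cdot\Vert_{H^{-\eta}(\C)}^{2}$ from Section \ref{Subsec Wick} pathwise, and then interchange the expectation with the double integral by Fubini; the only genuine work is the second-moment bound that makes Fubini legitimate.

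First I would record that, for every fixed $\varepsilon>0$, the field $X_{\varepsilon}:=V_{A,\varepsilon}^{k}\,\nu_{A,\varepsilon}^{l}\,:\Phi_{D\setminus A,\varepsilon}^{m}:$ is almost surely a $\LC^{\infty}$ function with compact support, hence $X_{\varepsilon}\in\mathcal{S}(\C)$ a.s. Indeed $\nu_{A,\varepsilon}=\nu_{A}\ast\rho_{\varepsilon}$ and $\Phi_{D\setminus A,\varepsilon}=\Phi_{D\setminus A}\ast\rho_{\varepsilon}$ are convolutions of a finite measure, respectively a compactly supported distribution, with the smooth compactly supported $\rho_{\varepsilon}$; likewise $z\mapsto G_{D\setminus A,\varepsilon,\varepsilon}(z,z)$ and $z\mapsto G_{D,\varepsilon,\varepsilon}(z,z)$ are $\LC^{\infty}$ with compact support, so $V_{A,\varepsilon}$ and, via $:\Phi_{D\setminus A,\varepsilon}^{m}:(z)=Q_{m}(\Phi_{D\setminus A,\varepsilon}(z),G_{D\setminus A,\varepsilon,\varepsilon}(z,z))$, also $:\Phi_{D\setminus A,\varepsilon}^{m}:$ are smooth and compactly supported. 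Moreover, since $G_{D}$ is extended by $0$ off $D^{2}$ and $A,D\setminus A\subset\overline{D}$, each of $V_{A,\varepsilon}$, $\nu_{A,\varepsilon}$, $:\Phi_{D\setminus A,\varepsilon}^{m}:$ vanishes outside $\{z:d(z,D)<\varepsilon\}$; as $l,m\geq 1$, the product $X_{\varepsilon}$ is supported in the fixed bounded set $\{z:d(z,D)<\varepsilon\}$. Since $X_{\varepsilon}$ is real-valued, the kernel formula gives, a.s.,
\[
\Vert X_{\varepsilon}\Vert_{H^{-\eta}(\C)}^{2}=\int_{\substack{d(z,D)<\varepsilon\\ d(w,D)<\varepsilon}}X_{\varepsilon}(z)\,\LK_{\eta}(\vert w-z\vert)\,X_{\varepsilon}(w)\,d^{2}z\,d^{2}w.
\]

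It then remains to pass the expectation inside. Since $\LK_{\eta}\geq 0$ on $(0,+\infty)$, Cauchy--Schwarz gives $\E[\vert X_{\varepsilon}(z)X_{\varepsilon}(w)\vert]\leq\E[X_{\varepsilon}(z)^{2}]^{1/2}\E[X_{\varepsilon}(w)^{2}]^{1/2}$, so it suffices to check $\sup_{z\in\C}\E[X_{\varepsilon}(z)^{2}]<+\infty$: combined with the fact that $X_{\varepsilon}$ is supported in a fixed bounded set and that $\LK_{\eta}$ is locally integrable on $\C$ (by the near-$0$ asymptotics \eqref{Eq asymp kernel} for $\eta\in(0,1)$, and boundedness for $\eta\geq 1$), Tonelli yields $\E\int\!\!\int\vert X_{\varepsilon}(z)\vert\LK_{\eta}(\vert w-z\vert)\vert X_{\varepsilon}(w)\vert\,d^{2}z\,d^{2}w<+\infty$, and Fubini applies. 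For the second-moment bound I would use $0\leq V_{A,\varepsilon}(z)\leq G_{D,\varepsilon,\varepsilon}(z,z)\leq\frac{1}{2\pi}\log(\varepsilon^{-1})+c$ deterministically (Lemma \ref{Lem G eps}, together with $G_{D\setminus A,\varepsilon,\varepsilon}(z,z)\geq 0$), pull out this bounded factor, and estimate $\E[\nu_{A,\varepsilon}(z)^{2l}:\Phi_{D\setminus A,\varepsilon}^{m}:(z)^{2}]\leq\E[\nu_{A,\varepsilon}(z)^{4l}]^{1/2}\E[:\Phi_{D\setminus A,\varepsilon}^{m}:(z)^{4}]^{1/2}$ by Cauchy--Schwarz, bounding the first factor by Lemma \ref{Lem moment nu eps} and the second by conditioning on $A$: there $:\Phi_{D\setminus A,\varepsilon}^{m}:(z)=G_{D\setminus A,\varepsilon,\varepsilon}(z,z)^{m/2}\He_{m}\big(\Phi_{D\setminus A,\varepsilon}(z)/G_{D\setminus A,\varepsilon,\varepsilon}(z,z)^{1/2}\big)$ with a standard Gaussian, so $\E[:\Phi_{D\setminus A,\varepsilon}^{m}:(z)^{4}\mid A]=\E[\He_{m}(\mathcal{N}(0,1))^{4}]\,G_{D\setminus A,\varepsilon,\varepsilon}(z,z)^{2m}$, and $G_{D\setminus A,\varepsilon,\varepsilon}(z,z)\leq G_{D,\varepsilon,\varepsilon}(z,z)\leq\frac{1}{2\pi}\log(\varepsilon^{-1})+c$; all of these are bounded uniformly in $z$ for fixed $\varepsilon$. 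Once Fubini is justified, the expectation passes inside, and expanding $X_{\varepsilon}(z)X_{\varepsilon}(w)$ gives exactly the asserted identity. The main point is thus the uniform bound $\sup_{z}\E[X_{\varepsilon}(z)^{2}]<+\infty$ needed for Fubini; everything else is routine bookkeeping.
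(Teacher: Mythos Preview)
Your proof is correct and follows essentially the same approach as the paper: write the Sobolev norm via the Bessel potential kernel, bound $V_{A,\varepsilon}$ deterministically by $G_{D,\varepsilon,\varepsilon}(z,z)$, and justify Fubini by a Cauchy--Schwarz / fourth-moment argument using Lemma~\ref{Lem moment nu eps} for the $\nu_{A,\varepsilon}$ factors and conditional Gaussianity for the $:\Phi_{D\setminus A,\varepsilon}^{m}:$ factors. The only cosmetic difference is that you package the Fubini justification via $\sup_{z}\E[X_{\varepsilon}(z)^{2}]<\infty$, whereas the paper bounds the two-point quantity $\E[\vert X_{\varepsilon}(z)X_{\varepsilon}(w)\vert]$ directly; the ingredients are the same.
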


\begin{proof}
Since the function $V_{A,\varepsilon}^{k}
\,\nu_{A,\varepsilon}^{l}
\, :\Phi_{D\setminus A,\varepsilon}^{m}:$
is smooth with compact support,
we have that
\begin{multline*}
\big\Vert
V_{A,\varepsilon}^{k}
\,\nu_{A,\varepsilon}^{l}
\, :\Phi_{D\setminus A,\varepsilon}^{m}:
\big\Vert_{H^{-\eta}(\C)}^{2}
=
\\
\int_{\substack{d(z,D)<\varepsilon\\ d(w,D)<\varepsilon}}
V_{A,\varepsilon}(z)^{k}
V_{A,\varepsilon}(w)^{k}
\LK_{\eta}(\vert w-z\vert)
\nu_{A,\varepsilon}(z)^{l}\nu_{A,\varepsilon}(w)^{l}
:\Phi_{D\setminus A,\varepsilon}^{m}:(z)
:\Phi_{D\setminus A,\varepsilon}^{m}:(w)
\,d^{2}z\,d^{2}w.
\end{multline*}
Further, $V_{A,\varepsilon}(z)\leq G_{D,\varepsilon,\varepsilon}(z,z)$.
Thus, for fixed $\varepsilon$, the function $V_{A,\varepsilon}$
is bounded on $\C$.
It suffices then to check that
\begin{equation}
\label{Eq abs val}
\E\Big[\nu_{A,\varepsilon}(z)^{l}\nu_{A,\varepsilon}(w)^{l}
\big\vert:\Phi_{D\setminus A,\varepsilon}^{m}:(z)
:\Phi_{D\setminus A,\varepsilon}^{m}:(w)
\big\vert
\Big]
\end{equation}
is uniformly bounded.
By Cauchy-Schwarz, \eqref{Eq abs val} is bounded by
\begin{displaymath}
\E[\nu_{A,\varepsilon}(z)^{4l}]^{1/4}
\E[\nu_{A,\varepsilon}(w)^{4l}]^{1/4}
\E[:\Phi_{D\setminus A,\varepsilon}^{m}:(z)^{4}]^{1/4}
\E[:\Phi_{D\setminus A,\varepsilon}^{m}:(w)^{4}]^{1/4}.
\end{displaymath}
To bound the factors in $\nu_{A,\varepsilon}$,
we use Lemma \ref{Lem moment nu eps}.
Further,
\begin{displaymath}
\E[:\Phi_{D\setminus A,\varepsilon}^{m}:(z)^{4}]
= N_{m}\E[G_{D\setminus A,\varepsilon,\varepsilon}(z,z)^{2 m}]
\leq 
N_{m}G_{D,\varepsilon,\varepsilon}(z,z)^{2 m}
\end{displaymath}
for some integer $N_{m}\geq 1$.
\end{proof}

\begin{lemma}
\label{Lem cond exp phi D A}
We have
\begin{multline*}
\E\Big[
V_{A,\varepsilon}(z)^{k}
V_{A,\varepsilon}(w)^{k}
\nu_{A,\varepsilon}(z)^{l}\nu_{A,\varepsilon}(w)^{l}
:\Phi_{D\setminus A,\varepsilon}^{m}:(z)
:\Phi_{D\setminus A,\varepsilon}^{m}:(w)
\Big]
=\\
m!\,\E\Big[
V_{A,\varepsilon}(z)^{k}
V_{A,\varepsilon}(w)^{k}
\nu_{A,\varepsilon}(z)^{l}\nu_{A,\varepsilon}(w)^{l}
G_{D\setminus A,\varepsilon,\varepsilon}(z,w)^{m}
\Big].
\end{multline*}
\end{lemma}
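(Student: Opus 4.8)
The plan is to condition on the FPS $A$. Given $A$, the factors $V_{A,\varepsilon}(z)$, $V_{A,\varepsilon}(w)$, $\nu_{A,\varepsilon}(z)$, $\nu_{A,\varepsilon}(w)$ are all deterministic: $\nu_{A,\varepsilon}$ is the mollification of the $A$-measurable measure $\nu_{A}$ (Theorem \ref{Thm Mink ALS1}), and $V_{A,\varepsilon}(z) = G_{D,\varepsilon,\varepsilon}(z,z) - G_{D\setminus A,\varepsilon,\varepsilon}(z,z)$ depends only on $A$. By contrast, by the FPS decomposition \eqref{Eq decomp Phi}, conditionally on $A$ the field $\Phi_{D\setminus A}$ is a GFF on $D\setminus A$ with $0$ boundary conditions, so $\Phi_{D\setminus A,\varepsilon} = \Phi_{D\setminus A}\ast\rho_{\varepsilon}$ is, conditionally on $A$, a centered Gaussian field with covariance kernel $G_{D\setminus A,\varepsilon,\varepsilon}$.

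First I would apply the conditional version of the Wick covariance identity \eqref{Eq cov Wick eps}. Since $:\Phi_{D\setminus A,\varepsilon}^{m}:(z) = Q_{m}(\Phi_{D\setminus A,\varepsilon}(z),G_{D\setminus A,\varepsilon,\varepsilon}(z,z))$ is the $m$-th Wick power of the conditionally Gaussian field $\Phi_{D\setminus A,\varepsilon}$ relative to its own variance, the same computation that gives \eqref{Eq cov Wick eps}, applied in the domain $D\setminus A$ (which is fixed once $A$ is given), yields
\[
\E\big[:\Phi_{D\setminus A,\varepsilon}^{m}:(z)\,:\Phi_{D\setminus A,\varepsilon}^{m}:(w)\,\big|\,A\big]
= m!\,G_{D\setminus A,\varepsilon,\varepsilon}(z,w)^{m}.
\]
Pulling the $A$-measurable quantity $V_{A,\varepsilon}(z)^{k}V_{A,\varepsilon}(w)^{k}\nu_{A,\varepsilon}(z)^{l}\nu_{A,\varepsilon}(w)^{l}$ out of the conditional expectation and then taking $\E[\cdot]$ via the tower property gives the claimed identity.

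The only points needing care are the integrability conditions justifying the factorization and the use of the tower property; but these are exactly the moment bounds already recorded: for fixed $\varepsilon$ the function $V_{A,\varepsilon}$ is bounded by $G_{D,\varepsilon,\varepsilon}(z,z)$, the random variable $\nu_{A,\varepsilon}(z)$ has finite moments of all orders by Lemma \ref{Lem moment nu eps}, and $\E[:\Phi_{D\setminus A,\varepsilon}^{m}:(z)^{4}\mid A]\leq N_{m}\,G_{D\setminus A,\varepsilon,\varepsilon}(z,z)^{2m}\leq N_{m}\,G_{D,\varepsilon,\varepsilon}(z,z)^{2m}$, so a Cauchy--Schwarz estimate exactly as in the proof of Lemma \ref{Lem second moment} puts everything in $L^{1}$ and legitimizes all the interchanges. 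I do not expect a genuine obstacle here: the lemma is a bookkeeping consequence of the Markov-type FPS decomposition together with the Gaussian Wick calculus, and the only thing that must be stated with care is that conditioning on $A$ simultaneously freezes $V_{A,\varepsilon}$ and $\nu_{A,\varepsilon}$ and turns $\Phi_{D\setminus A,\varepsilon}$ into a centered Gaussian field with the advertised covariance.
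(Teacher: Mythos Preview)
Your proposal is correct and follows exactly the same approach as the paper: the paper's proof consists of the single sentence ``This follows by taking the conditional expectation w.r.t.\ $A$,'' and you have simply unpacked that sentence, spelling out the $A$-measurability of $V_{A,\varepsilon}$ and $\nu_{A,\varepsilon}$, the conditional Gaussianity of $\Phi_{D\setminus A,\varepsilon}$, the Wick covariance identity \eqref{Eq cov Wick eps}, and the integrability bounds justifying the tower property.
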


\begin{proof}
This follows by taking the conditional expectation w.r.t. $A$.
\end{proof}

\begin{lemma}
\label{Lem key estimate}
Fix $m\geq 1$.
Then there is a constant $C>0$
such that
a.s., for every $z,w\in\C$
with $\nu_{A,\varepsilon}(z)\nu_{A,\varepsilon}(w)\neq 0$,
we have
\begin{displaymath}
G_{D\setminus A,\varepsilon,\varepsilon}(z,w)^{m}
\leq
C
\int_{D^{2}}
\Big( 1\wedge
\Big(
\dfrac{\varepsilon}{\vert y-x\vert}
\Big)^{m/2}
\Big)
\log\Big(\dfrac{2 \diam (D)}{\vert y-x\vert}\Big)^{m}
\rho_{\varepsilon}(z-x)\rho_{\varepsilon}(w-y)
d^{2}x\,d^{2}y.
\end{displaymath}
\end{lemma}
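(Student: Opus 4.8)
The plan is to obtain a deterministic pointwise bound on the Green kernel $G_{D\setminus A}(x,y)$ via Proposition \ref{Prop Green}, and then transfer it to the mollified kernel $G_{D\setminus A,\varepsilon,\varepsilon}(z,w)$ and to its $m$-th power by convexity. First I would use the support properties of the mollifier: since $\rho$ is supported in a compact subset of $\D$, the function $\rho_{\varepsilon}(z-\cdot)$ is supported in the disc of radius $\varepsilon$ about $z$, so $\nu_{A,\varepsilon}(z)\neq 0$ forces a point of $\Supp(\nu_{A})\subseteq A$ to lie within distance $\varepsilon$ of $z$; hence $d(z,A)<\varepsilon$, and likewise $d(w,A)<\varepsilon$. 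Consequently, every $x$ with $\rho_{\varepsilon}(z-x)\neq 0$ satisfies $d(x,A)<2\varepsilon$, and every such $y$ satisfies $d(y,A)<2\varepsilon$.

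Next I would bound $G_{D\setminus A}(x,y)$ for such $x,y$ with $x\neq y$ (the diagonal being Lebesgue-null in the integration that defines $G_{D\setminus A,\varepsilon,\varepsilon}$). If $x,y$ do not lie in the same connected component of $D\setminus A$ (in particular if one of them lies outside $D\setminus A$), then $G_{D\setminus A}(x,y)=0$ and there is nothing to prove, as the right-hand side integrand is nonnegative. Otherwise let $U$ be their common component, so $G_{D\setminus A}(x,y)=G_{U}(x,y)$. The set $U$ is open and bounded with $\diam(U)\le\diam(D)$, and — because $A$ is connected and contains $\partial D$, which forces $\partial U\subseteq A$ — it is simply connected. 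A first-exit argument along the segment from $x$ to a nearest point of $A$ gives $d(x,\partial U)\le d(x,A)<2\varepsilon$, and similarly $d(y,\partial U)<2\varepsilon$. Applying Proposition \ref{Prop Green} to $U$ then yields, with $C_{0}$ the universal constant there,
\begin{displaymath}
G_{D\setminus A}(x,y)=G_{U}(x,y)\le C_{0}\Big(1\wedge\Big(\dfrac{2\varepsilon}{\vert y-x\vert}\Big)^{1/2}\Big)\log\Big(\dfrac{2\diam(D)}{\vert y-x\vert}\Big)\le \sqrt{2}\,C_{0}\,F_{\varepsilon}(x,y),
\end{displaymath}
where $F_{\varepsilon}(x,y):=\big(1\wedge(\varepsilon/\vert y-x\vert)^{1/2}\big)\log\big(2\diam(D)/\vert y-x\vert\big)\ge 0$ and I used the elementary inequality $1\wedge(2a)^{1/2}\le\sqrt{2}\,(1\wedge a^{1/2})$ for $a>0$; the same bound $G_{D\setminus A}(x,y)\le\sqrt{2}\,C_{0}\,F_{\varepsilon}(x,y)$ holds trivially in the degenerate cases above, where the left-hand side is $0$.

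Finally, since $\rho_{\varepsilon}(z-x)\,d^{2}x$ and $\rho_{\varepsilon}(w-y)\,d^{2}y$ are probability measures, the definition of $G_{D\setminus A,\varepsilon,\varepsilon}(z,w)$ exhibits it as an average of $G_{D\setminus A}(x,y)$ against their product; bounding the integrand by $\sqrt{2}\,C_{0}\,F_{\varepsilon}(x,y)$ and applying Jensen's inequality for the convex function $t\mapsto t^{m}$ on $[0,\infty)$ gives
\begin{displaymath}
G_{D\setminus A,\varepsilon,\varepsilon}(z,w)^{m}\le (\sqrt{2}\,C_{0})^{m}\int_{D^{2}}F_{\varepsilon}(x,y)^{m}\,\rho_{\varepsilon}(z-x)\rho_{\varepsilon}(w-y)\,d^{2}x\,d^{2}y,
\end{displaymath}
and rewriting $F_{\varepsilon}^{m}$ via $(1\wedge t)^{m}=1\wedge t^{m}$ produces the claimed inequality with $C=(\sqrt{2}\,C_{0})^{m}$. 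The only non-routine point, and the one I expect to require the most care, is the geometric claim $d(x,\partial U)\le d(x,A)$ for $x$ in a component $U$ of $D\setminus A$, which rests on $\partial U\subseteq A$; I regard it as a mild obstacle. The ``a.s.'' in the statement merely places us on the full-probability event where $A$ is a genuine first passage set with the decomposition of Section \ref{Subsubsec FPS}, after which the bound is entirely deterministic.
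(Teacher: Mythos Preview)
Your proof is correct and follows essentially the same route as the paper: Jensen's inequality to pass the $m$-th power inside the mollification, the support observation that $\nu_{A,\varepsilon}(z)\neq 0$ forces $d(z,A)<\varepsilon$ and hence $d(x,A)<2\varepsilon$ on the support of $\rho_{\varepsilon}(z-\cdot)$, and then the pointwise bound on $G_{D\setminus A}(x,y)$ from Proposition~\ref{Prop Green}. You spell out a couple of points the paper leaves implicit (that each component $U$ of $D\setminus A$ is simply connected, and that $d(x,\partial U)\le d(x,A)$ via the first-exit-along-a-segment argument), but the argument is the same; one small remark: in your closing paragraph you say the inequality $d(x,\partial U)\le d(x,A)$ ``rests on $\partial U\subseteq A$'', but in fact $\partial U\subseteq A$ gives the reverse inequality, and it is your segment argument (using only $x\in U$, $a\in A\subseteq\C\setminus U$) that gives the needed direction.
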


\begin{proof}
By definition,
\begin{displaymath}
G_{D\setminus A,\varepsilon,\varepsilon}(z,w)
=\int_{(D\setminus A)^{2}}
G_{D\setminus A}(x,y)
\rho_{\varepsilon}(z-x)\rho_{\varepsilon}(w-y)
d^{2}x\,d^{2}y.
\end{displaymath}
By Jensen's inequality,
\begin{displaymath}
G_{D\setminus A,\varepsilon,\varepsilon}(z,w)^{m}
\leq \int_{(D\setminus A)^{2}}
G_{D\setminus A}(x,y)^{m}
\rho_{\varepsilon}(z-x)\rho_{\varepsilon}(w-y)
d^{2}x\,d^{2}y.
\end{displaymath}
Now we use the estimate of Proposition \ref{Prop Green} (Section \ref{Subsec estim Green}):
\begin{displaymath}
G_{D\setminus A}(x,y)
\leq C\Big( 1\wedge \Big(
\dfrac{d(x,A)\wedge d(y,A)}{\vert y-x\vert}\Big)^{1/2}
\Big)
\log\Big(\dfrac{2 \diam (D)}{\vert y-x\vert}\Big)
\end{displaymath}
for some universal constant $C>0$.
Further, if $\nu_{A,\varepsilon}(z)\nu_{A,\varepsilon}(w)\neq 0$,
then $z$ and $w$ are $\varepsilon$-close to $A$.
Moreover, if $\rho_{\varepsilon}(z-x)\rho_{\varepsilon}(w-y)\neq 0$,
then $\vert z-x\vert <\varepsilon$,
$\vert w-y\vert <\varepsilon$.
So we only the case $d(x,A),d(y,A) < 2\varepsilon$.
Also, the integral on $(D\setminus A)^{2}$
is smaller or equal to that on $D^{2}$.
This concludes.
\end{proof}

\begin{proof}[Proof of Proposition \ref{Prop conv 0}]
We collect the results of Lemmas \ref{Lem second moment},
\ref{Lem cond exp phi D A} and \ref{Lem key estimate}.
We want to show that
\begin{multline*}
\lim_{\varepsilon\to 0}
\int_{\substack{d(z,D)<\varepsilon\\ d(w,D)<\varepsilon}}
\LK_{\eta}(\vert w-z\vert)
\E\big[
V_{A,\varepsilon}(z)^{k}
V_{A,\varepsilon}(w)^{k}
\nu_{A,\varepsilon}(z)^{l}\nu_{A,\varepsilon}(w)^{l}
\big]
\\
\int_{D^{2}}
\Big( 1\wedge
\Big(
\dfrac{\varepsilon}{\vert y-x\vert}
\Big)^{m/2}
\Big)
\log\Big(\dfrac{2 \diam (D)}{\vert y-x\vert}\Big)^{m}
\rho_{\varepsilon}(z-x)\rho_{\varepsilon}(w-y)
d^{2}x\,d^{2}y\,
\,d^{2}z\,d^{2}w
=0.
\end{multline*}
First,
\begin{displaymath}
V_{A,\varepsilon}(z)\leq G_{D,\varepsilon,\varepsilon}(z,z)
= O(\vert\log\varepsilon\vert).
\end{displaymath}
See Lemma \ref{Lem G eps}.
Moreover, by Cauchy-Schwarz and \eqref{Eq bound nu A eps},
\begin{displaymath}
\E\big[\nu_{A,\varepsilon}(z)^{l}\nu_{A,\varepsilon}(w)^{l}
\big]
\leq
\E\big[\nu_{A,\varepsilon}(z)^{2l}\big]^{1/2}
\E\big[\nu_{A,\varepsilon}(w)^{2l}\big]^{1/2}
= 
O(\vert\log\varepsilon\vert^{l}).
\end{displaymath}
So it is enough to show that for every $\beta >0$,
the integral
\begin{equation}
\label{Eq quadrup int}
\int_{\substack{d(z,D)<\varepsilon\\ d(w,D)<\varepsilon \\ x,y\in D}}
\LK_{\eta}(\vert w-z\vert)
\Big( 1\wedge
\Big(
\dfrac{\varepsilon}{\vert y-x\vert}
\Big)^{m/2}
\Big)
\log\Big(\dfrac{2 \diam (D)}{\vert y-x\vert}\Big)^{m}
\rho_{\varepsilon}(z-x)\rho_{\varepsilon}(w-y)
d^{2}x\,d^{2}y
\,d^{2}z\,d^{2}w
\end{equation}
is $o(\vert \log\varepsilon\vert^{-\beta})$.

Similarly to Lemma \ref{Lem G eps}, 
we will use the crude bound
\begin{displaymath}
\int_{D^{2}}
\log\Big(\dfrac{2 \diam (D)}{\vert y-x\vert}\Big)^{m}
\rho_{\varepsilon}(z-x)\rho_{\varepsilon}(w-y)
d^{2}x\,d^{2}y
=
O(\vert \log\varepsilon\vert^{m}).
\end{displaymath}

We will separate the integral \eqref{Eq quadrup int} into two parts.
The first part will correspond to $\vert z-w\vert>\varepsilon^{1/2}$.
The second part will correspond to 
$\vert z-w\vert\leq\varepsilon^{1/2}$.
In the first case, we will bound
\begin{displaymath}
1\wedge
\Big(
\dfrac{\varepsilon}{\vert y-x\vert}
\Big)^{m/2}
\end{displaymath}
by
\begin{displaymath}
\Big(
\dfrac{\varepsilon}{\varepsilon^{1/2}-2\varepsilon}
\Big)^{m/2}=
O(\varepsilon^{m/4}).
\end{displaymath}
In the second case, we will simply bound by $1$.

So the first part is bounded by
\begin{displaymath}
C' \varepsilon^{m/4}\vert \log\varepsilon\vert^{m}
\int_{\substack{d(z,D)<\varepsilon\\ d(w,D)<\varepsilon \\ 
\vert z-w\vert>\varepsilon^{1/2}}}
\LK_{\eta}(\vert w-z\vert)
\,d^{2}z\,d^{2}w,
\end{displaymath}
which is $o(\vert \log\varepsilon\vert^{-\beta})$.

The second part is bounded by
\begin{equation}
\label{Eq second case}
C'' \vert \log\varepsilon\vert^{m}
\int_{\substack{d(z,D)<\varepsilon\\ d(w,D)<\varepsilon \\ 
\vert z-w\vert \leq\varepsilon^{1/2}}}
\LK_{\eta}(\vert w-z\vert)
\,d^{2}z\,d^{2}w.
\end{equation}
We will consider the more difficult case of $\eta\in (0,1)$.
With the asymptotic \eqref{Eq asymp kernel},
\eqref{Eq second case} is bounded by
\begin{displaymath}
C'''\vert \log\varepsilon\vert^{m}
\int_{0}^{\varepsilon^{1/2}}\dfrac{1}{r^{2-2\eta}} r\,dr
=
O(\varepsilon^{\eta}\vert \log\varepsilon\vert^{m}),
\end{displaymath}
which is again $o(\vert \log\varepsilon\vert^{-\beta})$.
\end{proof}

\subsection{Wick powers outside the FPS}
\label{Subsec Wick outside}

Here we will describe $:\Phi^{n}: - \psi_{n,A}$.
From the decomposition of Lemma \ref{Lem decomp Phi eps},
this is the limit of
\begin{multline*}
Q_{n}(\Phi_{D\setminus A,\varepsilon}(z),G_{D,\varepsilon,\varepsilon}(z,z))
-\ind_{n \text{ even}}(-1)^{n/2}
\dfrac{n!}{2^{n/2} (n/2)!}
V_{A,\varepsilon}(z)^{n/2}
\\=
\sum_{0\leq k <\lfloor n/2\rfloor}
(-1)^{k}\dfrac{n!}{2^{k} k! (n-2k)!}
V_{A,\varepsilon}(z)^{k}
\,
:\Phi_{D\setminus A,\varepsilon}^{n-2k}:(z)
\end{multline*}
So naturally, one expects the limit to be
\begin{displaymath}
\sum_{0\leq k <\lfloor n/2\rfloor}
(-1)^{k}\dfrac{n!}{2^{k} k! (n-2k)!}
V_{A}^{k}
\,
:\Phi_{D\setminus A}^{n-2k}:\, .
\end{displaymath}
However, $V_{A}$ blows up near $A$.
Because of this, a little bit of care is needed.

Let $\chi: [0,+\infty)\rightarrow [0,1]$
be a smooth non-decreasing cut-off function,
which values $0$ on $[0,1/2]$ and $1$ on $[1,+\infty)$.
For $q\in\N$, denote $f_{q}$ the function on $D\setminus A$
\begin{equation}
\label{Eq cutoff f q}
f_{q}(z) = \chi(2^{q}\CR(z,D\setminus A)).
\end{equation}
Outside $D\setminus A$, we set $f_{q}$ to $0$.
Clearly, for all $n\geq 1$, $k\geq 1$ and $q\in\N$,
the field $f_{q} V_{A}^{k} :\Phi_{D\setminus A}^{n}:$
is well defined.

\begin{lemma}
\label{Lem f q}
Fix $n\geq 1$ and $k\geq 1$.
The sequence $(f_{q} V_{A}^{k} :\Phi_{D\setminus A}^{n}:)_{q\geq 0}$
is Cauchy in $L^{2}(d\PP,\sigma(\Phi),H^{-\eta}(\C))$
for every $\eta>0$.
\end{lemma}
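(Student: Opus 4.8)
The strategy is a second-moment computation, showing that
\[
\E\big[\Vert (f_{q}-f_{q'}) V_{A}^{k} :\Phi_{D\setminus A}^{n}: \Vert_{H^{-\eta}(\C)}^{2}\big]\to 0
\]
as $q,q'\to\infty$. Since $f_{q}-f_{q'}$ is supported on $\{z\in D\setminus A\mid \CR(z,D\setminus A)<2^{-q\wedge q'}\}$ (for the relevant range), it suffices to bound, uniformly in the cutoff, the quantity
\[
\int\int_{\{\CR(z,D\setminus A)<\delta,\ \CR(w,D\setminus A)<\delta\}} \LK_{\eta}(|w-z|)\,
\E\big[ V_{A}(z)^{k} V_{A}(w)^{k} :\Phi_{D\setminus A}^{n}:(z) :\Phi_{D\setminus A}^{n}:(w)\big]\,d^{2}z\,d^{2}w
\]
and show it tends to $0$ as $\delta\to 0$. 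First I would take the conditional expectation given $A$: by the covariance formula \eqref{Eq cov Wick eps} (in the limiting form, applied to the conditionally-Gaussian field $\Phi_{D\setminus A}$ in $D\setminus A$), this expectation equals $n!\,\E\big[V_{A}(z)^{k}V_{A}(w)^{k} G_{D\setminus A}(z,w)^{n}\big]$. So everything reduces to controlling
\[
n!\int\int_{\{\CR(z,D\setminus A)<\delta\}\times\{\CR(w,D\setminus A)<\delta\}} \LK_{\eta}(|w-z|)\,\E\big[V_{A}(z)^{k}V_{A}(w)^{k}G_{D\setminus A}(z,w)^{n}\big]\,d^{2}z\,d^{2}w .
\]

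**Key estimates.** For the $V_{A}$ factors I would use that, on the support of the integrand, $V_{A}(z)=\frac{1}{2\pi}\log(\CR(z,D)/\CR(z,D\setminus A))\geq \frac{1}{2\pi}\log(\CR(z,D)/\delta)$, but also that its moments are controlled: by Theorem~\ref{Thm law CR FPS}, $2\pi V_{A}(z)$ is distributed as a hitting time $T_{a}$ of Brownian motion, which has Gaussian-type tails, so $\E[V_{A}(z)^{2k}]$ is finite and uniformly bounded in $z$ (this is the analogue of Lemma~\ref{Lem moment nu eps} in the present smooth, $\varepsilon=0$ setting; alternatively one can pass to the limit from $V_{A,\varepsilon}$). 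For the Green's-function factor I would again invoke Proposition~\ref{Prop Green}: since $\CR(z,D\setminus A)$ small forces (by \eqref{Eq Koebe}) $d(z,A)$ small — at least when $z$ is also away from $\partial D$, and the boundary contribution to $\CR(z,D)$ is bounded below — we get
\[
G_{D\setminus A}(z,w)\leq C\Big(1\wedge \big(\tfrac{d(z,A)\wedge d(w,A)}{|w-z|}\big)^{1/2}\Big)\log\Big(\tfrac{2\diam(D)}{|w-z|}\Big).
\]
Then Cauchy–Schwarz in the probability, separating the $V_{A}$ factors (Gaussian-tail moments, hence $O(1)$) from the deterministic geometric factor, reduces the problem to showing that
\[
\int\int_{\{d(z,A)<\delta'\}\times\{d(w,A)<\delta'\}} \LK_{\eta}(|w-z|)\Big(1\wedge \big(\tfrac{d(z,A)}{|w-z|}\big)^{n/2}\Big)\log\Big(\tfrac{2\diam(D)}{|w-z|}\Big)^{n}\,d^{2}z\,d^{2}w \to 0
\]
where $\delta'\to 0$ as $\delta\to 0$; here one uses that $\operatorname{Leb}(\{d(z,A)<\delta'\})\to 0$ (indeed it is $O(|\log\delta'|^{-1/2})$ by Theorem~\ref{Thm Mink ALS1}), together with the local integrability of $\LK_{\eta}$ near the diagonal coming from \eqref{Eq asymp kernel} (for $\eta<1$, $\LK_{\eta}(r)\asymp r^{-(2-2\eta)}$, which is integrable in two dimensions). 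This is essentially the same splitting-by-scales argument as in the proof of Proposition~\ref{Prop conv 0}: split the region into $|z-w|$ small versus not small, bound the $1\wedge(\cdots)^{n/2}$ factor by a power of $\delta'/|z-w|$ in the far regime and by $1$ in the near regime.

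**Main obstacle.** The delicate point is the interplay between smallness of $\CR(z,D\setminus A)$ and smallness of $d(z,A)$ near the outer boundary $\partial D$: if $z$ is close to $\partial D$ then $\CR(z,D\setminus A)$ can be small simply because $\CR(z,D)$ is small, without $z$ being close to $A$ in the relevant proportional sense. However, $\partial D\subset A$, so $d(z,A)\le d(z,\partial D)$, and the Beurling-type estimate in Proposition~\ref{Prop Green} is precisely designed to handle the regime where $z,w$ are simultaneously close to each other and close to $\partial D$; this is why that proposition was stated with explicit domain-dependence. A secondary technical nuisance is that we are working with the genuine (not mollified) field $\Phi_{D\setminus A}$ and $V_{A}$, which blows up on $A$; the cleanest route is to approximate $f_{q}V_{A}^{k}:\Phi_{D\setminus A}^{n}:$ by $f_{q}V_{A,\varepsilon}^{k}:\Phi_{D\setminus A,\varepsilon}^{n}:$, note that for fixed $q$ the cutoff $f_{q}$ keeps $V_{A}$ bounded on its support so that the $\varepsilon\to 0$ limit is unproblematic, and carry out the moment bounds at the level of the mollified objects where Lemma~\ref{Lem G eps} and Lemma~\ref{Lem moment nu eps} apply verbatim, then pass to the limit. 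Once the Cauchy property is established, the limiting field — which we will denote $f_{\infty}V_{A}^{k}:\Phi_{D\setminus A}^{n}:$ or simply $V_{A}^{k}:\Phi_{D\setminus A}^{n}:$ in the sequel — exists in $L^{2}(d\PP,\sigma(\Phi),H^{-\eta}(\C))$ for every $\eta>0$, with consistency across $\eta$ following as usual.
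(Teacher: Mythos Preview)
Your overall second-moment strategy matches the paper's, and the reduction via conditioning on $A$ to the integral of $\LK_{\eta}(|w-z|)\,V_{A}(z)^{k}V_{A}(w)^{k}G_{D\setminus A}(z,w)^{n}$ is exactly right. However, the step where you separate the $V_{A}$ factors by Cauchy--Schwarz and claim they are controlled by moments is a genuine gap. By Theorem~\ref{Thm law CR FPS}, $V_{A}(z)$ is distributed as the one-sided hitting time $T_{0}$ of a level by one-dimensional Brownian motion started from $v>0$; this law has density $\frac{v}{\sqrt{2\pi}}\,t^{-3/2}e^{-v^{2}/(2t)}$ and hence a polynomial tail $\PP(T_{0}>t)\asymp t^{-1/2}$, not Gaussian-type. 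In particular $\E[V_{A}(z)]=\infty$ already, so $\E[V_{A}(z)^{2k}]=\infty$ for every $k\geq 1$, and the ``$O(1)$'' you invoke after Cauchy--Schwarz is false. (You may be thinking of the two-valued-set case, Theorem~\ref{Thm ASW CR ell a b}, where exit times from a bounded interval do have exponential tails; that does not apply here.)

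The paper's proof avoids moments of $V_{A}$ entirely. Instead it uses the pointwise (almost sure, given $A$) bound $V_{A}(z)\leq \frac{1}{2\pi}\log\big(4\diam(D)/d(z,A)\big)$ coming from the distortion inequalities \eqref{Eq Koebe}, and then keeps the $V_{A}$ and $G_{D\setminus A}$ factors together: the Beurling-type estimate of Proposition~\ref{Prop Green} gives a factor $\big(1\wedge(d(z,A)/|w-z|)^{1/2}\big)^{n}$, and the paper splits into the cases $d(z,A)\wedge d(w,A)\leq |w-z|^{2}$ and $d(z,A)\wedge d(w,A)>|w-z|^{2}$. In the first case the power of $d(z,A)$ absorbs the logarithmic blow-up of $V_{A}$; in the second, $\log(1/d(z,A))$ is dominated by $\log(1/|w-z|)$. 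This yields finiteness of the full double integral \eqref{Eq to bound}, and then dominated convergence (using $f_{q'}-f_{q}\leq \ind_{D\setminus A}-f_{q}\to 0$ a.e.) gives the Cauchy property. The essential point is that the correlation between $V_{A}$ and $G_{D\setminus A}$ through $d(\cdot,A)$ is what makes the bound work; decoupling them by Cauchy--Schwarz destroys exactly this.
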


\begin{proof}
Let $q'>q\geq 0$. Note that the function $f_{q'}-f_{q}$
is non-negative.
We have that
\begin{multline*}
\E\Big[
\big\Vert
(f_{q'}-f_{q})\,
V_{A}^{k}
\, :\Phi_{D\setminus A}^{n}:
\big\Vert_{H^{-\eta}(\C)}^{2}
\Big]
=
\\
\int_{\C^{2}}
\LK_{\eta}(\vert w-z\vert)
\E\Big[
(f_{q'}-f_{q})(z)
(f_{q'}-f_{q})(w)
V_{A}(z)^{k}
V_{A}(w)^{k}
:\Phi_{D\setminus A}^{n}:(z)
:\Phi_{D\setminus A}^{n}:(w)
\Big]
\,d^{2}z\,d^{2}w
\\
=
n!\,
\int_{\C^{2}}
\LK_{\eta}(\vert w-z\vert)
\E\Big[
(f_{q'}-f_{q})(z)
(f_{q'}-f_{q})(w)
V_{A}(z)^{k}
V_{A}(w)^{k}
G_{D\setminus A}(z,w)^{n}
\Big]
\,d^{2}z\,d^{2}w.
\end{multline*}
Further, $f_{q'}-f_{q}\leq \ind_{D\setminus A}-f_{q}$.
Thus,
\begin{multline*}
\E\Big[
\big\Vert
(f_{q'}-f_{q})\,
V_{A}^{k}
\, :\Phi_{D\setminus A}^{n}:
\big\Vert_{H^{-\eta}(\C)}^{2}
\Big]
\\ \leq
n!\,
\int_{\C^{2}}
\LK_{\eta}(\vert w-z\vert)
\E\Big[
(\ind_{D\setminus A}-f_{q})(z)
(\ind_{D\setminus A}-f_{q})(w)
V_{A}(z)^{k}
V_{A}(w)^{k}
G_{D\setminus A}(z,w)^{n}
\Big]
\,d^{2}z\,d^{2}w.
\end{multline*}
The function $\ind_{D\setminus A}-f_{q}$ converges to $0$ as $q\to +\infty$
almost sure almost everywhere.
So to conclude by dominated convergence, we need only to check that
\begin{equation}
\label{Eq to bound}
\int_{\C^{2}}
\LK_{\eta}(\vert w-z\vert)
\E\Big[
V_{A}(z)^{k}
V_{A}(w)^{k}
G_{D\setminus A}(z,w)^{n}
\Big]
\,d^{2}z\,d^{2}w
< + \infty.
\end{equation}
For $G_{D\setminus A}(z,w)$,
we apply the estimate \eqref{Eq est Green} of Proposition \ref{Prop Green}.
For $V_{A}$ we use the distortion inequalities \eqref{Eq Koebe}:
\begin{displaymath}
V_{A}(z)\leq \dfrac{1}{2\pi}\log\Big(\dfrac{4 d(z,\partial D)}{d(z,A)}\Big)
\leq 
\dfrac{1}{2\pi}\log\Big(\dfrac{4 \diam (D)}{d(z,A)}\Big).
\end{displaymath}
Thus, \eqref{Eq to bound} is upper bounded by
\begin{multline*}
C\int_{\C^{2}}
\LK_{\eta}(\vert w-z\vert)
\log\Big(\dfrac{2 \diam (D)}{\vert z-w\vert}\Big)^{n}
\E\Big[
\ind_{z,w\in D\setminus A}
\log\Big(\dfrac{4 \diam (D)}{d(z,A)}\Big)^{k}
\log\Big(\dfrac{4 \diam (D)}{d(z,A)}\Big)^{k}
\\
\Big( 1\wedge \Big(
\dfrac{d(z,A)\wedge d(w,A)}{\vert z-w\vert}\Big)^{n/2}
\Big)
\Big]
\,d^{2}z\,d^{2}w,
\end{multline*}
for some constant $C>0$. 
Now let us upper bound the quantity
\begin{equation}
\label{Eq to bound 2}
\log\Big(\dfrac{4 \diam (D)}{d(z,A)}\Big)^{k}
\log\Big(\dfrac{4 \diam (D)}{d(z,A)}\Big)^{k}
\Big( 1\wedge \Big(
\dfrac{d(z,A)\wedge d(w,A)}{\vert z-w\vert}\Big)^{n/2}
\Big).
\end{equation}
In case $d(z,A)\wedge d(w,A)\leq \vert z-w\vert^{2}$,
it is bounded by
\begin{displaymath}
\log\Big(\dfrac{4 \diam (D)}{d(z,A)}\Big)^{k}
\log\Big(\dfrac{4 \diam (D)}{d(z,A)}\Big)^{k}
(1\wedge(d(z,A)\wedge d(w,A))^{n/4}),
\end{displaymath}
which in turn is bounded by a deterministic constant.
In case $d(z,A)\wedge d(w,A) > \vert z-w\vert^{2}$,
\eqref{Eq to bound 2} is bounded by
\begin{displaymath}
\log\Big(\dfrac{4 \diam (D)}{\vert z-w\vert^{2}}\Big)^{2k}
\end{displaymath}
So in any case we get the desired convergence of the integral.
\end{proof}

We will denote the limit of $f_{q} V_{A}^{k} :\Phi_{D\setminus A}^{n}:$
simply $V_{A}^{k} :\Phi_{D\setminus A}^{n}:$.
It is easy to check that it does not depend on the particular choice of $\chi$.

\begin{prop}
\label{Prop V G eps}
Fix $n\geq 1$ and $k\geq 1$.
As $\varepsilon\to 0$, $V_{A,\varepsilon}^{k} :\Phi_{D\setminus A,\varepsilon}^{n}:$
converges to $V_{A}^{k} :\Phi_{D\setminus A}^{n}:$ in
$L^{2}(d\PP,\sigma(\Phi),H^{-\eta}(\C))$
for every $\eta>0$.
\end{prop}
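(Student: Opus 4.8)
The plan is to prove the $L^{2}$ convergence by a second moment estimate, using the cutoffs $f_{q}$ of \eqref{Eq cutoff f q} to isolate the contribution near the non-thin set $A$. Write $X_{\varepsilon} = V_{A,\varepsilon}^{k}\,:\Phi_{D\setminus A,\varepsilon}^{n}:$ and let $X = V_{A}^{k}\,:\Phi_{D\setminus A}^{n}:$ be the limit of $(f_{q}V_{A}^{k}\,:\Phi_{D\setminus A}^{n}:)_{q}$ from Lemma \ref{Lem f q}. Decompose $X_{\varepsilon} - X = (X_{\varepsilon} - f_{q}X_{\varepsilon}) + (f_{q}X_{\varepsilon} - f_{q}X) + (f_{q}X - X)$ and use $\Vert a+b+c\Vert^{2}\le 3(\Vert a\Vert^{2}+\Vert b\Vert^{2}+\Vert c\Vert^{2})$. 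The third term has $\E[\Vert(1-f_{q})X\Vert_{H^{-\eta}(\C)}^{2}]\to 0$ as $q\to\infty$, uniformly in $\varepsilon$, by Lemma \ref{Lem f q}. Thus it suffices to show: (I) $C_{q} := \limsup_{\varepsilon\to 0}\E[\Vert(1-f_{q})X_{\varepsilon}\Vert_{H^{-\eta}(\C)}^{2}]$ satisfies $C_{q}\to 0$ as $q\to\infty$; and (II) for each fixed $q$, $\E[\Vert f_{q}X_{\varepsilon} - f_{q}X\Vert_{H^{-\eta}(\C)}^{2}]\to 0$ as $\varepsilon\to 0$. Indeed, this gives $\limsup_{\varepsilon\to 0}\E[\Vert X_{\varepsilon}-X\Vert_{H^{-\eta}(\C)}^{2}]\le 3C_{q} + 3\,\E[\Vert(1-f_{q})X\Vert_{H^{-\eta}(\C)}^{2}]$ for every $q$, and then $q\to\infty$ concludes.

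Step (I) is the main obstacle. Conditioning on $A$, the factors $f_{q}$ and $V_{A,\varepsilon}$ are deterministic, $:\Phi_{D\setminus A,\varepsilon}^{n}:$ is the regularized $n$-th Wick power of the conditional $0$-boundary GFF on $D\setminus A$, and Wick's formula gives $\E[:\Phi_{D\setminus A,\varepsilon}^{n}:(z):\Phi_{D\setminus A,\varepsilon}^{n}:(w)\mid A] = n!\,G_{D\setminus A,\varepsilon,\varepsilon}(z,w)^{n}$. The key deterministic input is the $\varepsilon$-uniform pointwise bound $V_{A,\varepsilon}(z)\le W(z):= \frac{1}{2\pi}\log\!\big(C\diam(D)/d(z,A)\big)$, valid for all small $\varepsilon$ and all $z\in D\setminus A$ with $C$ a deterministic constant depending on $D$ and $\rho$: one distinguishes $d(z,A)<2\varepsilon$, where $V_{A,\varepsilon}(z)\le G_{D,\varepsilon,\varepsilon}(z,z)=O(\vert\log\varepsilon\vert)$ by Lemma \ref{Lem G eps}, from $d(z,A)\ge 2\varepsilon$, where the logarithmic singularities cancel in $V_{A,\varepsilon}(z)=g_{D,\varepsilon,\varepsilon}(z,z)-g_{D\setminus A,\varepsilon,\varepsilon}(z,z)$ and the remainder is controlled by $\frac{1}{2\pi}\log(1/d(z,A))+O(1)$ via \eqref{Eq Koebe} and Corollaries \ref{Cor estim CR}--\ref{Cor var G D}. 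Bounding $G_{D\setminus A,\varepsilon,\varepsilon}(z,w)^{n}$ by Jensen's inequality together with Proposition \ref{Prop Green} applied to $D\setminus A$ (exactly as in Lemma \ref{Lem key estimate}), the conditional second moment becomes a quadruple integral to which one applies the dichotomy from the proof of Lemma \ref{Lem f q}, namely whether $d(x,A)\wedge d(y,A)\le\vert x-y\vert^{2}$ or not, with $W$ replacing $V_{A}$: in the first regime one uses the boundedness of $t\mapsto t^{n/4}\log(1/t)^{2k}$, in the second one replaces $\log(1/d(x,A))$ by $2\log(1/\vert x-y\vert)$ and invokes $\int_{\C}\LK_{\eta}(\vert u\vert)\log(1/\vert u\vert)^{2k+n}\,d^{2}u<\infty$; the mollifiers $\rho_{\varepsilon}$ only shift distances by $O(\varepsilon)$ and the near-diagonal leftover $\{\vert z-w\vert\lesssim\varepsilon\}$ contributes $o_{\varepsilon}(1)$. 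This produces an $\varepsilon$-independent bound in which $(1-f_{q})(z)(1-f_{q})(w)\downarrow 0$ almost everywhere; since the $A$-expectation of the area of neighborhoods of $A$ is finite and controlled by the Minkowski-content estimate of Theorem \ref{Thm Mink ALS1}, dominated convergence yields $C_{q}\to 0$. The real difficulty is precisely obtaining control uniform in $\varepsilon$ near $A$, where $V_{A,\varepsilon}$ diverges; the delicate zones are $\{d(\cdot,A)\lesssim\varepsilon\}$ and the near-diagonal.

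Step (II) is more routine. On $\{f_{q}\ne 0\}$ one has $\CR(z,D\setminus A)\ge 2^{-q-1}$, hence by \eqref{Eq Koebe} a uniform lower bound $d(z,\partial(D\setminus A))\gtrsim 2^{-q}$; on this set $V_{A,\varepsilon}\to V_{A}$ in every $C^{m}$ norm uniformly as $\varepsilon\to 0$ (Corollary \ref{Cor var G D} and interior estimates for harmonic functions), and, conditionally on $A$, $:\Phi_{D\setminus A,\varepsilon}^{n}:\to\,:\Phi_{D\setminus A}^{n}:$ in $L^{2}(d\PP(\cdot\mid A),H^{-\eta}(\C))$ by the standard Wick construction on $D\setminus A$. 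Writing $f_{q}X_{\varepsilon} - f_{q}X = f_{q}(V_{A,\varepsilon}^{k} - V_{A}^{k}):\Phi_{D\setminus A,\varepsilon}^{n}: + f_{q}V_{A}^{k}\big(:\Phi_{D\setminus A,\varepsilon}^{n}: -\, :\Phi_{D\setminus A}^{n}:\big)$ and using that multiplication by a fixed compactly supported smooth function is bounded on $H^{-\eta}(\C)$, both conditional $L^{2}$ norms tend to $0$; they are moreover dominated, uniformly in small $\varepsilon$, by an integrable random variable (a $q$-dependent constant times $1+\Vert:\Phi_{D\setminus A}^{n}:\Vert_{H^{-\eta}(\C)}^{2}$ plus $A$-measurable $C^{m}$-norms of finite second moment), so dominated convergence over $A$ gives (II). Assembling (I), (II), and Lemma \ref{Lem f q} as above proves the convergence for every $\eta>0$.
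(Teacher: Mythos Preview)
Your approach is correct and takes a genuinely different route from the paper. The paper does not use the cutoffs $f_{q}$ at all in this proof; instead it expands $\E\big[\Vert X_{\varepsilon}-X\Vert_{H^{-\eta}(\C)}^{2}\big]$ directly via the kernel $\LK_{\eta}$ into three double integrals (the $\varepsilon$--$\varepsilon$, the $0$--$\varepsilon$ and the $0$--$0$ two-point expressions) and shows the first two converge to the third. To do this it introduces an $\varepsilon$-dependent good set $S_{\varepsilon}=\{d(z,A),d(w,A),\vert w-z\vert>\varepsilon^{1/2}\}$; on $S_{\varepsilon}$ the quantitative variation estimates of Section~\ref{Subsec estim CR} (Corollaries~\ref{Cor estim CR}--\ref{Cor var G D}) give $\vert V_{A,\varepsilon}-V_{A}\vert,\ \vert G_{D\setminus A,\varepsilon,\varepsilon}-G_{D\setminus A}\vert=O(\varepsilon^{1/2})$, forcing convergence of the integral over $S_{\varepsilon}$; on the complement one simply bounds $V_{A,\varepsilon}\le G_{D,\varepsilon,\varepsilon}(z,z)=O(\vert\log\varepsilon\vert)$ and uses Proposition~\ref{Prop Green} on $G_{D\setminus A,\varepsilon,\varepsilon}$ to get a contribution $o(\vert\log\varepsilon\vert^{-2k})$, hence $o(1)$.

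Your three-term $f_{q}$-splitting trades this $\varepsilon$-dependent decomposition for an $\varepsilon$-uniform one: Step~(II) becomes soft (interior convergence of mollifications at fixed positive distance from $A$), while Step~(I) is where the real work lies, namely a version of the bound \eqref{Eq to bound} that is uniform in $\varepsilon$. The paper's route avoids needing such a uniform pointwise bound $V_{A,\varepsilon}\le W$ (which you establish case by case), but in exchange it leans on the explicit modulus-of-continuity estimates of Section~\ref{Subsec estim CR}. Your route is more modular in that it recycles Lemma~\ref{Lem f q} and the dichotomy $d(\cdot,A)\wedge d(\cdot,A)\lessgtr\vert z-w\vert^{2}$; a minor point to be careful with in your write-up is that in Step~(II) the multiplier $f_{q}V_{A}^{k}$ is random, so rather than ``fixed smooth function'' you should note that its $C^{m}$-norms are \emph{deterministically} bounded on $\{f_{q}\neq 0\}$ (by constants depending only on $q$ and $\diam(D)$, via \eqref{Eq Koebe} and interior estimates for harmonic functions), which is what makes the operator-norm argument go through uniformly in $A$.
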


\begin{proof}
We need to show that
\begin{equation}
\label{Eq eps eps}
\int_{\C^{2}}
\LK_{\eta}(\vert w-z\vert)
\E\Big[
V_{A,\varepsilon}(z)^{k}
V_{A,\varepsilon}(w)^{k}
G_{D\setminus A,\varepsilon,\varepsilon}(z,w)^{n}
\Big]
\,d^{2}z\,d^{2}w,
\end{equation}
as well as 
\begin{equation}
\label{Eq 0 eps}
\int_{\C^{2}}
\LK_{\eta}(\vert w-z\vert)
\E\Big[
V_{A}(z)^{k}
V_{A,\varepsilon}(w)^{k}
G_{D\setminus A,0,\varepsilon}(z,w)^{n}
\Big]
\,d^{2}z\,d^{2}w,
\end{equation}
converge to
\begin{equation}
\label{Eq 0 0}
\int_{\C^{2}}
\LK_{\eta}(\vert w-z\vert)
\E\Big[
V_{A}(z)^{k}
V_{A}(w)^{k}
G_{D\setminus A}(z,w)^{n}
\Big]
\,d^{2}z\,d^{2}w.
\end{equation}
By $G_{D\setminus A,0,\varepsilon}(z,w)$ in \eqref{Eq 0 eps} we denoted
\begin{displaymath}
G_{D\setminus A,0,\varepsilon}(z,w) = 
\int_{(D\setminus A)}
G_{D\setminus A}(z,y)\rho_{\varepsilon}(w-y)
\,d^{2}y.
\end{displaymath}
We will focus on the convergence of \eqref{Eq eps eps}.
The convergence of \eqref{Eq 0 eps} is similar and simpler.

Let $\varepsilon\in (0,1/16]$.
Denote
\begin{displaymath}
S_{\varepsilon} = \{(z,w)\in (D\setminus A)^{2}\vert d(z,A)>\varepsilon^{1/2},d(w,A)>\varepsilon^{1/2},
\vert w-z\vert>\varepsilon^{1/2}\}.
\end{displaymath}
By Corollaries \ref{Cor estim CR} and \ref{Cor var G D},
there is a deterministic constant $C>0$
such that for every $(z,w)\in S_{\varepsilon}$, and every $(x,y)\in \D(z,\varepsilon)\times \D(w,\varepsilon)$,
all of the following holds:
\begin{displaymath}
\vert V_{A}(x) - V_{A}(z) \vert \leq C \varepsilon^{1/2},
\qquad
\vert V_{A}(y) - V_{A}(w) \vert \leq C \varepsilon^{1/2},
\end{displaymath}
\begin{displaymath}
\vert G_{D\setminus A}(x,y) - G_{D\setminus A}(z,w) \vert 
\leq 
C \varepsilon^{1/2} (G_{D\setminus A}(z,w) + \vert \log \varepsilon\vert).
\end{displaymath}
Then for every $(z,w)\in S_{\varepsilon}$,
\begin{displaymath}
\vert V_{A,\varepsilon}(z) - V_{A}(z) \vert \leq C \varepsilon^{1/2},
\qquad
\vert V_{A,\varepsilon}(w) - V_{A}(w) \vert \leq C \varepsilon^{1/2},
\end{displaymath}
\begin{displaymath}
\vert G_{D\setminus A,\varepsilon,\varepsilon}(z,w) - G_{D\setminus A}(z,w) \vert 
\leq 
C \varepsilon^{1/2} (G_{D\setminus A}(z,w) + \vert \log \varepsilon\vert).
\end{displaymath}
Therefore,
\begin{displaymath}
\int_{\C^{2}}
\LK_{\eta}(\vert w-z\vert)
\E\Big[
\ind_{(z,w)\in S_{\varepsilon}}
V_{A,\varepsilon}(z)^{k}
V_{A,\varepsilon}(w)^{k}
G_{D\setminus A,\varepsilon,\varepsilon}(z,w)^{n}
\Big]
\,d^{2}z\,d^{2}w
\end{displaymath}
converges, as $\varepsilon\to 0$, to \eqref{Eq 0 0}.
We need now to estimate
\begin{displaymath}
\int_{\C^{2}}
\LK_{\eta}(\vert w-z\vert)
\E\Big[
\ind_{(z,w)\not\in S_{\varepsilon}}
V_{A,\varepsilon}(z)^{k}
V_{A,\varepsilon}(w)^{k}
G_{D\setminus A,\varepsilon,\varepsilon}(z,w)^{n}
\Big]
\,d^{2}z\,d^{2}w,
\end{displaymath}
and show that it converges to $0$.
Since for every $z\in\C$,
\begin{displaymath}
V_{A,\varepsilon}(z)\leq G_{D,\varepsilon,\varepsilon}(z,z)
= O(\vert\log\varepsilon\vert),
\end{displaymath}
it is enough to show that
\begin{displaymath}
\int_{\C^{2}}
\LK_{\eta}(\vert w-z\vert)
\E\Big[
\ind_{(z,w)\not\in S_{\varepsilon}}
G_{D\setminus A,\varepsilon,\varepsilon}(z,w)^{n}
\Big]
\,d^{2}z\,d^{2}w =
o(\vert\log\varepsilon\vert^{-2k}).
\end{displaymath}
Denote
\begin{displaymath}
S'_{\varepsilon} =
\{(z,w)\in (D\setminus A)^{2}\vert d(z,A)\wedge d(w,A)\leq \varepsilon^{1/2},
\vert w-z\vert>\varepsilon^{1/2}\}.
\end{displaymath}
Then
\begin{multline*}
\int_{\C^{2}}
\LK_{\eta}(\vert w-z\vert)
\E\Big[
\ind_{(z,w)\not\in S_{\varepsilon}}
G_{D\setminus A,\varepsilon,\varepsilon}(z,w)^{n}
\Big]
\,d^{2}z\,d^{2}w
\\\leq 
\int_{\vert w-z\vert\leq \varepsilon^{1/2}}
\LK_{\eta}(\vert w-z\vert)
G_{D,\varepsilon,\varepsilon}(z,w)^{n}
\,d^{2}z\,d^{2}w
\\+
\int_{\C^{2}}
\LK_{\eta}(\vert w-z\vert)
\E\Big[
\ind_{(z,w)\in S'_{\varepsilon}}
G_{D\setminus A,\varepsilon,\varepsilon}(z,w)^{n}
\Big]
\,d^{2}z\,d^{2}w.
\end{multline*}
In the more difficult case of $\eta\in (0,1)$,
\begin{multline*}
\int_{\vert w-z\vert\leq \varepsilon^{1/2}}
\LK_{\eta}(\vert w-z\vert)
G_{D,\varepsilon,\varepsilon}(z,w)^{n}
\,d^{2}z\,d^{2}w
\\\leq 
C\vert \log\varepsilon\vert^{n}
\int_{0}^{\varepsilon^{1/2}}\dfrac{1}{r^{2-2\eta}} r\,dr
=
O(\varepsilon^{\eta}\vert \log\varepsilon\vert^{n})
=
o(\vert\log\varepsilon\vert^{-2k}).
\end{multline*}
For the term with $(z,w)\in S'_{\varepsilon}$,
we use the estimate \eqref{Eq est Green} to get
\begin{displaymath}
\int_{\C^{2}}
\LK_{\eta}(\vert w-z\vert)
\E\Big[
\ind_{(z,w)\in S'_{\varepsilon}}
G_{D\setminus A,\varepsilon,\varepsilon}(z,w)^{n}
\Big]
\,d^{2}z\,d^{2}w
=
O(\varepsilon^{n/4}\vert \log\varepsilon\vert^{n})
=
o(\vert\log\varepsilon\vert^{-2k}).
\qedhere
\end{displaymath}
\end{proof}

By collecting the results of Lemma \ref{Lem decomp Phi eps},
Proposition \ref{Prop psi eps},
Proposition \ref{Prop conv 0}
and Proposition \ref{Prop V G eps},
we obtain the following decomposition of the Wick powers
$:\Phi^{n}:$.

\begin{thm}
\label{Thm decomp FPS Wick}
Let $n\geq 1$. A.s., the Wick power $:\Phi^{n}:$ can be decomposed
\begin{displaymath}
:\Phi^{n}:~=\psi_{n,A}
+
\sum_{0\leq k <\lfloor n/2\rfloor}
(-1)^{k}\dfrac{n!}{2^{k} k! (n-2k)!}
V_{A}^{k}
\,
:\Phi_{D\setminus A}^{n-2k}:\,.
\end{displaymath}
\end{thm}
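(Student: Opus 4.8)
The plan is to pass to the limit $\varepsilon\to 0$ in the identity of Lemma~\ref{Lem decomp Phi eps}, treating term by term. Rewrite that identity by collecting, separately, (i) the ``pure $\nu$'' term $Q_n(\nu_{A,\varepsilon},V_{A,\varepsilon})$, (ii) the ``pure outside'' terms, i.e. $Q_n(\Phi_{D\setminus A,\varepsilon},G_{D,\varepsilon,\varepsilon})$ minus the even-degree counterterm $\ind_{n\text{ even}}(-1)^{n/2}\frac{n!}{2^{n/2}(n/2)!}V_{A,\varepsilon}^{n/2}$, and (iii) the genuine mixed terms, those indexed by $0\le j\le n-1$, $0\le k<\lfloor j/2\rfloor$, which carry a factor $\nu_{A,\varepsilon}^{n-j}$ with $n-j\ge 1$ and a factor $:\Phi_{D\setminus A,\varepsilon}^{j-2k}:$ with $j-2k\ge 1$. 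The claim is that, in $L^2(d\PP,\sigma(\Phi),H^{-\eta}(\C))$ for every $\eta>0$: block (i) converges to $\psi_{n,A}$, block (iii) converges to $0$, and block (ii) converges to $\sum_{0\le k<\lfloor n/2\rfloor}(-1)^k\frac{n!}{2^k k!(n-2k)!}V_A^k:\!\Phi_{D\setminus A}^{n-2k}\!:$; adding these and using that the left-hand side converges to $:\!\Phi^n\!:$ gives the theorem.

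First I would handle block (i): by Corollary~\ref{Cor cond exp eps} we have $Q_n(\nu_{A,\varepsilon},V_{A,\varepsilon})=\E[:\!\Phi_\varepsilon^n\!:\,|\,A]$, and Lemma~\ref{Lem cond Psi eps} (equivalently Proposition~\ref{Prop psi eps}) says exactly that this converges to $\psi_{n,A}$ in $L^2(d\PP,\sigma(\Phi),H^{-\eta}(\C))$. Next, block (iii): each mixed term, up to the combinatorial constant $(-1)^k\frac{n!}{2^k(n-j)!k!(j-2k)!}$, is of the form $V_{A,\varepsilon}^{k}\,\nu_{A,\varepsilon}^{n-j}\,:\!\Phi_{D\setminus A,\varepsilon}^{j-2k}\!:$ with $n-j\ge1$ and $j-2k\ge 1$, so Proposition~\ref{Prop conv 0} (applied with $l=n-j$, $m=j-2k$, and the given $k$) gives convergence to $0$ in the relevant norm. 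Finally, block (ii): applying the change-of-variance identity~\eqref{Eq change var} to $Q_n(\Phi_{D\setminus A,\varepsilon},G_{D,\varepsilon,\varepsilon})$, as done inside the proof of Lemma~\ref{Lem decomp Phi eps}, we get
\begin{displaymath}
Q_n(\Phi_{D\setminus A,\varepsilon},G_{D,\varepsilon,\varepsilon})-\ind_{n\text{ even}}(-1)^{n/2}\tfrac{n!}{2^{n/2}(n/2)!}V_{A,\varepsilon}^{n/2}
=\sum_{0\le k<\lfloor n/2\rfloor}(-1)^k\tfrac{n!}{2^k k!(n-2k)!}V_{A,\varepsilon}^{k}\,:\!\Phi_{D\setminus A,\varepsilon}^{n-2k}\!:\,,
\end{displaymath}
and Proposition~\ref{Prop V G eps} says each summand $V_{A,\varepsilon}^{k}\,:\!\Phi_{D\setminus A,\varepsilon}^{n-2k}\!:$ converges to $V_A^k:\!\Phi_{D\setminus A}^{n-2k}\!:$ (the limit being the one defined via the cutoffs $f_q$ in Lemma~\ref{Lem f q}). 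Since there are finitely many summands in each block, the sums of the limits are the limits of the sums.

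To conclude, combine: the left side of Lemma~\ref{Lem decomp Phi eps} is $:\!\Phi_\varepsilon^n\!:$, which converges to $:\!\Phi^n\!:$ in $L^2(d\PP,\sigma(\Phi),H^{-\eta}(\C))$ by definition of the Wick power (Section~\ref{Subsec Wick}); the right side is the sum of blocks (i)--(iii), which we have just shown converges to $\psi_{n,A}+\sum_{0\le k<\lfloor n/2\rfloor}(-1)^k\frac{n!}{2^k k!(n-2k)!}V_A^k:\!\Phi_{D\setminus A}^{n-2k}\!:$. By uniqueness of $L^2$ limits the two limits agree, which is the asserted decomposition; the a.s. version follows because all the ingredient convergences can be upgraded to almost sure convergence along a subsequence, or simply because the identity is between $L^2$-limits that are a.s.\ equal. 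The main obstacle is not in this assembly step, which is essentially bookkeeping, but in the inputs already established, chiefly Proposition~\ref{Prop conv 0} (killing the mixed terms) and Proposition~\ref{Prop V G eps} together with Lemma~\ref{Lem f q} (making sense of $V_A^k:\!\Phi_{D\setminus A}^{n-2k}\!:$ despite the blow-up of $V_A$ near $A$); here one must be careful that these convergences hold in the strong enough topology ($L^2$ of the $H^{-\eta}$-norm, not merely tested against a fixed function) so that the finite sums can be recombined.
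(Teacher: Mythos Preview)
Your proposal is correct and follows exactly the paper's approach: the paper's proof is the single sentence ``By collecting the results of Lemma~\ref{Lem decomp Phi eps}, Proposition~\ref{Prop psi eps}, Proposition~\ref{Prop conv 0} and Proposition~\ref{Prop V G eps}, we obtain the following decomposition,'' and you have simply made this collection explicit. One tiny remark: in block~(ii), Proposition~\ref{Prop V G eps} is stated for $k\ge 1$, so for the $k=0$ term you should invoke the standard convergence $:\!\Phi_{D\setminus A,\varepsilon}^{n}\!:\,\to\,:\!\Phi_{D\setminus A}^{n}\!:$ directly (which is immediate from the construction of Wick powers conditionally on $A$).
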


The above decomposition also implies a decomposition of
Sobolev norms.

\begin{cor}
\label{Cor Sob norm decomp}
Fix $\eta>0$. Let $n\geq 1$.
Then a.s.,
\begin{displaymath}
\E\big[\Vert :\Phi^{n}:\Vert^{2}_{H^{-\eta}(\C)}\big\vert A\big]
=
\Vert \psi_{n,A}\Vert^{2}_{H^{-\eta}(\C)}
+
\E\Big[
\Big\Vert
\sum_{0\leq k <\lfloor n/2\rfloor}
(-1)^{k}\dfrac{n!}{2^{k} k! (n-2k)!}
V_{A}^{k}:\Phi_{D\setminus A}^{n-2k}:
\Big\Vert^{2}_{H^{-\eta}(\C)}
\Big\vert A
\Big] .
\end{displaymath}
In particular,
\begin{displaymath}
\E\big[\Vert :\Phi^{n}:\Vert^{2}_{H^{-\eta}(\C)}\big]
=
\E\big[\Vert \psi_{n,A}\Vert^{2}_{H^{-\eta}(\C)}\big]
+
\E\Big[
\Big\Vert
\sum_{0\leq k <\lfloor n/2\rfloor}
(-1)^{k}\dfrac{n!}{2^{k} k! (n-2k)!}
V_{A}^{k}:\Phi_{D\setminus A}^{n-2k}:
\Big\Vert^{2}_{H^{-\eta}(\C)}
\Big].
\end{displaymath}
\end{cor}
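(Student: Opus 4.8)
The plan is to read the claimed identity as the Pythagorean theorem in the Hilbert space $L^{2}(d\PP,\sigma(\Phi),H^{-\eta}(\C))$, decomposed over the sub-$\sigma$-algebra $\sigma(A)$. Abbreviate
\begin{displaymath}
R_{n,A}:=\sum_{0\leq k <\lfloor n/2\rfloor}(-1)^{k}\dfrac{n!}{2^{k} k! (n-2k)!}V_{A}^{k}:\Phi_{D\setminus A}^{n-2k}:\,,
\end{displaymath}
so that Theorem \ref{Thm decomp FPS Wick} reads $:\Phi^{n}:\,=\psi_{n,A}+R_{n,A}$, an equality in $L^{2}(d\PP,\sigma(\Phi),H^{-\eta}(\C))$ for every $\eta>0$. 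The structural input is that conditioning on $A$ acts as the orthogonal projection of $L^{2}(d\PP,\sigma(\Phi),H^{-\eta}(\C))$ onto its closed subspace $\mathcal{H}_{A}:=L^{2}(d\PP,\sigma(A),H^{-\eta}(\C))$; see \cite[Section 2.6]{HNVW16AnBanach1}. Since $\psi_{n,A}=\E[:\Phi^{n}:\,\vert A]$ is this projection and $:\Phi^{n}:$ belongs to $L^{2}(d\PP,\sigma(\Phi),H^{-\eta}(\C))$ (because $\E[\Vert:\Phi^{n}:\Vert_{H^{-\eta}(\C)}^{2}]<\infty$), the remainder $R_{n,A}=:\Phi^{n}:\,-\psi_{n,A}$ lies in the orthogonal complement $\mathcal{H}_{A}^{\perp}$.

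First I would verify that the cross term has vanishing conditional expectation. For any bounded $\sigma(A)$-measurable scalar $\xi$, the product $\xi\,\psi_{n,A}$ lies in $\mathcal{H}_{A}$, while pointwise $\xi\,\langle\psi_{n,A},R_{n,A}\rangle_{H^{-\eta}(\C)}=\langle\xi\,\psi_{n,A},R_{n,A}\rangle_{H^{-\eta}(\C)}$; integrating and using $R_{n,A}\perp\mathcal{H}_{A}$ gives $\E\big[\xi\,\langle\psi_{n,A},R_{n,A}\rangle_{H^{-\eta}(\C)}\big]=0$. Since $\langle\psi_{n,A},R_{n,A}\rangle_{H^{-\eta}(\C)}$ is integrable (Cauchy--Schwarz together with $\E[\Vert\psi_{n,A}\Vert_{H^{-\eta}(\C)}^{2}]<\infty$ and $\E[\Vert R_{n,A}\Vert_{H^{-\eta}(\C)}^{2}]<\infty$) and $\xi$ is arbitrary, this forces $\E\big[\langle\psi_{n,A},R_{n,A}\rangle_{H^{-\eta}(\C)}\,\vert A\big]=0$ a.s. An alternative, more hands-on route to the same fact is to check $\E[R_{n,A}\,\vert A]=0$ termwise: the $k=0$ summand is $:\Phi_{D\setminus A}^{n}:$, which conditionally on $A$ is a Wick power of positive order of the centred GFF $\Phi_{D\setminus A}$ and hence has zero conditional mean; for $k\geq1$ one writes $V_{A}^{k}:\Phi_{D\setminus A}^{n-2k}:$ as the $L^{2}$-limit of $f_{q}V_{A}^{k}:\Phi_{D\setminus A}^{n-2k}:$ from Lemma \ref{Lem f q}, pulls out the bounded $\sigma(A)$-measurable factor $f_{q}V_{A}^{k}$, uses $\E[:\Phi_{D\setminus A}^{n-2k}:\,\vert A]=0$ (valid since $n-2k\geq1$), and lets $q\to\infty$.

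With the cross term handled, the identity is immediate: expanding the squared norm (all our fields being real-valued),
\begin{displaymath}
\Vert :\Phi^{n}:\Vert_{H^{-\eta}(\C)}^{2}
=\Vert\psi_{n,A}\Vert_{H^{-\eta}(\C)}^{2}
+2\langle\psi_{n,A},R_{n,A}\rangle_{H^{-\eta}(\C)}
+\Vert R_{n,A}\Vert_{H^{-\eta}(\C)}^{2},
\end{displaymath}
and applying $\E[\,\cdot\,\vert A]$ --- noting that $\Vert\psi_{n,A}\Vert_{H^{-\eta}(\C)}^{2}$ is $\sigma(A)$-measurable and that the middle term has zero conditional expectation --- gives the first displayed identity; taking a further expectation yields the second, which also follows directly from Pythagoras in $L^{2}(d\PP,\sigma(\Phi),H^{-\eta}(\C))$ applied to $:\Phi^{n}:\,=\psi_{n,A}+R_{n,A}$. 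I expect the only real care to be in the bookkeeping of $H^{-\eta}(\C)$-valued conditional expectations --- the projection characterization and the ``pull out what is measurable'' property --- and, should one opt for the termwise route, in handling the blow-up of $V_{A}$ near $A$ through the truncations $f_{q}$ of Lemma \ref{Lem f q}.
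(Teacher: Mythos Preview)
Your proposal is correct and follows essentially the same approach as the paper: expand $\Vert:\Phi^{n}:\Vert_{H^{-\eta}(\C)}^{2}$ via the decomposition $:\Phi^{n}:\,=\psi_{n,A}+R_{n,A}$ of Theorem \ref{Thm decomp FPS Wick}, then kill the cross term in conditional expectation by pulling out the $\sigma(A)$-measurable factor $\psi_{n,A}$ and using $\E[R_{n,A}\,\vert A]=0$. Your termwise alternative via the truncations $f_{q}$ of Lemma \ref{Lem f q} is correct but unnecessary, since $\E[R_{n,A}\,\vert A]=0$ is immediate from $R_{n,A}=:\Phi^{n}:-\,\E[:\Phi^{n}:\,\vert A]$.
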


\begin{proof}
One can expand $\Vert :\Phi^{n}:\Vert^{2}_{H^{-\eta}(\C)}$ as
\begin{multline*}
\Vert \psi_{n,A}\Vert^{2}_{H^{-\eta}(\C)}
+
\Big\Vert
\sum_{0\leq k <\lfloor n/2\rfloor}
(-1)^{k}\dfrac{n!}{2^{k} k! (n-2k)!}
V_{A}^{k}:\Phi_{D\setminus A}^{n-2k}:
\Big\Vert^{2}_{H^{-\eta}(\C)}
\\
+2
\Big\langle :\Phi^{n}:\, , 
\sum_{0\leq k <\lfloor n/2\rfloor}
(-1)^{k}\dfrac{n!}{2^{k} k! (n-2k)!}
V_{A}^{k}:\Phi_{D\setminus A}^{n-2k}:
\Big\rangle_{H^{-\eta}(\C)},
\end{multline*}
where $\langle \cdot,\cdot \rangle_{H^{-\eta}(\C)}$ is the
$H^{-\eta}(\C)$ inner product. 
Further,
\begin{multline*}
\E\Big[
\Big\langle :\Phi^{n}:\, , 
\sum_{0\leq k <\lfloor n/2\rfloor}
(-1)^{k}\dfrac{n!}{2^{k} k! (n-2k)!}
V_{A}^{k}:\Phi_{D\setminus A}^{n-2k}:
\Big\rangle
\Big\vert A
\Big] =
\\
\Big\langle :\Phi^{n}:\, ,
\E\Big[
\sum_{0\leq k <\lfloor n/2\rfloor}
(-1)^{k}\dfrac{n!}{2^{k} k! (n-2k)!}
V_{A}^{k}:\Phi_{D\setminus A}^{n-2k}:
\Big\vert A \Big]
\Big\rangle  = 0.
\end{multline*}
\end{proof}

Next we see that for $n$ odd, $\psi_{n,A}$ can be seen as the restriction of $:\Phi^{n}:$
to $A$. This is no longer the case if $n$ is even.

\begin{cor}
\label{Cor restr A}
Let $f_{q}$ be the cut-off functions given by \eqref{Eq cutoff f q}.
\begin{enumerate}
\item If $n$ is odd, then for every $\eta>0$.
\begin{displaymath}
\lim_{q\to 0}\E\Big[\big\Vert (1-f_{q}):\Phi^{n}: - \psi_{n,A}\big\Vert_{H^{-\eta}(\C)}^{2}\Big] = 0.
\end{displaymath}
\item If $n\geq 2$ is even, 
with $n = 2 \operatorname{mod} 4$, then
$((1-f_{q}):\Phi^{n}:\, ,1)$
converges to $+\infty$ in probability.
\item If $n\geq 2$ is even, 
with $n = 0 \operatorname{mod} 4$, then
$((1-f_{q}):\Phi^{n}:\, ,1)$
converges to $-\infty$ in probability.
\end{enumerate}
\end{cor}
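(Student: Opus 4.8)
The plan is to analyze, for each $q$, the field $(1-f_q):\Phi^n:$ via the decomposition of Theorem \ref{Thm decomp FPS Wick}, and to track separately what happens to the $\psi_{n,A}$ part and to the ``outside'' part $\sum_{0\le k<\lfloor n/2\rfloor}(-1)^k\frac{n!}{2^k k!(n-2k)!}V_A^k:\Phi_{D\setminus A}^{n-2k}:$. Since $f_q$ is supported on $D\setminus A$ and equals $1$ on the set $\{\CR(z,D\setminus A)\ge 2^{-q}\CR(z,D)\}$, the function $1-f_q$ is supported on an ``$\varepsilon$-neighborhood'' of $A$ (with $\varepsilon$ comparable to $2^{-q}$) together with $A$ itself. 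The key point is that $\psi_{n,A}$ is supported on $A$, so morally $(1-f_q)\psi_{n,A}=\psi_{n,A}$ up to the cut-off being $0$ rather than $1$ right on $A$; one has to justify that $(1-f_q):\Phi^n:\ \to\psi_{n,A}+(\text{something from the outside part})$, by writing
\begin{displaymath}
(1-f_q):\Phi^n:\ =\ \psi_{n,A}\ +\ (1-f_q)\sum_{0\le k<\lfloor n/2\rfloor}(-1)^k\frac{n!}{2^k k!(n-2k)!}V_A^k:\Phi_{D\setminus A}^{n-2k}:\ -\ f_q\,\psi_{n,A},
\end{displaymath}
and showing $f_q\psi_{n,A}\to 0$ in $L^2(d\PP,\sigma(\Phi),H^{-\eta}(\C))$ by a dominated-convergence argument analogous to Lemma \ref{Lem f q} (using that $\nu_A$ gives no mass to $\{\CR(z,D\setminus A)<2^{-q}\CR(z,D)\}$ in the limit, together with $\E[\Vert\psi_{n,A}\Vert^2_{H^{-\eta}}]<\infty$ and the support property of $\psi_{n,A}$ from Corollary \ref{Cor support psi}). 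So everything reduces to the behavior of $(1-f_q)\,V_A^k:\Phi_{D\setminus A}^{n-2k}:$ as $q\to\infty$.

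For part (1), $n$ odd: here every term in the outside sum has $n-2k\ge 1$, so $:\Phi_{D\setminus A}^{n-2k}:$ has zero conditional mean given $A$ and one can estimate
\begin{displaymath}
\E\Big[\big\Vert (1-f_q)V_A^k:\Phi_{D\setminus A}^{n-2k}:\big\Vert_{H^{-\eta}(\C)}^2\Big]=(n-2k)!\int_{\C^2}\LK_\eta(|w-z|)\,\E\big[(1-f_q)(z)(1-f_q)(w)V_A(z)^kV_A(w)^kG_{D\setminus A}(z,w)^{n-2k}\big]\,d^2z\,d^2w,
\end{displaymath}
which tends to $0$ by dominated convergence, using $(1-f_q)\le\ind_{D\setminus A}-f_q\to 0$ a.e.\ and exactly the integrability bound \eqref{Eq to bound} established in the proof of Lemma \ref{Lem f q}. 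Combined with $f_q\psi_{n,A}\to0$ this gives $(1-f_q):\Phi^n:\to\psi_{n,A}$ in $L^2(d\PP,\sigma(\Phi),H^{-\eta}(\C))$, which is the claimed statement.

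For parts (2) and (3), $n=2k_0$ even: now the outside sum contains the term with $n-2k=0$, namely $k=k_0$, which is the deterministic-given-$A$ function $(-1)^{k_0}\frac{(2k_0)!}{2^{k_0}k_0!}V_A^{k_0}$ — and by Remark \ref{Rem non sep} this is \emph{not} integrable near $A$, with sign $(-1)^{k_0}=(-1)^{n/2}$ (positive when $n\equiv 2\bmod 4$, negative when $n\equiv 0\bmod 4$). The strategy is: test against $1$, so $((1-f_q):\Phi^n:,1)=(\psi_{n,A},1)-(f_q\psi_{n,A},1)+\sum_k(\pm)((1-f_q)V_A^k:\Phi_{D\setminus A}^{n-2k}:,1)$. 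All terms with $1\le k<k_0$ (i.e.\ $n-2k\ge 1$) stay bounded in $L^2$ as above; $(\psi_{n,A},1)$ and $(f_q\psi_{n,A},1)$ are controlled; but the $k=k_0$ term equals $(-1)^{k_0}\frac{(2k_0)!}{2^{k_0}k_0!}\int_{D\setminus A}(1-f_q)(z)V_A(z)^{k_0}\,d^2z$, and by monotone convergence this integral increases to $\int_{D\setminus A}V_A(z)^{k_0}\,d^2z=+\infty$ a.s., by the divergence computation in Remark \ref{Rem non sep} (the Minkowski content asymptotics of Theorem \ref{Thm Mink ALS1} force $\int_0^1 |\log u|^{k_0-1}|\log u|^{-1/2}\,\frac{du}{u}=+\infty$). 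Hence $((1-f_q):\Phi^n:,1)$ is this diverging deterministic-sign quantity plus a family bounded in probability, so it converges in probability to $+\infty$ or $-\infty$ according to the sign $(-1)^{n/2}$.

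The main obstacle I expect is the bookkeeping around $f_q\psi_{n,A}\to 0$: one must argue that $\psi_{n,A}$, although it does charge $A$, puts no atomic mass precisely on the ``thin shell'' $\{2^{-q-1}\le\CR(z,D\setminus A)/\CR(z,D)<2^{-q}\}\cap A$ in a way that survives the limit — this is where one genuinely uses that $\psi_{n,A}$ is an $H^{-\eta}$-valued $L^2$ object built as an $L^2$-limit of the $\varepsilon$-mollified fields, so that convergence in $L^2(d\PP,\sigma(\Phi),H^{-\eta}(\C))$ for $f_q\,Q_n(\nu_{A,\varepsilon},V_{A,\varepsilon})$ can be interchanged and bounded by the (finite) second moment of $:\Phi^n:$ times an indicator of a shrinking random neighborhood of $A$, which tends to $0$ by dominated convergence after using Proposition \ref{Prop Green} and the distortion inequalities \eqref{Eq Koebe} exactly as in Lemma \ref{Lem f q}. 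For the even case, a secondary subtlety is confirming that the bounded-in-probability terms cannot conspire to cancel the divergence; this is automatic because the diverging term is deterministic given $A$ with a fixed sign and grows without bound a.s., while the remaining terms have uniformly bounded second moments, hence are tight.
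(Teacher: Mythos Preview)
Your overall framework (apply Theorem \ref{Thm decomp FPS Wick}, multiply by $1-f_q$, and analyze the pieces) is exactly the paper's, and your argument for part (1) is correct. However, you have overcomplicated the odd case and made a genuine error in the even case.

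For part (1): there is nothing to prove about $f_q\psi_{n,A}$. Since $f_q(z)=\chi(2^q\CR(z,D\setminus A))$ vanishes on the open set $\{\CR(z,D\setminus A)<2^{-q-1}\}$, the function $f_q$ is smooth and identically $0$ on a neighborhood of $A$; as $\psi_{n,A}$ is a distribution supported on $A$ (Corollary \ref{Cor support psi}), one has $f_q\psi_{n,A}=0$ exactly, i.e.\ $(1-f_q)\psi_{n,A}=\psi_{n,A}$. The dominated-convergence argument you sketch is unnecessary.

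For parts (2) and (3) there is a real gap. The sum in Theorem \ref{Thm decomp FPS Wick} runs over $0\le k<\lfloor n/2\rfloor$, so for $n=2k_0$ it \emph{does not} contain $k=k_0$. The $V_A^{k_0}$ contribution sits inside $\psi_{n,A}$ (this is precisely the content of Corollary \ref{Cor support psi} for even $n$), not in the ``outside'' sum. Consequently your claim that ``$(\psi_{n,A},1)$ and $(f_q\psi_{n,A},1)$ are controlled'' is false: since $f_q$ is compactly supported in $D\setminus A$, Corollary \ref{Cor support psi} gives
\[
(f_q\psi_{n,A},1)=(-1)^{k_0}\tfrac{(2k_0)!}{2^{k_0}k_0!}\int_{D\setminus A} f_q\,V_A^{k_0},
\]
which is finite for each $q$ (the integrand is supported on $\{\CR(z,D\setminus A)\ge 2^{-q-1}\}$, where $V_A$ is bounded) and increases to $+\infty$ as $q\to\infty$ by Remark \ref{Rem non sep}. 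By contrast, the quantity you wrote, $\int_{D\setminus A}(1-f_q)V_A^{k_0}$, equals $+\infty$ for \emph{every} $q$, since $1-f_q\equiv 1$ near $A$ and $V_A^{k_0}$ is not integrable there; so your decomposition into ``one diverging term plus bounded terms'' does not consist of finite pieces. Finally, your sign bookkeeping is off: for $n\equiv 2\bmod 4$ one has $n/2$ odd and $(-1)^{n/2}=-1$, not $+1$.

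The correct accounting, which is what the paper does, is
\[
((1-f_q):\Phi^n:,1)=(\psi_{n,A},1)\;-\;(-1)^{k_0}\tfrac{(2k_0)!}{2^{k_0}k_0!}\int_{D\setminus A} f_q\,V_A^{k_0}\;+\;((1-f_q)\,[\text{outside sum}],1),
\]
where the last term is bounded in $L^2$ by Lemma \ref{Lem f q} and the middle term diverges with sign $-(-1)^{n/2}$, yielding $+\infty$ for $n\equiv 2\bmod 4$ and $-\infty$ for $n\equiv 0\bmod 4$. (The paper phrases this via the difference $(1-f_q):\Phi^n:-(1-f_0):\Phi^n:$, which isolates the diverging integral $\int(f_q-f_0)V_A^{n/2}$; the content is the same.)
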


\begin{proof}
By the decomposition of Theorem \ref{Thm decomp FPS Wick}, we get
\begin{displaymath}
(1-f_{q})\, :\Phi^{n}:~=(1-f_{q})\psi_{n,A}
+
(1-f_{q})\sum_{0\leq k <\lfloor n/2\rfloor}
(-1)^{k}\dfrac{n!}{2^{k} k! (n-2k)!}
V_{A}^{k}
\,
:\Phi_{D\setminus A}^{n-2k}:\,.
\end{displaymath}
By Lemma \ref{Lem f q}, the term
\begin{displaymath}
(1-f_{q})\sum_{0\leq k <\lfloor n/2\rfloor}
(-1)^{k}\dfrac{n!}{2^{k} k! (n-2k)!}
V_{A}^{k}
\,
:\Phi_{D\setminus A}^{n-2k}:
\end{displaymath}
converges to $0$ in
$L^{2}(d\PP,\sigma(\Phi),H^{-\eta}(\C))$
as $q\to + \infty$.
If $n$ is odd, then by Corollary \eqref{Cor support psi},
$\psi_{n,A}$ is supported on $A$, and thus,
$(1-f_{q})\psi_{n,A} = \psi_{n,A}$.
So we get the point (1).

Assume now that $n\geq 2$ is even.
Then by Corollary \eqref{Cor support psi},
\begin{displaymath}
(1-f_{q})\, :\Phi^{n}: - (1-f_{0})\, :\Phi^{n}:
~= -(f_{q}-f_{0})
(-1)^{n/2}\dfrac{n!}{2^{n/2} (n/2)!}V_{A}^{n/2}.
\end{displaymath}
But a.s.,
\begin{displaymath}
\lim_{q\to +\infty}\int_{D\setminus A}(f_{q}-f_{0})V_{A}^{n/2} = +\infty.
\end{displaymath}
Thus, the points (2) and (3) follow.
\end{proof}

\subsection{Decomposition of Wick powers induced by two-valued sets}
\label{Subsec Wick TVS}

As previously, let $\Phi$ be a GFF on $D$ with constant
boundary condition $v>0$ on $\partial D$.
Let $b>v$ and $a<v$ such that $b-a\geq 2\lambda$,
where $2\lambda = \sqrt{\pi/2}$ is the height gap of the GFF;
see Section \ref{Subsubsec TVS}.
Let $A_{a,b}$ be the two-valued set (TVS) of $\Phi$ for levels $a$ and $b$.
Let $\ell_{a,b}$ be the label function of $(\Phi, A_{a,b})$ : on each connected component of
$D\setminus A_{a,b}$, $\ell_{a,b}$ takes a constant value $a$ or $b$.
Outside $D\setminus A_{a,b}$, we set $\ell_{a,b}$ to $0$.
The $\Phi$ can be decomposed as
\begin{displaymath}
\Phi = \ell_{a,b} + \Phi_{D\setminus A_{a,b}},
\end{displaymath}
where conditionally on $(A_{a,b}, \ell_{a,b})$,
the field $\Phi_{D\setminus A_{a,b}}$ is distributed as a GFF on $D\setminus A_{a,b}$
with $0$ boundary condition.
Next we present the decomposition of the Wick powers $:\Phi^{n}:$ induced by the local set $A_{a,b}$.
The picture is much simpler than for the first passage sets because $A_{a,b}$ is \textbf{thin},
that is to say $\Phi$ does not ``charge'' the set $A_{a,b}$ itself,
and only the harmonic extension $\ell_{a,b}$ matters.
See \cite{Sepulveda19ThinLocalSet,ASW} for the terminology.
We will omit the details of proofs, since they are either straightforward or similar to the case of first passage sets.
We will need the function
\begin{displaymath}
V_{A_{a,b}}(z) = g_{D}(z,z) - g_{D\setminus A_{a,b}}(z,z)
=\dfrac{1}{2\pi}\log\Big(\dfrac{\CR(z,D)}{\CR(z,D\setminus A_{a,b})}\Big),
\end{displaymath}
defined on $D\setminus A_{a,b}$.
Note that for all $k\geq 1$,
\begin{displaymath}
\E\Big[\int_{D\setminus A_{a,b}}
V_{A_{a,b}}(z)^{k}\, d^{2}z \Big]
=
\int_{D}
\E\big[V_{A_{a,b}}(z)^{k}
\big] \, d^{2}z < +\infty,
\end{displaymath}
since the r.v.-s $V_{A_{a,b}}(z)$ have all the same distribution (by conformal invariance)
and have exponential tails; see Theorem \ref{Thm ASW CR ell a b}.
So this is different from the case of first passage sets; see Remark \ref{Rem non sep}.

\begin{lemma}
\label{Lem Q n x y u v}
For every $n\geq 1$, the following polynomial identity holds:
\begin{equation}
\label{Eq Q n x y u v}
Q_{n}(x_{1}+x_{2}, u_{1}+u_{2}) = 
\sum_{j=0}^{n} \dfrac{n!}{j! (n-j)!} Q_{n-j}(x_{1},u_{1})Q_{j}(x_{2},u_{2}).
\end{equation}
\end{lemma}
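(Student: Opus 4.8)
The plan is to deduce the identity \eqref{Eq Q n x y u v} from the exponential generating function \eqref{Eq Q exp}. First I would write, as an identity of formal power series in $\gamma$ (equivalently, an identity valid for every $\gamma\in\R$),
\begin{align*}
\Big(\sum_{n\geq 0}\frac{\gamma^{n}}{n!}Q_{n}(x_{1},u_{1})\Big)
\Big(\sum_{m\geq 0}\frac{\gamma^{m}}{m!}Q_{m}(x_{2},u_{2})\Big)
&= e^{\gamma x_{1}-\gamma^{2}u_{1}/2}\,e^{\gamma x_{2}-\gamma^{2}u_{2}/2} \\
&= e^{\gamma(x_{1}+x_{2})-\gamma^{2}(u_{1}+u_{2})/2}
= \sum_{n\geq 0}\frac{\gamma^{n}}{n!}Q_{n}(x_{1}+x_{2},u_{1}+u_{2}),
\end{align*}
using \eqref{Eq Q exp} at the first and the last equality. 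Extracting the coefficient of $\gamma^{n}$ from the Cauchy product on the left gives
\begin{displaymath}
\frac{1}{n!}Q_{n}(x_{1}+x_{2},u_{1}+u_{2})
=\sum_{j=0}^{n}\frac{1}{j!\,(n-j)!}\,Q_{n-j}(x_{1},u_{1})\,Q_{j}(x_{2},u_{2}),
\end{displaymath}
and multiplying through by $n!$ yields \eqref{Eq Q n x y u v}. Since both sides of \eqref{Eq Q n x y u v} are polynomials in $(x_{1},x_{2},u_{1},u_{2})$, this comparison of coefficients is purely formal and involves no convergence issue.

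A more transparent alternative uses the probabilistic representation behind \eqref{Eq Herm 2}: one has $Q_{n}(x,u)=\E[(x+i\sqrt{u}\,Y)^{n}]$ for $Y\sim\mathcal N(0,1)$, at least when $u\geq 0$. Taking $Y_{1},Y_{2}$ independent standard Gaussians, the variable $\sqrt{u_{1}}\,Y_{1}+\sqrt{u_{2}}\,Y_{2}$ is Gaussian with variance $u_{1}+u_{2}$, hence
\begin{displaymath}
Q_{n}(x_{1}+x_{2},u_{1}+u_{2})
=\E\big[\big((x_{1}+i\sqrt{u_{1}}\,Y_{1})+(x_{2}+i\sqrt{u_{2}}\,Y_{2})\big)^{n}\big];
\end{displaymath}
expanding by the binomial theorem and using the independence of $Y_{1}$ and $Y_{2}$ reproduces exactly the right-hand side of \eqref{Eq Q n x y u v}. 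This proves the identity for $u_{1},u_{2}\geq 0$, and since both sides are polynomials in $(u_{1},u_{2})$ agreeing on an open set they agree identically.

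There is no genuine obstacle here, the statement being an elementary algebraic identity about the two-variable Hermite polynomials $Q_{n}$; the only points deserving a line of care are the reindexing in the Cauchy product and, for the second argument, the polynomial-extension step removing the restriction $u_{1},u_{2}\geq 0$. It seems worth recording that \eqref{Eq Q n x y u v} contains both earlier identities: taking $u_{2}=0$, so that $Q_{j}(x_{2},0)=x_{2}^{j}$, recovers the binomial identity \eqref{Eq Q bin}, while taking $x_{2}=0$, so that $Q_{j}(0,u_{2})=0$ for $j$ odd and $Q_{2k}(0,u_{2})=(-1)^{k}\frac{(2k)!}{2^{k}k!}u_{2}^{k}$, recovers the change of variance identity \eqref{Eq change var}.
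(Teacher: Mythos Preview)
Your proof is correct and, in fact, more direct than the paper's. The paper does not use the generating function \eqref{Eq Q exp}; instead it first applies the binomial identity \eqref{Eq Q bin} to split off $x_1$, obtaining $Q_n(x_1+x_2,u_1+u_2)=\sum_k \binom{n}{k} x_1^{n-k} Q_k(x_2,u_1+u_2)$, and then applies the change of variance formula \eqref{Eq change var} to each $Q_k(x_2,u_1+u_2)$ to separate $u_1$ from $u_2$; a reindexing $j=k-2m$ then regroups the double sum into $\sum_j \binom{n}{j} Q_{n-j}(x_1,u_1) Q_j(x_2,u_2)$. Your generating-function argument bypasses this two-step computation entirely, exploiting the multiplicativity of the exponential to get the Cauchy product in one line. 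The paper's route has the minor virtue of showing explicitly that \eqref{Eq Q n x y u v} is a formal consequence of \eqref{Eq Q bin} and \eqref{Eq change var} combined, whereas your closing remark inverts this logic and recovers those two identities as specializations of \eqref{Eq Q n x y u v}; both viewpoints are useful. Your probabilistic alternative is also sound and is perhaps the most conceptually transparent of the three.
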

\begin{proof}
The binomial formula \eqref{Eq Q bin} gives
\begin{displaymath}
Q_{n}(x_{1}+x_{2}, u_{1}+u_{2}) = 
\sum_{k=0}^{n} \dfrac{n!}{k! (n-k)!} x_{1}^{n-k}Q_{k}(x_{2},u_{1}+u_{2}).
\end{displaymath}
The change of variance formula \eqref{Eq change var} gives
\begin{multline*}
Q_{n}(x_{1}+x_{2}, u_{1}+u_{2}) = 
\sum_{k=0}^{n} \dfrac{n!}{k! (n-k)!} x_{1}^{n-k}
\sum_{0\leq m\leq \lfloor k/2\rfloor}(-1)^{m}\dfrac{k!}{2^{m} m! (k-2m)!} u_{1}^{m} Q_{k-2m}(x_{2},u_{2})
\\ = 
\sum_{k=0}^{n}\sum_{0\leq m\leq \lfloor k/2\rfloor}
\dfrac{n!}{(k-2m)!(n-k + 2m)!}
(-1)^{m}
\dfrac{(n-k + 2m)!}{2^{m} m!(n-k)!}
x_{1}^{n-k} u_{1}^{m} Q_{k-2m}(x_{2},u_{2}).
\end{multline*}
By setting $j=k-2m$ and summing over $k$, we get \eqref{Eq Q n x y u v}.
\end{proof}

\begin{prop}
\label{Prop decomp Wick TVS}
With the notations above,
for every $n\geq 1$ the following decomposition holds:
\begin{equation}
\label{Eq decomp Wick TVS}
:\Phi^{n}: ~=~
\sum_{j=0}^{n} \dfrac{n!}{j! (n-j)!} Q_{n-j}(\ell_{a,b},V_{A_{a,b}})\,:\Phi_{D\setminus A_{a,b}}^{j} : \, ,
\end{equation}
where all the terms in the sum belong, as random variables,
to $L^{2}(d\PP,\sigma(\Phi),H^{-\eta}(\C))$
for every $\eta>0$.
In particular,
\begin{equation}
\label{Eq cond exp TVS}
\E\big[ :\Phi^{n}: \big\vert A_{a,b}, \ell_{a,b}\big]
= Q_{n}(\ell_{a,b},V_{A_{a,b}}).
\end{equation}
\end{prop}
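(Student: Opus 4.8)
The plan is to prove Proposition \ref{Prop decomp Wick TVS} by following the same scheme used for first passage sets, but exploiting the crucial simplification that $A_{a,b}$ is \emph{thin}: the field $\Phi$ does not charge $A_{a,b}$, so in the decomposition $\Phi = \ell_{a,b} + \Phi_{D\setminus A_{a,b}}$ the first term is a genuine bounded (piecewise constant) function rather than a singular measure. First I would work at the regularized level. Writing $\Phi_\varepsilon = \ell_{a,b,\varepsilon} + \Phi_{D\setminus A_{a,b},\varepsilon}$ with $\ell_{a,b,\varepsilon} = \ell_{a,b}\ast\rho_\varepsilon$, and noting that $G_{D,\varepsilon,\varepsilon}(z,z) = G_{D\setminus A_{a,b},\varepsilon,\varepsilon}(z,z) + V_{A_{a,b},\varepsilon}(z)$, I would apply the two-variable identity \eqref{Eq Q n x y u v} of Lemma \ref{Lem Q n x y u v} directly with $x_1 = \ell_{a,b,\varepsilon}(z)$, $x_2 = \Phi_{D\setminus A_{a,b},\varepsilon}(z)$, $u_1 = V_{A_{a,b},\varepsilon}(z)$, $u_2 = G_{D\setminus A_{a,b},\varepsilon,\varepsilon}(z,z)$. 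This immediately yields
\begin{displaymath}
:\Phi_\varepsilon^n:(z) = \sum_{j=0}^n \frac{n!}{j!(n-j)!} Q_{n-j}(\ell_{a,b,\varepsilon}(z),V_{A_{a,b},\varepsilon}(z))\,:\Phi_{D\setminus A_{a,b},\varepsilon}^j:(z),
\end{displaymath}
the clean analogue of Lemma \ref{Lem decomp Phi eps} with no leftover mixed ``error'' terms, precisely because $\ell_{a,b}$ is not renormalized.

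Next I would pass to the limit $\varepsilon\to 0$ term by term in $L^2(d\PP,\sigma(\Phi),H^{-\eta}(\C))$. The left-hand side converges to $:\Phi^n:$ by construction (Section \ref{Subsec Wick}). For the right-hand side, each summand is a product of the bounded function $Q_{n-j}(\ell_{a,b,\varepsilon},V_{A_{a,b},\varepsilon})$ with the Wick power $:\Phi_{D\setminus A_{a,b},\varepsilon}^j:$. One needs two ingredients: (i) the smoothed label-plus-variance factor converges, which follows from $\ell_{a,b,\varepsilon}\to\ell_{a,b}$ a.e.\ and the convergence $V_{A_{a,b},\varepsilon}\to V_{A_{a,b}}$ (handled exactly as in Proposition \ref{Prop V G eps}, but simpler here since $V_{A_{a,b}}$ has exponential tails by Theorem \ref{Thm ASW CR ell a b} and is integrable, unlike the FPS case); and (ii) a uniform second-moment bound of the type in Lemmas \ref{Lem second moment}--\ref{Lem key estimate}, using $G_{D\setminus A_{a,b},\varepsilon,\varepsilon}(z,w)\le G_{D,\varepsilon,\varepsilon}(z,w)$ together with the Green's function estimate of Proposition \ref{Prop Green} to control the integral against the Bessel potential $\LK_\eta$. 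Since the paper says proofs will be omitted as ``straightforward or similar to the case of first passage sets,'' the plan is to state that these convergences go through mutatis mutandis, pointing to the FPS arguments.

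Finally, for the identity \eqref{Eq cond exp TVS} I would take the conditional expectation given $(A_{a,b},\ell_{a,b})$ in the decomposition \eqref{Eq decomp Wick TVS}. Conditionally on $(A_{a,b},\ell_{a,b})$, the field $\Phi_{D\setminus A_{a,b}}$ is a GFF with $0$ boundary conditions on $D\setminus A_{a,b}$, so $\E[:\Phi_{D\setminus A_{a,b}}^j:\mid A_{a,b},\ell_{a,b}] = 0$ for $j\ge 1$ and equals $\ind_{D\setminus A_{a,b}}$ for $j=0$. Only the $j=0$ term survives, giving $Q_n(\ell_{a,b},V_{A_{a,b}})$ as claimed. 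I expect the main (only) genuine obstacle to be the justification of the termwise $L^2$-convergence in item (ii) above, i.e.\ the uniform integrability / dominated-convergence step controlling $\E[\,\Vert Q_{n-j}(\ell_{a,b,\varepsilon},V_{A_{a,b},\varepsilon}):\Phi_{D\setminus A_{a,b},\varepsilon}^j: - Q_{n-j}(\ell_{a,b},V_{A_{a,b}}):\Phi_{D\setminus A_{a,b}}^j:\Vert_{H^{-\eta}(\C)}^2\,]\to 0$; but this is strictly easier than the corresponding FPS estimates because here $\ell_{a,b}$ is bounded by $\max(|a|,|b|)$ and $V_{A_{a,b}}$ is integrable with exponential tails, so no delicate blow-up analysis near the set is required.
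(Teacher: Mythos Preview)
Your proposal is correct and follows essentially the same approach as the paper: apply the polynomial identity \eqref{Eq Q n x y u v} of Lemma \ref{Lem Q n x y u v} with the identifications $x_1=\ell_{a,b}$, $x_2=\Phi_{D\setminus A_{a,b}}$, $u_1=V_{A_{a,b}}$, $u_2=G_{D\setminus A_{a,b}}(z,z)$, handle convergence by the same (but simpler) arguments as in the FPS case, and deduce \eqref{Eq cond exp TVS} from $\E[:\Phi_{D\setminus A_{a,b}}^{j}:\mid A_{a,b},\ell_{a,b}]=0$ for $j\ge 1$. Your write-up is in fact more detailed than the paper's, which simply states the identification and says ``for the convergence, this is similar and actually simpler to the case of first passage sets, so we skip the details here.''
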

\begin{proof}
At the level of formal computations,
the decomposition \eqref{Eq decomp Wick TVS} from the identity \eqref{Eq Q n x y u v}.
Indeed, $\ell_{a,b}$ corresponds to $x_{1}$,
$\Phi_{D\setminus A_{a,b}}$ corresponds to $x_{2}$, and
$V_{A_{a,b}}$ corresponds to $u_{1}$.
As for $u_{2}$, it corresponds to $G_{D\setminus A_{a,b}}(z,z)$, which is infinite.
For the convergence, this is similar and actually simpler to the case of first passage sets,
so we skip the details here.

As for \eqref{Eq cond exp TVS}, we use that for every $j\geq 1$,
\begin{displaymath}
\E\big[ :\Phi_{D\setminus A_{a,b}}^{j} : \big\vert A_{a,b}, \ell_{a,b}\big]
= 0.
\qedhere
\end{displaymath}
\end{proof}

For a generalization of Proposition \ref{Prop decomp Wick TVS} to
the case of bounded-type \textbf{thin} local sets,
see Proposition \ref{Prop decomp Wick BTLS} in Section \ref{Subsec remark nestes CLE 4}.

\section{Asymptotic expansion of the neighborhoods of first passage sets}
\label{Sec A E}

\subsection{Presentation of the expansion}
\label{Subsec Pres A E}

As in Section \ref{Sec Wick FPS},
let $D\subset \C$ be an open, non-empty, bounded, connected and simply connected domain.
Let be a constant $v>0$ and $\Phi$ a GFF on $D$ with constant
boundary condition $v$ on $\partial D$.
Let $A$ be the first passage set (FPS) of $\Phi$
from level $v$ to level $0$.
We will introduce two open neighborhoods of $A$ defined through the conformal radius.
For $\varepsilon>0$, set
\begin{equation}
\label{Eq not N eps A}
\Ns_{\varepsilon}(A) = \{ z\in D\setminus A \vert \CR(z,D\setminus A)<\varepsilon \CR(z,D)\},
\qquad
\widetilde{\Ns}_{\varepsilon}(A)
= \{ z\in D\setminus A \vert \CR(z,D\setminus A)<\varepsilon\}.
\end{equation}
Note that we actually did not include $A$ in $N_{\varepsilon}(A)$
or $\widetilde{N}_{\varepsilon}(A)$.
This actually does not matter for what follows, because a.s.
$A$ has a $0$ Lebesgue measure.
By the distortion inequalities \eqref{Eq Koebe},
the sets $\Ns_{\varepsilon}(A)$ and $\widetilde{\Ns}_{\varepsilon}(A)$
can be compared to $\varepsilon$-neighborhoods of $A$ defined in terms of the Euclidean
distance. 
Indeed, for every $z\in D\setminus A$,
\begin{displaymath}
d(z,A) \leq \CR(z,D\setminus A) \leq 4 d(z,A),
\qquad
\dfrac{1}{4}\dfrac{d(z,A)}{d(z,\partial D)}
\leq\dfrac{\CR(z,D\setminus A)}{\CR(z,D)}\leq 
4 \dfrac{d(z,A)}{d(z,\partial D)}.
\end{displaymath}
Note also that $\Ns_{\varepsilon}(A)$ is conformally invariant in law,
but $\widetilde{\Ns}_{\varepsilon}(A)$ is not.

Recall the fields $\psi_{n,A}$ introduced in Section \ref{Subsec cond Wick FPS}.
Our goal is to prove the following asymptotic expansion.

\begin{thm}
\label{Thm A E FPS}
Consider $\ind_{\Ns_{\varepsilon}(A)}$ the Lebesgue measure on 
$\Ns_{\varepsilon}(A)$ (indicator function).
It satisfies the following asymptotic expansion:
for every $N\geq 0$,
\begin{equation}
\label{Eq A E N}
\ind_{\Ns_{\varepsilon}(A)}
=
\dfrac{1}{\sqrt{2\pi}}
\sum_{k=0}^{N} (-1)^{k}
\dfrac{1}{2^{k} k! (k+1/2)}
\dfrac{\psi_{2k+1,A}}{\big(\frac{1}{2\pi}\vert\log \varepsilon\vert\big)^{k + 1/2}}
~+~R_{N,\varepsilon},
\end{equation}
where the error term $R_{N,\varepsilon}$ satisfies, as $\varepsilon \to 0$,
\begin{displaymath}
\forall \eta >0,~
\E\big[\Vert R_{N,\varepsilon} \Vert_{H^{-\eta}(\C)}^{2}\big]^{1/2}
=\, o (\vert\log \varepsilon\vert^{-(N+1/2)}).
\end{displaymath}
As for the Lebesgue measure on $\widetilde{\Ns}_{\varepsilon}(A)$,
fix a deterministic smooth cutoff function $f_{0}:D\rightarrow [0,1]$, compactly supported in $D$.
Then  $\ind_{\widetilde{\Ns}_{\varepsilon}(A)}f_{0}$ satisfies the following asymptotic expansion:
for every $N\geq 0$,
\begin{equation}
\label{Eq A E tilde N}
\ind_{\widetilde{\Ns}_{\varepsilon}(A)}f_{0}
=
\dfrac{1}{\sqrt{2\pi}}
\sum_{k=0}^{N} (-1)^{k}
\dfrac{1}{2^{k} k! (k+1/2)}
\dfrac{f_{0}\,\tilde{\psi}_{2k+1,A}}{\big(\frac{1}{2\pi}
\vert\log \varepsilon\vert\big)^{k + 1/2}}
~+~\widetilde{R}_{N,\varepsilon, f_{0}},
\end{equation}
where
\begin{equation}
\label{Eq def tilde psi}
\tilde{\psi}_{2k+1,A} = 
\sum_{j=0}^{k} \dfrac{(2k+1)!}{2^{j} j! (2(k-j)+1)!}
\,\Big(\dfrac{1}{2\pi} \log \CR(z,D)\Big)^{j}\,\psi_{2(k-j)+1,A},
\end{equation}
and the error term $\widetilde{R}_{N,\varepsilon, f_{0}}$ satisfies, as $\varepsilon \to 0$,
\begin{displaymath}
\forall \eta >0,~
\E\big[\Vert \widetilde{R}_{N,\varepsilon, f_{0}} \Vert_{H^{-\eta}(\C)}^{2}\big]^{1/2}
=\, o (\vert\log \varepsilon\vert^{-(N+1/2)}).
\end{displaymath}
Further assume that
\begin{eqnarray}
\label{Eq cond small boundary}
\forall \beta >0,~
\operatorname{Leb}(\{ z\in D \vert \,d(z,\partial D)<\varepsilon\}) 
= o(\vert \log\varepsilon\vert^{-\beta})
\text{ as } \varepsilon\to 0.
\end{eqnarray}
Then for every $N\geq 0$,
\begin{equation}
\label{Eq A E tilde N bis}
\ind_{\widetilde{\Ns}_{\varepsilon}(A)}
=
\dfrac{1}{\sqrt{2\pi}}
\sum_{k=0}^{N} (-1)^{k}
\dfrac{1}{2^{k} k! (k+1/2)}
\dfrac{\tilde{\psi}_{2k+1,A}}{\big(\frac{1}{2\pi}
\vert\log \varepsilon\vert\big)^{k + 1/2}}
~+~\widetilde{R}_{N,\varepsilon},
\end{equation}
where the error term $\widetilde{R}_{N,\varepsilon}$ satisfies, as $\varepsilon \to 0$,
\begin{displaymath}
\forall \eta >0,~
\E\big[\Vert \widetilde{R}_{N,\varepsilon} \Vert_{H^{-\eta}(\C)}^{2}\big]^{1/2}
=\, o (\vert\log \varepsilon\vert^{-(N+1/2)}).
\end{displaymath}
\end{thm}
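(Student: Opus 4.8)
We spell out the argument for the first expansion \eqref{Eq A E N}; the two statements about $\widetilde{\Ns}_{\varepsilon}(A)$ will then follow by a change of scale. Write $T=\frac1{2\pi}\vert\log\varepsilon\vert$ and $c_{k}=(-1)^{k}/(2^{k}k!(k+1/2))$, so that $R_{N,\varepsilon}=\ind_{\Ns_{\varepsilon}(A)}-\frac1{\sqrt{2\pi}}\sum_{k=0}^{N}c_{k}\,\psi_{2k+1,A}\,T^{-(k+1/2)}$. For $b\geq 2\lambda$ with $b>v$, let $\mathcal G_{b}=\sigma(A_{0,b},\ell_{0,b})$ be the sigma-algebra of the two-valued set of $\Phi$ of levels $0$ and $b$ with its label function; since $(A_{0,b},\ell_{0,b})$ is measurable with respect to $A=A_{0}$ and $A=\overline{\bigcup_{b\geq 2\lambda}A_{0,b}}$, the $\mathcal G_{b}$ increase to $\sigma(A)$. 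The proof is an interchange of the limits $\varepsilon\to 0$ and $b\to\infty$, and the main work is to make that interchange quantitative. As a consistency check, the $k=0$ term of \eqref{Eq A E N} is $2\,\psi_{1,A}\,\vert\log\varepsilon\vert^{-1/2}=2\,\nu_{A}\,\vert\log\varepsilon\vert^{-1/2}$, which matches the Minkowski-content description of Theorem \ref{Thm Mink ALS1}.

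\emph{Step 1: the conditional expansion.} First I would compute $\E[R_{N,\varepsilon}\vert\mathcal G_{b}]$. Conditionally on $\mathcal G_{b}$, inside each component $\mathcal O$ of $D\setminus A_{0,b}$ with $\ell_{0,b}\equiv b$ the field $\Phi$ is a GFF with boundary value $b$ and $A\cap\overline{\mathcal O}$ is its first passage set from $b$ to $0$, while on the label-$0$ components nothing is added. Using $\CR(z,\mathcal O)=\CR(z,D)\,e^{-2\pi V_{A_{0,b}}(z)}$ and Theorem \ref{Thm law CR FPS} applied inside $\mathcal O$, one gets for every $z$
\begin{multline*}
\E\big[\ind_{\Ns_{\varepsilon}(A)}(z)\vert\mathcal G_{b}\big]
=\ind_{\ell_{0,b}(z)=0}\,\ind_{V_{A_{0,b}}(z)>T}
\\
+\ind_{\ell_{0,b}(z)=b}\,\PP_{b}\big(T_{0}>T-V_{A_{0,b}}(z)\big),
\end{multline*}
where $T_{0}$ is the first hitting time of $0$ by a one–dimensional Brownian motion started at $b$ (an explicit Gaussian quantity, $\PP_{b}(T_{0}>s)=\PP(\vert Z\vert<b/\sqrt{s})$ for $Z$ standard normal). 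Expanding $\PP_{b}(T_{0}>s)$ as a power series in $s^{-1/2}$ and re-expanding each $(T-V_{A_{0,b}}(z))^{-1/2}$ in powers of $T^{-1/2}$, the reexpansion identity for Hermite polynomials (Proposition \ref{Prop reexp}) identifies the coefficient of $T^{-(k+1/2)}$ as $\frac1{\sqrt{2\pi}}c_{k}\,Q_{2k+1}(\ell_{0,b}(z),V_{A_{0,b}}(z))$, which by \eqref{Eq cond exp TVS} equals $\frac1{\sqrt{2\pi}}c_{k}\,\E[\psi_{2k+1,A}(z)\vert\mathcal G_{b}]$. The error is bounded pointwise by
\begin{multline*}
\big\vert\E[R_{N,\varepsilon}\vert\mathcal G_{b}](z)\big\vert
\leq\ind_{\ell_{0,b}(z)=0}\,\ind_{V_{A_{0,b}}(z)>T}
\\
+C_{N}\,\ind_{\ell_{0,b}(z)=b}\,T^{-(N+3/2)}\big(1+V_{A_{0,b}}(z)\big)^{2N+3},
\end{multline*}
the regime $V_{A_{0,b}}(z)>T/2$ being handled crudely via $\PP\leq 1$ and the exponential tails of $V_{A_{0,b}}$ (Theorem \ref{Thm ASW CR ell a b}), and the first term — the conformal $\varepsilon$-neighbourhood of $A_{0,b}$ inside the label-$0$ components — having Lebesgue measure $O(\varepsilon^{2\lambda^{2}/b^{2}})$ because $A_{0,b}$ has Hausdorff dimension $2-2\lambda^{2}/b^{2}<2$, hence negligible at every polynomial order in $1/\vert\log\varepsilon\vert$ for fixed $b$. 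Integrating the square against the Bessel potential $\LK_{\eta}$ yields
\[
\E\big[\Vert\E[R_{N,\varepsilon}\vert\mathcal G_{b}]\Vert_{H^{-\eta}(\C)}^{2}\big]\leq o_{\varepsilon}(1)+C_{N}\,T^{-(2N+3)}\,M_{N}(b),
\]
where $M_{N}(b):=\int_{\C^{2}}\LK_{\eta}(\vert w-z\vert)\,\E[P_{N}(V_{A_{0,b}}(z),V_{A_{0,b}}(w))]\,d^{2}z\,d^{2}w<\infty$ for a fixed polynomial $P_{N}$, finiteness again following from the exponential tails of $V_{A_{0,b}}$. This computation is Proposition \ref{Prop cond series}.

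\emph{Step 2: orthogonal splitting and the interchange of limits.} Since $\E[\cdot\vert\mathcal G_{b}]$ is the orthogonal projection in the Hilbert space $L^{2}(d\PP,\sigma(\Phi),H^{-\eta}(\C))$,
\begin{multline*}
\E\big[\Vert R_{N,\varepsilon}\Vert_{H^{-\eta}(\C)}^{2}\big]
=\E\big[\Vert\E[R_{N,\varepsilon}\vert\mathcal G_{b}]\Vert_{H^{-\eta}(\C)}^{2}\big]
\\
+\E\big[\Vert R_{N,\varepsilon}-\E[R_{N,\varepsilon}\vert\mathcal G_{b}]\Vert_{H^{-\eta}(\C)}^{2}\big].
\end{multline*}
The first summand is controlled by Step 1. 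Integrated against $\LK_{\eta}$, the second summand splits into the $\mathcal G_{b}$-conditional variance of $\ind_{\Ns_{\varepsilon}(A)}$, plus $\frac1{2\pi}\sum_{k}c_{k}^{2}T^{-(2k+1)}\E[\Vert\psi_{2k+1,A}-\E[\psi_{2k+1,A}\vert\mathcal G_{b}]\Vert_{H^{-\eta}(\C)}^{2}]$, plus cross terms of the same nature; for fixed $\varepsilon$ each piece tends to $0$ as $b\to\infty$ since $\mathcal G_{b}\uparrow\sigma(A)$, $\ind_{\Ns_{\varepsilon}(A)}$ is a bounded $\sigma(A)$-measurable function, and $\psi_{2k+1,A}\in L^{2}$ is $\sigma(A)$-measurable. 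The crux is to make this decay explicit in $(\varepsilon,b)$. For the conditional variance of $\ind_{\Ns_{\varepsilon}(A)}$ one uses that, conditionally on $\mathcal G_{b}$, the first passage sets resampled in distinct label-$b$ components are independent, so $\mathrm{Cov}(\ind_{z\in\Ns_{\varepsilon}(A)},\ind_{w\in\Ns_{\varepsilon}(A)}\vert\mathcal G_{b})$ vanishes unless $z,w$ lie in the same component, where it is at most the smaller of the two one–point probabilities of Step 1, of order $(T-V_{A_{0,b}})^{-1/2}$; the decisive extra ingredient is a Beurling / extremal–distance estimate — Proposition \ref{Prop Green} together with Theorem \ref{Thm ED ALS3} (through Lemma \ref{Lem ED Gauss}) — bounding, inside one component, the probability that two separated points both fall into the narrow conformal neighbourhood of the resampled FPS, which produces the required decay in $b$. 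For the $\psi_{2k+1,A}$-terms one uses the decomposition of $:\Phi^{2k+1}:$ induced by $A_{0,b}$ and by the FPS resampled inside it (Theorem \ref{Thm decomp FPS Wick}, applied conditionally in each label-$b$ component) together with $\PP(\ell_{0,b}(z)=b)=v/b\to 0$ to get a rate. One then chooses $b=b(\varepsilon)\to\infty$ slowly enough that $M_{N}(b(\varepsilon))=o(\vert\log\varepsilon\vert^{2})$, so that the first summand times $\vert\log\varepsilon\vert^{2N+1}$ vanishes, yet fast enough that the second summand times $\vert\log\varepsilon\vert^{2N+1}$ also vanishes; this gives \eqref{Eq A E N}. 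Showing that these two competing requirements on $b(\varepsilon)$ are compatible — equivalently, that the rate in $b$ obtained for the second summand is strong enough — is the main obstacle of the whole proof, and it is precisely here that the estimates of Section \ref{Sec estimates} and Theorem \ref{Thm ED ALS3} are needed.

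\emph{Step 3: the neighbourhoods $\widetilde{\Ns}_{\varepsilon}(A)$.} Because $z\in\widetilde{\Ns}_{\varepsilon}(A)$ iff $\CR(z,D\setminus A)<\big(\varepsilon/\CR(z,D)\big)\CR(z,D)$, one has $\ind_{\widetilde{\Ns}_{\varepsilon}(A)}(z)=\ind_{\Ns_{\varepsilon/\CR(z,D)}(A)}(z)$, i.e. in \eqref{Eq A E N} the scale $T$ is replaced by $T+\frac1{2\pi}\log\CR(z,D)$. Substituting this and re-expanding $\big(T+\frac1{2\pi}\log\CR(z,D)\big)^{-(k+1/2)}$ in powers of $T^{-1}$ by the binomial series, the combinatorial factors assemble exactly into the fields $\tilde{\psi}_{2k+1,A}$ of \eqref{Eq def tilde psi}; multiplying by the cutoff $f_{0}$ keeps $\log\CR(\cdot,D)$ bounded on $\Supp f_{0}$, so the re-expansion is uniform and the remainder bound of \eqref{Eq A E tilde N} is inherited from \eqref{Eq A E N} by the same $H^{-\eta}(\C)$ second–moment bookkeeping. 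To remove $f_{0}$ under hypothesis \eqref{Eq cond small boundary}: on $\widetilde{\Ns}_{\varepsilon}(A)$ one has $\CR(z,D\setminus A)<\varepsilon$, so either $z$ lies within distance $O(\varepsilon)$ of $A$ (its contribution governed by \eqref{Eq A E N}) or the component of $z$ in $D\setminus A$ has conformal radius $<\varepsilon$ while $z$ stays away from $A$, which by the distortion inequalities \eqref{Eq Koebe} forces $d(z,\partial D)=O(\varepsilon)$; by \eqref{Eq cond small boundary} the Lebesgue measure of the latter set is $o(\vert\log\varepsilon\vert^{-\beta})$ for every $\beta$, so its contribution to the $H^{-\eta}(\C)$ norm is negligible at every order, and \eqref{Eq A E tilde N bis} follows with the same coefficients $\tilde{\psi}_{2k+1,A}$.
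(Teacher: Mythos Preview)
Your overall architecture matches the paper's: condition on the TVS $(A_{0,b},\ell_{0,b})$, use Proposition~\ref{Prop cond series} for the conditional identity, split $R_{N,\varepsilon}$ into the $\mathcal G_b$-projection and its orthogonal complement, and then choose $b=b(\varepsilon)\to\infty$ so that both pieces are $o(T^{-(N+1/2)})$. You have correctly located the hard point (the compatibility of the two requirements on $b(\varepsilon)$) and the key SLE input (Theorem~\ref{Thm ED ALS3}, via Lemma~\ref{Lem ED Gauss}). However, two of your intermediate bounds are either wrong or too weak to close the argument.

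\textbf{The $\psi$-rate is the real gap.} For the indicator piece you invoke the same-component extremal-distance estimate and obtain exponential decay $e^{-cb}$ in $b$; that is exactly what the paper does (Corollary~\ref{Cor Eq ind norm exp}). But for the $\psi_{2k+1,A}$ piece you propose to ``get a rate'' from $\PP(\ell_{0,b}(z)=b)=v/b$. A one-point bound of order $b^{-1}$ cannot work: for $k=0$ you would need $\E\big[\Vert \psi_{1,A}-\E[\psi_{1,A}\vert\mathcal G_b]\Vert^2\big]=o(T^{-2N})$, i.e.\ $b(\varepsilon)\gg T^{2N}$, whereas your Step~1 forces $b(\varepsilon)$ to grow at most like a small power of $T$ (see below). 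The paper instead runs a \emph{two-point} analysis for the $\psi$-terms that parallels the one for the indicator: Lemma~\ref{Lem Delta psi 1} rewrites $\psi_{2k+1,A}-\E[\psi_{2k+1,A}\vert\mathcal G_b]$ as a sum of fields $\ell_{0,b}^{\,\cdots}V_{A_{0,b}}^{\,l}\,{:}\Phi_{D\setminus A_{0,b}}^{\,j}{:}$ restricted to $\{\ell_{0,b}=b\}$; Lemma~\ref{Lem 2 point cor D A 0 b} expresses the $H^{-\eta}$ second moment as a double integral with kernel $G_{D\setminus A_{0,b}}(z,w)^{j}$; Lemma~\ref{Lem a s bound} controls that kernel via the Beurling estimate (Proposition~\ref{Prop Green}); and then the \emph{same} same-component probability bound (Lemmas~\ref{Lem bound ED}--\ref{Lem ED Gauss}) gives $e^{-cb}$ (Corollary~\ref{Cor bref psi k}). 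Without this, the interchange of limits does not close.

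\textbf{Two secondary imprecisions.} First, your Step~1 pointwise bound $C_N T^{-(N+3/2)}(1+V_{A_{0,b}})^{2N+3}$ has lost the $b$-dependence: the tail of the series involves $Q_{2k+1}(b,V)$, hence powers $b^{2k+1}$, and the correct control (Lemma~\ref{Lem bound series}) is of the form $u^{N+3/2}F_N(u)$ with $u=(b^2\vee V_{A_{0,b}})/T$. This is why the paper introduces a third scale $\delta(\varepsilon)$, separating the region $V_{A_{0,b}}>\frac{1}{2\pi}\vert\log\delta\vert$ (handled by Lemma~\ref{Lem V area 0}) from the region where the series estimate applies. Second, the claim that the label-$0$ neighbourhood has area $O(\varepsilon^{2\lambda^2/b^2})$ ``because $A_{0,b}$ has Hausdorff dimension $2-2\lambda^2/b^2$'' is a heuristic, not a proof; the paper obtains the needed bound directly from Theorem~\ref{Thm ASW CR ell a b} and the spectral estimate \eqref{Eq tail -b b} (Lemma~\ref{Lem area 0}).

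\textbf{Step~3.} Your substitution $\varepsilon\mapsto\varepsilon/\CR(z,D)$ is formally correct, but the remainder bound in \eqref{Eq A E N} is an $L^2(H^{-\eta})$ statement for fixed $\varepsilon$, not a pointwise-in-$z$ bound, so one cannot literally ``substitute and inherit''. The paper avoids this by redoing the whole analysis for $\widetilde{\Ns}_\varepsilon(A)$ in parallel (same six-term decomposition, using Lemmas~\ref{Lem area 0}--\ref{Lem V area 0} and Corollaries~\ref{Cor bref psi k CR f 0}--\ref{Cor bref psi k CR}); in the no-cutoff case it moreover needs a \emph{different} choice $\delta(\varepsilon)=\exp(-\vert\log\varepsilon\vert^{1/(N+2)})$ and invokes Lemma~\ref{Lem small boundary means fast enough decay} to exploit hypothesis~\eqref{Eq cond small boundary}.
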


\medskip

\begin{rem}
In the case of $\widetilde{\Ns}_{\varepsilon}(A)$, the cutoff function $f_{0}$
is to avoid the issues related to the boundary $\partial D$ of the domain.
These could typically arise if
\begin{displaymath}
\operatorname{Leb}(\{ z\in D \vert \,d(z,\partial D)<\varepsilon\}) 
\gg \vert \log\varepsilon\vert^{-\beta},
\end{displaymath}
for some $\beta>0$, 
as then deterministic contributions originating from $\partial D$ should also appear in the asymptotic expansion.
Such a cutoff is not required under the condition \eqref{Eq cond small boundary}.
Also note that this condition \eqref{Eq cond small boundary}
is automatically satisfied if the Minkowski dimension of $\partial D$ is
strictly smaller than $2$.
In the case of $\Ns_{\varepsilon}(A)$,
the cutoff is not required regardless whether \eqref{Eq cond small boundary} holds or not.
\end{rem}

\medskip

\begin{rem}
Both in \eqref{Eq A E N} and \eqref{Eq A E tilde N bis}, 
the first order term is the same, equal to
\begin{displaymath}
2\,\dfrac{\psi_{1}}{\vert \log\varepsilon\vert^{1/2}}
=
2\,\dfrac{\nu_{A}}{\vert \log\varepsilon\vert^{1/2}},
\end{displaymath}
where $\nu_{A}$ is the Minkowski content measure on $A$; see Theorem \ref{Thm Mink ALS1}.
Higher order terms however differ.
\end{rem}

\medskip

\begin{rem}
Note that both sides of \eqref{Eq A E N} satisfy the conformal invariance in law,
whereas in \eqref{Eq A E tilde N} and \eqref{Eq A E tilde N bis}, 
neither of the sides of the equality is conformally  invariant.
\end{rem}

\medskip

\begin{rem}
Theorem \ref{Thm A E FPS} does \textbf{not} imply that for fixed $\varepsilon>0$,
$\ind_{\Ns_{\varepsilon}(A)}$ equals
\begin{displaymath}
\dfrac{1}{\sqrt{2\pi}}
\sum_{k=0}^{+\infty} (-1)^{k}
\dfrac{1}{2^{k} k! (k+1/2)}
\dfrac{\psi_{2k+1,A}}{\big(\frac{1}{2\pi}
\vert\log \varepsilon\vert\big)^{k + 1/2}},
\end{displaymath} 
and similarly for $\ind_{\widetilde{\Ns}_{\varepsilon}(A)}f_{0}$ and $\ind_{\widetilde{\Ns}_{\varepsilon}(A)}$.
This cannot be the case, since $\ind_{\Ns_{\varepsilon}(A)}$
is uniformly supported on $\Ns_{\varepsilon}(A)$,
while the terms of \eqref{Eq A E N} are all supported on $A$ and do not ``charge''
$\Ns_{\varepsilon}(A)$.
Most likely, the radius of convergence of the above power series in 
$\vert\log \varepsilon\vert$ is $0$.
If we replace $\psi_{2k+1,A}$ by $:\Phi^{2k+1}:$, then
\begin{displaymath}
\E\big[\big(:\Phi^{2k+1}:\, , 1\big)^{2}\big]^{1/2}\geq
c\dfrac{(2k+1)!}{(2\pi)^{k} 2^{k}},
\end{displaymath}
for some constant $c>0$ not depending on $k$.
So,
\begin{displaymath}
\sum_{k=0}^{+\infty}
\dfrac{1}{2^{k} k! (k+1/2)}
\dfrac{\E\big[\big(:\Phi^{2k+1}:\, , 1\big)^{2}\big]^{1/2}}{\big(\frac{1}{2\pi}
\vert\log \varepsilon\vert\big)^{k + 1/2}}
= +\infty
\end{displaymath}
because the factor $(2k+1)!/k!$ grows superexponentially.
See also Proposition \ref{Prop 0 rc BM} and the remark preceding it for the situation in the Brownian case.
\end{rem}

\medskip

\begin{rem}
Note that the expansions \eqref{Eq A E N}, \eqref{Eq A E tilde N} and \eqref{Eq A E tilde N bis}
do not involve the even powers $\psi_{2k, A}$.
We believe that this is related to the fact that one cannot restrict $\psi_{2k, A}$
to $A$ in the first place; see Remark \ref{Rem non sep} and
Corollary \ref{Cor restr A}.
Also, the integer powers of $\vert\log \varepsilon\vert$
do not appear.
\end{rem}

\medskip

\begin{rem}
\label{Rem an Le Gall}
We see Theorem \ref{Thm A E FPS} as an analogue of Le Gall's expansion for the
Wiener sausage ($\varepsilon$-neighborhood of a Brownian path) in dimension $2$ 
\cite{LeGallLocTime,LeGallStFlour}; see Section \ref{Subsec Le Gall sausage}.
The leading exponents of $1/\vert\log \varepsilon\vert$ differ:
$1$ in the Brownian case, and $1/2$ in the GFF case.
However, the increment of the exponents is the same in both cases, $1$.
In both cases, the leading coefficients are positive measures,
while the higher order terms involve more complicated renormalized quantities.
The Brownian and the Gaussian expansions could perhaps also be related through the fact that the first passage sets correspond to clusters of Brownian excursions and loops,
with however infinitely many Brownian loops of all small scales \cite{ALS2}.
We do not develop this point of view in this paper.
However, the renormalized intersection local times of these Brownian trajectories correspond to the even Wick powers $:\Phi^{2k}:$ \cite{LeJan2011Loops},
those that actually do not appear in the asymptotic expansions above.
So the correspondence between the Brownian expansion and the Gaussian expansion through the Brownian representations of the GFF is actually not straightforward,
and is yet to be understood.
See Section \ref{Subsec Wiener to FPS} for an in-depths discussion.
\end{rem}

\bigskip

The expansion \eqref{Eq A E N} provides a way to explicitly express
the restricted off Wick powers $\psi_{2n+1, A}$ in terms of $A$, by induction on the degree.
The expression for $\psi_{1, A}=\nu_{A}$ coincides with that of Theorem \ref{Thm Mink ALS1} (Minkowski content).
Further,
\begin{displaymath}
\psi_{3, A} = \lim_{\varepsilon \to 0}
- 
\dfrac{3}{2\pi}
\vert \log\varepsilon\vert^{3/2}
\Big(
\ind_{\Ns_{\varepsilon}(A)} - \dfrac{1}{2} \vert\log\varepsilon\vert^{1/2} \psi_{1,A}
\Big),
\end{displaymath}
with convergence in $L^{2}(d\PP,\sigma(\Phi),H^{-\eta}(\C))$.
For higher degrees we get the following, which is just a rephrasing of \eqref{Eq A E N}:
\begin{multline*}
\psi_{2n+1, A} = 
\\
\lim_{\varepsilon \to 0} (-1)^{n}\dfrac{2^{n} n! (n+1/2)}{(2\pi)^{n}}\vert\log\varepsilon\vert^{n+1/2}
\Big(
\ind_{\Ns_{\varepsilon}(A)} - 
\dfrac{1}{\sqrt{2\pi}}
\sum_{k=0}^{n-1} (-1)^{k}
\dfrac{1}{2^{k} k! (k+1/2)}
\dfrac{\psi_{2k+1,A}}{\big(\frac{1}{2\pi}\vert\log \varepsilon\vert\big)^{k + 1/2}}
\Big),
\end{multline*}
with convergence in $L^{2}(d\PP,\sigma(\Phi),H^{-\eta}(\C))$.

Another expression for $\psi_{2n+1, A}$
can be obtained by combining different scales $\varepsilon^{\alpha_{i}}$.

\begin{cor}
\label{Cor psi Vandermonde}
Let $n\geq 1$
and let $\alpha_{n}>\alpha_{n-1}>\dots >\alpha_{1}>\alpha_{0}=1$.
Let $(c_{0}, c_{1}, \dots, c_{n})$
be the unique solution (Vandermonde) to the linear system
\begin{equation}
\label{Eq Vandermonde sys}
\forall k\in \{0,\dots, n-1\},
\sum_{i=0}^{n} c_{i} \alpha_{i}^{-(k+1/2)} = 0,
\qquad
\sum_{i=0}^{n} c_{i} \alpha_{i}^{-(n+1/2)} = 1.
\end{equation}
Then
\begin{equation}
\label{Eq psi scales}
\psi_{2n+1, A} = \lim_{\varepsilon \to 0} (-1)^{n}\dfrac{2^{n} n! (n+1/2)}{(2\pi)^{n}}\vert\log\varepsilon\vert^{n+1/2}
\sum_{i=0}^{n} c_{i} \ind_{\Ns_{\varepsilon^{\alpha_{i}}}(A)},
\end{equation}
with convergence in $L^{2}(d\PP,\sigma(\Phi),H^{-\eta}(\C))$ for $\eta>0$.
\end{cor}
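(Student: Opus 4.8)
The plan is to deduce Corollary~\ref{Cor psi Vandermonde} directly from the asymptotic expansion \eqref{Eq A E N} of Theorem~\ref{Thm A E FPS}; the only work is a finite-dimensional linear-algebra bookkeeping, together with the elementary observation that the rescaling $\varepsilon\mapsto\varepsilon^{\alpha_i}$ does not affect the error estimates.

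First I would check that the linear system \eqref{Eq Vandermonde sys} is uniquely solvable (this is why the statement calls it a Vandermonde system). Setting $\beta_i=\alpha_i^{-1/2}$ for $0\le i\le n$, the coefficient matrix $\big(\alpha_i^{-(k+1/2)}\big)_{0\le i,k\le n}$ factors as $\operatorname{diag}(\beta_0,\dots,\beta_n)$ times the Vandermonde matrix $\big(\beta_i^{\,2k}\big)_{0\le i,k\le n}$ with nodes $\beta_0>\beta_1>\dots>\beta_n>0$ pairwise distinct (since the $\alpha_i$ are). Both factors are invertible, so $(c_0,\dots,c_n)$ is well defined.

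Next I would apply \eqref{Eq A E N} with $N=n$ and with $\varepsilon$ replaced by $\varepsilon^{\alpha_i}$, using $\vert\log\varepsilon^{\alpha_i}\vert=\alpha_i\vert\log\varepsilon\vert$, to obtain for each $i$
\begin{displaymath}
\ind_{\Ns_{\varepsilon^{\alpha_i}}(A)}
=\frac{1}{\sqrt{2\pi}}\sum_{k=0}^{n}(-1)^k\frac{1}{2^k k!(k+1/2)}\,\alpha_i^{-(k+1/2)}\frac{\psi_{2k+1,A}}{\big(\frac{1}{2\pi}\vert\log\varepsilon\vert\big)^{k+1/2}}+R_{n,\varepsilon^{\alpha_i}}.
\end{displaymath}
Forming $\sum_{i=0}^{n}c_i\ind_{\Ns_{\varepsilon^{\alpha_i}}(A)}$ and interchanging the two finite sums, the coefficient of $\psi_{2k+1,A}$ becomes proportional to $\sum_{i=0}^{n}c_i\alpha_i^{-(k+1/2)}$, which by \eqref{Eq Vandermonde sys} equals $0$ for $k<n$ and $1$ for $k=n$. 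Hence only the top term survives, and using $\sqrt{2\pi}\,(2\pi)^{-(n+1/2)}=(2\pi)^{-n}$ one gets
\begin{displaymath}
(-1)^n\frac{2^n n!(n+1/2)}{(2\pi)^n}\vert\log\varepsilon\vert^{n+1/2}\sum_{i=0}^{n}c_i\ind_{\Ns_{\varepsilon^{\alpha_i}}(A)}
=\psi_{2n+1,A}+(-1)^n\frac{2^n n!(n+1/2)}{(2\pi)^n}\vert\log\varepsilon\vert^{n+1/2}\sum_{i=0}^{n}c_i R_{n,\varepsilon^{\alpha_i}}.
\end{displaymath}

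Finally I would control the remainder. For each fixed $i$, Theorem~\ref{Thm A E FPS} gives $\E\big[\Vert R_{n,\varepsilon^{\alpha_i}}\Vert_{H^{-\eta}(\C)}^{2}\big]^{1/2}=o(\vert\log\varepsilon^{\alpha_i}\vert^{-(n+1/2)})=o(\vert\log\varepsilon\vert^{-(n+1/2)})$ as $\varepsilon\to 0$, the positive constant $\alpha_i$ being harmless. Summing over the finitely many indices with the fixed weights $c_i$ (triangle inequality in the Banach space $L^{2}(d\PP,\sigma(\Phi),H^{-\eta}(\C))$) and multiplying by $\vert\log\varepsilon\vert^{n+1/2}$, the last sum above tends to $0$ in $L^{2}(d\PP,\sigma(\Phi),H^{-\eta}(\C))$ for every $\eta>0$, which yields \eqref{Eq psi scales}. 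There is no genuine obstacle here; the only points that require a little care are matching the powers of $2\pi$ and noting that replacing $\varepsilon$ by $\varepsilon^{\alpha_i}$ preserves the $o(\cdot)$ bounds, which it does precisely because the exponents $\alpha_i$ are fixed.
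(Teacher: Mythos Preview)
Your proof is correct and follows essentially the same approach as the paper's own proof: apply the expansion \eqref{Eq A E N} at each scale $\varepsilon^{\alpha_i}$, take the linear combination so that the Vandermonde conditions \eqref{Eq Vandermonde sys} kill all terms except the top one, and observe that the remainders are still $o(\vert\log\varepsilon\vert^{-(n+1/2)})$. You are somewhat more explicit than the paper about the invertibility argument, the constant matching, and the remainder control, but the substance is identical.
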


\begin{proof}
For $\ind_{\Ns_{\varepsilon^{\alpha_{i}}}(A)}$ we have the following asymptotic expansion:
\begin{displaymath}
\ind_{\Ns_{\varepsilon^{\alpha_{i}}}(A)}
=
\dfrac{1}{\sqrt{2\pi}}
\sum_{k=0}^{n} (-1)^{k}
\dfrac{\alpha_{i}^{-(k+1/2)}}{2^{k} k! (k+1/2)}
\dfrac{\psi_{2k+1,A}}{\big(\frac{1}{2\pi}\vert\log \varepsilon\vert\big)^{k + 1/2}}
+
o(\vert\log \varepsilon\vert^{-(n + 1/2)}).
\end{displaymath}
Then \eqref{Eq psi scales} is a consequence of the linear system \eqref{Eq Vandermonde sys}.
Note that the matrix $(\alpha_{i}^{-(k+1/2)})_{0\leq i,k\leq n}$,
associated to the linear system \eqref{Eq Vandermonde sys},
is invertible,
because up to a multiplication of rows by $\alpha_{i}^{-1/2}$
it is a Vandermonde matrix.
\end{proof}

In Section \ref{Sec psi GMC},
we will present a different, but related,
description of the fields $\psi_{2n+1, A}$.

\bigskip

The expansions \eqref{Eq A E N} and \eqref{Eq A E tilde N} imply expansions of second moments into integer powers of
$1/\vert\log \varepsilon\vert$, as below.

\begin{cor}
\label{Cor 2 pt}
Let $\eta>0$. Then for every $N\geq 0$ and every deterministic functions or generalized functions
$f_{1},f_{2}\in H^{\eta}(\C)$ (in particular for every $f_{1},f_{2}$ smooth compactly supported in $\C$ test functions),
\begin{multline*}
\E\Big[\int_{\Ns_{\varepsilon}(A)} f_{1}(z)\,d^{2} z~\int_{\Ns_{\varepsilon}(A)} f_{2}(w)\,d^{2} w\Big]
=
\\
\sum_{n=1}^{N+1}
\Big(
\sum_{k=0}^{n-1}\dfrac{\E[(\psi_{2k+1, A},f_{1})\,(\psi_{2(n-k)-1, A},f_{2})]}{k!(n-1-k)! (k+1/2)(n-k-1/2)}
\Big)
\dfrac{(-1)^{n-1} \pi^{n-1}}{\vert\log \varepsilon\vert^{n}}
~+~ o(\vert\log \varepsilon\vert^{-(N+1)}).
\end{multline*}
If furthermore $f_{1}$ and $f_{2}$ are compactly supported in $D$, then
\begin{multline*}
\E\Big[\int_{\widetilde{\Ns}_{\varepsilon}(A)} f_{1}(z)\,d^{2} z~
\int_{\widetilde{\Ns}_{\varepsilon}(A)} f_{2}(w)\,d^{2} w\Big]
=
\\
\sum_{n=1}^{N+1}
\Big(
\sum_{k=0}^{n-1}\dfrac{\E[(\tilde{\psi}_{2k+1, A},f_{1})\,(\tilde{\psi}_{2(n-k)-1, A},f_{2})]}
{k!(n-1-k)! (k+1/2)(n-k-1/2)}
\Big)
\dfrac{(-1)^{n-1} \pi^{n-1}}{\vert\log \varepsilon\vert^{n}}
~+~ o(\vert\log \varepsilon\vert^{-(N+1)}).
\end{multline*}
\end{cor}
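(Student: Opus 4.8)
The plan is to deduce Corollary \ref{Cor 2 pt} directly from the asymptotic expansions \eqref{Eq A E N} and \eqref{Eq A E tilde N} of Theorem \ref{Thm A E FPS}, by pairing each expansion with the two test functions, multiplying the resulting scalar identities, and taking expectations; the only genuinely new point is arithmetic, namely that a product of two half-integer power series in $1/\vert\log\varepsilon\vert$ is an integer power series. Throughout I would use, for $f\in H^{\eta}(\C)$, the duality pairing $H^{-\eta}(\C)\times H^{\eta}(\C)\to\R$ with $\vert(g,f)\vert\leq\Vert g\Vert_{H^{-\eta}(\C)}\Vert f\Vert_{H^{\eta}(\C)}$; in particular $(\psi_{2k+1,A},f)$ is square-integrable because $\psi_{n,A}\in L^{2}(d\PP,\sigma(\Phi),H^{-\eta}(\C))$ (Section \ref{Subsec cond Wick FPS}), and $\E[(R_{N,\varepsilon},f)^{2}]^{1/2}\leq\Vert f\Vert_{H^{\eta}(\C)}\,\E[\Vert R_{N,\varepsilon}\Vert_{H^{-\eta}(\C)}^{2}]^{1/2}=o(\vert\log\varepsilon\vert^{-(N+1/2)})$.

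First I would test \eqref{Eq A E N} against $f_{1}$ and against $f_{2}$, multiply the two identities and take $\E[\cdot]$. The product of the two main sums contributes
\[
\frac{1}{2\pi}\sum_{k,j=0}^{N}(-1)^{k+j}\frac{\E[(\psi_{2k+1,A},f_{1})(\psi_{2j+1,A},f_{2})]}{2^{k}2^{j}\,k!\,j!\,(k+1/2)(j+1/2)}\Big(\tfrac{1}{2\pi}\vert\log\varepsilon\vert\Big)^{-(k+j+1)}.
\]
I would then reindex by $n=k+j+1$: using $(\tfrac{1}{2\pi}\vert\log\varepsilon\vert)^{-n}=(2\pi)^{n}\vert\log\varepsilon\vert^{-n}$ the prefactor $\tfrac{(2\pi)^{n}}{2\pi\cdot 2^{n-1}}$ collapses to $\pi^{n-1}$, the sign becomes $(-1)^{n-1}$, and writing $j=n-1-k$ (so $2j+1=2(n-k)-1$, $j+1/2=n-k-1/2$) reproduces exactly the $n$-th coefficient in the statement. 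For $n\leq N+1$ the constraint $k+j=n-1$ already forces $0\leq k,j\leq N$, so the inner sum $\sum_{k=0}^{n-1}$ is complete; the terms with $n\geq N+2$ are $O(\vert\log\varepsilon\vert^{-(N+2)})=o(\vert\log\varepsilon\vert^{-(N+1)})$ and go into the remainder. The coefficients of the final series are finite by one further Cauchy--Schwarz.

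It then remains to absorb the three cross contributions into $o(\vert\log\varepsilon\vert^{-(N+1)})$. For $\E[(R_{N,\varepsilon},f_{1})(R_{N,\varepsilon},f_{2})]$, Cauchy--Schwarz and the remainder bound give $o(\vert\log\varepsilon\vert^{-(2N+1)})$, enough for every $N\geq 0$. For each of the two mixed contributions --- the remainder $(R_{N,\varepsilon},f_{1})$ paired with the main sum tested against $f_{2}$, and its symmetric --- Cauchy--Schwarz against the dominant summand (index $k=0$, of deterministic order $\vert\log\varepsilon\vert^{-1/2}$) gives $o(\vert\log\varepsilon\vert^{-(N+1/2)})\cdot O(\vert\log\varepsilon\vert^{-1/2})=o(\vert\log\varepsilon\vert^{-(N+1)})$, using $\E[(\psi_{2j+1,A},f_{2})^{2}]<\infty$. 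For the second expansion I would run the same argument from \eqref{Eq A E tilde N}: since $f_{1},f_{2}$ are compactly supported in $D$, I would fix the cutoff $f_{0}\equiv 1$ on a neighborhood of $\Supp f_{1}\cup\Supp f_{2}$, so that $(\ind_{\widetilde{\Ns}_{\varepsilon}(A)}f_{0},f_{i})=(\ind_{\widetilde{\Ns}_{\varepsilon}(A)},f_{i})$ and $(f_{0}\tilde{\psi}_{2k+1,A},f_{i})=(\tilde{\psi}_{2k+1,A},f_{i})$; on $\Supp f_{i}$ the weight $\log\CR(z,D)$ in \eqref{Eq def tilde psi} is bounded, hence $(\tilde{\psi}_{2k+1,A},f_{i})$ is square-integrable and the multiplication--expectation argument goes through verbatim with $\tilde{\psi}$ replacing $\psi$. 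The only real difficulty is bookkeeping: matching which pairs of half-integer exponents combine into each integer exponent and checking that truncating at $n=N+1$ discards only $o(\vert\log\varepsilon\vert^{-(N+1)})$ --- there is no new analytic ingredient beyond Theorem \ref{Thm A E FPS}.
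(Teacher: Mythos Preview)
Your proposal is correct and follows essentially the same approach as the paper: test the expansion of Theorem \ref{Thm A E FPS} against $f_{1}$ and $f_{2}$, multiply the two scalar identities, take expectations, and reindex the double sum by $n=k+j+1$; for the $\widetilde{\Ns}_{\varepsilon}(A)$ case, introduce a cutoff $f_{0}$ that equals $1$ on the supports so that \eqref{Eq A E tilde N} applies. You supply more explicit detail on the arithmetic of the reindexing and on bounding the three cross terms via Cauchy--Schwarz than the paper does, but there is no methodological difference.
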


\begin{proof}
Let us first deal with the case of $\Ns_{\varepsilon}(A)$. 
Note that
\begin{multline*}
\E[\,\vert(\psi_{2k+1, A},f_{1})\,(\psi_{2(n-k)-1, A},f_{2})\vert\,]
\\\leq \E\big[\Vert\psi_{2k+1, A}\Vert_{H^{-\eta}(\C)}^{2}\big]^{1/2}\,
\E\big[\Vert\psi_{2(n-k)-1, A}\Vert_{H^{-\eta}(\C)}^{2}\big]^{1/2}
\,\Vert f_{1}\Vert_{H^{\eta}(\C)}
\,\Vert f_{2}\Vert_{H^{\eta}(\C)}
\\\leq \E\big[\Vert :\Phi^{2k+1}:\Vert_{H^{-\eta}(\C)}^{2}\big]^{1/2}\,
\E\big[\Vert :\Phi^{2(n-k) -1}:\Vert_{H^{-\eta}(\C)}^{2}\big]^{1/2}
\,\Vert f_{1}\Vert_{H^{\eta}(\C)}
\,\Vert f_{2}\Vert_{H^{\eta}(\C)}
<+\infty.
\end{multline*}
For both $\int_{\widetilde{\Ns}_{\varepsilon}(A)} f_{1}$ and $\int_{\widetilde{\Ns}_{\varepsilon}(A)} f_{2}$
we use the expansion \eqref{Eq A E N} up to order
$\vert\log \varepsilon\vert^{-(N+1/2)}$, test it against $f_{1}$, respectively $f_{2}$, and then take the expectation
of the product.

In the case of $\widetilde{\Ns}_{\varepsilon}(A)$,
our additional assumptions on $f_{1}$ and $f_{2}$
imply that there is a smooth function $f_{0}:D\rightarrow [0,1]$, compactly supported in $D$,
such that $f_{1} = f_{0}f_{1}$ and $f_{2} = f_{0}f_{2}$.
Then we use the expansion \eqref{Eq A E tilde N}.
\end{proof}

\begin{rem}
It would be interesting to show that all the above second moments have densities, 
that is to say two-point correlation functions defined for 
$z,w\in D$ with $z\neq w$, 
and that the second moment expansions also hold at the level of two-point correlation functions.
\end{rem}

\bigskip

Next we give a first intuition why the combinatorial coefficients
\begin{equation}
\label{Eq comb coeff}
(-1)^{k}
\dfrac{1}{2^{k} k! (k+1/2)}
\end{equation}
should appear in the expansions \eqref{Eq A E N}, \eqref{Eq A E tilde N} and \eqref{Eq A E tilde N bis}.
Further heuristics will be given in Section \ref{Subsec algeb} and in Section \ref{Subsec psi GMC multiscale}.

By taking the expectation, we have
\begin{displaymath}
\E[ \ind_{\Ns_{\varepsilon}(A)} ]
=
\PP(z\in \Ns_{\varepsilon}(A)) \ind_{z\in D}.
\end{displaymath}
By conformal invariance, 
the probability $\PP(z\in \Ns_{\varepsilon}(A))$
is the same whatever $z\in D$.
Let $T_{0}$ be the first hitting time of level $0$ for a standard one-dimension Brownian motion starting from $v$.
By Theorem \ref{Thm law CR FPS}, 
the r.v. $\CR(z,D\setminus A)/ \CR(z,D)$ has the same distribution as $e^{-2\pi T_{0}}$. 
Thus,
\begin{displaymath}
\PP(z\in \Ns_{\varepsilon}(A)) = \PP_{v}\Big(T_{0}>\frac{1}{2\pi}\vert\log\varepsilon\vert\Big).
\end{displaymath}
The law of $T_{0}$ has the density
\begin{displaymath}
\ind_{t>0} \dfrac{v}{\sqrt{2\pi} t^{3/2}}e^{-v^{2}/(2t)}.
\end{displaymath}
Check for instance \cite[Formula 2.0.2]{BorodinSalminen2015}.
Thus,
\begin{displaymath}
\PP(z\in \Ns_{\varepsilon}(A))
=
\int_{\frac{1}{2\pi}\vert\log\varepsilon\vert}^{+\infty}
\dfrac{v}{\sqrt{2\pi} t^{3/2}}e^{-v^{2}/(2t)}\, dt.
\end{displaymath}
By expanding $e^{-v^{2}/(2t)}$ into power series, we get
\begin{equation}
\label{Eq series 1}
\PP(z\in \Ns_{\varepsilon}(A))
=
\dfrac{1}{\sqrt{2\pi}}
\sum_{k=0}^{+\infty} (-1)^{k}
\dfrac{1}{2^{k} k! (k+1/2)}
\dfrac{v^{2k+1}}{\big(\frac{1}{2\pi}\vert\log \varepsilon\vert\big)^{k + 1/2}}.
\end{equation}
But now,
\begin{displaymath}
\E[\psi_{2k+1, A}] = \E[\, :\Phi^{2k+1}:\, ] = v^{2k+1}\ind_{D}.
\end{displaymath}
So, finally,
\begin{equation}
\label{Eq eq expect}
\E[ \ind_{\Ns_{\varepsilon}(A)} ]
=
\dfrac{1}{\sqrt{2\pi}}
\sum_{k=0}^{+\infty} (-1)^{k}
\dfrac{1}{2^{k} k! (k+1/2)}
\dfrac{\E[\psi_{2k+1, A}]}{\big(\frac{1}{2\pi}\vert\log \varepsilon\vert\big)^{k + 1/2}}.
\end{equation}
Of course, an equality in expectation does not imply the expansion \eqref{Eq A E N}.
On top of that, it is unclear whether
on the right-hand side of \eqref{Eq eq expect}
one can interchange the expectation and the infinite sum.
In our proof of Theorem \ref{Thm A E FPS}, we will use an enhanced version of the above argument,
which involves conditional expectations with respect to a filtration.

Now let us consider the case of $\widetilde{\Ns}_{\varepsilon}(A)$.
Let $\varepsilon\in (0,1)$ and $z\in D$ such that
$\CR(z,D)>\varepsilon$.
Then
\begin{eqnarray*}
\PP(z\in \widetilde{\Ns}_{\varepsilon}(A)) 
&=&
\PP_{v}\Big(T_{0}>\frac{1}{2\pi}(\vert\log\varepsilon\vert + \log\CR(z,D))\Big)
\\
&=&
\dfrac{1}{\sqrt{2\pi}}
\sum_{k=0}^{+\infty} (-1)^{k}
\dfrac{1}{2^{k} k! (k+1/2)}
\dfrac{v^{2k+1}}{\big(\frac{1}{2\pi}(\vert\log \varepsilon\vert + \log\CR(z,D))\big)^{k + 1/2}}.
\end{eqnarray*}
By reexpanding the powers of $1/(\vert\log \varepsilon\vert + \log\CR(z,D))$
into powers of $1/\vert\log \varepsilon\vert$
(provided $\CR(z,D)\in (\varepsilon, \varepsilon^{-1})$), we get
that $\PP(z\in \widetilde{\Ns}_{\varepsilon}(A))$ equals
\begin{equation}
\label{Eq reexp 1}
\dfrac{1}{\sqrt{2\pi}}
\sum_{k=0}^{+\infty} (-1)^{k}
\dfrac{1}{2^{k} k! (k+1/2)}
\Big(\sum_{j=0}^{k} \dfrac{(2k+1)!}{2^{j} j! (2(k-j)+1)!}
\,\Big(\dfrac{1}{2\pi} \log \CR(z,D)\Big)^{j}\,v^{2(k-j)+1}
\Big)
\dfrac{1}{\big(\frac{1}{2\pi}\vert\log \varepsilon\vert\big)^{k + 1/2}}.
\end{equation}
See Proposition \ref{Prop reexp} in the forthcoming Section \ref{Subsec cond TVS} for the details.
So, given a smooth cutoff function $f_{0}:D\rightarrow [0,1]$, compactly supported in $D$,
and given $\varepsilon\in (0,1)$ small enough such that for every
$z\in \operatorname{Supp}(f_{0})$,
$\CR(z,D)\in (\varepsilon, \varepsilon^{-1})$,
we have
\begin{displaymath}
\E[f_{0}\ind_{\widetilde{\Ns}_{\varepsilon}(A)} ]
=
\dfrac{1}{\sqrt{2\pi}}
\sum_{k=0}^{+\infty} (-1)^{k}
\dfrac{1}{2^{k} k! (k+1/2)}
\dfrac{\E[f_{0}\,\tilde{\psi}_{2k+1, A}]}{\big(\frac{1}{2\pi}\vert\log \varepsilon\vert\big)^{k + 1/2}}.
\end{displaymath}

\subsection{Conditioning on two-valued sets}
\label{Subsec cond TVS}

Let $b>v\vee 2\lambda$, where $2\lambda$ is the height gap of the GFF.
We consider $A_{0,b}$ the two-valued set of our GFF $\Phi$ (boundary condition $v$)
of levels $0$ and $b$.
Let $\ell_{0,b}$ be the label function of $(\Phi, A_{0,b})$.
Then $A_{0,b}\subset A$ a.s.
Moreover, the sigma-algebra of $(A_{0,b},\ell_{0,b})$
is contained in the sigma-algebra of $A$.
We will further consider the conditional expectations with respect to $(A_{0,b},\ell_{0,b})$.

First, we have
\begin{displaymath}
\E[\psi_{2k+1, A} \vert A_{0,b},\ell_{0,b}] = 
\E[\, :\Phi^{2k+1}:\, \vert A_{0,b},\ell_{0,b}].
\end{displaymath}
By Proposition \ref{Prop decomp Wick TVS},
\begin{displaymath}
\E[\psi_{2k+1, A} \vert A_{0,b},\ell_{0,b}] = Q_{2k+1}(\ell_{0,b}, V_{A_{0,b}}),
\end{displaymath}
where for $z\in D\setminus A_{0,b}$,
\begin{displaymath}
V_{A_{0,b}}(z) = \dfrac{1}{2\pi}\log \dfrac{\CR(z,D)}{\CR(z,D\setminus A_{0,b})} .
\end{displaymath}
Note that for $z\in D\setminus A_{0,b}$ such that $\ell_{0,b}(z) = b$, we have
\begin{displaymath}
Q_{2k+1}(\ell_{0,b}(z), V_{A_{0,b}}(z)) = 
\sum_{j=0}^{k} (-1)^{j}\dfrac{(2k+1)!}{2^{j} j! (2(k-j)+1)!}
\,V_{A_{0,b}}(z)^{j}\,b^{2(k-j)+1}.
\end{displaymath}
As for $z\in D\setminus A_{0,b}$ such that $\ell_{0,b}(z) = 0$,
\begin{displaymath}
Q_{2k+1}(\ell_{0,b}(z), V_{A_{0,b}}(z)) = 0.
\end{displaymath}

Now let us consider
$\E[\tilde{\psi}_{2k+1, A} \vert A_{0,b},\ell_{0,b}]$,
where $\tilde{\psi}_{2k+1, A}$ is given by \eqref{Eq def tilde psi}.
We have
\begin{displaymath}
\E[\tilde{\psi}_{2k+1, A} \vert A_{0,b},\ell_{0,b}](z) = 
\sum_{j=0}^{k} \dfrac{(2k+1)!}{2^{j} j! (2(k-j)+1)!}
\,\Big(\dfrac{1}{2\pi} \log \CR(z,D)\Big)^{j}\,
Q_{2(k-j)+1}(\ell_{0,b}(z), V_{A_{0,b}}(z)).
\end{displaymath}
Further, according to the change of variance formula \eqref{Eq change var},
\begin{eqnarray*}
\E[\tilde{\psi}_{2k+1, A} \vert A_{0,b},\ell_{0,b}](z) &=&
Q_{2k+1}\Big(\ell_{0,b}(z), -\frac{1}{2\pi}\log \CR(z,D\setminus A_{0,b})\Big)
\\
&=&
\ind_{\ell_{0,b}(z) = b}\,
Q_{2k+1}\Big(b, -\frac{1}{2\pi}\log \CR(z,D\setminus A_{0,b})\Big).
\end{eqnarray*}

Let us study now the conditional expectations
$\E[ \ind_{\Ns_{\varepsilon}(A)} \vert A_{0,b},\ell_{0,b}]$
and 
$\E[ \ind_{\widetilde{\Ns}_{\varepsilon}(A)} \vert A_{0,b},\ell_{0,b}]$.
If $z\in D\setminus A_{0,b}$ such that $\ell_{0,b}(z) = 0$,
then the connected component of $z$ in $D\setminus A$
equals to that in $D\setminus A_{0,b}$.
So, in this case,
\begin{displaymath}
\CR(z,D\setminus A) = \CR(z,D\setminus A_{0,b}).
\end{displaymath}
Therefore,
\begin{equation}
\label{Eq triv 0 loop}
\E[ \ind_{\Ns_{\varepsilon}(A)} \vert A_{0,b},\ell_{0,b}] \ind_{D\setminus A_{0,b}\cap \{\ell_{0,b} = 0\}}
= \ind_{\{ \CR(z,D\setminus A_{0,b})<\varepsilon \CR(z,D)\}} \ind_{D\setminus A_{0,b}\cap \{\ell_{0,b} = 0\}},
\end{equation}
and
\begin{displaymath}
\E[ \ind_{\widetilde{\Ns}_{\varepsilon}(A)} \vert A_{0,b},\ell_{0,b}] \ind_{D\setminus A_{0,b}\cap \{\ell_{0,b} = 0\}} =
\ind_{\{ \CR(z,D\setminus A_{0,b})<\varepsilon \}} \ind_{D\setminus A_{0,b}\cap \{\ell_{0,b} = 0\}}.
\end{displaymath}

By contrast, the case of $z\in D\setminus A_{0,b}$ such that $\ell_{0,b}(z) = b$
is non-trivial.
Fix $\varepsilon\in (0,1)$ and $z\in D$. 
Conditionally on $(A_{0,b},\ell_{0,b})$,
on the event $\{\ell_{0,b}(z) = b\}$,
the r.v.
$\CR(z,D\setminus A)/\CR(z,D\setminus A_{0,b})$ is distributed as
$e^{-2\pi T_{0}}$,
where $T_{0}$ is the first hitting time of $0$ of a standard one-dimensional Brownian motion starting from $b$; 
see Theorem \ref{Thm law CR FPS}.
Therefore,
\begin{displaymath}
\PP(z\in \Ns_{\varepsilon}(A)\vert A_{0,b},\ell_{0,b},\ell_{0,b}(z) = b) =
\PP_{b}\Big(T_{0}> 0\vee\Big(\frac{1}{2\pi}\vert\log\varepsilon\vert - V_{A_{0,b}}(z)\Big)\Big).
\end{displaymath}
As in \eqref{Eq series 1}, this in turn equals
\begin{displaymath}
\dfrac{1}{\sqrt{2\pi}}
\sum_{k=0}^{+\infty} (-1)^{k}
\dfrac{1}{2^{k} k! (k+1/2)}
\dfrac{b^{2k+1}}{\big(\frac{1}{2\pi}\vert\log \varepsilon\vert - V_{A_{0,b}}(z) \big)^{k + 1/2}},
\end{displaymath}
provided $2\pi V_{A_{0,b}}(z)<\vert \log\varepsilon\vert$.
In the same way, on the event $\{\ell_{0,b}(z) = b, \CR(z,D\setminus A_{0,b})>\varepsilon\}$,
we have the equality
\begin{displaymath}
\PP(z\in \widetilde{\Ns}_{\varepsilon}(A)\vert A_{0,b},\ell_{0,b}) =
\dfrac{1}{\sqrt{2\pi}}
\sum_{k=0}^{+\infty} (-1)^{k}
\dfrac{1}{2^{k} k! (k+1/2)}
\dfrac{b^{2k+1}}{\big(\frac{1}{2\pi}\vert\log \varepsilon\vert - V_{A_{0,b}}(z) +  \frac{1}{2\pi}\log\CR(z,D)\big)^{k + 1/2}}.
\end{displaymath}
Now we want to reexpand the expressions above into powers of $1/\vert\log \varepsilon\vert$.
This is similar to \eqref{Eq reexp 1}, and we give the details in Proposition \ref{Prop reexp} and 
Proposition \ref{Prop cond series} below.

\begin{prop}
\label{Prop reexp}
For every $w\in\C$, $x>0$ and $y\in\R$ such that $\vert y\vert <x$,
the following holds:
\begin{equation}
\label{Eq two sum fin}
\sum_{k=0}^{+\infty}
\dfrac{1}{2^{k} k! (k+1/2)}
\Big(\sum_{j=0}^{k}\dfrac{(2k+1)!}{2^{j} j! (2(k-j)+1)!}
\,\vert y\vert^{j}\,\vert w\vert^{2(k-j)+1}
\Big)
\dfrac{1}{x^{k + 1/2}}
<+\infty,
\end{equation}
and
\begin{multline}
\label{Eq reexp w x y}
\sum_{k=0}^{+\infty} (-1)^{k}
\dfrac{1}{2^{k} k! (k+1/2)}
\dfrac{w^{2k+1}}{(x-y)^{k + 1/2}}
\\
=
\sum_{k=0}^{+\infty} (-1)^{k}
\dfrac{1}{2^{k} k! (k+1/2)}
\Big(\sum_{j=0}^{k}(-1)^{j} \dfrac{(2k+1)!}{2^{j} j! (2(k-j)+1)!}
\,y^{j}\,w^{2(k-j)+1}
\Big)
\dfrac{1}{x^{k + 1/2}}
\\=
\sum_{k=0}^{+\infty} (-1)^{k}
\dfrac{1}{2^{k} k! (k+1/2)}
Q_{2k+1}(w,y)
\dfrac{1}{x^{k+1/2}}.
\end{multline}
\end{prop}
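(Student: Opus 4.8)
The plan is to prove the absolute–convergence statement \eqref{Eq two sum fin} first, and then to deduce \eqref{Eq reexp w x y} by re-expanding $(x-y)^{-(k+1/2)}$ with the generalized binomial theorem and interchanging the order of summation, the interchange being legitimized precisely by \eqref{Eq two sum fin}. Note at the outset that the middle expression in \eqref{Eq reexp w x y} is, by the very definition \eqref{Eq def Q} of $Q_{n}$ and the equality $2(k-j)+1=(2k+1)-2j$, \emph{literally} the third expression, so only the identity between the first and the third lines has content. The whole argument runs off the one elementary coefficient identity
\begin{equation*}
\frac{1}{2^{k'}k'!\,(k'+\tfrac12)}\binom{n-\tfrac12}{\,n-k'\,}
=\frac{1}{2^{n}n!\,(n+\tfrac12)}\cdot\frac{(2n+1)!}{2^{\,n-k'}(n-k')!\,(2k'+1)!}\qquad(0\le k'\le n),
\end{equation*}
which one checks directly: expanding the falling factorial, $\binom{n-1/2}{n-k'}=\prod_{i=k'+1}^{n}(i-\tfrac12)\,/\,(n-k')!=\frac{(2n)!\,2^{k'}k'!}{2^{2n-k'}(2k')!\,n!\,(n-k')!}$ by $\prod_{i=k'+1}^{n}(2i-1)=(2n-1)!!/(2k'-1)!!$ and $(2m-1)!!=(2m)!/(2^{m}m!)$, after which $\tfrac{1}{(k'+1/2)(2k')!}=\tfrac{2}{(2k'+1)!}$ and $\tfrac{(2n+1)!}{n+1/2}=2(2n)!$ identify the two sides.

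For \eqref{Eq two sum fin}: the inner $j$–sum there is $\overline{Q}_{2k+1}(|w|,|y|)$, where $\overline{Q}_{m}$ denotes the polynomial $Q_{m}$ with all signs replaced by $+$. Using the coefficient identity with $n=k$, $k'=k-j$ and then substituting $k'=k-j$ converts the left side of \eqref{Eq two sum fin}, a series of non-negative reals, into
\begin{equation*}
\sum_{k'\ge0}\frac{|w|^{2k'+1}}{2^{k'}k'!\,(k'+\tfrac12)\,x^{k'+1/2}}\ \sum_{m\ge0}\binom{k'-\tfrac12+m}{m}\Big(\frac{|y|}{x}\Big)^{m}
\ =\ \sum_{k'\ge0}\frac{|w|^{2k'+1}}{2^{k'}k'!\,(k'+\tfrac12)\,(x-|y|)^{k'+1/2}},
\end{equation*}
where $\sum_{m\ge0}\binom{\alpha+m-1}{m}t^{m}=(1-t)^{-\alpha}$ was used with $\alpha=k'+\tfrac12$, $t=|y|/x<1$, and Tonelli's theorem justifies the rearrangement. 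The final series converges because $x-|y|>0$ and its general term is $O(\rho^{k'}/k'!)$ with $\rho=|w|^{2}/(2(x-|y|))$; this is \eqref{Eq two sum fin}. (For $w\ge0$ its value is $\sqrt{2\pi}\,\PP_{|w|}(T_{0}>x-|y|)$, which is where the expansion comes from.)

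For \eqref{Eq reexp w x y} one repeats the same steps, now tracking signs. Since $|y/x|<1$ we may write $\frac{1}{(x-y)^{k+1/2}}=x^{-(k+1/2)}\sum_{m\ge0}\binom{k-\tfrac12+m}{m}(y/x)^{m}$, so the first line of \eqref{Eq reexp w x y} becomes the double series $\sum_{k,m\ge0}(-1)^{k}\frac{w^{2k+1}}{2^{k}k!(k+\tfrac12)}\binom{k-\tfrac12+m}{m}\frac{y^{m}}{x^{k+m+1/2}}$, whose absolute values are dominated termwise by the series displayed in the previous paragraph (with $|w|,|y|$ for $w,y$), hence summable. By Fubini we regroup along $n=k+m$ to get
\begin{equation*}
\sum_{n\ge0}\Big(\sum_{k=0}^{n}(-1)^{k}\frac{w^{2k+1}}{2^{k}k!\,(k+\tfrac12)}\binom{n-\tfrac12}{\,n-k\,}\,y^{\,n-k}\Big)\frac{1}{x^{n+1/2}},
\end{equation*}
and the coefficient identity (with $k'=k$) rewrites the inner finite sum as $\frac{1}{2^{n}n!(n+1/2)}\sum_{k=0}^{n}(-1)^{k}\frac{(2n+1)!}{2^{\,n-k}(n-k)!(2k+1)!}w^{2k+1}y^{\,n-k}$, which, since $(-1)^{k}=(-1)^{n}(-1)^{n-k}$, equals $(-1)^{n}Q_{2n+1}(w,y)\,/\,(2^{n}n!(n+1/2))$ by \eqref{Eq def Q} (put $j=n-k$). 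This is the third line of \eqref{Eq reexp w x y}; since every manipulation is algebraic in $w$, it holds for all $w\in\C$.

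There is no real analytic obstacle here; the only thing that demands care is to invoke the binomial expansion of $(x-y)^{-(k+1/2)}$ solely in the range $|y|<x$, which is exactly what makes all the interchanges of summation legitimate via the first step, and the rest is bookkeeping around the one coefficient identity. As a conceptual check (not needed for the proof), the third line of \eqref{Eq reexp w x y} is $e^{-\frac{y}{2}\partial_{w}^{2}}$ applied termwise to $\sum_{k}(-1)^{k}\frac{w^{2k+1}}{2^{k}k!(k+1/2)\,x^{k+1/2}}$, since $e^{-\frac{y}{2}\partial_{w}^{2}}w^{n}=Q_{n}(w,y)$; and that series $g_{x}(w)$ solves the backward heat equation $\partial_{x}g_{x}=\tfrac12\partial_{w}^{2}g_{x}$, so $g_{x-y}=e^{-\frac{y}{2}\partial_{w}^{2}}g_{x}$ is the first line — but making this rigorous needs the same convergence estimates, so the direct double-sum argument is the one I would write out.
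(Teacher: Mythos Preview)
Your proof is correct and follows essentially the same route as the paper's: both arguments reduce the double series to the single series $\sum_{k'\ge 0}\frac{|w|^{2k'+1}}{2^{k'}k'!(k'+\tfrac12)(x-|y|)^{k'+1/2}}$ via the same combinatorial simplification (your ``coefficient identity'' is exactly the product manipulation the paper carries out inline) and the generalized binomial series $(1-t)^{-(k'+1/2)}$. The only cosmetic differences are that you isolate the coefficient identity at the start and, for \eqref{Eq reexp w x y}, run the computation from the first line to the third rather than the reverse; the paper simply says the signed identity ``follows from a similar calculation,'' which is what you spell out.
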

\begin{proof}
Let us first check \eqref{Eq two sum fin}.
In the left-hand side of \eqref{Eq two sum fin},
consider the change of variables 
$m = k-j$.
Then \eqref{Eq two sum fin} equals
\begin{displaymath}
\sum_{m=0}^{+\infty}
\dfrac{\vert w\vert^{2m+1}}{2^{m} m! (m+1/2)}
\dfrac{1}{x^{m+1/2}}
\sum_{j=0}^{+\infty}\dfrac{(2(m+j)+1)!m!(m+1/2)}{2^{2j} j!(m+j)!(m+j+1/2)(2 m + 1)!}\dfrac{\vert y\vert^{j}}{x^{j}}
\end{displaymath}
Further, we can write
\begin{multline*}
\dfrac{(2(m+j)+1)!m!(m+1/2)}{2^{2j} j!(m+j)!(m+j+1/2)(2 m + 1)!}
=
\dfrac{1}{2^{j}j!}
\dfrac{2^{m} m!}{(2m)!}
\dfrac{(2(m+j))!}{2^{m+j} (m+j)!}
=
\dfrac{1}{2^{j}j!} 
\prod_{\substack{2m<q<2(m+j)\\ q \text{~odd}}} q
\\
=
\dfrac{1}{2^{j}j!}
\prod_{l=0}^{j-1}(2(m+l)+1)
=
\dfrac{1}{j!}
\prod_{l=0}^{j-1}((m+l)+1/2).
\end{multline*}
Moreover, since $\vert y\vert/x <1$,
\begin{displaymath}
\sum_{j=0}^{+\infty}\dfrac{1}{j!}\Big(\prod_{l=0}^{j-1}((m+l)+1/2)\Big)\dfrac{\vert y\vert^{j}}{x^{j}}
= \dfrac{1}{(1 - \vert y\vert/x)^{m+1/2}}.
\end{displaymath}
Therefore, \eqref{Eq two sum fin} equals
\begin{displaymath}
\sum_{m=0}^{+\infty}
\dfrac{\vert w\vert^{2m+1}}{2^{m} m! (m+1/2)}
\dfrac{1}{(x-\vert y\vert)^{m+1/2}}.
\end{displaymath}
Above we recognize a power series in $\vert w\vert^{2}/(x-\vert y\vert)$ which has an infinite radius of convergence.
Therefore, the sum is finite.
As for the identity \eqref{Eq reexp w x y}, it follows from a similar calculation.
\end{proof}

\begin{prop}
\label{Prop cond series}
Fix $\varepsilon\in (0,1)$ and $z\in D$.
On the event $\{\ell_{0,b}(z) = b,\, 2\pi V_{A_{0,b}}(z)<\vert \log\varepsilon\vert\}$,
\begin{displaymath}
\PP(z\in \Ns_{\varepsilon}(A)\vert A_{0,b},\ell_{0,b}) =
\sum_{k=0}^{+\infty} (-1)^{k}
\dfrac{1}{2^{k} k! (k+1/2)}
\dfrac{\E[\psi_{2k+1, A} \vert A_{0,b},\ell_{0,b}](z)}{\big(\frac{1}{2\pi}\vert\log \varepsilon\vert\big)^{k + 1/2}}.
\end{displaymath}
On the event $\{\ell_{0,b}(z) = b, \CR(z,D\setminus A_{0,b})\in (\varepsilon,\varepsilon^{-1})\}$,
\begin{displaymath}
\PP(z\in \widetilde{\Ns}_{\varepsilon}(A)\vert A_{0,b},\ell_{0,b}) =
\sum_{k=0}^{+\infty} (-1)^{k}
\dfrac{1}{2^{k} k! (k+1/2)}
\dfrac{\E[\tilde{\psi}_{2k+1, A} \vert A_{0,b},\ell_{0,b}](z)}{\big(\frac{1}{2\pi}\vert\log \varepsilon\vert\big)^{k + 1/2}}.
\end{displaymath}
\end{prop}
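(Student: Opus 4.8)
The plan is to assemble the computations already carried out in this subsection. Almost everything is in place: the subsection has identified, for $z$ lying in a label-$b$ component of $D\setminus A_{0,b}$, both $\E[\psi_{2k+1,A}\mid A_{0,b},\ell_{0,b}](z)=Q_{2k+1}(\ell_{0,b}(z),V_{A_{0,b}}(z))$ and, via the change-of-variance formula \eqref{Eq change var}, the analogous expression for $\E[\tilde\psi_{2k+1,A}\mid A_{0,b},\ell_{0,b}](z)$ with $\tilde\psi$ as in \eqref{Eq def tilde psi}; and, using the conditional law from Theorem \ref{Thm law CR FPS} together with the restriction property of $A$ relative to $A_{0,b}$, it has expressed $\PP(z\in\Ns_\varepsilon(A)\mid A_{0,b},\ell_{0,b})$ and $\PP(z\in\widetilde\Ns_\varepsilon(A)\mid A_{0,b},\ell_{0,b})$, on the relevant events, as convergent power series in $\tfrac{1}{2\pi}\vert\log\varepsilon\vert-V_{A_{0,b}}(z)$, respectively in $\tfrac{1}{2\pi}\vert\log\varepsilon\vert-V_{A_{0,b}}(z)+\tfrac{1}{2\pi}\log\CR(z,D)$. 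So the only step left is to reexpand these in powers of $\tfrac{1}{2\pi}\vert\log\varepsilon\vert$ alone, and to recognise the resulting numerators.

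First I would check that, on the event appearing in each statement, the shift subtracted from $\tfrac{1}{2\pi}\vert\log\varepsilon\vert$ in the denominator is strictly smaller in absolute value than $\tfrac{1}{2\pi}\vert\log\varepsilon\vert$, so that Proposition \ref{Prop reexp} applies. For $\Ns_\varepsilon(A)$ the shift is $V_{A_{0,b}}(z)$, and since $D\setminus A_{0,b}\subset D$ gives $V_{A_{0,b}}(z)\ge 0$ by \eqref{Eq V A CR}, the condition $\vert V_{A_{0,b}}(z)\vert<\tfrac{1}{2\pi}\vert\log\varepsilon\vert$ is exactly $2\pi V_{A_{0,b}}(z)<\vert\log\varepsilon\vert$. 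For $\widetilde\Ns_\varepsilon(A)$ the shift is $V_{A_{0,b}}(z)-\tfrac{1}{2\pi}\log\CR(z,D)=-\tfrac{1}{2\pi}\log\CR(z,D\setminus A_{0,b})$ by \eqref{Eq V A CR}, whose absolute value is $<\tfrac{1}{2\pi}\vert\log\varepsilon\vert$ precisely when $\CR(z,D\setminus A_{0,b})\in(\varepsilon,\varepsilon^{-1})$ (which also makes the hitting-time threshold $\tfrac{1}{2\pi}\log(\CR(z,D\setminus A_{0,b})/\varepsilon)$ positive, so the $0\vee$ is inactive).

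Then I would apply the reexpansion identity \eqref{Eq reexp w x y} with $w=b$, $x=\tfrac{1}{2\pi}\vert\log\varepsilon\vert$ and $y$ equal to the respective shift; the term-by-term rearrangement is legitimate because the rearranged series is absolutely convergent by \eqref{Eq two sum fin}. This turns each $b^{2k+1}(x-y)^{-(k+1/2)}$ into $Q_{2k+1}(b,y)\,x^{-(k+1/2)}$. Since $\ell_{0,b}(z)=b$ on the events considered, $Q_{2k+1}(b,V_{A_{0,b}}(z))=Q_{2k+1}(\ell_{0,b}(z),V_{A_{0,b}}(z))=\E[\psi_{2k+1,A}\mid A_{0,b},\ell_{0,b}](z)$ by Proposition \ref{Prop decomp Wick TVS}, and likewise $Q_{2k+1}(b,-\tfrac{1}{2\pi}\log\CR(z,D\setminus A_{0,b}))=\E[\tilde\psi_{2k+1,A}\mid A_{0,b},\ell_{0,b}](z)$ by the identity already derived in this subsection. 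Substituting these in yields the two displayed formulas. There is no genuine obstacle here: the substantive inputs (the explicit conditional laws via Theorem \ref{Thm law CR FPS} and the TVS/FPS restriction property) are in place, and the only point requiring care — the justification of the successive sum–integral and sum–sum interchanges — is exactly what Proposition \ref{Prop reexp} was set up to handle.
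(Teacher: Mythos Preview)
Your proposal is correct and follows exactly the approach of the paper: the subsection preceding the proposition has already computed both the conditional probabilities (as series in the shifted denominators $\tfrac{1}{2\pi}\vert\log\varepsilon\vert - V_{A_{0,b}}(z)$, respectively $-\tfrac{1}{2\pi}\log\CR(z,D\setminus A_{0,b})$) and the conditional expectations of $\psi_{2k+1,A}$ and $\tilde\psi_{2k+1,A}$ (as $Q_{2k+1}$ values), and Proposition~\ref{Prop reexp} is precisely the reexpansion tool needed to match them. The paper does not even write out a separate proof for this proposition, treating it as the immediate consequence of the preceding displays together with Proposition~\ref{Prop reexp}; your verification that the events in the hypotheses are exactly what is needed for $\vert y\vert < x$ is the only thing one might add, and you have done so.
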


\subsection{Bounding the error terms near the TVS}
\label{Subsec error terms TVS}

Denote
\begin{equation}
\label{Eq U 0 b eps}
U_{0,b,\varepsilon} = \{ z\in D\setminus A_{0,b}\,\vert \CR(z,D\setminus A_{0,b})<\varepsilon\CR(z,D)\},
\end{equation}
\begin{displaymath}
\widetilde{U}_{0,b,\varepsilon} = \{ z\in D\setminus A_{0,b}\,\vert \CR(z,D\setminus A_{0,b})<\varepsilon\}.
\end{displaymath}

\begin{lemma}
\label{Lem area 0}
Let be $\eta>0$, $\varepsilon\in (0,1)$ and $b\geq (2v)\vee(2\lambda)$. Then
\begin{equation}
\label{Eq bound 0 loop 1}
\E\Big[\big\Vert\ind_{U_{0,b,\varepsilon}}\big\Vert_{H^{-\eta}(\C)}^{2}\Big]^{1/2}
\leq \dfrac{2}{\sqrt{\pi}}\big\Vert\ind_{D}\big\Vert_{H^{-\eta}(\C)}\,
\varepsilon^{\pi/(32 b^{2})},
\end{equation}
and
\begin{equation}
\label{Eq bound 0 loop 2}
\E\Big[\big\Vert\ind_{\widetilde{U}_{0,b,\varepsilon}}\big\Vert_{H^{-\eta}(\C)}^{2}\Big]^{1/2}
\leq \dfrac{2}{\sqrt{\pi}}
\Big(\int_{D^{2}}
\Big(\Big(\dfrac{\varepsilon}{\CR(z,D)}\Big)\wedge 1\Big)^{\pi/(16 b^{2})}
\LK_{\eta}(\vert w-z\vert)\,d^{2}z\,d^{2}w\Big)^{1/2}.
\end{equation}
Fix a deterministic smooth cutoff function $f_{0}:D\rightarrow [0,1]$, compactly supported in $D$.
Assume that
\begin{displaymath}
\varepsilon\,\leq \inf_{z\in\operatorname{Supp}(f_{0})}\CR(z,D).
\end{displaymath}
Then
\begin{equation}
\label{Eq bound 0 loop 3}
\E\Big[\big\Vert f_{0}\ind_{\widetilde{U}_{0,b,\varepsilon}}\big\Vert_{H^{-\eta}(\C)}^{2}\Big]^{1/2}
\leq \dfrac{2}{\sqrt{\pi}}
\big\Vert f_{0}\big\Vert_{H^{-\eta}(\C)}
\Big(
\dfrac{\varepsilon}{\inf_{\operatorname{Supp}(f_{0})}\CR(z,D)}
\Big)^{\pi/(32 b^{2})}.
\end{equation}
\end{lemma}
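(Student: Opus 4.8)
The plan is to reduce all three bounds to a single exponential tail estimate for a one–dimensional Brownian exit time. Using the kernel form of the Sobolev norm, $\Vert f\Vert_{H^{-\eta}(\C)}^{2}=\int_{\C^{2}}f(z)\LK_{\eta}(\vert w-z\vert)\overline{f(w)}\,d^{2}z\,d^{2}w$ with $\LK_{\eta}\geq 0$, I would first write
\[
\E\big[\Vert\ind_{U_{0,b,\varepsilon}}\Vert_{H^{-\eta}(\C)}^{2}\big]=\int_{D^{2}}\PP\big(z\in U_{0,b,\varepsilon},\,w\in U_{0,b,\varepsilon}\big)\,\LK_{\eta}(\vert w-z\vert)\,d^{2}z\,d^{2}w,
\]
and analogously for $\widetilde{U}_{0,b,\varepsilon}$ and for $f_{0}\ind_{\widetilde{U}_{0,b,\varepsilon}}$. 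In each case I would bound the joint probability by dropping one of the two membership conditions (for \eqref{Eq bound 0 loop 1} one may instead use Cauchy--Schwarz; since $\PP(z\in U_{0,b,\varepsilon})$ is independent of $z\in D$ by conformal invariance, the two options give the same exponent). This leaves only the one-point probabilities $\PP(z\in U_{0,b,\varepsilon})$ and $\PP(z\in\widetilde{U}_{0,b,\varepsilon})$ to estimate.

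To compute these, I would use that $V_{A_{0,b}}(z)=\frac{1}{2\pi}\log(\CR(z,D)/\CR(z,D\setminus A_{0,b}))\geq 0$, so $\{z\in U_{0,b,\varepsilon}\}=\{2\pi V_{A_{0,b}}(z)>\vert\log\varepsilon\vert\}$ and $\{z\in\widetilde{U}_{0,b,\varepsilon}\}=\{2\pi V_{A_{0,b}}(z)>\log^{+}(\CR(z,D)/\varepsilon)\}$, where $\log^{+}=0\vee\log$. By Theorem \ref{Thm ASW CR ell a b}, $V_{A_{0,b}}(z)$ is distributed as the exit time $T_{0,b}$ of a standard one-dimensional Brownian motion started at $v$ from the interval $(0,b)$; hence $\PP(z\in U_{0,b,\varepsilon})=\PP_{v}(T_{0,b}>\frac{1}{2\pi}\vert\log\varepsilon\vert)$ and $\PP(z\in\widetilde{U}_{0,b,\varepsilon})=\PP_{v}(T_{0,b}>\frac{1}{2\pi}\log^{+}(\CR(z,D)/\varepsilon))$.

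The one genuine computation is then the tail bound $\PP_{v}(T_{0,b}>t)\leq\frac{4}{\pi}e^{-\pi^{2}t/(8b^{2})}$ for all $t\geq 0$, where the hypothesis $b\geq 2v$ forces $v\leq b/2$. I would obtain it from the spectral expansion of the Dirichlet survival probability: since $(0,b)\subset(-b,b)$ one has $T_{0,b}\leq T_{(-b,b)}$; the operator $-\frac12\partial_{x}^{2}$ on $(-b,b)$ has first Dirichlet eigenvalue $\pi^{2}/(8b^{2})$ with first eigenfunction proportional to $\cos(\pi x/(2b))$, so the survival probability equals $\frac{4}{\pi}\sum_{k\geq 0}\frac{(-1)^{k}}{2k+1}\cos\!\big(\frac{(2k+1)\pi v}{2b}\big)e^{-(2k+1)^{2}\pi^{2}t/(8b^{2})}$, whose leading term is $\frac{4}{\pi}\cos(\pi v/(2b))e^{-\pi^{2}t/(8b^{2})}$; the higher terms, alternating and rapidly decaying, are absorbed into the constant $\frac{4}{\pi}$ using $\cos(\pi v/(2b))\leq 1$ and a crude bound on $\sum_{k\geq 1}\frac{1}{2k+1}e^{-(2k+1)^{2}\pi^{2}t/(8b^{2})}$, while for the small values of $t$ where this is too coarse one simply uses $\PP\leq 1\leq\frac{4}{\pi}$. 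An alternative derivation iterates the strong Markov property over time windows of length of order $b^{2}$; this is where care with constants is needed, but any exponential survival bound suffices for the qualitative content.

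Finally, substituting $t=\frac{1}{2\pi}\vert\log\varepsilon\vert$ gives $\PP(z\in U_{0,b,\varepsilon})\leq\frac{4}{\pi}\varepsilon^{\pi/(16b^{2})}$, and substituting $t=\frac{1}{2\pi}\log^{+}(\CR(z,D)/\varepsilon)$ gives $\PP(z\in\widetilde{U}_{0,b,\varepsilon})\leq\frac{4}{\pi}\big((\varepsilon/\CR(z,D))\wedge 1\big)^{\pi/(16b^{2})}$. Plugging the first bound into the double integral and using $\int_{D^{2}}\LK_{\eta}(\vert w-z\vert)\,d^{2}z\,d^{2}w=\Vert\ind_{D}\Vert_{H^{-\eta}(\C)}^{2}$ yields \eqref{Eq bound 0 loop 1} after a square root; plugging the second bound in (after dropping the $w$-condition, which produces exactly the asymmetric integrand on the right of \eqref{Eq bound 0 loop 2}) yields \eqref{Eq bound 0 loop 2}; and for \eqref{Eq bound 0 loop 3} one uses in addition that $\CR(z,D)\geq\inf_{\operatorname{Supp}(f_{0})}\CR(\cdot,D)$ on $\operatorname{Supp}(f_{0})$, so the second bound there is uniformly $\leq\frac{4}{\pi}\big(\varepsilon/\inf_{\operatorname{Supp}(f_{0})}\CR(\cdot,D)\big)^{\pi/(16b^{2})}$, together with $\int_{D^{2}}f_{0}(z)f_{0}(w)\LK_{\eta}(\vert w-z\vert)\,d^{2}z\,d^{2}w=\Vert f_{0}\Vert_{H^{-\eta}(\C)}^{2}$. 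The main obstacle is purely bookkeeping: matching the precise constants $\tfrac{4}{\pi}$, $\pi^{2}/8$ in the exit-time estimate so that the exponents $\pi/(32b^{2})$ and $\pi/(16b^{2})$ come out exactly as stated.
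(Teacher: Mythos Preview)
Your proposal is correct and follows essentially the same route as the paper: express the Sobolev norm through the kernel $\LK_{\eta}$, drop one of the two membership conditions, invoke conformal invariance and Theorem~\ref{Thm ASW CR ell a b} to reduce to the tail of a one-dimensional Brownian exit time, then bound this tail via the Fourier/spectral series for the symmetric interval $(-b,b)$.

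The only point where the paper is cleaner is the passage to the symmetric interval. You enlarge $(0,b)$ directly to $(-b,b)$ with the Brownian motion still started at $v$, which leaves $\cos\!\big((2k+1)\pi v/(2b)\big)$ factors in the series and forces you into a somewhat ad hoc argument (``absorb the higher terms'', ``for small $t$ use $\PP\le 1$''). The paper instead enlarges $(0,b)$ to the interval $(2v-b,b)$, symmetric about $v$ (this is where $b\ge 2v$ is used), and then recenters: the resulting law is that of $T_{-b,b}$ started at $0$, for which the series $\frac{4}{\pi}\sum_{n\ge 0}\frac{(-1)^{n}}{2n+1}e^{-(2n+1)^{2}\pi^{2}t/(8b^{2})}$ is genuinely alternating with decreasing terms, so the first term is an exact upper bound. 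Your argument is easily patched by the one-line observation $\PP_{v}(T_{-b,b}>t)\le\PP_{0}(T_{-b,b}>t)$ (the survival probability is maximal at the center), after which the same alternating-series bound applies.
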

\begin{proof}
We have that
\begin{displaymath}
\big\Vert\ind_{U_{0,b,\varepsilon}}\big\Vert_{H^{-\eta}(\C)}^{2}
=\int_{U_{0,b,\varepsilon}\times U_{0,b,\varepsilon}}
\LK_{\eta}(\vert w-z\vert)\,d^{2}z\,d^{2}w
\leq
\int_{U_{0,b,\varepsilon}\times D}
\LK_{\eta}(\vert w-z\vert)\,d^{2}z\,d^{2}w.
\end{displaymath}
Thus,
\begin{displaymath}
\E\Big[\big\Vert\ind_{U_{0,b,\varepsilon}}\big\Vert_{H^{-\eta}(\C)}^{2}\Big]\leq 
\int_{D^{2}}
\PP(z\in U_{0,b,\varepsilon})
\LK_{\eta}(\vert w-z\vert)\,d^{2}z\,d^{2}w.
\end{displaymath}
By conformal invariance, $\PP(z\in U_{0,b,\varepsilon})$ is the same whatever $z\in D$.
Theorem \ref{Thm ASW CR ell a b} gives an expression of this probability in terms of a one-dimensional standard Brownian motion starting from $v$.
Denote by $T_{0,b}$ its first exit time from the interval $(0,b)$.
Then, for every $z\in D$,
\begin{displaymath}
\PP(z\in U_{0,b,\varepsilon})
=
\PP_{v}\Big(T_{0,b}>\dfrac{1}{2\pi}\vert\log\varepsilon\vert\Big).
\end{displaymath}
Since the exit time from $(2v-b,b)$ is larger or equal to that from $(0,b)$,
\begin{displaymath}
\PP_{v}\Big(T_{0,b}>\dfrac{1}{2\pi}\vert\log\varepsilon\vert\Big)
\leq 
\PP_{v}\Big(T_{2v-b,b}>\dfrac{1}{2\pi}\vert\log\varepsilon\vert\Big)
\leq
\PP_{0}\Big(T_{-b,b}>\dfrac{1}{2\pi}\vert\log\varepsilon\vert\Big).
\end{displaymath}
By Fourier decomposition, for every $t\geq 0$,
\begin{displaymath}
\PP_{0}(T_{-b,b}>t)
=\dfrac{4}{\pi}\sum_{n=0}^{+\infty} (-1)^{n} \dfrac{1}{2n+1}e^{-\frac{(2n+1)^{2}\pi^{2}}{8 b^{2}}t}.
\end{displaymath}
Note that the terms of the above alternate sum are decreasing in absolute value.
Thus, the value of the sum is upper-bounded by the first term:
\begin{equation}
\label{Eq tail -b b}
\PP_{0}(T_{-b,b}>t)\leq \dfrac{4}{\pi}e^{-\frac{\pi^{2}}{8 b^{2}}t},
\qquad
\PP_{0}\Big(T_{-b,b}>\dfrac{1}{2\pi}\vert\log\varepsilon\vert\Big)\leq 
\dfrac{4}{\pi}\varepsilon^{\pi/(16 b^{2})}.
\end{equation}
Then \eqref{Eq bound 0 loop 1} follows.

Similarly,
\begin{displaymath}
\E\Big[\big\Vert \ind_{\widetilde{U}_{0,b,\varepsilon}}\big\Vert_{H^{-\eta}(\C)}^{2}\Big]\leq 
\int_{D^{2}}
\PP(z\in \widetilde{U}_{0,b,\varepsilon})
\LK_{\eta}(\vert w-z\vert)\,d^{2}z\,d^{2}w,
\end{displaymath}
and
\begin{displaymath}
\PP(z\in \widetilde{U}_{0,b,\varepsilon})
=\PP_{v}\Big(T_{0,b}>\dfrac{1}{2\pi}(\vert\log\varepsilon\vert + \log \CR(z,D))\Big).
\end{displaymath}
If $\CR(z,D)\geq \varepsilon$, then by \eqref{Eq tail -b b} left,
\begin{displaymath}
\PP(z\in \widetilde{U}_{0,b,\varepsilon})
\leq \dfrac{4}{\pi}\Big(\dfrac{\varepsilon}{\CR(z,D)}\Big)^{\pi/(16 b^{2})}.
\end{displaymath}
If $\CR(z,D)<\varepsilon$, then simply
\begin{displaymath}
\PP(z\in \widetilde{U}_{0,b,\varepsilon})\leq 1 < \dfrac{4}{\pi}.
\end{displaymath}
So \eqref{Eq bound 0 loop 2} follows.
The bound \eqref{Eq bound 0 loop 3} is similar.
\end{proof}

\begin{lemma}
\label{Lem V area 0}
Let be $\eta>0$, $\varepsilon\in (0,1)$ and $b\geq (2v)\vee(2\lambda)$.
Let $j\geq 1$.
Then
\begin{equation}
\label{Eq norm V A 0 b eps}
\E\Big[\Big\Vert\ind_{U_{0,b,\varepsilon}} 
V_{A_{0,b}}^{j}
\Big\Vert_{H^{-\eta}(\C)}^{2}\Big]^{1/2}
\leq \dfrac{j C_{j}}{2^{j-1}\pi^{j+1/2}}\big\Vert\ind_{D}\big\Vert_{H^{-\eta}(\C)}\,
\Big(
\dfrac{32 b^{2}}{\pi}
\Big)^{j}
\Big(
1\vee
\Big(
\dfrac{\pi}{32 b^{2}} \vert\log\varepsilon \vert
\Big)
\Big)^{j-1}
\varepsilon^{\pi/(32 b^{2})},
\end{equation}
where
\begin{equation}
\label{Eq C j int}
C_{j} = 
\int_{0}^{+\infty}
\Big(
1 + y\Big)^{j-1}
e^{-y}\,dy.
\end{equation}
Similarly,
given a deterministic smooth cutoff function $f_{0}:D\rightarrow [0,1]$, compactly supported in $D$,
and $\varepsilon\in (0,1)$, with
\begin{displaymath}
\varepsilon\,\leq \inf_{z\in\operatorname{Supp}(f_{0})}\CR(z,D),
\end{displaymath}
we have
\begin{multline}
\label{Eq norm log CR A 0 b eps}
\E\Big[\Big\Vert f_{0}\ind_{\widetilde{U}_{0,b,\varepsilon}} 
\vert \log \CR(z,D\setminus A_{0,b})\vert^{j}
\Big\Vert_{H^{-\eta}(\C)}^{2}\Big]^{1/2}
\\
\leq
\dfrac{2j C_{j}}{\sqrt{\pi}}
\big\Vert f_{0}\big\Vert_{H^{-\eta}(\C)}
\Big(
\dfrac{32 b^{2}}{\pi}
\Big)^{j}
\Big(
1\vee
\Big(
\dfrac{\pi}{32 b^{2}} \vert\log\varepsilon \vert
\Big)
\Big)^{j-1}
\Big(
\dfrac{\varepsilon}{\inf_{\operatorname{Supp}(f_{0})}\CR(z,D)}
\Big)^{\pi/(32 b^{2})},
\end{multline}
where $C_{j}$ is still given by \eqref{Eq C j int}.
\end{lemma}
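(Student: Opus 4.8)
The plan is to reduce both random fields to explicit functionals of a one-dimensional Brownian motion --- using conformal invariance and Theorem~\ref{Thm ASW CR ell a b} --- and then to bootstrap from the base estimate \eqref{Eq bound 0 loop 1} of Lemma~\ref{Lem area 0} through a layer-cake argument over the family of nested neighborhoods $(U_{0,b,e^{-2\pi r}})_{r}$. Put $s=\frac{1}{2\pi}\vert\log\varepsilon\vert$. Since $V_{A_{0,b}}(z)=\frac{1}{2\pi}\log\big(\CR(z,D)/\CR(z,D\setminus A_{0,b})\big)$, one has $\{V_{A_{0,b}}>r\}=U_{0,b,e^{-2\pi r}}$ for every $r>0$, and in particular $U_{0,b,\varepsilon}=\{V_{A_{0,b}}>s\}$. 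The elementary identity $\ind_{\{V>s\}}V^{j}=s^{j}\ind_{\{V>s\}}+j\int_{s}^{\infty}r^{j-1}\ind_{\{V>r\}}\,dr$ then expresses $\ind_{U_{0,b,\varepsilon}}V_{A_{0,b}}^{j}$ as a superposition of indicator functions $\ind_{U_{0,b,e^{-2\pi r}}}$, $r\geq s$, plus the boundary term $s^{j}\ind_{U_{0,b,\varepsilon}}$; applying Minkowski's integral inequality in $L^{2}(d\PP,\sigma(\Phi),H^{-\eta}(\C))$ pulls the norm inside the integral over $r$.

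It then suffices to bound each $\E[\Vert\ind_{U_{0,b,e^{-2\pi r}}}\Vert_{H^{-\eta}(\C)}^{2}]^{1/2}$, and this is what \eqref{Eq bound 0 loop 1} provides: for $b\geq(2v)\vee(2\lambda)$ it is at most $\frac{2}{\sqrt{\pi}}\Vert\ind_{D}\Vert_{H^{-\eta}(\C)}\,(e^{-2\pi r})^{\pi/(32b^{2})}=\frac{2}{\sqrt{\pi}}\Vert\ind_{D}\Vert_{H^{-\eta}(\C)}\,e^{-\beta r}$ with $\beta=\pi^{2}/(16b^{2})$, so that $\varepsilon^{\pi/(32b^{2})}=e^{-\beta s}$. (This bound itself comes, as in the proof of Lemma~\ref{Lem area 0}, from $\E[\Vert\ind_{U_{0,b,\varepsilon}}\Vert_{H^{-\eta}(\C)}^{2}]\leq\int_{D^{2}}\PP(z\in U_{0,b,\varepsilon})\,\LK_{\eta}(\vert w-z\vert)\,d^{2}z\,d^{2}w$, conformal invariance making $\PP(z\in U_{0,b,\varepsilon})$ independent of $z$ and hence factorizable out of the integral.) The whole problem is thereby reduced to estimating the deterministic quantity $j\int_{s}^{\infty}r^{j-1}e^{-\beta r}\,dr$ (plus the contribution $s^{j}e^{-\beta s}$ of the boundary term); substituting $r=s+y/\beta$ and using $(\beta s+y)^{j-1}\leq(1\vee\beta s)^{j-1}(1+y)^{j-1}$ gives $j\int_{s}^{\infty}r^{j-1}e^{-\beta r}\,dr\leq\frac{jC_{j}}{\beta^{j}}(1\vee\beta s)^{j-1}e^{-\beta s}$ with $C_{j}=\int_{0}^{\infty}(1+y)^{j-1}e^{-y}\,dy$ as in \eqref{Eq C j int}. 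Rewriting $\frac{32b^{2}}{\pi}=\frac{2\pi}{\beta}$ and $\frac{\pi}{32b^{2}}\vert\log\varepsilon\vert=\beta s$ and collecting the numerical constants then yields \eqref{Eq norm V A 0 b eps}.

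For \eqref{Eq norm log CR A 0 b eps} the same scheme applies with a few adjustments. On $\widetilde{U}_{0,b,\varepsilon}$ one has $\vert\log\CR(z,D\setminus A_{0,b})\vert=2\pi V_{A_{0,b}}(z)-\log\CR(z,D)$, and $z\in\widetilde{U}_{0,b,\varepsilon}$ iff this exceeds $\vert\log\varepsilon\vert$; thus $\vert\log\CR(\cdot,D\setminus A_{0,b})\vert>\vert\log\varepsilon\vert$ on $\widetilde{U}_{0,b,\varepsilon}$, the layer cake of $\vert\log\CR(\cdot,D\setminus A_{0,b})\vert^{j}$ is run at the threshold $\vert\log\varepsilon\vert$ over the nested sets $\{\vert\log\CR(\cdot,D\setminus A_{0,b})\vert>t\}=\widetilde{U}_{0,b,e^{-t}}$, and the base estimate is now \eqref{Eq bound 0 loop 3}, giving $\E[\Vert f_{0}\ind_{\widetilde{U}_{0,b,e^{-t}}}\Vert_{H^{-\eta}(\C)}^{2}]^{1/2}\leq\frac{2}{\sqrt{\pi}}\Vert f_{0}\Vert_{H^{-\eta}(\C)}\big(e^{-t}/\inf_{\operatorname{Supp}(f_{0})}\CR(\cdot,D)\big)^{\pi/(32b^{2})}$. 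The hypothesis $\varepsilon\leq\inf_{\operatorname{Supp}(f_{0})}\CR(\cdot,D)$ guarantees this base estimate is legitimately applicable for all $t\geq\vert\log\varepsilon\vert$, since then $e^{-t}\leq\varepsilon\leq\inf_{\operatorname{Supp}(f_{0})}\CR(\cdot,D)$, and then the same substitution as above produces the factor $(1\vee\frac{\pi}{32b^{2}}\vert\log\varepsilon\vert)^{j-1}$ and the constant $C_{j}$, the extra factor $2$ accounting for the boundary term. The one genuinely delicate point is the bookkeeping of constants in that last step --- producing exactly $C_{j}$ and the exponent $(1\vee\frac{\pi}{32b^{2}}\vert\log\varepsilon\vert)^{j-1}$ out of the incomplete-Gamma integral, and absorbing the boundary contribution $s^{j}e^{-\beta s}$ (resp.\ $\vert\log\varepsilon\vert^{j}e^{-\beta s}$) into it --- while the layer cake, Minkowski's inequality, and the appeals to Lemma~\ref{Lem area 0} are otherwise routine.
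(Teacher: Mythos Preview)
Your approach is the same as the paper's: layer-cake over the nested sets $U_{0,b,e^{-2\pi r}}$, Minkowski's inequality, and then the base estimate from Lemma~\ref{Lem area 0} on each layer, followed by the substitution $r=s+y/\beta$. In fact your layer-cake identity
\[
\ind_{\{V>s\}}V^{j}=s^{j}\ind_{\{V>s\}}+j\int_{s}^{\infty}r^{j-1}\ind_{\{V>r\}}\,dr
\]
is more careful than the paper's second displayed formula, which omits the boundary term $s^{j}\ind_{\{V>s\}}$ and is therefore not literally an equality.

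However, the step you flag as ``genuinely delicate'' --- absorbing the boundary contribution $s^{j}e^{-\beta s}$ into the target bound --- does not go through, and there is no ``extra factor~$2$'' available for it. A direct check shows that the constants in both \eqref{Eq norm V A 0 b eps} and \eqref{Eq norm log CR A 0 b eps} are exactly saturated by the integral term $\frac{jC_{j}}{\beta^{j}}(1\vee\beta s)^{j-1}e^{-\beta s}$ alone (in both cases the effective constant is $2jC_{j}/\sqrt{\pi}$). The boundary term contributes $s^{j}e^{-\beta s}=\beta^{-j}(\beta s)^{j}e^{-\beta s}$, which for $\beta s>1$ exceeds any constant multiple of $\beta^{-j}(1\vee\beta s)^{j-1}e^{-\beta s}$ by a factor $\beta s$. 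So your argument proves a bound with exponent $(1\vee\beta s)^{j}$ rather than $(1\vee\beta s)^{j-1}$, not the lemma exactly as stated. The paper's proof avoids this only by (tacitly) dropping the boundary term in its layer-cake identity, which is itself a gap. Fortunately this discrepancy is harmless for the application: in the proof of Theorem~\ref{Thm A E FPS} the factor is immediately absorbed into a cruder $\vert\log\delta\vert^{N}$, where one extra power makes no difference.
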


\begin{proof}
For every $z\in D\setminus A_{0,b}$,
\begin{displaymath}
V_{A_{0,b}}(z)^{j}
=
\dfrac{j}{(2\pi)^{j}}
\int_{1}^{\frac{\CR(z,D)}{\CR(z,D\setminus A_{0,b})}}
\log(u)^{j-1}
\dfrac{du}{u},
\end{displaymath}
and
\begin{displaymath}
\ind_{z\in U_{0,b,\varepsilon}}
V_{A_{0,b}}(z)^{j}
=
\dfrac{j}{(2\pi)^{j}}
\int_{\varepsilon^{-1}}^{+\infty}
\ind_{z\in U_{0,b,u^{-1}}}
\log(u)^{j-1}
\dfrac{du}{u}.
\end{displaymath}
Thus,
\begin{displaymath}
\E\Big[\Big\Vert\ind_{U_{0,b,\varepsilon}} 
V_{A_{0,b}}^{j}
\Big\Vert_{H^{-\eta}(\C)}^{2}\Big]^{1/2}
\leq
\dfrac{j}{(2\pi)^{j}}
\int_{\varepsilon^{-1}}^{+\infty}
\E\Big[\big\Vert\ind_{U_{0,b,u^{-1}}}\big\Vert_{H^{-\eta}(\C)}^{2}\Big]^{1/2}
\log(u)^{j-1}
\dfrac{du}{u}.
\end{displaymath}
By Lemma \ref{Lem area 0}, bound \eqref{Eq bound 0 loop 1},
\begin{displaymath}
\E\Big[\Big\Vert\ind_{U_{0,b,\varepsilon}} 
V_{A_{0,b}}^{j}
\Big\Vert_{H^{-\eta}(\C)}^{2}\Big]^{1/2}
\leq
\dfrac{j}{2^{j-1}\pi^{j+1/2}} \big\Vert\ind_{D}\big\Vert_{H^{-\eta}(\C)}
\int_{\varepsilon^{-1}}^{+\infty}
u^{-\pi/(32 b^{2})}
\log(u)^{j-1}
\dfrac{du}{u}.
\end{displaymath}
By performing the change of variables
\begin{displaymath}
y = \dfrac{\pi}{32 b^{2}} \log(\varepsilon u),
\end{displaymath}
we get
\begin{multline*}
\E\Big[\Big\Vert\ind_{U_{0,b,\varepsilon}} 
V_{A_{0,b}}^{j}
\Big\Vert_{H^{-\eta}(\C)}^{2}\Big]^{1/2}
\\
\leq
\dfrac{j}{2^{j-1}\pi^{j+1/2}} \big\Vert\ind_{D}\big\Vert_{H^{-\eta}(\C)}
\Big(
\dfrac{32 b^{2}}{\pi}
\Big)^{j}
\varepsilon^{\pi/(32 b^{2})}
\int_{0}^{+\infty}
\Big(
\dfrac{\pi}{32 b^{2}} \vert\log\varepsilon \vert
+ y\Big)^{j-1}
e^{-y}\,dy.
\end{multline*}
If $\frac{\pi}{32 b^{2}} \vert\log\varepsilon\vert\leq 1$,
then
\begin{displaymath}
\int_{0}^{+\infty}
\Big(
\dfrac{\pi}{32 b^{2}} \vert\log\varepsilon \vert
+ y\Big)^{j-1}
e^{-y}\,dy
\leq 
\int_{0}^{+\infty}
\Big(
1 + y\Big)^{j-1}
e^{-y}\,dy.
\end{displaymath}
If $\frac{\pi}{32 b^{2}} \vert\log\varepsilon\vert > 1$,
then
\begin{displaymath}
\int_{0}^{+\infty}
\Big(
\dfrac{\pi}{32 b^{2}} \vert\log\varepsilon \vert
+ y\Big)^{j-1}
e^{-y}\,dy
\leq 
\Big(
\dfrac{\pi}{32 b^{2}} \vert\log\varepsilon \vert\Big)^{j-1}
\int_{0}^{+\infty}
\Big(
1 + y\Big)^{j-1}
e^{-y}\,dy.
\end{displaymath}
This concludes the proof of \eqref{Eq norm V A 0 b eps}.
The proof of \eqref{Eq norm log CR A 0 b eps} is similar,
and relies on the bound \eqref{Eq bound 0 loop 3}.
\end{proof}

\begin{lemma}
\label{Lem small boundary means fast enough decay}
Assume that the condition \eqref{Eq cond small boundary} holds:
the area of the $\varepsilon$-neighborhood of $\partial D$
decay faster than any power of $1/\vert \log\varepsilon \vert$.
Fix $\eta>0$. Let $b$ depend on $\varepsilon$, $b=b(\varepsilon)$,
with values in $[(2v)\vee(2\lambda),+\infty)$.
We further assume that as $\varepsilon\to 0$,
\begin{equation}
\label{Eq slow growth b eps}
\dfrac{\vert \log \varepsilon\vert}{b(\varepsilon)^{2}}\gg \log \vert\log\varepsilon\vert,
\end{equation}
that is to say
\begin{displaymath}
b(\varepsilon) = o\Big(\dfrac{\vert \log \varepsilon\vert^{1/2}}{(\log \vert\log\varepsilon\vert)^{1/2}}\Big).
\end{displaymath}
Then for every $\beta>0$, as $\varepsilon\to 0$,
\begin{equation}
\label{Eq small log eps beta 1}
\E\Big[\big\Vert\ind_{\widetilde{U}_{0,b(\varepsilon),\varepsilon}}\big\Vert_{H^{-\eta}(\C)}^{2}\Big]^{1/2}
= o(\vert \log\varepsilon \vert^{-\beta}).
\end{equation}
Similarly, for every $j\geq 1$ and $\beta>0$,
\begin{equation}
\label{Eq small log eps beta 2}
\E\Big[\Big\Vert \ind_{\widetilde{U}_{0,b(\varepsilon),\varepsilon}} 
\vert \log \CR(z,D\setminus A_{0,b})\vert^{j}
\Big\Vert_{H^{-\eta}(\C)}^{2}\Big]^{1/2}
= o(\vert \log\varepsilon \vert^{-\beta}).
\end{equation}
\end{lemma}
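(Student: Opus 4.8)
Both estimates follow the same two–region scheme, so I describe it once. First I would reduce the $H^{-\eta}(\C)$–norm to a plain $L^{2}(\C)$ integral. Since $\LK_{\eta}\geq 0$ with $c_{\eta}:=\int_{\C}\LK_{\eta}(\vert u\vert)\,d^{2}u<+\infty$ (by the near–zero behaviour \eqref{Eq asymp kernel} and the exponential decay of $K_{\eta-1}$ at infinity), for any nonnegative measurable $g$, setting $h=\ind_{\widetilde{U}_{0,b,\varepsilon}}\,g$ and using $h(z)h(w)\leq\tfrac12(h(z)^{2}+h(w)^{2})$ together with the symmetry of $\LK_{\eta}$,
\begin{displaymath}
\Vert h\Vert_{H^{-\eta}(\C)}^{2}
=\int_{\C^{2}}h(z)\LK_{\eta}(\vert w-z\vert)h(w)\,d^{2}z\,d^{2}w
\leq c_{\eta}\int_{\C}h(z)^{2}\,d^{2}z .
\end{displaymath}
Taking expectations and using Tonelli, it then suffices to estimate $\int_{D}\E\big[\ind_{z\in\widetilde{U}_{0,b(\varepsilon),\varepsilon}}\,g(z)^{2}\big]\,d^{2}z$, with $g\equiv 1$ for \eqref{Eq small log eps beta 1} and $g(z)=\vert\log\CR(z,D\setminus A_{0,b})\vert^{j}$ for \eqref{Eq small log eps beta 2}. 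For the pointwise (in $z$) analysis I would invoke the Brownian representation of Theorem \ref{Thm ASW CR ell a b}: for fixed $z$, $\log\CR(z,D\setminus A_{0,b})$ has the law of $\log\CR(z,D)-2\pi T_{0,b}$, where $T_{0,b}$ is the exit time from $(0,b)$ of a standard one–dimensional Brownian motion started at $v$. Since $b\geq 2v$, the interval $(0,b)$ translates into a subinterval of $(-b,b)$, so $T_{0,b}$ is stochastically dominated by the exit time $\widehat T_{b}$ from $(-b,b)$ started at $0$, with $\widehat T_{b}\overset{d}{=}b^{2}\widehat T_{1}$ and $\PP(\widehat T_{b}>t)\leq\tfrac{4}{\pi}e^{-\pi^{2}t/(8b^{2})}$ by \eqref{Eq tail -b b}; in particular $\widehat T_{1}$ has all moments finite. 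Writing $L(z):=\tfrac{1}{2\pi}\log\CR(z,D)$ (bounded above by $\tfrac{1}{2\pi}\log(4\diam D)$ via \eqref{Eq Koebe}), the event $\{z\in\widetilde{U}_{0,b,\varepsilon}\}$ corresponds, up to a null set, to $\{T_{0,b}>\tfrac{1}{2\pi}\vert\log\varepsilon\vert+L(z)\}$, and one has $\vert\log\CR(z,D\setminus A_{0,b})\vert\leq\vert\log\CR(z,D)\vert+2\pi T_{0,b}$ pointwise.

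\textbf{Region far from $\partial D$.} Split $D$ according to $\CR(z,D)\gtrless\varepsilon^{1/2}$. For $z$ with $\CR(z,D)\geq\varepsilon^{1/2}$ one has $L(z)\geq-\tfrac{1}{4\pi}\vert\log\varepsilon\vert$, so the threshold $s(z):=\tfrac{1}{2\pi}\vert\log\varepsilon\vert+L(z)\geq\tfrac{1}{4\pi}\vert\log\varepsilon\vert>0$ while $\vert\log\CR(z,D)\vert=O(\vert\log\varepsilon\vert)$ uniformly. For \eqref{Eq small log eps beta 1} this gives $\PP(z\in\widetilde{U}_{0,b,\varepsilon})\leq\tfrac{4}{\pi}\exp(-\pi\vert\log\varepsilon\vert/(32 b(\varepsilon)^{2}))$; for \eqref{Eq small log eps beta 2}, combining the pointwise bound on $\vert\log\CR(z,D\setminus A_{0,b})\vert$ with an integration by parts against the tail of $\widehat T_{b}$ yields
\begin{displaymath}
\E\big[\ind_{z\in\widetilde{U}_{0,b,\varepsilon}}\vert\log\CR(z,D\setminus A_{0,b})\vert^{2j}\big]
\leq C_{j}\big(\vert\log\varepsilon\vert+b(\varepsilon)^{2}\big)^{2j}\exp\!\big(-\pi\vert\log\varepsilon\vert/(32 b(\varepsilon)^{2})\big).
\end{displaymath}
The growth hypothesis \eqref{Eq slow growth b eps}, $\vert\log\varepsilon\vert/b(\varepsilon)^{2}\gg\log\vert\log\varepsilon\vert$, makes the exponential factor $o(\vert\log\varepsilon\vert^{-\beta})$ for every $\beta$, and since also $b(\varepsilon)^{2}=o(\vert\log\varepsilon\vert)$ it dominates the polynomial prefactors; integrating over the bounded set $D$ and absorbing the finite $w$–integral of $\LK_{\eta}$ into $c_{\eta}$ keeps the bound $o(\vert\log\varepsilon\vert^{-\beta})$ for every $\beta$.

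\textbf{Region near $\partial D$.} For $z$ with $\CR(z,D)<\varepsilon^{1/2}$, Koebe's estimate \eqref{Eq Koebe} gives $d(z,\partial D)<\varepsilon^{1/2}$, so such $z$ lie in $B_{\varepsilon^{1/2}}:=\{z\in D:d(z,\partial D)<\varepsilon^{1/2}\}$. Here I would simply drop the indicator. For \eqref{Eq small log eps beta 1} the contribution is then at most $c_{\eta}\operatorname{Leb}(B_{\varepsilon^{1/2}})$; for \eqref{Eq small log eps beta 2}, using $\vert\log\CR(z,D\setminus A_{0,b})\vert^{2j}\leq 2^{2j-1}(\vert\log\CR(z,D)\vert^{2j}+(2\pi)^{2j}T_{0,b}^{2j})$, $\E[T_{0,b}^{2j}]\leq b(\varepsilon)^{4j}\E[\widehat T_{1}^{2j}]$ and $\CR(z,D)\in[d(z,\partial D),4d(z,\partial D)]$, it is at most a $j$–dependent constant times
\begin{displaymath}
\int_{B_{\varepsilon^{1/2}}}\vert\log d(z,\partial D)\vert^{2j}\,d^{2}z
\;+\;b(\varepsilon)^{4j}\operatorname{Leb}(B_{\varepsilon^{1/2}}).
\end{displaymath}
Writing $F(s):=\operatorname{Leb}(\{z\in D:d(z,\partial D)<s\})$, a layer–cake integration by parts gives
\begin{displaymath}
\int_{B_{\varepsilon^{1/2}}}\vert\log d(z,\partial D)\vert^{2j}\,d^{2}z
=\vert\log\varepsilon^{1/2}\vert^{2j}F(\varepsilon^{1/2})+2j\int_{0}^{\varepsilon^{1/2}}F(s)\,\vert\log s\vert^{2j-1}\,\tfrac{ds}{s},
\end{displaymath}
and since \eqref{Eq cond small boundary} states $F(s)=o(\vert\log s\vert^{-\beta})$ for every $\beta$, both terms on the right, as well as $b(\varepsilon)^{4j}F(\varepsilon^{1/2})$ (using $b(\varepsilon)^{4j}=o(\vert\log\varepsilon\vert^{2j})$), are $o(\vert\log\varepsilon\vert^{-\beta})$ for every $\beta$. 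Adding the two regions and taking square roots gives \eqref{Eq small log eps beta 1} and \eqref{Eq small log eps beta 2}.

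\textbf{Main obstacle.} The only genuinely delicate point is the near–boundary part of the weighted estimate \eqref{Eq small log eps beta 2}: there the weight $\vert\log\CR(z,D\setminus A_{0,b})\vert^{j}$ truly blows up as $z\to\partial D$, so one must verify that the area bound \eqref{Eq cond small boundary} is strong enough to beat it. It is, precisely because \eqref{Eq cond small boundary} asserts decay faster than \emph{every} power of $1/\vert\log\varepsilon\vert$, not just one fixed power; the layer–cake computation then converts this into a bound still beating every power, with room to absorb both the logarithmic weight and the exit–time moments $O(b(\varepsilon)^{4j})$. Everything else — the reduction to an $L^2$ integral, and the far–from–boundary estimates — is routine once one has Theorem \ref{Thm ASW CR ell a b} and the tail bound \eqref{Eq tail -b b}, and parallels the arguments already used in Lemmas \ref{Lem area 0} and \ref{Lem V area 0}.
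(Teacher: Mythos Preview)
Your proof is correct and, for \eqref{Eq small log eps beta 1}, essentially identical to the paper's: both reduce the $H^{-\eta}$ norm to a single integral over $D$ via the integrability of $\LK_\eta$, then split $D$ at the level $\CR(z,D)=\varepsilon^{1/2}$ and use the exponential tail \eqref{Eq tail -b b} on the bulk together with \eqref{Eq cond small boundary} near the boundary.

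For \eqref{Eq small log eps beta 2} you take a genuinely different (though closely related) route. The paper performs a layer-cake decomposition on the \emph{full} random weight $|\log\CR(z,D\setminus A_{0,b})|^{j}$, writing
\[
\E\Big[\big\Vert \ind_{\widetilde{U}_{0,b,\varepsilon}}\,|\log\CR(z,D\setminus A_{0,b})|^{j}\big\Vert_{H^{-\eta}}^{2}\Big]^{1/2}
\lesssim \int_{\varepsilon^{-1}}^{\infty}\E\big[\Vert\ind_{\widetilde{U}_{0,b,u^{-1}}}\Vert_{H^{-\eta}}^{2}\big]^{1/2}\log(u)^{j-1}\dfrac{du}{u},
\]
thereby reducing part two to an integral of part-one type estimates at varying scales $u^{-1}$. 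You instead split the weight pointwise as $|\log\CR(z,D\setminus A_{0,b})|\le|\log\CR(z,D)|+2\pi T_{0,b}$, bound the random piece via moments of the exit time (using $T_{0,b}\le_{\rm st}b^{2}\widehat T_{1}$ and integration against the exponential tail), and only invoke a layer-cake on the \emph{deterministic} weight $|\log d(z,\partial D)|$ in the near-boundary region. Your approach is a bit more hands-on but has the virtue of sidestepping the exact form of the layer-cake identity for the random weight; the paper's approach is more economical since it recycles part one wholesale. Both lead to the same conclusion with comparable effort.
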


\begin{proof}
Let us first deal with \eqref{Eq small log eps beta 1}.
We use the bound \eqref{Eq bound 0 loop 2} (Lemma \ref{Lem area 0}).
First observe that
\begin{multline*}
\Big(\int_{D^{2}}
\Big(\Big(\dfrac{\varepsilon}{\CR(z,D)}\Big)\wedge 1\Big)^{\pi/(16 b(\varepsilon)^{2})}
\LK_{\eta}(\vert w-z\vert)\,d^{2}z\,d^{2}w\Big)^{1/2}
\\
\leq I_{\eta,D}^{1/2}
\,
\Big(\int_{D}
\Big(\Big(\dfrac{\varepsilon}{\CR(z,D)}\Big)\wedge 1\Big)^{\pi/(16 b(\varepsilon)^{2})}
\,d^{2}z\Big)^{1/2},
\end{multline*}
where
\begin{displaymath}
I_{\eta,D} = 2\pi \int_{0}^{\diam(D)}\LK_{\eta}(r)\, r\, dr <+\infty.
\end{displaymath}
Then we cut the integral
\begin{displaymath}
\int_{D}
\Big(\Big(\dfrac{\varepsilon}{\CR(z,D)}\Big)\wedge 1\Big)^{\pi/(16 b(\varepsilon)^{2})}
\,d^{2}z
\end{displaymath}
into two parts,
on the subsets
\begin{displaymath}
\{z\in D \vert \CR(z,D)<\varepsilon^{1/2}\}
\qquad
\text{and}
\qquad
\{z\in D \vert \CR(z,D)\geq\varepsilon^{1/2}\}.
\end{displaymath}
The first part is simply bounded by
\begin{displaymath}
\operatorname{Leb}(\{z\in D \vert \CR(z,D)<\varepsilon^{1/2}\}).
\end{displaymath}
By the assumption \eqref{Eq cond small boundary} and the distortion inequality
\eqref{Eq Koebe},
we get that this area is $o(\vert \log\varepsilon \vert^{-2\beta})$,
which corresponds to what we want.
The second part is bounded by
\begin{displaymath}
\vert D\vert \, \varepsilon^{\pi/(32 b(\varepsilon)^{2})},
\end{displaymath}
where $\vert D\vert$ is the area of $D$.
Then by \eqref{Eq slow growth b eps},
this is again $o(\vert \log\varepsilon \vert^{-2\beta})$.
So we get \eqref{Eq small log eps beta 1}.

Now let us deal with \eqref{Eq small log eps beta 2}.
First note that, as in the proof of Lemma \ref{Lem V area 0},
for $\varepsilon\in (0,1)$,
\begin{displaymath}
\E\Big[\Big\Vert \ind_{\widetilde{U}_{0,b(\varepsilon),\varepsilon}} 
\vert \log \CR(z,D\setminus A_{0,b})\vert^{j}
\Big\Vert_{H^{-\eta}(\C)}^{2}\Big]^{1/2}
\leq
j\int_{\varepsilon^{-1}}^{+\infty}
\E\Big[\big\Vert\ind_{U_{0,b(\varepsilon),u^{-1}}}\big\Vert_{H^{-\eta}(\C)}^{2}\Big]^{1/2}
\log(u)^{j-1}
\dfrac{du}{u}.
\end{displaymath}
Then, as previously,
\begin{displaymath}
\E\Big[\big\Vert\ind_{U_{0,b(\varepsilon),u^{-1}}}\big\Vert_{H^{-\eta}(\C)}^{2}\Big]^{1/2}
\leq\dfrac{2}{\sqrt{\pi}}
I_{\eta,D}^{1/2}
\Big(
\operatorname{Leb}(\{z\in D \vert \CR(z,D)<u^{-1/2}\})
+
\vert D\vert \, u^{-\pi/(32 b(\varepsilon)^{2})}
\Big)^{1/2},
\end{displaymath}
and we verify that we get $o(\vert \log\varepsilon \vert^{-\beta})$.
\end{proof}

\subsection{Bounding the difference between the $\varepsilon$-neighborhood and its conditional expectation}
\label{Subsec diff ind cond}

Our goal here is to bound the difference
\begin{displaymath}
\ind_{\Ns_{\varepsilon}(A)}
-\E\big[\ind_{\Ns_{\varepsilon}(A)}\big\vert  A_{0,b},\ell_{0,b}\big],
\end{displaymath}
and similarly for $\widetilde{\Ns}_{\varepsilon}(A)$.
For $z,w\in D$, we will denote by
\begin{equation}
\label{Eq evt same cc}
\Big\{z\stackrel{D\setminus A_{0,b}}{\longleftrightarrow} w\Big\}
\end{equation}
the event that $z$ and $w$ belong to the same connected component of
$D\setminus A_{0,b}$.
For $z\in D\setminus A_{0,b}$,
we will denote by $\Gamma_{0,b}(z)$
the loop of $A_{0,b}$ that surrounds $z$,
that is to say the boundary of the connected component of $z$ in $D\setminus A_{0,b}$.
Then the event $\eqref{Eq evt same cc}$ is the same as
$\{\Gamma_{0,b}(z) = \Gamma_{0,b}(w)\}$.

\begin{lemma}
\label{Lem bound ind same loop}
Let $\eta>0$, $\varepsilon>0$ and $b> v\vee (2\lambda)$.
Then
\begin{displaymath}
\E\Big[\Big\Vert
\ind_{\Ns_{\varepsilon}(A)}
-\E\big[\ind_{\Ns_{\varepsilon}(A)}\big\vert  A_{0,b},\ell_{0,b}\big]
\Big\Vert_{H^{-\eta}(\C)}^{2}\Big]
\leq
\int_{D^{2}}
\PP\Big(z\stackrel{D\setminus A_{0,b}}{\longleftrightarrow} w, \ell_{0,b}(z) = b\Big)
\LK_{\eta}(\vert w-z\vert)\,d^{2}z\,d^{2}w,
\end{displaymath}
and
\begin{displaymath}
\E\Big[\Big\Vert
\ind_{\widetilde{\Ns}_{\varepsilon}(A)}
-\E\big[\ind_{\widetilde{\Ns}_{\varepsilon}(A)}\big\vert  A_{0,b},\ell_{0,b}\big]
\Big\Vert_{H^{-\eta}(\C)}^{2}\Big]
\leq
\int_{D^{2}}
\PP\Big(z\stackrel{D\setminus A_{0,b}}{\longleftrightarrow} w, \ell_{0,b}(z) = b\Big)
\LK_{\eta}(\vert w-z\vert)\,d^{2}z\,d^{2}w.
\end{displaymath}
Given a deterministic smooth cutoff function $f_{0}:D\rightarrow [0,1]$, compactly supported in $D$,
\begin{multline*}
\E\Big[\Big\Vert
f_{0}\ind_{\widetilde{\Ns}_{\varepsilon}(A)}
-\E\big[f_{0}\ind_{\widetilde{\Ns}_{\varepsilon}(A)}\big\vert  A_{0,b},\ell_{0,b}\big]
\Big\Vert_{H^{-\eta}(\C)}^{2}\Big]
\\
\leq
\int_{D^{2}}
\PP\Big(z\stackrel{D\setminus A_{0,b}}{\longleftrightarrow} w, \ell_{0,b}(z) = b\Big)
\LK_{\eta}(\vert w-z\vert)
f_{0}(z)f_{0}(w)\,d^{2}z\,d^{2}w.
\end{multline*}
\end{lemma}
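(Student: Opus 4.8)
The key observation is that conditional expectation is an orthogonal projection in the Hilbert-valued $L^2$ space, so the difference $\ind_{\Ns_\varepsilon(A)} - \E[\ind_{\Ns_\varepsilon(A)}\mid A_{0,b},\ell_{0,b}]$ has squared $H^{-\eta}(\C)$-norm (in expectation) equal to $\E[\|\ind_{\Ns_\varepsilon(A)}\|^2_{H^{-\eta}}] - \E[\|\E[\ind_{\Ns_\varepsilon(A)}\mid A_{0,b},\ell_{0,b}]\|^2_{H^{-\eta}}]$. Writing the $H^{-\eta}$-norm squared as a double integral against the Bessel kernel $\LK_\eta$, this difference equals
$$
\int_{D^2} \Big( \PP(z,w\in\Ns_\varepsilon(A)) - \E\big[\PP(z\in\Ns_\varepsilon(A)\mid A_{0,b},\ell_{0,b})\PP(w\in\Ns_\varepsilon(A)\mid A_{0,b},\ell_{0,b})\big]\Big) \LK_\eta(|w-z|)\, d^2z\, d^2w.
$$

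First I would compute the integrand. Recall that $A_{0,b}\subset A$ a.s. and that conditionally on $(A_{0,b},\ell_{0,b})$, inside each connected component of $D\setminus A_{0,b}$ on which $\ell_{0,b}=0$ we have $\CR(z,D\setminus A)=\CR(z,D\setminus A_{0,b})$, so the event $\{z\in\Ns_\varepsilon(A)\}$ is already $(A_{0,b},\ell_{0,b})$-measurable there (cf.\ \eqref{Eq triv 0 loop}), whereas inside components with $\ell_{0,b}=b$, the FPS $A$ is generated by conditionally independent first passage sets, one per component. Hence if $z$ and $w$ lie in \emph{different} components of $D\setminus A_{0,b}$, or if at least one of them lies in an $\ell_{0,b}=0$ component, the conditional events $\{z\in\Ns_\varepsilon(A)\}$ and $\{w\in\Ns_\varepsilon(A)\}$ are conditionally independent, and moreover for an $\ell_{0,b}=0$ component the event is even deterministic given $(A_{0,b},\ell_{0,b})$; in all these cases the integrand vanishes. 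The only surviving contribution is when $z\stackrel{D\setminus A_{0,b}}{\longleftrightarrow} w$ \emph{and} $\ell_{0,b}(z)=\ell_{0,b}(w)=b$ (note these two conditions force $\ell_{0,b}(w)=b$ once $\ell_{0,b}(z)=b$, since they share a component). On that event one simply bounds $\PP(z,w\in\Ns_\varepsilon(A)) - \E[\cdots]$ crudely by $\PP\big(z\stackrel{D\setminus A_{0,b}}{\longleftrightarrow} w,\ \ell_{0,b}(z)=b\big)$: the probability that both points are in the neighborhood is at most the probability that they share a $b$-component, and the subtracted term is nonnegative. This yields the stated bound; the arguments for $\widetilde{\Ns}_\varepsilon(A)$ and for $f_0\ind_{\widetilde{\Ns}_\varepsilon(A)}$ are verbatim the same, with the extra factor $f_0(z)f_0(w)$ carried along since $f_0$ is deterministic.

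The main point requiring care — though it is conceptually, not technically, delicate — is justifying the conditional-independence/measurability dichotomy: one must invoke the structure of the FPS given a TVS described in Section~\ref{Subsubsec FPS} (the four bullet points: $A_{0,b}\subset A$, $(A_{0,b},\ell_{0,b})$ measurable w.r.t.\ $A$, $A\subset A_{0,b}\cup\{\ell_{0,b}=b\}$, and the conditional resampling of independent FPS's inside the $\ell_{0,b}=b$ components) and combine it with the fact that $\ind_{\Ns_\varepsilon(A)}(z)$ depends only on $\CR(z,D\setminus A)$ and $\CR(z,D)$, the latter being deterministic. I would also note the harmless measure-zero issues ($z$ or $w$ landing on $A_{0,b}$, which has zero Lebesgue measure a.s.). Once the integrand is identified the rest is immediate, so the ``hard part'' is really just assembling the known FPS-vs-TVS coupling correctly; the actual estimation of the resulting integral is deferred to subsequent lemmas.
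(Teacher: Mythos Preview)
Your argument is correct and follows the same route as the paper's proof: write the expected squared $H^{-\eta}$-norm as the double integral of the conditional covariance $\E\big[\PP(z,w\in\Ns_\varepsilon(A)\mid A_{0,b},\ell_{0,b})-\PP(z\in\Ns_\varepsilon(A)\mid A_{0,b},\ell_{0,b})\PP(w\in\Ns_\varepsilon(A)\mid A_{0,b},\ell_{0,b})\big]$, observe it vanishes unless $z\stackrel{D\setminus A_{0,b}}{\longleftrightarrow} w$ with $\ell_{0,b}(z)=b$, and bound by $1$ on that event.

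One imprecision to fix: your sentence ``the probability that both points are in the neighborhood is at most the probability that they share a $b$-component'' is not literally true (for $\varepsilon$ close to $1$, $\PP(z,w\in\Ns_\varepsilon(A))$ can be near $1$ while the connection probability is small). What you actually need, and what the paper does, is that the \emph{conditional} covariance is bounded by $1$ on the event $E=\{z\leftrightarrow w,\ \ell_{0,b}(z)=b\}$, so that the integrand equals $\E[\ind_E\cdot(\text{cond.\ covariance})]\le \E[\ind_E]=\PP(E)$. Equivalently, $\E[\ind_E X_z X_w]\le\PP(E)$, not $\E[X_z X_w]\le\PP(E)$. With that correction the argument is complete.
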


\begin{proof}
We have that
\begin{multline*}
\E\Big[\Big\Vert
\ind_{\Ns_{\varepsilon}(A)}
-\E\big[\ind_{\Ns_{\varepsilon}(A)}\big\vert  A_{0,b},\ell_{0,b}\big]
\Big\Vert_{H^{-\eta}(\C)}^{2}\Big]
=
\\
\int_{D^{2}}
\E\Big[
\PP(z,w \in \Ns_{\varepsilon}(A)\vert A_{0,b},\ell_{0,b}) - 
\PP(z\in \Ns_{\varepsilon}(A)\vert A_{0,b},\ell_{0,b})
\PP(w\in \Ns_{\varepsilon}(A)\vert A_{0,b},\ell_{0,b})
\Big]
\LK_{\eta}(\vert w-z\vert)\,d^{2}z\,d^{2}w.
\end{multline*}
Note that $\PP(z,w\in D\setminus A_{0,b})=1$.
Conditionally on $(A_{0,b},\ell_{0,b})$
and on the event $\Big\{z\stackrel{D\setminus A_{0,b}}{\nleftrightarrow} w\Big\}$,
the events $\{z\in \Ns_{\varepsilon}(A)\}$ and $\{w\in \Ns_{\varepsilon}(A)\}$
are independent.
Thus, on the event $\Big\{z\stackrel{D\setminus A_{0,b}}{\nleftrightarrow} w\Big\}$,
\begin{displaymath}
\PP(z,w \in \Ns_{\varepsilon}(A)\vert A_{0,b},\ell_{0,b}) - 
\PP(z\in \Ns_{\varepsilon}(A)\vert A_{0,b},\ell_{0,b})
\PP(w\in \Ns_{\varepsilon}(A)\vert A_{0,b},\ell_{0,b})
=0.
\end{displaymath}
On the event
$\{z\stackrel{D\setminus A_{0,b}}{\longleftrightarrow} w, \ell_{0,b}(z) = 0\}$
(and then necessarily $\ell_{0,b}(w)=\ell_{0,b}(z) = 0$)
we have
\begin{multline*}
\PP(z,w \in \Ns_{\varepsilon}(A)\vert A_{0,b},\ell_{0,b}) - 
\PP(z\in \Ns_{\varepsilon}(A)\vert A_{0,b},\ell_{0,b})
\PP(w\in \Ns_{\varepsilon}(A)\vert A_{0,b},\ell_{0,b})
\\
=
\ind_{z,w \in \Ns_{\varepsilon}(A)}
-
\ind_{z \in \Ns_{\varepsilon}(A)}
\ind_{w \in \Ns_{\varepsilon}(A)} = 0.
\end{multline*}
See \eqref{Eq triv 0 loop}.
On the event $\{z\stackrel{D\setminus A_{0,b}}{\longleftrightarrow} w, \ell_{0,b}(z) = b\}$,
the absolute value
\begin{displaymath}
\vert \PP(z,w \in \Ns_{\varepsilon}(A)\vert A_{0,b},\ell_{0,b}) - 
\PP(z\in \Ns_{\varepsilon}(A)\vert A_{0,b},\ell_{0,b})
\PP(w\in \Ns_{\varepsilon}(A)\vert A_{0,b},\ell_{0,b})\vert 
\end{displaymath}
is simply bounded by $1$.
The case of $\widetilde{\Ns}_{\varepsilon}(A)$ is similar.
\end{proof}

Further we will consider the extremal distance (same as effective electrical resistance)
between $\partial D$ and the loops $\Gamma_{0,b}(z)$ of $A_{0,b}$.
It will be denoted $\ED(\partial D,\Gamma_{0,b}(z))$.
See Section \ref{Subsubsec TVS}.

\begin{lemma}
\label{Lem bound ED}
Let $z,w\in D$ and $b> v\vee (2\lambda)$. Then
\begin{equation}
\label{Eq bound ED}
\PP\Big(z\stackrel{D\setminus A_{0,b}}{\longleftrightarrow} w, \ell_{0,b}(z) = b\Big)
\leq
\PP(\ED(\partial D,\Gamma_{0,b}(z))\leq G_{D}(z,w) + \log(2)/\pi, \ell_{0,b}(z) = b).
\end{equation}
\end{lemma}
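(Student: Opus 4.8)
The event on the left-hand side of \eqref{Eq bound ED} says that $z$ and $w$ lie in the same connected component $\mathcal{O}$ of $D\setminus A_{0,b}$, and that the label of that component is $b$. The plan is to show that on this event the extremal distance from $\partial D$ to the surrounding loop $\Gamma_{0,b}(z)=\Gamma_{0,b}(w)$ cannot be too large, the bound being governed by $G_D(z,w)$. The key geometric input is that if $z$ and $w$ both lie inside $\mathcal{O}$, then the conformal modulus of the annular region between $\partial D$ and $\Gamma_{0,b}(z)$ is limited by how far apart $z$ and $w$ can be while still both being enclosed by $\Gamma_{0,b}(z)$: a very thin annulus (large extremal distance / resistance) forces the enclosed region to be small, hence $z$ and $w$ to be close, hence $G_D(z,w)$ large. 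So the implication should read: $\{z\stackrel{D\setminus A_{0,b}}{\longleftrightarrow} w\}\subset\{\ED(\partial D,\Gamma_{0,b}(z))\le G_D(z,w)+\log(2)/\pi\}$, after which one intersects with $\{\ell_{0,b}(z)=b\}$ on both sides and takes probabilities, which is the claimed inequality \eqref{Eq bound ED}.

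To make the geometric step precise I would argue as follows. Suppose $z\stackrel{D\setminus A_{0,b}}{\longleftrightarrow} w$, and write $\Gamma=\Gamma_{0,b}(z)$, $\mathcal{O}=\inter(\Gamma)\supset\{z,w\}$. Let $\widetilde D$ be the annular domain between $\partial D$ and $\Gamma$, i.e.\ the connected component of $D\setminus\overline{\mathcal{O}}$ adjacent to $\partial D$ whose two boundary components are (subsets of) $\partial D$ and $\Gamma$; here one must use that $A_{0,b}$ is connected and contains $\partial D$, so $\Gamma$ together with $\partial D$ bounds a genuine topological annulus. The extremal distance $\ED(\partial D,\Gamma)$ equals the modulus of this annulus. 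Now compare Green's functions: by the maximum principle and monotonicity of the Green's function under domain inclusion, $G_{D}(z,w)\ge G_{\mathcal{O}}(z,w)$, and on the other hand $G_{\mathcal{O}}(z,w)$ can be bounded \emph{below} in terms of the modulus of $\mathcal{O}$ seen with the two marked points — concretely, one can uniformize $\mathcal{O}$ to a disk and use that a lower bound on $G_{\mathcal{O}}(z,w)$ together with the distortion/Koebe estimate \eqref{Eq Koebe} controls the conformal radius ratio $\CR(z,\mathcal{O})/\CR(z,D)$, which via Theorem \ref{Thm ASW CR ell a b} is exactly $e^{-2\pi\,\ED(\partial D,\Gamma)}$ up to the constant shift. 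Tracking constants through this chain — the $\log(2)/\pi$ comes from the $\CR\le 4\,d(\cdot,\partial\cdot)$ side of \eqref{Eq Koebe}, since $\log 4/(2\pi)=\log 2/\pi$ — yields the inequality $\ED(\partial D,\Gamma)\le G_D(z,w)+\log(2)/\pi$ deterministically on the event $\{z\stackrel{D\setminus A_{0,b}}{\longleftrightarrow} w\}$.

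A cleaner route, and the one I would actually write, avoids uniformizing the awkward annulus $\mathcal{O}$ and instead works directly from the probabilistic description already set up in Section \ref{Subsubsec TVS}. On the event $\{z\stackrel{D\setminus A_{0,b}}{\longleftrightarrow} w\}$ one has $\Gamma_{0,b}(z)=\Gamma_{0,b}(w)$, hence $\CR(w,D\setminus A_{0,b})=\CR(w,\mathcal{O})$ with the \emph{same} component $\mathcal{O}$; combining with the trivial bound $\CR(w,\mathcal{O})\le 4\,d(w,\partial\mathcal{O})\le 4\,d(w,\Gamma)\le 4\,|w-z|\cdot(\text{geometry})$ is too lossy, so instead I use that $\CR(z,D\setminus A_{0,b})\le 4\,d(z,\partial\mathcal{O})$ while $d(z,\partial\mathcal{O})\le\,$(something comparable to $|w-z|$ only when $w$ is near $\Gamma$), which is not automatic. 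This is the main obstacle: one genuinely needs the two-point comparison, not a one-point one. The resolution is to invoke the standard fact relating Green's function and extremal distance for doubly connected configurations: $\ED(\partial D,\Gamma)\le \frac{1}{2\pi}\log\frac{\CR(z,D)}{\CR(z,\mathcal{O})}$ is false in general, but $G_D(z,w)\ge G_{\mathcal{O}}(z,w)$ and $G_{\mathcal{O}}(z,w)\ge \frac{1}{2\pi}\log\frac{\CR(z,\mathcal{O})}{|z-w|}-$const while $\frac{1}{2\pi}\log\frac{\CR(z,D)}{\CR(z,\mathcal{O})}=\ED(\partial D,\Gamma)$ in law (Thm.~\ref{Thm ASW CR ell a b} pointwise, not just in law, once we condition on $A_{0,b}$). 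Chaining $\ED(\partial D,\Gamma)=\frac{1}{2\pi}\log\CR(z,D)-\frac{1}{2\pi}\log\CR(z,\mathcal{O})\le\frac{1}{2\pi}\log\CR(z,D)-\bigl(-G_{\mathcal{O}}(z,w)+\frac{1}{2\pi}\log|z-w|\bigr)$ and using $\frac{1}{2\pi}\log\frac{\CR(z,D)}{|z-w|}\le G_D(z,w)+\log(2)/\pi$ (which is just \eqref{Eq G CR} plus $g_D\le\frac{1}{2\pi}\log\diam D$ handled carefully, or more simply the Beurling-type bound of Proposition \ref{Prop Green}) finishes the estimate. Intersecting with $\{\ell_{0,b}(z)=b\}$ and taking probabilities gives \eqref{Eq bound ED}. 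I expect the bookkeeping of the additive constant $\log(2)/\pi$ and making the ``$\Gamma$ and $\partial D$ bound an annulus'' statement rigorous (using connectedness of $A_{0,b}$) to be where the real care is needed; everything else is monotonicity of Green's functions and \eqref{Eq Koebe}.
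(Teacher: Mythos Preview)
Your high-level strategy is right: the lemma follows once you show the deterministic implication
\[
\Big\{z\stackrel{D\setminus A_{0,b}}{\longleftrightarrow} w\Big\}
\subset
\big\{\ED(\partial D,\Gamma_{0,b}(z))\le G_D(z,w)+\log(2)/\pi\big\},
\]
and then intersect with $\{\ell_{0,b}(z)=b\}$. But your derivation of this implication breaks at a specific point.

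In both your sketched routes you assert, in one form or another, that
\[
\ED(\partial D,\Gamma_{0,b}(z))
=
\dfrac{1}{2\pi}\log\dfrac{\CR(z,D)}{\CR(z,D\setminus A_{0,b})}
= V_{A_{0,b}}(z),
\]
either ``in law by Theorem~\ref{Thm ASW CR ell a b}'' or ``pointwise once we condition on $A_{0,b}$''. This is false. Theorem~\ref{Thm ASW CR ell a b} identifies $V_{A_{0,b}}(z)$ with the Brownian exit time $T_{0,b}$, whereas Theorem~\ref{Thm ED ALS3} identifies $\ED(\partial D,\Gamma_{0,b}(z))$ with the last-passage time $\tau_{0,b}$. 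These are genuinely different random variables (indeed $\tau_{0,b}\le T_{0,b}$ strictly in general), and they are certainly not pointwise equal given $A_{0,b}$. Once this identity fails, your chain of inequalities collapses: you have no way to pass from a lower bound on $\CR(z,\mathcal{O})$ to an upper bound on the extremal distance. The auxiliary inequality $g_{\mathcal{O}}(z,w)\le g_{\mathcal{O}}(z,z)$ that you implicitly use is also not justified.

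The paper's argument avoids all of this. Both sides of \eqref{Eq bound ED} are conformally invariant, so one may send $D$ to the unit disk $\D$, $z$ to $0$, and $w$ to $r=e^{-2\pi G_D(z,w)}\in[0,1)$. On the event that $\Gamma_{0,b}^{\D}(0)$ surrounds both $0$ and $r$, one invokes a direct consequence of Gr\"otzsch's theorem (stated as Proposition~2.5 in \cite{ALS3}): any Jordan curve in $\D$ surrounding both $0$ and $r$ satisfies
\[
r \le 4\,e^{-2\pi\,\ED(\partial\D,\Gamma)}.
\]
Taking logarithms yields $\ED(\partial\D,\Gamma)\le \tfrac{1}{2\pi}\log(4/r)=G_{\D}(0,r)+\log(2)/\pi$, which is exactly the needed deterministic bound. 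The constant $\log(2)/\pi$ comes from the factor $4$ in the Gr\"otzsch estimate, not from the Koebe inequality \eqref{Eq Koebe} applied inside $\mathcal{O}$ as you suggest. So the missing ingredient in your proposal is precisely this extremal-length comparison (Gr\"otzsch), together with the conformal-invariance reduction that makes it cleanly applicable.
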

\begin{proof}
Since both probabilities in \eqref{Eq bound ED} are conformally invariant,
we may replace the domain $D$ by the unit disk $\D$,
the point $z$ by $0$, and $w$ by a point $r\in [0,1)$
such that
\begin{displaymath}
G_{\D}(0,r) = G_{D}(z,w).
\end{displaymath}
Since
\begin{displaymath}
G_{\D}(0,r) = \dfrac{1}{2\pi} \log r^{-1},
\end{displaymath}
we get that
\begin{displaymath}
r = e^{- 2\pi G_{D}(z,w)}.
\end{displaymath}
Consider the loop $\Gamma^{\D}_{0,b}(0)$ of the TVS $A^{\D}_{0,b}$
in the unit disk $\D$.
On the event $\{\Gamma^{\D}_{0,b}(0) = \Gamma^{\D}_{0,b}(r)\}$ it surrounds both $0$ and the point $r$.
Thus, according to Proposition 2.5 in \cite{ALS3},
on the event $\{\Gamma^{\D}_{0,b}(0) = \Gamma^{\D}_{0,b}(r)\}$,
\begin{equation}
\label{Eq Grotzsch}
r \leq 4 e^{-2\pi\ED(\partial \D, \Gamma^{\D}_{0,b}(0))},
\end{equation}
that is to say
\begin{displaymath}
\ED(\partial \D, \Gamma^{\D}_{0,b}(0))\leq \dfrac{1}{2\pi} \log(4/r)
=G_{\D}(0,r) + \dfrac{\log(2)}{\pi}.
\end{displaymath}
So \eqref{Eq bound ED} follows.
The inequality \eqref{Eq Grotzsch} is a distortion bound that can be deduced from
Grötzsch's theorem \cite[Theorem 4-6]{Ahlfors2010ConfInv}.
See \cite{ALS3} for details.
\end{proof}

\begin{lemma}
\label{Lem ED Gauss}
Let $z,w\in D$ and $b\in 2\lambda\N$ such that
$b\geq v+2\lambda$. 
Then for every $L>0$,
\begin{displaymath}
\PP(\ED(\partial D,\Gamma_{0,b}(z))\leq L, \ell_{0,b}(z) = b)
\leq
e^{-(b-v-2\lambda)^{2}/(2L)}.
\end{displaymath}
\end{lemma}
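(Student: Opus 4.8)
The plan is to reduce the statement to a one-dimensional Brownian computation via Theorem \ref{Thm ED ALS3}, and then estimate the resulting probability by an optimal stopping / reflection-type argument. First I would recall that since $b-a\in 2\lambda\N$ (here with $a=0$), Theorem \ref{Thm ED ALS3} applies and gives that the pair $(\ED(\partial D,\Gamma_{0,b}(z)),\ell_{0,b}(z))$ has the same law as $(\tau_{0,b},W_{T_{0,b}})$ under $W_0=v$, where $(W_t)_{t\ge 0}$ is standard one-dimensional Brownian motion, $T_{0,b}$ is its exit time from $(0,b)$, and on the event $\{W_{T_{0,b}}=b\}$ the time $\tau_{0,b}$ is the last visit of $W$ to level $b-2\lambda$ before $T_{0,b}$. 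Hence the left-hand side equals
\begin{displaymath}
\PP_v(\tau_{0,b}\le L,\ W_{T_{0,b}}=b).
\end{displaymath}

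Next I would analyze this probability by conditioning on $\mathcal{F}_{\tau_{0,b}}$, or more cleanly by a path decomposition at $\tau_{0,b}$. On the event $\{W_{T_{0,b}}=b\}$, after time $\tau_{0,b}$ the path $W$ starts at level $b-2\lambda$, stays strictly below $b-2\lambda$ until it eventually reaches $b$ (by definition of $\tau_{0,b}$ as the \emph{last} visit to $b-2\lambda$), and in particular it never returns to $b-2\lambda$ after $\tau_{0,b}$. Meanwhile on the time interval $[0,\tau_{0,b}]$ the path stays in $(0,b)$ and ends at $b-2\lambda$. So the event $\{\tau_{0,b}\le L,\ W_{T_{0,b}}=b\}$ is contained in the event that, starting from $v$, the Brownian motion reaches level $b-2\lambda$ (for the last relevant time) within time $L$ while staying positive; dropping the positivity constraint, this is at most the probability that a Brownian motion started at $v$ hits $b-2\lambda$ before time $L$. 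Since $b-2\lambda\ge v$ (because $b\ge v+2\lambda$), by the reflection principle
\begin{displaymath}
\PP_v\big(\exists\, t\le L:\ W_t = b-2\lambda\big)
= 2\,\PP\big(\mathcal{N}(0,L) \ge b-2\lambda - v\big)
\le e^{-(b-2\lambda-v)^2/(2L)},
\end{displaymath}
using the standard Gaussian tail bound $2\PP(\mathcal N(0,1)\ge x)\le e^{-x^2/2}$ for $x\ge 0$. This gives exactly the claimed bound $e^{-(b-v-2\lambda)^2/(2L)}$.

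The only subtle point — and the step I expect to be the main obstacle to write carefully — is justifying that $\tau_{0,b}\le L$ on $\{W_{T_{0,b}}=b\}$ really does force the Brownian motion to have visited level $b-2\lambda$ by time $L$. One must make sure $\tau_{0,b}$ is defined to be positive with positive probability even when $W$ never dips back down to $b-2\lambda$ after first reaching it; but in that degenerate case $\tau_{0,b}$ equals the \emph{first} hitting time of $b-2\lambda$ (or $0$ by the $0\vee$ convention, which only helps), so in all cases $\{\tau_{0,b}\le L\}\cap\{W_{T_{0,b}}=b\}$ implies $W$ hits $b-2\lambda$ at or before time $L$. With that observation in hand, the rest is the elementary reflection-principle estimate above; since $L$ is arbitrary positive the bound holds uniformly, and no further use of the geometry of $D$, $z$, or $w$ is needed (everything is conformally invariant and reduced to the Brownian picture by Theorem \ref{Thm ED ALS3}).
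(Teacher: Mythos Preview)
Your proof is correct and follows essentially the same route as the paper: reduce via Theorem \ref{Thm ED ALS3} to a one-dimensional Brownian statement, observe that $\{\tau_{0,b}\le L,\ W_{T_{0,b}}=b\}$ forces the Brownian motion to reach level $b-2\lambda$ by time $L$, and then apply the reflection principle together with the Gaussian tail bound. (One small slip: after $\tau_{0,b}$ the path stays strictly \emph{above} $b-2\lambda$, not below, but this is purely expository and does not affect your argument.)
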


\begin{proof}
Let $(W_{t})_{t\geq 0}$ be a one-dimensional standard Brownian motion starting from $0$.
Let $T_{-v,b-v}$ the first exit time from the interval $(-v,b-v)$:
\begin{displaymath}
T_{-v,b-v} = \inf\{ t>0\vert W_{t}\not\in (-v,b-v)\}.
\end{displaymath}
Let $\tau_{-v,b-v}$ be the last passage time
\begin{displaymath}
\tau_{-v,b-v} = 0\vee\sup\{t\in[0,T_{-v,b-v}]\vert W_{t} = b-v -2\lambda \}.
\end{displaymath}
According to Theorem \ref{Thm ED ALS3},
\begin{displaymath}
\PP(\ED(\partial D,\Gamma_{0,b}(z))\leq L, \ell_{0,b}(z) = b)
=
\PP_{0}\big(\tau_{-v,b-v}\leq L, W_{T_{-v,b-v}} = b-v\big).
\end{displaymath}
Further,
\begin{displaymath}
\PP_{0}\big(\tau_{-v,b-v}\leq L, W_{T_{-v,b-v}} = b-v\big)
\leq
\PP_{0}\Big(\sup_{t\in[0,L]}W_{t}\geq b-v-2\lambda\Big).
\end{displaymath}
But $\sup_{t\in[0,L]}W_{t}$ has the same distribution as $\vert W_{L}\vert$.
Thus,
\begin{multline*}
\PP_{0}\Big(\sup_{t\in[0,L]}W_{t}\geq b-v-2\lambda\Big)
=
\PP_{0}(\vert W_{L}\vert\geq b-v-2\lambda)
=
\dfrac{2}{\sqrt{2\pi L}}
\int_{b-v-2\lambda}^{+\infty}
e^{-x^{2}/(2L)}\,dx
\\
\leq
e^{-(b-v-2\lambda)^{2}/(2L)}
\,
\dfrac{2}{\sqrt{2\pi L}}
\int_{0}^{+\infty}
e^{-y^{2}/(2L)}\,dy
=
e^{-(b-v-2\lambda)^{2}/(2L)}.
\end{multline*}
This concludes.
\end{proof}

By combining Lemmas \eqref{Lem bound ind same loop},
\eqref{Lem bound ED} and \eqref{Lem ED Gauss},
we immediately get the following.

\begin{cor}
\label{Cor bound ind diff 1}
Let $\eta>0$, $\varepsilon>0$ and $b\in2\lambda\N$ with $b\geq v + 2\lambda$.
Then
\begin{displaymath}
\E\Big[\Big\Vert
\ind_{\Ns_{\varepsilon}(A)}
-\E\big[\ind_{\Ns_{\varepsilon}(A)}\big\vert  A_{0,b},\ell_{0,b}\big]
\Big\Vert_{H^{-\eta}(\C)}^{2}\Big]
\qquad
\text{and}
\qquad
\E\Big[\Big\Vert
\ind_{\widetilde{\Ns}_{\varepsilon}(A)}
-\E\big[\ind_{\widetilde{\Ns}_{\varepsilon}(A)}\big\vert  A_{0,b},\ell_{0,b}\big]
\Big\Vert_{H^{-\eta}(\C)}^{2}\Big]
\end{displaymath}
are bounded by
\begin{equation}
\label{Eq ind norm Gauss}
\int_{D^{2}}
e^{-(b-v-2\lambda)^{2}/(2\log(2)/\pi + 2 G_{D}(z,w))}
\LK_{\eta}(\vert w-z\vert)
\,d^{2}z\,d^{2}w.
\end{equation}
Given a deterministic smooth cutoff function $f_{0}:D\rightarrow [0,1]$, compactly supported in $D$,
then
\begin{displaymath}
\E\Big[\Big\Vert
f_{0}\ind_{\widetilde{\Ns}_{\varepsilon}(A)}
-\E\big[f_{0}\ind_{\widetilde{\Ns}_{\varepsilon}(A)}\big\vert  A_{0,b},\ell_{0,b}\big]
\Big\Vert_{H^{-\eta}(\C)}^{2}\Big]
\end{displaymath}
is again bounded by \eqref{Eq ind norm Gauss}.
\end{cor}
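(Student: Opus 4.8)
The plan is to chain the three preceding lemmas, since each reduces the quantity of interest to something controlled by the next, and the constants line up exactly.

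First I would invoke Lemma \ref{Lem bound ind same loop}, which bounds each of the three second moments --- for $\Ns_{\varepsilon}(A)$, for $\widetilde{\Ns}_{\varepsilon}(A)$, and for the $f_{0}$-weighted version --- by an integral over $D^{2}$ of $\PP\big(z\stackrel{D\setminus A_{0,b}}{\longleftrightarrow} w,\,\ell_{0,b}(z)=b\big)$ against the Bessel potential $\LK_{\eta}(|w-z|)$. The mechanism behind that lemma is that, conditionally on $(A_{0,b},\ell_{0,b})$, the neighborhood of $A$ inside $D\setminus A_{0,b}$ is frozen on the $\{\ell_{0,b}=0\}$ components (cf.\ \eqref{Eq triv 0 loop}) and built from independent FPS on the $\{\ell_{0,b}=b\}$ components, so the conditional covariance vanishes off the event that $z$ and $w$ share a common $\{\ell_{0,b}=b\}$-component and is bounded by $1$ on it. In the $f_{0}$-weighted case the integrand carries an extra factor $f_{0}(z)f_{0}(w)\le 1$, which I would simply discard to land on the same expression \eqref{Eq ind norm Gauss}.

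Second, pointwise in $(z,w)$ inside the integrand I would apply Lemma \ref{Lem bound ED}: on the connection event the loop $\Gamma_{0,b}(z)$ surrounds both $z$ and $w$, so by the Grötzsch-type distortion bound (Proposition 2.5 of \cite{ALS3}) its extremal distance to $\partial D$ is at most $G_{D}(z,w)+\log(2)/\pi$; this requires only $b>v\vee(2\lambda)$. Third, taking $L=G_{D}(z,w)+\log(2)/\pi$ I would apply Lemma \ref{Lem ED Gauss} --- legitimate precisely because $b\in 2\lambda\N$ and $b\ge v+2\lambda$, so Theorem \ref{Thm ED ALS3} is available --- to obtain the bound $e^{-(b-v-2\lambda)^{2}/(2L)}$. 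Since $2L=2\log(2)/\pi+2G_{D}(z,w)$, this is exactly the exponent in \eqref{Eq ind norm Gauss}, and integrating against $\LK_{\eta}(|w-z|)$ finishes all three estimates simultaneously.

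There is essentially no obstacle at this stage: all the genuine work has been front-loaded into Lemmas \ref{Lem bound ind same loop}, \ref{Lem bound ED} and \ref{Lem ED Gauss} (the conditional-independence/freezing argument, the Grötzsch distortion inequality, and the reduction of the extremal-distance law to a one-dimensional Brownian last-passage computation, respectively). The only point requiring care is the bookkeeping of constants --- checking that $L=G_{D}(z,w)+\log(2)/\pi$ is the correct value to feed into Lemma \ref{Lem ED Gauss} so that $2L$ matches the denominator $2\log(2)/\pi+2G_{D}(z,w)$ --- and noting that integrability of the resulting bound is not asserted by this corollary (that will be handled later, using the near-boundary Green's function estimate of Proposition \ref{Prop Green}).
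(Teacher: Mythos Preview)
Your proposal is correct and follows exactly the paper's approach: the paper simply states that Corollary \ref{Cor bound ind diff 1} follows ``immediately'' by combining Lemmas \ref{Lem bound ind same loop}, \ref{Lem bound ED} and \ref{Lem ED Gauss}, and you have spelled out precisely this chain with the correct bookkeeping of the constant $L=G_{D}(z,w)+\log(2)/\pi$.
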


We further want a more explicit bound than \eqref{Eq ind norm Gauss}.
We will deal only with the case $\eta\in (0,1)$,
as by definition \eqref{Eq def Sobo norm}, 
the $H^{-\eta}(\C)$ norms are non-increasing in $\eta$.

\begin{lemma}
\label{Lem better bound}
Let $\eta\in (0,1)$ and $b\geq v + 2\lambda + 2\log(2)\pi$.
Then
\begin{multline*}
\int_{D^{2}}
e^{-(b-v-2\lambda)^{2}/(2\log(2)/\pi + 2 G_{D}(z,w))}
\LK_{\eta}(\vert w-z\vert)
\,d^{2}z\,d^{2}w
\\
\leq 
e^{-((2\pi\eta)\wedge 1)(b-v-2\lambda - 2\log(2)/\pi)}
(c_{\eta}\vert D\vert \diam(D)^{2\eta} +
\Vert 1_{D} \Vert_{H^{-\eta}(\C)}^{2}
),
\end{multline*}
where $\vert D\vert$ is the Lebesgue measure (area) of $D$,
and $c_{\eta}>0$ is a constant depending only on $\eta$.
\end{lemma}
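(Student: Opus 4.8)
The plan is to split the double integral over $D^{2}$ according to whether $G_{D}(z,w)$ is large (which forces $z,w$ close) or small. Write $\delta = b-v-2\lambda$ and $\beta = 2\log(2)/\pi$, so the integrand is $e^{-\delta^{2}/(\beta + 2G_{D}(z,w))}\,\LK_{\eta}(|w-z|)$. First I would observe that since $G_{D}\geq 0$, on the whole domain $e^{-\delta^{2}/(\beta+2G_{D}(z,w))}\leq 1$; this crude bound is enough wherever $\LK_{\eta}$ is integrable, i.e.\ away from the diagonal. The difficulty is only near the diagonal, where $\LK_{\eta}(r)\asymp r^{-(2-2\eta)}$ by \eqref{Eq asymp kernel} and $G_{D}(z,w)\sim \frac{1}{2\pi}\log(1/|w-z|)$ blows up, making $e^{-\delta^{2}/(\beta+2G_{D})}$ very small. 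So the two competing effects must be balanced.

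Concretely, for fixed $z\in D$ I would integrate over $w$ using the upper bound $G_{D}(z,w)\leq \frac{1}{2\pi}\log\frac{2\diam(D)}{|w-z|}$ (valid since $g_{D}\leq \frac{1}{2\pi}\log\diam(D)$, see \eqref{Eq G CR}), giving $\beta + 2G_{D}(z,w)\leq \frac{1}{\pi}\log\frac{4\diam(D)^{2}\,e^{2}}{|w-z|^{2}}$ after absorbing $\beta$; more simply one checks $\beta + 2G_D(z,w) \le \frac1\pi \log \frac{C_0\diam(D)^2}{|w-z|^2}$ for a universal constant. Hence $e^{-\delta^{2}/(\beta+2G_{D}(z,w))}\leq \big(|w-z|^{2}/(C_0\diam(D)^{2})\big)^{\pi\delta^{2}/\log(\cdots)}$ — but that exponent still depends on $|w-z|$, so instead I would pass to polar coordinates $r = |w-z|$ and directly estimate
\begin{displaymath}
\int_{0}^{\diam(D)} e^{-\pi\delta^{2}/\log(C_0\diam(D)^{2}/r^{2})}\,\LK_{\eta}(r)\,r\,dr.
\end{displaymath}
Splitting this at $r = \diam(D)\,e^{-\delta/2}$: for $r$ below this threshold, $\log(C_0\diam(D)^2/r^2)$ is comparable to $\delta$ up to lower order, so the Gaussian factor is $\leq e^{-c\delta}$ for suitable $c$, and what remains, $\int_0^{\cdot}\LK_\eta(r)r\,dr \le \int_0^{\diam(D)}\LK_\eta(r)r\,dr<\infty$ by \eqref{Eq asymp kernel}; for $r$ above the threshold one uses $\LK_\eta(r)r\le c_\eta r^{2\eta-1}$ (from \eqref{Eq asymp kernel}, integrating to $c_\eta\diam(D)^{2\eta}$) together with the bound $e^{-\pi\delta^2/\log(\cdots)} \le e^{-\pi\delta^2/\log(C_0\diam(D)^2/r^2)}$; one needs to check that on this range the exponent is at least $(2\pi\eta)(\delta - \beta)$ or so, which is where the precise constant $(2\pi\eta)\wedge 1$ and the hypothesis $b\geq v+2\lambda+2\log(2)\pi$ enter. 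The $\wedge 1$ reflects that the first (diagonal) piece only gives decay rate $1$ in $\delta$ regardless of $\eta$, while the second piece gives rate $2\pi\eta$.

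Then I would multiply back by $\int_{D}d^{2}z = |D|$ for the second term and note the first term is already $\leq \Vert \ind_D\Vert_{H^{-\eta}(\C)}^{2}$ after reinserting the full kernel (since $e^{-\delta^2/(\beta+2G_D)}\le 1$ there is slack to compare with $\int_{D^2}\LK_\eta = \Vert\ind_D\Vert^2_{H^{-\eta}}$), collecting everything into the claimed form $e^{-((2\pi\eta)\wedge 1)(\delta-\beta)}(c_\eta|D|\diam(D)^{2\eta} + \Vert\ind_D\Vert^2_{H^{-\eta}(\C)})$. The main obstacle, I expect, is getting the exponential rate exactly $(2\pi\eta)\wedge 1$ rather than something weaker: this requires carefully tracking how $e^{-\delta^2/(\beta+2G_D(z,w))}$ trades off against the $r^{2\eta-1}$ growth of $\LK_\eta(r)r$ near $r=0$, and in particular choosing the split radius and estimating $\delta^2/\log(C_0\diam(D)^2/r^2) \ge 2\pi\eta\log(1/r) + O(1)$ on the appropriate range — essentially an elementary but slightly delicate optimization of $t\mapsto \delta^2/t$ against a linear function of $t=\log(\cdots)$. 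The constant-order corrections (the $\diam(D)$ factors, the $-\beta$ shift) are bookkeeping that the hypothesis $b\ge v+2\lambda+2\log(2)\pi$ is designed to absorb.
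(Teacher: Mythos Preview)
Your overall strategy—split the integral according to $|w-z|$—is the same as the paper's, but you have the roles of the two regions reversed, and this makes your argument not go through as written. When $r=|w-z|$ is small, $G_D(z,w)$ is large, so the denominator $\beta+2G_D$ is large, hence the exponent $\delta^2/(\beta+2G_D)$ is \emph{small}, and the Gaussian factor is close to $1$, not $\leq e^{-c\delta}$. Conversely, when $r$ is large, $G_D(z,w)$ is bounded, so the Gaussian factor is small. Your claim that ``for $r$ below this threshold the Gaussian factor is $\leq e^{-c\delta}$'' goes the wrong way: at $r$ below your threshold $\diam(D)e^{-\delta/2}$ you only get $e^{-\pi\delta^2/\log(\cdots)}\geq e^{-\pi\delta}$, which is a useless lower bound.

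The paper's proof is in fact simpler than you anticipate, with no delicate optimization needed. Using $G_D(z,w)\leq\frac{1}{2\pi}\log(\diam(D)/|w-z|)$, one chooses the split radius $r(b)$ so that at $|w-z|=r(b)$ the bound on $\beta+2G_D(z,w)$ equals exactly $\delta$; this gives $r(b)=e^{-\pi(\delta-\beta)}\diam(D)$. Then for $|w-z|\geq r(b)$ the denominator is $\leq\delta$, so the Gaussian factor is $\leq e^{-\delta}$, and the remaining integral is bounded by $\Vert\ind_D\Vert_{H^{-\eta}(\C)}^2$; this is where the rate $1$ comes from. For $|w-z|<r(b)$ one simply bounds the Gaussian factor by $1$ and uses $\int_{|w-z|<r(b)}\LK_\eta(|w-z|)\,d^2z\,d^2w\leq c'_\eta|D|\,r(b)^{2\eta}$ from the asymptotic \eqref{Eq asymp kernel}; substituting $r(b)$ yields the factor $e^{-2\pi\eta(\delta-\beta)}\diam(D)^{2\eta}$, and this is where the rate $2\pi\eta$ and the $|D|\diam(D)^{2\eta}$ term come from. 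Taking the worse of the two rates gives $(2\pi\eta)\wedge 1$. So the correct pairing is: \emph{near} diagonal $\to$ $c_\eta|D|\diam(D)^{2\eta}$ with rate $2\pi\eta$; \emph{away} from diagonal $\to$ $\Vert\ind_D\Vert^2_{H^{-\eta}(\C)}$ with rate $1$.
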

\begin{proof}
Take $z,w\in D$. 
Since $D$ is contained in a disk of center $z$ and radius $\diam(D)$, 
We have that
\begin{equation}
\label{Eq simple bound G D}
G_{D}(z,w)\leq \dfrac{1}{2\pi}\log\Big(\dfrac{\diam(D)}{\vert w-z\vert}\Big).
\end{equation}
Let $r(b)>0$ be such that
\begin{displaymath}
\dfrac{2\log(2)}{\pi} + \dfrac{1}{\pi}\log\Big(\dfrac{\diam(D)}{r(b)}\Big)
=
b-v-2\lambda,
\end{displaymath}
that is to say
\begin{displaymath}
r(b) = e^{-\pi(b-v-2\lambda - 2\log(2)/\pi)}\, \diam(D).
\end{displaymath}
Then
\begin{displaymath}
\int_{\substack{(z,w)\in D^{2}\\ \vert w-z\vert\geq r(b)}}
e^{-(b-v-2\lambda)^{2}/(2\log(2)/\pi + 2 G_{D}(z,w))}
\LK_{\eta}(\vert w-z\vert)
\,d^{2}z\,d^{2}w
\leq
e^{-(b-v-2\lambda)} \Vert 1_{D} \Vert_{H^{-\eta}(\C)}^{2}.
\end{displaymath}
Further,
\begin{displaymath}
\int_{\substack{(z,w)\in D^{2}\\ \vert w-z\vert< r(b)}}
e^{-(b-v-2\lambda)^{2}/(2\log(2)/\pi + 2 G_{D}(z,w))}
\LK_{\eta}(\vert w-z\vert)
\,d^{2}z\,d^{2}w
\leq
\int_{\substack{(z,w)\in D^{2}\\ \vert w-z\vert< r(b)}}
\LK_{\eta}(\vert w-z\vert)
\,d^{2}z\,d^{2}w.
\end{displaymath}
By \eqref{Eq asymp kernel},
\begin{displaymath}
\int_{\substack{(z,w)\in D^{2}\\ \vert w-z\vert< r(b)}}
\LK_{\eta}(\vert w-z\vert)
\,d^{2}z\,d^{2}w
\leq c'_{\eta}\vert D\vert\,r(b)^{2\eta},
\end{displaymath}
for some constant $c'_{\eta}>0$ depending only on $\eta$.
By combining, we get the desired bound. 
\end{proof}

\begin{cor}
\label{Cor Eq ind norm exp}
Let $\eta\in (0,1)$.
Then there are constants $c,C>0$,
depending only on $\eta$ and $v$, such that
for every $\varepsilon>0$ and $b\in2\lambda\N$ with $b\geq v + 2\lambda$,
\begin{displaymath}
\E\Big[\Big\Vert
\ind_{\Ns_{\varepsilon}(A)}
-\E\big[\ind_{\Ns_{\varepsilon}(A)}\big\vert  A_{0,b},\ell_{0,b}\big]
\Big\Vert_{H^{-\eta}(\C)}^{2}\Big]
\qquad
\text{and}
\qquad
\E\Big[\Big\Vert
\ind_{\widetilde{\Ns}_{\varepsilon}(A)}
-\E\big[\ind_{\widetilde{\Ns}_{\varepsilon}(A)}\big\vert  A_{0,b},\ell_{0,b}\big]
\Big\Vert_{H^{-\eta}(\C)}^{2}\Big]
\end{displaymath}
are bounded by
\begin{equation}
\label{Eq ind norm exp}
Ce^{-cb}
(\vert D\vert \diam(D)^{2\eta} +
\Vert 1_{D} \Vert_{H^{-\eta}(\C)}^{2}
).
\end{equation}
Given a deterministic smooth cutoff function $f_{0}:D\rightarrow [0,1]$, compactly supported in $D$,
then
\begin{displaymath}
\E\Big[\Big\Vert
f_{0}\ind_{\widetilde{\Ns}_{\varepsilon}(A)}
-\E\big[f_{0}\ind_{\widetilde{\Ns}_{\varepsilon}(A)}\big\vert  A_{0,b},\ell_{0,b}\big]
\Big\Vert_{H^{-\eta}(\C)}^{2}\Big]
\end{displaymath}
is again bounded by \eqref{Eq ind norm exp}.
\end{cor}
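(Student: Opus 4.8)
The plan is to combine the abstract bound on the difference between the $\varepsilon$-neighborhood indicator and its conditional expectation given $(A_{0,b},\ell_{0,b})$, provided by Corollary \ref{Cor bound ind diff 1}, with the more quantitative estimate of Lemma \ref{Lem better bound}. Concretely, Corollary \ref{Cor bound ind diff 1} already tells us that each of the three left-hand sides in the statement is bounded above by the single integral
\begin{displaymath}
\int_{D^{2}}
e^{-(b-v-2\lambda)^{2}/(2\log(2)/\pi + 2 G_{D}(z,w))}
\LK_{\eta}(\vert w-z\vert)
\,d^{2}z\,d^{2}w,
\end{displaymath}
uniformly in $\varepsilon>0$, as soon as $b\in 2\lambda\N$ and $b\geq v+2\lambda$. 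So the entire task reduces to bounding this $\varepsilon$-independent integral.

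First I would dispose of the range $b< v + 2\lambda + 2\log(2)\pi$: there, $e^{-cb}$ with a small enough $c$ is bounded below by a positive constant, so the asserted bound \eqref{Eq ind norm exp} holds trivially after enlarging the constant $C$ (using that the integral $\int_{D^2}\LK_\eta(\vert w-z\vert)\,d^2z\,d^2w$ is finite and controlled by $\vert D\vert\diam(D)^{2\eta} + \Vert 1_D\Vert_{H^{-\eta}(\C)}^2$ up to an $\eta$-dependent factor, which follows from \eqref{Eq asymp kernel}). For $b\geq v + 2\lambda + 2\log(2)\pi$, I invoke Lemma \ref{Lem better bound} directly: it gives the bound
\begin{displaymath}
e^{-((2\pi\eta)\wedge 1)(b-v-2\lambda - 2\log(2)/\pi)}
(c_{\eta}\vert D\vert \diam(D)^{2\eta} +
\Vert 1_{D} \Vert_{H^{-\eta}(\C)}^{2}).
\end{displaymath}
Setting $c = \tfrac{1}{2}\big((2\pi\eta)\wedge 1\big)$ one has $e^{-((2\pi\eta)\wedge 1)(b - v - 2\lambda - 2\log(2)/\pi)} \leq C' e^{-cb}$ with $C'$ depending only on $\eta$ and $v$ (absorbing the constant shift $v + 2\lambda + 2\log(2)/\pi$ in the exponent and using $e^{-cb}\leq e^{-((2\pi\eta)\wedge 1)b + cb}$ to trade the remaining half of the exponent for a harmless constant). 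Choosing $C$ to be the maximum of $C'$, of $c_\eta$, and of the constant coming from the small-$b$ case, and enlarging $c_\eta$ to $1$ if needed in the second factor, yields the uniform bound \eqref{Eq ind norm exp} for all admissible $b$ and all $\varepsilon>0$. The final clause of the statement, concerning $f_0\ind_{\widetilde{\Ns}_\varepsilon(A)}$, is handled identically since Corollary \ref{Cor bound ind diff 1} already provides the very same integral bound \eqref{Eq ind norm Gauss} for that quantity (the presence of $f_0\in[0,1]$ only decreases the integrand).

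The only mildly delicate point is making the constants in the statement \emph{depend only on $\eta$ and $v$}, not on $D$: this is exactly why Lemma \ref{Lem better bound} was formulated with the domain-dependence isolated in the explicit factor $\vert D\vert\diam(D)^{2\eta} + \Vert 1_D\Vert_{H^{-\eta}(\C)}^2$, and why Corollary \ref{Cor bound ind diff 1} reduced everything to a single integral with a domain-independent Gaussian kernel against $\LK_\eta$. So there is no genuine obstacle here; the content of the corollary is simply the bookkeeping of exponents in Lemma \ref{Lem better bound} into the cleaner form $Ce^{-cb}$, plus the trivial small-$b$ case. I would write the proof as: ``By Corollary \ref{Cor bound ind diff 1}, all three quantities are bounded by \eqref{Eq ind norm Gauss}. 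For $b\geq v + 2\lambda + 2\log(2)\pi$ apply Lemma \ref{Lem better bound} and bound the exponential factor by $C' e^{-cb}$ with $c = \tfrac12((2\pi\eta)\wedge 1)$; for smaller $b$ the bound is trivial since $\int_{D^2}\LK_\eta \lesssim_\eta \vert D\vert\diam(D)^{2\eta} + \Vert 1_D\Vert_{H^{-\eta}(\C)}^2$. Combining and adjusting constants gives \eqref{Eq ind norm exp}.''
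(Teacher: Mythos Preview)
Your proposal is correct and follows exactly the intended approach: the paper places this corollary immediately after Corollary~\ref{Cor bound ind diff 1} and Lemma~\ref{Lem better bound} without an explicit proof, precisely because it is meant to be the straightforward combination you describe. Your handling of the small-$b$ range and the absorption of the shift $v+2\lambda+2\log(2)/\pi$ into the constant $C$ is the right bookkeeping.
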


\subsection{Bounding the difference between fields and their conditional expectations}
\label{Subsec error fields}

Here we will bound the differences
\begin{displaymath}
\psi_{2k+1,A} - \E[\psi_{2k+1, A} \vert A_{0,b},\ell_{0,b}]
\qquad
\text{and}
\qquad
\tilde{\psi}_{2k+1,A} - \E[\tilde{\psi}_{2k+1, A} \vert A_{0,b},\ell_{0,b}]
.
\end{displaymath}
We begin with $\psi_{2k+1,A} - \E[\psi_{2k+1, A} \vert A_{0,b},\ell_{0,b}]$.

\begin{lemma}
\label{Lem Delta psi 1}
Let $\eta>0$, $k\geq 0$ and $b\geq v\vee(2\lambda)$.
Then
\begin{multline*}
\E\Big[\big\Vert\psi_{2k+1,A} - \E[\psi_{2k+1, A} \vert A_{0,b},\ell_{0,b}]
\big\Vert_{H^{-\eta}(\C)}^{2}
\Big]^{1/2}
\leq
\\
\sum_{\substack{1\leq j\leq 2k+1 \\ 0\leq l\leq \lfloor(2k+1-j)/2\rfloor}}
\dfrac{(2k+1)!}{2^{l}j!l!(2k+1-j-2l)!}
\E\Big[\big\Vert
\ell_{0,b}^{2k+1-j-2l}V_{A_{0,b}}^{l}
\,:\Phi_{D\setminus A_{0,b}}^{j} :
\ind_{\{z\in A_{0,b}\text{~or~}\ell_{0,b}(z)=b\}}
\big\Vert_{H^{-\eta}(\C)}^{2}
\Big]^{1/2},
\end{multline*}
where the notations are those of \eqref{Eq decomp Wick TVS}.
\end{lemma}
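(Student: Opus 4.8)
The plan is to identify $\psi_{2k+1,A}-\E[\psi_{2k+1,A}\mid A_{0,b},\ell_{0,b}]$ with the $A$-conditional expectation of the ``$j\ge 1$'' part of the local-set decomposition \eqref{Eq decomp Wick TVS} of $:\Phi^{2k+1}:$ relative to the TVS $A_{0,b}$, then to show that this part has vanishing $A$-conditional expectation outside the region $\{\ell_{0,b}=b\}$, and finally to conclude via the contraction property of conditional expectation on $L^{2}(d\PP,\sigma(\Phi),H^{-\eta}(\C))$ together with the triangle inequality.

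First, since $\sigma(A_{0,b},\ell_{0,b})\subseteq\sigma(A)$, the tower property gives $\E[\psi_{2k+1,A}\mid A_{0,b},\ell_{0,b}]=\E[:\Phi^{2k+1}:\mid A_{0,b},\ell_{0,b}]$, which by \eqref{Eq cond exp TVS} equals $Q_{2k+1}(\ell_{0,b},V_{A_{0,b}})$, i.e. the $j=0$ term of \eqref{Eq decomp Wick TVS}. Subtracting and using \eqref{Eq decomp Wick TVS},
\[
\psi_{2k+1,A}-\E[\psi_{2k+1,A}\mid A_{0,b},\ell_{0,b}]
=\E\Big[\sum_{j=1}^{2k+1}\frac{(2k+1)!}{j!(2k+1-j)!}\,Q_{2k+1-j}(\ell_{0,b},V_{A_{0,b}})\,:\Phi_{D\setminus A_{0,b}}^{j}:\,\Big|\,A\Big].
\]
Next I would localize this expression. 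On a connected component of $D\setminus A_{0,b}$ carrying the label $0$, the construction of the FPS $A$ given $(A_{0,b},\ell_{0,b})$ recalled in Section \ref{Subsubsec FPS} (where $A$ is obtained from $A_{0,b}$ by resampling independent FPS only inside the $\{\ell_{0,b}=b\}$ components) leaves $\Phi_{D\setminus A_{0,b}}$ untouched; hence, conditionally on $(A_{0,b},\ell_{0,b})$, the restriction of $\Phi_{D\setminus A_{0,b}}$ to the $\{\ell_{0,b}=0\}$ region is independent of $A$. Since $Q_{2k+1-j}(\ell_{0,b},V_{A_{0,b}})$ and $\ind_{\{\ell_{0,b}=0\}}$ are $A$-measurable, and $\E[:\Phi_{D\setminus A_{0,b}}^{j}:\mid A_{0,b},\ell_{0,b}]=0$ for $j\ge1$ (the identity used in the proof of Proposition \ref{Prop decomp Wick TVS}), it follows that for each $j\ge1$ the conditional expectation given $A$ of the $j$-th term, restricted to $\{z\in D\setminus A_{0,b},\ \ell_{0,b}(z)=0\}$, vanishes. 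Therefore one may insert the $A$-measurable indicator $\ind_{\{z\in A_{0,b}\ \text{or}\ \ell_{0,b}(z)=b\}}$ inside the conditional expectation above.

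It then remains to estimate the $H^{-\eta}(\C)$-norm. Using that $X\mapsto\E[X\mid A]$ is a contraction on $L^{2}(d\PP,\sigma(\Phi),H^{-\eta}(\C))$ (as in Section \ref{Subsec cond Wick FPS}), the left-hand quantity is at most $\E[\|\sum_{j\ge1}\cdots\ind_{\{\cdots\}}\|_{H^{-\eta}(\C)}^{2}]^{1/2}$; applying the triangle inequality to this finite sum and expanding $Q_{2k+1-j}(\ell_{0,b},V_{A_{0,b}})$ through its explicit coefficients \eqref{Eq def Q}, the product of the binomial and Hermite coefficients, $\frac{(2k+1)!}{j!(2k+1-j)!}\cdot\frac{(2k+1-j)!}{2^{l}\,l!\,(2k+1-j-2l)!}=\frac{(2k+1)!}{2^{l}\,j!\,l!\,(2k+1-j-2l)!}$, produces exactly the stated bound (the signs $(-1)^{l}$ disappearing under the triangle inequality). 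The one delicate step is the localization: it rests on the precise relation between the FPS $A$ and the TVS $A_{0,b}$ from Section \ref{Subsubsec FPS}, namely the conditional independence, given $(A_{0,b},\ell_{0,b})$, between $A$ and the part of $\Phi_{D\setminus A_{0,b}}$ over the label-$0$ components; once this is in place, the remaining manipulations are routine.
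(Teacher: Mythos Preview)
Your proof is correct and follows the same overall route as the paper: both identify the difference with the $A$-conditional expectation of the $j\ge 1$ part of the TVS decomposition \eqref{Eq decomp Wick TVS}, bound it by the unconditioned $L^{2}$-norm via the contraction property of conditional expectation, and then apply the triangle inequality after expanding $Q_{2k+1-j}$ through \eqref{Eq def Q}. The one substantive difference is the justification for inserting the indicator $\ind_{\{z\in A_{0,b}\text{ or }\ell_{0,b}(z)=b\}}$: the paper invokes Corollary~\ref{Cor support psi} (for odd $n$, $\psi_{2k+1,A}$ is supported on $A\subset A_{0,b}\cup\{\ell_{0,b}=b\}$), whereas you argue probabilistically that, conditionally on $(A_{0,b},\ell_{0,b})$, the GFF on the label-$0$ components is independent of $A$, so $\E[\,:\Phi_{D\setminus A_{0,b}}^{j}:\,\vert A]$ vanishes there. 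Both arguments are valid; the paper's is shorter since it recycles an earlier structural result, while yours is more self-contained and makes explicit the conditional-independence structure of the FPS relative to the TVS.
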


\begin{proof}
First, we claim that
\begin{displaymath}
\psi_{2k+1,A} = \psi_{2k+1,A}\ind_{\{z\in A_{0,b}\text{~or~}\ell_{0,b}(z)=b\}}.
\end{displaymath}
Indeed $\psi_{2k+1,A}$ is supported on $A$ (Corollary \ref{Cor support psi}),
and a.s., $A\subset \{z\in \overline{D}\vert z\in A_{0,b}\text{~or~}\ell_{0,b}(z)=b\}$.
Since the random compact set $\{z\in A_{0,b}\text{~or~}\ell_{0,b}(z)=b\}$
is measurable w.r.t. $(A_{0,b},\ell_{0,b})$,
we have that
\begin{eqnarray*}
\E[\psi_{2k+1, A} \vert A_{0,b},\ell_{0,b}]
&=&
\E[\psi_{2k+1, A} \ind_{\{z\in A_{0,b}\text{~or~}\ell_{0,b}(z)=b\}}\vert A_{0,b},\ell_{0,b}]
\\&=&
\E[\psi_{2k+1, A} \vert A_{0,b},\ell_{0,b}]
\ind_{\{z\in A_{0,b}\text{~or~}\ell_{0,b}(z)=b\}}.
\end{eqnarray*}
Similarly,
\begin{displaymath}
\E[\,:\Phi^{2k+1}: \ind_{\{z\in A_{0,b}\text{~or~}\ell_{0,b}(z)=b\}}\vert A_{0,b},\ell_{0,b}]
=
\E[\,:\Phi^{2k+1}: \vert A_{0,b},\ell_{0,b}]\ind_{\{z\in A_{0,b}\text{~or~}\ell_{0,b}(z)=b\}}.
\end{displaymath}
Further, since the sigma-algebra of $(A_{0,b},\ell_{0,b})$
is contained in the sigma-algebra of $A$,
we have
\begin{displaymath}
\E[\,:\Phi^{2k+1}: \ind_{\{z\in A_{0,b}\text{~or~}\ell_{0,b}(z)=b\}}\vert A]
=
\E[\,:\Phi^{2k+1}: \vert A]\ind_{\{z\in A_{0,b}\text{~or~}\ell_{0,b}(z)=b\}}
=\psi_{2k+1, A}.
\end{displaymath}
Thus,
\begin{multline*}
\E\Big[\big\Vert\psi_{2k+1,A} - \E[\psi_{2k+1, A} \vert A_{0,b},\ell_{0,b}]
\big\Vert_{H^{-\eta}(\C)}^{2}
\Big]
\\=
\E\Big[\big\Vert
\E[\,:\Phi^{2k+1}: \ind_{\{z\in A_{0,b}\text{~or~}\ell_{0,b}(z)=b\}}\vert A]
- \E[\,:\Phi^{2k+1}: \ind_{\{z\in A_{0,b}\text{~or~}\ell_{0,b}(z)=b\}}\vert A_{0,b},\ell_{0,b}]
\big\Vert_{H^{-\eta}(\C)}^{2}
\Big]
\\
=
\E\Big[\big\Vert
(\,:\Phi^{2k+1}:
- \E[\,:\Phi^{2k+1}: \vert A_{0,b},\ell_{0,b}])
\ind_{\{z\in A_{0,b}\text{~or~}\ell_{0,b}(z)=b\}}
\big\Vert_{H^{-\eta}(\C)}^{2}
\Big]
\\-
\E\Big[\big\Vert
(\,:\Phi^{2k+1}:
- \E[\,:\Phi^{2k+1}: \vert A])
\ind_{\{z\in A_{0,b}\text{~or~}\ell_{0,b}(z)=b\}}
\big\Vert_{H^{-\eta}(\C)}^{2}
\Big].
\end{multline*}
Consequently,
\begin{multline*}
\E\Big[\big\Vert\psi_{2k+1,A} - \E[\psi_{2k+1, A} \vert A_{0,b},\ell_{0,b}]
\big\Vert_{H^{-\eta}(\C)}^{2}
\Big]^{1/2}
\\\leq 
\E\Big[\big\Vert
(\,:\Phi^{2k+1}:
- \E[\,:\Phi^{2k+1}: \vert A_{0,b},\ell_{0,b}])
\ind_{\{z\in A_{0,b}\text{~or~}\ell_{0,b}(z)=b\}}
\big\Vert_{H^{-\eta}(\C)}^{2}
\Big]^{1/2}.
\end{multline*}
By \eqref{Eq decomp Wick TVS}, 
\begin{multline*}
\E[\,:\Phi^{2k+1}:]
- \E[\,:\Phi^{2k+1}: \vert A_{0,b},\ell_{0,b}]
=\sum_{j=1}^{2k+1}
\dfrac{(2k+1)!}{j!(2k+1 -j)!}
Q_{2k+1-j}(\ell_{0,b},V_{A_{0,b}})\,:\Phi_{D\setminus A_{0,b}}^{j} :
\\
=
\sum_{\substack{1\leq j\leq 2k+1 \\ 0\leq l\leq \lfloor(2k+1-j)/2\rfloor}}
\dfrac{(2k+1)!}{2^{l}j!l!(2k+1-j-2l)!}(-1)^{l}
\ell_{0,b}^{2k+1-j-2l}V_{A_{0,b}}^{l}
\,:\Phi_{D\setminus A_{0,b}}^{j} :\,,
\end{multline*}
and so we get the desired bound.
\end{proof}

\begin{lemma}
\label{Lem 2 point cor D A 0 b}
Let $\eta>0$, $k\geq 0$, $j\geq 1$, $l\geq 0$, with $j+2l\leq 2k+1$, and $b\geq v\vee(2\lambda)$.
Then
\begin{multline*}
\E\Big[\big\Vert
\ell_{0,b}^{2k+1-j-2l}V_{A_{0,b}}^{l}
\,:\Phi_{D\setminus A_{0,b}}^{j} :
\ind_{\{z\in A_{0,b}\text{~or~}\ell_{0,b}(z)=b\}}
\big\Vert_{H^{-\eta}(\C)}^{2}
\Big]
=
\\
j!
b^{4k+2-2j- 4l}
\int_{D^{2}}
\E
\big[
V_{A_{0,b}}(z)^{l}V_{A_{0,b}}(w)^{l}
G_{D\setminus A_{0,b}}(z,w)^{j}
\ind_{\{\ell_{0,b}(z)=\ell_{0,b}(w)=b\}}
\big]
\LK_{\eta}(\vert w-z\vert)
\,d^{2}z\,d^{2}w.
\end{multline*}
\end{lemma}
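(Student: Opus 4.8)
\section*{Proof proposal}

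The plan is to unfold the $H^{-\eta}(\C)$ norm as a double integral against the Bessel potential $\LK_{\eta}$, condition on $(A_{0,b},\ell_{0,b})$, and use the explicit two-point function of Wick powers of a GFF. First I would reduce the $\ell_{0,b}$-factor to a constant. Since $A_{0,b}$ has a.s.\ zero Lebesgue measure, the indicator $\ind_{\{z\in A_{0,b}\text{ or }\ell_{0,b}(z)=b\}}$ coincides $d^{2}z$-a.e., hence as an element of $H^{-\eta}(\C)$, with $\ind_{\{\ell_{0,b}(z)=b\}}$; and on $\{\ell_{0,b}(z)=b\}$ one has $\ell_{0,b}(z)^{2k+1-j-2l}=b^{2k+1-j-2l}$. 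So the field in the statement equals $b^{2k+1-j-2l}\,\ind_{\{\ell_{0,b}=b\}}\,V_{A_{0,b}}^{l}\,:\Phi_{D\setminus A_{0,b}}^{j}:$. That this is a well-defined element of $L^{2}(d\PP,\sigma(\Phi),H^{-\eta}(\C))$ follows by the same cut-off/Cauchy argument as in Lemma \ref{Lem f q} and Proposition \ref{Prop V G eps}: multiplying by the cut-offs $f_{q}$ of \eqref{Eq cutoff f q} attached to $A_{0,b}$ and taking conditional expectations given $(A_{0,b},\ell_{0,b})$, one is reduced to the finiteness of $\int_{D^{2}}\E[V_{A_{0,b}}(z)^{l}V_{A_{0,b}}(w)^{l}G_{D\setminus A_{0,b}}(z,w)^{j}]\LK_{\eta}(\vert w-z\vert)\,d^{2}z\,d^{2}w$, which holds by Cauchy--Schwarz in the $V_{A_{0,b}}$ factors (these have exponential tails with a $z$-independent law by Theorem \ref{Thm ASW CR ell a b}), the domain monotonicity $G_{D\setminus A_{0,b}}\le G_{D}$, and the near-diagonal asymptotics \eqref{Eq asymp kernel}. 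This already shows the right-hand side of the lemma is finite.

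Next, let $F_{\varepsilon}$ denote the field obtained by replacing $:\Phi_{D\setminus A_{0,b}}^{j}:$ by its convolution regularization $:\Phi_{D\setminus A_{0,b},\varepsilon}^{j}:$ and $V_{A_{0,b}}$ by $V_{A_{0,b},\varepsilon}$. Then $F_{\varepsilon}$ is smooth with compact support, so
\[
\Vert F_{\varepsilon}\Vert_{H^{-\eta}(\C)}^{2}=b^{4k+2-2j-4l}\!\int_{D^{2}}\! V_{A_{0,b},\varepsilon}(z)^{l}V_{A_{0,b},\varepsilon}(w)^{l}\ind_{\{\ell_{0,b}(z)=\ell_{0,b}(w)=b\}}:\!\Phi_{D\setminus A_{0,b},\varepsilon}^{j}\!:\!(z):\!\Phi_{D\setminus A_{0,b},\varepsilon}^{j}\!:\!(w)\,\LK_{\eta}(\vert w-z\vert)\,d^{2}z\,d^{2}w .
\]
Taking expectations and then conditioning on $(A_{0,b},\ell_{0,b})$ --- with respect to which $V_{A_{0,b},\varepsilon}$, $\ell_{0,b}$ and the events $\{\ell_{0,b}=b\}$ are measurable, while $\Phi_{D\setminus A_{0,b}}$ is a GFF on $D\setminus A_{0,b}$ --- the orthogonality of Wick powers, i.e.\ the limiting form of \eqref{Eq cov Wick eps}, yields $\E[:\!\Phi_{D\setminus A_{0,b},\varepsilon}^{j}\!:\!(z):\!\Phi_{D\setminus A_{0,b},\varepsilon}^{j}\!:\!(w)\mid A_{0,b},\ell_{0,b}]=j!\,G_{D\setminus A_{0,b},\varepsilon,\varepsilon}(z,w)^{j}$ (here $j\ge 1$ is what makes the cross terms vanish). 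The interchange of expectation and integral is legitimate for fixed $\varepsilon$, since $G_{D\setminus A_{0,b},\varepsilon,\varepsilon}(z,z)\le G_{D,\varepsilon,\varepsilon}(z,z)=O(\vert\log\varepsilon\vert)$ by Lemma \ref{Lem G eps} and the $V_{A_{0,b},\varepsilon}$ moments are finite and uniformly bounded in $z$. This gives the claimed identity with $F_{\varepsilon}$, $V_{A_{0,b},\varepsilon}$, $G_{D\setminus A_{0,b},\varepsilon,\varepsilon}$ in place of $F$, $V_{A_{0,b}}$, $G_{D\setminus A_{0,b}}$.

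Finally I would let $\varepsilon\to 0$. On the left, $F_{\varepsilon}\to F$ in $L^{2}(d\PP,\sigma(\Phi),H^{-\eta}(\C))$ by the first paragraph, so $\E[\Vert F_{\varepsilon}\Vert_{H^{-\eta}(\C)}^{2}]\to\E[\Vert F\Vert_{H^{-\eta}(\C)}^{2}]$. On the right, $V_{A_{0,b},\varepsilon}(z)\to V_{A_{0,b}}(z)$ and $G_{D\setminus A_{0,b},\varepsilon,\varepsilon}(z,w)\to G_{D\setminus A_{0,b}}(z,w)$ for a.e.\ $(z,w)$, and one controls the ``bad'' region --- where $z,w$ are close to each other or close to $A_{0,b}$ --- uniformly in $\varepsilon$ exactly as in the proof of Proposition \ref{Prop V G eps}, splitting off the set where all points are well separated and estimating the remainder via \eqref{Eq asymp kernel}, Lemma \ref{Lem G eps}, Proposition \ref{Prop Green} and the moment bounds on $V_{A_{0,b}}$. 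The main obstacle is precisely this last interchange of limit and integration; it is, however, strictly simpler than the FPS analogue already carried out in Section \ref{Sec Wick FPS}, because $b$ is fixed and $A_{0,b}$ is thin, so the tails of $V_{A_{0,b}}$ are merely exponential.
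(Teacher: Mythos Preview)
Your proposal is correct and follows essentially the same approach as the paper: unfold the $H^{-\eta}(\C)$ norm via the kernel $\LK_{\eta}$, replace the indicator using that $A_{0,b}$ has zero Lebesgue measure, and take the conditional expectation given $(A_{0,b},\ell_{0,b})$ to turn the product of Wick powers into $j!\,G_{D\setminus A_{0,b}}(z,w)^{j}$. The paper's proof is three sentences and treats the kernel representation and the Fubini/conditioning step as routine (having established the analogous computations in Lemma~\ref{Lem second moment} and Lemma~\ref{Lem f q}); your version inserts an explicit regularization layer and a limit $\varepsilon\to 0$, which is a legitimate but heavier way to justify the same interchange.
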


\begin{proof}
We can replace $\ind_{\{z\in A_{0,b}\text{~or~}\ell_{0,b}(z)=b,~w\in A_{0,b}\text{~or~}\ell_{0,b}(w)=b\}}$ 
by $\ind_{\{\ell_{0,b}(z)=\ell_{0,b}(w)=b\}}$
because a.s., $A_{0,b}$ has $0$ Lebesgue measure.
If $\ell_{0,b}(z)=\ell_{0,b}(w)=b$, then of course
\begin{displaymath}
\ell_{0,b}(z)^{2k+1-j-2l}\ell_{0,b}(w)^{2k+1-j-2l} = b^{4k+2-2j- 4l}.
\end{displaymath}
Finally, we use that conditionally on $(A_{0,b},\ell_{0,b})$,
the two-point correlation function of the field $:\Phi_{D\setminus A_{0,b}}^{j} :$
is $j! G_{D\setminus A_{0,b}}(z,w)^{j}$.
\end{proof}

\begin{lemma}
\label{Lem a s bound}
Let $j\geq 1$ and $l\geq 0$.
There is a constant $C_{j,l}>0$ depending only on $j$ and $l$
(in particular depending neither on $D$ nor $b$),
such that a.s., for every $z,w\in D$ with $z\neq w$,
\begin{multline}
\label{Eq a s bound 2}
V_{A_{0,b}}(z)^{l}V_{A_{0,b}}(w)^{l}
G_{D\setminus A_{0,b}}(z,w)^{j}
\ind_{\{\ell_{0,b}(z)=\ell_{0,b}(w)=b\}}
\\
\leq C_{j,l} (1\vee\diam(D)^{j/4})
\Big(
1\vee\log\Big(\dfrac{2(1\vee\diam(D))}{\vert w-z\vert}\Big)^{j+2l}
\Big)
\,
\ind_{z\stackrel{D\setminus A_{0,b}}{\longleftrightarrow} w,\,\ell_{0,b}(z) = b}.
\end{multline}
\end{lemma}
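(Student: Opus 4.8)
The plan is to establish \eqref{Eq a s bound 2} pointwise on the event that $z$ and $w$ lie in the same connected component of $D\setminus A_{0,b}$ with label $b$, since off this event both sides vanish. On this event, write $\Gamma = \Gamma_{0,b}(z) = \Gamma_{0,b}(w)$ for the common surrounding loop and $U = \inter(\Gamma)$ for the connected component, so that $D\setminus A_{0,b}$ restricted near $z,w$ is just $U$ and $G_{D\setminus A_{0,b}}(z,w) = G_{U}(z,w)$, while $V_{A_{0,b}}(z) = \frac{1}{2\pi}\log(\CR(z,D)/\CR(z,U))$ and similarly for $w$. The three factors $V_{A_{0,b}}(z)^l$, $V_{A_{0,b}}(w)^l$, $G_{U}(z,w)^j$ must each be bounded by a deterministic quantity depending only on $\vert w-z\vert$ and $\diam(D)$ (up to the combinatorial constant), uniformly over the random domain $U$.

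First I would handle $G_{U}(z,w)^j$. Since $U\subset D$ is contained in a disk of radius $\diam(D)$ centered at $z$, the monotonicity of Green's functions and the explicit form on a disk give $G_{U}(z,w)\le \frac{1}{2\pi}\log(\diam(D)/\vert w-z\vert)$ when $\vert w-z\vert<\diam(D)$; more robustly, I would invoke Proposition \ref{Prop Green} directly, which gives $G_{U}(z,w)\le C(1\wedge (d(z,\partial U)\wedge d(w,\partial U)/\vert w-z\vert)^{1/2})\log(2\diam(U)/\vert w-z\vert)$, and then just drop the $(1\wedge\cdots)^{1/2}$ factor by bounding it by $1$, using $\diam(U)\le\diam(D)$. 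This produces a bound of the form $C^j(1\vee\log(2(1\vee\diam(D))/\vert w-z\vert))^j$. The slightly awkward factor $(1\vee\diam(D)^{j/4})$ in the statement is there to absorb the case where $\vert w-z\vert$ is comparable to or larger than $\diam(D)$, where the logarithm could be negative or small; I would split into $\vert w-z\vert \le 1\wedge\diam(D)$ versus the complementary range and bound crudely in the latter using that $G_U(z,w)\le G_U(z,z)\vee G_U(w,w)$ is controlled by $\frac{1}{2\pi}\log\CR(z,U)$ which is $\le \frac{1}{2\pi}\log(4\diam(D))$ by Koebe \eqref{Eq Koebe}.

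Next I would bound $V_{A_{0,b}}(z)^l$. Using the distortion inequalities \eqref{Eq Koebe}, $\CR(z,D)\le 4d(z,\partial D)\le 4\diam(D)$ and $\CR(z,U)\ge d(z,\partial U)\ge d(z,\Gamma)$, so $V_{A_{0,b}}(z)\le\frac{1}{2\pi}\log(4\diam(D)/d(z,\Gamma))$. The issue is that $d(z,\Gamma)$ can be tiny, so this alone is not deterministic. The key geometric observation is that on the event $z\stackrel{D\setminus A_{0,b}}{\longleftrightarrow}w$ the loop $\Gamma$ surrounds both $z$ and $w$, hence $d(z,\Gamma)\ge$ something, but actually the cleaner route is: $\CR(z,U)\ge$ a constant times $\diam(U)$ is false in general, so instead I bound $V_{A_{0,b}}(z)\le G_D(z,z) - G_U(z,z)$ differently. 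Actually the right move is to use $\CR(z,U)\ge \frac{1}{4}d(z,\Gamma)$ and then, since $\Gamma$ surrounds $w$ with $w\in U$, note $d(z,\Gamma)$ can still be small; so I would instead bound $V_{A_{0,b}}(z)^l$ by using the elementary inequality (valid for any conformal radii) $\log(\CR(z,D)/\CR(z,U)) \le 2\pi G_D(z,w) + \log 2 + 2\pi(G_D(w,w)-G_U(w,w))$-type comparisons — but the honest and simplest path, given that the subsequent Lemma \ref{Lem key estimate}-style analysis already controls such quantities, is to bound $V_{A_{0,b}}(z)\le$ (something like) $2\pi G_D(z,w)+\log(2\diam(D)/\vert w-z\vert)+O(1)$ using that $\CR(z,U)\ge c\,\vert w-z\vert$ when $\Gamma$ surrounds both $z$ and $w$ (a Grötzsch-type distortion bound, cf. the argument in Lemma \ref{Lem bound ED}). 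That gives $V_{A_{0,b}}(z)\le C\log(2(1\vee\diam(D))/\vert w-z\vert) + C$, and likewise for $w$, so $V_{A_{0,b}}(z)^l V_{A_{0,b}}(w)^l$ contributes $(1\vee\log(\cdots))^{2l}$.

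Combining the three bounds and collecting the constant into $C_{j,l}$, and the $\diam(D)$-dependence into $1\vee\diam(D)^{j/4}$ (the exponent $j/4$ being exactly what survives from the crude large-separation estimate on $G_U^j$ via the $(d/\vert w-z\vert)^{1/2}$ Beurling factor raised to the $j$-th power), yields \eqref{Eq a s bound 2}. The main obstacle, and the step requiring care, is the uniform control of $V_{A_{0,b}}(z)$ in terms of $\vert w-z\vert$ alone: one cannot bound $\CR(z,U)$ from below by a deterministic constant, so one genuinely needs the fact that on the connectivity event the loop $\Gamma$ encircles both points, which forces $\CR(z,U)\gtrsim\vert w-z\vert$ via a Grötzsch/Teichmüller extremal-length estimate — precisely the same distortion input used in Lemma \ref{Lem bound ED}. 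Everything else is monotonicity of Green's functions, the Koebe quarter theorem, and bookkeeping of constants.
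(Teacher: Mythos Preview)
There is a genuine gap. Your key step for controlling $V_{A_{0,b}}(z)$ is the claim that when $\Gamma$ surrounds both $z$ and $w$ one has $\CR(z,\inter(\Gamma))\gtrsim |w-z|$, invoking a Gr\"otzsch/Teichm\"uller argument. This inequality is false: take $U=\inter(\Gamma)$ a thin simply connected rectangle, say $(0,1)\times(0,\epsilon)$, with $z=(1/4,\epsilon/2)$ and $w=(3/4,\epsilon/2)$; then $|w-z|=1/2$ while $\CR(z,U)\asymp\epsilon$ by \eqref{Eq Koebe}. The Gr\"otzsch bound used in Lemma~\ref{Lem bound ED} concerns the extremal distance $\ED(\partial D,\Gamma)$ of the \emph{outer} annulus and gives no lower bound on $\CR(z,\inter(\Gamma))$. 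Consequently you cannot bound $V_{A_{0,b}}(z)^l V_{A_{0,b}}(w)^l$ by a deterministic function of $|w-z|$ and $\diam(D)$ alone.

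The paper does not attempt this. It keeps the random quantity $d = d(z,A_{0,b})\wedge d(w,A_{0,b})$ in \emph{both} estimates simultaneously: Koebe gives $V_{A_{0,b}}(z)\vee V_{A_{0,b}}(w)\le\frac{1}{2\pi}\log(4\diam(D)/d)$, while the full Beurling bound of Proposition~\ref{Prop Green} gives
\[
G_{D\setminus A_{0,b}}(z,w)^j \le C^j\Big(1\wedge\Big(\frac{d}{|w-z|}\Big)^{j/2}\Big)\log\Big(\frac{2\diam(D)}{|w-z|}\Big)^{j}.
\]
The factor $(d/|w-z|)^{j/2}$, which you discard in your first step, is exactly what compensates the blowup of $\log(4\diam(D)/d)^{2l}$ as $d\to 0$. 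One then splits on $d\le|w-z|^2$ versus $d>|w-z|^2$: in the first regime $(d/|w-z|)^{j/2}\le d^{j/4}$ kills the logarithm in $d$ via $\sup_{x\ge 1}x^{-j/4}\log(4x)^{2l}<\infty$ (this is where $\diam(D)^{j/4}$ actually originates, not from the large-$|w-z|$ regime you suggest); in the second regime $\log(4\diam(D)/d)\le 2\log(2(1\vee\diam(D))/|w-z|)$ directly. Bounding $G^{j}$ and $V^{2l}$ separately, as you propose, cannot succeed here.
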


\begin{proof}
First of all, $G_{D\setminus A_{0,b}}(z,w)$ is $0$ unless $z$ and $w$ belong to the same connected component
of $D\setminus A_{0,b}$.
This gives rise to the indicatrix 
$\ind_{z\stackrel{D\setminus A_{0,b}}{\longleftrightarrow} w,\,\ell_{0,b}(z) = b}$
in \eqref{Eq a s bound 2}.
Further, clearly $G_{D\setminus A_{0,b}}(z,w)\leq G_{D}(z,w)$.
So, if $l=0$ then the desired bound follows 
from \eqref{Eq simple bound G D}.

If $l\geq 1$, the bound \eqref{Eq a s bound 2} is more involved.
By the distortion inequalities \eqref{Eq Koebe},
\begin{displaymath}
\dfrac{\CR(z,D)}{\CR(z,D\setminus A_{0,b})}
\leq \dfrac{4 d(z,\partial D)}{d(z,A_{0,b})}
\leq \dfrac{4 \diam(D)}{d(z,A_{0,b})\wedge d(w,A_{0,b})}.
\end{displaymath}
A similar bound holds for $\CR(w,D)/\CR(w,D\setminus A_{0,b})$.
Thus,
\begin{displaymath}
V_{A_{0,b}}(z)^{l}V_{A_{0,b}}(w)^{l}
\leq \dfrac{1}{(2\pi)^{2 l}}
\log\Big(\dfrac{4 \diam(D)}{d(z,A_{0,b})\wedge d(w,A_{0,b})}\Big)^{2l}.
\end{displaymath}
We also apply the bound \eqref{Eq est Green} (Proposition \ref{Prop Green}) 
to $G_{D\setminus A_{0,b}}(z,w)$:
\begin{eqnarray*}
G_{D\setminus A_{0,b}}(z,w)
&\leq&
C\Big( 1\wedge \Big(
\dfrac{d(z,A_{0,b})\wedge d(w,A_{0,b})}{\vert w-z\vert}\Big)^{1/2}
\Big)
\log\Big(\dfrac{2 \diam (D\setminus A_{0,b})}{\vert w-z\vert}\Big)
\\
&\leq&
C\Big( 1\wedge \Big(
\dfrac{d(z,A_{0,b})\wedge d(w,A_{0,b})}{\vert w-z\vert}\Big)^{1/2}
\Big)
\log\Big(\dfrac{2 \diam (D)}{\vert w-z\vert}\Big),
\end{eqnarray*}
where $C>0$ is a universal constant.
Therefore,
\begin{multline*}
V_{A_{0,b}}(z)^{l}V_{A_{0,b}}(w)^{l}
G_{D\setminus A_{0,b}}(z,w)^{j}
\\\leq
\dfrac{C^{j}}{(2\pi)^{l}}
\Big( 1\wedge \Big(
\dfrac{d(z,A_{0,b})\wedge d(w,A_{0,b})}{\vert w-z\vert}\Big)^{j/2}
\Big)
\log\Big(\dfrac{4 \diam(D)}{d(z,A_{0,b})\wedge d(w,A_{0,b})}\Big)^{2l}
\log\Big(\dfrac{2 \diam (D)}{\vert w-z\vert}\Big)^{j}.
\end{multline*}
Further, we will distinguish two cases:
$d(z,A_{0,b})\wedge d(w,A_{0,b})\leq \vert w-z\vert^{2}$
and
$d(z,A_{0,b})\wedge d(w,A_{0,b})> \vert w-z\vert^{2}$.

Consider the first case $d(z,A_{0,b})\wedge d(w,A_{0,b})\leq \vert w-z\vert^{2}$.
Then 
\begin{multline*}
V_{A_{0,b}}(z)^{l}V_{A_{0,b}}(w)^{l}
G_{D\setminus A_{0,b}}(z,w)^{j}
\\\leq
\dfrac{C^{j}}{(2\pi)^{l}}
(d(z,A_{0,b})\wedge d(w,A_{0,b}))^{j/4}
\log\Big(\dfrac{4 \diam(D)}{d(z,A_{0,b})\wedge d(w,A_{0,b})}\Big)^{2l}
\log\Big(\dfrac{2 \diam (D)}{\vert w-z\vert}\Big)^{j}
\\
\leq
\dfrac{C^{j}}{(2\pi)^{l}}
\Big(\dfrac{d(z,A_{0,b})\wedge d(w,A_{0,b})}{\diam (D)}\Big)^{j/4}
\log\Big(\dfrac{4 \diam(D)}{d(z,A_{0,b})\wedge d(w,A_{0,b})}\Big)^{2l}
\\
\times \diam (D)^{j/4}
\log\Big(\dfrac{2 \diam (D)}{\vert w-z\vert}\Big)^{j}
.
\end{multline*}
Let  be
\begin{displaymath}
C^{\ast}_{j,l} = \sup_{x\geq 1} ( x^{-j/4} \log (4x)^{2l}) <+\infty.
\end{displaymath}
Then
\begin{displaymath}
V_{A_{0,b}}(z)^{l}V_{A_{0,b}}(w)^{l}
G_{D\setminus A_{0,b}}(z,w)^{j}
\leq 
\dfrac{C^{j}C^{\ast}_{j,l}}{(2\pi)^{l}}
\diam (D)^{j/4}
\log\Big(\dfrac{2 \diam (D)}{\vert w-z\vert}\Big)^{j}.
\end{displaymath}

Now consider the second case $d(z,A_{0,b})\wedge d(w,A_{0,b})> \vert w-z\vert^{2}$.
Then
\begin{multline*}
\log\Big(\dfrac{4 \diam(D)}{d(z,A_{0,b})\wedge d(w,A_{0,b})}\Big)
\leq
\log\Big(\dfrac{4 \diam(D)}{\vert w-z\vert^{2}}\Big)
\\=
2
\log\Big(\dfrac{2 \diam(D)^{1/2}}{\vert w-z\vert}\Big)
\leq 
2
\log\Big(\dfrac{2 (1\vee\diam(D))}{\vert w-z\vert}\Big).
\end{multline*}
So in both case we get a bound of desired form.
\end{proof}

By combining Lemmas \ref{Lem 2 point cor D A 0 b} and \ref{Lem a s bound}
we immediately get the following.

\begin{cor}
\label{Cor cool bound}
Let $\eta>0$, $k\geq 0$, $j\geq 1$, $l\geq 0$, with $j+2l\leq 2k+1$, and $b\geq v\vee(2\lambda)$.
Then,
\begin{multline*}
\E\Big[\big\Vert
\ell_{0,b}^{2k+1-j-2l}V_{A_{0,b}}^{l}
\,:\Phi_{D\setminus A_{0,b}}^{j} :
\ind_{\{z\in A_{0,b}\text{~or~}\ell_{0,b}(z)=b\}}
\big\Vert_{H^{-\eta}(\C)}^{2}
\Big]
\leq
\\
C_{j,l}(1\vee\diam(D)^{j/4})
b^{4k+2-2j - 4l}
\\
\times
\int_{D^{2}}
\PP
\Big(
z\stackrel{D\setminus A_{0,b}}{\longleftrightarrow} w,\,\ell_{0,b}(z) = b
\Big)
\Big(
1\vee\log\Big(\dfrac{2(1\vee\diam(D))}{\vert w-z\vert}\Big)^{j+2l}
\Big)
\LK_{\eta}(\vert w-z\vert)
\,d^{2}z\,d^{2}w,
\end{multline*}
where $C_{j,l}>0$ is a constant depending only on $j$ and $l$.
\end{cor}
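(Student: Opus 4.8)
The plan is to obtain this corollary by a direct assembly of the two preceding lemmas, with no additional input required. First I would apply Lemma~\ref{Lem 2 point cor D A 0 b}: since $j\ge 1$, $l\ge 0$ and $j+2l\le 2k+1$, its hypotheses are met, and it rewrites the left-hand side exactly as
\[
j!\, b^{4k+2-2j-4l}
\int_{D^{2}}
\E\big[
V_{A_{0,b}}(z)^{l}V_{A_{0,b}}(w)^{l}
G_{D\setminus A_{0,b}}(z,w)^{j}
\ind_{\{\ell_{0,b}(z)=\ell_{0,b}(w)=b\}}
\big]
\LK_{\eta}(\vert w-z\vert)\,d^{2}z\,d^{2}w .
\]
The condition $j+2l\le 2k+1$ is precisely what makes the exponent $4k+2-2j-4l$ non-negative, which is the shape needed when this estimate is later applied with $b$ growing (Section~\ref{Subsec diff ind cond} and onwards).

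Second, I would feed into the expectation the almost sure pointwise bound of Lemma~\ref{Lem a s bound}, namely that for every $z\ne w$ in $D$,
\[
V_{A_{0,b}}(z)^{l}V_{A_{0,b}}(w)^{l}
G_{D\setminus A_{0,b}}(z,w)^{j}
\ind_{\{\ell_{0,b}(z)=\ell_{0,b}(w)=b\}}
\le
C_{j,l}(1\vee\diam(D)^{j/4})
\Big(1\vee\log\Big(\tfrac{2(1\vee\diam(D))}{\vert w-z\vert}\Big)^{j+2l}\Big)
\,\ind_{z\stackrel{D\setminus A_{0,b}}{\longleftrightarrow} w,\,\ell_{0,b}(z)=b},
\]
with $C_{j,l}$ depending only on $j$ and $l$. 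On the right-hand side the deterministic factors come out of the expectation, and the expectation of the remaining indicator is exactly $\PP(z\stackrel{D\setminus A_{0,b}}{\longleftrightarrow} w,\,\ell_{0,b}(z)=b)$. Substituting into the double integral above and folding the constant $j!$ into $C_{j,l}$ (a harmless renaming, since the new constant still depends only on $j$ and $l$) yields precisely the asserted inequality.

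I do not expect any real obstacle here: the one point worth checking is the legitimacy of passing the pointwise bound through the double integral after taking expectations, and this is immediate because every factor involved is non-negative — $\LK_{\eta}$ is continuous and positive on $(0,+\infty)$, while $V_{A_{0,b}}$, $G_{D\setminus A_{0,b}}$ and the indicators are non-negative — so Tonelli's theorem applies throughout. All the substantive work has already been done in Lemmas~\ref{Lem 2 point cor D A 0 b} and~\ref{Lem a s bound}, and this corollary is purely a matter of combining them.
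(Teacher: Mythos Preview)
Your proposal is correct and follows exactly the paper's own approach: the paper states that the corollary is obtained immediately by combining Lemmas~\ref{Lem 2 point cor D A 0 b} and~\ref{Lem a s bound}, which is precisely what you do. Your added remark about non-negativity and Tonelli is a fair justification of the routine step the paper leaves implicit.
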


By further applying Lemmas \ref{Lem bound ED} and \ref{Lem ED Gauss},
we get the following.

\begin{cor}
\label{Cor cool bound 2}
Let $\eta>0$, $k\geq 0$, $j\geq 1$, $l\geq 0$, with $j+2l\leq 2k+1$.
Let $b\in2\lambda\N$ with $b\geq v + 2\lambda$.
Then
\begin{multline*}
\E\Big[\big\Vert
\ell_{0,b}^{2k+1-j-2l}V_{A_{0,b}}^{l}
\,:\Phi_{D\setminus A_{0,b}}^{j} :
\ind_{\{z\in A_{0,b}\text{~or~}\ell_{0,b}(z)=b\}}
\big\Vert_{H^{-\eta}(\C)}^{2}
\Big]
\leq
\\
C_{j,l}(1\vee\diam(D)^{j/4})
b^{4k+2-2j - 4l}
\\
\times
\int_{D^{2}}
e^{-(b-v-2\lambda)^{2}/(2\log(2)/\pi + 2 G_{D}(z,w))}
\Big(
1\vee\log\Big(\dfrac{2(1\vee\diam(D))}{\vert w-z\vert}\Big)^{j+2l}
\Big)
\LK_{\eta}(\vert w-z\vert)
\,d^{2}z\,d^{2}w,
\end{multline*}
where $C_{j,l}>0$ is a constant depending only on $j$ and $l$.
\end{cor}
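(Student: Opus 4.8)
The plan is simply to chain together Corollary \ref{Cor cool bound} with Lemmas \ref{Lem bound ED} and \ref{Lem ED Gauss}; no new idea is needed, only a pointwise estimate on the connectivity probability. First I would invoke Corollary \ref{Cor cool bound}, which already reduces $\E[\Vert\ell_{0,b}^{2k+1-j-2l}V_{A_{0,b}}^{l}:\Phi_{D\setminus A_{0,b}}^{j}:\ind_{\{z\in A_{0,b}\text{~or~}\ell_{0,b}(z)=b\}}\Vert_{H^{-\eta}(\C)}^{2}]$ to the prefactor $C_{j,l}(1\vee\diam(D)^{j/4})\,b^{4k+2-2j-4l}$ times the double integral over $D^{2}$ of $\PP(z\stackrel{D\setminus A_{0,b}}{\longleftrightarrow} w,\,\ell_{0,b}(z)=b)$ against the deterministic weight $(1\vee\log(2(1\vee\diam(D))/\vert w-z\vert)^{j+2l})\,\LK_{\eta}(\vert w-z\vert)$. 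This is already of the desired shape, the only discrepancy being that the connectivity probability in the integrand must be replaced by the Gaussian tail factor $e^{-(b-v-2\lambda)^{2}/(2\log(2)/\pi+2G_{D}(z,w))}$.

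The second step is to bound the connectivity probability pointwise in $(z,w)$. Lemma \ref{Lem bound ED} gives
\begin{displaymath}
\PP\Big(z\stackrel{D\setminus A_{0,b}}{\longleftrightarrow} w,\,\ell_{0,b}(z)=b\Big)
\leq
\PP\big(\ED(\partial D,\Gamma_{0,b}(z))\leq G_{D}(z,w)+\log(2)/\pi,\,\ell_{0,b}(z)=b\big),
\end{displaymath}
and then Lemma \ref{Lem ED Gauss}, applied with $L=G_{D}(z,w)+\log(2)/\pi$ (which is positive since $G_{D}(z,w)\geq 0$, so the lemma is applicable for every $z\neq w$ in $D$), bounds the right-hand side by $e^{-(b-v-2\lambda)^{2}/(2L)}=e^{-(b-v-2\lambda)^{2}/(2\log(2)/\pi+2G_{D}(z,w))}$. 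At this point I would just check that the hypotheses of the two lemmas are covered: Lemma \ref{Lem bound ED} needs $b>v\vee(2\lambda)$ and Lemma \ref{Lem ED Gauss} needs $b\in 2\lambda\N$ with $b\geq v+2\lambda$, and both are subsumed by the standing assumption $b\in 2\lambda\N$, $b\geq v+2\lambda$ (in particular $b\geq v+2\lambda>v\vee(2\lambda)$). Substituting this pointwise bound back into the double integral from the first step, and pulling the unchanged prefactor out front, yields exactly the claimed inequality, with the constant $C_{j,l}$ inherited verbatim from Corollary \ref{Cor cool bound}.

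Since every ingredient is already established, there is essentially no obstacle here; the only points requiring a moment's care are the bookkeeping of which lemma imposes which constraint on $b$, and the trivial observation that $L>0$ so that Lemma \ref{Lem ED Gauss} is in force uniformly over the domain of integration. I would therefore present the proof as a three-line chain of inequalities rather than a computation.
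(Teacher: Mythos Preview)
Your proposal is correct and follows exactly the route the paper takes: the paper states the corollary immediately after Corollary~\ref{Cor cool bound} with the one-line justification ``By further applying Lemmas \ref{Lem bound ED} and \ref{Lem ED Gauss}, we get the following,'' and your chain of inequalities spells out precisely that. Your extra bookkeeping on the hypotheses for $b$ and the positivity of $L$ is sound and makes explicit what the paper leaves implicit.
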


\begin{lemma}
\label{Lem better bound 2}
Let $\eta\in (0,1)$, $j\geq 1$, $l\geq 0$, and $b\geq v + 2\lambda + 2\log(2)\pi$.
Then
\begin{multline*}
\int_{D^{2}}
e^{-(b-v-2\lambda)^{2}/(2\log(2)/\pi + 2 G_{D}(z,w))}
\Big(
1\vee\log\Big(\dfrac{2(1\vee\diam(D))}{\vert w-z\vert}\Big)^{j+2l}
\Big)
\LK_{\eta}(\vert w-z\vert)
\,d^{2}z\,d^{2}w
\\
\leq 
e^{-((2\pi\eta)\wedge 1)(b-v-2\lambda - 2\log(2)/\pi)}
\\
\times
(c_{\eta,j,l}\vert D\vert \diam(D)^{2\eta}
(1\vee(\pi b + 2\log(2)-\log(1\wedge\diam(D)))^{j+2l}) 
+ I_{\eta,j,l}(D) 
),
\end{multline*}
where $\vert D\vert$ is the Lebesgue measure (area) of $D$,
and $c_{\eta,j,l}>0$ is a constant depending on $\eta, j, l$, and
\begin{displaymath}
I_{\eta,j,l}(D) = 
\int_{D^{2}}
\Big(
1\vee\log\Big(\dfrac{2(1\vee\diam(D))}{\vert w-z\vert}\Big)^{j+2l}
\Big)
\LK_{\eta}(\vert w-z\vert)
\,d^{2}z\,d^{2}w.
\end{displaymath}
\end{lemma}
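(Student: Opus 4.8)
The plan is to run the proof of Lemma \ref{Lem better bound} essentially verbatim, with one extra ingredient: controlling the additional factor $1\vee\log(2(1\vee\diam(D))/\vert w-z\vert)^{j+2l}$ on a fixed-size neighbourhood of the diagonal, which will produce the polynomial-in-$b$ prefactor and the term $I_{\eta,j,l}(D)$.

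First I would fix $z\neq w$ in $D$, use $G_{D}(z,w)\leq\frac{1}{2\pi}\log(\diam(D)/\vert w-z\vert)$ from \eqref{Eq simple bound G D}, and introduce exactly the same cut-off radius as in Lemma \ref{Lem better bound},
\[
r(b)=e^{-\pi(b-v-2\lambda-2\log(2)/\pi)}\,\diam(D),
\]
chosen so that $\frac{2\log 2}{\pi}+\frac{1}{\pi}\log(\diam(D)/r(b))=b-v-2\lambda$. I then split the integral over $D^{2}$ according to whether $\vert w-z\vert\geq r(b)$ or $\vert w-z\vert< r(b)$. On the region $\vert w-z\vert\geq r(b)$ the computation of Lemma \ref{Lem better bound} gives $2\log(2)/\pi+2G_{D}(z,w)\leq b-v-2\lambda$, hence the Gaussian weight is at most $e^{-(b-v-2\lambda)}$; dropping the constraint $\vert w-z\vert\geq r(b)$ bounds this piece by $e^{-(b-v-2\lambda)}\,I_{\eta,j,l}(D)$, with $I_{\eta,j,l}(D)<\infty$ because $\LK_{\eta}(r)\asymp r^{2\eta-2}$ as $r\to 0$ (so the logarithmic factor times $\LK_{\eta}(r)\,r$ is integrable near $0$) and $D^{2}$ is bounded.

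On the near-diagonal region $\vert w-z\vert< r(b)$ I would bound the Gaussian weight by $1$, pass to polar coordinates, and use that $\LK_{\eta}(r)\leq C_{\eta}r^{2\eta-2}$ for \emph{all} $r>0$ with a constant depending only on $\eta$ — this holds since $\LK_{\eta}$ is positive continuous, has the singularity \eqref{Eq asymp kernel} at $0$, and decays faster than any power at infinity, so $\sup_{r>0}\LK_{\eta}(r)r^{2-2\eta}<\infty$. Writing $a=2(1\vee\diam(D))$ and $s_{0}(b)=\log(a/r(b))=\pi(b-v-2\lambda)-\log 2-\log(1\wedge\diam(D))\geq 0$, and substituting $r=r(b)e^{-t}$ so that $\log(a/r)=s_{0}(b)+t$, the near-diagonal integral becomes at most a constant times $\vert D\vert\,r(b)^{2\eta}\int_{0}^{+\infty}(1\vee(s_{0}(b)+t)^{j+2l})e^{-2\eta t}\,dt$; using $1+s_{0}(b)+t\leq(1+s_{0}(b))(1+t)$ and $\int_{0}^{+\infty}(1+t)^{j+2l}e^{-2\eta t}dt<\infty$ this is at most $c_{\eta,j,l}\vert D\vert\,r(b)^{2\eta}(1\vee s_{0}(b)^{j+2l})$. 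Since $r(b)^{2\eta}=e^{-2\pi\eta(b-v-2\lambda-2\log(2)/\pi)}\diam(D)^{2\eta}$ and, using $v,\lambda\geq 0$, $s_{0}(b)\leq\pi b+2\log 2-\log(1\wedge\diam(D))$, this bounds the near-diagonal piece by $c_{\eta,j,l}\vert D\vert\,\diam(D)^{2\eta}\big(1\vee(\pi b+2\log 2-\log(1\wedge\diam(D)))^{j+2l}\big)e^{-2\pi\eta(b-v-2\lambda-2\log(2)/\pi)}$.

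Finally I would combine the two contributions. The hypothesis $b\geq v+2\lambda+2\log(2)\pi$ makes $b-v-2\lambda-2\log(2)/\pi\geq 0$, and since $(2\pi\eta)\wedge 1\leq\min(2\pi\eta,\,1)$, both $e^{-(b-v-2\lambda)}$ and $e^{-2\pi\eta(b-v-2\lambda-2\log(2)/\pi)}$ are at most $e^{-((2\pi\eta)\wedge 1)(b-v-2\lambda-2\log(2)/\pi)}$; factoring this out of the sum of the far- and near-diagonal bounds yields the claimed inequality. The only step that genuinely needs attention is the near-diagonal estimate, specifically the passage to the logarithmic variable $t$ and the bookkeeping showing that the $\diam(D)$-dependence there is entirely absorbed into the explicit factors $\diam(D)^{2\eta}$ and $1\vee(\pi b+2\log 2-\log(1\wedge\diam(D)))^{j+2l}$; the far-diagonal part and the matching of exponential rates are exactly as in Lemma \ref{Lem better bound}.
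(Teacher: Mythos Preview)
Your proof is correct and follows exactly the approach the paper indicates: the same cut-off radius $r(b)$ as in Lemma~\ref{Lem better bound}, the same split into $\vert w-z\vert\geq r(b)$ and $\vert w-z\vert<r(b)$, with the far part contributing $e^{-(b-v-2\lambda)}I_{\eta,j,l}(D)$ and the near part handled via $\LK_{\eta}(r)\leq C_{\eta}r^{2\eta-2}$. The paper's own proof is just a one-line reference back to Lemma~\ref{Lem better bound} with details omitted; you have supplied those details accurately, including the bookkeeping for the logarithmic factor that yields the $(1\vee(\pi b+2\log 2-\log(1\wedge\diam(D)))^{j+2l})$ term.
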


\begin{proof}
The proof is similar to that of Lemma \ref{Lem better bound}.
As there, we take
\begin{displaymath}
r(b) = e^{-\pi(b-v-2\lambda - 2\log(2)/\pi)}\, \diam(D),
\end{displaymath}
and separate the integral over $D^{2}$ into two parts:
$\vert w-z\vert\geq r(b)$ and $\vert w-z\vert < r(b)$.
We omit further details.
\end{proof}

\begin{cor}
\label{Cor bref psi k j l}
Let $\eta\in (0,1)$, $k\geq 0$, $j\geq 1$ and $l\geq 0$, with $j+2l\leq 2k+1$.
Then there are constants $c,C>0$
($c=c(\eta)$,
$C=C(D,\eta,k,j,l,v)$),
such that for every $b\in2\lambda\N$ with $b\geq v + 2\lambda$,
\begin{displaymath}
\E\Big[\big\Vert
\ell_{0,b}^{2k+1-j-2l}V_{A_{0,b}}^{l}
\,:\Phi_{D\setminus A_{0,b}}^{j} :
\ind_{\{z\in A_{0,b}\text{~or~}\ell_{0,b}(z)=b\}}
\big\Vert_{H^{-\eta}(\C)}^{2}
\Big]
\leq C e^{-c b}.
\end{displaymath}
\end{cor}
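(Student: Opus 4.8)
The plan is to combine the two immediately preceding results. By Corollary~\ref{Cor cool bound 2}, the quantity
\begin{displaymath}
\E\Big[\big\Vert
\ell_{0,b}^{2k+1-j-2l}V_{A_{0,b}}^{l}
\,:\Phi_{D\setminus A_{0,b}}^{j} :
\ind_{\{z\in A_{0,b}\text{~or~}\ell_{0,b}(z)=b\}}
\big\Vert_{H^{-\eta}(\C)}^{2}
\Big]
\end{displaymath}
is at most $C_{j,l}(1\vee\diam(D)^{j/4})\,b^{4k+2-2j-4l}$ times the integral appearing there, and for $b\geq v+2\lambda+2\log(2)\pi$ that integral is, by Lemma~\ref{Lem better bound 2}, at most $e^{-((2\pi\eta)\wedge 1)(b-v-2\lambda-2\log(2)/\pi)}$ times the factor $c_{\eta,j,l}\vert D\vert\diam(D)^{2\eta}(1\vee(\pi b+\dots)^{j+2l})+I_{\eta,j,l}(D)$, which is a polynomial in $b$ of degree $j+2l$. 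Multiplying, one finds that for $b\geq v+2\lambda+2\log(2)\pi$ the displayed expectation is bounded by $\Lambda(b)\,e^{-((2\pi\eta)\wedge 1)b}$, where $\Lambda$ is a polynomial whose degree depends only on $k,j,l$ and whose coefficients depend on $D,\eta,k,j,l,v$.

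The decisive point is that the exponential rate $(2\pi\eta)\wedge 1$ produced by Lemma~\ref{Lem better bound 2} depends on $\eta$ alone. Hence I would set
\begin{displaymath}
c=\frac{1}{2}\big((2\pi\eta)\wedge 1\big),
\end{displaymath}
which depends only on $\eta$, and use the elementary fact that $\Lambda(b)e^{-((2\pi\eta)\wedge 1)b}\leq C' e^{-cb}$ for all $b\geq 0$, with $C'$ depending only on $\deg\Lambda$ and the coefficients of $\Lambda$, hence on $D,\eta,k,j,l,v$. This gives the asserted bound $Ce^{-cb}$ for every $b\in 2\lambda\N$ with $b\geq v+2\lambda+2\log(2)\pi$, for a suitable $C=C(D,\eta,k,j,l,v)$.

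Finally, for the finitely many remaining values $b\in 2\lambda\N$ with $v+2\lambda\leq b<v+2\lambda+2\log(2)\pi$, Proposition~\ref{Prop decomp Wick TVS} already tells us that $\ell_{0,b}^{2k+1-j-2l}V_{A_{0,b}}^{l}:\Phi_{D\setminus A_{0,b}}^{j}:$ belongs to $L^{2}(d\PP,\sigma(\Phi),H^{-\eta}(\C))$, so the displayed expectation is finite (inserting the indicator, which is $\leq 1$, only decreases it); taking the maximum over this finite set and enlarging $C$ absorbs these cases. There is no genuine obstacle here — the only point requiring a little care is precisely that $c$ must depend on $\eta$ alone, which works out because the $k,j,l,D,v$-dependence enters only through the polynomial prefactor $\Lambda(b)$, which is swallowed at the slightly reduced rate $c$.
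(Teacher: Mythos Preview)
Your argument is correct and follows exactly the route the paper intends: combine Corollary~\ref{Cor cool bound 2} with Lemma~\ref{Lem better bound 2}, observe that all $D,k,j,l,v$-dependence enters only through a polynomial prefactor in $b$, and absorb it by halving the exponential rate $(2\pi\eta)\wedge 1$.

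One small caveat: the sentence ``inserting the indicator, which is $\leq 1$, only decreases it'' is not automatic for $H^{-\eta}$ norms, since multiplication by an indicator need not be a contraction there. In this particular situation it is harmless: if $2k+1-j-2l\geq 1$ then $\ell_{0,b}^{2k+1-j-2l}$ already vanishes off $\{\ell_{0,b}=b\}$ (and $A_{0,b}$ has measure zero), so the indicator changes nothing; and in the remaining case $2k+1-j-2l=0$ the finiteness for each fixed $b$ follows directly from the explicit formula of Lemma~\ref{Lem 2 point cor D A 0 b} together with Lemma~\ref{Lem a s bound} (or just Corollary~\ref{Cor cool bound} with the trivial bound $\PP(\cdot)\leq 1$). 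Either way the handling of the finitely many small $b$ goes through.
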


\begin{cor}
\label{Cor bref psi k}
Let $\eta\in (0,1)$ and $k\geq 0$.
Then there are constants $c,C>0$
($c=c(\eta)$,
$C=C(D,\eta,k,v)$),
such that for every $b\in2\lambda\N$ with $b\geq v + 2\lambda$,
\begin{displaymath}
\E\Big[\big\Vert\psi_{2k+1,A} - \E[\psi_{2k+1, A} \vert A_{0,b},\ell_{0,b}]
\big\Vert_{H^{-\eta}(\C)}^{2}
\Big]^{1/2}
\leq C e^{-c b}.
\end{displaymath}
\end{cor}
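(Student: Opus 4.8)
The plan is to deduce this corollary with essentially no new computation, by chaining together the estimates assembled earlier in this subsection. First I would apply Lemma~\ref{Lem Delta psi 1}, which bounds $\E\big[\Vert\psi_{2k+1,A} - \E[\psi_{2k+1, A} \vert A_{0,b},\ell_{0,b}]\Vert_{H^{-\eta}(\C)}^{2}\big]^{1/2}$ by the finite sum, over pairs $(j,l)$ with $1\le j\le 2k+1$ and $0\le l\le\lfloor(2k+1-j)/2\rfloor$, of $\frac{(2k+1)!}{2^{l}j!l!(2k+1-j-2l)!}\,\E\big[\Vert\ell_{0,b}^{2k+1-j-2l}V_{A_{0,b}}^{l}\,:\Phi_{D\setminus A_{0,b}}^{j}:\,\ind_{\{z\in A_{0,b}\text{ or }\ell_{0,b}(z)=b\}}\Vert_{H^{-\eta}(\C)}^{2}\big]^{1/2}$. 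The crucial point is that the index set here is finite, with cardinality depending only on $k$, and that the constraint $1\le j$, $2l\le 2k+1-j$ forces $j+2l\le 2k+1$, so every pair $(j,l)$ that appears is exactly one of those covered by Corollary~\ref{Cor bref psi k j l}.

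Next I would invoke Corollary~\ref{Cor bref psi k j l} term by term: for $b\in 2\lambda\N$ with $b\ge v+2\lambda$ it bounds the $(j,l)$-summand's squared norm by $C_{j,l}\,e^{-c_\eta b}$, where $c_\eta>0$ depends only on $\eta$ and $C_{j,l}=C(D,\eta,k,j,l,v)$. Taking square roots, multiplying by the harmless combinatorial prefactors, and summing over the finitely many $(j,l)$ then gives a bound of the form $\big(\sum_{j,l}\tfrac{(2k+1)!}{2^{l}j!l!(2k+1-j-2l)!}\,C_{j,l}^{1/2}\big)\,e^{-c_\eta b/2}$, i.e. a bound $C e^{-cb}$ with $c=c_\eta/2$ and $C=C(D,\eta,k,v)$, as required. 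The one point that should be stated carefully, rather than just asserted, is that the exponential rate $c_\eta$ produced by Corollary~\ref{Cor bref psi k j l} is genuinely independent of $j$, $l$ and $k$; this is indeed so, since it descends from the fixed factor $e^{-((2\pi\eta)\wedge 1)(b-v-2\lambda-2\log(2)/\pi)}$ in Lemma~\ref{Lem better bound 2}, which allows a common rate to be pulled out of the finite sum.

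Finally, a small bookkeeping remark closes the argument: Corollary~\ref{Cor bref psi k j l} ultimately rests on Lemma~\ref{Lem better bound 2}, stated for $b\ge v+2\lambda+2\log(2)\pi$; for the finitely many $b\in 2\lambda\N$ with $v+2\lambda\le b< v+2\lambda+2\log(2)\pi$ the left-hand side is in any case finite (it is dominated by $\E[\Vert:\Phi^{2k+1}:\Vert_{H^{-\eta}(\C)}^{2}]<+\infty$ from Section~\ref{Subsec Wick}, since conditional expectation is an $L^2$-contraction), so one simply enlarges $C$ to absorb those cases. I do not expect a genuine obstacle here: all the real work --- controlling the relevant two-point correlations via the extremal-distance identity of Theorem~\ref{Thm ED ALS3} and the resulting Gaussian tail bound, and integrating the Bessel-potential kernel --- has already been carried out in Corollaries~\ref{Cor cool bound} and~\ref{Cor cool bound 2} and Lemma~\ref{Lem better bound 2}. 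The only thing requiring attention is the uniformity of the number of terms and of the decay rate $c$, which is what the two paragraphs above make explicit.
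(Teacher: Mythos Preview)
Your proposal is correct and follows exactly the approach implicit in the paper: the corollary is stated there without proof, as the immediate combination of Lemma~\ref{Lem Delta psi 1} (reducing to a finite sum over $(j,l)$) and Corollary~\ref{Cor bref psi k j l} (bounding each term by $Ce^{-cb}$). Your additional remarks on the uniformity of the rate $c$ and on absorbing the finitely many small values of $b$ are the natural details one would spell out if writing the proof in full.
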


\medskip

Next we deal with
\begin{displaymath}
\tilde{\psi}_{2k+1,A} - \E[\tilde{\psi}_{2k+1, A} \vert A_{0,b},\ell_{0,b}]
\qquad
\text{and}
\qquad
f_{0}\tilde{\psi}_{2k+1,A} - \E[f_{0}\tilde{\psi}_{2k+1, A} \vert A_{0,b},\ell_{0,b}]
,
\end{displaymath}
where we recall that $\tilde{\psi}_{2k+1,A}$ is given by \eqref{Eq def tilde psi},
and $f_{0}$ is a deterministic cutoff function as in \eqref{Eq A E tilde N}.
We will use that $\tilde{\psi}_{2k+1,A}$ is a linear combination of
$(\log \CR(z,D))^{q}\psi_{2(k-q)+1,A}$,
with constant coefficients.
We would like to emphasize that the function
$z\mapsto (\log \CR(z,D))^{q}$ is deterministic.

Similarly to Lemma \ref{Lem Delta psi 1},
the following holds.

\begin{lemma}
\label{Lem Delta psi CR 1}
Let $\eta>0$, $k\geq 0$, $q\in\{1,\dots,k-1\}$, and $b\geq v\vee(2\lambda)$.
Then
\begin{multline*}
\E\Big[\big\Vert(\log \CR(z,D))^{q}\psi_{2(k-q)+1,A} - \E[(\log \CR(z,D))^{q}\psi_{2(k-q)+1,A} \vert A_{0,b},\ell_{0,b}]
\big\Vert_{H^{-\eta}(\C)}^{2}
\Big]^{1/2}
\leq
\\
\sum_{\substack{1\leq j\leq 2(k-q)+1 \\ 0\leq l\leq \lfloor(2(k-q)+1-j)/2\rfloor}}
\dfrac{(2(k-q)+1)!}{2^{l}j!l!(2(k-q)+1-j-2l)!} \times
\\
\times
\E\Big[\big\Vert
\ell_{0,b}^{2(k-q)+1-j-2l}
(\log \CR(z,D))^{q}
V_{A_{0,b}}^{l}
\,:\Phi_{D\setminus A_{0,b}}^{j} :
\ind_{\{z\in A_{0,b}\text{~or~}\ell_{0,b}(z)=b\}}
\big\Vert_{H^{-\eta}(\C)}^{2}
\Big]^{1/2}.
\end{multline*}
If $f_{0}$ is a deterministic cutoff function as in \eqref{Eq A E tilde N}, then
\begin{multline*}
\E\Big[\big\Vert f_{0}(\log \CR(z,D))^{q}\psi_{2(k-q)+1,A} 
- \E[f_{0}(\log \CR(z,D))^{q}\psi_{2(k-q)+1,A} \vert A_{0,b},\ell_{0,b}]
\big\Vert_{H^{-\eta}(\C)}^{2}
\Big]^{1/2}
\leq
\\
\sum_{\substack{1\leq j\leq 2(k-q)+1 \\ 0\leq l\leq \lfloor(2(k-q)+1-j)/2\rfloor}}
\dfrac{(2(k-q)+1)!}{2^{l}j!l!(2(k-q)+1-j-2l)!} \times
\\
\times
\E\Big[\big\Vert
f_{0}
\ell_{0,b}^{2(k-q)+1-j-2l}
(\log \CR(z,D))^{q}
V_{A_{0,b}}^{l}
\,:\Phi_{D\setminus A_{0,b}}^{j} :
\ind_{\{z\in A_{0,b}\text{~or~}\ell_{0,b}(z)=b\}}
\big\Vert_{H^{-\eta}(\C)}^{2}
\Big]^{1/2}.
\end{multline*}
\end{lemma}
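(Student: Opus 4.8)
The plan is to transcribe, line by line, the proof of Lemma \ref{Lem Delta psi 1}, carrying along the extra deterministic multiplier $(\log \CR(z,D))^{q}$ (and, for the second assertion, the deterministic cutoff $f_{0}$). The structural inputs are exactly the same: the deterministic factors commute with conditional expectations and with multiplication by indicators; $\psi_{2(k-q)+1,A}$ is a.s. supported on $A$ (Corollary \ref{Cor support psi}); a.s. $A\subset\{z\in\overline D\,\vert\,z\in A_{0,b}\text{ or }\ell_{0,b}(z)=b\}$; the sigma-algebra of $(A_{0,b},\ell_{0,b})$ is contained in that of $A$; and the two-valued-set decomposition \eqref{Eq decomp Wick TVS} together with the explicit coefficient formula \eqref{Eq def Q} for $Q_{m}$.

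Concretely, I would set $h(z)=(\log \CR(z,D))^{q}$, note that $h$ is deterministic and hence $\E[h\,\psi_{2(k-q)+1,A}\,\vert\,A_{0,b},\ell_{0,b}]=h\,\E[\psi_{2(k-q)+1,A}\,\vert\,A_{0,b},\ell_{0,b}]=\E[h\,:\!\Phi^{2(k-q)+1}\!:\,\vert\,A_{0,b},\ell_{0,b}]$ by the tower property, while $h\,\psi_{2(k-q)+1,A}=\E[h\,:\!\Phi^{2(k-q)+1}\!:\,\vert\,A]$. Writing $M=\ind_{\{z\in A_{0,b}\text{ or }\ell_{0,b}(z)=b\}}$, which is measurable for both sigma-algebras and equals $1$ on $\Supp(\psi_{2(k-q)+1,A})\subset A$, one has $h\,\psi_{2(k-q)+1,A}=hM\,\psi_{2(k-q)+1,A}$, and the conditional-variance identity from the proof of Lemma \ref{Lem Delta psi 1} (the cross term vanishes since $X-\E[X\,\vert\,\sigma(A)]$ is orthogonal in $L^{2}(d\PP,\sigma(\Phi),H^{-\eta}(\C))$ to any $\sigma(A)$-measurable field) gives
\[
\E\big[\big\Vert h\,\psi_{2(k-q)+1,A}-\E[h\,\psi_{2(k-q)+1,A}\,\vert\,A_{0,b},\ell_{0,b}]\big\Vert_{H^{-\eta}(\C)}^{2}\big]
\leq
\E\big[\big\Vert hM\,\big(:\!\Phi^{2(k-q)+1}\!:-\,\E[:\!\Phi^{2(k-q)+1}\!:\,\vert\,A_{0,b},\ell_{0,b}]\big)\big\Vert_{H^{-\eta}(\C)}^{2}\big].
\]
Then I would substitute the tail of \eqref{Eq decomp Wick TVS}, namely $:\!\Phi^{2(k-q)+1}\!:-\,Q_{2(k-q)+1}(\ell_{0,b},V_{A_{0,b}})=\sum_{j=1}^{2(k-q)+1}\tfrac{(2(k-q)+1)!}{j!(2(k-q)+1-j)!}Q_{2(k-q)+1-j}(\ell_{0,b},V_{A_{0,b}})\,:\!\Phi_{D\setminus A_{0,b}}^{j}\!:$, expand each $Q_{2(k-q)+1-j}(\ell_{0,b},V_{A_{0,b}})$ via \eqref{Eq def Q} so that the combined coefficients become precisely $\tfrac{(2(k-q)+1)!}{2^{l}j!\,l!\,(2(k-q)+1-j-2l)!}$, and apply the triangle inequality in $L^{2}(d\PP,\sigma(\Phi),H^{-\eta}(\C))$ term by term. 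This yields the first displayed bound. For the second assertion one carries the extra factor $f_{0}$ through the identical computation, using that $\sigma(A_{0,b},\ell_{0,b})$-measurability and supports are unaffected by multiplication by the deterministic $f_{0}$, and that $\E[f_{0}\,h\,\psi\,\vert\,\cdot]=f_{0}h\,\E[\psi\,\vert\,\cdot]$.

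I do not expect a genuine obstacle here: the statement is a mechanical reduction step mirroring Lemma \ref{Lem Delta psi 1}. The only point deserving a word of care is that multiplication by $(\log \CR(z,D))^{q}$ is not a bounded operator on $H^{-\eta}(\C)$ (the factor blows up as $z\to\partial D$), so one must \emph{not} try to dominate $\Vert h\,X\Vert_{H^{-\eta}(\C)}$ by a constant times $\Vert X\Vert_{H^{-\eta}(\C)}$; instead $h$ (respectively $f_{0}h$, which \emph{is} bounded on $\Supp(f_{0})$ since $\CR(\cdot,D)$ is bounded away from $0$ and $\infty$ there) is kept inside each building-block field, and the subsequent quantitative estimates — the analogues of Lemmas \ref{Lem a s bound}--\ref{Lem better bound 2}, relying on the Green's-function bound of Proposition \ref{Prop Green} and the large-deviation estimate of Lemma \ref{Lem ED Gauss} — will absorb the resulting polynomial-in-$\log$ growth against the exponential decay in $b$; in the no-cutoff regime this is exactly what the hypothesis \eqref{Eq cond small boundary} is there to control.
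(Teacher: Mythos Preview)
Your proposal is correct and follows exactly the approach the paper indicates: the paper states only that this lemma holds ``similarly to Lemma \ref{Lem Delta psi 1}'', and your transcription of that proof with the deterministic factor $(\log\CR(z,D))^{q}$ (resp.\ $f_{0}(\log\CR(z,D))^{q}$) carried through is precisely what is intended. Your closing remark about not trying to dominate $\Vert hX\Vert_{H^{-\eta}(\C)}$ by $\Vert X\Vert_{H^{-\eta}(\C)}$ is well taken and anticipates exactly why the subsequent Lemma \ref{Lem a s bound q} is more delicate than Lemma \ref{Lem a s bound}.
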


Similarly to Lemma \ref{Lem 2 point cor D A 0 b},
the following holds.

\begin{lemma}
\label{Lem 2 point cor CR D A 0 b}
Let $\eta>0$, $k\geq 0$, $q\in\{1,\dots,k-1\}$, $j\geq 1$, $l\geq 0$, with $j+2l\leq 2(k-q)+1$, 
and $b\geq v\vee(2\lambda)$.
Then
\begin{multline*}
\E\Big[\big\Vert
\ell_{0,b}^{2(k-q)+1-j-2l}
(\log \CR(z,D))^{q}
V_{A_{0,b}}^{l}
\,:\Phi_{D\setminus A_{0,b}}^{j} :
\ind_{\{z\in A_{0,b}\text{~or~}\ell_{0,b}(z)=b\}}
\big\Vert_{H^{-\eta}(\C)}^{2}
\Big]
=
\\
j!
b^{4(k-q)+2-2j- 4l}
\int_{D^{2}}
(\log \CR(z,D))^{q}
(\log \CR(w,D))^{q}
\LK_{\eta}(\vert w-z\vert)
\\
\times
\E
\big[
V_{A_{0,b}}(z)^{l}V_{A_{0,b}}(w)^{l}
G_{D\setminus A_{0,b}}(z,w)^{j}
\ind_{\{\ell_{0,b}(z)=\ell_{0,b}(w)=b\}}
\big]
\,d^{2}z\,d^{2}w
.
\end{multline*}
If $f_{0}$ is a deterministic cutoff function as in \eqref{Eq A E tilde N}, then
\begin{multline*}
\E\Big[\big\Vert
f_{0}
\ell_{0,b}^{2(k-q)+1-j-2l}
(\log \CR(z,D))^{q}
V_{A_{0,b}}^{l}
\,:\Phi_{D\setminus A_{0,b}}^{j} :
\ind_{\{z\in A_{0,b}\text{~or~}\ell_{0,b}(z)=b\}}
\big\Vert_{H^{-\eta}(\C)}^{2}
\Big]
=
\\
j!
b^{4(k-q)+2-2j- 4l}
\int_{D^{2}}
f_{0}(z)f_{0}(w)
(\log \CR(z,D))^{q}
(\log \CR(w,D))^{q}
\LK_{\eta}(\vert w-z\vert)
\\
\times
\E
\big[
V_{A_{0,b}}(z)^{l}V_{A_{0,b}}(w)^{l}
G_{D\setminus A_{0,b}}(z,w)^{j}
\ind_{\{\ell_{0,b}(z)=\ell_{0,b}(w)=b\}}
\big]
\,d^{2}z\,d^{2}w
.
\end{multline*}
\end{lemma}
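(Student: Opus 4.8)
The plan is to follow, essentially verbatim, the proof of Lemma \ref{Lem 2 point cor D A 0 b}; the only new feature is the deterministic weight $z\mapsto(\log\CR(z,D))^{q}$ (and, in the second statement, the deterministic cutoff $f_{0}$), which plays a purely passive role and is carried along unchanged through every step. In particular, no new probabilistic input is needed beyond what is already available.

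Concretely, I would proceed as follows. First, write the squared Sobolev norm as a double integral against the Bessel potential, $\Vert F\Vert_{H^{-\eta}(\C)}^{2}=\int_{\C^{2}}F(z)F(w)\LK_{\eta}(\vert w-z\vert)\,d^{2}z\,d^{2}w$, apply this to the field appearing on the left-hand side, take the expectation, and interchange $\E$ with the integral by Fubini. Next, since a.s.\ $A_{0,b}$ has zero Lebesgue measure, inside the integral one may replace $\ind_{\{z\in A_{0,b}\text{~or~}\ell_{0,b}(z)=b\}}\ind_{\{w\in A_{0,b}\text{~or~}\ell_{0,b}(w)=b\}}$ by $\ind_{\{\ell_{0,b}(z)=\ell_{0,b}(w)=b\}}$; on that event both labels equal $b$, so the label factor collapses to the constant $b^{4(k-q)+2-2j-4l}$, which factors out. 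Then I would condition on $(A_{0,b},\ell_{0,b})$: the functions $(\log\CR(z,D))^{q}$ and $(\log\CR(w,D))^{q}$ (and $f_{0}$) are deterministic, while $V_{A_{0,b}}$ and the indicator are $(A_{0,b},\ell_{0,b})$-measurable, so all of these come out of the conditional expectation, and conditionally on $(A_{0,b},\ell_{0,b})$ the field $:\Phi_{D\setminus A_{0,b}}^{j}:$ is the $j$-th Wick power of a GFF on $D\setminus A_{0,b}$ with $0$ boundary conditions, whence its two-point function is $j!\,G_{D\setminus A_{0,b}}(z,w)^{j}$ (as in Proposition \ref{Prop decomp Wick TVS}). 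Collecting the factors yields precisely the displayed identity; the version with $f_{0}$ is obtained identically, with $f_{0}(z)f_{0}(w)$ appended to the integrand.

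The only point that needs a little care is the interchange of $\E$ and the double integral, and this is handled exactly as in the proof of Lemma \ref{Lem 2 point cor D A 0 b} and its predecessors: one first runs the computation for the mollified fields $:\Phi_{D\setminus A_{0,b},\varepsilon}^{j}:$, where everything is a bona fide function and the absolute integrability is immediate from the integrability of $\LK_{\eta}$ near the diagonal (equation \eqref{Eq asymp kernel}), the exponential tails of $V_{A_{0,b}}$ (Theorem \ref{Thm ASW CR ell a b}) and Cauchy--Schwarz, and then passes to the limit in $L^{2}(d\PP,\sigma(\Phi),H^{-\eta}(\C))$ using the membership statement of Proposition \ref{Prop decomp Wick TVS}. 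The extra weight is harmless: $(\log\CR(z,D))^{q}$ is bounded on $\operatorname{Supp}(f_{0})$ in the cutoff case, and in general $\vert\log\CR(z,D)\vert$ grows only logarithmically as $z\to\partial D$ by the distortion inequalities \eqref{Eq Koebe}, hence $(\log\CR(z,D))^{2q}$ is locally integrable on $D$ and disturbs none of the estimates. Thus there is no genuine obstacle here — the work is confined to bookkeeping of the powers of $b$.
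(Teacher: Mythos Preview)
Your proposal is correct and follows essentially the same approach as the paper, which in fact does not give a separate proof here but simply states that the lemma holds ``similarly to Lemma \ref{Lem 2 point cor D A 0 b}.'' Your three steps --- replacing the indicator by $\ind_{\{\ell_{0,b}(z)=\ell_{0,b}(w)=b\}}$ since $A_{0,b}$ has zero Lebesgue measure, factoring out the constant $b^{4(k-q)+2-2j-4l}$, and using the conditional two-point function $j!\,G_{D\setminus A_{0,b}}(z,w)^{j}$ --- are exactly the ingredients of that earlier proof, and your remark that the deterministic weight $(\log\CR(z,D))^{q}$ (and $f_{0}$) is carried along passively is precisely the point.
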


The case with the cutoff function $f_{0}$ then follows immediately,
since $z\mapsto f_{0}(z)(\log \CR(z,D))^{q}$ is a bounded function.
We get then as in Corollary \ref{Cor bref psi k} the following bound.

\begin{cor}
\label{Cor bref psi k CR f 0}
Let $\eta\in (0,1)$ and $k\geq 0$.
Let $f_{0}$ be a deterministic cutoff function as in \eqref{Eq A E tilde N}.
Then there are constants $c,C>0$
($c=c(\eta)$,
$C=C(D,f_{0},\eta,k,v)$),
such that for every $b\in2\lambda\N$ with $b\geq v + 2\lambda$,
\begin{displaymath}
\E\Big[\big\Vert f_{0}\tilde{\psi}_{2k+1,A} - \E[f_{0}\tilde{\psi}_{2k+1, A} \vert A_{0,b},\ell_{0,b}]
\big\Vert_{H^{-\eta}(\C)}^{2}
\Big]^{1/2}
\leq C e^{-c b}.
\end{displaymath}
\end{cor}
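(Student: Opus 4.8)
The plan is to reduce the statement to the unweighted bound of Corollary~\ref{Cor bref psi k}. By its definition \eqref{Eq def tilde psi}, $\tilde{\psi}_{2k+1,A}$ is a linear combination, with \emph{deterministic} scalar coefficients, of the fields $\big(\tfrac{1}{2\pi}\log\CR(z,D)\big)^{q}\psi_{2(k-q)+1,A}$ for $q=0,\dots,k$. Since $\E[\,\cdot\mid A_{0,b},\ell_{0,b}]$ is linear and $b$ is fixed, the triangle inequality in $L^{2}(d\PP,\sigma(\Phi),H^{-\eta}(\C))$ shows it suffices to bound, for each $q\in\{0,\dots,k\}$, the quantity
\[
\E\Big[\big\Vert f_{0}(\log\CR(z,D))^{q}\psi_{2(k-q)+1,A}-\E\big[f_{0}(\log\CR(z,D))^{q}\psi_{2(k-q)+1,A}\,\big\vert\,A_{0,b},\ell_{0,b}\big]\big\Vert_{H^{-\eta}(\C)}^{2}\Big]^{1/2}
\]
by $Ce^{-cb}$. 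For $q=0$ this is Corollary~\ref{Cor bref psi k} with the harmless extra bounded factor $f_{0}$, so only $q\geq 1$ needs attention.

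The first step for $q\geq 1$ is to record the boundedness of the weight: $\operatorname{Supp}(f_{0})$ is a compact subset of $D$, so by \eqref{Eq Koebe} and the continuity of $z\mapsto\CR(z,D)$ on $D$ the function $z\mapsto f_{0}(z)(\log\CR(z,D))^{q}$ is bounded on $\C$; write $M_{q}=\sup_{z}\vert f_{0}(z)(\log\CR(z,D))^{q}\vert<+\infty$. Next I would apply the cutoff version of Lemma~\ref{Lem Delta psi CR 1} to expand the displayed difference into the finite sum over $j\geq 1$, $l\geq 0$ with $j+2l\leq 2(k-q)+1$ of the terms
\[
\E\Big[\big\Vert f_{0}\,\ell_{0,b}^{2(k-q)+1-j-2l}(\log\CR(z,D))^{q}V_{A_{0,b}}^{l}\,:\!\Phi_{D\setminus A_{0,b}}^{j}\!:\,\ind_{\{z\in A_{0,b}\text{ or }\ell_{0,b}(z)=b\}}\big\Vert_{H^{-\eta}(\C)}^{2}\Big]^{1/2},
\]
and then use the cutoff version of Lemma~\ref{Lem 2 point cor CR D A 0 b} to write the square of each such term as a double integral over $D^{2}$ whose integrand carries the deterministic weight $f_{0}(z)f_{0}(w)(\log\CR(z,D))^{q}(\log\CR(w,D))^{q}$, a factor $j!\,b^{4(k-q)+2-2j-4l}$, the Bessel kernel $\LK_{\eta}(\vert w-z\vert)$, and the random factor $\E[V_{A_{0,b}}(z)^{l}V_{A_{0,b}}(w)^{l}G_{D\setminus A_{0,b}}(z,w)^{j}\ind_{\{\ell_{0,b}(z)=\ell_{0,b}(w)=b\}}]$.

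Now I would bound the weight by $M_{q}^{2}$ and $\ell_{0,b}$ by $b$ on $\{\ell_{0,b}=b\}$, which puts the integral into exactly the form handled in the chain Corollary~\ref{Cor cool bound}--Corollary~\ref{Cor bref psi k}: the pointwise bound of Lemma~\ref{Lem a s bound} reduces the random factor to $\ind_{z\stackrel{D\setminus A_{0,b}}{\longleftrightarrow}w,\,\ell_{0,b}(z)=b}$ times a deterministic logarithmic factor; Lemmas~\ref{Lem bound ED} and~\ref{Lem ED Gauss} bound $\PP(z\stackrel{D\setminus A_{0,b}}{\longleftrightarrow}w,\,\ell_{0,b}(z)=b)$ by $e^{-(b-v-2\lambda)^{2}/(2\log(2)/\pi+2G_{D}(z,w))}$; and Lemma~\ref{Lem better bound 2} bounds the resulting integral by $e^{-((2\pi\eta)\wedge1)(b-v-2\lambda-2\log(2)/\pi)}$ times a factor polynomial in $b$. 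Multiplying by the polynomial prefactor $b^{4(k-q)+2-2j-4l}$ and absorbing all polynomial-in-$b$ corrections into a slightly smaller exponential rate, each summand is $\leq Ce^{-cb}$; summing over the finitely many admissible triples $(q,j,l)$ then yields the claim.

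I do not expect a genuine obstacle here: the entire content reuses the estimates already established for Corollary~\ref{Cor bref psi k}, and the only new point needing a line of proof is the boundedness of $z\mapsto f_{0}(z)(\log\CR(z,D))^{q}$ on $\operatorname{Supp}(f_{0})$, after which the $\log\CR$ weight is simply discarded. The one bookkeeping subtlety is to verify, when absorbing the polynomial-in-$b$ factors into the exponential, that the resulting rate $c$ depends only on $\eta$ (through $(2\pi\eta)\wedge1$), while $C$ is allowed to depend on $D,f_{0},\eta,k,v$, as asserted.
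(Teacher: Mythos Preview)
Your proposal is correct and follows exactly the paper's approach: the key observation is that $z\mapsto f_{0}(z)(\log\CR(z,D))^{q}$ is bounded because $f_{0}$ is compactly supported in $D$, after which the weight can be discarded and the chain of estimates leading to Corollary~\ref{Cor bref psi k} applies verbatim. The paper states this in one line immediately before the corollary; you have simply spelled out the details.
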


Now let us consider the more involved case without cutoff.
Next bound is similar to Lemma \ref{Lem a s bound} (case $q=0$).

\begin{lemma}
\label{Lem a s bound q}
Let $q\geq 1$, $j\geq 1$ and $l\geq 0$.
There is a constant $C_{q,j,l}>0$ depending only on $q$, $j$ and $l$
(in particular depending neither on $D$ nor $b$),
such that a.s., for every $z,w\in D$ with $z\neq w$,
\begin{multline*}
\vert\log \CR(z,D)\vert^{q}
\vert\log \CR(w,D)\vert^{q}
V_{A_{0,b}}(z)^{l}V_{A_{0,b}}(w)^{l}
G_{D\setminus A_{0,b}}(z,w)^{j}
\ind_{\{\ell_{0,b}(z)=\ell_{0,b}(w)=b\}}
\leq
\\
C_{q,j,l} 
(1\vee \log \diam (D))^{2q}
(1\vee\diam(D)^{j/8})
\Big(
1\vee\log\Big(\dfrac{2(1\vee\diam(D))}{\vert w-z\vert}\Big)^{j+2l+2q}
\Big)
\,
\ind_{z\stackrel{D\setminus A_{0,b}}{\longleftrightarrow} w,\,\ell_{0,b}(z) = b}.
\end{multline*}
\end{lemma}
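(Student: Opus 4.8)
The plan is to follow the proof of Lemma \ref{Lem a s bound} essentially verbatim, inserting a control of the two new deterministic factors $\vert\log\CR(z,D)\vert^{q}$ and $\vert\log\CR(w,D)\vert^{q}$. As there, $G_{D\setminus A_{0,b}}(z,w)$ vanishes unless $z$ and $w$ lie in the same connected component of $D\setminus A_{0,b}$, on which $\ell_{0,b}$ is constant; this produces the indicator $\ind_{z\stackrel{D\setminus A_{0,b}}{\longleftrightarrow} w,\,\ell_{0,b}(z)=b}$ on the right-hand side, and on that event all remaining factors will be bounded purely deterministically.

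The first new step is a pointwise bound for $\vert\log\CR(z,D)\vert$. Since $\partial D\subset A_{0,b}$ we have $d(z,A_{0,b})\leq d(z,\partial D)$, and by the distortion inequalities \eqref{Eq Koebe}, $d(z,\partial D)\leq\CR(z,D)\leq 4 d(z,\partial D)\leq 4\diam(D)$. Distinguishing $\CR(z,D)\geq 1$ from $\CR(z,D)<1$ gives, with a universal constant and writing $d=d(z,A_{0,b})\wedge d(w,A_{0,b})$,
\begin{displaymath}
\vert\log\CR(z,D)\vert^{q}\vert\log\CR(w,D)\vert^{q}\leq C_{q}\big(1\vee\log\diam(D)\big)^{2q}\Big(1\vee\log\dfrac{4\diam(D)}{d}\Big)^{2q}.
\end{displaymath}

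One then multiplies this by the estimate already obtained inside the proof of Lemma \ref{Lem a s bound} (coming from \eqref{Eq Koebe} and Proposition \ref{Prop Green}), namely $V_{A_{0,b}}(z)^{l}V_{A_{0,b}}(w)^{l}G_{D\setminus A_{0,b}}(z,w)^{j}\leq C^{j}(1\wedge(d/\vert w-z\vert)^{j/2})\log(4\diam(D)/d)^{2l}\log(2\diam(D)/\vert w-z\vert)^{j}$. The product is a Beurling-type factor $1\wedge(d/\vert w-z\vert)^{j/2}$ times $(1\vee\log(4\diam(D)/d))^{2l+2q}$, $\log(2\diam(D)/\vert w-z\vert)^{j}$, and $(1\vee\log\diam(D))^{2q}$. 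I would then run the same dichotomy as in Lemma \ref{Lem a s bound}: if $d\leq\vert w-z\vert^{2}$, then $(d/\vert w-z\vert)^{j/2}\leq d^{j/4}$ and a positive power of $d$ is spent, using $\sup_{x\geq 1}x^{-j/8}\log(4x)^{2l+2q}<\infty$ and $d\leq 1\vee\diam(D)$, to kill the $(1\vee\log(4\diam(D)/d))^{2l+2q}$ factor, leaving the prefactor $1\vee\diam(D)^{j/8}$ and the stated logarithmic power in $\vert w-z\vert$; if $d>\vert w-z\vert^{2}$, then $\log(4\diam(D)/d)\leq 2\log(2(1\vee\diam(D))/\vert w-z\vert)+C$, and all logarithms merge into $(1\vee\log(2(1\vee\diam(D))/\vert w-z\vert))^{j+2l+2q}$. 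Collecting constants, which depend only on $q,j,l$, finishes the argument.

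This is a routine pointwise estimate, and the only genuine care needed is to keep all constants independent of $D$ and $b$ and to carry the $1\vee$ cut-offs that handle domains of very small or very large diameter. Conceptually the mechanism is the one already at work in Lemma \ref{Lem a s bound}: the $(d/\vert w-z\vert)^{1/2}$ Beurling decay from Proposition \ref{Prop Green} beats the logarithmic singularity of the Green's function near $A_{0,b}$; the mild novelty is that it must additionally absorb the two extra logarithmic factors coming from $\vert\log\CR\vert^{q}$, which is precisely why one only retains $\diam(D)^{j/8}$ here rather than the $\diam(D)^{j/4}$ of Lemma \ref{Lem a s bound}.
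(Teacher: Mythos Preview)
Your first displayed inequality, bounding $\vert\log\CR(z,D)\vert^{q}\vert\log\CR(w,D)\vert^{q}$ by $C_{q}(1\vee\log\diam(D))^{2q}(1\vee\log(4\diam(D)/d))^{2q}$, fails when $\diam(D)$ is small. Take $\diam(D)=e^{-N}$ with $N$ large and $d$ comparable to $\diam(D)$, say $d=\diam(D)/2$. Then $\CR(z,D)\leq 4\diam(D)$ forces $\vert\log\CR(z,D)\vert\geq N-\log 4$, whereas your right-hand side is at most $C_{q}\cdot 1\cdot(\log 8)^{2q}$, a fixed constant. The slip is that passing from $\log(1/d(z,\partial D))$ to $\log(4\diam(D)/d)$ is only an upper bound when $4\diam(D)\geq 1$. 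Replacing $4\diam(D)$ by $4(1\vee\diam(D))$ inside that logarithm repairs the inequality, and your dichotomy then goes through; however, in the case $d\leq\vert w-z\vert^{2}$ the prefactor you actually obtain is $(1\vee\diam(D))^{j/4}$, not $(1\vee\diam(D))^{j/8}$: spending $d^{j/8}$ to absorb $(1\vee\log(4(1\vee\diam D)/d))^{2l+2q}$ leaves another $d^{j/8}\leq(1\vee\diam(D))^{j/8}$, so the two combine to $j/4$. Your closing explanation of why $j/8$ appears is therefore not consistent with your own argument.

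The paper takes a different route that sidesteps both issues. It writes $G_{D\setminus A_{0,b}}^{j}\leq G_{D}^{j/2}\,G_{D\setminus A_{0,b}}^{j/2}$, applies Lemma~\ref{Lem a s bound} with exponent $j/2$ to $V_{A_{0,b}}(z)^{l}V_{A_{0,b}}(w)^{l}G_{D\setminus A_{0,b}}^{j/2}$ (this is where $(1\vee\diam(D))^{j/8}$ comes from), and then applies Proposition~\ref{Prop Green} to $G_{D}^{j/2}$, which yields a Beurling factor in $d(z,\partial D)\wedge d(w,\partial D)$ rather than $d(z,A_{0,b})\wedge d(w,A_{0,b})$. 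The dichotomy for the $\vert\log\CR\vert$ factors is then run on $d(\cdot,\partial D)$, the scale that genuinely controls $\CR(\cdot,D)$; this is why small $\diam(D)$ causes no trouble there. For the downstream use (Corollary~\ref{Cor bref psi k CR}) the exponent $j/4$ would serve just as well, so your approach, once the first inequality is corrected, proves a serviceable variant of the lemma by a somewhat simpler single-dichotomy argument.
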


\begin{proof}
Since $G_{D\setminus A_{0,b}}(z,w)\leq G_{D}(z,w)$,
\begin{multline*}
\vert\log \CR(z,D)\vert^{q}
\vert\log \CR(w,D)\vert^{q}
V_{A_{0,b}}(z)^{l}V_{A_{0,b}}(w)^{l}
G_{D\setminus A_{0,b}}(z,w)^{j}
\ind_{\{\ell_{0,b}(z)=\ell_{0,b}(w)=b\}}
\leq
\\
\big(
\vert\log \CR(z,D)\vert^{q}
\vert\log \CR(w,D)\vert^{q}
G_{D}(z,w)^{j/2}
\big)
V_{A_{0,b}}(z)^{l}V_{A_{0,b}}(w)^{l}
G_{D\setminus A_{0,b}}(z,w)^{j/2}
\ind_{\{\ell_{0,b}(z)=\ell_{0,b}(w)=b\}}.
\end{multline*}
As in Lemma \ref{Lem a s bound} ($j/2$ does not need to be integer),
\begin{multline*}
V_{A_{0,b}}(z)^{l}V_{A_{0,b}}(w)^{l}
G_{D\setminus A_{0,b}}(z,w)^{j/2}
\ind_{\{\ell_{0,b}(z)=\ell_{0,b}(w)=b\}}
\\
\leq C (1\vee\diam(D)^{j/8})
\Big(
1\vee\log\Big(\dfrac{2(1\vee\diam(D))}{\vert w-z\vert}\Big)^{j/2+2l}
\Big)
\,
\ind_{z\stackrel{D\setminus A_{0,b}}{\longleftrightarrow} w,\,\ell_{0,b}(z) = b},
\end{multline*}
for some constant $C>0$ depending on $j$ and $l$.
Next we use the inequality \eqref{Eq est Green} (Proposition \ref{Prop Green}):
\begin{displaymath}
G_{D}(z,w) \leq C'\Big( 1\wedge \Big(
\dfrac{d(z,\partial D)\wedge d(w,\partial D)}{\vert w-z\vert}\Big)^{1/2}
\Big)
\log\Big(\dfrac{2 \diam (D)}{\vert w-z\vert}\Big),
\end{displaymath}
for some universal constant $C'>0$.
Moreover, by \eqref{Eq Koebe},
\begin{displaymath}
\vert\log \CR(z,D)\vert
\leq \vert\log d(z,\partial D)\vert + 2 \log(2).
\end{displaymath}
Thus,
\begin{multline*}
\vert\log \CR(z,D)\vert^{q}
\vert\log \CR(w,D)\vert^{q}
G_{D}(z,w)^{j/2}
\leq
\\
(C')^{j/2}
(\vert\log d(z,\partial D)\vert + 2 \log(2))^{q}
(\vert\log d(w,\partial D)\vert + 2 \log(2))^{q}
\\
\times
\Big( 1\wedge \Big(
\dfrac{d(z,\partial D)\wedge d(w,\partial D)}{\vert w-z\vert}\Big)^{j/4}
\Big)
\log\Big(\dfrac{2 \diam (D)}{\vert w-z\vert}\Big)^{j/2}
.
\end{multline*}
Further, we will distinguish two cases:
$d(z,\partial D)\wedge d(w,\partial D)\leq \vert w-z\vert^{2}$
and
$d(z,\partial D)\wedge d(w,\partial D)> \vert w-z\vert^{2}$.

In the case $d(z,\partial D)\wedge d(w,\partial D)\leq \vert w-z\vert^{2}$,
\begin{displaymath}
1\wedge \Big(
\dfrac{d(z,\partial D)\wedge d(w,\partial D)}{\vert w-z\vert}\Big)^{j/4}
\leq
1\wedge (d(z,\partial D)\wedge d(w,\partial D))^{j/8},
\end{displaymath}
and further,
\begin{multline*}
(\vert\log d(z,\partial D)\vert + 2 \log(2))^{q}
(\vert\log d(w,\partial D)\vert + 2 \log(2))^{q}
\Big( 1\wedge \Big(
\dfrac{d(z,\partial D)\wedge d(w,\partial D)}{\vert w-z\vert}\Big)^{j/4}
\Big)
\\
\leq
C^{\ast}_{q,j}(2\log(2)+0\vee \log \diam (D))^{2q}
\leq
(2\log(2) + 1)^{2q} C^{\ast}_{q,j}(1\vee \log \diam (D))^{2q}
,
\end{multline*}
where
\begin{displaymath}
C^{\ast}_{q,j} = \sup_{x\in (0,1]}\Big(1 + \dfrac{\vert \log x \vert}{2\log(2)}\Big)^{2q} x^{j/8}
< +\infty.
\end{displaymath}

In the case $d(z,\partial D)\wedge d(w,\partial D) > \vert w-z\vert^{2}$,
we have
\begin{multline*}
(\vert\log d(z,\partial D)\vert + 2 \log(2))^{q}
(\vert\log d(w,\partial D)\vert + 2 \log(2))^{q}
\\
\leq
((2\vert \log \vert w-z\vert\vert)\vee\vert \log (1\vee\diam(D))\vert + 2\log (2))^{2q}
\\
\leq
4^{q}
\Big(
1
+
\dfrac{1}{\log 2}
\Big)^{2q}
(1\vee \log \diam (D))^{2q}
\Big(
1\vee\log\Big(\dfrac{2(1\vee\diam(D))}{\vert w-z\vert}\Big)^{2q}
\Big).
\end{multline*}
This concludes the proof.
\end{proof}

Armed with the bound of Lemma \ref{Lem a s bound q},
we can proceed as in the derivation of Corollary \ref{Cor bref psi k}.
We get the following.

\begin{cor}
\label{Cor bref psi k CR}
Let $\eta\in (0,1)$ and $k\geq 0$.
Then there are constants $c,C>0$
($c=c(\eta)$,
$C=C(D,f_{0},\eta,k,v)$),
such that for every $b\in2\lambda\N$ with $b\geq v + 2\lambda$,
\begin{displaymath}
\E\Big[\big\Vert \tilde{\psi}_{2k+1,A} - \E[\tilde{\psi}_{2k+1, A} \vert A_{0,b},\ell_{0,b}]
\big\Vert_{H^{-\eta}(\C)}^{2}
\Big]^{1/2}
\leq C e^{-c b}.
\end{displaymath}
\end{cor}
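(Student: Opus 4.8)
The plan is to follow the same route as in the proof of Corollary~\ref{Cor bref psi k}, now keeping track of the extra deterministic factors $(\log\CR(z,D))^{q}$ coming from the definition \eqref{Eq def tilde psi} of $\tilde\psi_{2k+1,A}$. Recall that $\tilde\psi_{2k+1,A}$ is, by \eqref{Eq def tilde psi}, a finite linear combination with constant coefficients of the fields $(\log\CR(z,D))^{q}\psi_{2(k-q)+1,A}$ for $q\in\{0,\dots,k\}$. Since conditional expectation is linear and commutes with multiplication by the deterministic function $(\log\CR(z,D))^{q}$, we have
\begin{displaymath}
\tilde\psi_{2k+1,A} - \E[\tilde\psi_{2k+1, A} \vert A_{0,b},\ell_{0,b}]
=
\sum_{q=0}^{k}\dfrac{(2k+1)!}{2^{q} q! (2(k-q)+1)!}
\big(\log\CR(z,D)\big)^{q}
\big(\psi_{2(k-q)+1,A} - \E[\psi_{2(k-q)+1, A} \vert A_{0,b},\ell_{0,b}]\big),
\end{displaymath}
so by the triangle inequality in $L^{2}(d\PP,\sigma(\Phi),H^{-\eta}(\C))$ it suffices to bound each summand. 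The $q=0$ term is exactly the content of Corollary~\ref{Cor bref psi k}. For $q\geq 1$ the plan is to invoke Lemma~\ref{Lem Delta psi CR 1}, which reduces the bound to controlling the $H^{-\eta}(\C)$ norms of the mixed terms $\ell_{0,b}^{2(k-q)+1-j-2l}(\log\CR(z,D))^{q}V_{A_{0,b}}^{l}:\Phi_{D\setminus A_{0,b}}^{j}:\ind_{\{z\in A_{0,b}\text{~or~}\ell_{0,b}(z)=b\}}$, then Lemma~\ref{Lem 2 point cor CR D A 0 b} to express the second moment as a double integral, then the pointwise bound of Lemma~\ref{Lem a s bound q} together with Lemmas~\ref{Lem bound ED} and~\ref{Lem ED Gauss} to replace $\ind_{z\stackrel{D\setminus A_{0,b}}{\longleftrightarrow} w,\,\ell_{0,b}(z)=b}$ by the Gaussian tail $e^{-(b-v-2\lambda)^{2}/(2\log(2)/\pi+2G_{D}(z,w))}$, and finally an analogue of Lemma~\ref{Lem better bound 2} (splitting the integral over $D^{2}$ at $\vert w-z\vert=r(b)=e^{-\pi(b-v-2\lambda-2\log(2)/\pi)}\diam(D)$) to produce a bound of the form $Ce^{-cb}$ with $c=c(\eta)$ and $C$ depending on $D,\eta,k,v$.

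Concretely, after applying Lemma~\ref{Lem a s bound q} the relevant double integral becomes
\begin{displaymath}
b^{4(k-q)+2-2j-4l}\int_{D^{2}}
e^{-(b-v-2\lambda)^{2}/(2\log(2)/\pi+2G_{D}(z,w))}
\Big(1\vee\log\Big(\dfrac{2(1\vee\diam(D))}{\vert w-z\vert}\Big)^{j+2l+2q}\Big)
\LK_{\eta}(\vert w-z\vert)\,d^{2}z\,d^{2}w,
\end{displaymath}
up to a deterministic constant $C_{q,j,l}(1\vee\log\diam(D))^{2q}(1\vee\diam(D)^{j/8})$. On $\{\vert w-z\vert\geq r(b)\}$ one uses $G_{D}(z,w)\le\frac{1}{2\pi}\log(\diam(D)/\vert w-z\vert)$ exactly as in Lemma~\ref{Lem better bound}, which gives a factor $e^{-(b-v-2\lambda)}$ times the finite integral $I_{\eta,j,l,q}(D)$ of the logarithmic weight against $\LK_{\eta}$; on $\{\vert w-z\vert<r(b)\}$ one bounds the exponential by $1$ and uses $\LK_{\eta}(r)\asymp r^{-2+2\eta}$ (this is where we restrict to $\eta\in(0,1)$, harmless by monotonicity of the norms) to get $c_{\eta}\vert D\vert\, r(b)^{2\eta}$ times a power of $(1\vee\log(1/r(b)))$, i.e. a power of $b$. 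In both cases the polynomial-in-$b$ prefactors (including $b^{4(k-q)+2-2j-4l}$ and the logarithmic-weight powers $(1\vee(\pi b+\cdots))^{j+2l+2q}$) are absorbed into the exponential decay $e^{-((2\pi\eta)\wedge 1)(b-v-2\lambda-2\log(2)/\pi)}$, yielding $Ce^{-cb}$. Summing over the finitely many $(j,l)$ in Lemma~\ref{Lem Delta psi CR 1} and the finitely many $q$ completes the argument.

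The steps are essentially routine given the machinery already developed; the only genuinely new ingredient is the pointwise estimate of Lemma~\ref{Lem a s bound q}, which is the analogue of Lemma~\ref{Lem a s bound} but must now simultaneously absorb the factors $\vert\log\CR(z,D)\vert^{q}$ that blow up near $\partial D$. The trick there — and the step I expect to need the most care — is to split a power $G_{D\setminus A_{0,b}}(z,w)^{j}$ as $G_{D\setminus A_{0,b}}(z,w)^{j/2}\cdot G_{D}(z,w)^{j/2}$, use the Beurling/Koebe estimate on the second half (Proposition~\ref{Prop Green} plus \eqref{Eq Koebe}) to beat the $\vert\log d(z,\partial D)\vert^{q}$ factors via $\sup_{x\in(0,1]}(1+\vert\log x\vert)^{2q}x^{j/8}<\infty$, and use the already-established FPS-type bound of Lemma~\ref{Lem a s bound} on the first half. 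Once Lemma~\ref{Lem a s bound q} is in hand, everything downstream is a direct transcription of Corollaries~\ref{Cor cool bound}--\ref{Cor bref psi k} with the harmless extra logarithmic weight of degree $j+2l+2q$ instead of $j+2l$, and with an additional deterministic multiplicative constant $(1\vee\log\diam(D))^{2q}$; no interchange of limits or new probabilistic input beyond Theorems~\ref{Thm ASW CR ell a b} and~\ref{Thm ED ALS3} (already used) is required.
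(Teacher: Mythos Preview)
Your proposal is correct and follows essentially the same approach as the paper: the paper's proof of Corollary~\ref{Cor bref psi k CR} simply says that, armed with the bound of Lemma~\ref{Lem a s bound q}, one proceeds exactly as in the derivation of Corollary~\ref{Cor bref psi k}. You have spelled out this derivation in detail, correctly identifying the chain Lemma~\ref{Lem Delta psi CR 1} $\to$ Lemma~\ref{Lem 2 point cor CR D A 0 b} $\to$ Lemma~\ref{Lem a s bound q} $\to$ Lemmas~\ref{Lem bound ED}, \ref{Lem ED Gauss} $\to$ analogue of Lemma~\ref{Lem better bound 2}, and even correctly describing the splitting trick $G_{D\setminus A_{0,b}}^{j}\le G_{D\setminus A_{0,b}}^{j/2}G_{D}^{j/2}$ that makes Lemma~\ref{Lem a s bound q} work.
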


\subsection{Bounding the tails of power series}
\label{Subsec bound series}

Here we are going to bound the tails of the power series in $\vert\log \varepsilon\vert^{-1}$,
\begin{displaymath}
\sum
(-1)^{k}
\dfrac{1}{2^{k} k! (k+1/2)}
\dfrac{\E[\psi_{2k+1, A} \vert A_{0,b},\ell_{0,b}]}{\big(\frac{1}{2\pi}\vert\log \varepsilon\vert\big)^{k + 1/2}}
\qquad
\text{and}
\qquad
\sum
(-1)^{k}
\dfrac{1}{2^{k} k! (k+1/2)}
\dfrac{\E[\tilde{\psi}_{2k+1, A} \vert A_{0,b},\ell_{0,b}]}{\big(\frac{1}{2\pi}\vert\log \varepsilon\vert\big)^{k + 1/2}}
,
\end{displaymath}
appearing in Proposition \ref{Prop cond series}.

Let $N\geq 0$ and $z\in D\setminus A_{0,b}$ such that $2\pi V_{A_{0,b}}<\vert\log\varepsilon\vert$.
Then
\begin{align*}
\sum_{k=N+1}^{+\infty}
(-1)^{k}&
\dfrac{1}{2^{k} k! (k+1/2)}
\dfrac{\E[\psi_{2k+1, A} \vert A_{0,b},\ell_{0,b}](z)}{\big(\frac{1}{2\pi}\vert\log \varepsilon\vert\big)^{k + 1/2}}
\\
&=
\sum_{k=N+1}^{+\infty}
(-1)^{k}
\dfrac{1}{2^{k} k! (k+1/2)}
\dfrac{Q_{2k+1}(\ell_{0,b}(z),V_{A_{0,b}}(z))}{\big(\frac{1}{2\pi}\vert\log \varepsilon\vert\big)^{k + 1/2}}
\\
&=
\ind_{\ell_{0,b}(z)=b}
\sum_{k=N+1}^{+\infty}
(-1)^{k}
\dfrac{1}{2^{k} k! (k+1/2)}
\dfrac{Q_{2k+1}(b,V_{A_{0,b}}(z))}{\big(\frac{1}{2\pi}\vert\log \varepsilon\vert\big)^{k + 1/2}}
\\
&=
\ind_{\ell_{0,b}(z)=b}
\sum_{k=N+1}^{+\infty}
\sum_{j=0}^{k}
(-1)^{k+j}
\dfrac{(2k+1)!}{2^{k+j} k! j! (2(k-j)+1)! (k+1/2)}
\dfrac{b^{2(k-j)+1}V_{A_{0,b}}(z)^{j}}{\big(\frac{1}{2\pi}\vert\log \varepsilon\vert\big)^{k + 1/2}}.
\end{align*}

Similarly, for $z\in D\setminus A_{0,b}$ such that $\CR(z,D\setminus A_{0,b})\in (\varepsilon,\varepsilon^{-1})$,
\begin{multline*}
\sum_{k=N+1}^{+\infty}
(-1)^{k}
\dfrac{1}{2^{k} k! (k+1/2)}
\dfrac{\E[\tilde{\psi}_{2k+1, A} \vert A_{0,b},\ell_{0,b}](z)}{\big(\frac{1}{2\pi}\vert\log \varepsilon\vert\big)^{k + 1/2}}
\\
=
\ind_{\ell_{0,b}(z)=b}
\sum_{k=N+1}^{+\infty}
(-1)^{k}
\dfrac{1}{2^{k} k! (k+1/2)}
\dfrac{Q_{2k+1}(b,-\frac{1}{2\pi}\log \CR(z,D\setminus A_{0,b}))}
{\big(\frac{1}{2\pi}\vert\log \varepsilon\vert\big)^{k + 1/2}}
\\
=
\ind_{\ell_{0,b}(z)=b}
\sum_{k=N+1}^{+\infty}
\sum_{j=0}^{k}
(-1)^{k}
\dfrac{(2k+1)!}{2^{k+j} k! j! (2(k-j)+1)! (k+1/2)}
\dfrac{b^{2(k-j)+1}(\frac{1}{2\pi}\log \CR(z,D\setminus A_{0,b}))^{j}}
{\big(\frac{1}{2\pi}\vert\log \varepsilon\vert\big)^{k + 1/2}}.
\end{multline*}

Denote
\begin{multline*}
F_{N}(u) = 
\sum_{k=N+1}^{+\infty}
\sum_{j=0}^{k}
\dfrac{(2k+1)!}{2^{k+j} k! j! (2(k-j)+1)! (k+1/2)}
\,u^{k-(N+1)}
\\
=\sum_{m=0}^{+\infty}
\dfrac{1}{2^{m} m! (m+1/2)}
\dfrac{u^{m -(N+1)}}{(1-u)^{m+1/2}}
-
\sum_{k=0}^{N}
\sum_{j=0}^{k}
\dfrac{(2k+1)!}{2^{k+j} k! j! (2(k-j)+1)! (k+1/2)}
\,u^{k-(N+1)}.
\end{multline*}
The radius of convergence of the above power series in $u$ is $1$.
The following is now immediate.

\begin{lemma}
\label{Lem bound series}
Fix $N\geq 0$. Let $z\in D\setminus A_{0,b}$ such that $2\pi V_{A_{0,b}}<\vert\log\varepsilon\vert$.
Then
\begin{displaymath}
\Big\vert
\sum_{k=N+1}^{+\infty}
(-1)^{k}
\dfrac{1}{2^{k} k! (k+1/2)}
\dfrac{\E[\psi_{2k+1, A} \vert A_{0,b},\ell_{0,b}](z)}{\big(\frac{1}{2\pi}\vert\log \varepsilon\vert\big)^{k + 1/2}}
\Big\vert
\leq
\dfrac{1}{\sqrt{2\pi}}
\big(u^{N+3/2}F_{N}(u)\big)
\Big(
u=
\dfrac{b^{2}\vee V_{A_{0,b}}(z)}{\frac{1}{2\pi}\vert\log\varepsilon\vert}
\Big).
\end{displaymath}
Let $z\in D\setminus A_{0,b}$ such that $\CR(z,D\setminus A_{0,b})\in (\varepsilon,\varepsilon^{-1})$.
Then
\begin{multline*}
\Big\vert
\sum_{k=N+1}^{+\infty}
(-1)^{k}
\dfrac{1}{2^{k} k! (k+1/2)}
\dfrac{\E[\tilde{\psi}_{2k+1, A} \vert A_{0,b},\ell_{0,b}](z)}{\big(\frac{1}{2\pi}\vert\log \varepsilon\vert\big)^{k + 1/2}}
\Big\vert
\\
\leq
\dfrac{1}{\sqrt{2\pi}}
\big(u^{N+3/2}F_{N}(u)\big)
\Big(
u=
\dfrac{b^{2}\vee (\frac{1}{2\pi}\vert \log \CR(z,D\setminus A_{0,b})\vert)}{\frac{1}{2\pi}\vert\log\varepsilon\vert}
\Big).
\end{multline*}
\end{lemma}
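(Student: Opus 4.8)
The plan is to read the bound off directly from the two explicit double-series expansions of the tails written immediately above the statement: once those are in hand, the lemma follows by discarding the cancellations in the alternating sums and applying a single crude monomial estimate. First I would dispose of the event $\{\ell_{0,b}(z)\neq b\}$: there $Q_{2k+1}(\ell_{0,b}(z),\cdot)=Q_{2k+1}(0,\cdot)=0$ for every $k$ (using $\eqref{Eq change var}$ only through $\E[\psi_{2k+1,A}\mid A_{0,b},\ell_{0,b}]=Q_{2k+1}(\ell_{0,b},V_{A_{0,b}})$ from Proposition \ref{Prop decomp Wick TVS}), so the left-hand side vanishes identically while the right-hand side $\tfrac1{\sqrt{2\pi}}\,u^{N+3/2}F_N(u)$ is nonnegative (the coefficients of $F_N$ are positive and $u\geq 0$); hence the inequality is trivial there. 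So it suffices to argue on $\{\ell_{0,b}(z)=b\}$.

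On that event, write $c_{k,j}=\tfrac{(2k+1)!}{2^{k+j}k!\,j!\,(2(k-j)+1)!\,(k+1/2)}>0$, so that $F_N(u)=\sum_{k\geq N+1}\big(\sum_{j=0}^{k}c_{k,j}\big)u^{k-(N+1)}$ by definition. The computation preceding the lemma exhibits the tail, up to the constant $\tfrac1{\sqrt{2\pi}}$ carried along from the L\'evy density of the Brownian first hitting time (cf. \eqref{Eq series 1}), as $\tfrac1{\sqrt{2\pi}}\sum_{k\geq N+1}\sum_{j=0}^{k}(-1)^{k+j}c_{k,j}\,b^{2(k-j)+1}V_{A_{0,b}}(z)^{j}\,(\tfrac1{2\pi}|\log\varepsilon|)^{-(k+1/2)}$, and by the triangle inequality it is enough to bound the same sum with all signs removed. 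The only estimate I would invoke is $b^{2(k-j)+1}V_{A_{0,b}}(z)^{j}\leq M^{k+1/2}$ with $M=b^{2}\vee V_{A_{0,b}}(z)$, which holds because $b^{2(k-j)}\leq M^{k-j}$, $b\leq M^{1/2}$ (as $b^{2}\leq M$), and $V_{A_{0,b}}(z)^{j}\leq M^{j}$ (recall $V_{A_{0,b}}(z)\geq 0$). Dividing by $(\tfrac1{2\pi}|\log\varepsilon|)^{k+1/2}$ dominates the general term by $u^{k+1/2}$ for $u=M/(\tfrac1{2\pi}|\log\varepsilon|)$, so the modulus of the tail is at most $\tfrac1{\sqrt{2\pi}}\sum_{k\geq N+1}\big(\sum_{j=0}^{k}c_{k,j}\big)u^{k+1/2}=\tfrac1{\sqrt{2\pi}}\,u^{N+3/2}F_N(u)$; this is exactly the asserted inequality. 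When $u\geq 1$ the power series $F_N$, which has radius of convergence $1$, diverges and the bound is vacuously true, so the substance is confined to the regime $u<1$.

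The statement for $\tilde\psi_{2k+1,A}$ is established verbatim the same way. By the change-of-variance identity \eqref{Eq change var} one has $\E[\tilde\psi_{2k+1,A}\mid A_{0,b},\ell_{0,b}](z)=\ind_{\ell_{0,b}(z)=b}\,Q_{2k+1}\big(b,-\tfrac1{2\pi}\log\CR(z,D\setminus A_{0,b})\big)$, so I would rerun the previous argument with $V_{A_{0,b}}(z)$ replaced by $-\tfrac1{2\pi}\log\CR(z,D\setminus A_{0,b})$, bounding the absolute value of its $j$-th power by $(\tfrac1{2\pi}|\log\CR(z,D\setminus A_{0,b})|)^{j}\leq M^{j}$ where now $M=b^{2}\vee(\tfrac1{2\pi}|\log\CR(z,D\setminus A_{0,b})|)$; the sign of the logarithm is irrelevant once one has passed to absolute values, and the hypothesis $\CR(z,D\setminus A_{0,b})\in(\varepsilon,\varepsilon^{-1})$ is precisely the condition under which the expansion of Proposition \ref{Prop cond series} is valid on this event.

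As the author's wording (``now immediate'') signals, there is no genuine obstacle here. The only points demanding a little care are the index bookkeeping needed to recognise $\sum_{k\geq N+1}(\sum_{j}c_{k,j})u^{k-(N+1)}$ as $F_N(u)$, and the discipline of discarding the cancellations in the alternating series so as to dominate it by the companion series with positive coefficients, together with the harmless sign issue for the logarithm in the $\tilde\psi$ case.
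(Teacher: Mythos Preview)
Your approach is exactly what the paper intends by ``now immediate'': take the explicit double-sum expression for the tail computed just above the lemma, pass to absolute values, bound each monomial $b^{2(k-j)+1}V_{A_{0,b}}(z)^{j}$ by $M^{k+1/2}$ with $M=b^{2}\vee V_{A_{0,b}}(z)$, and recognise $F_{N}$. The handling of the case $\ell_{0,b}(z)=0$, the second statement for $\tilde\psi$, and the remark on the radius of convergence are all fine.

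There is one slip. The display immediately preceding the lemma expresses the tail as the double sum \emph{without} any prefactor $\tfrac1{\sqrt{2\pi}}$; you have inserted one that is not there. Your argument therefore actually proves the bound $u^{N+3/2}F_{N}(u)$, not $\tfrac1{\sqrt{2\pi}}\,u^{N+3/2}F_{N}(u)$. The $\tfrac1{\sqrt{2\pi}}$ on the right-hand side of the lemma's statement appears to be a typo in the paper: in the application in Section~\ref{Subsec proof expansion} (term~(6)), the quantity being bounded already carries an external $\tfrac1{\sqrt{2\pi}}$, and the bound written there matches the lemma \emph{without} that factor on the right. Your attribution of the extra $\tfrac1{\sqrt{2\pi}}$ to the L\'evy density is not correct: that constant sits in front of the whole series in~\eqref{Eq series 1} and in Theorem~\ref{Thm A E FPS}, not inside the summands $\E[\psi_{2k+1,A}\mid A_{0,b},\ell_{0,b}]=Q_{2k+1}(\ell_{0,b},V_{A_{0,b}})$.
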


\subsection{Proof of Theorem \ref{Thm A E FPS}}
\label{Subsec proof expansion}

We are now ready to prove Theorem \ref{Thm A E FPS}.

\begin{proof}[Proof of Theorem \ref{Thm A E FPS}]
We start with \eqref{Eq A E N}.
Fix $\eta\in (0,1)$ and an integer $N\geq 0$.
Let $\varepsilon\in (0,e^{-1})$,
$\delta\in(\varepsilon,e^{-1})$ and
$b\in 2\lambda \N$ with $b\geq (2v)\vee(v + 2\lambda)$.
Recall the notation $U_{0,b,\varepsilon}$ \eqref{Eq U 0 b eps}.
We will further use the notations
\begin{displaymath}
U_{0,b,\varepsilon}^{(0)}
=
\{ z\in D\setminus A_{0,b}\,\vert \CR(z,D\setminus A_{0,b})<\varepsilon\CR(z,D)
\text{ and } \ell_{0,b}(z)=0\},
\end{displaymath}
\begin{displaymath}
U_{0,b,\delta}^{(b)}
=
\{ z\in D\setminus A_{0,b}\,\vert \CR(z,D\setminus A_{0,b})<\delta\CR(z,D)
\text{ and } \ell_{0,b}(z)=b\}.
\end{displaymath}
Proposition \ref{Prop cond series} ensures that one can bound the difference
\begin{displaymath}
\Big\Vert
\ind_{\Ns_{\varepsilon}(A)}
-
\dfrac{1}{\sqrt{2\pi}}
\sum_{k=0}^{N} (-1)^{k}
\dfrac{1}{2^{k} k! (k+1/2)}
\dfrac{\psi_{2k+1,A}}{\big(\frac{1}{2\pi}\vert\log \varepsilon\vert\big)^{k + 1/2}}
\Big\Vert_{H^{-\eta}(\C)}
\end{displaymath}
by the sum of the sum of the Sobolev norms of the following terms:
\begin{enumerate}
\item 
$\ind_{U_{0,b,\varepsilon}^{(0)}}$;
\item $\ind_{U_{0,b,\delta}^{(b)}}$;
\item $\ind_{\Ns_{\varepsilon}(A)}
-\E\big[\ind_{\Ns_{\varepsilon}(A)}\big\vert  A_{0,b},\ell_{0,b}\big]$;
\item $\dfrac{1}{\sqrt{2\pi}}
\sum_{k=0}^{N} (-1)^{k}
\dfrac{1}{2^{k} k! (k+1/2)}
\dfrac{\psi_{2k+1,A}-\E[\psi_{2k+1,A}\vert A_{0,b},\ell_{0,b}]}
{\big(\frac{1}{2\pi}\vert\log \varepsilon\vert\big)^{k + 1/2}}$;
\item $\ind_{U_{0,b,\delta}^{(b)}} \dfrac{1}{\sqrt{2\pi}} 
\sum_{k=0}^{N} (-1)^{k}
\dfrac{1}{2^{k} k! (k+1/2)}
\dfrac{\E[\psi_{2k+1,A}\vert A_{0,b},\ell_{0,b}]}
{\big(\frac{1}{2\pi}\vert\log \varepsilon\vert\big)^{k + 1/2}}$;
\item $\ind_{D\setminus (A_{0,b}\cup U_{0,b,\varepsilon}^{(0)}\cup U_{0,b,\delta}^{(b)})}
\dfrac{1}{\sqrt{2\pi}}
\sum_{k=N+1}^{+\infty} (-1)^{k} \dfrac{1}{2^{k} k! (k+1/2)}
\dfrac{\E[\psi_{2k+1,A}\vert A_{0,b},\ell_{0,b}]}
{\big(\frac{1}{2\pi}\vert\log \varepsilon\vert\big)^{k + 1/2}}$.
\end{enumerate}

For the terms (1) and (2), we will use the result of Section \ref{Subsec error terms TVS}.
Since $U_{0,b,\varepsilon}^{(0)}\subset U_{0,b,\varepsilon}$
and $U_{0,b,\delta}^{(b)}\subset U_{0,b,\delta}$,
\begin{displaymath}
\big\Vert\ind_{U_{0,b,\varepsilon}}^{(0)}\big\Vert_{H^{-\eta}(\C)}
\leq
\big\Vert\ind_{U_{0,b,\varepsilon}}\big\Vert_{H^{-\eta}(\C)},
\qquad
\big\Vert\ind_{U_{0,b,\delta}}^{(b)}\big\Vert_{H^{-\eta}(\C)}
\leq
\big\Vert\ind_{U_{0,b,\delta}}\big\Vert_{H^{-\eta}(\C)}.
\end{displaymath}
Then, by Lemma \ref{Lem area 0},
\begin{displaymath}
\E\Big[\big\Vert\ind_{U_{0,b,\varepsilon}}^{(0)}\big\Vert_{H^{-\eta}(\C)}^{2}\Big]^{1/2}
\leq \dfrac{2}{\sqrt{\pi}}\big\Vert\ind_{D}\big\Vert_{H^{-\eta}(\C)}\,
\varepsilon^{\pi/(32 b^{2})},
\end{displaymath}
\begin{displaymath}
\E\Big[\big\Vert\ind_{U_{0,b,\delta}}^{(b)}\big\Vert_{H^{-\eta}(\C)}^{2}\Big]^{1/2}
\leq \dfrac{2}{\sqrt{\pi}}\big\Vert\ind_{D}\big\Vert_{H^{-\eta}(\C)}\,
\delta^{\pi/(32 b^{2})}.
\end{displaymath}

For the term (3), we will use the result of Section \ref{Subsec diff ind cond}.
According to Corollary \ref{Cor Eq ind norm exp},
\begin{displaymath}
\E\Big[\Big\Vert
\ind_{\Ns_{\varepsilon}(A)}
-\E\big[\ind_{\Ns_{\varepsilon}(A)}\big\vert  A_{0,b},\ell_{0,b}\big]
\Big\Vert_{H^{-\eta}(\C)}^{2}\Big]^{1/2}
\leq
Ce^{-cb}
(\vert D\vert^{1/2} \diam(D)^{\eta} +
\Vert 1_{D} \Vert_{H^{-\eta}(\C)}
),
\end{displaymath}
for some constants $c,C>0$ depending on $\eta$ and the boundary condition $v$.

For the term (4), we will use the result of Section \ref{Subsec error fields}.
According to Corollary \ref{Cor bref psi k},
\begin{displaymath}
\E\Big[\Big\Vert
\dfrac{1}{\sqrt{2\pi}}
\sum_{k=0}^{N} (-1)^{k}
\dfrac{1}{2^{k} k! (k+1/2)}
\dfrac{\psi_{2k+1,A}-\E[\psi_{2k+1,A}\vert A_{0,b},\ell_{0,b}]}
{\big(\frac{1}{2\pi}\vert\log \varepsilon\vert\big)^{k + 1/2}}
\Big\Vert_{H^{-\eta}(\C)}^{2}\Big]^{1/2}
\leq
Ce^{-cb},
\end{displaymath}
where $c,C>0$ are constants depending on $D$, $\eta$, $N$ and $v$.

For the term (5), we will again rely on Section \ref{Subsec error terms TVS}.
The term (5) can be bounded by a linear combination of terms of form
\begin{equation}
\label{Eq b V eps bound}
\ind_{U_{0,b,\delta}}b^{2(k-j)+1}V_{A_{0,b}}^{j}\vert \log \varepsilon\vert^{-(k+1/2)},
\end{equation}
with the coefficients of the combination being some constants depending only on $k$ and $j$.
Since $\varepsilon\leq e^{-1}$,
$\delta\leq e^{-1}$ (so that $V_{A_{0,b}}\geq 1/(2\pi)$ on $U_{0,b,\delta}$),
and provided $b\geq 1$, 
the quantity \eqref{Eq b V eps bound} is bounded by
\begin{displaymath}
\ind_{U_{0,b,\delta}}(2\pi)^{N}b^{2N+1}V_{A_{0,b}}^{N}\vert \log \varepsilon\vert^{-1/2}.
\end{displaymath}
Then, by Lemma \ref{Lem V area 0}, we get that
\begin{displaymath}
\E\Big[\Big\Vert
\ind_{U_{0,b,\delta}^{(b)}} \dfrac{1}{\sqrt{2\pi}} 
\sum_{k=0}^{N} (-1)^{k}
\dfrac{1}{2^{k} k! (k+1/2)}
\dfrac{\E[\psi_{2k+1,A}\vert A_{0,b},\ell_{0,b}]}
{\big(\frac{1}{2\pi}\vert\log \varepsilon\vert\big)^{k + 1/2}}
\Big\Vert_{H^{-\eta}(\C)}^{2}\Big]^{1/2}
\end{displaymath}
is bounded by
\begin{displaymath}
C
\big\Vert\ind_{D}\big\Vert_{H^{-\eta}(\C)}\,
b^{3N+1}
\Big(
1\vee
\Big(
\dfrac{\pi}{32 b^{2}} \vert\log\delta \vert
\Big)
\Big)^{N-1}
\vert \log \varepsilon\vert^{-1/2}
\,
\delta^{\pi/(32 b^{2})},
\end{displaymath}
where $C>0$ is a constant depending only on $N$.
The above can be further bounded by
\begin{displaymath}
C'\big\Vert\ind_{D}\big\Vert_{H^{-\eta}(\C)}\,
b^{3N+1} \vert\log\delta \vert^{N}\,
\delta^{\pi/(32 b^{2})}.
\end{displaymath}

For the term (6), we will use the result of Section \ref{Subsec bound series}.
Accord to Lemma \ref{Lem bound series},
\begin{multline*}
\E\Big[\Big\Vert
\ind_{D\setminus (A_{0,b}\cup U_{0,b,\varepsilon}^{(0)}\cup U_{0,b,\delta}^{(b)})}
\dfrac{1}{\sqrt{2\pi}}
\sum_{k=N+1}^{+\infty} (-1)^{k}
\dfrac{1}{2^{k} k! (k+1/2)}
\dfrac{\E[\psi_{2k+1,A}\vert A_{0,b},\ell_{0,b}]}
{\big(\frac{1}{2\pi}\vert\log \varepsilon\vert\big)^{k + 1/2}}
\Big\Vert_{H^{-\eta}(\C)}^{2}\Big]^{1/2}
\\
\leq
\dfrac{1}{\sqrt{2\pi}}
\Big(\dfrac{(2\pi b^{2})\vee\vert\log\delta\vert}
{\vert\log\varepsilon\vert}\Big)^{N + 3/2}
F_{N}\Big(\dfrac{(2\pi b^{2})\vee\vert\log\delta\vert}
{\vert\log\varepsilon\vert}\Big)
\big\Vert\ind_{D}\big\Vert_{H^{-\eta}(\C)}.
\end{multline*}

By adding up all the bounds, we get that
\begin{displaymath}
\E\Big[
\Big\Vert
\ind_{\Ns_{\varepsilon}(A)}
-
\dfrac{1}{\sqrt{2\pi}}
\sum_{k=0}^{N} (-1)^{k}
\dfrac{1}{2^{k} k! (k+1/2)}
\dfrac{\psi_{2k+1,A}}{\big(\frac{1}{2\pi}\vert\log \varepsilon\vert\big)^{k + 1/2}}
\Big\Vert_{H^{-\eta}(\C)}^{2}
\Big]^{1/2}
\end{displaymath}
is smaller or equal to
\begin{equation}
\label{Eq big bound eps delta b}
C\Big(\varepsilon^{c/b^{2}} + b^{3N+1} \vert\log\delta \vert^{N}\delta^{c/b^{2}} + e^{-c b}
+
\Big(\dfrac{(2\pi b^{2})\vee\vert\log\delta\vert}
{\vert\log\varepsilon\vert}\Big)^{N + 3/2}
F_{N}\Big(\dfrac{(2\pi b^{2})\vee\vert\log\delta\vert}
{\vert\log\varepsilon\vert}\Big)
\Big),
\end{equation}
where the constants $c,C>0$ may in general depend on $D$, $\eta$, $N$, $v$,
but crucially not on $\varepsilon$, $\delta$ or $b$.

Further, we want to choose $\delta$ and $b$ depending on $\varepsilon$
($\delta = \delta(\varepsilon)$, $b=b(\varepsilon)$),
such that \eqref{Eq big bound eps delta b} is $o(\vert\log \varepsilon\vert^{-(N+1/2)})$.
For this, it is sufficient to have all the following conditions satisfied:
\begin{itemize}
\item $b \gg \log \vert \log \varepsilon \vert$;
\item $\log b \ll \log \vert \log \varepsilon \vert$;
\item $\log \vert \log \delta \vert \ll \log \vert \log \varepsilon \vert$;
\item $\vert \log \delta \vert \gg b^{2} \log \vert \log \varepsilon \vert$.
\end{itemize}
Note that some of these conditions work in the opposite directions.
Yet, there is plenty of room to satisfy them all.
For instance, one can take
\begin{equation}
\label{Eq delta b eps}
\delta(\varepsilon) = e^{-(\log \vert \log \varepsilon \vert)^{6}},
\qquad
b(\varepsilon) = 2\lambda\lfloor(\log \vert \log \varepsilon \vert)^{2}\rfloor
\end{equation}
(so that $b(\varepsilon)$ is an integer multiple of the height gap $2\lambda$).
This concludes the proof of \eqref{Eq A E N}.

Now let us deal with the asymptotic expansion of $\ind_{\widetilde{\Ns}_{\varepsilon}(A)}$,
\eqref{Eq A E tilde N} and \eqref{Eq A E tilde N bis}.
Recall the notation $\widetilde{U}_{0,b,\varepsilon}$ \eqref{Eq U 0 b eps}.
We further define
\begin{displaymath}
\widetilde{U}_{0,b,\varepsilon}^{(0)}
=
\{ z\in D\setminus A_{0,b}\,\vert \CR(z,D\setminus A_{0,b})<\varepsilon\text{ and } \ell_{0,b}(z)=0\},
\end{displaymath}
\begin{displaymath}
\widetilde{U}_{0,b,\delta}^{(b)}
=
\{ z\in D\setminus A_{0,b}\,\vert \CR(z,D\setminus A_{0,b})<\delta\text{ and } \ell_{0,b}(z)=b\}.
\end{displaymath}

Fix $\eta\in (0,1)$ and an integer $N\geq 0$.
As previously, we take
$\varepsilon\in (0,e^{-1})$,
$\delta\in(\varepsilon,e^{-1})$ and
$b\in 2\lambda \N$ with $b\geq (2v)\vee(v + 2\lambda)$.

Given the identity of Proposition \ref{Prop cond series},
we get that the difference
\begin{displaymath}
\ind_{\widetilde{\Ns}_{\varepsilon}(A)}
-
\dfrac{1}{\sqrt{2\pi}}
\sum_{k=0}^{N} (-1)^{k}
\dfrac{1}{2^{k} k! (k+1/2)}
\dfrac{\tilde{\psi}_{2k+1,A}}{\big(\frac{1}{2\pi}\vert\log \varepsilon\vert\big)^{k + 1/2}}
\end{displaymath}
can be decomposed into six terms as follows:
\begin{enumerate}
\item 
$\ind_{\widetilde{U}_{0,b,\varepsilon}^{(0)}}$;
\item $\ind_{\widetilde{U}_{0,b,\delta}^{(b)}}
\E\big[\ind_{\widetilde{\Ns}_{\varepsilon}(A)}\big\vert  A_{0,b},\ell_{0,b}\big]$;
\item $\ind_{\widetilde{\Ns}_{\varepsilon}(A)}
-\E\big[\ind_{\widetilde{\Ns}_{\varepsilon}(A)}\big\vert  A_{0,b},\ell_{0,b}\big]$;
\item $\dfrac{1}{\sqrt{2\pi}}
\sum_{k=0}^{N} (-1)^{k}
\dfrac{1}{2^{k} k! (k+1/2)}
\dfrac{\tilde{\psi}_{2k+1,A}-\E[\tilde{\psi}_{2k+1,A}\vert A_{0,b},\ell_{0,b}]}
{\big(\frac{1}{2\pi}\vert\log \varepsilon\vert\big)^{k + 1/2}}$;
\item $\ind_{\widetilde{U}_{0,b,\delta}^{(b)}} \dfrac{1}{\sqrt{2\pi}} 
\sum_{k=0}^{N} (-1)^{k}
\dfrac{1}{2^{k} k! (k+1/2)}
\dfrac{\E[\tilde{\psi}_{2k+1,A}\vert A_{0,b},\ell_{0,b}]}
{\big(\frac{1}{2\pi}\vert\log \varepsilon\vert\big)^{k + 1/2}}$;
\item $\ind_{D\setminus (A_{0,b}\cup \widetilde{U}_{0,b,\varepsilon}^{(0)}\cup \widetilde{U}_{0,b,\delta}^{(b)})}
\dfrac{1}{\sqrt{2\pi}}
\sum_{k=N+1}^{+\infty} (-1)^{k} \dfrac{1}{2^{k} k! (k+1/2)}
\dfrac{\E[\tilde{\psi}_{2k+1,A}\vert A_{0,b},\ell_{0,b}]}
{\big(\frac{1}{2\pi}\vert\log \varepsilon\vert\big)^{k + 1/2}}$.
\end{enumerate}
In the presence of a cutoff function $f_{0}$ as in \eqref{Eq A E tilde N},
every term should be multiplied by $f_{0}$.

Further, the proof of the expansion \eqref{Eq A E tilde N},
with the cutoff function $f_{0}$,
is very similar to that of \eqref{Eq A E N}.
We make the same choice \eqref{Eq delta b eps} for
$\delta(\varepsilon)$ and $b(\varepsilon)$.
We use Lemma \ref{Lem area 0} to bound the terms (1) and (2).
For the term (3) we use Corollary \eqref{Cor Eq ind norm exp}.
For (4) we use Corollary \eqref{Cor bref psi k CR f 0}.
To bound (5) we use Lemma \ref{Lem V area 0}.
For (6) we apply Lemma \ref{Lem bound series}.

For the expansion \eqref{Eq A E tilde N bis} (without cutoff)
we have first to assume that the area of a small neighborhood of 
$\partial D$ decays faster than any power of $\log$
(condition \eqref{Eq cond small boundary}).
Further, we cannot use the same $\delta(\varepsilon)$ as in \eqref{Eq delta b eps}.
Indeed, we want to use Lemma \ref{Lem small boundary means fast enough decay}
to control the terms (2) and (5).
But to get the desired decay, we need
$\vert\log \delta(\varepsilon)\vert$
to grow at least like a power of $\vert \log \varepsilon\vert$
as $\varepsilon\to 0$,
and this is not the case in \eqref{Eq delta b eps}.
But on the other hand, we are also constrained by
the bound of Lemma \ref{Lem bound series}:
we need $(\vert\log \delta(\varepsilon)\vert/\vert \log \varepsilon\vert)^{N+3/2}$
to decay faster than $\vert \log \varepsilon\vert^{-(N+1/2)}$.
Therefore, we make a choice of $\delta(\varepsilon)$ also depending on $N$, the order of the expansion,
\begin{displaymath}
\delta(\varepsilon) = \exp(-\vert \log \varepsilon\vert^{1/(N+2)}),
\end{displaymath}
and keep the same choice of $b(\varepsilon)$ \eqref{Eq delta b eps}.
Then we use Lemma \ref{Lem small boundary means fast enough decay}
to bound the terms (1), (2) and (5),
Corollary \ref{Cor Eq ind norm exp} to bound (3),
and Corollary \ref{Cor bref psi k CR} to bound (4).
\end{proof}

\section{An algebraic viewpoint on the asymptotic expansions: consistency conditions and associated equations}
\label{Sec algeb}

\subsection{Consistency of the asymptotic expansion for the FPS under change of domain}
\label{Subsec algeb}

This section will be heuristic for the most part. 
We will see that the asymptotic expansion \eqref{Eq A E N}
(Theorem \ref{Thm A E FPS}) satisfies a consistency property under
change of domain.
This consistency property is closely related to
the algebraic identity \eqref{Eq reexp w x y} (Proposition \ref{Prop reexp}),
and this also characterizes the combinatorial coefficients \eqref{Eq comb coeff}
appearing in \eqref{Eq A E N}, up to a global multiplicative constant.

Let $D$ and $\widehat{D}$ be two open bounded non-empty simply-connected domains,
such that the intersection $D\cap\widehat{D}$ is non-empty.
Let $f_{0}$ be a smooth function compactly supported in $D\cap\widehat{D}$.
Consider boundary conditions $v>0$ on $\partial D$
and $\hat{v}>0$ on $\partial \widehat{D}$.
Denote $\Phi$ the GFF on $D$ with boundary condition $v$,
and $\widehat{\Phi}$ the GFF on $\widehat{D}$ with boundary condition $\hat{v}$.
Let $A$ be the FPS of $\Phi$ of level $0$ and
$\widehat{A}$ the FPS of $\widehat{\Phi}$ of level $0$.

The laws of the restrictions $\Phi_{\vert \Supp(f_{0})}$
and $\widehat{\Phi}_{\vert \Supp(f_{0})}$ are mutually absolutely continuous,
and therefore, one can couple $\Phi$ and $\widehat{\Phi}$ on the same probability space such that
\begin{displaymath}
\PP\big(\ind_{\Supp(f_{0})}\Phi\equiv \ind_{\Supp(f_{0})}\widehat{\Phi}\big) > 0.
\end{displaymath}
Actually, one can show that a stronger coupling holds.
Let $U$ be an open subset of $D\cap\widehat{D}$ such that
$\overline{U}\subset D\cap\widehat{D}$ and $\Supp(f_{0})\subset U$.
Let $\mathbf{E}$ be the event defined by the following conditions:
\begin{itemize}
\item $\ind_{U}\Phi\equiv \ind_{U}\widehat{\Phi}$;
\item $A\cap U = \widehat{A} \cap U$;
\item $\forall z\in U\setminus A = U\setminus \widehat{A}$, 
the connected component of $z$ in $D\setminus A$ is contained in $D\cap\widehat{D}$
and it equals the connected component of $z$ in $D\setminus \widehat{A}$.
\end{itemize}

\begin{claim}
One can couple $\Phi$ and $\widehat{\Phi}$ on the same probability space such that
$\PP(\mathbf{E})>0$.
\end{claim}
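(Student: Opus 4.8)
The plan is to construct the coupling by first sampling the GFF $\Phi_{(0)}$ on the larger domain $D\cup\widehat D$ (or on a fixed bounded simply connected domain containing $\overline{D\cup\widehat D}$), restricting it to build $\Phi$ and $\widehat\Phi$, and then showing that the event $\mathbf E$ has positive probability under this coupling. The key observation is that the boundary behaviour of $\Phi$ near $\partial D\setminus\partial\widehat D$ and of $\widehat\Phi$ near $\partial\widehat D\setminus\partial D$ should not matter for what happens inside $U$: with positive probability, both the field configuration and the structure of the first passage set localize in a controlled way inside $D\cap\widehat D$.

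Concretely, first I would fix an intermediate open set $W$ with $\overline U\subset W$, $\overline W\subset D\cap\widehat D$, and $W$ simply connected. On $\partial W$ one can prescribe a constant (or smooth) boundary value $w_0>0$ and consider the GFF $\Phi_W$ on $W$ with this boundary value. By the Markov property of the GFF (the decomposition of $\Phi$ as harmonic extension of boundary data on $\partial W$ plus an independent GFF on $W$, and likewise for $\widehat\Phi$), the events ``$\ind_W\Phi\equiv \ind_W\Phi_W$'' and ``$\ind_W\widehat\Phi\equiv\ind_W\Phi_W$'' each have positive probability, because they amount to the harmonic extension of the respective boundary data from $\partial D$ (resp.\ $\partial\widehat D$) agreeing with $w_0$ on $\partial W$ up to a small perturbation absorbed by the internal GFF on the thin shell between $\partial W$ and the respective outer boundary; this is a standard positivity-of-density statement for GFFs on annular-type regions, and crucially these two events are conditionally independent given $\Phi_W$. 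On their intersection $\ind_W\Phi\equiv\ind_W\widehat\Phi\equiv\ind_W\Phi_W$.

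Next I would upgrade the agreement of the fields inside $W$ to the agreement of the first passage sets and their connected components inside $U$, using the locality of the FPS together with a further positive-probability event controlling the geometry. The FPS $A$ is a local set measurable w.r.t.\ $\Phi$ (Section~\ref{Subsubsec FPS}), and the portion $A\cap W$ is determined, up to events of positive probability, by $\Phi_{|W}$ together with the global shape only through its effect on whether level lines started inside $W$ stay inside. I would take $\mathbf E$ to further impose that there is a ``barrier'' loop of the FPS separating $\overline U$ from $\partial W$, lying entirely inside $W$, with the field above $2\lambda$ on an annular neighbourhood of $\partial W$ inside $W$; such a configuration has positive probability (e.g.\ by choosing $w_0$ large and invoking positivity of Radon--Nikodym derivatives / FKG-type lower bounds for level-line events), and it guarantees simultaneously for $\Phi$ and $\widehat\Phi$ that: $A\cap U=\widehat A\cap U$ (both equal $A^W\cap U$ where $A^W$ is the FPS of $\Phi_W$ from level $w_0$ to $0$), and that every connected component of $U\setminus A$ is trapped inside $W\subset D\cap\widehat D$ and coincides with the corresponding component of $D\setminus\widehat A$. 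Intersecting this with the field-agreement event from the previous paragraph, and noting both events have positive probability under the coupling through $\Phi_W$ and are compatible, yields $\PP(\mathbf E)>0$.

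The main obstacle I anticipate is the rigorous justification that the \emph{connected component} of a point $z\in U\setminus A$ in $D\setminus A$ is contained in $D\cap\widehat D$ — this is a genuinely non-local statement about $A$, since a priori the component could wander out near $\partial D$. The resolution is precisely the barrier/separating-loop event: if a loop of $A$ (equivalently, of $A_{0,b}$ for $b$ large, using $A_{0,b}\subset A$ and the relations between TVS and FPS recalled after Theorem~\ref{Thm law CR FPS}) surrounds $\overline U$ and lies inside $W$, then the component of any $z\in U\setminus A$ is confined to the inside of that loop, hence inside $W\subset D\cap\widehat D$, and the same loop is a loop of $\widehat A$ by the field agreement on $W$, so the two components literally coincide. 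Making ``there exists such a separating loop with positive probability, jointly for both fields'' precise is where the real work lies, but it reduces to the well-understood fact that level lines of the GFF form, with positive probability, a loop surrounding a prescribed compact set inside a prescribed neighbourhood, together with absolute continuity of $\Phi_{|W}$ and $\widehat\Phi_{|W}$ w.r.t.\ the fixed reference field $\Phi_W$.
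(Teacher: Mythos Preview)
Your approach differs from the paper's, and it has a genuine gap in the field-agreement step. You claim that the events $\{\ind_W\Phi\equiv\ind_W\Phi_W\}$ and $\{\ind_W\widehat\Phi\equiv\ind_W\Phi_W\}$ each have positive probability because ``the harmonic extension of the respective boundary data \dots\ agrees with $w_0$ on $\partial W$ up to a small perturbation absorbed by the internal GFF.'' This is not right: the harmonic extension of $\Phi$ inside $W$ is a nondegenerate continuous Gaussian field, and the probability that it equals the constant $w_0$ (or any fixed function) is zero. No ``positivity of density'' on an annular region makes an exact equality of generalized functions occur with positive probability. What absolute continuity of $\Phi_{|W}$ and $\widehat\Phi_{|W}$ does buy you is the existence of a \emph{maximal coupling} with $\PP(\ind_W\Phi=\ind_W\widehat\Phi)>0$; but you would then need to argue that the FPS restricted to $U$ is a measurable functional of $\Phi_{|W}$ on the barrier event, and your TVS-loop barrier argument is not quite sufficient for this either, since the TVS $A_{0,b}$ of $\Phi$ is constructed by iterating level lines that may leave $W$ and re-enter, so agreement of the fields on $W$ alone does not force agreement of the TVS loops inside $W$.

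The paper takes a different and cleaner route: it couples via the Brownian loop soup representation of the FPS (Section~\ref{Subsubsec FPS}). One samples the same Brownian loop soup inside $D\cap\widehat D$ for both pictures; then $A$ and $\widehat A$ are the closures of the boundary-connected clusters in their respective loop-plus-excursion soups. With positive probability the boundary excursions and the loops not contained in $D\cap\widehat D$ stay away from $U$, which directly yields $A\cap U=\widehat A\cap U$ and forces each connected component of $U\setminus A$ to be trapped inside $D\cap\widehat D$ and to coincide with its counterpart for $\widehat A$. The field agreement on $U$ then comes for free: conditionally on the FPS, both $\Phi$ and $\widehat\Phi$ are $0$-boundary GFFs in the (common) holes, so one couples them to be equal there with positive probability; on $A\cap U=\widehat A\cap U$ the fields are the Minkowski content measures of the same set. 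The advantage of this route is that the FPS and the component structure are handled first, by direct geometric coupling of Poisson point processes, and the GFF agreement is a simple consequence rather than the starting point.
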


To get the last two conditions, one can for instance use the representation of first passage sets as clusters of Brownian loops and excursions (see Section \ref{Subsubsec FPS}), 
with the Brownian loops contained in $D\cap\widehat{D}$ being the same in both cases.
Then one samples $0$-boundary GFF-s inside the holes of $A$ and $\widehat{A}$
which coincide on $U$ with positive probability.
We leave the details for the reader.

Now let be
\begin{displaymath}
\psi_{n} = \E\big[:\Phi^{n}:\big\vert A\big],
\qquad
\hat{\psi}_{n} = \E\big[:\widehat{\Phi}^{n}:\big\vert A\big].
\end{displaymath}
On the event $\mathbf{E}$,
$f_{0}\psi_{1}$ and $f_{0}\hat{\psi}_{1}$ coincide.
However, this is no longer true for higher powers $n\geq 2$.
This is due to the fact that the counterterms in $\psi_{n}$ and $\hat{\psi}_{n}$ (sub-leading degree)
are domain-dependent. Here is the exact relation between $\psi_{n}$ and $\hat{\psi}_{n}$.
Denote, for $z\in D\setminus A$, respectively $z\in \widehat{D}\setminus \widehat{A}$,
\begin{displaymath}
V_{A}(z) = \dfrac{1}{2\pi}\log\Big(\dfrac{\CR(z,D)}{\CR(z,D\setminus A)}\Big),
\qquad
V_{\widehat{A}}(z) = \dfrac{1}{2\pi}\log\Big(\dfrac{\CR(z,\widehat{D})}{\CR(z,\widehat{D}\setminus \widehat{A})}\Big).
\end{displaymath}
For $z\in D\cap\widehat{D}$, denote
\begin{displaymath}
V_{D,\widehat{D}}(z)
=
\dfrac{1}{2\pi}\log\Big(\dfrac{\CR(z,\widehat{D})}{\CR(z,D)}\Big).
\end{displaymath}
Note that unlike $V_{\widehat{A}}$ and $V_{A}$,
the function $V_{D,\widehat{D}}$ is deterministic.
Also, on the event $\mathbf{E}$, a.s.,
for every $z\in D\cap\widehat{D}$,
\begin{displaymath}
V_{D,\widehat{D}}(z) = V_{\widehat{A}}(z) - V_{A}(z).
\end{displaymath}
This is because on $\mathbf{E}$, a.s., 
for every $z\in D\cap\widehat{D}$,
\begin{displaymath}
\CR(z,D\setminus A) = \CR(z,\widehat{D}\setminus \widehat{A}).
\end{displaymath}

\begin{lemma}
\label{Lem psi hat psi}
On the event $\mathbf{E}$, 
for every $n\geq 1$,
\begin{displaymath}
f_{0}\hat{\psi}_{n}
=
\sum_{0\leq k\leq \lfloor n/2\rfloor}
(-1)^{k} \dfrac{n!}{2^{k} k! (n-2k)!} V_{D,\widehat{D}}^{k}f_{0}\psi_{n-2k}.
\end{displaymath}
\end{lemma}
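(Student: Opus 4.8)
The plan is to establish the identity by comparing Wick renormalizations of the \emph{same} field performed with respect to two different Green's functions, $G_{D}$ and $G_{\widehat{D}}$, and then take conditional expectations given $A$ (which equals $\widehat{A}$ on $U\supset\Supp(f_0)$ on the event $\mathbf{E}$). The key point is that the change of variance identity \eqref{Eq change var} precisely governs how a Wick power changes when one adds a deterministic increment to the variance used in the renormalization, and here that increment is exactly $V_{D,\widehat{D}}(z)=g_{\widehat{D}}(z,z)-g_{D}(z,z)$.

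First I would work at the regularized level, with mollifiers $\rho_\varepsilon$. For $z\in D\cap\widehat{D}$ one has $G_{\widehat{D},\varepsilon,\varepsilon}(z,z)-G_{D,\varepsilon,\varepsilon}(z,z)\to V_{D,\widehat{D}}(z)$ as $\varepsilon\to 0$, uniformly on $\Supp(f_0)$. Applying \eqref{Eq change var} with $x=\Phi_\varepsilon(z)$ (the \emph{same} regularized field, which makes sense on $\mathbf{E}$ since $\ind_U\Phi\equiv\ind_U\widehat\Phi$ and $\Supp(f_0)$ is compactly inside $U$, so for small $\varepsilon$ the convolutions agree on $\Supp(f_0)$), $u_1=G_{D,\varepsilon,\varepsilon}(z,z)$ and $u_2=G_{\widehat{D},\varepsilon,\varepsilon}(z,z)-G_{D,\varepsilon,\varepsilon}(z,z)$, gives
\begin{displaymath}
f_0\,:\widehat{\Phi}_\varepsilon^{\,n}:\ =\ \sum_{0\le k\le \lfloor n/2\rfloor}(-1)^k\dfrac{n!}{2^k k!(n-2k)!}\,(G_{\widehat{D},\varepsilon,\varepsilon}(z,z)-G_{D,\varepsilon,\varepsilon}(z,z))^k\,f_0\,:\Phi_\varepsilon^{\,n-2k}:\,.
\end{displaymath}
Letting $\varepsilon\to 0$, the left side converges to $f_0\,:\widehat\Phi^n:$ in $L^2(d\PP,\sigma(\widehat\Phi),H^{-\eta})$ and the right side to the corresponding expression with $G$-differences replaced by $V_{D,\widehat{D}}$, using that $f_0$ is a fixed smooth compactly supported bounded cutoff and $V_{D,\widehat D}$ is smooth and bounded on $\Supp(f_0)$; this yields, on $\mathbf{E}$,
\begin{displaymath}
f_0\,:\widehat\Phi^n:\ =\ \sum_{0\le k\le \lfloor n/2\rfloor}(-1)^k\dfrac{n!}{2^k k!(n-2k)!}\,V_{D,\widehat{D}}^{\,k}\,f_0\,:\Phi^{n-2k}:\,.
\end{displaymath}

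Then I would take the conditional expectation $\E[\,\cdot\mid A\,]$ of both sides. Since $\sigma(A)$ contains $\sigma(A_{0,b},\ell_{0,b})$ and $A=\widehat A$ on $U$, and since $f_0$ and $V_{D,\widehat D}$ are deterministic (hence $A$-measurable), we get $\E[f_0\,:\widehat\Phi^n:\mid A]=f_0\hat\psi_n$ and $\E[V_{D,\widehat D}^k f_0\,:\Phi^{n-2k}:\mid A]=V_{D,\widehat D}^k f_0\psi_{n-2k}$, giving the claim. The one subtlety to handle carefully is the restriction to the event $\mathbf{E}$: all the above displays are equalities of random elements of $H^{-\eta}$ that a priori only hold when restricted (via multiplication by $\ind_{\mathbf{E}}$) and after multiplication by $f_0$; I would phrase everything as identities for $\ind_{\mathbf E}f_0(\cdot)$ throughout, noting that $\ind_{\mathbf E}$ is measurable with respect to both $\sigma(\Phi)$ and $\sigma(\widehat\Phi)$ jointly and is $A$-measurable on $\mathbf E$ in the relevant sense. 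The main obstacle is therefore bookkeeping rather than analysis: making precise that on $\mathbf E$ the two regularized fields genuinely agree on $\Supp(f_0)$ for small $\varepsilon$ (so that substituting the common value $x=\Phi_\varepsilon=\widehat\Phi_\varepsilon$ into \eqref{Eq change var} is legitimate), and that conditional expectations given $A$ commute with the localization by $\ind_{\mathbf E}f_0$ — both of which follow from the construction of $\mathbf E$ and the $A$-measurability of the cutoffs, but deserve an explicit sentence. The $L^2$ convergence of the $\varepsilon\to 0$ limits is routine given the a priori bounds $\E[\|:\Phi^n:\|_{H^{-\eta}}^2]<\infty$ and the uniform convergence of the variance differences on $\Supp(f_0)$.
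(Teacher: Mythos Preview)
Your first half is fine and matches the paper's use of the change of variance identity. The problem is the final step, where you ``take the conditional expectation $\E[\,\cdot\mid A]$ of both sides'' and assert that the left-hand side becomes $f_0\hat\psi_n$. By definition $\hat\psi_n=\E[\,:\widehat\Phi^n:\mid\widehat A\,]$, not $\E[\,:\widehat\Phi^n:\mid A\,]$; there is no reason for these two conditional expectations to agree, even after localizing by $\ind_{\mathbf E}f_0$. The sets $A$ and $\widehat A$ coincide only on $U$, and the sigma-algebras $\sigma(A)$ and $\sigma(\widehat A)$ are genuinely different global objects. Your remark that $\ind_{\mathbf E}$ is ``$A$-measurable on $\mathbf E$ in the relevant sense'' is not correct: $\mathbf E$ depends on the joint coupling of $(\Phi,\widehat\Phi)$ and is not $\sigma(A)$-measurable, so you cannot pull $\ind_{\mathbf E}$ through a conditioning on $A$ either.

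The paper avoids this entirely by never taking a conditional expectation at this stage. It uses the explicit description from Proposition~\ref{Prop psi eps}, namely $\hat\psi_n=\lim_{\varepsilon\to 0}Q_n(\nu_{\widehat A,\varepsilon},V_{\widehat A,\varepsilon})$, which is already $\widehat A$-measurable. On $\mathbf E$ one has $f_0\nu_{\widehat A,\varepsilon}=f_0\nu_{A,\varepsilon}$ (for small $\varepsilon$) and $V_{\widehat A,\varepsilon}-V_{A,\varepsilon}=g_{\widehat D,\varepsilon,\varepsilon}-g_{D,\varepsilon,\varepsilon}\to V_{D,\widehat D}$ on $\Supp(f_0)$; applying \eqref{Eq change var} to $Q_n(\nu_{A,\varepsilon},V_{\widehat A,\varepsilon})$ then yields the limit $\sum_k(-1)^k\frac{n!}{2^k k!(n-2k)!}V_{D,\widehat D}^k f_0\psi_{n-2k}$ directly, with each $\psi_{n-2k}$ likewise identified via Proposition~\ref{Prop psi eps}. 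So the fix is to replace your conditioning step by invoking these explicit $A$- and $\widehat A$-measurable approximations of $\psi$ and $\hat\psi$; the change-of-variance computation is then applied to $Q_n(\nu_{A,\varepsilon},\,\cdot\,)$ rather than to $Q_n(\Phi_\varepsilon,\,\cdot\,)$.
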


\begin{proof}
Let $\nu_{\widehat{A}} = \hat{\psi}_{1}$ be the Minkowski content measure on $\widehat{A}$,
corresponding to the restriction of the GFF $\widehat{\Phi}$ to its FPS $\widehat{A}$.
Let $\nu_{\widehat{A},\varepsilon}$ be the convolution 
$\nu_{\widehat{A}}\ast \rho_{\varepsilon}$, 
where $(\rho_{\varepsilon})_{\varepsilon>0}$ is a family of mollificators as in
Section \ref{Subsec Wick}.
Let $V_{\widehat{A},\varepsilon}$ be
\begin{displaymath}
V_{\widehat{A},\varepsilon}(z) = g_{\widehat{D},\varepsilon,\varepsilon}(z,z) - 
g_{\widehat{D}\setminus \widehat{A},\varepsilon,\varepsilon}(z,z).
\end{displaymath}
According to Proposition \ref{Prop psi eps},
\begin{displaymath}
\hat{\psi}_{n} = \lim_{\varepsilon\to 0}
Q_{n}(\nu_{\widehat{A},\varepsilon},V_{\widehat{A},\varepsilon}),
\end{displaymath}
where the limit is in $L^{2}(d\PP,\sigma(\Phi,\widehat{\Phi}),H^{-\eta}(\C))$ for $\eta>0$.
Therefore,
\begin{displaymath}
\ind_{\mathbf{E}} f_{0} \hat{\psi}_{n} = \lim_{\varepsilon\to 0}
\ind_{\mathbf{E}} f_{0}
Q_{n}(\nu_{\widehat{A},\varepsilon},V_{\widehat{A},\varepsilon}),
\end{displaymath}
with the limit in $L^{2}(d\PP,\sigma(\Phi,\widehat{\Phi}),H^{-\eta}(\C))$.
On the event $\mathbf{E}$, and for $\varepsilon<d(\Supp(f_{0}),\partial U)$,
$\nu_{\widehat{A},\varepsilon}$ and $\nu_{A,\varepsilon}$ coincide on $\Supp(f_{0})$.
Therefore, 
\begin{displaymath}
\ind_{\mathbf{E}} f_{0} \hat{\psi}_{n} = \lim_{\varepsilon\to 0}
\ind_{\mathbf{E}} f_{0}
Q_{n}(\nu_{A,\varepsilon},V_{\widehat{A},\varepsilon}),
\end{displaymath}
By the change of variance formula \eqref{Eq change var},
\begin{displaymath}
f_{0}
Q_{n}(\nu_{A,\varepsilon},V_{\widehat{A},\varepsilon}) = 
\sum_{0\leq k\leq \lfloor n/2\rfloor}
(-1)^{k} \dfrac{n!}{2^{k} k! (n-2k)!} (V_{\widehat{A},\varepsilon}-V_{A,\varepsilon})^{k}
f_{0} Q_{n-2k}(\nu_{A,\varepsilon},V_{A,\varepsilon}).
\end{displaymath}
On the event $\mathbf{E}$, for every $z\in \Supp(f_{0})$,
\begin{displaymath}
V_{\widehat{A},\varepsilon}(z)-V_{A,\varepsilon}(z)
= g_{\widehat{D},\varepsilon,\varepsilon}(z,z) - g_{D,\varepsilon,\varepsilon}(z,z).
\end{displaymath}
Therefore,
\begin{displaymath}
\ind_{\mathbf{E}} f_{0} \hat{\psi}_{n} = \lim_{\varepsilon\to 0}
\ind_{\mathbf{E}}
\sum_{0\leq k\leq \lfloor n/2\rfloor}
(-1)^{k} \dfrac{n!}{2^{k} k! (n-2k)!} 
(g_{\widehat{D},\varepsilon,\varepsilon}(z,z) - g_{D,\varepsilon,\varepsilon}(z,z))^{k}
f_{0} Q_{n-2k}(\nu_{A,\varepsilon},V_{A,\varepsilon}),
\end{displaymath}
with the limit in $L^{2}(d\PP,\sigma(\Phi,\widehat{\Phi}),H^{-\eta}(\C))$.
Given that $g_{\widehat{D},\varepsilon,\varepsilon}(z,z) - g_{D,\varepsilon,\varepsilon}(z,z)$
converges to $V_{D,\widehat{D}}(z)$ uniformly on compact subsets of $D\cap\widehat{D}$,
we get that the right-hand side also converges in $L^{2}(d\PP,\sigma(\Phi,\widehat{\Phi}),H^{-\eta}(\C))$
to
\begin{displaymath}
\ind_{\mathbf{E}}
\sum_{0\leq k\leq \lfloor n/2\rfloor}
(-1)^{k} \dfrac{n!}{2^{k} k! (n-2k)!} V_{D,\widehat{D}}^{k}f_{0}\psi_{n-2k}.
\end{displaymath}
This implies the desired almost sure equality.
\end{proof}

Let $\Ns_{\varepsilon}(\widehat{A})$ be the neighborhood of $\widehat{A}$ in $\widehat{D}\setminus \widehat{A}$
given by
\begin{displaymath}
\Ns_{\varepsilon}(\widehat{A}) = 
\{ z\in \widehat{D}\setminus \widehat{A} \vert \CR(z,\widehat{D}\setminus \widehat{A})<\varepsilon \CR(z,\widehat{D})\}.
\end{displaymath}
According to Theorem \ref{Thm A E FPS},
for every $N\geq 0$,
\begin{displaymath}
f_{0}\ind_{\Ns_{\varepsilon}(\widehat{A})}
=
\dfrac{1}{\sqrt{2\pi}}
\sum_{k=0}^{N} (-1)^{k}
\dfrac{1}{2^{k} k! (k+1/2)}
\dfrac{f_{0}\hat{\psi}_{2k+1}}{\big(\frac{1}{2\pi}\vert\log \varepsilon\vert\big)^{k + 1/2}}
~+~o(\vert\log\varepsilon\vert^{-(N+1/2)}).
\end{displaymath}
By reexpressing the $\hat{\psi}_{2k+1}$ in terms of the $\psi_{2j+1}$,
we get that on the event $\mathbf{E}$,
\begin{multline}
\label{Eq exp N hat A 1}
\ind_{\mathbf{E}}f_{0}\ind_{\Ns_{\varepsilon}(\widehat{A})}
=
\\
\dfrac{\ind_{\mathbf{E}}f_{0}}{\sqrt{2\pi}}
\sum_{k=0}^{N} 
\dfrac{(-1)^{k}}{2^{k} k! (k+1/2)}
\dfrac{1}{\big(\frac{1}{2\pi}\vert\log \varepsilon\vert\big)^{k + 1/2}}
\Big(\sum_{j=0}^{k}\dfrac{(-1)^{j}(2k+1)!}{2^{j}j!(2(k-j)+1)!}V_{D,\widehat{D}}^{j}\psi_{2(k-j)+1}\Big)
\\
+~o(\vert\log\varepsilon\vert^{-(N+1/2)}).
\end{multline}

Next, let $\check{\Ns}_{\varepsilon}(A)$ be the neighborhood of $A\cap \widehat{D}$
in $(D\cap\widehat{D})\setminus A$ defined via the conformal radius in $\widehat{D}$ and not $D$:
\begin{displaymath}
\check{\Ns}_{\varepsilon}(A) = 
\{ z\in (D\cap\widehat{D})\setminus A \vert \CR(z,D\setminus A)<\varepsilon \CR(z,\widehat{D})\}.
\end{displaymath}
Denote
\begin{displaymath}
\check{\varepsilon}(\varepsilon,z) = \varepsilon \dfrac{\CR(z,\widehat{D})}{\CR(z,D)} .
\end{displaymath}
The condition $\CR(z,D\setminus A)<\varepsilon \CR(z,\widehat{D})$
is equivalent to
\begin{displaymath}
\CR(z,D\setminus A)<\check{\varepsilon}(\varepsilon,z)\CR(z,D).
\end{displaymath}
The function $z\mapsto \check{\varepsilon}(\varepsilon,z)$ is deterministic an smooth on
$D\cap\widehat{D}$.
Moreover, as $\varepsilon\to 0$,
$\vert\log\check{\varepsilon}(\varepsilon,z)\vert \sim \vert\log\varepsilon\vert$,
with the equivalent being uniform on compact subsets of $D\cap\widehat{D}$.
We claim that the asymptotic expansion \eqref{Eq A E N} for $A$ holds
if we replace $\varepsilon$ by $\check{\varepsilon}(\varepsilon,z)$.
It can be proved along the same lines, but we will not detail it here.

\begin{claim}
For every $N\geq 0$,
\begin{displaymath}
f_{0}\ind_{\check{\Ns}_{\varepsilon}(A)} =
\dfrac{1}{\sqrt{2\pi}}
\sum_{k=0}^{N} (-1)^{k}
\dfrac{1}{2^{k} k! (k+1/2)}
\dfrac{f_{0}\psi_{2k+1}}{\big(\frac{1}{2\pi}\vert\log \check{\varepsilon}(\varepsilon,z)\vert\big)^{k + 1/2}}
~+~o(\vert\log\varepsilon\vert^{-(N+1/2)}). 
\end{displaymath}
\end{claim}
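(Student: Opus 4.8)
The plan is to rerun the proof of Theorem~\ref{Thm A E FPS} — concretely, of the cutoff expansion \eqref{Eq A E tilde N} — with the fixed scale $\varepsilon$ replaced by the deterministic, smooth, $z$-dependent scale $\check{\varepsilon}(\varepsilon,z)$. Since $f_{0}$ is compactly supported in $D\cap\widehat{D}$, on $\Supp(f_{0})$ both $\CR(z,D)$ and $\CR(z,\widehat{D})$ are bounded above and below by positive constants; writing $\check{\varepsilon}(\varepsilon,z)=\varepsilon\,\varphi(z)$ with $\varphi(z)=\CR(z,\widehat{D})/\CR(z,D)$, the modulating factor $\varphi$ is a fixed smooth function bounded away from $0$ and $+\infty$ on $\Supp(f_{0})$, and in particular $\vert\log\check{\varepsilon}(\varepsilon,z)\vert=\vert\log\varepsilon\vert+O(1)$ uniformly on $\Supp(f_{0})$. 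In this respect $\check{\varepsilon}(\varepsilon,z)$ plays exactly the role that $\varepsilon/\CR(z,D)$ plays in the proof of \eqref{Eq A E tilde N}, and the whole argument is that of Theorem~\ref{Thm A E FPS} with this substitution.

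First I would establish the variable-scale analogue of Proposition~\ref{Prop cond series}. Fix $b>v\vee(2\lambda)$ and consider the TVS $A_{0,b}$ of $\Phi$ with its label $\ell_{0,b}$, so that $A_{0,b}\subset A$ a.s.\ and $\sigma(A_{0,b},\ell_{0,b})\subset\sigma(A)$. On $\{\ell_{0,b}(z)=0\}$ the connected component of $z$ in $D\setminus A$ equals that in $D\setminus A_{0,b}$, so membership of $z$ in $\check{\Ns}_{\varepsilon}(A)$ is then $(A_{0,b},\ell_{0,b})$-measurable; on $\{\ell_{0,b}(z)=b\}$, by Theorem~\ref{Thm law CR FPS} the ratio $\CR(z,D\setminus A)/\CR(z,D\setminus A_{0,b})$ is, conditionally on $(A_{0,b},\ell_{0,b})$, distributed as $e^{-2\pi T_{0}}$ with $T_{0}$ the first hitting time of $0$ by a Brownian motion started from $b$, and the defining inequality $\CR(z,D\setminus A)<\varepsilon\CR(z,\widehat{D})$ rewrites as $T_{0}>\tfrac{1}{2\pi}\vert\log\check{\varepsilon}(\varepsilon,z)\vert-V_{A_{0,b}}(z)$. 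Expanding $e^{-b^{2}/(2t)}$ into a power series as in \eqref{Eq series 1} and reexpanding into powers of $\vert\log\check{\varepsilon}(\varepsilon,z)\vert^{-1}$ via Proposition~\ref{Prop reexp}, one obtains, on the event $\{\ell_{0,b}(z)=b,\ 2\pi V_{A_{0,b}}(z)<\vert\log\check{\varepsilon}(\varepsilon,z)\vert\}$,
\begin{displaymath}
\PP(z\in\check{\Ns}_{\varepsilon}(A)\mid A_{0,b},\ell_{0,b})
=\sum_{k=0}^{+\infty}(-1)^{k}\dfrac{1}{2^{k}k!(k+1/2)}\dfrac{\E[\psi_{2k+1}\mid A_{0,b},\ell_{0,b}](z)}{\big(\tfrac{1}{2\pi}\vert\log\check{\varepsilon}(\varepsilon,z)\vert\big)^{k+1/2}},
\end{displaymath}
the right-hand side being unchanged from Proposition~\ref{Prop cond series} since $\E[\psi_{2k+1}\mid A_{0,b},\ell_{0,b}]=Q_{2k+1}(\ell_{0,b},V_{A_{0,b}})$ depends neither on $\varepsilon$ nor on $\widehat{D}$.

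Then I would decompose $f_{0}\ind_{\check{\Ns}_{\varepsilon}(A)}-\tfrac{1}{\sqrt{2\pi}}\sum_{k=0}^{N}(-1)^{k}\tfrac{1}{2^{k}k!(k+1/2)}f_{0}\psi_{2k+1}\big(\tfrac{1}{2\pi}\vert\log\check{\varepsilon}(\varepsilon,z)\vert\big)^{-(k+1/2)}$ into the same six families of error terms as in the proof of \eqref{Eq A E tilde N}: the indicators of $\{\ell_{0,b}(z)=0,\ \CR(z,D\setminus A_{0,b})<\varepsilon\CR(z,\widehat{D})\}$ and of $\{\ell_{0,b}(z)=b,\ \CR(z,D\setminus A_{0,b})<\delta\CR(z,D)\}$; the difference $\ind_{\check{\Ns}_{\varepsilon}(A)}-\E[\,\cdot\mid A_{0,b},\ell_{0,b}]$; the fluctuations $\psi_{2k+1}-\E[\psi_{2k+1}\mid A_{0,b},\ell_{0,b}]$; the contribution of the conditional expansion on the $\{\ell_{0,b}=b\}$ sub-threshold region; and the $k\ge N+1$ tail of the power series. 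Each of these is bounded either by a pointwise-in-$z$ probability estimate integrated against $\LK_{\eta}$ (Lemmas~\ref{Lem area 0}, \ref{Lem bound ind same loop}, \ref{Lem bound ED}, \ref{Lem ED Gauss}, \ref{Lem bound series}) or by an $L^{2}$ estimate of a random field measurable w.r.t.\ the TVS (Corollaries~\ref{Cor Eq ind norm exp}, \ref{Cor bref psi k}, Lemma~\ref{Lem V area 0}); since $\check{\varepsilon}(\varepsilon,z)=\varepsilon\varphi(z)$ with $\varphi$ bounded above and below on $\Supp(f_{0})$ and $\vert\log\check{\varepsilon}(\varepsilon,z)\vert=\vert\log\varepsilon\vert+O(1)$ there, every one of these bounds survives the replacement of $\varepsilon$ by $\check{\varepsilon}(\varepsilon,z)$ up to adjusting the constants. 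Choosing $\delta(\varepsilon)$ and $b(\varepsilon)$ as in \eqref{Eq delta b eps} then makes the total error $o(\vert\log\varepsilon\vert^{-(N+1/2)})$ in $H^{-\eta}(\C)$ for every $\eta>0$, which is the Claim.

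The work is essentially bookkeeping, and the main delicate point is the region-indicator terms: in the proof of \eqref{Eq A E tilde N} these were controlled by comparing $\CR(z,D\setminus A_{0,b})$ with a threshold that was either constant or $\varepsilon\CR(z,D)$, whereas here the threshold is $\varepsilon\CR(z,\widehat{D})$, a smooth function of $z$. This is harmless on $\Supp(f_{0})$ because there $\varepsilon/C\le\check{\varepsilon}(\varepsilon,z)\le C\varepsilon$ for a constant $C=C(f_{0},D,\widehat{D})$, so $f_{0}\ind_{\check{\Ns}_{\varepsilon}(A)}$ is squeezed between $f_{0}\ind_{\Ns_{\varepsilon/C}(A)}$ and $f_{0}\ind_{\Ns_{C\varepsilon}(A)}$, and the relevant estimates (Lemmas~\ref{Lem area 0} and \ref{Lem V area 0}) are monotone and stable under such a bounded rescaling of the scale parameter.
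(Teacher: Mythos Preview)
Your approach is exactly what the paper intends: it explicitly states that the Claim ``can be proved along the same lines'' as Theorem~\ref{Thm A E FPS} and does not give a detailed argument, so there is no paper proof to compare against beyond that remark. Your sketch correctly identifies the key point, namely that $\check{\varepsilon}(\varepsilon,z)=\varepsilon\,\varphi(z)$ with $\varphi$ smooth and bounded away from $0$ and $\infty$ on $\Supp(f_{0})$, so that every estimate in Sections~\ref{Subsec error terms TVS}--\ref{Subsec bound series} survives the substitution uniformly in $z$; the conditional identity you derive is the right variable-scale analogue of Proposition~\ref{Prop cond series}, and the six-term decomposition and the choice \eqref{Eq delta b eps} go through unchanged.

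One small remark: you frame the argument as paralleling the cutoff expansion \eqref{Eq A E tilde N}, but the Claim is stated with the fields $\psi_{2k+1}$ rather than $\tilde{\psi}_{2k+1}$, so structurally it is the proof of \eqref{Eq A E N} that you are rerunning, with the cutoff $f_{0}$ inserted only to guarantee the uniform bounds on $\varphi$. This does not affect the substance of your argument.
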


Now, on the event $\mathbf{E}$, 
$\check{\Ns}_{\varepsilon}(A)\cap\Supp(f_{0}) = \Ns_{\varepsilon}(\widehat{A})\cap\Supp(f_{0})$.
Moreover, for
\begin{displaymath}
\varepsilon < \inf_{w\in \Supp(f_{0})} \dfrac{\CR(w,D)}{\CR(w,\widehat{D})},
\end{displaymath}
we have that for all $z\in \Supp(f_{0})$,
\begin{displaymath}
\dfrac{1}{2\pi}\vert \log \check{\varepsilon}(\varepsilon,z)\vert
=
\dfrac{1}{2\pi}\vert \log \varepsilon\vert - V_{D,\widehat{D}}(z).
\end{displaymath}
Therefore,
\begin{equation}
\label{Eq exp N hat A 2}
\ind_{\mathbf{E}}f_{0}\ind_{\Ns_{\varepsilon}(\widehat{A})}=
\ind_{\mathbf{E}}
\dfrac{1}{\sqrt{2\pi}}
\sum_{k=0}^{N} (-1)^{k}
\dfrac{1}{2^{k} k! (k+1/2)}
\dfrac{f_{0}\psi_{2k+1}}{\big(\frac{1}{2\pi}\vert\log \varepsilon\vert - V_{D,\widehat{D}}\big)^{k + 1/2}}
~+~o(\vert\log\varepsilon\vert^{-(N+1/2)}).
\end{equation}

So now, one the event $\mathbf{E}$, we have two expansions for
$f_{0}\ind_{\Ns_{\varepsilon}(\widehat{A})}$,
\eqref{Eq exp N hat A 1} and \eqref{Eq exp N hat A 2}.
Actually, the two expansion are the same.
Indeed, in \eqref{Eq exp N hat A 2} one can reexpand
\begin{displaymath}
\dfrac{1}{\big(\frac{1}{2\pi}\vert\log \varepsilon\vert - V_{D,\widehat{D}}\big)^{k + 1/2}}
\end{displaymath}
into powers of $\vert\log \varepsilon\vert$,
and then one gets exactly \eqref{Eq exp N hat A 1}.
This is ensured by the algebraic identity \eqref{Eq reexp w x y} (Proposition \ref{Prop reexp}).
The coincidence between \eqref{Eq exp N hat A 1} and \eqref{Eq exp N hat A 2} is a consistency
of the asymptotic expansion \eqref{Eq A E N} (Theorem \ref{Thm A E FPS})
under change of domain.
Indeed, \eqref{Eq exp N hat A 1} has been obtained by applying \eqref{Eq A E N}
to the FPS $\widehat{A}$ in $\widehat{D}$,
and \eqref{Eq exp N hat A 2} by applying \eqref{Eq A E N} to the FPS $A$ in $D$.
On the event that $A$ and $\widehat{A}$ coincide on some region of $D\cap\widehat{D}$,
the two yield the same result.

\medskip

Further, if we had \textit{a priori} assumed an asymptotic expansion of form
\begin{displaymath}
\ind_{\Ns_{\varepsilon}(A)}
=
\sum_{k=0}^{N} a_{k}
\dfrac{\psi_{2k+1,A}}{\big(\frac{1}{2\pi}\vert\log \varepsilon\vert\big)^{k + 1/2}}
~+~o(\vert\log\varepsilon\vert^{-(N+1/2)}),
\end{displaymath}
(which by the way is still a strong assumption),
then the condition of consistency under change of domain would have automatically given us that
for all $k\geq 1$,
\begin{equation}
\label{Eq a k a 0}
a_k = \dfrac{(-1)^k}{2^{k+1} k! (k+1/2)} a_{0}.
\end{equation}
Indeed, as explained above, the consistency under change of domain is equivalent to the following identity at the level of formal (i.e. not necessarily convergent) power series:
\begin{equation}
\label{Eq change domain algeb}
\sum_{k=0}^{+\infty} a_{k}
\dfrac{w^{2k+1}}{(x-y)^{k + 1/2}}
=
\sum_{k=0}^{+\infty} a_{k}
Q_{2k+1}(w,y)
\dfrac{1}{x^{k+1/2}}.
\end{equation}
Then, the following holds.

\begin{prop}
\label{Prop change domain unique}
The sequences $(a_k)_{k\geq 0}\in\R^{\N}$ satisfying the formal identity \eqref{Eq change domain algeb},
with the convention that
\begin{equation}
\label{Eq exp y x k half}
\dfrac{1}{(1-y/x)^{k+1/2}} =
\sum_{j=0}^{+\infty}\dfrac{1}{j!}\Big(\prod_{l=0}^{j-1}((k+l)+1/2)\Big)\dfrac{ y^{j}}{x^{j}},
\end{equation}
are exactly the sequences of form \eqref{Eq a k a 0},
with $a_0\in\R$ being arbitrary.
\end{prop}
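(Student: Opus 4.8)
The plan is to treat \eqref{Eq change domain algeb} as an identity between two formal power series in the two commuting indeterminates $w$ and $y$ (with $x$ playing only the role of the expansion variable, i.e. we compare coefficients of $x^{-(k+1/2)}$ after expanding everything), and to extract from it a triangular system of linear recursions on the $(a_k)_{k\ge 0}$. First I would fix the degree in $y$: the right-hand side of \eqref{Eq change domain algeb} is, by \eqref{Eq def Q}, already a polynomial in $y$ of degree $k$ inside the $x^{-(k+1/2)}$-term, namely
\begin{displaymath}
Q_{2k+1}(w,y) = \sum_{j=0}^{k} (-1)^{j}\dfrac{(2k+1)!}{2^{j} j! (2(k-j)+1)!}\, w^{2(k-j)+1}\, y^{j},
\end{displaymath}
while the left-hand side, using the convention \eqref{Eq exp y x k half}, contributes to the coefficient of $x^{-(m+1/2)}y^{j}$ (with $m=k+j$) the quantity $a_{k}\,\frac{w^{2k+1}}{j!}\prod_{l=0}^{j-1}(k+l+1/2)$. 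Collecting the coefficient of $x^{-(m+1/2)}y^{j}w^{2(m-j)+1}$ on both sides therefore yields, for every $m\ge 0$ and $0\le j\le m$, a single scalar equation relating $a_{m-j}$ (from the LHS, with $k=m-j$) to $a_{m}$ (from the RHS, with $k=m$).

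The key computation is that these two families of coefficients are \emph{the same combinatorial quantity}, which is exactly the identity already isolated in the proof of Proposition \ref{Prop reexp}:
\begin{displaymath}
\dfrac{1}{j!}\prod_{l=0}^{j-1}\big((m-j)+l+1/2\big)
= \dfrac{1}{2^{j}\,j!}\prod_{l=0}^{j-1}\big(2(m-j)+2l+1\big)
= \dfrac{(2m+1)!}{2^{m+j}\,m!\,(2(m-j)+1)!}\cdot\dfrac{2^{m-j}(m-j)!}{1}\cdot\dfrac{1}{\,\cdots}
\end{displaymath}
— more cleanly, the ratio of the RHS-coefficient to the LHS-coefficient, at fixed $(m,j)$, equals $(-1)^{j}$ times $\dfrac{(2m+1)!\,(m-j)!\,\big(m-j+1/2\big)}{(2(m-j)+1)!\,m!\,(m+1/2)}$ divided by $\prod_{l=0}^{j-1}(m-j+l+1/2)$, and the telescoping identity from Proposition \ref{Prop reexp} shows this ratio is precisely $\dfrac{a_{m}}{a_{m-j}}$ forced by $a_m = (-1)^m a_0 / (2^{m+1} m! (m+1/2))$, i.e. \eqref{Eq a k a 0}. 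So the plan is: (i) specialize the coefficient comparison to $j=1$ for each $m\ge 1$, which gives a two-term recursion $a_m = -\frac{1}{2m(m+1/2)/(m-1/2)}\cdots$ — concretely it will collapse, after using $\prod_{l=0}^{0}(m-1+l+1/2)=m-1/2$ on the LHS and the $j=1$ term $-(2m+1)m\,w^{2m-1}y$ of $Q_{2m+1}$ on the RHS, to
\begin{displaymath}
a_{m-1}\,(m-1/2) = -\,a_{m}\,(2m+1)\,m,
\end{displaymath}
hence $a_m = -\dfrac{2m-1}{2m(2m+1)}\,a_{m-1} = -\dfrac{1}{2m(m+1/2)}\cdot\dfrac{2m-1}{2}\cdot\dfrac{1}{m}\cdots$ — and solving this recursion from $a_0$ gives exactly \eqref{Eq a k a 0}; (ii) check that the full system (all $j$, not just $j=1$) is then automatically satisfied, which is guaranteed because \eqref{Eq reexp w x y} in Proposition \ref{Prop reexp} is precisely the assertion that the sequence $a_k = (-1)^k/(2^{k}k!(k+1/2))$ (up to the global constant $1/\sqrt{2\pi}$, which drops out of the homogeneous identity) satisfies \eqref{Eq change domain algeb}; (iii) conversely, any solution of \eqref{Eq change domain algeb} must satisfy the $j=1$ subsystem, which already pins down every $a_k$ in terms of $a_0$, so the solution set is exactly the one-parameter family \eqref{Eq a k a 0}.

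I expect the main obstacle to be bookkeeping rather than conceptual: one must be careful that the convention \eqref{Eq exp y x k half} is the \emph{only} admissible way to expand $(x-y)^{-(k+1/2)}$ as a formal series (so that the coefficient comparison is well-posed in $\R[[\,w,y,x^{-1}\,]]$ after the overall $x^{-1/2}$ is factored out), and that shifting the summation index $m=k+j$ on the LHS does not create convergence issues — it does not, since for fixed total $x$-degree $m$ only finitely many $(k,j)$ with $k+j=m$ contribute. Once the $j=1$ recursion is written down correctly, the identification with \eqref{Eq a k a 0} and the consistency of the remaining equations follow from Proposition \ref{Prop reexp} with essentially no further work, so the proof is short: set up the coefficient comparison, read off the recursion at $j=1$, solve it, and invoke Proposition \ref{Prop reexp} for sufficiency.
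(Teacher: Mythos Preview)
Your approach is essentially the same as the paper's: compare coefficients of $x^{-(m+1/2)}y^{j}w^{2(m-j)+1}$ on both sides to obtain the linear system
\[
\frac{1}{j!}\Big(\prod_{l=0}^{j-1}\big((m-j)+l+\tfrac12\big)\Big)\,a_{m-j}
\;=\;
(-1)^{j}\,\frac{(2m+1)!}{2^{j}j!\,(2(m-j)+1)!}\,a_{m},
\]
and then extract a determining subsystem. The only cosmetic difference is that the paper specializes to $j=m$ (giving $a_{m}$ directly in terms of $a_{0}$ in one line), whereas you specialize to $j=1$ and solve the resulting two-term recursion; both routes are valid, and sufficiency via Proposition~\ref{Prop reexp} is handled identically. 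Your middle paragraph is muddled (several trailing ``$\cdots$'' and an unfinished ratio computation), but the $j=1$ equation you eventually write, $a_{m-1}(m-\tfrac12)=-a_{m}(2m+1)m$, is correct and does yield \eqref{Eq a k a 0}.
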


\begin{proof}
Note that the equation \eqref{Eq change domain algeb}
is linear in the sequence $(a_k)_{k\geq 0}\in\R^{\N}$,
therefore the solutions form a linear subspace of $\R^{\N}$.
The statement tells that the set of solutions is actually a one-dimensional line.
The identity \eqref{Eq reexp w x y} (Proposition \ref{Prop reexp}) ensures already that the sequences of form 
\eqref{Eq a k a 0} are solutions to \eqref{Eq change domain algeb}.
Now let us see the converse.

With the expansion \eqref{Eq exp y x k half},
we see that \eqref{Eq change domain algeb} is equivalent to the following system of linear equations:
\begin{displaymath}
\dfrac{1}{j!}\Big(\prod_{l=0}^{j-1}((k-j+l)+1/2)\Big)
a_{k-j}
=(-1)^{j}\dfrac{(2k+1)!}{2^{j}j!(2(k-j)+1)!} a_{k},
\end{displaymath}
indexed by couples $(k,j)$ with $k\geq j\geq 1$. By taking $j=k$, we in particular get that
\begin{displaymath}
(-1)^{k}\dfrac{(2k+1)!}{2^{k}} a_{k}
= \Big(\prod_{l=0}^{k-1}(l+1/2)\Big) a_0,
\end{displaymath}
which is exactly \eqref{Eq a k a 0}.
\end{proof}

\subsection{A remark on Le Gall's expansion for the 2D Wiener sausage: consistency under change of mass parameter}
\label{Subsec BM algeb}

Just as the asymptotic expansion in the Gaussian case satisfies a consistency constraint
(see Section \ref{Subsec algeb} above) which is related to an algebraic identity for Hermite
polynomials (Proposition \ref{Prop reexp}),
so too there is a consistency constraint in the case of the asymptotic expansion
of the Wiener sausage (see Section \ref{Subsec Le Gall sausage})
which is related to an algebraic identity for generalized Laguerre polynomials of order $-1$,
which we will present in this section.
In \cite[page 301]{LeGallLocTime}, Le Gall briefly mentions that his expansion (Theorem \ref{Thm Le Gall})
is consistent under the change of the mass parameter $M$,
but neither details this nor attaches importance to it, 
seeing it as a mere computational fact. 
Yet, we believe that this point is actually important,
and we will develop it here.
This will provide both a stronger analogy between the Gaussian and the Brownian setting,
and another connection to the umbral composition (see Section \ref{Subsec umbral}).

Consider two mass parameters $M, \widehat{M} >0$, with $M>\widehat{M}$.
The two exponential times $\zeta_{M}$ and $\zeta_{\widehat{M}}$ are naturally coupled
together, with
\begin{displaymath}
\mathbb{P}(\zeta_{M} = \zeta_{\widehat{M}}) = \dfrac{\widehat{M}}{M},
\end{displaymath}
and conditionally on $\zeta_{\widehat{M}}>\zeta_{M}$,
$\zeta_{\widehat{M}}-\zeta_{M}$ being distributed as the unconditioned $\zeta_{\widehat{M}}$.
Take now a standard Brownian motion on $\C$,
$(B_{t})_{t\geq 0}$, independent from the couple $(\zeta_{M},\zeta_{\widehat{M}})$,
with $B_{0}=0$.

On the event $\{\zeta_{M} = \zeta_{\widehat{M}}\}$,
the occupation fields $\Theta_{\zeta_{M}}$ and $\Theta_{\zeta_{\widehat{M}}}$
are the same. 
However, the higher renormalized powers $:\Theta_{\zeta_{M}}^{n}:$
and $:\Theta_{\zeta_{\widehat{M}}}^{n}:$, $n\geq 2$, are not equal.
This is because the parameter $M$ enters the definition of $:\Theta_{\zeta_{M}}^{n}:$
\eqref{Eq renorm mult loc time} through $h_{M}(r)$.
The exact relation between the powers $:\Theta_{\zeta_{\widehat{M}}}^{n}:$
and $:\Theta_{\zeta_{M}}^{k}:$
follows from the change of normalization identity \eqref{Eq change norm Laguerre} for the
generalized Laguerre polynomials and the asymptotic expansion \eqref{Eq h M exp}
for $h_{M}(r)$.
In this way, we get the following.

\begin{prop}
\label{Prop powers loc time change norm}
With the notations above,
on the event $\{\zeta_{M} = \zeta_{\widehat{M}}\}$,
a.s., for every $n\geq 2$,
the following decomposition holds:
\begin{displaymath}
:\Theta_{\zeta_{\widehat{M}}}^{n}:~=
\sum_{k=1}^{n}(-1)^{n-k} \dfrac{n! (n-1)!}{(n-k)! k! (k-1)! }
(\cst(\widehat{M}) - \cst(M))^{n-k}
:\Theta_{\zeta_{M}}^{k}: 
\, .
\end{displaymath}
\end{prop}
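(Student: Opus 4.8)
The plan is to run the same kind of argument as for the GFF case (see Lemma \ref{Lem psi hat psi} and the discussion around it in Section \ref{Subsec algeb}), transposed to the Brownian setting. The starting point is the circle-average regularization \eqref{Eq renorm mult loc time}: on the event $\{\zeta_{M} = \zeta_{\widehat{M}}\}$ the occupation measures coincide, hence so do their circle averages, so a.s.\ on this event $\Theta_{\zeta_{M},r} = \Theta_{\zeta_{\widehat{M}},r}$ for every $r>0$. Therefore, on this event,
\begin{displaymath}
:\Theta_{\zeta_{\widehat{M}}}^{n}:~ = \lim_{r\to 0}\Lambda_{n}\big(\Theta_{\zeta_{M},r},\, h_{\widehat{M}}(r)\big),
\end{displaymath}
with the limit in $L^{2}(d\PP,\sigma((B_{t})_{0\leq t<\zeta_{M}\vee\zeta_{\widehat{M}}}),H^{-\eta}(\C))$ for $\eta>0$. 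The only difference with the definition of $:\Theta_{\zeta_{M}}^{n}:$ is that the normalization $h_{\widehat{M}}(r)$ replaces $h_{M}(r)$.

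First I would apply the change of normalization identity \eqref{Eq change norm Laguerre} for the two-variable generalized Laguerre polynomials $\Lambda_{n}$, writing $h_{\widehat{M}}(r) = h_{M}(r) + (h_{\widehat{M}}(r) - h_{M}(r))$: this gives
\begin{displaymath}
\Lambda_{n}\big(\Theta_{\zeta_{M},r},\, h_{\widehat{M}}(r)\big) = \sum_{k=1}^{n}(-1)^{n-k}\dfrac{n!(n-1)!}{(n-k)!k!(k-1)!}\,\Lambda_{k}\big(\Theta_{\zeta_{M},r},\, h_{M}(r)\big)\,\big(h_{\widehat{M}}(r) - h_{M}(r)\big)^{n-k}.
\end{displaymath}
Next I would use the asymptotic expansion \eqref{Eq h M exp}: since $h_{M}(r) = \tfrac{1}{\pi}\vert\log r\vert + \cst(M) + O(r^{2}\vert\log r\vert)$ and likewise for $\widehat{M}$, the logarithmic divergences cancel in the difference and $h_{\widehat{M}}(r) - h_{M}(r) \to \cst(\widehat{M}) - \cst(M)$ as $r\to 0$, the convergence being deterministic (indeed $O(r^{2}\vert\log r\vert)$). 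Then I would pass to the limit $r\to 0$ term by term: for each fixed $k$, $\Lambda_{k}(\Theta_{\zeta_{M},r},h_{M}(r)) \to\; :\Theta_{\zeta_{M}}^{k}:$ in $L^{2}(d\PP,\ldots,H^{-\eta}(\C))$ by \eqref{Eq renorm mult loc time}, while the scalar factor $(h_{\widehat{M}}(r) - h_{M}(r))^{n-k}$ converges to the constant $(\cst(\widehat{M}) - \cst(M))^{n-k}$; since the sum is finite, the product converges to the claimed expression in $L^{2}(d\PP,\ldots,H^{-\eta}(\C))$, hence (after extracting a subsequence, or directly) a.s.\ on $\{\zeta_{M} = \zeta_{\widehat{M}}\}$.

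The one point requiring a little care — and the main (mild) obstacle — is the interchange of the limit $r\to 0$ with the coupling event: I want the identity to hold almost surely \emph{on} the event $\{\zeta_{M} = \zeta_{\widehat{M}}\}$, not merely in an unconditional $L^{2}$ sense. This is handled by multiplying both sides by the indicator $\ind_{\{\zeta_{M}=\zeta_{\widehat{M}}\}}$, which is measurable with respect to the coupling and commutes with the $L^{2}$ limits, and by noting that on this event the two regularized occupation fields are literally equal, so the only $r$-dependence left is through the deterministic normalizations $h_{M}(r)$, $h_{\widehat{M}}(r)$; the convergence of $(h_{\widehat{M}}(r) - h_{M}(r))^{n-k}$ being deterministic, no uniform integrability issue arises. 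I would present the argument by induction on $n$ to match the statement's ``for every $n\geq 2$'', although the direct computation above already yields all $n$ at once. This mirrors precisely the role played by the change of variance identity \eqref{Eq change var} and Lemma \ref{Lem psi hat psi} in the Gaussian case, and exhibits the same umbral one-parameter subgroup structure (Corollary \ref{Cor Laguerre subgroup}), with $\cst(M)$ playing the role of the variance parameter.
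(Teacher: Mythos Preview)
Your proposal is correct and follows essentially the same approach as the paper: the paper's own justification is the single sentence preceding the proposition, stating that the relation follows from the change of normalization identity \eqref{Eq change norm Laguerre} together with the asymptotic expansion \eqref{Eq h M exp} for $h_{M}(r)$. You have simply fleshed out this sketch, correctly observing that on $\{\zeta_{M}=\zeta_{\widehat{M}}\}$ the circle averages coincide, applying \eqref{Eq change norm Laguerre} with $u_{1}=h_{M}(r)$ and $u_{2}=h_{\widehat{M}}(r)-h_{M}(r)$, and passing to the limit using that the deterministic difference converges to $\cst(\widehat{M})-\cst(M)$.
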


On the event $\{\zeta_{M} = \zeta_{\widehat{M}}\}$, the Wiener sausages 
$S_\varepsilon(\zeta_M)$ and $S_{\varepsilon}(\zeta_{\widehat{M}})$
are equal.
For $S_\varepsilon(\zeta_M)$ one has the expansion
\begin{displaymath}
\sum_{n=1}^{N} (-1)^{n-1} \dfrac{1}{n!}\dfrac{:\Theta_{\zeta_M}^{n}:}
{\big(\frac{1}{\pi}\vert \log \varepsilon\vert + \cst (M)\big)^{n}}
\, + o(\vert \log\varepsilon\vert^{-N}).
\end{displaymath}
For $S_{\varepsilon}(\zeta_{\widehat{M}})$ one has the expansion
\begin{multline*}
\sum_{n=1}^{N} (-1)^{n-1} \dfrac{1}{n!}\dfrac{1}
{\big(\frac{1}{\pi}\vert \log \varepsilon\vert + \cst (\widehat{M})\big)^{n}}
\Big( \sum_{k=1}^{n}(-1)^{n-k} \dfrac{n! (n-1)!}{(n-k)! k! (k-1)! }
(\cst(\widehat{M}) - \cst(M))^{n-k}
:\Theta_{\zeta_{M}}^{k}: \Big)
\\ + o(\vert \log\varepsilon\vert^{-N}).
\end{multline*}
The two expansions should be the same. 
They are indeed, and this follows from the following identity for
generalized Laguerre polynomials, which is an analogue of
Proposition \ref{Prop reexp} for Hermite polynomials,
but with different exponents and coefficients.

\begin{prop}
\label{Prop reexp Laguerre}
For every $w\in\C$, $x>0$ and $y\in\R$ such that $\vert y\vert <x$,
the following holds:
\begin{equation}
\label{Eq finite sum Laguerre}
\sum_{n=1}^{+\infty}
\dfrac{1}{n!}
\Big(\sum_{k=1}^{n}\dfrac{n! (n-1)!}{(n-k)! k! (k-1)! }
\,\vert y\vert^{n-k}\,\vert w\vert^{k}
\Big)
\dfrac{1}{x^{n}}
<+\infty,
\end{equation}
and
\begin{eqnarray}
\label{Eq Laguerre reexp}
\sum_{n=1}^{+\infty} (-1)^{n-1}
\dfrac{1}{n!}
\dfrac{w^{n}}{(x-y)^{n}}
&=&
\sum_{n=1}^{+\infty} (-1)^{n-1}
\dfrac{1}{n!}
\Big(\sum_{k=1}^{n}(-1)^{n-k}\dfrac{n! (n-1)!}{(n-k)! k! (k-1)! }
\,y^{n-k}\,w^{n}
\Big)
\dfrac{1}{x^{n}}
\\ 
\nonumber
&=&
\sum_{n=1}^{+\infty} (-1)^{n-1}
\dfrac{1}{n!}
\Lambda_{n}(w,y)
\dfrac{1}{x^{n}}.
\end{eqnarray}
\end{prop}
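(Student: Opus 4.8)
The statement to prove is Proposition \ref{Prop reexp Laguerre}, which is the generalized-Laguerre analogue of Proposition \ref{Prop reexp}. The plan is to follow the exact same strategy as in the proof of Proposition \ref{Prop reexp}: first establish the absolute convergence \eqref{Eq finite sum Laguerre}, then prove the formal identity \eqref{Eq Laguerre reexp} by a resummation argument, the legitimacy of the term rearrangement being guaranteed by \eqref{Eq finite sum Laguerre}. For \eqref{Eq finite sum Laguerre}, I would perform the change of variables $m = n-k$ (so that $k$ runs and $m\geq 0$), turning the left-hand side into
\begin{displaymath}
\sum_{k=1}^{+\infty}\dfrac{\vert w\vert^{k}}{k!(k-1)!\,x^{k}}
\sum_{m=0}^{+\infty}\dfrac{(m+k-1)!(k-1)!}{m!(k-1)!(k-1)!}\,\dfrac{\vert y\vert^{m}}{x^{m}}
=\sum_{k=1}^{+\infty}\dfrac{\vert w\vert^{k}}{k!(k-1)!\,x^{k}}
\sum_{m=0}^{+\infty}\binom{m+k-1}{m}\dfrac{\vert y\vert^{m}}{x^{m}}.
\end{displaymath}
Since $\vert y\vert/x<1$, the inner sum is the binomial series $(1-\vert y\vert/x)^{-k}$, so the whole expression equals $\sum_{k\geq 1}\frac{1}{k!(k-1)!}\bigl(\frac{\vert w\vert}{x-\vert y\vert}\bigr)^{k}$, which one recognizes as (essentially) a modified Bessel function $I_1$ evaluated at $2\sqrt{\vert w\vert/(x-\vert y\vert)}$, in any case an entire function of its argument; hence the sum is finite. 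This is the direct counterpart of the computation in Proposition \ref{Prop reexp}, where the Gaussian half-integer Pochhammer symbols were summed into $(1-\vert y\vert/x)^{-(m+1/2)}$; here the relevant combinatorial identity is the integer Pochhammer/negative-binomial identity, which is cleaner.

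For the identity \eqref{Eq Laguerre reexp} itself, I would argue in two stages. The first equality is a direct application of the change of normalization identity \eqref{Eq change norm Laguerre} for the homogeneous Laguerre polynomials $\Lambda_n$, combined with the definition \eqref{Eq Lambda}: indeed, by \eqref{Eq Lambda} one has $\Lambda_n(w,x-y)=\sum_{k=1}^n (-1)^{n-k}\frac{n!(n-1)!}{(n-k)!k!(k-1)!}w^k(x-y)^{n-k}$, but that is not quite what is needed; rather, the cleanest route is to write, using \eqref{Eq change norm Laguerre} with $u_1=-y$, $u_2=x$ (so $u_1+u_2 = x-y$), or the appropriate sign convention, that $\Lambda_n(w, x-y)$ expands in terms of $\Lambda_k(w,-y)$ with coefficients involving $x^{n-k}$. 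Actually the structurally correct statement, mirroring \eqref{Eq reexp w x y}, is that the middle expression in \eqref{Eq Laguerre reexp} is $\sum_n (-1)^{n-1}\frac{1}{n!}\Lambda_n(w,y)x^{-n}$ with the inner sum being precisely $\Lambda_n(w,y)$ by \eqref{Eq Lambda} — this gives the second equality of \eqref{Eq Laguerre reexp} by pure definition, once one checks the sign bookkeeping: $\Lambda_n(w,y)=\sum_{k=1}^n(-1)^{n-k}\frac{n!(n-1)!}{(n-k)!k!(k-1)!}w^k y^{n-k}$, and indeed the inner sum $\sum_{k=1}^n(-1)^{n-k}\frac{n!(n-1)!}{(n-k)!k!(k-1)!}y^{n-k}w^n$ appearing in the middle of \eqref{Eq Laguerre reexp} matches this after identifying the free $w^n$ with $w^k\cdot w^{n-k}$... wait, here the exponent on $w$ is $n$, not $k$, so in fact the middle expression as literally written has a typo-like feature that $w^n$ should perhaps be grouped differently; I will read it charitably as the Laguerre polynomial evaluated appropriately and verify the second equality is just the definition \eqref{Eq Lambda}. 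So the real content is the first equality, obtained from the resummation $w^n(x-y)^{-n} = w^n x^{-n}(1-y/x)^{-n} = w^n x^{-n}\sum_{j\geq 0}\binom{n+j-1}{j}(y/x)^j$, substituting into $\sum_n(-1)^{n-1}\frac1{n!}w^n(x-y)^{-n}$, interchanging the order of summation (justified by \eqref{Eq finite sum Laguerre}), re-indexing $n\mapsto k$, $n+j\mapsto n$, and matching coefficients against the Laguerre expansion \eqref{Eq change norm Laguerre}.

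\textbf{Main obstacle.} The technically delicate point — as always in such resummation arguments — is \emph{justifying the interchange of the two infinite sums} in the passage from $\sum_n (-1)^{n-1}\frac1{n!}w^n(x-y)^{-n}$ to the double sum over $(n,j)$ (equivalently over $(k,m)$). This is exactly where \eqref{Eq finite sum Laguerre} is needed: it provides the absolute-convergence majorant $\sum_{n,k}\frac1{n!}\frac{n!(n-1)!}{(n-k)!k!(k-1)!}\vert y\vert^{n-k}\vert w\vert^k x^{-n}<+\infty$ that licenses Fubini for series. The second, more bookkeeping-type, obstacle is getting all the signs and the binomial-versus-Pochhammer coefficient identities to line up so that the re-indexed double sum is term-for-term the coefficient appearing in \eqref{Eq change norm Laguerre}; this requires the elementary identity $\frac{(n-1)!}{(k-1)!(n-k)!}=\binom{n-1}{n-k}$ together with $\binom{n-1}{j}$ being the Taylor coefficient of $(1-t)^{-k}$ at order $j=n-k$, i.e. $\binom{k+j-1}{j}=\binom{n-1}{n-k}$. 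None of this is deep, and the entire proof is a transcription of the proof of Proposition \ref{Prop reexp} with the one-parameter subgroup $(\Lambda(\cdot,u))_{u\in\R}$ (Corollary \ref{Cor Laguerre subgroup}) playing the role of $(Q(\cdot,u))_{u\in\R}$ (Corollary \ref{Cor Hermite subgroup}); accordingly I expect the write-up to be short, with the phrase ``the proof is entirely analogous to that of Proposition \ref{Prop reexp}, replacing the Hermite change-of-variance identity \eqref{Eq change var} by the Laguerre change-of-normalization identity \eqref{Eq change norm Laguerre}'' carrying much of the weight, followed by the explicit resummation computation above.
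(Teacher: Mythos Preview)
Your approach is essentially the same as the paper's: change variables $m=n-k$, sum the inner negative-binomial series to $(1-\vert y\vert/x)^{-k}$, and conclude. However, you have an arithmetic slip in the factorization. The coefficient is $\frac{(m+k-1)!}{m!\,k!\,(k-1)!}$, and to extract $\binom{m+k-1}{m}=\frac{(m+k-1)!}{m!\,(k-1)!}$ you should pull out only $\frac{1}{k!}$, not $\frac{1}{k!\,(k-1)!}$. With the correct bookkeeping the outer sum is $\sum_{k\geq 1}\frac{1}{k!}\bigl(\frac{\vert w\vert}{x-\vert y\vert}\bigr)^{k}=\exp\bigl(\frac{\vert w\vert}{x-\vert y\vert}\bigr)-1$, an exponential rather than a Bessel-type series. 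This does not affect the finiteness conclusion, but it is worth getting right, and the paper's proof records exactly this exponential.

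You are also correct that the $w^{n}$ in the middle expression of \eqref{Eq Laguerre reexp} should read $w^{k}$ for the second equality to be the definition \eqref{Eq Lambda} of $\Lambda_{n}(w,y)$; your resummation of the left-hand side via $(x-y)^{-n}=x^{-n}\sum_{j\geq 0}\binom{n+j-1}{j}(y/x)^{j}$ and re-indexing indeed produces $w^{k}$, matching $(-1)^{n-1}\frac{1}{n!}\Lambda_{n}(w,y)$ term by term. The paper simply says the identity ``follows from a similar computation'', so your written-out version is fine once the factorization is corrected.
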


\begin{proof}
Let us deal with \eqref{Eq finite sum Laguerre}.
One can write it as
\begin{displaymath}
\sum_{n=1}^{+\infty}
\sum_{k=1}^{n}\dfrac{(n-1)!}{(n-k)! k! (k-1)! }
\,\vert y\vert^{n-k}\,\vert w\vert^{k}
\dfrac{1}{x^{n}}
=
\sum_{k=1}^{+\infty}
\sum_{m=0}^{+\infty}
\dfrac{(m+k-1)!}{m! k! (k-1)!}
\vert y\vert^{m}\,\vert w\vert^{k}
\dfrac{1}{x^{m+k}}
,
\end{displaymath}
where we performed the change of variables $m=n-k$.
Further, we use that
\begin{displaymath}
\sum_{m=0}^{+\infty}
\dfrac{(m+k-1)!}{m!(k-1)!}
\,\dfrac{\vert y\vert^{m}}{x^{m}}
=
\dfrac{1}{(1-\vert y\vert/x)^{k}}.
\end{displaymath}
Therefore, \eqref{Eq finite sum Laguerre} equals
\begin{displaymath}
\sum_{k=1}^{+\infty}\dfrac{1}{k!}\dfrac{\vert w\vert^{k}}{(x-\vert y\vert)^{k}}
=
\exp\Big(\dfrac{\vert w\vert}{x-\vert y\vert}\Big) -1 <+\infty.
\end{displaymath}
The identity \eqref{Eq Laguerre reexp} follows from a similar computation.
\end{proof}

A uniqueness property similar to Proposition \ref{Prop change domain unique}
in the Gaussian case also holds.

\begin{prop}
\label{Prop reexp Laguerre unique}
The sequences $(a_n)_{n\geq 1}\in\R^{\N\setminus\{0\}}$ satisfying the formal identity
\begin{equation}
\label{Eq reexp Leguerre a n}
\sum_{n=1}^{+\infty} a_{n}
\dfrac{w^{n}}{(x-y)^{n}}
=
\sum_{n=1}^{+\infty} a_{n}
\Lambda_{n}(w,y)
\dfrac{1}{x^{n}},
\end{equation}
with the convention that
\begin{displaymath}
\dfrac{1}{(1-y/x)^{n}} =
\sum_{m=0}^{+\infty}
\dfrac{(m+n-1)!}{m!(n-1)!}
\,\dfrac{y^{m}}{x^{m}}
\end{displaymath}
are exactly the sequences of form
\begin{equation}
\label{Eq a n a 1 Laguerre}
a_{n} = (-1)^{n-1}
\dfrac{1}{n!} a_{1},
\end{equation}
with $a_1\in\R$ being arbitrary.
\end{prop}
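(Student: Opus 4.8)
The plan is to follow, \emph{mutatis mutandis}, the proof of Proposition \ref{Prop change domain unique}, with the Hermite data replaced by the Laguerre data. First note that \eqref{Eq reexp Leguerre a n} is a \emph{linear} constraint on the sequence $(a_{n})_{n\geq 1}$, so its solution set is a linear subspace of $\R^{\N\setminus\{0\}}$; the assertion is that this subspace is exactly the line spanned by $(\,(-1)^{n-1}/n!\,)_{n\geq 1}$. That this line is contained in the solution set is precisely the identity \eqref{Eq Laguerre reexp} of Proposition \ref{Prop reexp Laguerre}, so it only remains to rule out any other solution.

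For the converse inclusion I would expand both sides of \eqref{Eq reexp Leguerre a n} as formal power series in the two variables $w$ and $y$, treating $1/x$ merely as a bookkeeping parameter, and match the coefficient of the monomial $w^{k}y^{j}$ for each $k\geq 1$ and $j\geq 0$. On the left-hand side, using the stated convention for $(1-y/x)^{-n}$, the term $a_{n}w^{n}(x-y)^{-n}$ contributes only to $w$-degree $n$, so the coefficient of $w^{k}y^{j}$ comes solely from $n=k$ and equals $a_{k}\,\dfrac{(k+j-1)!}{j!\,(k-1)!}\,x^{-(k+j)}$. On the right-hand side, using the explicit expression \eqref{Eq Lambda} for $\Lambda_{n}(w,y)$, the term $a_{n}\Lambda_{n}(w,y)x^{-n}$ contributes to $w^{k}y^{j}$ only when $n=k+j$, with coefficient $(-1)^{j}a_{k+j}\,\dfrac{(k+j)!\,(k+j-1)!}{j!\,k!\,(k-1)!}\,x^{-(k+j)}$. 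Equating the two and cancelling the common factor $\dfrac{(k+j-1)!}{j!\,(k-1)!}$ turns \eqref{Eq reexp Leguerre a n} into the system of scalar identities
\[ a_{k}=(-1)^{j}\,\dfrac{(k+j)!}{k!}\,a_{k+j},\qquad k\geq 1,\ j\geq 0. \]
Specialising to $k=1$ (equivalently, writing $n=j+1$) gives $a_{n}=(-1)^{n-1}a_{1}/n!$ for every $n\geq 1$, which is \eqref{Eq a n a 1 Laguerre}; the remaining equations ($k\geq 2$) are then automatically satisfied, since \eqref{Eq a n a 1 Laguerre} does solve \eqref{Eq reexp Leguerre a n} by \eqref{Eq Laguerre reexp}. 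This completes the proof.

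I do not expect a genuine obstacle here: the argument is a short coefficient extraction once the formal framework is fixed. The only point deserving a sentence of care is the passage from the formal identity \eqref{Eq reexp Leguerre a n} to the displayed scalar system. One should observe that, because of the triangular degree structure (the left-hand term indexed by $n$ has minimal $w$-degree $n$, while $\Lambda_{n}$ has $w$-degrees in $\{1,\dots,n\}$), every monomial $w^{k}y^{j}x^{-(k+j)}$ receives a contribution from exactly one value of $n$ on each side, so no rearrangement or convergence issue arises; convergence of the two series for $|y|<x$ is in any case covered by \eqref{Eq finite sum Laguerre}. This is the exact Laguerre counterpart of the step ``take $j=k$'' in the proof of Proposition \ref{Prop change domain unique}.
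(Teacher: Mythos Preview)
Your proof is correct and follows essentially the same approach as the paper: both reduce the formal identity to the scalar system obtained by matching the coefficient of each monomial $w^{k}y^{j}$, arriving (in your notation $n=k+j$) at $a_{k}=(-1)^{j}\tfrac{(k+j)!}{k!}\,a_{k+j}$, and then specialize to $k=1$ to obtain \eqref{Eq a n a 1 Laguerre}. The paper's write-up is just terser, indexing the system by pairs $(n,k)$ with $n\geq k\geq 1$ rather than $(k,j)$.
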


\begin{proof}
Indeed, the equation \eqref{Eq reexp Leguerre a n} is equivalent to the system
\begin{displaymath}
 \dfrac{(n-1)!}{(n-k)!(k-1)!}
 a_{k} = (-1)^{n-k}\dfrac{n! (n-1)!}{(n-k)! k! (k-1)!}a_{n},
\end{displaymath}
indexed by the couples $(n,k)$ with $n\geq k\geq 1$.
This is the same as
\begin{displaymath}
a_{n} = (-1)^{n-k} \dfrac{k!}{n!}a_{k},
\end{displaymath}
and by taking $k=1$, we get \eqref{Eq a n a 1 Laguerre}.
\end{proof}

\section{A description of the fields $\psi_{n,A}$ from the germs of the multiplicative chaos}
\label{Sec psi GMC}

\subsection{The description and its proof}
\label{Subsec psi GMC 1}

As in Sections \ref{Sec Wick FPS} and \ref{Sec A E},
let $D\subset \C$ be an open, non-empty, bounded, connected and simply connected domain.
Let be a constant $v>0$ and $\Phi$ a GFF on $D$ with constant
boundary condition $v$ on $\partial D$.
Let $A$ be the first passage set (FPS) of $\Phi$
from level $v$ to level $0$.
Recall the fields $\psi_{n,A}$ introduced in Section \ref{Subsec cond Wick FPS}.
There is a description of the $\psi_{n,A}$-s that come from series expansion of the
Gaussian multiplicative chaos (GMC) at $\gamma=0$ \eqref{Eq GMC Wick gen}.
It holds for both odd and even powers.

\begin{thm}
\label{Thm psi GMC}
Let $n\geq 1$.
Then the field $\psi_{n,A}$ is the limit, as $\gamma\to 0^{+}$,
of the following continuous function supported of $D\setminus A$:
\begin{equation}
\label{Eq psi n as lim func}
\ind_{D\setminus A}
\He_{n}(\gamma V_{A}^{1/2})V_{A}^{n/2} \exp\Big(-\dfrac{\gamma^{2}}{2} V_{A}\Big),
\end{equation}
where $\He_{n}$ is the $n$-th probabilistic Hermite polynomial \eqref{Eq Herm 3},
and $V_{A}$ is the function given by \eqref{Eq V A CR}.
The convergence holds both in
$L^{2}(d\PP,\sigma(\Phi),H^{-\eta}(\C))$ 
and almost surely in $H^{-\eta}(\C)$ (for the Sobolev norm) for every $\eta>0$.
\end{thm}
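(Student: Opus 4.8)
The plan is to identify the claimed limit as a conditional expectation $\E[\,\cdot\mid A]$ of a "germ of multiplicative chaos" built from the inner field $\Phi_{D\setminus A}$, to reduce the $H^{-\eta}(\C)$-convergence to a second‑moment estimate, and to control that estimate by the Green's function bounds of Section~\ref{Sec estimates} together with the marginal law of the conformal‑radius ratio from Theorem~\ref{Thm law CR FPS}. Recall $\psi_{n,A}=\E[:\Phi^{n}:\mid A]$. I would introduce, for small $\gamma>0$, the multiplicative chaos of $\Phi_{D\setminus A}$ renormalized with the \emph{ambient} Green's function $G_{D}$ rather than $G_{D\setminus A}$,
\[
:e^{u\Phi_{D\setminus A}}:^{\sharp}\ :=\ \lim_{\varepsilon\to0}\exp\Big(u\,\Phi_{D\setminus A,\varepsilon}(z)-\tfrac{u^{2}}{2}G_{D,\varepsilon,\varepsilon}(z,z)\Big)\,d^{2}z\ =\ e^{-u^{2}V_{A}/2}\,:e^{u\Phi_{D\setminus A}}:\, ,
\]
and set $Z_{\gamma}:=\partial_{s}^{\,n}|_{s=0}:e^{(s-\gamma)\Phi_{D\setminus A}}:^{\sharp}$. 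The first step is to check that, for $\gamma$ small and $\eta$ large, $s\mapsto :e^{(s-\gamma)\Phi_{D\setminus A}}:^{\sharp}$ is $C^{n}$ as a map into $L^{2}(d\PP,\sigma(\Phi),H^{-\eta}(\C))$, so that $Z_{\gamma}$ is a well‑defined element there; the bound $e^{-(s-\gamma)^{2}V_{A}/2}\le e^{-\gamma^{2}V_{A}/8}$ for $s$ near $0$ and the Green's estimate of Proposition~\ref{Prop Green} (used as in Lemma~\ref{Lem f q}) tame the powers of $V_{A}$ produced by differentiation, while the Laplace transform $\E[e^{-\lambda V_{A}(z)}]=e^{-v\sqrt{2\lambda}}$ from Theorem~\ref{Thm law CR FPS} controls the resulting integrals.

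Next I would compute $\E[Z_{\gamma}\mid A]$. Expanding $:e^{u\Phi_{D\setminus A}}:^{\sharp}$ in $u$ via \eqref{Eq GMC Wick gen} applied to $\Phi_{D\setminus A}$ and using the change‑of‑variance identity \eqref{Eq change var} gives $Z_{\gamma}=\sum_{j\ge0}\frac{(-\gamma)^{j}}{j!}:\Phi_{D\setminus A}^{\,n+j}:^{\sharp}$ with $:\Phi_{D\setminus A}^{\,m}:^{\sharp}=\sum_{0\le k\le\lfloor m/2\rfloor}(-1)^{k}\frac{m!}{2^{k}k!(m-2k)!}V_{A}^{k}:\Phi_{D\setminus A}^{\,m-2k}:$; since $\E[:\Phi_{D\setminus A}^{\,m}:\mid A]=0$ for $m\ge1$, only the $k=m/2$ terms survive, and the rearrangement $\sum_{j\ge0}\frac{(-\gamma)^{j}}{j!}Q_{n+j}(0,u)=\partial_{s}^{n}|_{0}e^{-(s-\gamma)^{2}u/2}=Q_{n}(\gamma u,u)e^{-\gamma^{2}u/2}$ of \eqref{Eq Q exp} yields
\[
\E[Z_{\gamma}\mid A]\ =\ \ind_{D\setminus A}\,Q_{n}(\gamma V_{A},V_{A})\,e^{-\gamma^{2}V_{A}/2}\ =\ \ind_{D\setminus A}\,\He_{n}(\gamma V_{A}^{1/2})V_{A}^{n/2}e^{-\gamma^{2}V_{A}/2},
\]
which is exactly the candidate limit in \eqref{Eq psi n as lim func}. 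By the contraction property of conditional expectation in $L^{2}(d\PP,\sigma(\Phi),H^{-\eta}(\C))$ it then suffices to show $\E[\|\E[Z_{\gamma}\mid A]-\psi_{n,A}\|_{H^{-\eta}(\C)}^{2}]\to0$, equivalently to expand it as $\E[\|\E[Z_{\gamma}\mid A]\|^{2}]-2\E[\langle \E[Z_{\gamma}\mid A],:\Phi^{n}:\rangle]+\E[\|\psi_{n,A}\|^{2}]$ and evaluate the three limits, the last one via Corollary~\ref{Cor Sob norm decomp}.

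This last second‑moment computation is the heart of the argument, and I expect it to be the main obstacle. The point is that, writing $:e^{u\Phi_{D\setminus A}}:^{\sharp}=\ind_{D\setminus A}\,:e^{u\Phi}:$ (the restriction of the chaos of $\Phi$ to $D\setminus A$, which follows because $\nu_{A,\varepsilon}(z)\to0$ for every fixed $z\in D\setminus A$) and $:\Phi^{n}:-:\Phi_{D\setminus A}^{\,n}:^{\sharp}=\psi_{n,A}-\ind_{n\text{ even}}Q_{n}(0,V_{A})$ by Theorem~\ref{Thm decomp FPS Wick}, the difference $Z_{\gamma}-:\Phi^{n}:$ splits into a multiplicative‑chaos tail (controlled, hence tending to $0$ in $L^{2}(d\PP,\sigma(\Phi),H^{-\eta}(\C))$, by absolute convergence of \eqref{Eq GMC Wick gen} for $|\gamma|<2\sqrt{\pi\eta}$) plus a term carrying the mass that $\ind_{D\setminus A}Q_{n}(\gamma V_{A},V_{A})e^{-\gamma^{2}V_{A}/2}$ concentrates onto $A$ as $\gamma\to0^{+}$, which must reconstruct the "on‑$A$" part of $\psi_{n,A}$; this is an interchange of the limit $\gamma\to0$ with the renormalization limit $\varepsilon\to0$ in a regime where the individual terms blow up near $A$. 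At the level of first moments it already works, since $\E[e^{-\lambda V_{A}(z)}]=e^{-v\sqrt{2\lambda}}$ and $Q_{n}(\gamma t,t)e^{-\gamma^{2}t/2}=\partial_{s}^{n}|_{0}e^{-(s-\gamma)^{2}t/2}$ give $\E[\ind_{D\setminus A}Q_{n}(\gamma V_{A},V_{A})e^{-\gamma^{2}V_{A}/2}]=v^{n}e^{-v\gamma}\ind_{D}\to v^{n}\ind_{D}=\E[\psi_{n,A}]$. For the second moments there is no closed‑form joint law of $(V_{A}(z),V_{A}(w))$, so one decouples the two points by Cauchy–Schwarz and handles the diagonal concentration through the marginal law (Theorem~\ref{Thm law CR FPS}) and the off‑diagonal — the contribution of the $:\Phi_{D\setminus A}^{\,m}:$, $m\ge1$, pieces and of $G_{D\setminus A}(z,w)^{m}$ — through the Green's bound of Proposition~\ref{Prop Green}, exactly in the spirit of Section~\ref{Subsec prod 0}, Lemma~\ref{Lem f q}, and the Minkowski‑content characterization of Theorem~\ref{Thm Mink ALS1}.

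Finally, for the almost‑sure convergence in $H^{-\eta}(\C)$, I would extract from the second‑moment estimate a polynomial rate in $\gamma$ (the corrections being of order $\gamma^{\theta}$ for some $\theta>0$, from the $e^{-v\gamma}$ factors and the chaos tail $\sum_{j\ge1}(\gamma/\sqrt{4\pi\eta})^{j}=O(\gamma)$), apply it along $\gamma_{k}=k^{-2}$ with Borel–Cantelli to get a.s.\ convergence along that sequence, and fill the gaps with a Lipschitz‑in‑$\gamma$ bound for $\gamma\mapsto\ind_{D\setminus A}\He_{n}(\gamma V_{A}^{1/2})V_{A}^{n/2}e^{-\gamma^{2}V_{A}/2}$ in $H^{-\eta}(\C)$ valid on compacts of $(0,\infty)$ with an $A$‑measurable a.s.\ finite random constant (obtained by differentiating under the norm and estimating as before); consistency in $\eta>0$ is automatic because the $H^{-\eta}(\C)$‑norms are non‑increasing in $\eta$.
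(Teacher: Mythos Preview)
Your approach is substantially more complicated than the paper's, and the part you flag as ``the main obstacle'' is in fact unnecessary. The paper's proof is short: take the conditional expectation of the Wick expansion \eqref{Eq GMC Wick gen} (with $-\gamma$ in place of $\gamma$) to obtain
\[
\E\big[\ind_{D}:e^{-\gamma\Phi}:\,\big|\,A\big]\ =\ \ind_{D}+\sum_{n\ge1}\frac{(-\gamma)^{n}}{n!}\,\psi_{n,A},
\]
which converges absolutely in $L^{2}(d\PP,\sigma(\Phi),H^{-\eta}(\C))$ and, since $\sum_{n}\frac{|\gamma|^{n}}{n!}\|\psi_{n,A}\|_{H^{-\eta}(\C)}<\infty$ a.s., also a.s.\ in $H^{-\eta}(\C)$. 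Hence $\psi_{n,A}=\lim_{\gamma\to0}(-1)^{n}\partial_{\gamma}^{n}\E[\ind_{D}:e^{-\gamma\Phi}:\mid A]$, the limit holding in both senses by analyticity. For $\gamma>0$ one then invokes the Aru--Powell--Sep\'ulveda identity $\ind_{D}:e^{-\gamma\Phi}:\,=\,\ind_{D\setminus A}\,e^{-\gamma^{2}V_{A}/2}:e^{-\gamma\Phi_{D\setminus A}}:$, whose conditional expectation is simply $\ind_{D\setminus A}\,e^{-\gamma^{2}V_{A}/2}$; differentiating this $n$ times via \eqref{Eq Herm 1} yields \eqref{Eq psi n as lim func}. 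That is the whole proof.

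You actually have the APS identity in your hands (your equation $:e^{u\Phi_{D\setminus A}}:^{\sharp}=\ind_{D\setminus A}:e^{u\Phi}:$ for $u<0$ is exactly it), and your computation of $\E[Z_{\gamma}\mid A]$ is correct. What you miss is that the same identity lets you recognise $\E[Z_{\gamma}\mid A]$ as the $n$-th $\gamma$-derivative of the \emph{generating function} $\E[\ind_{D}:e^{-\gamma\Phi}:\mid A]$, whose analyticity around $0$ (inherited from \eqref{Eq GMC Wick gen}) makes the convergence to $\psi_{n,A}$ automatic. By differentiating \emph{before} taking the conditional expectation and then trying to close the gap via second moments of $V_{A}(z)V_{A}(w)$ with no closed-form joint law, you trade a one-line analyticity argument for an interchange-of-limits problem that you do not resolve; your sketch (Cauchy--Schwarz decoupling, marginal Laplace transform, Proposition~\ref{Prop Green}) does not make clear how the ``mass concentrating on $A$'' is matched to $\psi_{n,A}$ at the level of two-point functions. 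Likewise, your Borel--Cantelli plus Lipschitz-in-$\gamma$ route to a.s.\ convergence is superfluous once one has absolute convergence of the power series in $H^{-\eta}(\C)$.
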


\begin{proof}
We will follow a similar argument used to derive Theorem \ref{Thm Mink ALS1} in \cite{ALS1}.
Fix $\eta>0$.
Let $\gamma\in (-2\sqrt{(1\wedge\eta)\pi},2\sqrt{(1\wedge\eta)\pi})$,
so that
\begin{displaymath}
\ind_{D}: e^{-\gamma \Phi} :
~=\ind_{D}\sum_{n\geq 0}(-1)^{n}\dfrac{\gamma^{n}}{n!}:\Phi^{n}:\,
~=\ind_{D}+\sum_{n\geq 1}(-1)^{n}\dfrac{\gamma^{n}}{n!}:\Phi^{n}:,
\end{displaymath}
with convergence in $L^{2}(d\PP,\sigma(\Phi),H^{-\eta}(\C))$.
This is just the identity \eqref{Eq GMC Wick gen}
with $-\gamma$ instead of $\gamma$.
By taking the conditional expectation w.r.t. $A$, we get
\begin{displaymath}
\E\big[\ind_{D}: e^{-\gamma \Phi} :\,\vert A\big]
=\ind_{D}+\sum_{n\geq 1}(-1)^{n}\dfrac{\gamma^{n}}{n!}\E\big[\,:\Phi^{n}:\,\vert A\big]
=\ind_{D}+\sum_{n\geq 1}(-1)^{n}\dfrac{\gamma^{n}}{n!}\psi_{n,A},
\end{displaymath}
with convergence in $L^{2}(d\PP,\sigma(\Phi),H^{-\eta}(\C))$.
Moreover, for our range of $\gamma$,
\begin{displaymath}
\sum_{n\geq 1}\dfrac{\vert\gamma\vert^{n}}{n!}\E\big[\Vert\psi_{n,A}\Vert_{H^{-\eta}(\C)}^{2}\big]^{1/2}
\leq
\sum_{n\geq 1}\dfrac{\vert\gamma\vert^{n}}{n!}\E\big[\Vert :\Phi^{n}:\Vert_{H^{-\eta}(\C)}^{2}\big]^{1/2}
< +\infty.
\end{displaymath}
So the convergence of the above power series in $\gamma$ also holds almost surely for the $H^{-\eta}(\C)$ norm.
So we get that for every $n\geq 1$,
\begin{equation}
\label{Eq psi n deriv}
\psi_{n,A}
=
\lim_{\gamma \to 0} (-1)^{n}\dfrac{d^{n}}{d\gamma^{n}} \E\big[\ind_{D}: e^{-\gamma \Phi} :\,\vert A\big],
\end{equation}
where the convergence holds both in $L^{2}(d\PP,\sigma(\Phi),H^{-\eta}(\C))$
and a.s. for the $H^{-\eta}(\C)$ norm.

Now recall the decomposition \eqref{Eq decomp Phi}:
\begin{displaymath}
\Phi = \nu_{A} + \Phi_{D\setminus A},
\end{displaymath}
where $\nu_{A}$ is a positive measure supported on A,
measurable w.r.t. $A$
(Minkowski content, Theorem \ref{Thm Mink ALS1})
and conditionally on $A$,
$\Phi_{D\setminus A}$ is distributed as a GFF on $D\setminus A$
with $0$ boundary condition.
Aru, Powell and Sep\'ulveda showed in \cite[Proposition 4.1]{APS} that for $\gamma\in (0,2\sqrt{2\pi})$,
\begin{equation}
\label{Eq GMC A - gamma}
\ind_{D}: e^{-\gamma \Phi} :\, =\,
\ind_{D\setminus A}
\exp\Big(-\dfrac{\gamma^{2}}{2} V_{A}\Big): e^{-\gamma \Phi_{D\setminus A}} :\, .
\end{equation}
The factor
\begin{displaymath}
\exp\Big(-\dfrac{\gamma^{2}}{2} V_{A}\Big) = \Big(\dfrac{\CR(z,D)}{\CR(z,D\setminus A)}\Big)^{-\gamma^{2}/(4\pi)}
\end{displaymath}
accounts for the change of domain: $: e^{-\gamma \Phi} :$ is renormalized with $G_{D}$,
whereas $: e^{-\gamma \Phi_{D\setminus A}} :$ is renormalized with $G_{D\setminus A}$.
In general, one can show that
\begin{displaymath}
\ind_{D}: e^{-\gamma \Phi} :\, =\,
\ind_{D\setminus A}
\exp\Big(-\dfrac{\gamma^{2}}{2} V_{A}\Big): e^{-\gamma \Phi_{D\setminus A}} :\,
+
\lim_{\varepsilon\to 0} e^{-\gamma\nu_{A,\varepsilon}}
\exp\Big(-\dfrac{\gamma^{2}}{2} V_{A,\varepsilon}\Big).
\end{displaymath}
So the identity \eqref{Eq GMC A - gamma} comes from the fact that for $\gamma>0$,
\begin{displaymath}
e^{-\gamma\nu_{A,\varepsilon}(z)}
\exp\Big(-\dfrac{\gamma^{2}}{2} V_{A,\varepsilon}(z)\Big)
\leq 
\ind_{d(z,A)<\varepsilon}
\exp\Big(-\dfrac{\gamma^{2}}{2} V_{A,\varepsilon}(z)\Big),
\end{displaymath}
and it converges to $0$ as $\varepsilon\to 0$.
(Note that \eqref{Eq GMC A - gamma} does not hold for $\gamma < 0$.)
Therefore, for $\gamma\in (0,2\sqrt{2\pi})$,
\begin{displaymath}
\E\big[\ind_{D}: e^{-\gamma \Phi} :\,\vert A\big]
=
\ind_{D\setminus A}
\exp\Big(-\dfrac{\gamma^{2}}{2} V_{A}\Big)
\E\big[\,: e^{-\gamma \Phi_{D\setminus A}} :\,\vert A\big]
= 
\ind_{D\setminus A}
\exp\Big(-\dfrac{\gamma^{2}}{2} V_{A}\Big).
\end{displaymath}
By combining with \eqref{Eq psi n deriv},
we get that
\begin{displaymath}
\psi_{n,A}
=
\lim_{\gamma \to 0^{+}} (-1)^{n}\ind_{D\setminus A}
\dfrac{d^{n}}{d\gamma^{n}} \exp\Big(-\dfrac{\gamma^{2}}{2} V_{A}\Big).
\end{displaymath}
By combining with the identity \eqref{Eq Herm 1} defining the Hermite polynomials
$\He_{n}$, we get \eqref{Eq psi n as lim func}.
\end{proof}

\begin{rem}
\label{Rem concentration}
Above Theorem \ref{Thm psi GMC} gives an expression of the fields $\psi_{n,A}$ as 
limits of functions in $\LC^{\infty}(\C)$
which are zero outside $D\setminus A$.
If $n$ is odd, 
then \eqref{Eq psi n as lim func}
converges to $0$ uniformly on compact subsets of 
$D\setminus A$, because then $\He_{n}(0)=0$.
However, the convergence to $0$ does not hold in the Sobolev spaces $H^{-\eta}(\C)$:
the function \eqref{Eq psi n as lim func} concentrates near $A$,
and has a non-trivial functional limit supported on $A$,
that is $\psi_{n,A}$.
If $n=2k$ is even, then 
\begin{displaymath}
\He_{n}(0) = (-1)^{k}\dfrac{(2k)!}{2^{k}k!}.
\end{displaymath}
In this way, the function \eqref{Eq psi n as lim func}
converges, uniformly on compact subsets of $D\setminus A$, 
towards the function
\begin{displaymath}
(-1)^{k}\dfrac{(2k)!}{2^{k}k!}V_{A}^{k}.
\end{displaymath}
The limit in $H^{-\eta}(\C)$ is however different,
$\psi_{2k}$.
All this is consistent with Corollary \ref{Cor support psi}.
\end{rem}

\begin{rem}
\label{Rem non holomorph}
We would like to point out an important subtlety.
The identity \eqref{Eq GMC A - gamma} holds for $\gamma\in (0,2\sqrt{2\pi})$
but not for $\gamma<0$.
But shouldn't two holomorphic functions in $\gamma$ coinciding on $(0,2\sqrt{2\pi})$
also coincide on a neighborhood of $0$
(principle of isolated zeroes)?
The only option is that the right-hand side of \eqref{Eq GMC A - gamma} is does not depend holomorphically on $\gamma$ is a neighborhood of $0$.
Formally, one could expand
\begin{displaymath}
\ind_{D\setminus A}
\exp\Big(-\dfrac{\gamma^{2}}{2} V_{A}\Big): e^{-\gamma \Phi_{D\setminus A}} :
\end{displaymath}
into
\begin{displaymath}
\ind_{D\setminus A} + \sum_{n\geq 1}(-1)^{n}\frac{\gamma^{n}}{n!}
\Big(\sum_{0\leq k\leq \lfloor n/2\rfloor}(-1)^{k}
\dfrac{n!}{2^{k} k! (n-2k)!} V_{A}^{k} :\Phi_{D\setminus A}^{n-2k}:
\Big).
\end{displaymath}
However, already the term in $\gamma^{2}$,
$(:\Phi_{D\setminus A}^{2}: - V_{A})/2$
is problematic.
The field $:\Phi_{D\setminus A}^{2}:$
is a well defined element of $L^{2}(d\PP,\sigma(\Phi),H^{-\eta}(\C))$,
but
\begin{displaymath}
\int_{D\setminus A} V_{A} = +\infty~\text{a.s. },
\end{displaymath}
as explained in Remark \ref{Rem non sep}.
\end{rem}

\subsection{Relation for odd powers to the asymptotic expansion}
\label{Subsec psi GMC multiscale}

For odd integers $n$, the expression \eqref{Eq psi n as lim func} for $\psi_{n,A}$
and the asymptotic expansion \eqref{Eq A E N} are closely related.
The function \eqref{Eq psi n as lim func} is a linear combination of terms of form
\begin{displaymath}
\gamma^{n-2k}\ind_{D\setminus A}
V_{A}^{n-k} \exp\Big(-\dfrac{\gamma^{2}}{2} V_{A}\Big),
\end{displaymath}
with $k\in \{0,\dots,\lfloor n/2\rfloor\}$.

Recall the notation $\Ns_{\varepsilon}(A)$ \eqref{Eq not N eps A}.

\begin{lemma}
\label{Lem beta gamma int}
Let $\beta, \gamma >0$.
Then
\begin{displaymath}
\ind_{D\setminus A}
V_{A}^{\beta} \exp\Big(-\dfrac{\gamma^{2}}{2} V_{A}\Big)
=\dfrac{2^{\beta}}{\gamma^{2\beta}}
\int_{0}^{+\infty}
\ind_{\Ns_{e^{-4\pi y/\gamma^{2}}}(A)}
\Big(\dfrac{\beta}{y}-1\Big)
y^{\beta} e^{-y}\, dy.
\end{displaymath}
\end{lemma}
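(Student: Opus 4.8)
The plan is to prove this identity pointwise in $z\in D\setminus A$ (with both sides being well-defined functions on $D\setminus A$, zero elsewhere), after which the equality of the two continuous-function-valued expressions follows. Fix $z\in D\setminus A$ and write $V=V_A(z)>0$. The key observation is that $z\in\Ns_{\varepsilon}(A)$ precisely when $\CR(z,D\setminus A)<\varepsilon\,\CR(z,D)$, i.e. by \eqref{Eq V A CR} exactly when $V_A(z)>\frac{1}{2\pi}\vert\log\varepsilon\vert$, equivalently $\vert\log\varepsilon\vert<2\pi V$. With the substitution $\varepsilon=e^{-4\pi y/\gamma^2}$ we have $\vert\log\varepsilon\vert=4\pi y/\gamma^2$, so $\ind_{\Ns_{e^{-4\pi y/\gamma^2}}(A)}(z)=\ind_{\{y<\gamma^2 V/2\}}$.

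The second step is therefore to reduce the right-hand side, evaluated at $z$, to the elementary integral
\begin{displaymath}
\frac{2^{\beta}}{\gamma^{2\beta}}\int_{0}^{\gamma^2 V/2}\Big(\frac{\beta}{y}-1\Big)y^{\beta}e^{-y}\,dy
=\frac{2^{\beta}}{\gamma^{2\beta}}\int_{0}^{\gamma^2 V/2}\big(\beta\,y^{\beta-1}-y^{\beta}\big)e^{-y}\,dy.
\end{displaymath}
The integrand $\beta y^{\beta-1}e^{-y}-y^{\beta}e^{-y}$ is exactly $\frac{d}{dy}\big(y^{\beta}e^{-y}\big)$. Hence the integral telescopes to $\big[y^{\beta}e^{-y}\big]_{0}^{\gamma^2 V/2}=(\gamma^2 V/2)^{\beta}e^{-\gamma^2 V/2}$ (the lower endpoint contributes $0$ since $\beta>0$). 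Multiplying by $2^{\beta}\gamma^{-2\beta}$ gives $V^{\beta}e^{-\gamma^2 V/2}$, which is the left-hand side evaluated at $z$. This establishes the pointwise identity; since $z$ was arbitrary in $D\setminus A$ and both sides vanish on $A$ and outside $D$, the function identity follows.

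There is essentially no analytic obstacle here — the content is the change of variables linking the conformal-radius neighborhood $\Ns_\varepsilon(A)$ to a level set of $V_A$, plus the recognition of a total derivative. The one point requiring a line of care is the interchange of the integral over $y$ with the implicit integration against test functions (or the identification of $\ind_{\Ns_{e^{-4\pi y/\gamma^2}}(A)}$ as a measurable function of $(y,z)$): one should note that $(y,z)\mapsto\ind_{\{y<\gamma^2 V_A(z)/2\}}$ is jointly measurable and that, for fixed $\gamma$, the map $\varepsilon\mapsto\ind_{\Ns_\varepsilon(A)}$ in the integrand is dominated by an integrable bound so that Fubini/Tonelli applies; this uses $V_A\le \tfrac{1}{2\pi}\log\big(4\diam(D)/d(z,A)\big)$ from \eqref{Eq Koebe} together with the finiteness of $\int_{D\setminus A}V_A^\beta e^{-\gamma^2 V_A/2}\,d^2z$, which holds since the Gaussian factor kills the logarithmic divergence of $V_A$ near $A$. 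Thus the main (very mild) obstacle is simply bookkeeping the measurability/integrability needed to present the right-hand side as a bona fide function rather than only a formal integral.
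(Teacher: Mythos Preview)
Your proof is correct and follows essentially the same approach as the paper: both arguments rest on identifying $\{z:V_A(z)>u\}=\Ns_{e^{-2\pi u}}(A)$ and on the fundamental theorem of calculus applied to $y\mapsto y^\beta e^{-y}$ (equivalently, $u\mapsto u^\beta e^{-\gamma^2 u/2}$), followed by the change of variables $y=\gamma^2 u/2$. The only cosmetic difference is direction---the paper starts from the left-hand side and writes $x^\beta e^{-\gamma^2 x/2}=\int_0^x(\beta/u-\gamma^2/2)u^\beta e^{-\gamma^2 u/2}\,du$ before substituting, whereas you start from the right-hand side and evaluate the integral; your Fubini/measurability remarks are harmless but not needed for a pointwise identity of continuous functions on $D\setminus A$.
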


\begin{proof}
For every $x\geq 0$, we can write
\begin{eqnarray*}
x^{\beta}\exp\Big(-\dfrac{\gamma^{2}}{2}x\Big)
&=&
\int_{0}^{x}
\Big(\dfrac{\beta}{u}-\dfrac{\gamma^{2}}{2}\Big)
u^{\beta}\exp\Big(-\dfrac{\gamma^{2}}{2}u\Big)\, du
\\
&=&
\int_{0}^{+\infty}
\ind_{x>u}
\Big(\dfrac{\beta}{u}-\dfrac{\gamma^{2}}{2}\Big)
u^{\beta}\exp\Big(-\dfrac{\gamma^{2}}{2}u\Big)\, du .
\end{eqnarray*}
Therefore,
\begin{displaymath}
\ind_{D\setminus A}
V_{A}^{\beta} \exp\Big(-\dfrac{\gamma^{2}}{2} V_{A}\Big)
=
\int_{0}^{+\infty}
\ind_{V_{A}>u}
\Big(\dfrac{\beta}{u}-\dfrac{\gamma^{2}}{2}\Big)
u^{\beta}\exp\Big(-\dfrac{\gamma^{2}}{2}u\Big)\, du .
\end{displaymath}
By definition, the subset
$\{z\in D\setminus A \vert V_{A}(z)>u\}$
is $\Ns_{e^{-2\pi u}}(A)$.
Further, we perform the change of variable
$y = 2u/\gamma^{2}$
to get the desired expression.
\end{proof}

\begin{prop}
\label{Prop exp beta gamma}
Let $N\geq 0$ and $\beta>N + 1/2$.
Then, for every $\eta>0$,
as $\gamma\to 0^{+}$,
the following asymptotic expansion holds in $L^{2}(d\PP,\sigma(\Phi),H^{-\eta}(\C))$:
\begin{displaymath}
\ind_{D\setminus A}
V_{A}^{\beta} \exp\Big(-\dfrac{\gamma^{2}}{2} V_{A}\Big)
= 
\dfrac{1}{\sqrt{2\pi}}
\sum_{j=0}^{N} (-1)^{j}
\dfrac{\Gamma(\beta -j-1/2) \gamma^{2j+1 - 2\beta}}{2^{2j+1/2 - \beta} j!}
\,\psi_{2j+1,A}
+
o(\gamma^{2N+1 - 2\beta}).
\end{displaymath}
\end{prop}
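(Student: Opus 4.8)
The plan is to combine Lemma~\ref{Lem beta gamma int}, which expresses $\ind_{D\setminus A}V_A^{\beta}\exp(-\tfrac{\gamma^2}{2}V_A)$ as an integral of $\ind_{\Ns_{\varepsilon}(A)}$ against an explicit weight, with the asymptotic expansion of $\ind_{\Ns_\varepsilon(A)}$ from Theorem~\ref{Thm A E FPS}. First I would fix $\eta\in(0,1)$ and rewrite, via Lemma~\ref{Lem beta gamma int},
\begin{displaymath}
\ind_{D\setminus A}V_A^{\beta}\exp\Big(-\dfrac{\gamma^2}{2}V_A\Big)
=\dfrac{2^{\beta}}{\gamma^{2\beta}}\int_0^{+\infty}\ind_{\Ns_{e^{-4\pi y/\gamma^2}}(A)}\Big(\dfrac{\beta}{y}-1\Big)y^{\beta}e^{-y}\,dy,
\end{displaymath}
so that the scale $\varepsilon=\varepsilon(y,\gamma)=e^{-4\pi y/\gamma^2}$ satisfies $\tfrac{1}{2\pi}|\log\varepsilon|=\tfrac{2y}{\gamma^2}$. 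Plugging in the $N$-th order expansion $\ind_{\Ns_\varepsilon(A)}=\tfrac{1}{\sqrt{2\pi}}\sum_{k=0}^{N}(-1)^k\tfrac{1}{2^k k!(k+1/2)}\psi_{2k+1,A}\,(\tfrac{1}{2\pi}|\log\varepsilon|)^{-(k+1/2)}+R_{N,\varepsilon}$ and integrating the main terms in $y$ against $(\tfrac{\beta}{y}-1)y^{\beta}e^{-y}$ should, after the substitution, produce Gamma integrals: the coefficient of $\psi_{2k+1,A}$ becomes
\begin{displaymath}
\dfrac{2^{\beta}}{\gamma^{2\beta}}\cdot\dfrac{1}{\sqrt{2\pi}}\cdot\dfrac{(-1)^k}{2^k k!(k+1/2)}\Big(\dfrac{\gamma^2}{2}\Big)^{k+1/2}\int_0^{+\infty}\Big(\dfrac{\beta}{y}-1\Big)y^{\beta-k-1/2}e^{-y}\,dy,
\end{displaymath}
and $\int_0^{\infty}(\tfrac{\beta}{y}-1)y^{\alpha}e^{-y}\,dy=\beta\Gamma(\alpha)-\Gamma(\alpha+1)=(\beta-\alpha)\Gamma(\alpha)$ with $\alpha=\beta-k-1/2$ gives $(k+1/2)\Gamma(\beta-k-1/2)$; the factor $(k+1/2)$ cancels, and collecting powers of $\gamma$ yields exactly $(-1)^k\Gamma(\beta-k-1/2)\gamma^{2k+1-2\beta}/(2^{2k+1/2-\beta}k!\sqrt{2\pi})\,\psi_{2k+1,A}$, matching the claimed expansion after renaming $k\to j$.

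The remaining work is to control the error, i.e. to show that $\tfrac{2^{\beta}}{\gamma^{2\beta}}\int_0^{\infty}R_{N,\varepsilon(y,\gamma)}(\tfrac{\beta}{y}-1)y^{\beta}e^{-y}\,dy=o(\gamma^{2N+1-2\beta})$ in $L^2(d\PP,\sigma(\Phi),H^{-\eta}(\C))$, together with the fact that the tail contributions to the Gamma integrals (the difference between $\int_0^{\infty}$ and a truncated integral, and the error from replacing the finite-order expansion's remainder) are negligible. Here I would split the $y$-integral into three regions: small $y$ (say $y\le\gamma$, equivalently $\varepsilon$ not small), a bulk region, and large $y$. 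On the bulk and large-$y$ regions one uses Minkowski's integral inequality to bound $\E[\|\cdot\|_{H^{-\eta}}^2]^{1/2}\le\int_0^{\infty}\E[\|R_{N,\varepsilon(y,\gamma)}\|_{H^{-\eta}}^2]^{1/2}|\tfrac{\beta}{y}-1|y^{\beta}e^{-y}\,dy$ and invokes the qualitative bound $\E[\|R_{N,\varepsilon}\|_{H^{-\eta}}^2]^{1/2}=o(|\log\varepsilon|^{-(N+1/2)})$ from Theorem~\ref{Thm A E FPS} — but one needs a \emph{uniform} version, not merely pointwise-in-$\varepsilon$, so I would instead carry through the proof of Theorem~\ref{Thm A E FPS} keeping track of the bound \eqref{Eq big bound eps delta b} and noting it is uniform once $|\log\varepsilon|$ is large; since $|\log\varepsilon(y,\gamma)|=8\pi y/\gamma^2$, the weight $y^{\beta}e^{-y}$ gives exponential decay in $y$ that comfortably absorbs the polynomial-in-$y$ prefactors coming from \eqref{Eq big bound eps delta b} (which grow like powers of $b(\varepsilon)\sim(\log|\log\varepsilon|)^2\sim(\log y-2\log\gamma)^2$). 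On the small-$y$ region $y\le\gamma$, $\ind_{\Ns_{\varepsilon}(A)}$ is bounded by $\ind_D$ in $L^2$-norm (area bounded), and $\tfrac{2^{\beta}}{\gamma^{2\beta}}\int_0^{\gamma}(\tfrac{\beta}{y}+1)y^{\beta}e^{-y}\,dy=O(\gamma^{-2\beta}\cdot\gamma^{\beta})=O(\gamma^{-\beta})$, which since $\beta$ can be taken finite and we only need $o(\gamma^{2N+1-2\beta})$ with $\beta>N+1/2$... — actually this crude bound is $\gamma^{-\beta}$ versus the target $\gamma^{2N+1-2\beta}$, and $-\beta>2N+1-2\beta\iff\beta>2N+1$, which need not hold, so the small-$y$ region must be handled more carefully, e.g. by using that for $\varepsilon$ bounded away from $0$ one still has $\E[\|\ind_{\Ns_\varepsilon(A)}-\tfrac{1}{\sqrt{2\pi}}\sum_{k\le N}\ldots\|^2]$ controlled, or more simply by comparing to the expectation computation \eqref{Eq series 1}--\eqref{Eq eq expect} and a conditional-expectation-on-TVS argument as in Section~\ref{Subsec cond TVS}; alternatively, one enlarges the threshold to $y\le\gamma^{1/2}$ and checks the arithmetic.

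The main obstacle, as just indicated, is getting a uniform-in-$\varepsilon$ (hence uniform-in-$y$) quantitative error bound that decays fast enough after integration against $(\tfrac{\beta}{y}-1)y^{\beta}e^{-y}$, and in particular handling the region where $\varepsilon(y,\gamma)$ is not small (small $y$), where Theorem~\ref{Thm A E FPS} gives no decay. I expect the cleanest route is: (i) for the main terms, justify interchanging $\int_0^\infty dy$ with testing against $H^{\eta}$ functions and with the finite sum, using the integrability established in Lemma~\ref{Lem f q}-style estimates and the fact that each $\psi_{2k+1,A}\in L^2(d\PP,\sigma(\Phi),H^{-\eta}(\C))$; (ii) for the bulk, use Minkowski's inequality plus the uniform form of \eqref{Eq big bound eps delta b} with the choice \eqref{Eq delta b eps} of $\delta,b$ as functions of $\varepsilon=\varepsilon(y,\gamma)$, noting $e^{-y}$ kills all $\log$-polynomial losses; (iii) for small $y$, either choose the split point so the crude bound suffices ($\beta$ is a free parameter $>N+1/2$ but the statement must hold for all such $\beta$, so one cannot assume $\beta$ large — hence one genuinely needs a refined estimate here), using instead the conditioning-on-$A_{0,b}$ bound of Corollary~\ref{Cor Eq ind norm exp} together with the explicit expansion of $\PP(z\in\Ns_\varepsilon(A))$ in \eqref{Eq series 1}, which already exhibits the $\gamma$-cancellation at the level of first moments, and a second-moment argument analogous to Corollary~\ref{Cor 2 pt}. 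Once all three regions are shown to contribute $o(\gamma^{2N+1-2\beta})$, assembling them gives the stated expansion.
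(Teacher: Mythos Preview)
Your approach is exactly the paper's: apply Lemma~\ref{Lem beta gamma int}, plug in the expansion from Theorem~\ref{Thm A E FPS}, and compute the Gamma integrals. Your computation of the main terms is correct and matches the paper verbatim.

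The only genuine gap is the small-$y$ region, and it has a much simpler fix than the elaborate programme you outline in (iii). The paper splits at $y=\gamma^{\theta}$ with
\[
\theta = 1 + \dfrac{2N+1}{2\beta}.
\]
The hypothesis $\beta>N+1/2$ gives simultaneously $\theta<2$ and $\theta\beta>2N+1$. The first inequality ensures that for $y\ge\gamma^{\theta}$ one has $\varepsilon(y,\gamma)=e^{-4\pi y/\gamma^{2}}\le e^{-4\pi\gamma^{\theta-2}}\to 0$, so Theorem~\ref{Thm A E FPS} applies uniformly there (the $o(\vert\log\varepsilon\vert^{-(N+1/2)})$ is automatically uniform once $\varepsilon$ is below any fixed threshold). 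The second inequality makes your crude bound work on $(0,\gamma^{\theta})$: since $\Vert\ind_{\Ns_{\varepsilon}(A)}\Vert_{H^{-\eta}}\le\Vert\ind_{D}\Vert_{H^{-\eta}}$ deterministically,
\[
\dfrac{2^{\beta}}{\gamma^{2\beta}}\int_{0}^{\gamma^{\theta}}\Big\Vert\ind_{\Ns_{e^{-4\pi y/\gamma^{2}}}(A)}\Big\Vert_{H^{-\eta}}\Big\vert\dfrac{\beta}{y}-1\Big\vert y^{\beta}e^{-y}\,dy
= O\big(\gamma^{-2\beta}\cdot\gamma^{\theta\beta}\big)
= o\big(\gamma^{2N+1-2\beta}\big).
\]
The tails of the main-term Gamma integrals over $(0,\gamma^{\theta})$ are handled by the same arithmetic (using $\theta<2$ one checks $2j+1+\theta(\beta-j-1/2)>\theta\beta>2N+1$). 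There is no need to revisit the proof of Theorem~\ref{Thm A E FPS}, no conditioning on $A_{0,b}$, and no appeal to \eqref{Eq big bound eps delta b}: your worry about uniformity is resolved simply because on $(\gamma^{\theta},+\infty)$ the parameter $\varepsilon(y,\gamma)$ lies in a shrinking interval $(0,e^{-4\pi\gamma^{\theta-2}}]$ on which the $o(\cdot)$ bound is uniform by definition, and the weight $y^{\beta-j-3/2}e^{-y}$ is integrable near $0$ precisely because $\beta>N+1/2\ge j+1/2$.
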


\begin{proof}
Let $y>0$. 
By \eqref{Eq A E N},
\begin{equation}
\label{Eq exp fixed y}
\ind_{\Ns_{e^{-4\pi y/\gamma^{2}}}(A)}
= 
\dfrac{1}{\sqrt{2\pi}}
\sum_{j=0}^{N} (-1)^{j}
\dfrac{\gamma^{2j+1}}{2^{2j+1/2} j! (j+1/2)}
\dfrac{\psi_{2j+1,A}}{y^{j + 1/2}}
+
o(\gamma^{2N+1}),
\end{equation}
where $o(\gamma^{2N+1})$ is in the $L^{2}(d\PP,\sigma(\Phi),H^{-\eta}(\C))$ sense.
The idea now is to integrate the above expansion for $y\in (0,+\infty)$
against
\begin{displaymath}
\Big(\dfrac{\beta}{y}-1\Big)
y^{\beta} e^{-y}\, dy,
\end{displaymath}
as in Lemma \ref{Lem beta gamma int}.
The condition $\beta>N + 1/2$ ensures the integrability of all the terms of the expansion.
To be more precise, we will split the integral into two parts,
from $0$ to $\gamma^{\theta}$ and from $\gamma^{\theta}$ to $+\infty$,
where
\begin{displaymath}
\theta = 1 + \dfrac{2N+1}{2\beta}.
\end{displaymath}
Since $\theta < 2$, $e^{-4\pi \gamma^{\theta}/\gamma^{2}}$ converges to $0$,
and therefore one can interchange the expansion \eqref{Eq exp fixed y}
and the integral on $(\gamma^{\theta},+\infty)$.
For the first part, given that the Sobolev norms 
$\Vert\ind_{\Ns_{e^{-4\pi y/\gamma^{2}}}(A)}\Vert_{H^{-\eta}(\C)}$
are uniformly and deterministically bounded by
$\Vert\ind_{D}\Vert_{H^{-\eta}(\C)}$,
\begin{displaymath}
\int_{0}^{\gamma^{\theta}}\ind_{\Ns_{e^{-4\pi y/\gamma^{2}}}(A)}
\Big(\dfrac{\beta}{y}-1\Big)
y^{\beta} e^{-y}\, dy
= O(\gamma^{\theta \beta}) = o(\gamma^{2N+1}).
\end{displaymath}
So we get that
\begin{multline*}
\ind_{D\setminus A}
V_{A}^{\beta} \exp\Big(-\dfrac{\gamma^{2}}{2} V_{A}\Big)
=
\\
\dfrac{1}{\sqrt{2\pi}}
\sum_{j=0}^{N} (-1)^{j}
\dfrac{\gamma^{2j+1 - 2\beta}}{2^{2j+1/2 - \beta} j! (j+1/2)}
\Big(\int_{0}^{+\infty}
\Big(\dfrac{\beta}{y}-1\Big)
y^{\beta - (j+1/2)} e^{-y}\, dy
\Big)
\psi_{2j+1,A}
+
o(\gamma^{2N+1 - 2\beta}) .
\end{multline*}
Further,
\begin{eqnarray*}
\int_{0}^{+\infty}
\Big(\dfrac{\beta}{y}-1\Big)
y^{\beta - (j+1/2)} e^{-y}\, dy
&=& \beta \Gamma(\beta -j-1/2) - \Gamma(\beta -j+1/2) 
\\
&=& \Big(j+\dfrac{1}{2}\Big)\Gamma(\beta -j-1/2).
\end{eqnarray*}
So we get the desired expansion.
\end{proof}

\begin{cor}
\label{Cor exp V A n k}
Let $n\geq 1$ be an odd integer and let $k\in \{0,\dots,\lfloor n/2\rfloor\}$.
Then, for every $\eta>0$,
as $\gamma\to 0^{+}$,
the following asymptotic expansion holds in $L^{2}(d\PP,\sigma(\Phi),H^{-\eta}(\C))$:
\begin{displaymath}
\gamma^{n-2k}\ind_{D\setminus A}
V_{A}^{n-k} \exp\Big(-\dfrac{\gamma^{2}}{2} V_{A}\Big)
= 
\dfrac{1}{2^{n-k-1}}
\sum_{j=0}^{\lfloor n/2\rfloor} (-1)^{j}
\dfrac{(2(n-1-k-j))!}{(n-1-k-j)! j!}\,\gamma^{2j+1 - n}
\,\psi_{2j+1,A}
+
o(1).
\end{displaymath}
\end{cor}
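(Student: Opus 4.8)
\emph{Proof proposal.} The statement is a direct specialization of Proposition \ref{Prop exp beta gamma}. The plan is to apply that proposition with
\begin{displaymath}
\beta = n-k, \qquad N = \lfloor n/2\rfloor = \frac{n-1}{2},
\end{displaymath}
the second equality using that $n$ is odd. First one checks admissibility: the hypothesis $\beta > N+1/2$ reads $n-k > n/2$, i.e. $k < n/2$, which holds since $0\le k\le \lfloor n/2\rfloor = (n-1)/2 < n/2$. With these choices Proposition \ref{Prop exp beta gamma} gives, in $L^2(d\PP,\sigma(\Phi),H^{-\eta}(\C))$ as $\gamma\to 0^+$,
\begin{displaymath}
\ind_{D\setminus A} V_A^{\,n-k}\exp\Big(-\frac{\gamma^2}{2}V_A\Big)
= \frac{1}{\sqrt{2\pi}}\sum_{j=0}^{\lfloor n/2\rfloor}(-1)^j
\frac{\Gamma(n-k-j-1/2)\,\gamma^{2j+1-2(n-k)}}{2^{\,2j+1/2-(n-k)}\,j!}\,\psi_{2j+1,A}
+ o\big(\gamma^{2N+1-2\beta}\big),
\end{displaymath}
and since $2N+1-2\beta = (n-1)+1-2(n-k) = -(n-2k)$, multiplying through by $\gamma^{\,n-2k}$ turns the error into $o(1)$ and each power $\gamma^{2j+1-2(n-k)}$ into $\gamma^{2j+1-n}$, matching the powers of $\gamma$ in the claimed identity.

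It then remains to simplify the combinatorial constant. I would write $n-k-j-1/2 = m+1/2$ with $m = n-1-k-j$; for $0\le j\le (n-1)/2$ and $0\le k\le (n-1)/2$ one has $m \ge (n-1)/2 - k \ge 0$, so $m$ is a genuine nonnegative integer and the Legendre-duplication identity $\Gamma(m+1/2) = \dfrac{(2m)!}{4^m m!}\sqrt{\pi}$ applies, giving
\begin{displaymath}
\Gamma(n-k-j-1/2) = \frac{\big(2(n-1-k-j)\big)!}{4^{\,n-1-k-j}\,(n-1-k-j)!}\,\sqrt{\pi}.
\end{displaymath}
Substituting this and collecting the powers of $2$ and the factors $\sqrt{\pi}/\sqrt{2\pi} = 2^{-1/2}$, the $j$-dependent powers cancel and the prefactor of each $(-1)^j\dfrac{(2(n-1-k-j))!}{(n-1-k-j)!\,j!}\,\gamma^{2j+1-n}\psi_{2j+1,A}$ collapses to the single constant $2^{-(n-k-1)}$, independent of $j$; pulling it out of the sum yields exactly the right-hand side of Corollary \ref{Cor exp V A n k}.

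There is no genuine obstacle here beyond bookkeeping: the only points requiring a word of care are verifying the admissibility condition $\beta > N+1/2$ from the constraints on $n$ and $k$, confirming that for the indices appearing the argument of $\Gamma$ is a strictly positive half-integer (so that the factorial formula is legitimate), and checking that the error exponent $2N+1-2\beta + (n-2k)$ is exactly $0$ so that the remainder is $o(1)$. All three are immediate from $n$ odd and $0\le k\le (n-1)/2$, so the proof is short.
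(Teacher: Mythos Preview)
Your proposal is correct and follows exactly the same approach as the paper: apply Proposition~\ref{Prop exp beta gamma} with $\beta=n-k$ and $N=\lfloor n/2\rfloor$, then simplify $\Gamma(n-k-j-1/2)$ via the duplication formula. The paper's proof is in fact terser than yours, recording only the choice of parameters and the Gamma identity; your additional verification of the admissibility condition, the error exponent, and the bookkeeping of powers of $2$ is all accurate.
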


\begin{proof}
We apply Proposition \ref{Prop exp beta gamma}
with $N= \lfloor n/2\rfloor$
and $\beta = n-k\geq \lfloor n/2\rfloor+1>\lfloor n/2\rfloor +1/2$.
We use that
\begin{displaymath}
\Gamma(n-k -j-1/2) = 
\dfrac{(2(n-1-k -j))!\sqrt{\pi}}{2^{2(n-1-k -j)}(n-1-k -j)!}.
\qedhere
\end{displaymath}
\end{proof}

Corollary \ref{Cor exp V A n k} implies that for $n$ odd, the function
\begin{displaymath}
\ind_{D\setminus A}
\He_{n}(\gamma V_{A}^{1/2})V_{A}^{n/2} \exp\Big(-\dfrac{\gamma^{2}}{2} V_{A}\Big)
\end{displaymath}
appearing in \eqref{Eq psi n as lim func}
has the following asymptotic expansion, in $L^{2}(d\PP,\sigma(\Phi),H^{-\eta}(\C))$,
as $\gamma\to 0^{+}$:
\begin{displaymath}
\dfrac{1}{2^{n-1}}
\sum_{\substack{0\leq k\leq \lfloor n/2\rfloor\\0\leq j\leq \lfloor n/2\rfloor}}
(-1)^{j+k}
\dfrac{n! (2(n-1-k-j))!}{(n-2k)! k! (n-1-k-j)! j!}\,\gamma^{2j+1 - n}
\,\psi_{2j+1,A}
+
o(1).
\end{displaymath}
This expansion is consistent with Theorem \ref{Thm psi GMC}
because the following combinatorial identity holds.

\begin{prop}
\label{Prop combi 0 1}
Let $n\in \N$, $n$ odd.
Then the following holds.
\begin{enumerate}
\item For every $j\in \{0,\dots, \lfloor n/2\rfloor-1\}$,
\begin{displaymath}
\dfrac{1}{2^{n-1}}
\sum_{k=0}^{\lfloor n/2\rfloor}
(-1)^{j+k}
\dfrac{n! (2(n-1-k-j))!}{(n-2k)! k! (n-1-k-j)! j!}
= 0.
\end{displaymath}
\item In the case $j = \lfloor n/2\rfloor$,
\begin{displaymath}
\dfrac{1}{2^{n-1}}
\sum_{k=0}^{\lfloor n/2\rfloor}
(-1)^{j+k}
\dfrac{n! (2(n-1-k-j))!}{(n-2k)! k! (n-1-k-j)! j!}
= 1.
\end{displaymath}
\end{enumerate}
\end{prop}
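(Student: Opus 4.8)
\emph{Plan.} The idea is to avoid any head-on binomial computation and instead read the identity off the fact, already available, that the function
\begin{displaymath}
g_{\gamma} := \ind_{D\setminus A}\,\He_{n}(\gamma V_{A}^{1/2})\,V_{A}^{n/2}\,\exp\Big(-\tfrac{\gamma^{2}}{2}V_{A}\Big)
\end{displaymath}
has two descriptions of its $\gamma\to 0^{+}$ behaviour which are forced to coincide. Write $n=2m+1$, so $\lfloor n/2\rfloor=m$, and for $0\le j\le m$ let
\begin{displaymath}
c_{n,j} := \frac{1}{2^{n-1}}\sum_{k=0}^{m}(-1)^{j+k}\frac{n!\,(2(n-1-k-j))!}{(n-2k)!\,k!\,(n-1-k-j)!\,j!}
\end{displaymath}
be the quantity to be computed; note $2j+1-n=-2(m-j)$, which is $<0$ for $j<m$ and $=0$ for $j=m$. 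On one side, summing Corollary \ref{Cor exp V A n k} over $k$ (this is exactly the displayed expansion immediately preceding the statement) gives, in $L^{2}(d\PP,\sigma(\Phi),H^{-\eta}(\C))$ for every $\eta>0$,
\begin{displaymath}
g_{\gamma} = \sum_{j=0}^{m} c_{n,j}\,\gamma^{2j+1-n}\,\psi_{2j+1,A} + o(1),\qquad \gamma\to 0^{+}.
\end{displaymath}
On the other side, Theorem \ref{Thm psi GMC} says $g_{\gamma}\to\psi_{n,A}=\psi_{2m+1,A}$ in the same space. Subtracting, I would record that
\begin{displaymath}
Y_{\gamma} := \sum_{j=0}^{m-1} c_{n,j}\,\gamma^{2j+1-n}\,\psi_{2j+1,A} + (c_{n,m}-1)\,\psi_{2m+1,A}\ \longrightarrow\ 0
\end{displaymath}
in $L^{2}(d\PP,\sigma(\Phi),H^{-\eta}(\C))$ as $\gamma\to0^{+}$.

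The second ingredient is a pair of elementary non-degeneracy facts about the fields $\psi_{2j+1,A}$, $0\le j\le m$, viewed as elements of $L^{2}(d\PP,\sigma(\Phi),H^{-\eta}(\C))$: each has finite norm, since $\E[\Vert\psi_{2j+1,A}\Vert_{H^{-\eta}(\C)}^{2}]\le\E[\Vert :\Phi^{2j+1}:\Vert_{H^{-\eta}(\C)}^{2}]<+\infty$; and none is the zero element, because $\E[\psi_{2j+1,A}]=v^{2j+1}\ind_{D}$ (with $\psi_{1,A}=\nu_{A}$, $\E[\nu_{A}]=v\ind_{D}$, Theorem \ref{Thm Mink ALS1}), so $\E[\Vert\psi_{2j+1,A}\Vert_{H^{-\eta}(\C)}^{2}]^{1/2}\ge\Vert v^{2j+1}\ind_{D}\Vert_{H^{-\eta}(\C)}>0$ as $v>0$.

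With these in hand the argument is a one-term-at-a-time extraction. For part (1): assuming some $c_{n,j}$ with $j<m$ is nonzero, take $j_{0}$ minimal among them and multiply $Y_{\gamma}$ by $\gamma^{\,n-1-2j_{0}}=\gamma^{\,2(m-j_{0})}$, which is bounded and tends to $0$, so $\gamma^{\,n-1-2j_{0}}Y_{\gamma}\to0$ in $L^{2}$. Since
\begin{displaymath}
\gamma^{\,n-1-2j_{0}}Y_{\gamma} = c_{n,j_{0}}\psi_{2j_{0}+1,A} + \sum_{j_{0}<j\le m-1} c_{n,j}\,\gamma^{\,2(j-j_{0})}\psi_{2j+1,A} + (c_{n,m}-1)\gamma^{\,2(m-j_{0})}\psi_{2m+1,A},
\end{displaymath}
and every term after the first carries a strictly positive power of $\gamma$ times a field of finite $L^{2}$-norm, the right-hand side converges in $L^{2}$ to $c_{n,j_{0}}\psi_{2j_{0}+1,A}$; hence $c_{n,j_{0}}\psi_{2j_{0}+1,A}=0$, contradicting $c_{n,j_{0}}\ne0$ and $\psi_{2j_{0}+1,A}\ne0$. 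Thus $c_{n,j}=0$ for $j<m$. Then $Y_{\gamma}=(c_{n,m}-1)\psi_{2m+1,A}$ no longer depends on $\gamma$, so $Y_{\gamma}\to0$ forces $(c_{n,m}-1)\psi_{2m+1,A}=0$, and since $\psi_{2m+1,A}\ne0$ we get $c_{n,m}=1$, which is part (2).

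I do not expect a genuine obstacle: the proofs of Theorem \ref{Thm psi GMC} and of Corollary \ref{Cor exp V A n k} (the latter through Proposition \ref{Prop exp beta gamma} and Theorem \ref{Thm A E FPS}) do not use the present proposition, so there is no circularity, and the only things to check carefully are the two non-degeneracy facts above together with the observation that the orders $\gamma^{2j+1-n}$, $0\le j\le m$, are pairwise distinct — which is precisely what makes the extraction work. For completeness one can also give a purely combinatorial proof: in the case $j=m$ the substitution $p=m-k$ collapses the inner sum to $\sum_{p=0}^{m}\frac{(-1)^{p}}{2p+1}\binom{m}{p}=\int_{0}^{1}(1-x^{2})^{m}\,dx=\frac{2^{2m}(m!)^{2}}{(2m+1)!}$, giving $c_{n,m}=1$ directly, and a similar integral-representation manipulation handles $j<m$; but the consistency argument above settles all $j$ uniformly and is the cleaner route.
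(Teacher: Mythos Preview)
Your argument is correct and takes a genuinely different route from the paper's. The paper proves the identity by direct combinatorics: for part (1) it rewrites the sum so that the $k$-dependence factors through a polynomial in $k$ of degree $m-j-1<m$, and then invokes the finite-difference identity $\sum_{k=0}^{m}(-1)^{k}\binom{m}{k}P(k)=0$ for $\deg P<m$; for part (2) it reduces to the Wallis integral $\int_{0}^{1}(1-x^{2})^{m}\,dx = 4^{m}(m!)^{2}/(2m+1)!$, which is exactly the computation you sketch at the end. Your main argument instead exploits the fact that both Theorem~\ref{Thm psi GMC} and Corollary~\ref{Cor exp V A n k} are proved independently (no circularity, as you check), so their agreement forces the coefficients; the extraction via successive powers of $\gamma$ works because the fields $\psi_{2j+1,A}$ are nonzero in $L^{2}$, which you justify correctly from $\E[\psi_{2j+1,A}]=v^{2j+1}\ind_{D}$ with $v>0$. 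Your approach is conceptually transparent in context---the proposition is stated precisely as a consistency check, and you simply read the identity off that consistency---but it leans on the full strength of Theorems~\ref{Thm A E FPS} and~\ref{Thm psi GMC}, whereas the paper's proof is a self-contained combinatorial computation that stands on its own and confirms the consistency independently (which is the point the paper wants to make, since it then remarks that Theorem~\ref{Thm A E FPS} together with this proposition yields an alternative derivation of Theorem~\ref{Thm psi GMC} for odd $n$).
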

\begin{proof}
We write $n=2m+1$, so that $\lfloor n/2\rfloor = m$.
Let $j\in \{0,\dots, m-1\}$. Then
\begin{multline*}
\sum_{k=0}^{m}
(-1)^{k}
\dfrac{(2(2m-k-j))!}{(2(m-k)+1)! k! (2m-k-j)!}
=
\sum_{k=0}^{m}
(-1)^{k}
\dfrac{(2(m-k)+2(m-j))!}{(2(m-k)+1)! k! ((m-k)+(m-j))!}
\\
=
\dfrac{1}{m!}
\sum_{k=0}^{m}
(-1)^{k}
\Big(
\prod_{l=2}^{2(m-j)} (2(m-k)+l)
\Big)
\dfrac{(m-k)!}{((m-k)+(m-j))!}
\dfrac{m!}{k!(m-k)!}
\\
=
\dfrac{1}{m!}
\sum_{k=0}^{m}
(-1)^{k}
\dfrac{\prod_{l=2}^{2(m-j)} (2(m-k)+l)}
{\prod_{p=1}^{m-j}((m-k)+p)}
\dfrac{m!}{k!(m-k)!}\, .
\end{multline*}
For the second equality we used that $m-j\geq 1$.
Further,
\begin{displaymath}
\dfrac{\prod_{l=2}^{2(m-j)} (2(m-k)+l)}
{\prod_{p=1}^{m-j}((m-k)+p)}
=
2^{m-j}
\prod_{q=1}^{m-j-1}(2(m-k+q)+1)
.
\end{displaymath}
Thus,
\begin{multline}
\label{Eq red poly k}
\sum_{k=0}^{m}
(-1)^{k}
\dfrac{(2(2m-k-j))!}{(2(m-k)+1)! k! (2m-k-j)!}
=
\\
\dfrac{2^{m-j}}{m!}
\sum_{k=0}^{m}
(-1)^{k}
\Big(
\prod_{q=1}^{m-j-1}(2(m-k+q)+1)
\Big)
\dfrac{m!}{k!(m-k)!}\, .
\end{multline}
But the factor
\begin{displaymath}
\prod_{q=1}^{m-j-1}(2(m-k+q)+1)
\end{displaymath}
is a polynomial in $k$ of degree $m-j-1$.
We claim that for every polynomial $P$ of degree less or equal to $m-1$,
\begin{displaymath}
\sum_{k=0}^{m}
(-1)^{k}
P(k)
\dfrac{m!}{k!(m-k)!} = 0.
\end{displaymath}
If $P$ is a constant, this is clear.
Otherwise, $P(k)$ can be written as a constant plus a linear combination of
\begin{displaymath}
k(k-1)\dots (k-l)
\end{displaymath}
with $l\in \{0,\dots, m-2\}$, and
\begin{displaymath}
\sum_{k=0}^{m}
(-1)^{k}
k(k-1)\dots (k-l)
\dfrac{m!}{k!(m-k)!}
=
(-1)^{m}
\Big(\dfrac{d^{l+1}}{(dx)^{l+1}} (x-1)^{m}\Big)_{x=1} = 0.
\end{displaymath}
So \eqref{Eq red poly k} equals $0$, and we obtain (1).

Let us consider the case $j=m$.
Then
\begin{multline}
\label{Eq case j m}
\dfrac{1}{4^{m}}
\sum_{k=0}^{m}
(-1)^{m-k}
\dfrac{(2m+1)!(2(2m-k-m))!}{(2(m-k)+1)! k! (2m-k-m)! m!}
=
\\
\dfrac{(2m+1)!}{4^{m}(m!)^{2}}
\sum_{k=0}^{m}
(-1)^{m-k}
\dfrac{1}{2(m-k)+1}
\dfrac{m!}{k!(m-k)!}\, .
\end{multline}
But
\begin{displaymath}
\dfrac{1}{2(m-k)+1} = \int_{0}^{1} x^{2(m-k)}\,dx.
\end{displaymath}
Thus, \eqref{Eq case j m} equals
\begin{displaymath}
\dfrac{(2m+1)!}{4^{m} (m!)^{2}}
\sum_{k=0}^{m}
(-1)^{m-k}
\dfrac{m!}{k!(m-k)!}
\int_{0}^{1} x^{2(m-k)}\,dx
=
\dfrac{(2m+1)!}{4^{m} (m!)^{2}}
\int_{0}^{1} (1-x^{2})^{m}\,dx
\end{displaymath}
By performing the change of variables $x=\sin \theta$, we get
\begin{displaymath}
\int_{0}^{1} (1-x^{2})^{m}\,dx
=
\int_{0}^{\pi/2} (\cos \theta)^{2m+1}\, d\theta\,.
\end{displaymath}
Here we recognize a Wallis' integral:
\begin{displaymath}
\int_{0}^{\pi/2} (\cos \theta)^{2m+1}\, d\theta = 
\dfrac{4^{m} (m!)^{2}}{(2m+1)!} .
\end{displaymath}
So we get (2).
\end{proof}

Thus, the asymptotic expansion \eqref{Eq A E N} (Theorem \ref{Thm A E FPS}) provides an alternative proof of the convergence in Theorem \ref{Thm psi GMC} for $n$ odd in the $L^{2}(d\PP,\sigma(\Phi),H^{-\eta}(\C))$ sense.
However, unlike Theorem \ref{Thm psi GMC}, this covers neither the case $n$ ever nor the almost sure convergence.
On top of that, the direct proof of Theorem \ref{Thm psi GMC} is much simpler than that of Theorem \ref{Thm A E FPS}.

Conversely, if one assumes an asymptotic expansion in $L^{2}(d\PP,\sigma(\Phi),H^{-\eta}(\C))$
of form
\begin{equation}
\label{Eq form expansion}
\ind_{\Ns_{\varepsilon}(A)}
=
\sum_{k=0}^{N}
\dfrac{F_{k}}{\vert\log \varepsilon\vert^{k + 1/2}}
\,
+ o(\vert\log \varepsilon\vert^{-(N + 1/2)}),
\end{equation}
then Theorem \ref{Thm psi GMC} implies that
\begin{displaymath}
F_{k} = 
(-1)^{k}
\dfrac{\pi^{k}}{k! (k+1/2)} \psi_{2k+1,A}.
\end{displaymath}
However, Theorem \ref{Thm psi GMC} does not directly provide the existence of
an expansion of form \eqref{Eq form expansion},
in particular with these powers of $\vert\log\varepsilon\vert$.

\section{Decomposition of Wick powers via the excursion clusters}
\label{Sec Wick exc decomp}

\subsection{The generational decomposition}
\label{Subsec Wick exc gen}

Here we will present the full decomposition of Wick powers of the GFF through the excursion clusters.
We are in the setting of Sections \ref{Subsubsec exc} and \ref{Subsubsec gen exc},
and use the notations therein.
So $D\subset \C$ is an open, non-empty, bounded, connected and simply connected domain.
Let $\Phi$ a GFF on $D$ with $0$ boundary condition on $\partial D$.

We start with the generational decomposition, 
corresponding to grouping the excursion clusters into generations as in \ref{Subsubsec gen exc}.
Consider the first generation.
The GFF $\Phi$ decomposes
\begin{displaymath}
\Phi = \nu^{\rm gen}_{0} + \Phi_{D\setminus A^{\rm gen, +}_{0}},
\end{displaymath}
where $\nu^{\rm gen}_{0}$ is a signed measure with finite total variation $\vert \nu^{\rm gen}_{0}\vert$,
supported on $A^{\rm gen, +}_{0}$,
and conditionally on $\mathcal{F}_{0}^{\rm gen, +}$,
the field $\Phi_{D\setminus A^{\rm gen, +}_{0}}$ is distributed as a GFF on $D\setminus A^{\rm gen, +}_{0}$
with $0$ boundary conditions.
We are in a setting pretty similar to that of Section \ref{Sec Wick FPS},
except that $\nu^{\rm gen}_{0}$ is a signed measure and not a positive measure,
and $A^{\rm gen, +}_{0}$ is not an FPS in $D$.
Actually, conditionally on $\mathcal{F}_{0}^{\rm gen}$,
$A^{\rm gen, +}_{0}$ is an FPS of the field
\begin{displaymath}
\vert \nu^{\rm gen}_{0}\vert  + \Phi_{D\setminus A^{\rm gen, +}_{0}}
\end{displaymath}
which conditionally is a GFF on $D\setminus A^{\rm gen}_{0}$
with boundary value $2\lambda$;
see Section \ref{Subsubsec gen exc}.
Because of this similarity of settings,
we will only provide the main steps of proofs
and emphasize the points where there are differences.

Denote
\begin{displaymath}
\psi^{\rm gen}_{n,0} = \E[\, :\Phi^{n}:\,\vert \mathcal{F}_{0}^{\rm gen, +}].
\end{displaymath}
Observe that $\psi^{\rm gen}_{1,0} = \nu^{\rm gen}_{0}$.
Denote by $V_{A^{\rm gen, +}_{0}}$ the function
\begin{displaymath}
V_{A^{\rm gen, +}_{0}}(z)
=
\dfrac{1}{2\pi}\log\Big(\dfrac{\CR(z,D)}{\CR(z,D\setminus A^{\rm gen, +}_{0})}\Big),
\end{displaymath}
defined on $D\setminus A^{\rm gen, +}_{0}$.
Let $(\rho_{\varepsilon})_{\varepsilon>0}$ be a family of mollificators as in
Sections \ref{Subsec Wick} and \ref{Sec Wick FPS}.
By mollifying, and with obvious notations, we have
\begin{displaymath}
\Phi_{\varepsilon} = \nu^{\rm gen}_{0,\varepsilon} + \Phi_{D\setminus A^{\rm gen, +}_{0}, \varepsilon},
\end{displaymath}
Denote
\begin{displaymath}
V_{A^{\rm gen, +}_{0},\varepsilon}(z)
= G_{D,\varepsilon,\varepsilon}(z,z) - 
G_{D\setminus A^{\rm gen, +}_{0},\varepsilon,\varepsilon}(z,z)
=
g_{D,\varepsilon,\varepsilon}(z,z) - 
g_{D\setminus A^{\rm gen, +}_{0},\varepsilon,\varepsilon}(z,z),
\end{displaymath}
where $G_{D,\varepsilon,\varepsilon}$ and $G_{D\setminus A^{\rm gen, +}_{0},\varepsilon,\varepsilon}$
are $\rho_{\varepsilon}$-convolutions
of the Green's functions in respective domain w.r.t. both variables,
and 
$g_{D,\varepsilon,\varepsilon}$ and $g_{D\setminus A^{\rm gen, +}_{0},\varepsilon,\varepsilon}$
are similar convolutions of constant order parts of Green's functions
(logarithmic singularity substracted).
Then $V_{A^{\rm gen, +}_{0},\varepsilon}$ is a well-defined and smooth function on $\C$,
with compact support.
An analogue of the decomposition of Lemma \ref{Lem decomp Phi eps} holds,
with similar computations.

\begin{lemma}
\label{Lem decomp Phi gen 1}
For $n\geq 1$ and $z\in\C$,
\begin{eqnarray*}
:\Phi^{n}_{\varepsilon}:(z)&=&
Q_{n}(\nu^{\rm gen}_{0,\varepsilon}(z),V_{A^{\rm gen, +}_{0},\varepsilon}(z)) +
Q_{n}(\Phi_{D\setminus A^{\rm gen, +}_{0},\varepsilon}(z),G_{D,\varepsilon,\varepsilon}(z,z))
\\
&&-\ind_{n \text{ even}}(-1)^{n/2}
\dfrac{n!}{2^{n/2} (n/2)!}
V_{A^{\rm gen, +}_{0},\varepsilon}(z)^{n/2}
\\&& +
\sum_{\substack{0\leq j\leq n-1\\0\leq k <\lfloor j/2\rfloor}}
(-1)^{k}\dfrac{n!}{2^{k} (n-j)! k! (j-2k)!}
V_{A^{\rm gen, +}_{0},\varepsilon}(z)^{k}
\nu^{\rm gen}_{0,\varepsilon}(z)^{n-j}
\,
:\Phi_{D\setminus A^{\rm gen, +}_{0},\varepsilon}^{j-2k}:(z).
\end{eqnarray*}
\end{lemma}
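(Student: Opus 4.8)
\textbf{Proof proposal for Lemma \ref{Lem decomp Phi gen 1}.}

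The plan is to reproduce, step by step, the computation behind Lemma \ref{Lem decomp Phi eps}, which is purely algebraic once the mollified decomposition $\Phi_{\varepsilon} = \nu^{\rm gen}_{0,\varepsilon} + \Phi_{D\setminus A^{\rm gen, +}_{0}, \varepsilon}$ is in hand. First I would start from the definition $:\Phi^{n}_{\varepsilon}:(z) = Q_{n}(\Phi_{\varepsilon}(z),G_{D,\varepsilon,\varepsilon}(z,z))$ and apply the binomial identity \eqref{Eq Q bin} to split off the $\nu^{\rm gen}_{0,\varepsilon}$-part: writing $x_{1} = \nu^{\rm gen}_{0,\varepsilon}(z)$ and $x_{2} = \Phi_{D\setminus A^{\rm gen, +}_{0},\varepsilon}(z)$, one gets
\begin{displaymath}
:\Phi^{n}_{\varepsilon}:(z) = \sum_{j=0}^{n}\frac{n!}{j!(n-j)!} Q_{j}(\Phi_{D\setminus A^{\rm gen, +}_{0},\varepsilon}(z),G_{D,\varepsilon,\varepsilon}(z,z))\,\nu^{\rm gen}_{0,\varepsilon}(z)^{n-j}.
\end{displaymath}
The term $j=n$ is kept as the ``bulk'' term $Q_{n}(\Phi_{D\setminus A^{\rm gen, +}_{0},\varepsilon}(z),G_{D,\varepsilon,\varepsilon}(z,z))$, exactly as in the FPS case.

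Next I would apply the change of variance formula \eqref{Eq change var} to each remaining $Q_{j}$, converting the variance $G_{D,\varepsilon,\varepsilon}(z,z)$ into $G_{D\setminus A^{\rm gen, +}_{0},\varepsilon,\varepsilon}(z,z)$, with the difference absorbed into $V_{A^{\rm gen, +}_{0},\varepsilon}(z) = G_{D,\varepsilon,\varepsilon}(z,z) - G_{D\setminus A^{\rm gen, +}_{0},\varepsilon,\varepsilon}(z,z)$. This turns $Q_{j}(\Phi_{D\setminus A^{\rm gen, +}_{0},\varepsilon},G_{D,\varepsilon,\varepsilon}(z,z))$ into $\sum_{0\leq k\leq\lfloor j/2\rfloor}(-1)^{k}\frac{j!}{2^{k}k!(j-2k)!}V_{A^{\rm gen, +}_{0},\varepsilon}^{k}\,:\Phi_{D\setminus A^{\rm gen, +}_{0},\varepsilon}^{j-2k}:$. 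Then I would isolate, among these summands, those with $j$ even and $k=j/2$ (the terms of degree $0$ in $\Phi_{D\setminus A^{\rm gen, +}_{0},\varepsilon}$): summing these over $j$ reconstitutes $Q_{n}(\nu^{\rm gen}_{0,\varepsilon}(z),V_{A^{\rm gen, +}_{0},\varepsilon}(z))$ minus the lone ``overflow'' term $\ind_{n \text{ even}}(-1)^{n/2}\frac{n!}{2^{n/2}(n/2)!}V_{A^{\rm gen, +}_{0},\varepsilon}(z)^{n/2}$, which must be subtracted back because it corresponds to $j=n$ and is already counted in the bulk term via the same change-of-variance expansion applied to $Q_{n}(\Phi_{D\setminus A^{\rm gen, +}_{0},\varepsilon},G_{D,\varepsilon,\varepsilon}(z,z))$. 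The remaining mixed terms, those with $j\leq n-1$ and $k<\lfloor j/2\rfloor$, give the final double sum. Matching the combinatorial coefficients $\frac{n!}{2^{k}(n-j)!k!(j-2k)!}$ is a mechanical bookkeeping of the products of binomial and change-of-variance coefficients, identical to the FPS computation.

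The only point that genuinely needs checking — as opposed to being copied verbatim from Lemma \ref{Lem decomp Phi eps} — is that $V_{A^{\rm gen, +}_{0},\varepsilon}$ is a well-defined smooth compactly supported function, so that all the pointwise polynomial manipulations are legitimate; but this follows because $A^{\rm gen, +}_{0}$ is a (random) compact set, $D\setminus A^{\rm gen, +}_{0}$ has all components simply connected, and $G_{D,\varepsilon,\varepsilon}$, $G_{D\setminus A^{\rm gen, +}_{0},\varepsilon,\varepsilon}$ are mollifications of Green's functions exactly as in Section \ref{Sec Wick FPS}. I therefore do not anticipate any real obstacle: the statement is an identity of polynomials in $\nu^{\rm gen}_{0,\varepsilon}(z)$, $\Phi_{D\setminus A^{\rm gen, +}_{0},\varepsilon}(z)$, and $V_{A^{\rm gen, +}_{0},\varepsilon}(z)$, valid for every fixed $z$ and $\varepsilon$, with the sign measure $\nu^{\rm gen}_{0}$ playing the same formal role that the positive measure $\nu_{A}$ played in Lemma \ref{Lem decomp Phi eps} (positivity was never used in that algebraic step). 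Hence I would simply write ``the proof is identical to that of Lemma \ref{Lem decomp Phi eps}, replacing $\nu_{A}$ by $\nu^{\rm gen}_{0}$ and $A$ by $A^{\rm gen, +}_{0}$, and using \eqref{Eq Q bin} and \eqref{Eq change var}'' and leave the coefficient check to the reader.
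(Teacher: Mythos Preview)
Your proposal is correct and matches the paper's approach exactly: the paper does not give a separate proof of Lemma \ref{Lem decomp Phi gen 1} but simply says that it ``holds with similar computations'' to Lemma \ref{Lem decomp Phi eps}, which is precisely what you outline (binomial identity \eqref{Eq Q bin} followed by the change-of-variance identity \eqref{Eq change var}, then isolating the $k=j/2$ terms). Your closing remark that positivity of $\nu_{A}$ was never used in that algebraic step, so the signed measure $\nu^{\rm gen}_{0}$ plays the same formal role, is exactly the point.
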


From this we deduce the analogues of 
Corollary \ref{Cor cond exp eps} and Proposition \ref{Prop psi eps}.

\begin{prop}
\label{Prop psi eps gen 0}
For $\varepsilon>0$,
\begin{displaymath}
\E\big[:\Phi^{n}_{\varepsilon}:\vert \mathcal{F}_{0}^{\rm gen, +}\big] =
Q_{n}(\nu^{\rm gen}_{0,\varepsilon},V_{A^{\rm gen, +}_{0},\varepsilon}).
\end{displaymath}
In particular, for every $\eta>0$ and $n\geq 1$,
\begin{displaymath}
\lim_{\varepsilon\to 0}
\E\Big[
\big\Vert
\psi^{\rm gen}_{n,0}
-
Q_{n}(\nu^{\rm gen}_{0,\varepsilon},V_{A^{\rm gen, +}_{0},\varepsilon})
\big\Vert_{H^{-\eta}(\C)}^{2}
\Big] = 0.
\end{displaymath}
\end{prop}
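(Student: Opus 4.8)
The plan is to follow the exact same strategy as in the proof of Lemma~\ref{Lem decomp Phi eps}, Corollary~\ref{Cor cond exp eps} and Proposition~\ref{Prop psi eps}, transposed to the generational setting, using the already established decomposition of Lemma~\ref{Lem decomp Phi gen 1}. First I would verify the formula $\E\big[:\Phi^{n}_{\varepsilon}:\vert \mathcal{F}_{0}^{\rm gen, +}\big] = Q_{n}(\nu^{\rm gen}_{0,\varepsilon},V_{A^{\rm gen, +}_{0},\varepsilon})$ by taking the conditional expectation w.r.t. $\mathcal{F}_{0}^{\rm gen, +}$ in Lemma~\ref{Lem decomp Phi gen 1}. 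Conditionally on $\mathcal{F}_{0}^{\rm gen, +}$, the field $\Phi_{D\setminus A^{\rm gen, +}_{0}}$ is a GFF on $D\setminus A^{\rm gen, +}_{0}$ with $0$ boundary conditions, so its mollifications have mean zero and, more generally, $\E\big[:\Phi_{D\setminus A^{\rm gen, +}_{0},\varepsilon}^{j-2k}:\vert \mathcal{F}_{0}^{\rm gen, +}\big] = 0$ for $j-2k\geq 1$. Hence every mixed term in Lemma~\ref{Lem decomp Phi gen 1} with a strictly positive power of $\Phi_{D\setminus A^{\rm gen, +}_{0},\varepsilon}$ vanishes in conditional expectation. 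The only remaining contributions are $Q_{n}(\nu^{\rm gen}_{0,\varepsilon},V_{A^{\rm gen, +}_{0},\varepsilon})$, which is $\mathcal{F}_{0}^{\rm gen, +}$-measurable, together with the conditional expectation of $Q_{n}(\Phi_{D\setminus A^{\rm gen, +}_{0},\varepsilon},G_{D,\varepsilon,\varepsilon}(z,z))$ minus the explicit compensation term $\ind_{n \text{ even}}(-1)^{n/2}\frac{n!}{2^{n/2}(n/2)!}V_{A^{\rm gen, +}_{0},\varepsilon}^{n/2}$. As in the proof of Corollary~\ref{Cor cond exp eps}, applying the change of variance formula \eqref{Eq change var} to $Q_{n}(\Phi_{D\setminus A^{\rm gen, +}_{0},\varepsilon},G_{D,\varepsilon,\varepsilon}(z,z))$ (splitting the variance $G_{D,\varepsilon,\varepsilon}(z,z)$ as $G_{D\setminus A^{\rm gen, +}_{0},\varepsilon,\varepsilon}(z,z)+V_{A^{\rm gen, +}_{0},\varepsilon}(z)$) shows that its conditional expectation equals exactly this compensation term, so the two cancel and the identity follows.

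Next I would deduce the $L^2(H^{-\eta})$ convergence statement. Fix $\eta>0$. We have $\psi^{\rm gen}_{n,0} = \E[\,:\Phi^{n}:\,\vert \mathcal{F}_{0}^{\rm gen, +}]$, and $:\Phi^{n}_{\varepsilon}:$ converges to $:\Phi^{n}:$ in $L^{2}(d\PP,\sigma(\Phi),H^{-\eta}(\C))$ by the construction of Wick powers recalled in Section~\ref{Subsec Wick}. Conditional expectation with respect to $\mathcal{F}_{0}^{\rm gen, +}$ is a contraction on this Bochner space (see \cite[Section 2.6]{HNVW16AnBanach1}), so $\E[:\Phi^{n}_{\varepsilon}:\vert \mathcal{F}_{0}^{\rm gen, +}]$ converges to $\E[:\Phi^{n}:\vert \mathcal{F}_{0}^{\rm gen, +}]=\psi^{\rm gen}_{n,0}$ in the same space. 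This is the analogue of Lemma~\ref{Lem cond Psi eps}. Combining with the first part, $\E[:\Phi^{n}_{\varepsilon}:\vert \mathcal{F}_{0}^{\rm gen, +}] = Q_{n}(\nu^{\rm gen}_{0,\varepsilon},V_{A^{\rm gen, +}_{0},\varepsilon})$, we get
\begin{displaymath}
\lim_{\varepsilon\to 0}
\E\Big[
\big\Vert
\psi^{\rm gen}_{n,0}
-
Q_{n}(\nu^{\rm gen}_{0,\varepsilon},V_{A^{\rm gen, +}_{0},\varepsilon})
\big\Vert_{H^{-\eta}(\C)}^{2}
\Big] = 0,
\end{displaymath}
which is the second assertion. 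The only point requiring a word of care is that the conditioning filtration here is $\mathcal{F}_{0}^{\rm gen, +}$, not the sigma-algebra of a single set, but since $\nu^{\rm gen}_{0,\varepsilon}$ and $V_{A^{\rm gen, +}_{0},\varepsilon}$ are $\mathcal{F}_{0}^{\rm gen, +}$-measurable, and $\Phi_{D\setminus A^{\rm gen, +}_{0}}$ is conditionally a GFF, nothing changes relative to the FPS case.

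I do not expect a genuine obstacle here: the result is a direct transcription of Corollary~\ref{Cor cond exp eps} and Proposition~\ref{Prop psi eps}, which is precisely why the excerpt flags that "an analogue of the decomposition of Lemma~\ref{Lem decomp Phi eps} holds, with similar computations" and promises to "only provide the main steps of proofs". The mild subtlety to watch is the conditioning: one must use that, conditionally on $\mathcal{F}_{0}^{\rm gen, +}$, $\Phi_{D\setminus A^{\rm gen, +}_{0}}$ is a $0$-boundary GFF on the random domain $D\setminus A^{\rm gen, +}_{0}$ (as recorded in Section~\ref{Subsubsec gen exc}), so that all conditional moments of $:\Phi_{D\setminus A^{\rm gen, +}_{0},\varepsilon}^{m}:$ vanish for $m\geq 1$ and the two-point conditional correlation is $m!\,G_{D\setminus A^{\rm gen, +}_{0},\varepsilon,\varepsilon}(z,w)^{m}$ — exactly the inputs used in the FPS proof. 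The pointwise bound $V_{A^{\rm gen, +}_{0},\varepsilon}(z)\leq G_{D,\varepsilon,\varepsilon}(z,z)$ and the smoothness and compact support of $\nu^{\rm gen}_{0,\varepsilon}$, $V_{A^{\rm gen, +}_{0},\varepsilon}$, $\Phi_{D\setminus A^{\rm gen, +}_{0},\varepsilon}$ are inherited verbatim from Section~\ref{Sec Wick FPS}. So the proof is short, and I would simply write "This follows as in the proofs of Corollary~\ref{Cor cond exp eps} and Proposition~\ref{Prop psi eps}, using the decomposition of Lemma~\ref{Lem decomp Phi gen 1} and the contraction property of the conditional expectation $\E[\,\cdot\,\vert\mathcal{F}_{0}^{\rm gen, +}]$ on $L^{2}(d\PP,\sigma(\Phi),H^{-\eta}(\C))$."
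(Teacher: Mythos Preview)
Your proposal is correct and follows exactly the approach the paper takes: the paper does not give a separate proof for this proposition but simply states that it is deduced ``as the analogues of Corollary~\ref{Cor cond exp eps} and Proposition~\ref{Prop psi eps}'' from the decomposition of Lemma~\ref{Lem decomp Phi gen 1}, which is precisely what you do. Your identification of the key inputs (conditional GFF structure of $\Phi_{D\setminus A^{\rm gen,+}_0}$ given $\mathcal{F}_0^{\rm gen,+}$, the change of variance formula~\eqref{Eq change var}, and the contraction property of conditional expectation) matches the FPS proof verbatim.
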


Next we would like to get rid of the mixed terms
$V_{A^{\rm gen, +}_{0},\varepsilon}^{k}
(\nu^{\rm gen}_{0,\varepsilon})^{n-j}
\,
:\Phi_{D\setminus A^{\rm gen, +}_{0},\varepsilon}^{j-2k}:$
appearing in Lemma \ref{Lem decomp Phi gen 1}.
To apply the procedure of Section \ref{Subsec prod 0},
we need an analogue of Lemma \ref{Lem moment nu eps},
that is to say a logarithmic bound
for $\E[\nu^{\rm gen}_{0,\varepsilon}(z)^{2l}]$.
We have better, an upper bound on
$\E[\vert\nu^{\rm gen}_{0,\varepsilon}\vert(z)^{2l}]$,
and of course 
$\vert\nu^{\rm gen}_{0,\varepsilon}(z)\vert\leq \vert\nu^{\rm gen}_{0,\varepsilon}\vert(z)$.

\begin{lemma}
\label{Lem moment nu vert eps}
Let $l\geq 1$.
There are constants $c_{l},c'_{l}>0$,
depending only on $l$,
same as in Lemma \ref{Lem moment nu eps},
such that for every $\varepsilon>0$ and $z\in\C$,
\begin{displaymath}
\E[\vert\nu^{\rm gen}_{0,\varepsilon}\vert(z)^{2 l}\vert \mathcal{F}_{m}^{\rm gen}]
= c_{l} (2\lambda)^{2l} + c'_{l} G_{D\setminus A^{\rm gen}_{0},\varepsilon,\varepsilon}(z,z)^{l}
~~\text{a.s.}
\end{displaymath}
In particular,
\begin{displaymath}
\E[\vert\nu^{\rm gen}_{0,\varepsilon}\vert(z)^{2 l}]
= c_{l} (2\lambda)^{2l} + c'_{l} G_{D,\varepsilon,\varepsilon}(z,z)^{l}.
\end{displaymath}
\end{lemma}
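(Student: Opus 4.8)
The plan is to mirror the proof of Lemma \ref{Lem moment nu eps}, but now working conditionally on $\mathcal{F}_{0}^{\rm gen}$ and using the key fact recalled at the end of Section \ref{Subsubsec gen exc}: conditionally on $\mathcal{F}_{0}^{\rm gen}$, the field $\vert \nu^{\rm gen}_{0}\vert + \Phi_{D\setminus A^{\rm gen, +}_{0}}$ is distributed as a GFF on $D\setminus A^{\rm gen}_{0}$ with boundary condition $2\lambda$, and $A^{\rm gen, +}_{0}$ is an FPS (from level $2\lambda$ to level $0$) of this conditional GFF. So, conditionally on $\mathcal{F}_{0}^{\rm gen}$, the pair $(\vert \nu^{\rm gen}_{0}\vert, \Phi_{D\setminus A^{\rm gen, +}_{0}})$ plays exactly the role of $(\nu_{A}, \Phi_{D\setminus A})$ in Section \ref{Sec Wick FPS}, with $v$ replaced by $2\lambda$, and $D$ replaced by (each connected component of) $D\setminus A^{\rm gen}_{0}$. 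Mollifying by $\rho_{\varepsilon}$ commutes with taking restrictions of fields, so we also get $(\Phi_{D\setminus A^{\rm gen}_{0}})_{\varepsilon} = \vert \nu^{\rm gen}_{0,\varepsilon}\vert + \Phi_{D\setminus A^{\rm gen, +}_{0},\varepsilon}$ as an identity of smooth functions, where $\Phi_{D\setminus A^{\rm gen}_{0}}$ denotes the conditional GFF on $D\setminus A^{\rm gen}_{0}$ with boundary value $2\lambda$.

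First I would write, for fixed $z$,
\begin{displaymath}
\vert\nu^{\rm gen}_{0,\varepsilon}\vert(z) = (\Phi_{D\setminus A^{\rm gen}_{0}})_{\varepsilon}(z) - \Phi_{D\setminus A^{\rm gen, +}_{0},\varepsilon}(z),
\end{displaymath}
where the right-hand side, conditionally on $\mathcal{F}_{0}^{\rm gen}$, is the difference of two fields, each of which is, conditionally, a convolution-regularized GFF (with constant boundary value $2\lambda$ on its respective domain), hence Gaussian with explicit mean and variance. Then I would follow the computation of Lemma \ref{Lem moment nu eps} verbatim: apply the Brunn--Minkowski (Anderson/concentration) inequality to bound the $2l$-th moment by $(2\lambda + 2\,\E[\Phi_{(0),D\setminus A^{\rm gen}_{0},\varepsilon}(z)^{2l}]^{1/(2l)})^{2l}$, then expand and use Wick's formula for the centered Gaussian part. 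The centered Gaussian part has conditional variance $G_{D\setminus A^{\rm gen}_{0},\varepsilon,\varepsilon}(z,z)$, so Wick's formula produces exactly $c_{l}(2\lambda)^{2l} + c'_{l} G_{D\setminus A^{\rm gen}_{0},\varepsilon,\varepsilon}(z,z)^{l}$ with the same combinatorial constants $c_{l}, c'_{l}$ as in Lemma \ref{Lem moment nu eps} (these constants only depend on the number of Gaussian pairings and the constant shift, not on the domain). This gives the first, conditional identity.

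For the second, unconditional identity, I would take the expectation of the conditional one over $\mathcal{F}_{0}^{\rm gen}$ and use that $\E[G_{D\setminus A^{\rm gen}_{0},\varepsilon,\varepsilon}(z,z)^{l}] = G_{D,\varepsilon,\varepsilon}(z,z)^{l}$: indeed, by the analogue of the local set decomposition, $G_{D\setminus A^{\rm gen}_{0},\varepsilon,\varepsilon}(z,z)$ is the conditional variance of $\Phi_{D\setminus A^{\rm gen, +}_{0},\varepsilon}(z)$, but here one needs the stronger statement that the circle/mollifier-averaged variance is \emph{deterministic}. Actually the cleanest route is: conditionally on $\mathcal{F}_{0}^{\rm gen}$, $\vert\nu^{\rm gen}_{0,\varepsilon}\vert(z)^{2l}$ has conditional expectation $c_{l}(2\lambda)^{2l} + c'_{l} G_{D\setminus A^{\rm gen}_{0},\varepsilon,\varepsilon}(z,z)^{l}$; but also, by the same moment computation applied directly to the unconditional field $\Phi_{\varepsilon}$ decomposed via $A^{\rm gen}_{0}$ (which is the TVS $A_{-2\lambda,2\lambda}$ of $\Phi$, hence a thin local set with harmonic boundary values $\pm 2\lambda$ and $\E[\text{something}]$), one gets the unconditional value; I would instead simply invoke that $\E[G_{D\setminus A^{\rm gen}_{0},\varepsilon,\varepsilon}(z,z)] = G_{D,\varepsilon,\varepsilon}(z,z)$ holds for local sets with the convolution regularization — this is the standard fact that $\E[G_{D\setminus A}(z,z) + h_{A}(z)^{2}] $-type identities reduce, after integrating against $\rho_{\varepsilon}\otimes\rho_{\varepsilon}$, to $G_{D,\varepsilon,\varepsilon}(z,z)$ being the full variance and the local-set property giving the decomposition of variances — and then raise to the power $l$ requires $G_{D\setminus A^{\rm gen}_{0},\varepsilon,\varepsilon}(z,z)$ to be \emph{a.s. constant}, which is false in general.

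The main obstacle is therefore precisely this last point: the clean unconditional formula $\E[\vert\nu^{\rm gen}_{0,\varepsilon}\vert(z)^{2l}] = c_{l}(2\lambda)^{2l} + c'_{l} G_{D,\varepsilon,\varepsilon}(z,z)^{l}$ cannot be obtained by naively averaging the conditional formula unless $G_{D\setminus A^{\rm gen}_{0},\varepsilon,\varepsilon}(z,z)$ is deterministic. The resolution, which I would carry out carefully, is to \emph{not} condition on $\mathcal{F}_{0}^{\rm gen}$ at all for the second statement, but rather to repeat the Brunn--Minkowski-plus-Wick argument with the roles played by: the full GFF $\Phi_{\varepsilon}$ (boundary value $0$) in place of $\Phi_{D\setminus A^{\rm gen}_{0}}$, its restriction $\Phi_{D\setminus A^{\rm gen, +}_{0},\varepsilon}$ in place of the smaller field, and $\vert\nu^{\rm gen}_{0,\varepsilon}\vert = \nu^{\rm gen, abs}_{0,\varepsilon}$ expressed through $\Phi_{\varepsilon}$ and the excursion structure — but this fails because $\Phi$ has boundary value $0$ whereas the natural shift for the first generation is $2\lambda$, so the absolute-value measure is \emph{not} a simple difference $\Phi_{\varepsilon} - \Phi_{D\setminus A^{\rm gen, +}_{0},\varepsilon}$. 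Hence the genuinely correct statement is the conditional one, and the unconditional ``in particular'' is obtained by observing that $G_{D\setminus A^{\rm gen}_{0},\varepsilon,\varepsilon}(z,z)$, although random, satisfies $\E[G_{D\setminus A^{\rm gen}_{0},\varepsilon,\varepsilon}(z,z)^{l}] = G_{D,\varepsilon,\varepsilon}(z,z)^{l}$ as a consequence of the fact that conditionally on $A^{\rm gen}_{0}$ with its $\pm 2\lambda$ labels, adding back the absolute-value excursion measure produces the FPS decomposition, for which the analogue of $\E[\nu_{A,\varepsilon}(z)^{2l}]$ was computed in Lemma \ref{Lem moment nu eps} — and matching the two expressions for $\E[\vert\nu^{\rm gen}_{0,\varepsilon}\vert(z)^{2l}]$ forces $\E[G_{D\setminus A^{\rm gen}_{0},\varepsilon,\varepsilon}(z,z)^{l}] = G_{D,\varepsilon,\varepsilon}(z,z)^{l}$ for every $l$. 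I would spell this consistency argument out as the one place needing care; everything else is a transcription of the proof of Lemma \ref{Lem moment nu eps}.
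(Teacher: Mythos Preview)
Your treatment of the conditional statement is exactly what the paper does: conditionally on $\mathcal{F}_{0}^{\rm gen}$, the pair $(\vert\nu^{\rm gen}_{0}\vert, \Phi_{D\setminus A^{\rm gen,+}_{0}})$ is an FPS decomposition of a GFF on $D\setminus A^{\rm gen}_{0}$ with boundary value $2\lambda$, so Lemma \ref{Lem moment nu eps} applies verbatim with $v=2\lambda$ and $D$ replaced by $D\setminus A^{\rm gen}_{0}$. The paper's proof is literally one sentence to this effect.

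Your ``main obstacle'' for the unconditional statement is spurious, and it comes from reading the equality sign in Lemma \ref{Lem moment nu eps} literally. Look back at that proof: it uses the Brunn--Minkowski inequality and yields only an \emph{upper bound}; the displayed ``$=$'' there (and hence here) should be read as ``$\leq$'', and indeed the lemma is only ever invoked as an upper bound (see \eqref{Eq bound nu A eps} and its use in Section \ref{Subsec prod 0}). Once you accept this, the ``in particular'' is immediate: since $D\setminus A^{\rm gen}_{0}\subset D$, domain monotonicity of Green's functions gives
\[
G_{D\setminus A^{\rm gen}_{0},\varepsilon,\varepsilon}(z,z)\leq G_{D,\varepsilon,\varepsilon}(z,z)\quad\text{a.s.},
\]
and taking expectations in the conditional inequality yields the unconditional one. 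There is no need for the identity $\E[G_{D\setminus A^{\rm gen}_{0},\varepsilon,\varepsilon}(z,z)^{l}] = G_{D,\varepsilon,\varepsilon}(z,z)^{l}$, which, as you correctly suspect, is false in general. So your long final paragraph can be deleted and replaced by this one-line monotonicity observation.
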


\begin{proof}
This comes from the fact that conditionally on $\mathcal{F}_{0}^{\rm gen}$, the field
\begin{displaymath}
\vert \nu^{\rm gen}_{0}\vert  + \Phi_{D\setminus A^{\rm gen, +}_{0}}
\end{displaymath}
is a GFF on $D\setminus A^{\rm gen}_{0}$
with boundary value $2\lambda$,
and $\vert \nu^{\rm gen}_{0}\vert$ is the corresponding first passage set measure.
So we can actually apply Lemma \ref{Lem moment nu eps}.
\end{proof}

Given the above, we can use the method of Sections \ref{Subsec prod 0} and \ref{Subsec Wick outside}
and obtain the following decomposition.

\begin{prop}
\label{Prop decomp gen 0 Wick}
Let $n\geq 1$. A.s., the Wick power $:\Phi^{n}:$ can be decomposed
\begin{equation}
\label{Eq decomp Wick gen 0}
:\Phi^{n}:~=\psi^{\rm gen}_{n,0}
+
\sum_{0\leq k <\lfloor n/2\rfloor}
(-1)^{k}\dfrac{n!}{2^{k} k! (n-2k)!}
V_{A^{\rm gen, +}_{0}}^{k}
\,
:\Phi_{D\setminus A^{\rm gen, +}_{0}}^{n-2k}:\,.
\end{equation}
Fix $\eta>0$. 
Then a.s.,
\begin{multline*}
\E\big[\Vert :\Phi^{n}:\Vert^{2}_{H^{-\eta}(\C)}\big\vert \mathcal{F}_{0}^{\rm gen, +}\big]
=
\Vert \psi^{\rm gen}_{n,0}\Vert^{2}_{H^{-\eta}(\C)}
\\+
\E\Big[
\Big\Vert
\sum_{0\leq k <\lfloor n/2\rfloor}
(-1)^{k}\dfrac{n!}{2^{k} k! (n-2k)!}
V_{ A^{\rm gen, +}_{0}}^{k}:\Phi_{D\setminus A^{\rm gen, +}_{0}}^{n-2k}:
\Big\Vert^{2}_{H^{-\eta}(\C)}
\Big\vert \mathcal{F}_{0}^{\rm gen, +}
\Big] .
\end{multline*}
In particular,
\begin{multline*}
\E\big[\Vert :\Phi^{n}:\Vert^{2}_{H^{-\eta}(\C)}\big]
=
\E\big[\Vert \psi^{\rm gen}_{n,0}\Vert^{2}_{H^{-\eta}(\C)}\big]
\\
+
\E\Big[
\Big\Vert
\sum_{0\leq k <\lfloor n/2\rfloor}
(-1)^{k}\dfrac{n!}{2^{k} k! (n-2k)!}
V_{ A^{\rm gen, +}_{0}}^{k}:\Phi_{D\setminus A^{\rm gen, +}_{0}}^{n-2k}:
\Big\Vert^{2}_{H^{-\eta}(\C)}
\Big].
\end{multline*}
\end{prop}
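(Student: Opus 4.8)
The plan is to follow the template established in Sections \ref{Subsec cond Wick FPS}, \ref{Subsec prod 0} and \ref{Subsec Wick outside}, transporting each step to the present setting where the role of the measure $\nu_A$ is played by the signed measure $\nu^{\rm gen}_0$ and the role of the FPS $A$ by the set $A^{\rm gen,+}_0$. The only genuinely new input needed is a logarithmic moment bound for the mollified signed measure $\nu^{\rm gen}_{0,\varepsilon}$, and this is exactly what Lemma \ref{Lem moment nu vert eps} provides via the pointwise domination $\vert\nu^{\rm gen}_{0,\varepsilon}(z)\vert\leq \vert\nu^{\rm gen}_{0,\varepsilon}\vert(z)$ together with the conditional first-passage-set representation of $\vert\nu^{\rm gen}_0\vert$ given $\mathcal F_0^{\rm gen}$. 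With that in hand, the proof is essentially a line-by-line rerun of the FPS argument.

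Concretely, I would proceed as follows. First, start from the mollified decomposition of Lemma \ref{Lem decomp Phi gen 1}, which already isolates the term $Q_n(\nu^{\rm gen}_{0,\varepsilon},V_{A^{\rm gen,+}_0,\varepsilon})$ (whose $L^2(H^{-\eta})$-limit is $\psi^{\rm gen}_{n,0}$ by Proposition \ref{Prop psi eps gen 0}), the term $Q_n(\Phi_{D\setminus A^{\rm gen,+}_0,\varepsilon},G_{D,\varepsilon,\varepsilon})$ together with the compensating even-power term, and the mixed terms $V_{A^{\rm gen,+}_0,\varepsilon}^k(\nu^{\rm gen}_{0,\varepsilon})^{n-j}:\Phi_{D\setminus A^{\rm gen,+}_0,\varepsilon}^{j-2k}:$. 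Second, show the mixed terms converge to $0$ in $L^2(d\PP,\sigma(\Phi),H^{-\eta}(\C))$ by repeating Proposition \ref{Prop conv 0}: expand the squared Sobolev norm as a double integral against $\LK_\eta$, condition on $\mathcal F_0^{\rm gen,+}$ to replace the Wick-power correlation by $m!\,G_{D\setminus A^{\rm gen,+}_0,\varepsilon,\varepsilon}(z,w)^m$, bound $G_{D\setminus A^{\rm gen,+}_0}$ via Proposition \ref{Prop Green} exactly as in Lemma \ref{Lem key estimate} (here $d(\cdot,A^{\rm gen,+}_0)$ replaces $d(\cdot,A)$), bound the factors $V_{A^{\rm gen,+}_0,\varepsilon}\leq G_{D,\varepsilon,\varepsilon}=O(\vert\log\varepsilon\vert)$ by Lemma \ref{Lem G eps}, bound $\E[(\nu^{\rm gen}_{0,\varepsilon})^{2l}]=O(\vert\log\varepsilon\vert^l)$ by Lemma \ref{Lem moment nu vert eps}, and split the $(z,w)$-integral according to $\vert z-w\vert\gtrless\varepsilon^{1/2}$ as in the proof of Proposition \ref{Prop conv 0}. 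Third, for the remaining non-mixed term, introduce the cut-off functions $f_q(z)=\chi(2^q\CR(z,D\setminus A^{\rm gen,+}_0))$ and repeat Lemma \ref{Lem f q} and Proposition \ref{Prop V G eps} verbatim (again using Proposition \ref{Prop Green} and the Koebe bound \eqref{Eq Koebe}) to define $V_{A^{\rm gen,+}_0}^k:\Phi_{D\setminus A^{\rm gen,+}_0}^{n-2k}:$ as an $L^2(H^{-\eta})$-limit and to identify the limit of $Q_n(\Phi_{D\setminus A^{\rm gen,+}_0,\varepsilon},G_{D,\varepsilon,\varepsilon})$ minus its even-power compensation as $\sum_{0\leq k<\lfloor n/2\rfloor}(-1)^k\frac{n!}{2^k k!(n-2k)!}V_{A^{\rm gen,+}_0}^k:\Phi_{D\setminus A^{\rm gen,+}_0}^{n-2k}:$. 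Collecting these three limits yields \eqref{Eq decomp Wick gen 0}.

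For the Sobolev-norm identities I would argue exactly as in Corollary \ref{Cor Sob norm decomp}: expand $\Vert:\Phi^n:\Vert^2_{H^{-\eta}(\C)}$ into the sum of the squared norm of $\psi^{\rm gen}_{n,0}$, the squared norm of the remainder sum, and twice the $H^{-\eta}$ inner product between $:\Phi^n:$ and the remainder; then take $\E[\,\cdot\,\vert\mathcal F_0^{\rm gen,+}]$, pull the conditional expectation inside the inner product since $\psi^{\rm gen}_{n,0}$ is $\mathcal F_0^{\rm gen,+}$-measurable, and observe that $\E[:\Phi_{D\setminus A^{\rm gen,+}_0}^{j}:\,\vert\mathcal F_0^{\rm gen,+}]=0$ for $j\geq1$ (since conditionally on $\mathcal F_0^{\rm gen,+}$ the field $\Phi_{D\setminus A^{\rm gen,+}_0}$ is a centered GFF), so the cross term vanishes. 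Taking a further expectation gives the unconditional version. The main obstacle is the mixed-terms estimate: it requires the logarithmic growth of moments of the mollified signed measure, which is not obvious for a signed measure but is rescued precisely by passing to the total variation $\vert\nu^{\rm gen}_0\vert$ and invoking its conditional FPS structure via Lemma \ref{Lem moment nu vert eps}; once that bound is available every other step is a mechanical transcription of the FPS proofs.
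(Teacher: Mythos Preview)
Your proposal is correct and essentially identical to the paper's own approach: the paper states that, given Lemma \ref{Lem moment nu vert eps}, one ``can use the method of Sections \ref{Subsec prod 0} and \ref{Subsec Wick outside}'' to obtain the decomposition, which is precisely the transcription you outline. One small slip (shared by the paper's proof of Corollary \ref{Cor Sob norm decomp}): the cross term in the expansion of $\Vert :\Phi^n:\Vert^2_{H^{-\eta}(\C)}$ is $2\langle \psi^{\rm gen}_{n,0}, R\rangle_{H^{-\eta}(\C)}$, not $2\langle :\Phi^n:, R\rangle_{H^{-\eta}(\C)}$; with the correct pairing your argument that $\psi^{\rm gen}_{n,0}$ is $\mathcal F_0^{\rm gen,+}$-measurable and $\E[R\mid\mathcal F_0^{\rm gen,+}]=0$ goes through cleanly.
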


Next, we want to iterate the decomposition with further generations of excursion clusters.
At a surface level, the remainder term in \eqref{Eq decomp Wick gen 0},
that is to say
\begin{displaymath}
\sum_{0\leq k <\lfloor n/2\rfloor}
(-1)^{k}\dfrac{n!}{2^{k} k! (n-2k)!}
V_{A^{\rm gen, +}_{0}}^{k}
\,
:\Phi_{D\setminus A^{\rm gen, +}_{0}}^{n-2k}:\,.,
\end{displaymath}
looks like a complication.
It is not just $:\Phi_{D\setminus A^{\rm gen, +}_{0}}^{n}:$.
However, the change of variance identity \eqref{Eq change var} is there to simplify everything.
In what follows, we will just give the end result without the tedious but standard intermediate
computations.

So denote
\begin{displaymath}
\psi^{\rm gen}_{n,m} = \E[\, :\Phi_{D\setminus A^{\rm gen, +}_{m-1}}^{n}:\,\vert \mathcal{F}_{m}^{\rm gen, +}].
\end{displaymath}
Let be
\begin{displaymath}
V_{A^{\rm gen, +}_{m}}(z)
=
\dfrac{1}{2\pi}\log\Big(\dfrac{\CR(z,D)}{\CR(z,D\setminus A^{\rm gen, +}_{m})}\Big),
\end{displaymath}
for $z\in D\setminus A^{\rm gen, +}_{m}$.

\begin{prop}
\label{Prop decomp gen Wick iter}
Let $n\geq 1$ and $m\geq 1$.
A.s., the Wick power $:\Phi^{n}:$ can be decomposed
\begin{multline*}
:\Phi^{n}:~= \psi^{\rm gen}_{n,0}
+
\sum_{l=1}^{m}
\sum_{0\leq k <\lfloor n/2\rfloor}
(-1)^{k}\dfrac{n!}{2^{k} k! (n-2k)!}
V_{A^{\rm gen, +}_{l-1}}^{k}
\,
\psi^{\rm gen}_{n-2k,l}
\\
+
\sum_{0\leq k <\lfloor n/2\rfloor}
(-1)^{k}\dfrac{n!}{2^{k} k! (n-2k)!}
V_{A^{\rm gen, +}_{m}}^{k}
\,
:\Phi_{D\setminus A^{\rm gen, +}_{m}}^{n-2k}:
\, .
\end{multline*}
Moreover, for $\eta>0$,
one has the following $L^2$ decomposition of Sobolev norms:
\begin{eqnarray*}
\E\Big[
\Vert
:\Phi^{n}:
\Vert^{2}_{H^{-\eta}(\C)}
\Big]
&=&
\E\Big[
\Vert
\psi^{\rm gen}_{n,0}
\Vert^{2}_{H^{-\eta}(\C)}
\Big]
\\&&+
\sum_{l=1}^{m}
\E\Big[
\Big\Vert
\sum_{0\leq k <\lfloor n/2\rfloor}
(-1)^{k}\dfrac{n!}{2^{k} k! (n-2k)!}
V_{A^{\rm gen, +}_{l-1}}^{k}
\,
\psi^{\rm gen}_{n-2k,l}
\Big\Vert^{2}_{H^{-\eta}(\C)}
\Big]
\\
&&+
\E\Big[
\Big\Vert
\sum_{0\leq k <\lfloor n/2\rfloor}
(-1)^{k}\dfrac{n!}{2^{k} k! (n-2k)!}
V_{A^{\rm gen, +}_{m}}^{k}
\,
:\Phi_{D\setminus A^{\rm gen, +}_{m}}^{n-2k}:
\Big\Vert^{2}_{H^{-\eta}(\C)}
\Big]
\, .
\end{eqnarray*}
\end{prop}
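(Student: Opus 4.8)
The plan is to prove Proposition \ref{Prop decomp gen Wick iter} by induction on $m$, using Proposition \ref{Prop decomp gen 0 Wick} as the base case and, at each inductive step, applying the change of variance identity \eqref{Eq change var} together with the analogue of Proposition \ref{Prop decomp gen 0 Wick} for the field $:\Phi_{D\setminus A^{\rm gen, +}_{m-1}}^{\cdot}:$ conditioned on $\mathcal{F}_{m}^{\rm gen, +}$.

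First I would set up the induction. For $m=1$, Proposition \ref{Prop decomp gen 0 Wick} already gives
\begin{displaymath}
:\Phi^{n}:~=\psi^{\rm gen}_{n,0}
+
\sum_{0\leq k <\lfloor n/2\rfloor}
(-1)^{k}\dfrac{n!}{2^{k} k! (n-2k)!}
V_{A^{\rm gen, +}_{0}}^{k}
\,
:\Phi_{D\setminus A^{\rm gen, +}_{0}}^{n-2k}:\,,
\end{displaymath}
and, conditionally on $\mathcal{F}_{0}^{\rm gen, +}$, the field $\Phi_{D\setminus A^{\rm gen, +}_{0}}$ is a GFF on $D\setminus A^{\rm gen, +}_{0}$ whose first generation of excursion clusters (viewed inside $D\setminus A^{\rm gen, +}_{0}$) is exactly the generation-$1$ picture; hence re-applying Proposition \ref{Prop decomp gen 0 Wick} inside $D\setminus A^{\rm gen, +}_{0}$ to each $:\Phi_{D\setminus A^{\rm gen, +}_{0}}^{n-2k}:$ expresses it as $\psi^{\rm gen}_{n-2k,1}$ plus a remainder built from $V$ computed relative to $D\setminus A^{\rm gen, +}_{0}$ and Wick powers of $\Phi_{D\setminus A^{\rm gen, +}_{1}}$. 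The key algebraic point is that $V$ relative to $D\setminus A^{\rm gen, +}_{0}$ equals $V_{A^{\rm gen, +}_{1}} - V_{A^{\rm gen, +}_{0}}$ (a difference of log conformal radii), so substituting and regrouping the nested sums via the change of variance formula \eqref{Eq change var} — in the form \eqref{Eq Q n x y u v} — collapses the double sum into the single sum $\sum_{l=1}^{m}$ with the coefficients as stated, exactly as in the derivation of Proposition \ref{Prop decomp gen Wick iter} from Proposition \ref{Prop decomp gen 0 Wick}. The inductive step is identical: assume the decomposition up to level $m-1$, apply the conditional analogue of Proposition \ref{Prop decomp gen 0 Wick} to the trailing term $\sum_k (\dots) V_{A^{\rm gen, +}_{m-1}}^{k} :\Phi_{D\setminus A^{\rm gen, +}_{m-1}}^{n-2k}:$, and use \eqref{Eq change var} to absorb the newly produced cross terms into the prescribed form.

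For the convergence and the $L^2$ orthogonality of Sobolev norms, I would argue as in Corollary \ref{Cor Sob norm decomp} and Proposition \ref{Prop decomp gen 0 Wick}: each summand in the decomposition lies in $L^{2}(d\PP,\sigma(\Phi),H^{-\eta}(\C))$ (using Lemma \ref{Lem moment nu vert eps} to control moments of $\vert\nu^{\rm gen}_{l}\vert_{\varepsilon}$ and the estimates of Section \ref{Sec estimates} together with the arguments of Sections \ref{Subsec prod 0} and \ref{Subsec Wick outside} to make sense of and bound the $V^{k}\,:\Phi_{D\setminus A^{\rm gen, +}_{\cdot}}^{\cdot}:$ terms), and successive conditional expectations vanish: $\psi^{\rm gen}_{n,0}$ is $\mathcal{F}_{0}^{\rm gen, +}$-measurable while the remaining terms have zero conditional expectation given $\mathcal{F}_{0}^{\rm gen, +}$, and more generally the $l$-th block has zero conditional expectation given $\mathcal{F}_{l-1}^{\rm gen, +}$ because it is a linear combination of $V_{A^{\rm gen, +}_{l-1}}^{k}\,\psi^{\rm gen}_{n-2k,l}$ with $\psi^{\rm gen}_{n-2k,l}=\E[:\Phi_{D\setminus A^{\rm gen, +}_{l-1}}^{n-2k}:\vert \mathcal{F}_{l}^{\rm gen, +}]$ having zero $\mathcal{F}_{l-1}^{\rm gen, +}$-conditional mean for $n-2k\geq 1$. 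Taking the conditional expectation of $\Vert :\Phi^{n}:\Vert^{2}_{H^{-\eta}(\C)}$ given $\mathcal{F}_{l-1}^{\rm gen, +}$ and iterating then kills all the cross inner products $\langle\cdot,\cdot\rangle_{H^{-\eta}(\C)}$, exactly as in Corollary \ref{Cor Sob norm decomp}, yielding the telescoped sum of squared Sobolev norms; taking overall expectation gives the last displayed identity.

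The main obstacle will be the bookkeeping in the inductive step: correctly identifying $V$ relative to the nested domain $D\setminus A^{\rm gen, +}_{l-1}$ as $V_{A^{\rm gen, +}_{l}} - V_{A^{\rm gen, +}_{l-1}}$ on the relevant set, and then carrying out the re-summation via \eqref{Eq change var} / \eqref{Eq Q n x y u v} without sign or index errors — and, in parallel, checking that all the newly introduced mixed terms $V^{k}\,:\Phi_{D\setminus A^{\rm gen, +}_{l}}^{n-2k}:$ are well-defined elements of $H^{-\eta}(\C)$ via the cutoff-and-dominated-convergence argument of Lemma \ref{Lem f q} and Proposition \ref{Prop V G eps} applied in the random domain $D\setminus A^{\rm gen, +}_{l-1}$. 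These are all routine given the machinery of Sections \ref{Sec estimates}, \ref{Sec Wick FPS} and \ref{Subsec Wick exc gen}, so I would state them as "the same computations as in Section \ref{Sec Wick FPS}" and only spell out the combinatorial regrouping step in detail.
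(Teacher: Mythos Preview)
Your proposal is correct and follows exactly the approach the paper indicates: iterate Proposition~\ref{Prop decomp gen 0 Wick} across generations, use the change of variance identity \eqref{Eq change var} to collapse the nested sums (since the relative $V$ in $D\setminus A^{\rm gen,+}_{l-1}$ is $V_{A^{\rm gen,+}_{l}}-V_{A^{\rm gen,+}_{l-1}}$), and obtain the $L^2$ orthogonality from the martingale-difference structure with respect to the filtration $(\mathcal{F}_l^{\rm gen,+})_l$ as in Corollary~\ref{Cor Sob norm decomp}. The paper itself omits all details here (``we will just give the end result without the tedious but standard intermediate computations''), so your write-up is in fact more explicit than the original.
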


Now let use deal with the remainder terms.

\begin{lemma}
\label{Lem conv 0 remainder Sob}
Let $k\geq 0$ and $j\geq 1$. Let $\eta >0$.
Then
\begin{displaymath}
\lim_{m\to +\infty}
\E\Big[
\Big\Vert
V_{A^{\rm gen, +}_{m}}^{k}
\,
:\Phi_{D\setminus A^{\rm gen, +}_{m}}^{j}:
\Big\Vert^{2}_{H^{-\eta}(\C)}
\Big] = 0.
\end{displaymath}
\end{lemma}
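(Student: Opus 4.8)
The plan is to bound the Sobolev norm squared by a double integral against the Bessel potential $\LK_{\eta}$ and then show the integrand goes to $0$. First I would write, using the conditional covariance structure of $:\Phi_{D\setminus A^{\rm gen, +}_{m}}^{j}:$ given $\mathcal{F}_{m}^{\rm gen,+}$,
\begin{displaymath}
\E\Big[\Big\Vert V_{A^{\rm gen, +}_{m}}^{k} :\Phi_{D\setminus A^{\rm gen, +}_{m}}^{j}: \Big\Vert^{2}_{H^{-\eta}(\C)}\Big]
= j!\int_{\C^{2}}\LK_{\eta}(\vert w-z\vert)\,\E\big[V_{A^{\rm gen, +}_{m}}(z)^{k}V_{A^{\rm gen, +}_{m}}(w)^{k}\,G_{D\setminus A^{\rm gen, +}_{m}}(z,w)^{j}\big]\,d^{2}z\,d^{2}w.
\end{displaymath}
The goal is to send $m\to+\infty$. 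The key geometric input is that the sets $A^{\rm gen, +}_{m}$ increase and exhaust $\overline D$ in the sense that for a.e.\ $z\in D$, $\CR(z,D\setminus A^{\rm gen, +}_{m})\to 0$ (equivalently $d(z,A^{\rm gen, +}_{m})\to 0$), since every point of $D$ is eventually surrounded by arbitrarily many excursion clusters. Hence for fixed $z\ne w$, a.s.\ eventually $z$ and $w$ lie in different connected components of $D\setminus A^{\rm gen, +}_{m}$, so $G_{D\setminus A^{\rm gen, +}_{m}}(z,w)=0$; thus the integrand tends to $0$ pointwise a.e.

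For dominated convergence I would produce an $m$-independent integrable majorant. Bound $G_{D\setminus A^{\rm gen, +}_{m}}(z,w)\le G_{D}(z,w)$ and apply Proposition \ref{Prop Green} together with the distortion inequality \eqref{Eq Koebe} to get, as in Lemma \ref{Lem a s bound} and Lemma \ref{Lem a s bound q}, a deterministic bound of the form
\begin{displaymath}
V_{A^{\rm gen, +}_{m}}(z)^{k}V_{A^{\rm gen, +}_{m}}(w)^{k}G_{D\setminus A^{\rm gen, +}_{m}}(z,w)^{j}
\le C\,(1\vee\diam(D)^{j/4})\Big(1\vee\log\dfrac{2(1\vee\diam(D))}{\vert w-z\vert}\Big)^{j+2k},
\end{displaymath}
splitting into the cases $d(z,A^{\rm gen, +}_{m})\wedge d(w,A^{\rm gen, +}_{m})\le\vert w-z\vert^{2}$ and its complement exactly as in the proof of Lemma \ref{Lem a s bound q}; the key point is that the constant $C$ depends only on $j,k,D$ and not on $m$. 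Since the right-hand side multiplied by $\LK_{\eta}(\vert w-z\vert)$ is integrable on $D^{2}$ (the logarithmic factors are harmless against the $r^{2\eta-2}$ singularity of $\LK_{\eta}$ when $\eta\in(0,1)$, and $\LK_{\eta}$ is even milder for $\eta\ge 1$), dominated convergence gives the claim; the case $\eta\ge 1$ follows since the norms are non-increasing in $\eta$.

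The main obstacle I anticipate is the rigorous justification that $d(z,A^{\rm gen, +}_{m})\to 0$ for a.e.\ $z$, i.e.\ that the generational sets exhaust the domain. This should follow from the branching random walk description of the nested CLE$_4$ labels recalled in Section \ref{Subsubsec exc}: a fixed $z\in D$ is surrounded by a CLE$_4$ loop at each generation of the branching walk, hence is surrounded by at least $m$ excursion-cluster outer boundaries for all $m$ provided the branch through $z$ visits $0$ (and thus spawns an excursion cluster) infinitely often along the way --- which happens almost surely for each fixed $z$ by recurrence of the one-dimensional branching walk step, combined with the fact that the conformal radii $\CR(z,\inter(\Gamma))$ of successive surrounding loops shrink to $0$. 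I would invoke the structure from \cite{ALS4} for this; once it is in hand, everything else is the routine estimate sketched above. A minor secondary point is measurability/integrability of the conditional expectation, which is handled exactly as for $\psi_{n,A}$ in Section \ref{Subsec cond Wick FPS} via $\E[\Vert:\Phi^{n}:\Vert^{2}_{H^{-\eta}(\C)}]<+\infty$ and Proposition \ref{Prop decomp gen Wick iter}.
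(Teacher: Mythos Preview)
Your proof is correct and follows essentially the same route as the paper: express the squared Sobolev norm as the double integral
\begin{displaymath}
j!\int \LK_{\eta}(\vert w-z\vert)\,\E\big[V_{A^{\rm gen,+}_{m}}(z)^{k}V_{A^{\rm gen,+}_{m}}(w)^{k}G_{D\setminus A^{\rm gen,+}_{m}}(z,w)^{j}\big]\,d^{2}z\,d^{2}w,
\end{displaymath}
control the integrand uniformly in $m$ via the Beurling-type estimate of Proposition~\ref{Prop Green} (exactly the computation behind Lemma~\ref{Lem a s bound}), and conclude by dominated convergence once the components of $D\setminus A^{\rm gen,+}_{m}$ shrink. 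The only difference is in how the geometric input is obtained: the paper simply invokes that the maximal diameter of the connected components of $D\setminus A^{\rm gen,+}_{m}$ tends to $0$ in probability, citing \cite[Theorem~1.5]{AruPaponPowell23dust}, whereas you rederive this per point from the nested CLE$_{4}$/branching random walk picture. Both are fine; the paper's citation is shorter, your argument is more self-contained. One small comment: your intermediate claim ``$z$ and $w$ eventually lie in different components'' does not follow from $d(z,A^{\rm gen,+}_{m})\to 0$ alone, but it does follow once you also use (as you note at the end) that the surrounding loops $\Gamma_{m,\cdot}$ shrink in diameter around $z$ --- and in any case the pointwise vanishing of $V^{k}V^{k}G^{j}$ already follows directly from your majorant bound together with $d(z,A^{\rm gen,+}_{m})\to 0$, since the power $d(z,A^{\rm gen,+}_{m})^{j/2}$ beats the logarithmic blow-up of $V$.
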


\begin{proof}
Indeed, one can express
\begin{displaymath}
\E\Big[
\Big\Vert
V_{A^{\rm gen, +}_{m}}^{k}
\,
:\Phi_{D\setminus A^{\rm gen, +}_{m}}^{j}:
\Big\Vert^{2}_{H^{-\eta}(\C)}
\Big\vert
\mathcal{F}_{m}^{\rm gen, +}
\Big]
\end{displaymath}
as
\begin{displaymath}
j!\,
\int
V_{A^{\rm gen, +}_{m}}(z)^{k}
V_{A^{\rm gen, +}_{m}}(w)^{k}
G_{D\setminus A^{\rm gen, +}_{m}}(z,w)^{j}
\LK_{\eta}(\vert w-z\vert)
d^{2}z
\,
d^{2}w
\, .
\end{displaymath}
Note that $V_{A^{\rm gen, +}_{m}}(z)$
blows up when $z$ approaches $A^{\rm gen, +}_{m}$.
Also note that $G_{D\setminus A^{\rm gen, +}_{m}}(z,w)$ is $0$
if $z$ and $w$ belong to different connected components of $D\setminus A^{\rm gen, +}_{m}$.
Further, we apply the bound \eqref{Eq est Green}
(Proposition \ref{Prop Green}) to
$G_{D\setminus A^{\rm gen, +}_{m}}(z,w)$.
To conclude, we use the fact that the maximum of the diameters of connected components of
$D\setminus A^{\rm gen, +}_{m}$ converges in probability to $0$ as
$m\to +\infty$.
For instance, one can see the latter fact through the nested CLE$_{4}$ and apply
\cite[Theorem 1.5]{AruPaponPowell23dust}.
\end{proof}

So we get the full generational decomposition of Wick powers.

\begin{thm}
\label{Thm decomp gen Wick}
Let $n\geq 1$.
The Wick power $:\Phi^{n}:$ can be decomposed
\begin{displaymath}
:\Phi^{n}:~= \psi^{\rm gen}_{n,0}
+
\sum_{m=1}^{+\infty}
\sum_{0\leq k <\lfloor n/2\rfloor}
(-1)^{k}\dfrac{n!}{2^{k} k! (n-2k)!}
V_{A^{\rm gen, +}_{m-1}}^{k}
\,
\psi^{\rm gen}_{n-2k,m}
\, ,
\end{displaymath}
where the convergence holds in $L^{2}(d\PP,\sigma(\Phi),H^{-\eta}(\C))$ for $\eta>0$.
Moreover, one has the following $L^2$ decomposition of Sobolev norms:
\begin{eqnarray*}
\E\Big[
\Vert
:\Phi^{n}:
\Vert^{2}_{H^{-\eta}(\C)}
\Big]
&=&
\E\Big[
\Vert
\psi^{\rm gen}_{n,0}
\Vert^{2}_{H^{-\eta}(\C)}
\Big]
\\&&+
\sum_{m=1}^{+\infty}
\E\Big[
\Big\Vert
\sum_{0\leq k <\lfloor n/2\rfloor}
(-1)^{k}\dfrac{n!}{2^{k} k! (n-2k)!}
V_{A^{\rm gen, +}_{m-1}}^{k}
\,
\psi^{\rm gen}_{n-2k,m}
\Big\Vert^{2}_{H^{-\eta}(\C)}
\Big]
\, .
\end{eqnarray*}
\end{thm}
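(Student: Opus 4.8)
The plan is to deduce Theorem \ref{Thm decomp gen Wick} by taking the limit $m\to +\infty$ in the finite-generation decomposition of Proposition \ref{Prop decomp gen Wick iter}. First I would recall that Proposition \ref{Prop decomp gen Wick iter} gives, for every fixed $m\geq 1$, the identity
\begin{displaymath}
:\Phi^{n}:~= \psi^{\rm gen}_{n,0}
+
\sum_{l=1}^{m}
\sum_{0\leq k <\lfloor n/2\rfloor}
(-1)^{k}\dfrac{n!}{2^{k} k! (n-2k)!}
V_{A^{\rm gen, +}_{l-1}}^{k}
\,
\psi^{\rm gen}_{n-2k,l}
+
R_{n,m},
\end{displaymath}
where $R_{n,m}=\sum_{0\leq k <\lfloor n/2\rfloor}(-1)^{k}\frac{n!}{2^{k} k! (n-2k)!}V_{A^{\rm gen, +}_{m}}^{k}\,:\Phi_{D\setminus A^{\rm gen, +}_{m}}^{n-2k}:$ is the remainder, together with the associated orthogonal $L^2$ decomposition of the Sobolev norm. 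The key input is Lemma \ref{Lem conv 0 remainder Sob}, which says that each term $V_{A^{\rm gen, +}_{m}}^{k}\,:\Phi_{D\setminus A^{\rm gen, +}_{m}}^{n-2k}:$ appearing in $R_{n,m}$ tends to $0$ in $L^{2}(d\PP,\sigma(\Phi),H^{-\eta}(\C))$ as $m\to +\infty$ (for $n-2k\geq 1$, which is exactly the range in the sum since $k<\lfloor n/2\rfloor$). By the triangle inequality in the complete space $L^{2}(d\PP,\sigma(\Phi),H^{-\eta}(\C))$, this gives $\E[\Vert R_{n,m}\Vert^2_{H^{-\eta}(\C)}]\to 0$.

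Next I would observe that the orthogonal norm identity in Proposition \ref{Prop decomp gen Wick iter} forces the partial sums of the series to be a Cauchy sequence. Indeed, $\E[\Vert :\Phi^n:\Vert^2_{H^{-\eta}(\C)}]$ is finite and independent of $m$, while it equals $\E[\Vert \psi^{\rm gen}_{n,0}\Vert^2_{H^{-\eta}(\C)}]$ plus the sum over $l=1,\dots,m$ of the (nonnegative) terms $\E[\Vert\sum_{0\leq k <\lfloor n/2\rfloor}(-1)^{k}\frac{n!}{2^{k} k! (n-2k)!}V_{A^{\rm gen, +}_{l-1}}^{k}\psi^{\rm gen}_{n-2k,l}\Vert^2_{H^{-\eta}(\C)}]$ plus $\E[\Vert R_{n,m}\Vert^2_{H^{-\eta}(\C)}]$. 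Hence the series $\sum_{l\geq 1}\E[\Vert\cdots\Vert^2_{H^{-\eta}(\C)}]$ converges, being bounded by $\E[\Vert :\Phi^n:\Vert^2_{H^{-\eta}(\C)}]$; moreover the successive summands of the vector-valued series are pairwise orthogonal in $L^{2}(d\PP,\sigma(\Phi),H^{-\eta}(\C))$ (each is $\mathcal{F}_l^{\rm gen,+}$-measurable with conditional mean $0$ given $\mathcal{F}_{l-1}^{\rm gen,+}$, since $\psi^{\rm gen}_{n-2k,l}=\E[:\Phi^{n-2k}_{D\setminus A^{\rm gen,+}_{l-1}}:\,\vert\mathcal F^{\rm gen,+}_l]$ and conditionally on $\mathcal F^{\rm gen,+}_{l-1}$ the field $:\Phi^{n-2k}_{D\setminus A^{\rm gen,+}_{l-1}}:$ has conditional mean $0$ when $n-2k\geq 1$). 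Therefore the series converges in $L^{2}(d\PP,\sigma(\Phi),H^{-\eta}(\C))$, and its sum, together with $\psi^{\rm gen}_{n,0}$, must equal $:\Phi^n:$ because $R_{n,m}\to 0$. Taking $m\to\infty$ in the norm identity then yields the claimed Parseval-type decomposition of $\E[\Vert:\Phi^n:\Vert^2_{H^{-\eta}(\C)}]$.

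The only point requiring a little care is the orthogonality used above to pass from boundedness of the series of squared norms to convergence of the vector series: one needs that the cross terms vanish, i.e. that for $l'<l$ the summand indexed by $l$ is orthogonal, in $L^{2}(d\PP,\sigma(\Phi),H^{-\eta}(\C))$, to the summand indexed by $l'$ and also to $\psi^{\rm gen}_{n,0}$. This follows from the tower property of conditional expectation combined with the fact that $V_{A^{\rm gen,+}_{l'-1}}^{k'}\psi^{\rm gen}_{n-2k',l'}$ and $\psi^{\rm gen}_{n,0}$ are $\mathcal F^{\rm gen,+}_{l-1}$-measurable, while $\E[V_{A^{\rm gen,+}_{l-1}}^{k}\psi^{\rm gen}_{n-2k,l}\,\vert\,\mathcal F^{\rm gen,+}_{l-1}]=V_{A^{\rm gen,+}_{l-1}}^{k}\,\E[:\Phi^{n-2k}_{D\setminus A^{\rm gen,+}_{l-1}}:\,\vert\,\mathcal F^{\rm gen,+}_{l-1}]=0$; this is exactly the mechanism already invoked in Proposition \ref{Prop decomp gen Wick iter} to split the finite-$m$ Sobolev norm, so no new work is needed. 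The main obstacle — if one can call it that — is the application of Lemma \ref{Lem conv 0 remainder Sob}, whose proof relies on the fact that the maximal diameter of the connected components of $D\setminus A^{\rm gen,+}_m$ tends to $0$ in probability (via the nested CLE$_4$ and \cite[Theorem 1.5]{AruPaponPowell23dust}); everything else is a routine passage to the limit in a Hilbert space using an orthogonal (martingale-type) decomposition.
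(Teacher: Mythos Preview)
Your proposal is correct and follows exactly the approach of the paper: combine the finite-$m$ decomposition and orthogonal norm identity of Proposition \ref{Prop decomp gen Wick iter} with the vanishing of the remainder from Lemma \ref{Lem conv 0 remainder Sob}, then pass to the limit $m\to+\infty$. In fact the paper's own proof is just the one-line ``So we get the full generational decomposition of Wick powers'' after Lemma \ref{Lem conv 0 remainder Sob}; your write-up supplies more detail (the martingale-difference orthogonality via the filtration $(\mathcal F^{\rm gen,+}_l)_l$) than the paper does, and that detail is accurate.
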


Let us also recall the nature of the fields $\psi^{\rm gen}_{n,m}$.

\begin{prop}
\label{Prop struc psi n m}
If $n$ is odd, then a.s. the generalized function $\psi^{\rm gen}_{n,m}$
is supported on $A^{\rm gen, +}_{m}$.
If $n=2k\geq 2$ is even, then the generalized function $\psi^{\rm gen}_{n,m}$
also extends to $D\setminus A^{\rm gen, +}_{m}$,
where it coincides with the function
\begin{displaymath}
(-1)^{k}\dfrac{(2k)!}{2^{k}k!}
\Big(
\dfrac{1}{2\pi}
\log
\Big(
\dfrac{\CR(z, D\setminus A^{\rm gen, +}_{m-1})}{\CR(z, D\setminus A^{\rm gen, +}_{m})}
\Big)
\Big)^{k} .
\end{displaymath}
\end{prop}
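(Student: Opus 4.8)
\textbf{Proof plan for Proposition \ref{Prop struc psi n m}.}

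The plan is to deduce the structure of $\psi^{\rm gen}_{n,m}$ from the already-established generational decomposition, reducing everything to the known FPS picture (Corollary \ref{Cor support psi}). Recall that $\psi^{\rm gen}_{n,m} = \E[\,:\Phi_{D\setminus A^{\rm gen,+}_{m-1}}^{n}:\,\vert\,\mathcal{F}_{m}^{\rm gen,+}]$, and that conditionally on $\mathcal{F}_{m-1}^{\rm gen}$, the field $\vert\nu^{\rm gen}_{m-1}\vert + \Phi_{D\setminus A^{\rm gen,+}_{m-1}}$ is a GFF on $D\setminus A^{\rm gen}_{m-1}$ with boundary value $2\lambda$, of which $A^{\rm gen,+}_{m-1}$ (equivalently $\overline{A^{\rm gen,+}_{m-1}\setminus A^{\rm gen}_{m-1}}$) is the first passage set from level $2\lambda$ to level $0$; and then $A^{\rm gen,+}_{m}$ is, conditionally on $\mathcal{F}_{m}^{\rm gen}$, a conditional FPS built inside the holes of $A^{\rm gen,+}_{m-1}$ (Section \ref{Subsubsec gen exc}). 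First I would fix $m\geq 1$ and condition on $\mathcal{F}_{m-1}^{\rm gen,+}$, so that $\Phi_{D\setminus A^{\rm gen,+}_{m-1}}$ is a genuine GFF with $0$ boundary conditions on the (multiply connected, but with all components simply connected) domain $D\setminus A^{\rm gen,+}_{m-1}$. Working inside each connected component of this domain, which is an honest simply connected domain carrying an independent $0$-boundary GFF, and applying Corollary \ref{Cor support psi} (or rather its analogue for a conditional GFF, which is immediate) to the FPS of that GFF — namely the piece of $A^{\rm gen,+}_{m}$ inside that component — we obtain: for $n$ odd, $\E[\,:\Phi_{D\setminus A^{\rm gen,+}_{m-1}}^{n}:\,\vert\,\mathcal{F}_{m}^{\rm gen,+}]$ is supported on that piece of $A^{\rm gen,+}_{m}$; for $n=2k$ even, it coincides on the complement with
\begin{displaymath}
(-1)^{k}\dfrac{(2k)!}{2^{k}k!}
\Big(\dfrac{1}{2\pi}\log\Big(\dfrac{\CR(z,D\setminus A^{\rm gen,+}_{m-1})}{\CR(z,D\setminus A^{\rm gen,+}_{m})}\Big)\Big)^{k},
\end{displaymath}
since inside the relevant component $\CR(z,D\setminus A^{\rm gen,+}_{m-1})$ plays the role of the ambient conformal radius and $\CR(z,D\setminus A^{\rm gen,+}_{m})$ the role of $\CR(z,\text{component}\setminus\text{FPS})$.

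Concretely, the mechanism is the one already used to prove Corollary \ref{Cor support psi}: mollify via $(\rho_{\varepsilon})$, write $:\Phi_{D\setminus A^{\rm gen,+}_{m-1},\varepsilon}^{n}:$ through the decomposition of Lemma \ref{Lem decomp Phi eps} applied to this conditional GFF and its FPS $A^{\rm gen,+}_{m}$, take the conditional expectation (the terms containing a positive power of $:\Phi_{D\setminus A^{\rm gen,+}_{m},\varepsilon}^{\,\cdot}:$ vanish under $\E[\cdot\,\vert\,\mathcal{F}_{m}^{\rm gen,+}]$), and pass to the limit using Proposition \ref{Prop psi eps gen 0}. Then $\psi^{\rm gen}_{n,m}$ is the $L^2(H^{-\eta})$-limit of $Q_{n}$ of the mollified FPS measure and the mollified $V$-function; for $n$ odd there is no degree-$0$ term in the FPS-measure variable, hence the support on $A^{\rm gen,+}_{m}$, and for $n=2k$ even the degree-$0$ term is exactly $(-1)^{k}\frac{(2k)!}{2^{k}k!}V^{k}$ with $V(z)=\frac{1}{2\pi}\log(\CR(z,D\setminus A^{\rm gen,+}_{m-1})/\CR(z,D\setminus A^{\rm gen,+}_{m}))$. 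This is literally the argument of Corollary \ref{Cor support psi} with $D$ replaced by $D\setminus A^{\rm gen,+}_{m-1}$; the measurability of $A^{\rm gen,+}_{m-1}$ w.r.t. $\mathcal{F}_{m-1}^{\rm gen,+}\subset\mathcal{F}_{m}^{\rm gen,+}$ makes the conditioning tower work. Since the conclusions hold a.s. conditionally on $\mathcal{F}_{m-1}^{\rm gen,+}$ and the description is $\mathcal{F}_{m}^{\rm gen,+}$-measurable, they hold a.s.\ unconditionally. The case $m=0$ is exactly Corollary \ref{Cor support psi}, with $D\setminus A^{\rm gen,+}_{-1}:=D$, and is noted for completeness (the stated formula then reads with $\CR(z,D)$ in the numerator, matching $V_{A^{\rm gen,+}_{0}}$).

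The main obstacle, and the only place requiring genuine care rather than bookkeeping, is the passage from the simply connected FPS results of Section \ref{Sec Wick FPS} to the conditional, component-wise setting: one must check that Lemma \ref{Lem decomp Phi eps} and Proposition \ref{Prop psi eps gen 0} indeed apply verbatim to a GFF on the multiply connected domain $D\setminus A^{\rm gen,+}_{m-1}$ conditioned on $\mathcal{F}_{m-1}^{\rm gen,+}$ — but this is already asserted in Section \ref{Subsec Wick exc gen} (``the theory we are developing can be extended to multiply connected domains'', and the decompositions of this section are proved in that generality), and the key point is simply that such a domain is a disjoint union of simply connected pieces, in each of which the GFF and its FPS are independent copies of the objects of Section \ref{Sec Wick FPS}; no new phenomenon appears at the level of support or of the complementary function. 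A second, lesser, subtlety is the non-integrability of $V^{k}$ near $A^{\rm gen,+}_{m}$ for $n=2k$ even (Remark \ref{Rem non sep}), which is why the statement only says $\psi^{\rm gen}_{2k,m}$ \emph{extends} to and \emph{coincides with} that function on $D\setminus A^{\rm gen,+}_{m}$, not that it splits off a piece on $A^{\rm gen,+}_{m}$; this is inherited directly from Remark \ref{Rem non sep} and requires no separate argument.
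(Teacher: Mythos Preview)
Your proposal is correct and matches the paper's approach: the paper gives no separate proof here, introducing the proposition with ``Let us also recall the nature of the fields $\psi^{\rm gen}_{n,m}$'' and treating it as the direct analogue of Corollary~\ref{Cor support psi} via Proposition~\ref{Prop psi eps gen 0} (and its iteration to $m\geq 1$), which is exactly the mechanism you spell out in your second paragraph.

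One terminological quibble: you call the piece of $A^{\rm gen,+}_{m}$ inside a component of $D\setminus A^{\rm gen,+}_{m-1}$ ``the FPS of that GFF'', but for the $0$-boundary GFF $\Phi_{D\setminus A^{\rm gen,+}_{m-1}}$ it is not literally an FPS in the sense of Section~\ref{Subsubsec FPS} --- it is the composite TVS-then-FPS of Section~\ref{Subsubsec gen exc}. This does not affect your argument, since you only use the Markov decomposition $\Phi_{D\setminus A^{\rm gen,+}_{m-1}} = \nu_{m}^{\rm gen} + \Phi_{D\setminus A^{\rm gen,+}_{m}}$ with $\nu_{m}^{\rm gen}$ supported on $A^{\rm gen,+}_{m}$ and $\mathcal{F}_{m}^{\rm gen,+}$-measurable, and the conditional GFF property of the remainder; that is precisely what drives Lemma~\ref{Lem decomp Phi eps}/Corollary~\ref{Cor cond exp eps} and hence the $Q_n(\nu_{m,\varepsilon}^{\rm gen},V_\varepsilon)$ representation from which the support/degree-$0$-term conclusion follows. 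Your first paragraph's detour through the $2\lambda$-boundary field $\vert\nu^{\rm gen}_{m-1}\vert + \Phi_{D\setminus A^{\rm gen,+}_{m-1}}$ is unnecessary for this proposition.
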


\subsection{Fields associated to individual excursion clusters}
\label{Subsec fields indiv}

Here we will consider the individual excursion clusters $\clus_j$,
as in Section \ref{Subsubsec exc}.
We will denote by $\Phi_{j}$ the field
\begin{displaymath}
\Phi_{j} = \ind_{\inter(\Gamma_j)}\sigma_{j}\Phi .
\end{displaymath}
Conditionally on $\Gamma_j$,
the field $\Phi_{j}$ is distributed as a GFF on $\inter(\Gamma_j)$
with boundary condition $2\lambda$.
Moreover, the cluster $\clus_j$ is the first passage set to level $0$
of the field $\Phi_{j}$, and the positive measure $\nu_{j}$ is the associated FPS measure.
We will denote by $:\Phi_{j}^{n}:$ the Wick powers of $\Phi_{j}$,
where the renormalization is done w.r.t. the partial Green's function
$G_{\inter(\Gamma_j)}$ and not the total Green's function $G_{D}$.
We will denote by $\psi_{n,j}$ the fields
\begin{displaymath}
\psi_{n,j} = \E[:\Phi_{j}^{n}:\vert \clus_j].
\end{displaymath}
Here we will collect the properties of the fields $\psi_{n,j}$.
In the following Section \ref{Subsec Wick exc indiv}
we will relate the fields $\psi_{k,j}$
and the fields $\psi^{\rm gen}_{n,m}$ from the previous Section \ref{Subsec Wick exc gen}.

First, we restate the results of Sections \ref{Sec Wick FPS} and \ref{Sec psi GMC} in this context.
Strictly speaking, this is not a direct logical consequence of the results there,
because these are stated for fixed domains $D$, and here the domain $\inter(\Gamma_j)$
is random. 
However, it is easy to check that all the domain-dependent bounds are monotonic in the domains,
and of course $\inter(\Gamma_j)\subset D$.
So we get the following.
Denote
\begin{displaymath}
V_{\clus_j,\Gamma_j}(z)
=
\dfrac{1}{2\pi}
\log\Big(
\dfrac{\CR(z,\inter(\Gamma_j))}{\CR(z,\inter(\Gamma_j)\setminus \clus_j)}
\Big),
\end{displaymath}
and its approximation
\begin{displaymath}
V_{\clus_j,\Gamma_j,\varepsilon}(z)
=
G_{\inter(\Gamma_j),\varepsilon,\varepsilon}(z,z)
-
G_{\inter(\Gamma_j)\setminus \clus_j,\varepsilon,\varepsilon}(z,z).
\end{displaymath}

\begin{thm}
\label{Thm fields on C j}
Let $n\geq 1$.
Then
\begin{displaymath}
\psi_{n,j}
=
\lim_{\varepsilon\to 0}
Q_{n}(\nu_{j,\varepsilon},V_{\clus_j,\Gamma_j,\varepsilon}),
\end{displaymath}
with a convergence in $L^{2}(d\PP,\sigma(\Phi),H^{-\eta}(\C))$ for $\eta>0$.
Further,
\begin{displaymath}
\psi_{n,j}
=
\lim_{\gamma\to 0^{+}}
\ind_{\inter(\Gamma_j)\setminus \clus_j}
\He_{n}(\gamma V_{\clus_j,\Gamma_j}^{1/2})
V_{\clus_j,\Gamma_j}^{n/2} 
\exp\Big(-\dfrac{\gamma^{2}}{2} V_{\clus_j,\Gamma_j}\Big),
\end{displaymath}
with convergence both in $L^{2}(d\PP,\sigma(\Phi),H^{-\eta}(\C))$ 
and almost surely in $H^{-\eta}(\C)$, for $\eta>0$.
For $n$ odd, the field $\psi_{n,j}$ is a generalized function supported on $\clus_j$.
For $n=2k\geq 2$ even, the field $\psi_{n,j}$ also has an extension to
$\inter(\Gamma_j)\setminus \clus_j$, 
where it coincides with the function
\begin{displaymath}
(-1)^{k}\dfrac{(2k)!}{2^{k}k!}
V_{\clus_j,\Gamma_j}^{k} .
\end{displaymath}
\end{thm}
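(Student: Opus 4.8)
\textbf{Proof proposal for Theorem \ref{Thm fields on C j}.}
The plan is to reduce everything to the corresponding statements already proved for a first passage set in a \emph{fixed} domain, namely Proposition \ref{Prop psi eps}, Corollary \ref{Cor support psi}, Remark \ref{Rem concentration} and Theorem \ref{Thm psi GMC}, applied inside the random domain $\inter(\Gamma_j)$. The key observation is the one already recalled in Section \ref{Subsubsec exc}: conditionally on $\mathcal{F}_{{\rm ext},j}$ (in particular conditionally on $\Gamma_j$), the field $\Phi_j = \ind_{\inter(\Gamma_j)}\sigma_j\Phi$ is a GFF on $\inter(\Gamma_j)$ with boundary value $2\lambda$, and $\clus_j$ is its first passage set to level $0$, with FPS-measure $\nu_j$. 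Thus, working conditionally on $\Gamma_j$, the pair $(\Phi_j,\clus_j)$ is exactly the pair $(\Phi,A)$ of Section \ref{Sec Wick FPS} in the (conditionally deterministic) domain $\inter(\Gamma_j)$, with boundary constant $v=2\lambda>0$. Each of the desired assertions is a statement about this conditional law, and can be quoted verbatim from Section \ref{Sec Wick FPS} and Section \ref{Sec psi GMC}.

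Concretely, I would proceed in the following order. First I would note that, although the statements of Sections \ref{Sec Wick FPS} and \ref{Sec psi GMC} are formulated for a fixed domain $D$, every quantitative input used in their proofs depends on the domain only through monotone quantities: the Green's function estimate of Proposition \ref{Prop Green}, the variation estimates of Section \ref{Subsec estim CR}, the bound $G_{D\setminus A}(z,w)\le G_D(z,w)$, the moment bound of Lemma \ref{Lem moment nu eps}, and the bound $g_D(z,w)\le\frac1{2\pi}\log\diam(D)$. Since a.s. $\inter(\Gamma_j)\subset D$ with $\diam(\inter(\Gamma_j))\le\diam(D)$, all of these hold uniformly over the realisation of $\Gamma_j$, and the relevant integrals against the Bessel potential $\LK_\eta$ (as in Proposition \ref{Prop conv 0}, Lemma \ref{Lem f q}, Proposition \ref{Prop V G eps}) stay finite. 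Hence Proposition \ref{Prop psi eps} gives $\psi_{n,j}=\lim_{\varepsilon\to0}Q_n(\nu_{j,\varepsilon},V_{\clus_j,\Gamma_j,\varepsilon})$ conditionally on $\Gamma_j$, and the uniform bounds let me integrate out $\Gamma_j$ to upgrade this to convergence in $L^2(d\PP,\sigma(\Phi),H^{-\eta}(\C))$; this is the first displayed formula. Second, applying Theorem \ref{Thm psi GMC} (and its proof, which uses \cite{APS} Proposition 4.1 on $\inter(\Gamma_j)$, valid there since that is a fixed simply connected domain under the conditional law) yields the germ-of-GMC formula for $\psi_{n,j}$, with both the conditional $L^2$ and conditional almost sure convergence; integrating out $\Gamma_j$ preserves a.s.\ convergence and, by the uniform bounds, $L^2$ convergence. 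Third, the support and extension statements are exactly Corollary \ref{Cor support psi} (the degree-$0$-in-$\nu$ term of $Q_n(\nu_{j,\varepsilon},V_{\clus_j,\Gamma_j,\varepsilon})$ is $\ind_{n\text{ even}}(-1)^{n/2}\tfrac{n!}{2^{n/2}(n/2)!}V_{\clus_j,\Gamma_j,\varepsilon}^{n/2}$), together with Remark \ref{Rem concentration}; both are purely pathwise statements on the event determined by $\Gamma_j$, so no integration is needed.

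The main obstacle, and the only place where any genuine care is required, is the interchange between the conditional statements and the unconditional $L^2$/a.s.\ statements — i.e.\ making precise that "random domain" does not break anything. The point to check is that the constants $C$ appearing in the fixed-domain estimates of Section \ref{Sec estimates} and Section \ref{Sec Wick FPS} either are universal (as for Proposition \ref{Prop Green}, Corollary \ref{Cor estim CR}, Corollary \ref{Cor var G D}) or depend on the domain only through $\diam$ and the area, which are dominated by those of $D$. Given that, the conditional $L^2$ errors $\E[\Vert\psi_{n,j}-Q_n(\nu_{j,\varepsilon},V_{\clus_j,\Gamma_j,\varepsilon})\Vert_{H^{-\eta}(\C)}^2\mid\Gamma_j]$ are bounded by a deterministic quantity tending to $0$ as $\varepsilon\to0$ (uniformly in $\Gamma_j$), so dominated convergence gives the unconditional limit; the almost sure statement for the GMC germs follows because a.s.\ convergence conditionally on $\Gamma_j$, for all values of $\Gamma_j$, is a.s.\ convergence. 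I would write this verification once, in a short paragraph, and then state that the rest is a transcription of the cited results, exactly as the surrounding text ("all the domain-dependent bounds are monotonic in the domains") already indicates.
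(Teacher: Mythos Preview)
Your proposal is correct and follows essentially the same approach as the paper: the paper's argument is precisely that conditionally on $\Gamma_j$ one is in the fixed-domain setting of Sections \ref{Sec Wick FPS} and \ref{Sec psi GMC} with $v=2\lambda$, and that all domain-dependent bounds used there are monotone in the domain with $\inter(\Gamma_j)\subset D$. You have in fact spelled out more of the details (which estimates are monotone, how to pass from conditional to unconditional convergence) than the paper itself provides.
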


Next we give the asymptotic expansion for the cluster $\clus_j$.
Compared to Theorem \ref{Thm A E FPS},
we deal with random domains $\inter(\Gamma_j)$,
rather than deterministic domains.
In the proof of Theorem \ref{Thm A E FPS}, there are two types of domain-dependent bounds.
The first type depends either on the diameter or is an integral over the domain.
These bounds are monotononic w.r.t. the domain, and we keep a fairly explicit track of these in the proof of 
Theorem \ref{Thm A E FPS}.
The second type of bounds depends on the size of the boundary of the domain, and is related to
the condition \eqref{Eq cond small boundary}.
It appears only in Lemma \ref{Lem small boundary means fast enough decay}.
For the random domain $\inter(\Gamma_j)$ it translates into the condition
\begin{equation}
\label{Eq boundary cond CLE 4}
\forall \beta >0, \,
\E\big[
\operatorname{Leb}\{ z\in\inter(\Gamma_j)\vert d(z,\Gamma_j)<\varepsilon\}\big]
= o(\vert\log\varepsilon\vert^{-\beta}).
\end{equation}
We claim that this condition is satisfied with a very wide margin
since the dimension of an SLE$_{4}$ curve is $3/2$ \cite{BeffaraDimSLE},
and in particular, one has a polynomial decay $\varepsilon^{1/2 + o(1)}$.
However, we will not give a precise proof of \eqref{Eq boundary cond CLE 4},
because bridging the gap between what is written in the literature
and our precise setting appears to be both very standard and at the same time a very lengthy detour through SLE theory.
So here is the asymptotic expansion.

\begin{thm}
\label{Thm A E C j}
Let $j\geq 0$.
For $\varepsilon>0$, set
\begin{displaymath}
\Ns_{\varepsilon}(\clus_j) = \{ z\in \inter(\Gamma_j)\setminus \clus_j \vert 
\CR(z,\inter(\Gamma_j)\setminus \clus_j)<\varepsilon \CR(z,\inter(\Gamma_j))\},
\end{displaymath}
\begin{displaymath}
\widetilde{\Ns}_{\varepsilon}(\clus_j)
= \{ z\in \inter(\Gamma_j)\setminus \clus_j \vert \CR(z,\inter(\Gamma_j)\setminus \clus_j)<\varepsilon\}.
\end{displaymath}
Then, for every $N\geq 0$,
\begin{displaymath}
\ind_{\Ns_{\varepsilon}(\clus_j)}
=
\dfrac{1}{\sqrt{2\pi}}
\sum_{k=0}^{N} (-1)^{k}
\dfrac{1}{2^{k} k! (k+1/2)}
\dfrac{\psi_{2k+1,j}}{\big(\frac{1}{2\pi}\vert\log \varepsilon\vert\big)^{k + 1/2}}
~+ o(\vert\log\varepsilon\vert^{-(N+1/2)}),
\end{displaymath}
where $o(\vert\log\varepsilon\vert^{-(N+1/2)})$ is in the sense
$L^{2}(d\PP,\sigma(\Phi),H^{-\eta}(\C))$ for $\eta>0$.
Further,
\begin{displaymath}
\ind_{\widetilde{\Ns}_{\varepsilon}(\clus_j)}
=
\dfrac{1}{\sqrt{2\pi}}
\sum_{k=0}^{N} (-1)^{k}
\dfrac{1}{2^{k} k! (k+1/2)}
\dfrac{\tilde{\psi}_{2k+1,j}}{\big(\frac{1}{2\pi}
\vert\log \varepsilon\vert\big)^{k + 1/2}}
~+ o(\vert\log\varepsilon\vert^{-(N+1/2)}),
\end{displaymath}
where
\begin{displaymath}
\tilde{\psi}_{2k+1,j} = 
\sum_{j=0}^{k} \dfrac{(2k+1)!}{2^{j} j! (2(k-j)+1)!}
\,\Big(\dfrac{1}{2\pi} \log \CR(z,\inter(\Gamma_j))\Big)^{j}\,\psi_{2(k-j)+1,j}.
\end{displaymath}
\end{thm}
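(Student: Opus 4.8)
The plan is to deduce Theorem~\ref{Thm A E C j} from Theorem~\ref{Thm A E FPS} by conditioning on the outer boundary $\Gamma_j$ and invoking the domain-Markov description recalled in Section~\ref{Subsubsec exc}: conditionally on $\mathcal{F}_{{\rm ext},j}$, the field $\Phi_j = \ind_{\inter(\Gamma_j)}\sigma_j\Phi$ is a GFF on the simply connected domain $\inter(\Gamma_j)$ with boundary value $2\lambda$, and $\clus_j$ is precisely its first passage set from level $2\lambda$ to level $0$. First I would fix a realization of $\Gamma_j$ and apply Theorem~\ref{Thm A E FPS} to this conditional GFF in the domain $D'=\inter(\Gamma_j)$ with $v=2\lambda$. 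This immediately yields, conditionally, both the expansion of $\ind_{\Ns_\varepsilon(\clus_j)}$ with coefficients $\psi_{2k+1,j} = \E[:\Phi_j^{2k+1}:\,\vert\,\clus_j]$ (this is the $\Ns_\varepsilon$ statement, \eqref{Eq A E N}) and the expansion of $\ind_{\widetilde\Ns_\varepsilon(\clus_j)}$ with the tilded fields $\tilde\psi_{2k+1,j}$ defined exactly as in \eqref{Eq def tilde psi} but with $\CR(z,D)$ replaced by $\CR(z,\inter(\Gamma_j))$, which matches the formula in the statement. To produce \eqref{Eq A E tilde N bis} rather than merely \eqref{Eq A E tilde N} (i.e.\ to drop the cutoff $f_0$), I would need the boundary smallness condition \eqref{Eq cond small boundary} for $D'=\inter(\Gamma_j)$, in the averaged form \eqref{Eq boundary cond CLE 4}; as the authors note, this holds with wide margin because $\Gamma_j$ is locally an SLE$_4$ curve of dimension $3/2$, so one quotes \cite{BeffaraDimSLE} and the polynomial bound $\varepsilon^{1/2+o(1)}$ on the area of an $\varepsilon$-neighborhood of the boundary.

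The second step is to pass from the conditional statements (for fixed $\Gamma_j$) to the unconditional $L^2(d\PP,\sigma(\Phi),H^{-\eta}(\C))$ statements. The key point is that the error bounds driving the proof of Theorem~\ref{Thm A E FPS} — collected in Sections~\ref{Subsec error terms TVS}--\ref{Subsec bound series} — are monotone in the domain: they are expressed through $\diam(D')$, $|D'|$, integrals of Bessel potentials over $D'^2$, and the boundary-neighborhood area, all of which only increase when $D'$ grows, and $\inter(\Gamma_j)\subset D$. Thus the $o(|\log\varepsilon|^{-(N+1/2)})$ remainder can be bounded, uniformly over the randomness of $\Gamma_j$, by the corresponding remainder for the deterministic domain $D$, whose $L^2$ norm is already $o(|\log\varepsilon|^{-(N+1/2)})$. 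Taking expectations over $\mathcal{F}_{{\rm ext},j}$ (using the tower property together with $\E[\,\Vert R\Vert^2_{H^{-\eta}}\,] = \E[\E[\Vert R\Vert^2_{H^{-\eta}}\vert\mathcal{F}_{{\rm ext},j}]]$) then upgrades the conditional expansions to the claimed unconditional ones. I would also remark, as the authors do in the paragraph preceding the theorem, that the TVS approximation used internally in the proof of Theorem~\ref{Thm A E FPS} (the two-valued set $A_{0,b}$ with $b\in 2\lambda\N$) transfers verbatim inside $\inter(\Gamma_j)$, so no new SLE input beyond \eqref{Eq boundary cond CLE 4} is required.

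The main obstacle I anticipate is the uniformity over $\Gamma_j$ of the error control, and in particular handling the few bounds in the proof of Theorem~\ref{Thm A E FPS} that are \emph{not} manifestly monotone in the domain. Most of the constants there ($c,C$ in Corollaries~\ref{Cor Eq ind norm exp}, \ref{Cor bref psi k}, \ref{Cor bref psi k CR}, etc.) depend on $D$ only through $\diam(D)$, $|D|$, and $\Vert\ind_D\Vert_{H^{-\eta}(\C)}$ or $I_{\eta,D}$-type integrals, all bounded above by the corresponding quantities for $D$ itself, so these cause no trouble. The genuinely delicate piece is Lemma~\ref{Lem small boundary means fast enough decay}: there one needs the area of $\{z : \CR(z,\inter(\Gamma_j))<\varepsilon^{1/2}\}$ (equivalently, by Koebe, of $\{d(z,\Gamma_j)<\varepsilon^{1/2}\}$) to decay faster than any power of $|\log\varepsilon|$, and one wants this in $L^1$ over the law of $\Gamma_j$ with the decay \emph{uniform enough} to survive the $\varepsilon$-dependent choice $\delta(\varepsilon)=\exp(-|\log\varepsilon|^{1/(N+2)})$, $b(\varepsilon)=2\lambda\lfloor(\log|\log\varepsilon|)^2\rfloor$. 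Since \eqref{Eq boundary cond CLE 4} already gives a stretched-exponential/polynomial margin ($\varepsilon^{1/2+o(1)}$), there is ample room, and the argument of Lemma~\ref{Lem small boundary means fast enough decay} goes through after replacing the deterministic area bound by its expectation. I would flag that a fully rigorous proof of \eqref{Eq boundary cond CLE 4} is a lengthy but standard excursion through SLE$_4$ dimension and moment estimates, and — following the authors' stated stance — cite the relevant literature rather than reproduce it.
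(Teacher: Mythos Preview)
Your proposal is correct and follows essentially the same approach as the paper: condition on $\Gamma_j$ so that $\clus_j$ becomes an FPS in the random domain $\inter(\Gamma_j)$, apply Theorem~\ref{Thm A E FPS} there, and then observe that the domain-dependent bounds in the proof of Theorem~\ref{Thm A E FPS} split into (i) those monotone in the domain (controlled by the deterministic $D$) and (ii) the boundary-area condition~\eqref{Eq cond small boundary}, which translates into~\eqref{Eq boundary cond CLE 4} and holds by the SLE$_4$ dimension estimate. The paper likewise flags Lemma~\ref{Lem small boundary means fast enough decay} as the one place requiring the boundary input and, exactly as you do, cites the $\varepsilon^{1/2+o(1)}$ decay from SLE theory without reproducing a full proof of~\eqref{Eq boundary cond CLE 4}.
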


\begin{cor}
\label{Cor psi Vandermonde indiv exc}
Let $j\geq 0$.
Let $n\geq 1$
and let $\alpha_{n}>\alpha_{n-1}>\dots >\alpha_{1}>\alpha_{0}=1$.
Let $(c_{0}, c_{1}, \dots, c_{n})$
be the unique solution to the linear system
\begin{displaymath}
\forall k\in \{0,\dots, n-1\},
\sum_{i=0}^{n} c_{i} \alpha_{i}^{-(k+1/2)} = 0,
\qquad
\sum_{i=0}^{n} c_{i} \alpha_{i}^{-(n+1/2)} = 1.
\end{displaymath}
Then
\begin{displaymath}
\psi_{2n+1, j} = \lim_{\varepsilon \to 0} (-1)^{n}\dfrac{2^{n} n! (n+1/2)}{(2\pi)^{n}}\vert\log\varepsilon\vert^{n+1/2}
\sum_{i=0}^{n} c_{i} \ind_{\Ns_{\varepsilon^{\alpha_{i}}}(\clus_j)},
\end{displaymath}
with convergence in $L^{2}(d\PP,\sigma(\Phi),H^{-\eta}(\C))$ for $\eta>0$.
\end{cor}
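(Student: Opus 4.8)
\textbf{Proof plan for Corollary \ref{Cor psi Vandermonde indiv exc}.}
The plan is to deduce this statement directly from the asymptotic expansion of Theorem \ref{Thm A E C j} in exactly the same way Corollary \ref{Cor psi Vandermonde} is deduced from Theorem \ref{Thm A E FPS}. First I would fix $j\geq 0$ and $\eta>0$, and record that by Theorem \ref{Thm A E C j}, applied with the fixed order $N=n$, each rescaled indicator function satisfies
\begin{displaymath}
\ind_{\Ns_{\varepsilon^{\alpha_{i}}}(\clus_j)}
=
\dfrac{1}{\sqrt{2\pi}}
\sum_{k=0}^{n} (-1)^{k}
\dfrac{\alpha_{i}^{-(k+1/2)}}{2^{k} k! (k+1/2)}
\dfrac{\psi_{2k+1,j}}{\big(\frac{1}{2\pi}\vert\log \varepsilon\vert\big)^{k + 1/2}}
+ o(\vert\log \varepsilon\vert^{-(n+1/2)}),
\end{displaymath}
where the error is in the $L^{2}(d\PP,\sigma(\Phi),H^{-\eta}(\C))$ sense. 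Here I would use that $\vert\log(\varepsilon^{\alpha_i})\vert = \alpha_i\vert\log\varepsilon\vert$, so substituting $\varepsilon^{\alpha_i}$ for $\varepsilon$ in the expansion of Theorem \ref{Thm A E C j} only multiplies the $k$-th coefficient by $\alpha_i^{-(k+1/2)}$; the error term, being $o((\alpha_i\vert\log\varepsilon\vert)^{-(n+1/2)})$, is still $o(\vert\log\varepsilon\vert^{-(n+1/2)})$ since $\alpha_i$ is a fixed positive constant.

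Next I would take the linear combination $\sum_{i=0}^{n} c_i \ind_{\Ns_{\varepsilon^{\alpha_{i}}}(\clus_j)}$. Since there are finitely many indices $i$ and the coefficients $c_i$ are deterministic constants, the finite sum of $o(\vert\log\varepsilon\vert^{-(n+1/2)})$ error terms is again of that order, and one may interchange the finite sum with the finite sum over $k$. Collecting the coefficient of $\psi_{2k+1,j}$ one gets $\sum_{i=0}^{n} c_i \alpha_i^{-(k+1/2)}$, which by the defining linear system \eqref{Eq Vandermonde sys} (transported verbatim into this statement) vanishes for $k\in\{0,\dots,n-1\}$ and equals $1$ for $k=n$. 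Hence
\begin{displaymath}
\sum_{i=0}^{n} c_i \ind_{\Ns_{\varepsilon^{\alpha_{i}}}(\clus_j)}
= \dfrac{1}{\sqrt{2\pi}}\dfrac{(-1)^{n}}{2^{n} n! (n+1/2)}
\dfrac{\psi_{2n+1,j}}{\big(\frac{1}{2\pi}\vert\log \varepsilon\vert\big)^{n + 1/2}}
+ o(\vert\log\varepsilon\vert^{-(n+1/2)}).
\end{displaymath}
Multiplying both sides by $(-1)^{n}\,2^{n} n! (n+1/2)(2\pi)^{-n}\vert\log\varepsilon\vert^{n+1/2}$ and noting that $\big(\frac{1}{2\pi}\vert\log\varepsilon\vert\big)^{-(n+1/2)}=(2\pi)^{n+1/2}\vert\log\varepsilon\vert^{-(n+1/2)}$ and $\frac{1}{\sqrt{2\pi}}(2\pi)^{n+1/2}=(2\pi)^{n}$, the prefactor cancels exactly and the error term becomes $o(1)$ in $L^{2}(d\PP,\sigma(\Phi),H^{-\eta}(\C))$, which gives the claimed limit.

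Finally I would remark that the matrix $(\alpha_i^{-(k+1/2)})_{0\le i,k\le n}$ governing \eqref{Eq Vandermonde sys} is invertible, since after multiplying each row $i$ by $\alpha_i^{1/2}$ it becomes the Vandermonde matrix $(\alpha_i^{-k})_{0\le i,k\le n}$ with the $\alpha_i$ pairwise distinct; hence $(c_0,\dots,c_n)$ is indeed well defined, exactly as in the proof of Corollary \ref{Cor psi Vandermonde}. The only point requiring any care is the uniformity in $\varepsilon$ of the error term under the substitution $\varepsilon\mapsto\varepsilon^{\alpha_i}$ and its survival of multiplication by the growing factor $\vert\log\varepsilon\vert^{n+1/2}$, but this is immediate from the shape of the bound in Theorem \ref{Thm A E C j} and the fact that the $\alpha_i$ and $c_i$ are fixed; there is no genuine obstacle here beyond bookkeeping, since all the analytic work is already contained in Theorem \ref{Thm A E C j}.
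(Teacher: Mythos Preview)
Your proposal is correct and follows exactly the approach the paper intends: the corollary is stated without its own proof, as it is the direct transposition of Corollary~\ref{Cor psi Vandermonde} (whose proof you have reproduced faithfully) to the excursion-cluster setting via Theorem~\ref{Thm A E C j}. There is nothing to add.
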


\subsection{Decomposition through individual excursion clusters}
\label{Subsec Wick exc indiv}

Here we relate the fields $\psi_{k,j}$ from Section \ref{Subsec fields indiv}, 
associated to individual excursion clusters,
and the fields $\psi^{\rm gen}_{n,m}$ from Section \ref{Subsec Wick exc gen},
associated to generations of clusters.
We will omit some details of derivations, which at this stage of the paper are very standard.
Basically, given $j\in \Gen(m)$,
the restriction of $\psi^{\rm gen}_{n,m}$ to $\inter(\Gamma_j)$
is given by a linear combination of the $\psi_{n-2k,j}$,
with two points to take into account.
First, the sign $\sigma_{j}$ in the case $n$ is odd,
and then the fact that the renormalizations in the case of $\psi^{\rm gen}_{n,m}$
and that of $\psi_{n-2k,j}$ are done w.r.t. different Green's function.
The difference in Green's functions is handled as usually via the change of variance formula
\eqref{Eq change var}.
The precise relation is as follows.
Denote for $m\geq 0$,
\begin{displaymath}
V_{A^{\rm gen}_{m}}(z)
=
\dfrac{1}{2\pi}
\log\Big(
\dfrac{\CR(z,D)}{\CR(z,D\setminus A^{\rm gen}_{m})}
\Big),
\end{displaymath}
and
\begin{displaymath}
V_{A^{\rm gen}_{m},A^{\rm gen, +}_{m-1}}(z)
=
\dfrac{1}{2\pi}
\log\Big(
\dfrac{\CR(z,D\setminus A^{\rm gen, +}_{m-1})}{\CR(z,D\setminus A^{\rm gen}_{m})}
\Big),
\end{displaymath}
which are functions defined on $D\setminus A^{\rm gen}_{m}$.
By convention, for $m=0$, we set
$A^{\rm gen, +}_{-1}=\partial D$.
Note that for $j\in \Gen(m)$ and $z\in \inter(\Gamma_{j})$,
\begin{displaymath}
\CR(z,D\setminus A^{\rm gen}_{m}) = \CR(z,\inter(\Gamma_{j})).
\end{displaymath}

\begin{prop}
\label{Prop gen to indiv}
Let $n\geq 1$ odd.
Then, a.s., for every $m\geq 0$, for every $j\in \Gen(m)$
and every smooth function $f$ compactly supported in $\inter(\Gamma_j)$,
\begin{displaymath}
(\psi^{\rm gen}_{n,m},f)
=
\sigma_{j}
\sum_{0\leq k< \lfloor n/2\rfloor}
(-1)^{k} \dfrac{n!}{2^{k} k! (n-2k)!}
(V_{A^{\rm gen}_{m},A^{\rm gen, +}_{m-1}}^{k}\psi_{n-2k,j},f),
\end{displaymath}  
and
\begin{multline*}
\sum_{0\leq k <\lfloor n/2\rfloor}
(-1)^{k}\dfrac{n!}{2^{k} k! (n-2k)!}
(V_{A^{\rm gen, +}_{m-1}}^{k}
\psi^{\rm gen}_{n-2k,m},f)
=
\\
\sigma_{j}
\sum_{0\leq k< \lfloor n/2\rfloor}
(-1)^{k} \dfrac{n!}{2^{k} k! (n-2k)!}
(V_{A^{\rm gen}_{m}}^{k}\psi_{n-2k,j},f).
\end{multline*}
If $n\geq 2$ is even, then
\begin{displaymath}
(\psi^{\rm gen}_{n,m},f)
=
\sum_{0\leq k < \lfloor n/2\rfloor}
(-1)^{k} \dfrac{n!}{2^{k} k! (n-2k)!}
(V_{A^{\rm gen}_{m},A^{\rm gen, +}_{m-1}}^{k}\psi_{n-2k,j},f)
+
(-1)^{n/2} \dfrac{n!}{2^{n/2} (n/2)! }
(V_{A^{\rm gen}_{m},A^{\rm gen, +}_{m-1}}^{n/2},f),
\end{displaymath}
and
\begin{multline*}
\sum_{0\leq k <\lfloor n/2\rfloor}
(-1)^{k}\dfrac{n!}{2^{k} k! (n-2k)!}
(V_{A^{\rm gen, +}_{m-1}}^{k}
\psi^{\rm gen}_{n-2k,m},f)
=
\\
\sum_{0\leq k< \lfloor n/2\rfloor}
(-1)^{k} \dfrac{n!}{2^{k} k! (n-2k)!}
(V_{A^{\rm gen}_{m}}^{k}\psi_{n-2k,j},f)
+
(-1)^{n/2} \dfrac{n!}{2^{n/2} (n/2)! }
(V_{A^{\rm gen}_{m}}^{n/2} - V_{A^{\rm gen, +}_{m-1}}^{n/2},f)
.
\end{multline*}
\end{prop}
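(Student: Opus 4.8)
The plan is to prove both identities \emph{before} any conditioning, at the level of the regularized Wick powers restricted to $\inter(\Gamma_j)$, and only then apply $\E[\,\cdot\,\vert\,\mathcal F_{m}^{\rm gen,+}]$. Fix $m\ge 0$, $j\in\Gen(m)$, and a smooth $f$ compactly supported in $\inter(\Gamma_j)$. The key deterministic observation is that, since $\inter(\Gamma_j)\subset D\setminus A^{\rm gen, +}_{m-1}$, the field $\ind_{D\setminus A^{\rm gen, +}_{m-1}}\Phi$ coincides on $\inter(\Gamma_j)$ with $\Phi$, hence with $\sigma_j\Phi_j$ (because $\Phi_j=\ind_{\inter(\Gamma_j)}\sigma_j\Phi$ and $\sigma_j^2=1$); and for $z,w\in\inter(\Gamma_j)$ the Green's function $G_{D\setminus A^{\rm gen, +}_{m-1}}(z,w)$ equals $G_{O_j}(z,w)$, where $O_j$ is the connected component of $z$ in $D\setminus A^{\rm gen, +}_{m-1}$ (note $\inter(\Gamma_j)\subset O_j$). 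Consequently, after mollification and subtraction of the common logarithmic singularity, the difference between the regularized variance computed with $G_{D\setminus A^{\rm gen, +}_{m-1}}$ and that computed with $G_{\inter(\Gamma_j)}$ converges, locally uniformly on $\inter(\Gamma_j)$, to $g_{O_j}(z,z)-g_{\inter(\Gamma_j)}(z,z)=\frac{1}{2\pi}\log\frac{\CR(z,O_j)}{\CR(z,\inter(\Gamma_j))}=V_{A^{\rm gen}_{m},A^{\rm gen, +}_{m-1}}(z)$, using $\CR(z,O_j)=\CR(z,D\setminus A^{\rm gen, +}_{m-1})$ and $\CR(z,\inter(\Gamma_j))=\CR(z,D\setminus A^{\rm gen}_{m})$. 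So $V_{A^{\rm gen}_{m},A^{\rm gen, +}_{m-1}}$ is precisely the change-of-variance term between the two renormalizations.

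Given this, I would apply the change of variance identity \eqref{Eq change var} (equivalently, the umbral one-parameter subgroup relation of Section \ref{Subsec umbral}): for $\varepsilon$ small enough that $\Supp(f)$ lies at distance more than $2\varepsilon$ from $\Gamma_j$, on $\Supp(f)$ one has (with the obvious mollified notations)
\[
:\Phi_{D\setminus A^{\rm gen, +}_{m-1},\varepsilon}^{\,n}:(z)=\sigma_j^{\,n}\sum_{0\le k\le\lfloor n/2\rfloor}(-1)^k\frac{n!}{2^k k!(n-2k)!}\,V_{A^{\rm gen}_{m},A^{\rm gen, +}_{m-1},\varepsilon}(z)^k\;:\Phi_{j,\varepsilon}^{\,n-2k}:(z),
\]
where I also used $\Phi_{D\setminus A^{\rm gen, +}_{m-1},\varepsilon}=\sigma_j\Phi_{j,\varepsilon}$ on $\Supp(f)$ and the homogeneity $Q_{n-2k}(\sigma_j x,u)=\sigma_j^{\,n}Q_{n-2k}(x,u)$ (the exponents differ by multiples of $2$). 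Testing against $f$ and letting $\varepsilon\to 0$ — using the $L^2\bigl(d\PP,\sigma(\Phi),H^{-\eta}(\C)\bigr)$ convergence of the Wick powers (Section \ref{Subsec Wick}, Theorem \ref{Thm fields on C j}), the local $C^\infty$ convergence $V_{\cdot,\varepsilon}\to V_{A^{\rm gen}_{m},A^{\rm gen, +}_{m-1}}$, and the fact that $V_{A^{\rm gen}_{m},A^{\rm gen, +}_{m-1}}$ is smooth on all of $\inter(\Gamma_j)$ (a ratio of conformal radii of fixed domains, so no blow-up near $\clus_j$) — gives
\[
\bigl(:\Phi_{D\setminus A^{\rm gen, +}_{m-1}}^{\,n}:,f\bigr)=\sigma_j^{\,n}\sum_{0\le k\le\lfloor n/2\rfloor}(-1)^k\frac{n!}{2^k k!(n-2k)!}\bigl(V_{A^{\rm gen}_{m},A^{\rm gen, +}_{m-1}}^{k}\,:\Phi_j^{\,n-2k}:,f\bigr).
\]
Now I would take $\E[\,\cdot\,\vert\,\mathcal F_{m}^{\rm gen,+}]$ and pull out the $\mathcal F_{m}^{\rm gen,+}$-measurable factors $\sigma_j$ and $V_{A^{\rm gen}_{m},A^{\rm gen, +}_{m-1}}$. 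The claimed identity then follows from $\E[:\Phi_j^{\,n-2k}:\,\vert\,\mathcal F_{m}^{\rm gen,+}]=\psi_{n-2k,j}$: conditionally on $\mathcal F_{m}^{\rm gen,+}$ the configuration inside $\inter(\Gamma_j)$ consists exactly of $(\clus_j,\nu_j)$ together with an independent $0$-boundary GFF on $\inter(\Gamma_j)\setminus\clus_j$ (the alternating TVS/FPS construction of Section \ref{Subsubsec gen exc}), independently of the rest of $\mathcal F_{m}^{\rm gen,+}$, so the conditional expectation of $:\Phi_j^{\,n-2k}:$ is the same as that given only $\clus_j$. For $n$ odd, $\sigma_j^{\,n}=\sigma_j$ and this is the first identity (the top term $k=\lfloor n/2\rfloor$ contributing the summand with $\psi_{1,j}=\nu_j$); for $n$ even, $\sigma_j^{\,n}=1$ and the top term $k=n/2$, where $Q_0\equiv 1$ so that the conditional expectation contributes the constant $1$ rather than a field, produces the separate summand $(-1)^{n/2}\frac{n!}{2^{n/2}(n/2)!}\bigl(V_{A^{\rm gen}_{m},A^{\rm gen, +}_{m-1}}^{n/2},f\bigr)$.

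The second identity I would deduce from the first by a multinomial rearrangement. Substituting the first identity applied to each $\psi^{\rm gen}_{n-2k,m}$ (which has the same parity as $n$) into $\sum_k(-1)^k\frac{n!}{2^k k!(n-2k)!}\bigl(V_{A^{\rm gen, +}_{m-1}}^{k}\psi^{\rm gen}_{n-2k,m},f\bigr)$ and regrouping the double sum by the power $p=n-2k-2l$ of $\psi_{p,j}$, the $(k,l)$-sum at fixed $p$ collapses, by the binomial theorem together with the additivity $V_{A^{\rm gen, +}_{m-1}}+V_{A^{\rm gen}_{m},A^{\rm gen, +}_{m-1}}=V_{A^{\rm gen}_{m}}$ on $\inter(\Gamma_j)$ (multiply the defining conformal-radius ratios) — itself a manifestation of the composition law $Q(\cdot,u_1)\odot Q(\cdot,u_2)=Q(\cdot,u_1+u_2)$ of Section \ref{Subsec umbral} — into the coefficient $(-1)^{(n-p)/2}\frac{n!}{2^{(n-p)/2}\,p!\,((n-p)/2)!}\,V_{A^{\rm gen}_{m}}^{(n-p)/2}$ of $\psi_{p,j}$, which is exactly the right-hand side. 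In the even case, the only contributions that do not reconstitute a full binomial expansion are the constant ($p=0$) ones, since the top term is treated separately on both sides; they assemble precisely into $(-1)^{n/2}\frac{n!}{2^{n/2}(n/2)!}\bigl(V_{A^{\rm gen}_{m}}^{n/2}-V_{A^{\rm gen, +}_{m-1}}^{n/2},f\bigr)$.

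The point I expect to require the most care is the conditional-expectation identity $\E[:\Phi_j^{\,n-2k}:\,\vert\,\mathcal F_{m}^{\rm gen,+}]=\psi_{n-2k,j}$, i.e. that the \emph{generational} grouping — as opposed to the diameter enumeration $(\clus_j)_{j\ge 0}$, which introduces a conditioning on the absence of larger clusters — genuinely leaves an un-conditioned GFF inside each hole; this is where the constructions of Sections \ref{Subsubsec exc} and \ref{Subsubsec gen exc} are used in an essential way. The remaining ingredients — the boundary-layer matching near $\Gamma_j$ under mollification, the interchange of the $\varepsilon\to 0$ limit with testing against $f$, and the combinatorial rearrangement — are routine and of the same nature as the computations already carried out for first passage sets in Section \ref{Sec Wick FPS}.
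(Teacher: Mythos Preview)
Your proposal is correct and follows exactly the approach the paper sketches: the paper does not give a proof here, saying only that the derivation is ``very standard'' at this point and amounts to accounting for the sign $\sigma_j$ and using the change of variance formula \eqref{Eq change var} to pass between the renormalization with $G_{D\setminus A^{\rm gen,+}_{m-1}}$ and that with $G_{\inter(\Gamma_j)}$. You have filled in precisely these omitted details, including the justification of $\E[:\Phi_j^{n-2k}:\mid\mathcal F_m^{\rm gen,+}]=\psi_{n-2k,j}$ via the generational structure of Section~\ref{Subsubsec gen exc}, and the second identity via the additivity $V_{A^{\rm gen,+}_{m-1}}+V_{A^{\rm gen}_m,A^{\rm gen,+}_{m-1}}=V_{A^{\rm gen}_m}$.
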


Given a generation $m\geq 0$,
we will enumerate the clusters
$(\clus_{m,j})_{j\geq 0}$ belonging to this generation, in the decreasing order of the diameter.
We will denote by $\sigma_{m,j}$ the corresponding signs,
by $\Gamma_{m,j}$ the corresponding outer boundaries,
and by $\psi_{n,m,j}$ the corresponding fields.

\begin{thm}
\label{Thm per cluster enum}
For the generation $m=0$ and $n\geq 1$ odd,
\begin{displaymath}
\psi^{\rm gen}_{n,m=0} = 
\sum_{j=0}^{+\infty}
\Big(
\sigma_{m=0,j}
\sum_{0\leq k< \lfloor n/2\rfloor}
(-1)^{k} \dfrac{n!}{2^{k} k! (n-2k)!}
V_{A^{\rm gen}_{0}}^{k}\psi_{n-2k,m=0,j}
\Big),
\end{displaymath}
where the infinite sum in $j$ converges in $L^{2}(d\PP,\sigma(\Phi),H^{-\eta}(\C))$ for $\eta>0$.
Moreover,
\begin{displaymath}
\E\Big[\Vert \psi^{\rm gen}_{n,m=0}\Vert_{H^{-\eta}(\C)}^{2}\Big] = 
\sum_{j=0}^{+\infty}
\E\Big[
\Big\Vert
\sum_{0\leq k< \lfloor n/2\rfloor}
(-1)^{k} \dfrac{n!}{2^{k} k! (n-2k)!}
V_{A^{\rm gen}_{0}}^{k}\psi_{n-2k,m=0,j}
\Big\Vert_{H^{-\eta}(\C)}^{2}
\Big].
\end{displaymath}
For $n\geq 2$ even,
\begin{multline*}
\psi^{\rm gen}_{n,m=0} =
\\
\sum_{j=0}^{+\infty}
\Big(
\Big(
\sum_{0\leq k < \lfloor n/2\rfloor}
(-1)^{k} \dfrac{n!}{2^{k} k! (n-2k)!}
V_{A^{\rm gen}_{0}}^{k}\psi_{n-2k,m=0,j}
\Big)
+
(-1)^{n/2} \dfrac{n!}{2^{n/2} (n/2)! }
\ind_{\inter(\Gamma_{m=0,j})}
V_{A^{\rm gen}_{0}}^{n/2}
\Big).
\end{multline*}
For the generations $m\geq 1$ and $n\geq 1$ odd,
\begin{multline*}
\sum_{0\leq k <\lfloor n/2\rfloor}
(-1)^{k}\dfrac{n!}{2^{k} k! (n-2k)!}
V_{A^{\rm gen, +}_{m-1}}^{k}
\psi^{\rm gen}_{n-2k,m}
=
\\
\sum_{j=0}^{+\infty}
\Big(
\sigma_{m,j}
\sum_{0\leq k< \lfloor n/2\rfloor}
(-1)^{k} \dfrac{n!}{2^{k} k! (n-2k)!}
V_{A^{\rm gen}_{m}}^{k}\psi_{n-2k,m,j}
\Big),
\end{multline*}
and
\begin{multline*}
\E\Big[
\Big\Vert
\sum_{0\leq k <\lfloor n/2\rfloor}
(-1)^{k}\dfrac{n!}{2^{k} k! (n-2k)!}
V_{A^{\rm gen, +}_{m-1}}^{k}
\psi^{\rm gen}_{n-2k,m}
\Big\Vert_{H^{-\eta}(\C)}^{2}
\Big]
=
\\
\sum_{j=0}^{+\infty}
\E\Big[
\Big\Vert
\sum_{0\leq k< \lfloor n/2\rfloor}
(-1)^{k} \dfrac{n!}{2^{k} k! (n-2k)!}
V_{A^{\rm gen}_{m}}^{k}\psi_{n-2k,m,j}
\Big)
\Big\Vert_{H^{-\eta}(\C)}^{2}
\Big].
\end{multline*}
For $n\geq 2$ even,
\begin{multline*}
\sum_{0\leq k <\lfloor n/2\rfloor}
(-1)^{k}\dfrac{n!}{2^{k} k! (n-2k)!}
V_{A^{\rm gen, +}_{m-1}}^{k}
\psi^{\rm gen}_{n-2k,m}
=
\\
\sum_{j=0}^{+\infty}
\Big(
\Big(
\sum_{0\leq k< \lfloor n/2\rfloor}
(-1)^{k} \dfrac{n!}{2^{k} k! (n-2k)!}
V_{A^{\rm gen}_{m}}^{k}\psi_{n-2k,j}
\Big)
+
\ind_{\inter(\Gamma_{m,j})}
(-1)^{n/2} \dfrac{n!}{2^{n/2} (n/2)! }
(V_{A^{\rm gen}_{m}}^{n/2} - V_{A^{\rm gen, +}_{m-1}}^{n/2})
\Big)
.
\end{multline*}
\end{thm}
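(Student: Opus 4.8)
The plan is to read the statement as an upgrade of Proposition \ref{Prop gen to indiv}, which already gives the required identity when both sides are tested against a smooth $f$ compactly supported inside a single $\inter(\Gamma_{m,j})$. Write $X_{n,m}$ for the left-hand side of the identity in each case (so $X_{n,0}=\psi^{\rm gen}_{n,0}$ under the convention $A^{\rm gen,+}_{-1}=\partial D$, $V_{A^{\rm gen,+}_{-1}}=0$, and for $m\ge 1$, $X_{n,m}=\sum_{0\le k<\lfloor n/2\rfloor}(-1)^{k}\tfrac{n!}{2^{k}k!(n-2k)!}V_{A^{\rm gen,+}_{m-1}}^{k}\psi^{\rm gen}_{n-2k,m}$, an element of $L^{2}(d\PP,\sigma(\Phi),H^{-\eta}(\C))$ by Proposition \ref{Prop decomp gen Wick iter}). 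Write $Y_{n,m,j}$ for the $j$-th summand on the right-hand side, and in the odd case $W_{n,m,j}:=\sigma_{m,j}^{-1}Y_{n,m,j}=\sum_{0\le k<\lfloor n/2\rfloor}(-1)^{k}\tfrac{n!}{2^{k}k!(n-2k)!}V_{A^{\rm gen}_{m}}^{k}\psi_{n-2k,m,j}$, which is a $\sigma(\clus_{m,j})$-measurable generalized function supported on $\clus_{m,j}\subset\overline{\inter(\Gamma_{m,j})}$. After this, three things remain: (i) that $\sum_{j}Y_{n,m,j}$ converges in $L^{2}(d\PP,\sigma(\Phi),H^{-\eta}(\C))$ for every $\eta>0$; (ii) that this limit equals $X_{n,m}$; (iii) in the odd case, the orthogonal decomposition of second moments.

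Steps (i) and (iii) I would handle simultaneously by conditioning on $\mathcal{H}_{m}:=\sigma\big(A^{\rm gen}_{m},(\sigma_{j})_{j\in\cup_{k<m}\Gen(k)}\big)$ (so $\mathcal{H}_{0}=\sigma(A^{\rm gen}_{0})$). By the construction of the generations (Sections \ref{Subsubsec exc} and \ref{Subsubsec gen exc}), conditionally on $\mathcal{H}_{m}$ the restricted fields $\Phi_{m,j}=\ind_{\inter(\Gamma_{m,j})}\sigma_{m,j}\Phi$ are independent GFFs with boundary value $2\lambda$ on the pairwise disjoint domains $\inter(\Gamma_{m,j})$, hence the $W_{n,m,j}$ are conditionally independent, and the signs $\sigma_{m,j}$ are conditionally i.i.d., uniform on $\{-1,1\}$, and independent of the family $(\Phi_{m,j})_{j}$; in particular $\E[\sigma_{m,j}\mid\mathcal{H}_{m}]=0$ and, given $\mathcal{H}_{m}$, $W_{n,m,j}$ is independent of $\sigma_{m,j}$. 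For $n$ odd this yields, for $j\neq j'$,
\begin{displaymath}
\E\big[\langle Y_{n,m,j},Y_{n,m,j'}\rangle_{H^{-\eta}(\C)}\mid\mathcal{H}_{m}\big]=\E[\sigma_{m,j}\mid\mathcal{H}_{m}]\,\E[\sigma_{m,j'}\mid\mathcal{H}_{m}]\,\E\big[\langle W_{n,m,j},W_{n,m,j'}\rangle_{H^{-\eta}(\C)}\mid\mathcal{H}_{m}\big]=0,
\end{displaymath}
and $\E[\Vert Y_{n,m,j}\Vert_{H^{-\eta}(\C)}^{2}\mid\mathcal{H}_{m}]=\E[\Vert W_{n,m,j}\Vert_{H^{-\eta}(\C)}^{2}\mid\mathcal{H}_{m}]$. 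Taking expectations and summing, both the Cauchy property of the partial sums and the claimed identity $\E[\Vert\sum_{j}Y_{n,m,j}\Vert_{H^{-\eta}(\C)}^{2}]=\sum_{j}\E[\Vert W_{n,m,j}\Vert_{H^{-\eta}(\C)}^{2}]$ will reduce to finiteness of $\sum_{j}\E[\Vert W_{n,m,j}\Vert_{H^{-\eta}(\C)}^{2}]$. That will follow from a second-moment estimate of the type of Lemma \ref{Lem 2 point cor D A 0 b}, Lemma \ref{Lem a s bound} and Proposition \ref{Prop V G eps}: expanding over $k$ and taking conditional expectations, each contribution is dominated by $\E\big[\int_{\inter(\Gamma_{m,j})^{2}}P\big(V_{A^{\rm gen}_{m}}(z),V_{A^{\rm gen}_{m}}(w)\big)\,G_{\inter(\Gamma_{m,j})}(z,w)^{\ell}\,\LK_{\eta}(\vert w-z\vert)\,d^{2}z\,d^{2}w\big]$ for suitable polynomials $P$ and $\ell\ge 1$; summing over $j$ and using $\sum_{j}\ind_{\inter(\Gamma_{m,j})}=\ind_{D\setminus A^{\rm gen}_{m}}\le\ind_{D}$, the monotonicity $G_{\inter(\Gamma_{m,j})}\le G_{D}$, the Koebe bound on $V_{A^{\rm gen}_{m}}$, and Proposition \ref{Prop Green}, the total is bounded by $\int_{D^{2}}(\text{log-powers})\,\LK_{\eta}(\vert w-z\vert)\,d^{2}z\,d^{2}w<+\infty$, exactly as in the proof of Lemma \ref{Lem f q}. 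For $n$ even the signs are absent and only the convergence in (i) is asserted; there the convergence will come not from orthogonality but from the identification in step (ii).

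Step (ii) is the delicate point. Let $S_{\infty}$ be the limit produced by (i) (for $n$ even, the convergence of $\sum_{j}Y_{n,m,j}$ is itself part of this step). The difference $Z:=X_{n,m}-S_{\infty}$ annihilates every smooth test function supported in $\bigcup_{j}\inter(\Gamma_{m,j})=D\setminus A^{\rm gen}_{m}$, by Proposition \ref{Prop gen to indiv} and the support of the $Y_{n,m,j}$; hence $\Supp(Z)\subseteq A^{\rm gen}_{m}$. To conclude $Z=0$ I would match, cluster by cluster, two approximations by generalized functions living outside $A^{\rm gen}_{m}$: for $X_{n,m}$ one uses the mollified representation $\psi^{\rm gen}_{n,m}=\lim_{\varepsilon\to0}Q_{n}(\nu^{\rm gen}_{m,\varepsilon},V_{A^{\rm gen,+}_{m},\varepsilon})$ of Proposition \ref{Prop psi eps gen 0} together with the change of variance formula \eqref{Eq change var}, $\nu^{\rm gen}_{m}=\sum_{j\in\Gen(m)}\sigma_{j}\nu_{j}$, and the localization of $\nu^{\rm gen}_{m,\varepsilon}$ near the clusters; for $S_{\infty}$ one uses the cutoff construction of Section \ref{Subsec Wick outside} carried out inside each $\inter(\Gamma_{m,j})$ (Theorem \ref{Thm fields on C j}). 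The point to verify is that the mass carried by $X_{n,m}$ on each loop $\clus_{m,j}$ agrees with that carried by $Y_{n,m,j}$, and that there is no residual mass on $A^{\rm gen}_{m}\setminus\bigcup_{j}\clus_{m,j}$ — this is the same mechanism as Corollary \ref{Cor restr A} (restriction of a Wick power to an FPS, and the finite-part compensation for even powers), transported to the generational setting; it forces $Z=0$. Once $X_{n,m}=\sum_{j}Y_{n,m,j}$ is established, the even-$n$ convergence in (i) follows and the proof is complete. The main obstacle is precisely this step (ii): the local identity of Proposition \ref{Prop gen to indiv} does not by itself pin down the mass of either side on the boundary loops $\Gamma_{m,j}$ or on the dust of $A^{\rm gen}_{m}$, and reconciling these requires the cutoff/mollification bookkeeping above.
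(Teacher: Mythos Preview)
Your steps (i) and (iii) match the paper's strategy: the orthogonality for odd $n$ comes from the i.i.d.\ signs, and summability of $\sum_j\E[\Vert W_{n,m,j}\Vert^2]$ from Green's-function estimates of the type you cite. The paper conditions first on $\mathcal{F}^{\rm gen}_{0}$ (boundaries \emph{and} signs) rather than your $\mathcal{H}_0$, but this is cosmetic.

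Where you diverge is step (ii), and here the paper's route is simpler. You propose to first establish convergence of $S_\infty=\sum_j Y_{n,m,j}$, then argue that $Z=X_{n,m}-S_\infty$ is supported on $A^{\rm gen}_m$, and finally kill $Z$ by matching mollified approximations. You are right that Proposition~\ref{Prop gen to indiv} only gives the identity against $f$ compactly supported in $\inter(\Gamma_{m,j})$, so mass on the loops $\Gamma_{m,j}$ is not \emph{a priori} controlled; your mollification bookkeeping could in principle close this, but it is delicate and you flagged it yourself as the obstacle.

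The paper bypasses this completely by computing the second moment of the \emph{difference} directly. Conditionally on $\mathcal{F}^{\rm gen}_{0}$, the quantity $\E\big[\Vert \psi^{\rm gen}_{n,0}-\sum_{j\le j_0}Y_{n,0,j}\Vert_{H^{-\eta}(\C)}^{2}\,\big\vert\,\mathcal{F}^{\rm gen}_{0}\big]$ is a linear combination of double integrals over $(D\setminus A^{\rm gen}_{0})^2$ against kernels $\LK_\eta(\vert w-z\vert)\,G_{D\setminus A^{\rm gen}_{0}}(z,w)^q$, $q\in\{0,\dots,n\}$. For $q\ge 1$ the Green's function vanishes whenever $z,w$ lie in different $\inter(\Gamma_{m=0,j})$, so those terms automatically localize to the remaining clusters $j>j_0$. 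The $q=0$ cross terms carry explicit factors $\sigma_{m=0,j}\sigma_{m=0,j'}$; after further conditioning down to $\sigma(A^{\rm gen}_0)$ they vanish (for odd $n$), and one is left with exactly the tail $\sum_{j>j_0}\E[\Vert W_{n,0,j}\Vert^2\,\vert\,A^{\rm gen}_0]$. This single computation delivers convergence and identification at once, with no need to analyze mass on $A^{\rm gen}_m$ separately. For even $n$ the same two-point-function structure gives the convergence and the decomposition; the cross terms do not cancel, and accordingly no norm-squared identity is claimed (cf.\ Remark~\ref{Rem orth even}).

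In short: your orthogonality argument is the paper's, but your identification step is an unnecessary detour. Replace it by computing $\E[\Vert X_{n,m}-\sum_{j\le j_0}Y_{n,m,j}\Vert^2\,\vert\,\mathcal{F}^{\rm gen}_m]$ directly via the conditional two-point function.
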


\begin{proof}
Note that the exact form of the decompositions originates from Proposition \ref{Prop gen to indiv}.
We will focus on the case $m=0$, the case $m\geq 1$ being similar.
We will condition everything on $\mathcal{F}^{\rm gen}_{0}$,
that is to say the knowledge of all the outer boundaries $\Gamma_{m=0,j}$ and the signs $\sigma_{m=0,j}$.
Then the conditional expectation
\begin{displaymath}
\E\Big[\Vert \psi^{\rm gen}_{n,m=0}\Vert_{H^{-\eta}(\C)}^{2}\Big\vert \mathcal{F}^{\rm gen}_{0}\Big]
\end{displaymath}
is a linear combination of double integrals on $D\setminus A^{\rm gen}_{0}$
w.r.t. the kernels $\LK_{\eta}(\vert w-z\vert)G_{D\setminus A^{\rm gen}_{0}}(z,w)^{q}$
for $q\in\{0,1,\dots,n\}$.
We will not give the heavy exact expression, and just focus on the structure.
If $q\geq 1$, then $G_{D\setminus A^{\rm gen}_{0}}(z,w)^{q}=0$
whenever $z$ and $w$ belong to different connected components of $D\setminus A^{\rm gen}_{0}$.
For $q=0$, by convention, $G_{D\setminus A^{\rm gen}_{0}}(z,w)^{0}\equiv 1$.
From this we get that for $n$ odd,
\begin{multline*}
\E\Big[\Vert \psi^{\rm gen}_{n,m=0}\Vert_{H^{-\eta}(\C)}^{2}\Big\vert \mathcal{F}^{\rm gen}_{0}\Big] =
\\ 
\sum_{j=0}^{+\infty}
\E\Big[
\Big\Vert
\sum_{0\leq k< \lfloor n/2\rfloor}
(-1)^{k} \dfrac{n!}{2^{k} k! (n-2k)!}
V_{A^{\rm gen}_{0}}^{k}\psi_{n-2k,m=0,j}
\Big\Vert_{H^{-\eta}(\C)}^{2}
\Big\vert \mathcal{F}^{\rm gen}_{0}
\Big]
\\
+
2\sum_{0\leq j<j'}
\sigma_{m=0,j}\sigma_{m=0,j'}
\Big\langle
\E\Big[
\sum_{0\leq k< \lfloor n/2\rfloor}
(-1)^{k} \dfrac{n!}{2^{k} k! (n-2k)!}
V_{A^{\rm gen}_{0}}^{k}\psi_{n-2k,m=0,j}
\Big\vert \mathcal{F}^{\rm gen}_{0}
\Big],
\\
\E\Big[
\sum_{0\leq k< \lfloor n/2\rfloor}
(-1)^{k} \dfrac{n!}{2^{k} k! (n-2k)!}
V_{A^{\rm gen}_{0}}^{k}\psi_{n-2k,m=0,j'}
\Big\vert \mathcal{F}^{\rm gen}_{0}
\Big]
\Big\rangle_{H^{-\eta}(\C)}.
\end{multline*}
Note that
\begin{displaymath}
\E\Big[
\sum_{0\leq k< \lfloor n/2\rfloor}
(-1)^{k} \dfrac{n!}{2^{k} k! (n-2k)!}
V_{A^{\rm gen}_{0}}^{k}\psi_{n-2k,m=0,j}
\Big\vert \mathcal{F}^{\rm gen}_{0}
\Big]
=\ind_{\inter(\Gamma_{m=0,j})}Q_{n}(2\lambda, V_{A^{\rm gen}_{0}}),
\end{displaymath}
and in general this is not $0$.
Further, the double sum
\begin{displaymath}
\sum_{0\leq j<j'}
\sigma_{m=0,j}\sigma_{m=0,j'}
\Big\langle
\ind_{\inter(\Gamma_{m=0,j})}Q_{n}(2\lambda, V_{A^{\rm gen}_{0}}),
\ind_{\inter(\Gamma_{m=0,j'})}Q_{n}(2\lambda, V_{A^{\rm gen}_{0}})
\Big\rangle_{H^{-\eta}(\C)}
\end{displaymath}
is absolutely convergent.
Indeed, given exponents $q, q'\geq 0$,
\begin{multline*}
2\sum_{0\leq j<j'}
\Big\langle
\ind_{\inter(\Gamma_{m=0,j})}V_{A^{\rm gen}_{0}}^{q},
\ind_{\inter(\Gamma_{m=0,j})}V_{A^{\rm gen}_{0}}^{q'}
\Big\rangle_{H^{-\eta}(\C)}
\\
\leq
\big\langle
\ind_{D\setminus A^{\rm gen}_{0}}V_{A^{\rm gen}_{0}}^{q},
\ind_{D\setminus A^{\rm gen}_{0}}V_{A^{\rm gen}_{0}}^{q'}
\big\rangle_{H^{-\eta}(\C)}
\leq 
\big\langle
\ind_{D\setminus A^{\rm gen}_{0}}V_{A^{\rm gen}_{0}}^{2q},
1
\big\rangle_{H^{-\eta}(\C)}^{1/2}
\big\langle
\ind_{D\setminus A^{\rm gen}_{0}}V_{A^{\rm gen}_{0}}^{2q'},
1
\big\rangle_{H^{-\eta}(\C)}^{1/2}
\end{multline*}
Further, by conformal invariance, the law of $V_{A^{\rm gen}_{0}}(z)^{2q}$
does not depend on $z\in D$,
and it is given by Theorem \ref{Thm ASW CR ell a b}.
So $\E[V_{A^{\rm gen}_{0}}(z)^{2q}]$ is a finite constant.

Consider now the smaller sigma-algebra induced by $A^{\rm gen}_{0}$.
It contains the outer boundaries $\Gamma_{m=0,j}$, but not the signs $\sigma_{m=0,j}$.
Then, for $j\neq j'$,
\begin{displaymath}
\E\big[\sigma_{m=0,j}\sigma_{m=0,j'}\big\vert A^{\rm gen}_{0}\big]=0.
\end{displaymath}
Therefore,
\begin{multline*}
\E\Big[\Vert \psi^{\rm gen}_{n,m=0}\Vert_{H^{-\eta}(\C)}^{2}\Big\vert A^{\rm gen}_{0}\Big] =
\\ 
\sum_{j=0}^{+\infty}
\E\Big[
\Big\Vert
\sum_{0\leq k< \lfloor n/2\rfloor}
(-1)^{k} \dfrac{n!}{2^{k} k! (n-2k)!}
V_{A^{\rm gen}_{0}}^{k}\psi_{n-2k,m=0,j}
\Big\Vert_{H^{-\eta}(\C)}^{2}
\Big\vert A^{\rm gen}_{0}
\Big]
.
\end{multline*}
So this gives the orthogonal decomposition in square-norms in
$L^{2}(d\PP,\sigma(\Phi),H^{-\eta}(\C))$.
Further, we also have
\begin{multline*}
\E\Big[\Big\Vert \psi^{\rm gen}_{n,m=0}
-\sum_{j=0}^{j_0}
\Big(
\sum_{0\leq k< \lfloor n/2\rfloor}
(-1)^{k} \dfrac{n!}{2^{k} k! (n-2k)!}
V_{A^{\rm gen}_{0}}^{k}\psi_{n-2k,m=0,j}
\Big)
\Big\Vert_{H^{-\eta}(\C)}^{2}\Big\vert A^{\rm gen}_{0}\Big] =
\\ 
\sum_{j=j_0 +1}^{+\infty}
\E\Big[
\Big\Vert
\sum_{0\leq k< \lfloor n/2\rfloor}
(-1)^{k} \dfrac{n!}{2^{k} k! (n-2k)!}
V_{A^{\rm gen}_{0}}^{k}\psi_{n-2k,m=0,j}
\Big\Vert_{H^{-\eta}(\C)}^{2}
\Big\vert A^{\rm gen}_{0}
\Big]
.
\end{multline*}
The right-hand side converges a.s. to $0$ when $j_0\to +\infty$.

The case of $n$ even is somewhat different.
We have that
\begin{multline*}
\E\Big[
\Big(
\sum_{0\leq k < \lfloor n/2\rfloor}
(-1)^{k} \dfrac{n!}{2^{k} k! (n-2k)!}
V_{A^{\rm gen}_{0}}^{k}\psi_{n-2k,m=0,j}
\Big)
+
(-1)^{n/2} \dfrac{n!}{2^{n/2} (n/2)! }
\ind_{\inter(\Gamma_{m=0,j})}
V_{A^{\rm gen}_{0}}^{n/2}
\Big\vert 
\mathcal{F}^{\rm gen}_{0}
\Big]
\\
=\ind_{\inter(\Gamma_{m=0,j})}Q_{n}(2\lambda, V_{A^{\rm gen}_{0}}),
\end{multline*}
and
\begin{multline*}
\E\Big[\Big\Vert \psi^{\rm gen}_{n,m=0}
-
\sum_{j=0}^{j_0}
\Big(
\Big(
\sum_{0\leq k < \lfloor n/2\rfloor}
(-1)^{k} \dfrac{n!}{2^{k} k! (n-2k)!}
V_{A^{\rm gen}_{0}}^{k}\psi_{n-2k,m=0,j}
\Big)
\\
+
(-1)^{n/2} \dfrac{n!}{2^{n/2} (n/2)! }
\ind_{\inter(\Gamma_{m=0,j})}
V_{A^{\rm gen}_{0}}^{n/2}
\Big)
\Big\Vert_{H^{-\eta}(\C)}^{2}\Big\vert \mathcal{F}^{\rm gen}_{0}\Big]
\\
=
\sum_{j=j_0 + 1}^{+\infty}
\E\Big[\Big\Vert 
\Big(
\sum_{0\leq k < \lfloor n/2\rfloor}
(-1)^{k} \dfrac{n!}{2^{k} k! (n-2k)!}
V_{A^{\rm gen}_{0}}^{k}\psi_{n-2k,m=0,j}
\Big)
\\
+
(-1)^{n/2} \dfrac{n!}{2^{n/2} (n/2)! }
\ind_{\inter(\Gamma_{m=0,j})}
V_{A^{\rm gen}_{0}}^{n/2}
\Big\Vert_{H^{-\eta}(\C)}^{2}\Big\vert \mathcal{F}^{\rm gen}_{0}\Big]
\\
+
2 \sum_{j_0 + 1 \leq j<j'}
\Big\langle
\ind_{\inter(\Gamma_{m=0,j})}Q_{n}(2\lambda, V_{A^{\rm gen}_{0}}),
\ind_{\inter(\Gamma_{m=0,j'})}Q_{n}(2\lambda, V_{A^{\rm gen}_{0}})
\Big\rangle_{H^{-\eta}(\C)}.
\end{multline*}
Again this converges a.s. to $0$ as $j_{0}\to +\infty$.
\end{proof}

\begin{rem}
\label{Rem expect 0}
At the level of expectations,
for odd powers $n\geq 1$,
and for $z\in \inter(\Gamma_{m=0,j})$,
\begin{displaymath}
\E[\psi^{\rm gen}_{n,m=0}\vert\mathcal{F}^{\rm gen}_{0}](z)
=
\sigma_{m=0,j}
\ind_{\inter(\Gamma_{m=0,j})}Q_{n}(2\lambda, V_{A^{\rm gen}_{0}}).
\end{displaymath}
For even powers $n\geq 2$,
\begin{displaymath}
\E[\psi^{\rm gen}_{n,m=0}\vert\mathcal{F}^{\rm gen}_{0}]
=
\ind_{D\setminus A^{\rm gen}_{0}}Q_{n}(2\lambda, V_{A^{\rm gen}_{0}}).
\end{displaymath}
But 
\begin{displaymath}
\E[\psi^{\rm gen}_{n,m=0}]=
\E\big[\,:\Phi^{n}:\,\big]
=0,
\end{displaymath}
for every $n\geq 1$, odd or even.
For $n$ odd this is related to $\E[\sigma_{m=0,j}]=0$,
for every $j\geq 0$.
For $n\geq 2$ even, it implies that for every $z\in D$,
\begin{equation}
\label{Eq Q n 2 lambda 0 expect}
\E[Q_{n}(2\lambda, V_{A^{\rm gen}_{0}}(z))] = 0,
\end{equation}
where actually it was \textit{a priori} clear that the expectation is a constant not depending on $z$, just by conformal invariance.
Now, is there a way to see the relation \eqref{Eq Q n 2 lambda 0 expect},
without going through Wick powers of the GFF?
The answer is yes.
According to Theorem \ref{Thm ASW CR ell a b},
the r.v. $V_{A^{\rm gen}_{0}}(z)$ is distributed as
$T_{-2\lambda,2\lambda}$,
the first exit time from the interval
$(-2\lambda,2\lambda)$ by a standard Brownian motion
$(W_{t})_{t\geq 0}$ starting from $W_{0}=0$.
Thus,
\begin{displaymath}
\E[Q_{n}(2\lambda, V_{A^{\rm gen}_{0}}(z))]
=
\E_{0}[Q_{n}(2\lambda, T_{-2\lambda,2\lambda})].
\end{displaymath}
Since the Hermite polynomials of even degree are even, 
this is also the same as
\begin{displaymath}
\E_{0}[Q_{n}(W_{T_{-2\lambda,2\lambda}}, T_{-2\lambda,2\lambda})].
\end{displaymath}
But the stochastic process
$(Q_{n}(W_{t},t))_{t\geq 0}$ is actually a martingale.
For instance, for $n=2$,
$Q_{2}(W_{t},t) = W_{t}^{2} -t$,
which is well known to be a martingale,
but this also holds for all higher degrees.
See for instance \cite[Lemma 9.7]{JegoLupuQianFields}.
By the optional stopping theorem,
\begin{displaymath}
\E_{0}[Q_{n}(W_{T_{-2\lambda,2\lambda}}, T_{-2\lambda,2\lambda})]
=
\E_{0}[Q_{n}(W_{0}, 0)] = 0.
\end{displaymath}
\end{rem}

\begin{rem}
\label{Rem orth even}
A natural questions to ask is whether for $n$ even,
there is equality between
\begin{displaymath}
\E\Big[\Vert \psi^{\rm gen}_{n,m=0}\Vert_{H^{-\eta}(\C)}^{2}\Big]
\end{displaymath}
and
\begin{multline*}
\sum_{j=0}^{+\infty}
\E\Big[
\Big\Vert
\Big(
\sum_{0\leq k < \lfloor n/2\rfloor}
(-1)^{k} \dfrac{n!}{2^{k} k! (n-2k)!}
V_{A^{\rm gen}_{0}}^{k}\psi_{n-2k,m=0,j}
\Big)
\\
+
(-1)^{n/2} \dfrac{n!}{2^{n/2} (n/2)! }
\ind_{\inter(\Gamma_{m=0,j})}
V_{A^{\rm gen}_{0}}^{n/2}
\Big\Vert_{H^{-\eta}(\C)}^{2}
\Big],
\end{multline*}
and similarly for the generations $m\geq 1$.
The author does not have an answer to this question,
but sees no reason why such a thing should be true.
The issue is the term
\begin{displaymath}
2 \sum_{0 \leq j<j'}
\Big\langle
\ind_{\inter(\Gamma_{m=0,j})}Q_{n}(2\lambda, V_{A^{\rm gen}_{0}}),
\ind_{\inter(\Gamma_{m=0,j'})}Q_{n}(2\lambda, V_{A^{\rm gen}_{0}})
\Big\rangle_{H^{-\eta}(\C)}.
\end{displaymath}
Its expectation can be rewritten as
\begin{displaymath}
\int_{D^{2}}
\E\big[Q_{n}(2\lambda, V_{A^{\rm gen}_{0}}(z))Q_{n}(2\lambda, V_{A^{\rm gen}_{0}}(w))
\ind_{z,w \text{ in different con. comp. of } D\setminus A^{\rm gen}_{0}}
\big]
\LK_{\eta}(\vert w-z\vert)\,d^{2}z\,d^{2}w.
\end{displaymath}
So a related question is whether for all $z\neq w\in D$,
the two-point function
\begin{equation}
\label{Eq 2 point trunc}
\E\big[Q_{n}(2\lambda, V_{A^{\rm gen}_{0}}(z))Q_{n}(2\lambda, V_{A^{\rm gen}_{0}}(w))
\ind_{z,w \text{ in different con. comp. of } D\setminus A^{\rm gen}_{0}}
\big]
\end{equation}
is $0$. 
It is true that for every $z\in D$,
\begin{displaymath}
\E[Q_{n}(2\lambda, V_{A^{\rm gen}_{0}}(z))] = 0.
\end{displaymath}
See Remark \ref{Rem expect 0}.
However, the author does not see any good reason for
\eqref{Eq 2 point trunc} to be $0$ in general.
\end{rem}

By combining Theorem \ref{Thm decomp gen Wick} and Theorem \ref{Thm per cluster enum},
we get the following.

\begin{cor}
\label{Cor decomp gen plus enum}
For $n\geq 1$ odd,
the Wick power $:\Phi^{n}:$ can be decomposed
\begin{displaymath}
:\Phi^{n}:~= 
\sum_{m=0}^{+\infty}
\sum_{j=0}^{+\infty}
\Big(
\sigma_{m,j}
\sum_{0\leq k< \lfloor n/2\rfloor}
(-1)^{k} \dfrac{n!}{2^{k} k! (n-2k)!}
V_{A^{\rm gen}_{m}}^{k}\psi_{n-2k,m,j}
\Big),
\end{displaymath}
and 
\begin{displaymath}
\E\big[\big\Vert :\Phi^{n}:\big\Vert_{H^{-\eta}(\C)}^{2}\big]~= 
\sum_{m=0}^{+\infty}
\sum_{j=0}^{+\infty}
\E\Big[\Big\Vert
\sum_{0\leq k< \lfloor n/2\rfloor}
(-1)^{k} \dfrac{n!}{2^{k} k! (n-2k)!}
V_{A^{\rm gen}_{m}}^{k}\psi_{n-2k,m,j}
\Big\Vert_{H^{-\eta}(\C)}^{2}
\Big],
\end{displaymath}
for every $\eta>0$.
For $n\geq 2$ even,
the Wick power $:\Phi^{n}:$ can be decomposed
\begin{multline*}
:\Phi^{n}:~= 
\sum_{m=0}^{+\infty}
\sum_{j=0}^{+\infty}
\Big(
\Big(
\sum_{0\leq k< \lfloor n/2\rfloor}
(-1)^{k} \dfrac{n!}{2^{k} k! (n-2k)!}
V_{A^{\rm gen}_{m}}^{k}\psi_{n-2k,j}
\Big)
\\
+
\ind_{\inter(\Gamma_{m,j})}
(-1)^{n/2} \dfrac{n!}{2^{n/2} (n/2)! }
(V_{A^{\rm gen}_{m}}^{n/2} - V_{A^{\rm gen, +}_{m-1}}^{n/2})
\Big),
\end{multline*}
with the convention that for $m=0$, 
$V_{A^{\rm gen, +}_{-1}} = 0$.
\end{cor}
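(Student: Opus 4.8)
The plan is to substitute the per-cluster expansions of Theorem \ref{Thm per cluster enum} into the generational decomposition of Theorem \ref{Thm decomp gen Wick}, and then to merge the resulting iterated series into one double series indexed by pairs $(m,j)$.

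For $n\geq 1$ odd, Theorem \ref{Thm decomp gen Wick} gives $:\Phi^{n}: = \psi^{\rm gen}_{n,0}+\sum_{m\geq 1}\sum_{0\leq k<\lfloor n/2\rfloor}(-1)^{k}\frac{n!}{2^{k}k!(n-2k)!}V_{A^{\rm gen,+}_{m-1}}^{k}\psi^{\rm gen}_{n-2k,m}$, with convergence in $L^{2}(d\PP,\sigma(\Phi),H^{-\eta}(\C))$ for $\eta>0$. The $m=0$ statement of Theorem \ref{Thm per cluster enum} rewrites $\psi^{\rm gen}_{n,0}$ as $\sum_{j\geq 0}\sigma_{0,j}\sum_{0\leq k<\lfloor n/2\rfloor}(-1)^{k}\frac{n!}{2^{k}k!(n-2k)!}V_{A^{\rm gen}_{0}}^{k}\psi_{n-2k,0,j}$, and its $m\geq 1$ statement rewrites the $m$-th summand above as $\sum_{j\geq 0}\sigma_{m,j}\sum_{0\leq k<\lfloor n/2\rfloor}(-1)^{k}\frac{n!}{2^{k}k!(n-2k)!}V_{A^{\rm gen}_{m}}^{k}\psi_{n-2k,m,j}$, in both cases with $L^{2}$ convergence of the sum over $j$. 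Substituting termwise produces exactly the asserted double-series formula, provided the interchange of the two summations is justified; this I would justify together with the squared-norm identity.

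Since $H^{-\eta}(\C)$ is a separable Hilbert space, $L^{2}(d\PP,\sigma(\Phi),H^{-\eta}(\C))$ is itself a Hilbert space, and the point is that the individual summands $\sigma_{m,j}\sum_{k}(-1)^{k}\frac{n!}{2^{k}k!(n-2k)!}V_{A^{\rm gen}_{m}}^{k}\psi_{n-2k,m,j}$, indexed by $(m,j)$, are pairwise orthogonal there: cross terms between distinct generations vanish by the conditioning argument behind Theorem \ref{Thm decomp gen Wick}, and those within a single generation vanish by the conditioning argument behind Theorem \ref{Thm per cluster enum} (averaging over the signs $\sigma_{m,j}$ conditionally on $A^{\rm gen}_{m}$). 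Iterating the two Pythagoras identities already furnished by those theorems gives $\E[\Vert:\Phi^{n}:\Vert^{2}_{H^{-\eta}(\C)}]=\sum_{m\geq 0}\sum_{j\geq 0}\E[\Vert\sum_{k}(-1)^{k}\frac{n!}{2^{k}k!(n-2k)!}V_{A^{\rm gen}_{m}}^{k}\psi_{n-2k,m,j}\Vert^{2}_{H^{-\eta}(\C)}]<+\infty$, so this orthogonal family is unconditionally summable; this legitimizes the reordering and finishes the odd case.

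For $n\geq 2$ even the argument is the same except that each per-cluster term of Theorem \ref{Thm per cluster enum} carries the extra correction $\ind_{\inter(\Gamma_{m,j})}(-1)^{n/2}\frac{n!}{2^{n/2}(n/2)!}(V_{A^{\rm gen}_{m}}^{n/2}-V_{A^{\rm gen,+}_{m-1}}^{n/2})$, with the convention $V_{A^{\rm gen,+}_{-1}}=0$ recovering the $m=0$ correction $\ind_{\inter(\Gamma_{0,j})}(-1)^{n/2}\frac{n!}{2^{n/2}(n/2)!}V_{A^{\rm gen}_{0}}^{n/2}$; these assemble across generations with no extra work since $\clus_{m,j}\subset D\setminus A^{\rm gen,+}_{m-1}$ is cut out precisely when passing from $A^{\rm gen,+}_{m-1}$ to $A^{\rm gen}_{m}$. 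No squared-norm identity is asserted here, in agreement with Remark \ref{Rem orth even}, since the cross terms need no longer vanish; hence only the $L^{2}$ convergence of the double series must be checked, and it follows by combining the $L^{2}$ convergence over $m$ from Theorem \ref{Thm decomp gen Wick} with the convergence over $j$ within each generation from Theorem \ref{Thm per cluster enum} (the correction terms being controlled as in Lemma \ref{Lem conv 0 remainder Sob}), via a triangle-inequality estimate in place of the orthogonality shortcut. The main obstacle throughout is purely the bookkeeping of this double interchange of limits, made delicate by the fact that the enumeration indices $(m,j)$ do not themselves record which cluster each pair refers to; but with the iterated Pythagoras identity (odd case), or the matching one-generation-at-a-time convergence (even case), together with the finiteness of $\E[\Vert:\Phi^{n}:\Vert^{2}_{H^{-\eta}(\C)}]$ in hand, the rest is routine.
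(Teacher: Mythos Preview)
Your proposal is correct and follows exactly the paper's approach: the paper's proof is the single sentence ``By combining Theorem \ref{Thm decomp gen Wick} and Theorem \ref{Thm per cluster enum}, we get the following'', and you have simply spelled out what that combination entails. Your extra care about the interchange of sums and the Pythagoras identity in the odd case is welcome detail the paper leaves implicit; one small simplification is that pairwise orthogonality of \emph{all} the $(m,j)$-terms in the odd case follows in one stroke from the fact that each term carries a factor $\sigma_{m,j}$ while the remaining factor is measurable with respect to the cluster geometry alone, so $\E[\sigma_{m,j}\sigma_{m',j'}\mid\text{clusters}]=0$ for $(m,j)\neq(m',j')$ handles both within- and across-generation cross terms at once.
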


\subsection{A side remark: decomposition of Wick powers through nested CLE$_4$ and Miller-Sheffield coupling}
\label{Subsec remark nestes CLE 4}

Although this is not the focus of this work,
we would like to mention how the Wick powers fit into
the Miller-Sheffield description of the GFF through the nested CLE$_{4}$
and the labels performing a branching random walk with step $2\lambda$ on top of this nested structure.
So here again $\Phi$ will be a GFF with $0$ boundary condition.
Essentially, we are looking at the conditional expectation of $:\Phi^{n}:$
given the $k$-th generation CLE$_{4}$
(not to be confused with the $m$-th generation of excursion clusters)
and the corresponding labels.
Now, the gasket of $k$-th generation CLE$_{4}$ is a \textbf{thin}
local set of the GFF,
which induces a much simpler and less rich decomposition of $:\Phi^{n}:$
than in the \textbf{non-thin} case of first passage sets and excursion clusters.

A framework that can fit both the fixed generation CLE$_{4}$
and that of two-valued sets as in Section \ref{Subsec Wick TVS},
is that of \textit{bounded-type \textbf{thin} local sets} (BTLS)
introduced in \cite{ASW}.
So let $\mathcal{A}$ be such a BTLS,
which in particular is a random compact subset of
$\overline{D}$,
ling on the same probability space as $\Phi$,
but not necessarily measurable w.r.t. $\sigma(\Phi)$.
To simplify, we will additionally assume that $\mathcal{A}$ is a.s. connected and contains $\partial D$.
One can also deal with the case of $\mathcal{A}$ not being connected,
but this would require dealing with then non-simply connected domain
$D\setminus\mathcal{A}$, which we would like to avoid.
So $\mathcal{A}$ is a.s. connected and contains $\partial D$.
The GFF $\Phi$ admits then a decomposition
\begin{displaymath}
\Phi = h_{\mathcal{A}} + \Phi_{D\setminus\mathcal{A}},
\end{displaymath}
where
\begin{itemize}
\item $h_{\mathcal{A}}$ is a random harmonic function on $D\setminus\mathcal{A}$,
with moreover $\vert h_{\mathcal{A}}\vert$ being deterministically bounded;
\item conditionally on $(\mathcal{A}, h_{\mathcal{A}})$,
the field $\Phi_{D\setminus\mathcal{A}}$ is distributed as a GFF on
$D\setminus\mathcal{A}$ with $0$ boundary conditions.
\end{itemize}
We will denote by $:\Phi_{D\setminus\mathcal{A}}^{n}:$
the Wick powers of $\Phi_{D\setminus\mathcal{A}}$ renormalized
with the Green's function $G_{D\setminus\mathcal{A}}$,
and not the Green's function $G_{D}$.
Let $V_{\mathcal{A}}$ be the following function on $D\setminus\mathcal{A}$:
\begin{displaymath}
V_{\mathcal{A}}(z) = \dfrac{1}{2\pi}\log\Big(
\dfrac{\CR(z,D)}{\CR(z,D\setminus \mathcal{A})}
\Big).
\end{displaymath}
Next we state the decomposition of $:\Phi^{n}:$ via the BTLS
$\mathcal{A}$.
Its proof is straightforward given what we have done previously, so we omit it.
In particular, we use Lemma \ref{Lem Q n x y u v}.

\begin{prop}
\label{Prop decomp Wick BTLS}
With the notations above,
for every $n\geq 1$, the following decomposition holds:
\begin{displaymath}
:\Phi^{n}: ~=~
\sum_{j=0}^{n} \dfrac{n!}{j! (n-j)!} Q_{n-j}(h_{\mathcal{A}},V_{\mathcal{A}})\,:\Phi_{D\setminus \mathcal{A}}^{j} : \, ,
\end{displaymath}
In particular,
\begin{displaymath}
\E\big[ :\Phi^{n}: \big\vert \mathcal{A}, h_{\mathcal{A}}\big]
= Q_{n}(h_{\mathcal{A}},V_{\mathcal{A}}).
\end{displaymath}
\end{prop}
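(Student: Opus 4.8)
\textbf{Proof plan for Proposition \ref{Prop decomp Wick BTLS}.}

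The plan is to mimic exactly the strategy used for two-valued sets in Section \ref{Subsec Wick TVS}, substituting the harmonic function $h_{\mathcal{A}}$ for the label function $\ell_{a,b}$ and exploiting that, like a TVS, a BTLS $\mathcal{A}$ is \textbf{thin}, so that the GFF does not charge $\mathcal{A}$ itself and only the (deterministically bounded) harmonic extension $h_{\mathcal{A}}$ contributes a non-trivial restriction to the set. First I would fix a family of mollificators $(\rho_{\varepsilon})_{\varepsilon>0}$ as in Section \ref{Subsec Wick}, regularize the decomposition $\Phi = h_{\mathcal{A}} + \Phi_{D\setminus\mathcal{A}}$ by convolution to get $\Phi_{\varepsilon} = h_{\mathcal{A},\varepsilon} + \Phi_{D\setminus\mathcal{A},\varepsilon}$, and write $:\Phi^{n}_{\varepsilon}:\, = Q_n(\Phi_{\varepsilon}, G_{D,\varepsilon,\varepsilon}(z,z))$. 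Applying the two-variable identity \eqref{Eq Q n x y u v} of Lemma \ref{Lem Q n x y u v} with $x_1 \leftrightarrow h_{\mathcal{A},\varepsilon}$, $x_2 \leftrightarrow \Phi_{D\setminus\mathcal{A},\varepsilon}$, $u_1 \leftrightarrow V_{\mathcal{A},\varepsilon}$ and $u_2 \leftrightarrow G_{D\setminus\mathcal{A},\varepsilon,\varepsilon}(z,z)$ (where $V_{\mathcal{A},\varepsilon}(z) = G_{D,\varepsilon,\varepsilon}(z,z) - G_{D\setminus\mathcal{A},\varepsilon,\varepsilon}(z,z)$), and recognizing that $Q_j(\Phi_{D\setminus\mathcal{A},\varepsilon}(z), G_{D\setminus\mathcal{A},\varepsilon,\varepsilon}(z,z)) = \, :\Phi_{D\setminus\mathcal{A},\varepsilon}^{j}:\,$ is precisely the regularized Wick power of the conditional field renormalized in the cut domain, yields the regularized analogue of the claimed formula:
\begin{displaymath}
:\Phi^{n}_{\varepsilon}: ~=~ \sum_{j=0}^{n} \dfrac{n!}{j! (n-j)!} Q_{n-j}(h_{\mathcal{A},\varepsilon},V_{\mathcal{A},\varepsilon})\,:\Phi_{D\setminus \mathcal{A},\varepsilon}^{j} : \, .
\end{displaymath}

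Next I would pass to the limit $\varepsilon\to 0$ in $L^{2}(d\PP,\sigma(\Phi),H^{-\eta}(\C))$ term by term. The left-hand side converges to $:\Phi^{n}:$ by the construction of Wick powers in Section \ref{Subsec Wick}. For the right-hand side, the key point — and the only place where thinness is essential — is that $|h_{\mathcal{A}}|$ is deterministically bounded, so unlike the FPS case there is no non-integrability obstruction and no need for the delicate cut-off functions $f_q$ of Section \ref{Subsec Wick outside}. The coefficient $Q_{n-j}(h_{\mathcal{A},\varepsilon},V_{\mathcal{A},\varepsilon})$ is a polynomial in $h_{\mathcal{A},\varepsilon}$ (uniformly bounded) and $V_{\mathcal{A},\varepsilon}$ (which converges to $V_{\mathcal{A}}$ locally uniformly on $D\setminus\mathcal{A}$, and satisfies $V_{\mathcal{A},\varepsilon}\leq G_{D,\varepsilon,\varepsilon}(z,z) = O(|\log\varepsilon|)$ everywhere by Lemma \ref{Lem G eps}); combining this with the bound on mixed two-point correlations coming from $\E[:\Phi_{D\setminus\mathcal{A},\varepsilon}^{j}:(z):\Phi_{D\setminus\mathcal{A},\varepsilon}^{j}:(w)] = j!\,G_{D\setminus\mathcal{A},\varepsilon,\varepsilon}(z,w)^{j}$ and the estimate \eqref{Eq est Green} of Proposition \ref{Prop Green} gives, by the same computations as in Lemma \ref{Lem f q} and Proposition \ref{Prop V G eps}, that each term $Q_{n-j}(h_{\mathcal{A},\varepsilon},V_{\mathcal{A},\varepsilon})\,:\Phi_{D\setminus\mathcal{A},\varepsilon}^{j}:$ is Cauchy in $L^2(d\PP,\sigma(\Phi),H^{-\eta}(\C))$ and converges to a well-defined limit, which we denote $Q_{n-j}(h_{\mathcal{A}},V_{\mathcal{A}})\,:\Phi_{D\setminus\mathcal{A}}^{j}:$. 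This establishes the decomposition. Finally, the conditional-expectation formula follows immediately: taking $\E[\,\cdot\,|\,\mathcal{A},h_{\mathcal{A}}]$ kills every term with $j\geq 1$ since $\E[:\Phi_{D\setminus\mathcal{A}}^{j}:\,|\,\mathcal{A},h_{\mathcal{A}}] = 0$, leaving only the $j=0$ term $Q_n(h_{\mathcal{A}},V_{\mathcal{A}})$.

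I expect the main (though still modest) obstacle to be the $L^2$-convergence of the terms with $j\geq 1$ near $\mathcal{A}$: one must check that $V_{\mathcal{A},\varepsilon}^{k}$ blowing up like a power of $|\log\varepsilon|$ near $\mathcal{A}$ is controlled by the decay of $G_{D\setminus\mathcal{A},\varepsilon,\varepsilon}(z,w)^{j}$ when $z$ or $w$ approaches $\mathcal{A}$, exactly as in the proof of Proposition \ref{Prop conv 0} and Proposition \ref{Prop V G eps}; here the boundedness of $h_{\mathcal{A}}$ makes this strictly easier than the FPS case (no analogue of Lemma \ref{Lem moment nu eps} is needed), which is why the statement says the proof is ``straightforward given what we have done previously'' and can be omitted in full detail.
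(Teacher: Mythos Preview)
Your proposal is correct and follows essentially the same approach as the paper. The paper itself omits the proof, noting only that it is ``straightforward given what we have done previously'' and that Lemma \ref{Lem Q n x y u v} is used; your plan fleshes out exactly this sketch --- regularize, apply the two-variable identity \eqref{Eq Q n x y u v} with the correspondences $x_1\leftrightarrow h_{\mathcal A}$, $x_2\leftrightarrow \Phi_{D\setminus\mathcal A}$, $u_1\leftrightarrow V_{\mathcal A}$, $u_2\leftrightarrow G_{D\setminus\mathcal A}(z,z)$, and pass to the limit using the same estimates as for the FPS case, with the boundedness of $h_{\mathcal A}$ making the convergence strictly easier --- and correctly identifies why the conditional-expectation statement follows.
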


Note that the case of two-valued sets 
(Proposition \ref{Prop decomp Wick TVS} in Section \ref{Subsec Wick TVS})
is just a special case of the above Proposition \ref{Prop decomp Wick BTLS}.
Another special case is given by the $k$-th generation CLE$_{4}$,
which we will denote by $\mathcal{A}_{k}$.
Let $\ell_{k}$ be the corresponding label function
on $D\setminus \mathcal{A}_{k}$,
which takes values in
$\{-2\lambda k, -2\lambda (k-2),\dots, 2\lambda (k-2), 2\lambda k\}$.
With the notations above, $\ell_{k}$ corresponds to $h_{\mathcal{A}_{k}}$.

\begin{cor}
\label{Cor decomp Wick nested CLE 4}
For every $k\geq 0$ and $n\geq 1$, the following decomposition holds:
\begin{equation}
\label{Eq decomp Wick nested CLE 4}
:\Phi^{n}: ~=~
\sum_{j=0}^{n} \dfrac{n!}{j! (n-j)!} Q_{n-j}(\ell_{k},V_{\mathcal{A}_{k}})\,:\Phi_{D\setminus \mathcal{A}_{k}}^{j} : \, ,
\end{equation}
In particular,
\begin{displaymath}
\E\big[ :\Phi^{n}: \big\vert \mathcal{A}_{k}, \ell_{k}\big]
= Q_{n}(\ell_{k},V_{\mathcal{A}_{k}}).
\end{displaymath}
Further, for every $\eta >0$,
\begin{equation}
\label{Eq Wick limit nested CLE 4}
\lim_{k\to +\infty}
\E\big[
\big\Vert
\,:\Phi^{n}:\, - Q_{n}(\ell_{k},V_{\mathcal{A}_{k}})
\big\Vert_{H^{-\eta}(\C)}^{2}
\big] = 0.
\end{equation}
\end{cor}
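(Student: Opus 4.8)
The plan is to establish the decomposition \eqref{Eq decomp Wick nested CLE 4} first, then derive the limit \eqref{Eq Wick limit nested CLE 4} as a consequence of the convergence of the $k$-th generation CLE$_4$ gasket. For the decomposition, I would simply invoke Proposition \ref{Prop decomp Wick BTLS} with $\mathcal{A} = \mathcal{A}_k$ the $k$-th generation CLE$_4$ gasket, which is indeed a bounded-type thin local set of $\Phi$ (this is part of the content of \cite{ASW}, and the gasket is a.s. connected and contains $\partial D$). The function $h_{\mathcal{A}_k}$ is precisely the label function $\ell_k$, which takes values in $\{-2\lambda k, \dots, 2\lambda k\}$ and is in particular deterministically bounded, so the hypotheses of Proposition \ref{Prop decomp Wick BTLS} are met. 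This yields \eqref{Eq decomp Wick nested CLE 4} and the conditional expectation formula immediately.

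For the limit \eqref{Eq Wick limit nested CLE 4}, the key observation is that by Proposition \ref{Prop decomp Wick BTLS} applied to $\mathcal{A}_k$, the difference $:\Phi^n: - Q_n(\ell_k, V_{\mathcal{A}_k})$ equals
\begin{displaymath}
\sum_{j=1}^{n} \dfrac{n!}{j!(n-j)!} Q_{n-j}(\ell_k, V_{\mathcal{A}_k})\,:\Phi_{D\setminus\mathcal{A}_k}^{j}:\,,
\end{displaymath}
which carries a conditional expectation of $0$ given $(\mathcal{A}_k, \ell_k)$. Hence, by the same orthogonality argument as in Corollary \ref{Cor Sob norm decomp} (the cross term vanishes under the conditional expectation),
\begin{displaymath}
\E\big[\Vert :\Phi^n: - Q_n(\ell_k, V_{\mathcal{A}_k})\Vert_{H^{-\eta}(\C)}^2\big]
= \E\big[\Vert :\Phi^n:\Vert_{H^{-\eta}(\C)}^2\big] - \E\big[\Vert Q_n(\ell_k, V_{\mathcal{A}_k})\Vert_{H^{-\eta}(\C)}^2\big].
\end{displaymath}
So it suffices to show $\E[\Vert Q_n(\ell_k, V_{\mathcal{A}_k})\Vert_{H^{-\eta}(\C)}^2] \to \E[\Vert :\Phi^n:\Vert_{H^{-\eta}(\C)}^2]$ as $k\to\infty$. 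Equivalently, since $Q_n(\ell_k, V_{\mathcal{A}_k}) = \E[:\Phi^n:\,|\,\mathcal{A}_k,\ell_k]$, this is a statement that the conditional expectations converge to $:\Phi^n:$ in $L^2$, which by the conditional Jensen inequality and the martingale-type structure reduces to showing that the sigma-algebras generated by $(\mathcal{A}_k,\ell_k)$ increase (up to null sets) to $\sigma(\Phi)$ as $k\to\infty$. This is precisely the content of the Miller-Sheffield coupling: the nested CLE$_4$ together with its branching-random-walk labels recovers the full GFF $\Phi$ in the limit $k\to\infty$; see \cite{MS1} (and the discussion in Section \ref{Subsubsec exc}). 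Given this, $\E[:\Phi^n:\,|\,\mathcal{A}_k,\ell_k]$ is a closed martingale converging in $L^2(d\PP,\sigma(\Phi),H^{-\eta}(\C))$ to $\E[:\Phi^n:\,|\,\sigma(\Phi)] = :\Phi^n:$, which gives \eqref{Eq Wick limit nested CLE 4}.

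The main obstacle, such as it is, lies in justifying that $\bigvee_k \sigma(\mathcal{A}_k,\ell_k) = \sigma(\Phi)$ modulo null sets, i.e. that the Miller-Sheffield coupling is in fact a measurable recovery of $\Phi$ from the infinite nested structure. This is standard in the literature — it is the defining feature of the coupling between the GFF and the nested CLE$_4$ with labels — but bridging the precise measure-theoretic formulation here (that the conditional expectations of the already-constructed Wick powers converge) with what is stated in \cite{MS1, ASW} requires a small amount of care. An alternative route that avoids invoking the full coupling result is to observe directly that $V_{\mathcal{A}_k}(z) \to V_{A_k^{\text{gen}}}(z)$-type quantities blow up appropriately and the diameters of the connected components of $D\setminus\mathcal{A}_k$ tend to $0$ in probability (as in Lemma \ref{Lem conv 0 remainder Sob}, using \cite[Theorem 1.5]{AruPaponPowell23dust}); then one can bound the $L^2$-norm of the remainder term directly via the estimate \eqref{Eq est Green} of Proposition \ref{Prop Green}, exactly as in the proof of Lemma \ref{Lem conv 0 remainder Sob}, and show it vanishes. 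I would present the martingale argument as the clean conceptual proof and mention the direct estimate as the self-contained alternative.
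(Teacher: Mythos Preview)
Your proposal is correct and follows essentially the same approach as the paper: the decomposition and conditional expectation come straight from Proposition~\ref{Prop decomp Wick BTLS}, and for \eqref{Eq Wick limit nested CLE 4} the paper uses exactly the closed-martingale argument you give (the filtration $(\sigma(\mathcal{A}_k,\ell_k))_{k\ge 0}$ increases to $\sigma(\Phi)$ up to null sets), while also noting --- as you do --- that a direct estimate in the spirit of Lemma~\ref{Lem conv 0 remainder Sob} would be a more laborious alternative. Your intermediate Pythagorean identity is correct but unnecessary, since the $L^2$ martingale convergence already gives \eqref{Eq Wick limit nested CLE 4} directly.
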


\begin{proof}
The only not entirely straightforward point is 
\eqref{Eq Wick limit nested CLE 4}.
One could proceed with explicit estimates applied to the decomposition
\eqref{Eq decomp Wick nested CLE 4}
and argue that for $k$ large and $j\geq 1$,
$:\Phi_{D\setminus \mathcal{A}_{k}}^{j} :$ is sufficiently small in a sense
to compensate for $Q_{n-j}(\ell_{k},V_{\mathcal{A}_{k}})$
taking increasingly large values.
However, there is a more abstract and direct argument that circumvents all that.
The sequence of sigma-algebras
$(\sigma(\mathcal{A}_{k}, \ell_{k}))_{k\geq 0}$
is increasing and forms a filtration.
Moreover,
\begin{displaymath}
\bigvee_{k\geq 0} \sigma(\mathcal{A}_{k}, \ell_{k}) = \sigma(\Phi),
\end{displaymath}
up to negligible events (i.e. with probability $0$).
Therefore, \eqref{Eq Wick limit nested CLE 4} is just a convergence of $L^2$ closed martingales with values in 
the Sobolev space $H^{-\eta}(\C)$.
\end{proof}

\section{Conjectures and open questions}
\label{Sec open}

\subsection{The expansion of first passage sets for the Euclidean distance}
\label{Subsec FPS Euclid}

A natural question is what form takes the asymptotic expansion for first passage set (FPS)
(see Theorem \ref{Thm A E FPS} in Section \ref{Subsec Pres A E})
when one considers $\varepsilon$-neighborhoods
defined through Euclidean distance rather then conformal radius.
Here we will formulate a precise conjecture for this,
but at this stage we lack ingredients to actually prove it.
Our conjecture involves the Malliavin–Kontsevich–Suhov (MKS) measure on
SLE$_{4}$ loops,
as it is supposed to describe, up to scaling, the microscopic holes of an FPS.

So we consider Jordan loops $\Gamma$ in $\C$ that surround $0$,
that is to say separate $0$ from $\infty$.
A natural family of measure on such loops is given by the 
SLE$_{\kappa}$ MKS measures
for $\kappa\in (0,4]$,
which we will denote by
$\mu^{\rm loop}_{\text{SLE}_{\kappa}}$.
For references on MKS measures, see \cite{AiraultMalliavin01Unitarizing,KontsevichSuhov07MKS,KemppainenWerner16CLE,Zhan21SLEloop,BaverezJego24MKS}.
The measure $\mu^{\rm loop}_{\text{SLE}_{\kappa}}$ is scale invariant and has infinite total mass.
Given the scale invariance, it is natural to consider the Jordan loops $\Gamma$
up to change of scale, that is to say to identify
$\Gamma$ and $r\Gamma$ for any $r>0$.
We will denote by $[\Gamma]$ the corresponding equivalence class.
The infinite measure $\mu^{\rm loop}_{\text{SLE}_{\kappa}}$
can be factorized into a product measure:
a measure on a scaling factor (for instance $\CR(0,\inter(\Gamma))$ or $d(0,\Gamma)$)
times a \textbf{probability} measure on $[\Gamma]$.
So we have
\begin{displaymath}
\mu^{\rm loop}_{\text{SLE}_{\kappa}}
(\CR(0,\inter(\Gamma))\in (r, r+dr), d[\Gamma])
= C_{\kappa} \dfrac{dr}{r}\,\times\,\hat{\mu}^{\rm loop}_{\text{SLE}_{\kappa}}(d[\Gamma]),
\end{displaymath}
where $C_{\kappa}>0$ is a constant depending on $\kappa$,
and $\hat{\mu}^{\rm loop}_{\text{SLE}_{\kappa}}$ is a probability measure on $[\Gamma]$.
For our conjecture we will need the measure $\hat{\mu}^{\rm loop}_{\text{SLE}_{4}}$.

Now consider $\Phi$ a GFF with boundary condition $V>0$ and $A$ the FPS of $\Phi$ up to level $0$,
as in Sections \ref{Sec Wick FPS} and \ref{Sec A E}.
Consider the following $\varepsilon$-neighborhoods defined in terms of the Euclidean distance:
\begin{displaymath}
\Ns_{\varepsilon}^{\rm Eucl}(A) = \{ z\in D\setminus A \vert d(z,A)<\varepsilon d(z,\partial D)\},
\qquad
\widetilde{\Ns}^{\rm Eucl}_{\varepsilon}(A)
= \{ z\in D\setminus A \vert d(z,A)<\varepsilon\}.
\end{displaymath}
Let us first explain how the measure $\hat{\mu}^{\rm loop}_{\text{SLE}_{4}}$
appears in this context.
For $z\in D\setminus A$, we will denote by $\Gamma_{z}$
the boundary of the connected component of $z\in D\setminus A$,
which we translated by $-z$. 
In other words, we centered everything by sending $z$ to $0$.
In this way, $\Gamma_{z}$ is a Jordan loop in $\C$ separating $0$ from $\infty$.
We will denote by $[\Gamma_{z}]$ the equivalence class of $\Gamma_{z}$ under scaling.
Now, take $n\geq 1$ and consider $Z_{1,\varepsilon}, Z_{2,\varepsilon},\dots, Z_{n,\varepsilon}$
random points in $\widetilde{\Ns}^{\rm Eucl}_{\varepsilon}(A)$,
sampled, conditionally on $A$, in an i.i.d. way according to the probability measure
\begin{displaymath}
\dfrac{\ind_{z\in \widetilde{\Ns}^{\rm Eucl}_{\varepsilon}(A)}\,d^{2}z}
{\operatorname{Leb}(\widetilde{\Ns}^{\rm Eucl}_{\varepsilon}(A))}.
\end{displaymath}

\begin{conj}
\label{Conj MKS SLE 4}
Let $n\geq 1$.
For $\varepsilon>0$, consider the family of random variables
\begin{displaymath}
(A, (Z_{j,\varepsilon},[\Gamma_{Z_{j,\varepsilon}}])_{1\leq j\leq n}).
\end{displaymath}
As $\varepsilon\to 0$, this family converges in law towards
\begin{displaymath}
(A,(Z_{j},[\Gamma_{j}])_{1\leq j\leq n}),
\end{displaymath}
with the following properties.
\begin{itemize}
\item $A$ is the same FPS as previously.
\item $(Z_{j})_{1\leq j\leq n}$ are random points on $A$,
sampled, conditionally on $A$, in an i.i.d. way according to the probability measure
\begin{displaymath}
\dfrac{\nu_{A}}{\nu_{A}(A)},
\end{displaymath}
where $\nu_{A}$ is the FPS measure \eqref{Eq nu a Mink non norm}.
\item The family of equivalence classes of loops
$([\Gamma_{j}])_{1\leq j\leq n}$
is \textbf{independent} from
$(A,(Z_{j})_{1\leq j\leq n})$,
with an i.i.d. distribution according to the measure
$\hat{\mu}^{\rm loop}_{\text{SLE}_{4}}$.
\end{itemize}
\end{conj}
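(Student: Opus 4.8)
The plan is to prove Conjecture~\ref{Conj MKS SLE 4} in two stages: first the joint convergence of $(A,(Z_{j,\varepsilon})_{1\le j\le n})$ towards $(A,(Z_{j})_{1\le j\le n})$, which is essentially a repackaging of the Minkowski content theorem, and then the asymptotic independence and Malliavin--Kontsevich--Suhov description of the recentred loops $[\Gamma_{Z_{j,\varepsilon}}]$, which is the substantial part. For the first stage: conditionally on $A$, the points $Z_{1,\varepsilon},\dots,Z_{n,\varepsilon}$ are i.i.d.\ with law $\ind_{\widetilde{\Ns}^{\rm Eucl}_{\varepsilon}(A)}\,d^{2}z/\operatorname{Leb}(\widetilde{\Ns}^{\rm Eucl}_{\varepsilon}(A))$, and by Theorem~\ref{Thm Mink ALS1} (see \cite{ALS1}) the renormalised indicator $\tfrac12|\log\varepsilon|^{1/2}\ind_{\widetilde{\Ns}^{\rm Eucl}_{\varepsilon}(A)}\,d^{2}z$ converges a.s.\ in the weak topology to $\nu_{A}$, under the mild boundary condition \eqref{Eq cond Leb boundary ALS1} (otherwise one tests against a fixed cutoff $f_{0}$); in particular $\operatorname{Leb}(\widetilde{\Ns}^{\rm Eucl}_{\varepsilon}(A))\sim 2|\log\varepsilon|^{-1/2}\nu_{A}(D)$. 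Hence the conditional law of $(Z_{j,\varepsilon})_{1\le j\le n}$ given $A$ converges weakly a.s.\ to $(\nu_{A}/\nu_{A}(A))^{\otimes n}$, which yields the claimed limit for the pair $(A,(Z_{j,\varepsilon})_{1\le j\le n})$.

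The core of the second stage is a \emph{tangent-measure} statement for the holes of the first passage set: conditionally on $A$ and on a $\nu_{A}$-typical point $Z$, the connected component of $D\setminus A$ whose boundary is seen from $Z$, recentred at $Z$ and viewed modulo scaling, should converge in law to the quotient measure $\hat{\mu}^{\rm loop}_{\text{SLE}_{4}}$, and should become asymptotically independent of all macroscopic data. I would try to extract this from the two-valued set approximation of Section~\ref{Subsubsec FPS} together with the level-line (SLE$_{4}$) construction of the FPS: the boundary of the hole through which $Z_{j,\varepsilon}$ is seen is locally an SLE$_{4}$-type curve, so after zooming in at the scale of $d(Z_{j,\varepsilon},A)$ the restriction/locality and reversibility properties of SLE$_{4}$ should force convergence to a loop sampled from the Malliavin--Kontsevich--Suhov measure on SLE$_{4}$ loops modulo scaling --- this being precisely the expected description of the microscopic holes of the FPS (cf.\ \cite{KemppainenWerner16CLE,BaverezJego24MKS}). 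The independence from $A$ should then follow from a spatial quasi-independence (``locality'') estimate of the type used in Section~\ref{Subsec error terms TVS}: once one conditions on the SLE$_{4}$ loop surrounding $Z$ at a fixed intermediate scale, the rest of $A$ is asymptotically decoupled from the $\varepsilon$-scale neighbourhood of $Z$; applying the same estimate to the $n$ points $Z_{1,\varepsilon},\dots,Z_{n,\varepsilon}$ simultaneously, together with the fact that a.s.\ their recentred holes are eventually distinct, gives the mutual independence of the $[\Gamma_{j}]$, and the passage from $n=1$ to general $n$ is then a routine factorial-moment argument.

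The main obstacle, and where the bulk of the technical work lies, is the size-biasing introduced by sampling in a Euclidean neighbourhood of fixed width $\varepsilon$ rather than in one scaled to the local size of the hole. A hole $h$ of $D\setminus A$ of diameter $\delta$ contributes to $\widetilde{\Ns}^{\rm Eucl}_{\varepsilon}(A)$ an area of order $\varepsilon^{1/2}\delta^{3/2}$ when $\delta\gg\varepsilon$ (since $\partial h$ has dimension $3/2$) and an area of order $\delta^{2}$ when $\delta\lesssim\varepsilon$; one must show, by a first-moment computation over the point process of holes of $D\setminus A$ combined with the MKS scaling identity
\[
\mu^{\rm loop}_{\text{SLE}_{4}}\bigl(\CR(0,\inter(\Gamma))\in (r,r+dr),\,d[\Gamma]\bigr)
= C_{4}\,\frac{dr}{r}\,\times\,\hat{\mu}^{\rm loop}_{\text{SLE}_{4}}(d[\Gamma]),
\]
that the Euclidean neighbourhood asymptotically concentrates its mass near the boundaries of holes of macroscopic diameter, and that the resulting conditional law of the recentred loop modulo scaling is exactly the Palm version of $\hat{\mu}^{\rm loop}_{\text{SLE}_{4}}$ --- which is what makes the $r^{-1}dr$ factor integrate out and decouple from the scale. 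Transferring between conformal radius and Euclidean distance throughout is handled by the Koebe distortion bounds \eqref{Eq Koebe}.

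Both ingredients needed to carry this through --- sharp control of the number, diameters and boundary regularity of the holes of an FPS (with the correct constants), and the tangent-measure convergence of those holes to MKS SLE$_{4}$ loops --- are currently beyond what is available in the literature, which is why the statement is formulated as a conjecture rather than a theorem. A reasonable intermediate goal would be to first establish the analogous statement with $\widetilde{\Ns}^{\rm Eucl}_{\varepsilon}(A)$ replaced by the conformal-radius neighbourhood $\Ns_{\varepsilon}(A)$, where the natural scale of the hole is built into the definition and the scale-mixing issue is less acute, and only afterwards to upgrade to the Euclidean statement; even that intermediate step, however, requires the tangent-measure convergence of FPS holes to MKS loops, which remains the fundamental missing input.
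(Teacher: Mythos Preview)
The statement you were asked to prove is a \emph{conjecture} in the paper, not a theorem: the paper offers no proof and explicitly says ``at this stage we lack ingredients to actually prove it.'' So there is no proof in the paper to compare your proposal against. Your proposal itself is honest about this: after sketching a plausible two-stage strategy, you acknowledge in the final two paragraphs that the key inputs --- sharp control on the holes of the FPS and a tangent-measure convergence of those holes to MKS SLE$_{4}$ loops --- are not available, which is exactly why the statement is stated as a conjecture.

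Your outline is broadly consistent with the brief heuristic discussion the paper gives immediately after the conjecture. The paper singles out two points: (i) convergence of the law of a single recentred loop $[\Gamma_{Z_{j,\varepsilon}}]$ to $\hat{\mu}^{\rm loop}_{\text{SLE}_{4}}$ ``should be doable by using ideas similar to Kemppainen--Werner [Corollary 8]''; (ii) the asymptotic \emph{independence} of the loops from $(A,(Z_j))$ is ``the trickier part.'' Your sketch identifies the same division of labour and the same bottleneck. One small discrepancy: you frame the main technical obstacle as the size-biasing from sampling in a Euclidean (rather than conformal-radius) neighbourhood, whereas the paper locates the core difficulty squarely in the asymptotic independence statement itself, not in the choice of neighbourhood. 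In any case, neither you nor the paper supplies an argument that closes the gap, so your submission should be read as a strategy memo rather than a proof.
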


So, whereas for $\varepsilon>0$,
the loops $([\Gamma_{Z_{j,\varepsilon}}])_{1\leq j\leq n}$
are measurable w.r.t. $(A,(Z_{j,\varepsilon})_{1\leq j\leq n})$,
in the limit $\varepsilon\to 0$ we believe that they become independent.
The fact that the law of $[\Gamma_{Z_{j,\varepsilon}}]$
converges to $\hat{\mu}^{\rm loop}_{\text{SLE}_{4}}$
should be doable by using ideas similar to
Kemppainen-Werner \cite[Corollary 8]{KemppainenWerner16CLE}.
However, the asymptotic independence of loops from
the FPS and the points is the trickier part of
Conjecture \ref{Conj MKS SLE 4}.

Given $[\Gamma]$ an equivalence class, the ratio
\begin{equation}
\label{Eq ratio CR dist}
\dfrac{\CR(0,\inter{\Gamma})}{d(0,\Gamma)}
\end{equation}
does not dependent on the representative $\Gamma$ of the class $[\Gamma]$,
so it is a function of $[\Gamma]$.
Indeed, the conformal radius and the Euclidean distance scale in the same way.
So we can consider the ratio \eqref{Eq ratio CR dist}
as a random variable under the probability measure $\hat{\mu}^{\rm loop}_{\text{SLE}_{4}}$.
Given Conjecture \ref{Conj MKS SLE 4},
we believe it to be the limit law of
\begin{displaymath}
\dfrac{\CR(Z_{j,\varepsilon},D\setminus A)}{d(Z_{j,\varepsilon},A)}
\end{displaymath}
as $\varepsilon\to 0$,
with moreover asymptotic independence from
$A$ and $Z_{j,\varepsilon}$.

For $z\in D$ and $n\geq 1$, consider the following expectations w.r.t. $\hat{\mu}^{\rm loop}_{\text{SLE}_{4}}$:
\begin{eqnarray*}
F^{\text{SLE}_{4}}_{n}(z) &=& 
\hat{\mu}^{\rm loop}_{\text{SLE}_{4}}
\Big(\dfrac{1}{2\pi}
\Big(
\log
\Big(
\dfrac{\CR(0,\inter{\Gamma})}{d(0,\Gamma)}
\dfrac{d(z,\partial D)}{\CR(z,D)}
\Big)
\Big)^{n}\Big)
\\
&=&
\sum_{j=0}^{n}\dfrac{n!}{j!(n-j)!}
\hat{\mu}^{\rm loop}_{\text{SLE}_{4}}
\Big(\dfrac{1}{2\pi}
\Big(
\log
\Big(
\dfrac{\CR(0,\inter{\Gamma})}{d(0,\Gamma)}
\Big)
\Big)^{j}\Big)
\Big(\dfrac{1}{2\pi}
\Big(
\log
\Big(
\dfrac{d(z,\partial D)}{\CR(z,D)}
\Big)
\Big)^{n-j},
\end{eqnarray*}
\begin{eqnarray*}
\widetilde{F}^{\text{SLE}_{4}}_{n}(z) &=& 
\hat{\mu}^{\rm loop}_{\text{SLE}_{4}}
\Big(\dfrac{1}{2\pi}
\Big(
\log
\Big(
\dfrac{\CR(0,\inter{\Gamma})}{d(0,\Gamma)}
\dfrac{1}{\CR(z,D)}
\Big)
\Big)^{n}\Big)
\\
&=&
\sum_{j=0}^{n}\dfrac{(-1)^{n-j}n!}{j!(n-j)!}
\hat{\mu}^{\rm loop}_{\text{SLE}_{4}}
\Big(\dfrac{1}{2\pi}
\Big(
\log
\Big(
\dfrac{\CR(0,\inter{\Gamma})}{d(0,\Gamma)}
\Big)
\Big)^{j}\Big)
\Big(\dfrac{1}{2\pi}
\Big(
\log
\Big(
\CR(z,D)
\Big)
\Big)^{n-j}.
\end{eqnarray*}
As objects, $F^{\text{SLE}_{4}}_{n}$ and $\widetilde{F}^{\text{SLE}_{4}}_{n}$
are continuous deterministic functions on $D$.
By convention, we set
$F^{\text{SLE}_{4}}_{0}(z) = \widetilde{F}^{\text{SLE}_{4}}_{0} (z) =1$.
Recall the fields $\psi_{2k+1,A}$ as in Sections \ref{Sec Wick FPS} and \ref{Sec A E},
which are the restrictions of the odd Wick powers
$:\Phi^{2k+1}:$ to $A$.

\begin{conj}
\label{Conj expansion FPS Eucl}
Fix $\eta>0$.
The following expansions hold in $L^{2}(d\PP,\sigma(\Phi),H^{-\eta}(\C))$.
For every $N\geq 0$,
\begin{align*}
&\ind_{\Ns_{\varepsilon}^{\rm Eucl}(A)}
\\
&=
\dfrac{1}{\sqrt{2\pi}}
\sum_{k=0}^{N} 
\dfrac{(-1)^{k}}{2^{k} k! (k+1/2)}
\hat{\mu}^{\rm loop}_{\text{SLE}_{4}}
\Big(
\Big(\frac{1}{2\pi}\Big\vert\log \Big(\dfrac{\varepsilon\CR(0,\inter{\Gamma})d(z,\partial D)}
{d(0,\Gamma)\CR(z,D)}\Big)\Big\vert\Big)^{-(k + 1/2)}
\Big)
\psi_{2k+1,A}
\\&+o(\vert \log\varepsilon\vert^{-(N+1/2)})
\\
&=
\dfrac{1}{\sqrt{2\pi}}
\sum_{k=0}^{N} 
\dfrac{(-1)^{k}}{2^{k} k! (k+1/2)}
\dfrac{1}{\big(\frac{1}{2\pi}\vert\log\varepsilon\vert\big)^{k+1/2}}
\sum_{j=0}^{k} \dfrac{(-1)^{j}(2k+1)!}{2^{j} j! (2(k-j)+1)!}
\,F^{\text{SLE}_{4}}_{j}\,\psi_{2(k-j)+1,A}
\\&+o(\vert \log\varepsilon\vert^{-(N+1/2)}).
\end{align*}
Fix a deterministic smooth cutoff function $f_{0}:D\rightarrow [0,1]$, compactly supported in $D$.
Then  
\begin{align*}
&\ind_{\widetilde{\Ns}_{\varepsilon}^{\rm Eucl}(A)}f_{0}
\\
&=
\dfrac{1}{\sqrt{2\pi}}
\sum_{k=0}^{N} 
\dfrac{(-1)^{k}}{2^{k} k! (k+1/2)}
\hat{\mu}^{\rm loop}_{\text{SLE}_{4}}
\Big(
\Big(\frac{1}{2\pi}\Big\vert\log \Big(\dfrac{\varepsilon\CR(0,\inter{\Gamma})}
{d(0,\Gamma)\CR(z,D)}\Big)\Big\vert\Big)^{-(k + 1/2)}
\Big)
f_{0}\psi_{2k+1,A}
\\&+o(\vert \log\varepsilon\vert^{-(N+1/2)})
\\
&=
\dfrac{1}{\sqrt{2\pi}}
\sum_{k=0}^{N} 
\dfrac{(-1)^{k}}{2^{k} k! (k+1/2)}
\dfrac{1}{\big(\frac{1}{2\pi}\vert\log\varepsilon\vert\big)^{k+1/2}}
\sum_{j=0}^{k} \dfrac{(-1)^{j}(2k+1)!}{2^{j} j! (2(k-j)+1)!}
\,\widetilde{F}^{\text{SLE}_{4}}_{j}\,f_{0}\psi_{2(k-j)+1,A}
\\&+o(\vert \log\varepsilon\vert^{-(N+1/2)}).
\end{align*}
Further, assume that the condition \eqref{Eq cond small boundary} holds.
Then,
\begin{align*}
&\ind_{\widetilde{\Ns}_{\varepsilon}^{\rm Eucl}(A)} =
\\
&
\dfrac{1}{\sqrt{2\pi}}
\sum_{k=0}^{N} 
\dfrac{(-1)^{k}}{2^{k} k! (k+1/2)}
\dfrac{1}{\big(\frac{1}{2\pi}\vert\log\varepsilon\vert\big)^{k+1/2}}
\sum_{j=0}^{k} \dfrac{(-1)^{j}(2k+1)!}{2^{j} j! (2(k-j)+1)!}
\,\widetilde{F}^{\text{SLE}_{4}}_{j}\,\psi_{2(k-j)+1,A}
\\&+o(\vert \log\varepsilon\vert^{-(N+1/2)}).
\end{align*}
\end{conj}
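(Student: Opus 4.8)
The plan is to deduce the Euclidean expansions from the conformal-radius expansion already established in Theorem \ref{Thm A E FPS}, treating the random local hole shape of the FPS as a perturbation of the deterministic logarithmic corrections that appear there. For $z\in D\setminus A$ let $\Gamma_z$ denote the boundary of the connected component of $z$, centered so that $z\mapsto 0$, and set $\rho(z)=\CR(z,D\setminus A)/d(z,A)=\CR(0,\inter(\Gamma_z))/d(0,\Gamma_z)$, a quantity depending only on the scale-equivalence class $[\Gamma_z]$. Then $z\in\widetilde{\Ns}^{\rm Eucl}_{\varepsilon}(A)$ iff $\CR(z,D\setminus A)<\varepsilon\,\rho(z)$, and $z\in\Ns^{\rm Eucl}_{\varepsilon}(A)$ iff $\CR(z,D\setminus A)<\varepsilon\,\rho(z)\,d(z,\partial D)$; comparing with \eqref{Eq not N eps A}, the effect of passing from conformal radius to Euclidean distance is to replace $\varepsilon$ by $\varepsilon\,\rho(z)$ (respectively $\varepsilon\,\rho(z)\,d(z,\partial D)/\CR(z,D)$ in the unnormalized case). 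By Theorem \ref{Thm law CR FPS} the conditional law of $\tfrac1{2\pi}\log(\CR(z,D)/\CR(z,D\setminus A))$ is that of the hitting time $T_0$ of $0$ by a Brownian motion from $v$, so the first (purely algebraic) step is to run the computation \eqref{Eq series 1}--\eqref{Eq reexp 1} with the extra random factor $\rho(z)$ inside the logarithm, average over $[\Gamma_z]$, and reexpand into powers of $\vert\log\varepsilon\vert^{-1}$ via the Hermite reexpansion identity of Proposition \ref{Prop reexp}. The coefficients produced are precisely the moments of $\tfrac1{2\pi}\log(\rho/\CR(z,D))$ (resp.\ $\tfrac1{2\pi}\log(\rho\,d(z,\partial D)/\CR(z,D))$) under $\hat{\mu}^{\rm loop}_{\text{SLE}_{4}}$, i.e.\ the functions $\widetilde{F}^{\text{SLE}_{4}}_j$ (resp.\ $F^{\text{SLE}_{4}}_j$), which identifies the conjectured right-hand sides and shows the two displayed forms of each expansion are equivalent.

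To make this rigorous one must replicate the architecture of Section \ref{Sec A E}: condition on a two-valued set $A_{0,b}$ and its label function $\ell_{0,b}$ with $b=b(\varepsilon)\to\infty$ slowly, so that inside each label-$b$ component $A$ is a conditionally independent FPS from level $b$ to $0$ and the microscopic holes of $A$ relevant to $\widetilde{\Ns}^{\rm Eucl}_{\varepsilon}(A)$ lie deep inside it. The genuinely new ingredient is a quantitative form of Conjecture \ref{Conj MKS SLE 4}: conditionally on $(A_{0,b},\ell_{0,b})$ and on $z$ lying in a hole of $A$ of conformal radius of order $\delta$, the rescaled boundary loop $[\Gamma_z]$ converges in law as $\delta\to0$ to $\hat{\mu}^{\rm loop}_{\text{SLE}_{4}}$, \emph{asymptotically independently} of $A_{0,b}$, of the labels, and of the depth $\tfrac1{2\pi}\log(\CR(z,D\setminus A_{0,b})/\CR(z,D\setminus A))$. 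The convergence of the marginal law of $[\Gamma_z]$ should follow from a Kemppainen--Werner-type argument (cf.\ \cite[Corollary 8]{KemppainenWerner16CLE}) applied to the iterated-SLE$_4$ construction of the FPS; the asymptotic independence from the macroscopic FPS and from the depth is the delicate point, and would require coupling the FPS construction with the MKS measure on SLE$_4$ loops up to scaling, exploiting the conformal Markov property of the level lines.

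Granting this input, the remainder is a second-moment argument parallel to Sections \ref{Subsec error terms TVS}--\ref{Subsec proof expansion}. One decomposes $\ind_{\widetilde{\Ns}^{\rm Eucl}_{\varepsilon}(A)}$ minus its conditional expectation into the same six types of terms as in the proof of Theorem \ref{Thm A E FPS}, and bounds: the term comparing with the conditional expectation, via $\int_{D^2}\PP(z\stackrel{D\setminus A_{0,b}}{\longleftrightarrow} w,\,\ell_{0,b}(z)=b)\,\LK_\eta(\vert w-z\vert)\,d^2z\,d^2w$ controlled by extremal distance and Theorem \ref{Thm ED ALS3} as in Lemmas \ref{Lem bound ind same loop}--\ref{Lem ED Gauss}; the contributions from points already sitting in a very small hole of $A_{0,b}$, via Lemma \ref{Lem area 0}-type estimates together with the tails of $\log\rho$ under $\hat{\mu}^{\rm loop}_{\text{SLE}_{4}}$ (at least stretched-exponential, since the loop is SLE$_4$), which makes these errors decay faster than any power of $\vert\log\varepsilon\vert^{-1}$; the fluctuation $\psi_{2k+1,A}-\E[\psi_{2k+1,A}\mid A_{0,b},\ell_{0,b}]$ exactly as in Section \ref{Subsec error fields}; and the tails of the reexpanded series as in Section \ref{Subsec bound series}, using that the $\widetilde{F}^{\text{SLE}_{4}}_j$ grow at most geometrically in $j$. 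For the unnormalized statement one additionally invokes the condition \eqref{Eq cond small boundary} as in the proof of \eqref{Eq A E tilde N bis}, with the same choices $\delta(\varepsilon)=\exp(-\vert\log\varepsilon\vert^{1/(N+2)})$ and $b(\varepsilon)=2\lambda\lfloor(\log\vert\log\varepsilon\vert)^2\rfloor$. Two-point correlations follow the usual dichotomy: $z,w$ in the same component of $D\setminus A_{0,b}$ (crude bound by $1$), or in different components, where one needs the joint decorrelation of the two hole shapes $[\Gamma_z],[\Gamma_w]$, again a consequence of the independence input via a domain-Markov argument. The main obstacle is thus not the GFF analysis, which transfers essentially verbatim from Section \ref{Sec A E}, but the SLE input: proving the asymptotic independence of microscopic hole shapes from the macroscopic FPS, i.e.\ the quantitative version of Conjecture \ref{Conj MKS SLE 4}.
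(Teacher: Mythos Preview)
The statement you are attempting to prove is labeled a \emph{Conjecture} in the paper, and the paper explicitly provides no proof: the surrounding text says ``Here we will formulate a precise conjecture for this, but at this stage we lack ingredients to actually prove it.'' So there is no paper proof to compare against.

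Your proposal is not a proof either, and you correctly identify why: the entire argument is conditional on a quantitative form of Conjecture~\ref{Conj MKS SLE 4}, namely that the rescaled hole shapes $[\Gamma_z]$ converge to $\hat{\mu}^{\rm loop}_{\text{SLE}_4}$ \emph{asymptotically independently} of the macroscopic data $(A_{0,b},\ell_{0,b})$ and of the depth variable. You flag this as ``the delicate point'' and ``the main obstacle,'' which matches exactly what the paper says in the discussion following Conjecture~\ref{Conj MKS SLE 4}: ``the asymptotic independence of loops from the FPS and the points is the trickier part.'' Neither you nor the paper supplies this ingredient, so what you have written is a strategy outline, not a proof.

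That said, your outline is the natural one and is consistent with the paper's heuristics: reducing Euclidean to conformal-radius neighborhoods via the random factor $\rho(z)=\CR(0,\inter(\Gamma_z))/d(0,\Gamma_z)$, averaging over $[\Gamma_z]$ under $\hat{\mu}^{\rm loop}_{\text{SLE}_4}$, and then reexpanding via Proposition~\ref{Prop reexp} to produce the functions $F^{\text{SLE}_4}_j$ and $\widetilde{F}^{\text{SLE}_4}_j$. The proposed replication of the Section~\ref{Sec A E} architecture (TVS conditioning, six-term decomposition, choice of $b(\varepsilon)$ and $\delta(\varepsilon)$) is plausible once the independence input is available, though you would also need moment bounds on $\log\rho$ under $\hat{\mu}^{\rm loop}_{\text{SLE}_4}$ and a two-point decorrelation statement for pairs of hole shapes, both of which you note but neither of which is established anywhere. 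In short: your sketch captures the intended mechanism, but it remains a conjecture for the same reason the paper leaves it as one.
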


\subsection{The expansion of Wiener sausages for the conformal radius}
\label{Subsec Wiener CR}

In Le Gall's expansion for the 2D Wiener sausage
(Theorem \ref{Thm Le Gall} in Section \ref{Subsec Le Gall sausage}), the $\varepsilon$-neighborhood of
the Brownian trajectory is defined in terms of the Euclidean distance.
To establish a full parallelism with the asymptotic expansion for first passage sets,
one may wonder what is the expansion for one Brownian trajectory in
the $\varepsilon$-neighborhood is defined in terms of the conformal radius.
We have a precise conjecture for this case, too.
Again it involves an MKS measure $\hat{\mu}^{\rm loop}_{\text{SLE}_{8/3}}$,
this time with $\kappa = 8/3$,
and again we lack ingredients to prove our conjecture.
Note that historically, the MKS measure $\mu^{\rm loop}_{\text{SLE}_{8/3}}$
was the first to be constructed,
by Werner in \cite{Werner08SLE_8_3_loops}, the $\text{SLE}_{8/3}$ loops appearing as outer boundaries of Brownian loops.

Here we will reuse the notations of Section \ref{Subsec Le Gall sausage}.
Let $n\geq 1$, $\varepsilon>0$, and let
$Z_{1,\varepsilon},Z_{2,\varepsilon},\dots, Z_{n,\varepsilon}$
be random points on the Wiener sausage $S_\varepsilon(\zeta_M)$,
which, conditionally on $B([0,\zeta_M])$, are i.i.d.
and sampled according to the probability measure
\begin{displaymath}
\dfrac{\ind_{z\in S_\varepsilon(\zeta_M)} d^{2}z}{\operatorname{Leb}(S_\varepsilon(\zeta_M))}.
\end{displaymath}
Since the dimension of the outer boundary of a $2D$ Brownian trajectory is
$4/3$ and the area of $S_\varepsilon(\zeta_M)$ is of order $1/\vert\log\varepsilon\vert$, 
a point $Z_{j,\varepsilon}$ will be disconnected by $B([0,\zeta_M])$
from $\infty$ with probability
$1-\varepsilon^{2/3 + o(1)}$.
We will consider $\Gamma_{Z_{j,\varepsilon}}$ the boundary of the connected component of
$Z_{j,\varepsilon}$ in $\C\setminus B([0,\zeta_M])$
(which is bounded with high probability)
translated by $-Z_{j,\varepsilon}$
(we center the picture by sending $Z_{j,\varepsilon}$ to $0$).
We will denote by $[\Gamma_{Z_{j,\varepsilon}}]$ the equivalence class of 
$Z_{j,\varepsilon}$ under scaling.
We first state an analogue of Conjecture \ref{Conj MKS SLE 4} in this context.

\begin{conj}
\label{Conj MKS SLE 8 3}
Let $n\geq 1$.
For $\varepsilon>0$, consider the family of random variables
\begin{displaymath}
(B([0,\zeta_M]), (Z_{j,\varepsilon},[\Gamma_{Z_{j,\varepsilon}}])_{1\leq j\leq n}).
\end{displaymath}
As $\varepsilon\to 0$, this family converges in law towards
\begin{displaymath}
(B([0,\zeta_M]),(Z_{j},[\Gamma_{j}])_{1\leq j\leq n}),
\end{displaymath}
with the following properties.
\begin{itemize}
\item $B([0,\zeta_M])$ is the same Brownian trajectory as previously.
\item $(Z_{j})_{1\leq j\leq n}$ are random points on $B([0,\zeta_M])$,
sampled, conditionally on $B([0,\zeta_M])$, in an i.i.d. way according to the probability measure
$\Theta_{\zeta_{M}}/\zeta_{M}$, 
where $\Theta_{\zeta_{M}}$ is the occupation measure \eqref{Eq occup meas}.
\item The family of equivalence classes of loops
$([\Gamma_{j}])_{1\leq j\leq n}$
is \textbf{independent} from
$(B([0,\zeta_M]),(Z_{j})_{1\leq j\leq n})$,
with an i.i.d. distribution according to the measure
$\hat{\mu}^{\rm loop}_{\text{SLE}_{8/3}}$.
\end{itemize}
\end{conj}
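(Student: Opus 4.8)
The plan is to establish Conjecture~\ref{Conj MKS SLE 8 3} in three stages: convergence of the sampled points, convergence of a single rescaled boundary loop to Werner's $\mathrm{SLE}_{8/3}$ loop, and finally the joint independence statement, where the genuine difficulty sits.

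First I would treat the points. Conditioning on the trajectory $B([0,\zeta_M])$, the renormalized sausage measures $\tfrac1\pi|\log\varepsilon|\,\ind_{S_\varepsilon(\zeta_M)}\,d^2z$ converge to the occupation measure $\Theta_{\zeta_M}$ (recalled in Section~\ref{Subsec Le Gall sausage}), so the normalized probability measures on $S_\varepsilon(\zeta_M)$ converge weakly to $\Theta_{\zeta_M}/\zeta_M$ conditionally on the path; since $\mu\mapsto\mu^{\otimes n}$ is weakly continuous, a routine conditional weak-convergence argument gives that $(B([0,\zeta_M]),(Z_{j,\varepsilon})_{1\le j\le n})$ converges in law to $(B([0,\zeta_M]),(Z_j)_{1\le j\le n})$ with $(Z_j)$ conditionally i.i.d.\ of law $\Theta_{\zeta_M}/\zeta_M$. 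Almost surely the $Z_j$ are pairwise distinct and at positive mutual distance; this separation is what will later decouple the $n$ loops.

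Next, the one-loop statement. Fix the trajectory and write $Z:=Z_{1,\varepsilon}$, $\delta:=d(Z,B([0,\zeta_M]))$. A dimension-versus-area estimate (the area of $S_\varepsilon(\zeta_M)$ is of order $|\log\varepsilon|^{-1}$ while the outer boundary of a planar Brownian path has dimension $4/3$) shows that with probability $1-\varepsilon^{2/3+o(1)}$ the point $Z$ lies in a bounded component of $\C\setminus B([0,\zeta_M])$ of Euclidean diameter $\asymp\delta$. I would then argue that, after centering $Z$ at $0$ and dilating by $\delta^{-1}$, the boundary $\Gamma_Z$ of this component converges in law, as $\varepsilon\to 0$, to a sample from $\hat\mu^{\rm loop}_{\mathrm{SLE}_{8/3}}$. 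The natural route is Werner's identification of the $\mathrm{SLE}_{8/3}$ loop measure, up to restriction covariance and scaling, with the law of the outer boundary of a Brownian loop~\cite{Werner08SLE_8_3_loops}, together with the scale invariance of Brownian motion: near a typical point the path is, at each dyadic scale, a superposition of excursions and loops, so the microscopic hole containing $Z$ is asymptotically a hole of a Brownian loop-soup carpet, whose boundary is an $\mathrm{SLE}_{8/3}$ loop. Turning this into convergence of the conditioned loop shape should follow the conditioning-on-being-small arguments of Kemppainen--Werner~\cite{KemppainenWerner16CLE} (as the paper already suggests for $\mathrm{SLE}_4$ in Section~\ref{Subsec FPS Euclid}), using the restriction property and the factorization $\mu^{\rm loop}_{\mathrm{SLE}_{8/3}}=C\,\tfrac{dr}{r}\times\hat\mu^{\rm loop}_{\mathrm{SLE}_{8/3}}$ recalled above, which in particular fixes the law of the scale-free ratio $\CR(0,\inter\Gamma)/d(0,\Gamma)$.

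The hard part will be the asymptotic independence: that in the limit $([\Gamma_j])_{1\le j\le n}$ are i.i.d.\ of law $\hat\mu^{\rm loop}_{\mathrm{SLE}_{8/3}}$ and independent of $(B([0,\zeta_M]),(Z_j)_j)$, although for each fixed $\varepsilon$ they are deterministic functions of it. The mutual independence of the $n$ loops reduces to the single-loop problem, since with high probability the sampled points are macroscopically separated, so the holes live in disjoint regions and are conditionally independent given the path; the real question is the decorrelation of the microscopic boundary shape from the macroscopic path and point. I would attempt a multiscale coupling: conditioning on $B$ near $Z$ down to an intermediate scale $\delta^{\theta}$ with $0<\theta<1$, resample the path below that scale via the Markov property and show that the rescaled boundary of the component of $Z$ at scale $\delta$ depends, up to $o(1)$ in total variation, only on the resampled microscopic piece, which is asymptotically independent of the macroscopic data; then let $\theta\to 1$. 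The obstructions are that one conditions simultaneously on the rare event that $Z$ is disconnected at a small scale (which interacts with the resampling), and that one must rule out a macroscopic excursion of the path pinching the component of $Z$ off prematurely --- a quantitative non-pinching estimate for Brownian motion, moreover uniform over the shape of the surrounding domain because of the conformal-radius normalization. I expect this uniform non-pinching / two-sided decoupling estimate for the component of a sampled point to be the principal new ingredient needed, and the reason the conjecture is presently out of reach; once it is available, the product-form limit follows by combining the three stages, since weak convergence together with asymptotic conditional independence of continuous functionals yields exactly the claimed factorized limiting law.
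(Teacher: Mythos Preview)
The statement you are addressing is labeled a \emph{Conjecture} in the paper, not a theorem, and the paper does not supply a proof: Section~\ref{Subsec Wiener CR} explicitly says ``we have a precise conjecture for this case, too \ldots\ and again we lack ingredients to prove our conjecture.'' So there is no ``paper's own proof'' to compare your proposal against; the conjecture is presented as open.

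That said, your outline is a sensible roadmap and, to your credit, you are candid about where it breaks. Stage~1 (convergence of the sampled points via convergence of the normalized sausage measure to the normalized occupation measure) is essentially routine. Stage~2 (a single loop converging to $\hat\mu^{\rm loop}_{\mathrm{SLE}_{8/3}}$) is plausible in spirit and the Kemppainen--Werner conditioning-on-being-small idea is the right analogy, as the paper itself indicates for the $\mathrm{SLE}_4$ version in Section~\ref{Subsec FPS Euclid}; but already here one would need to control the law of the boundary of the hole containing a \emph{uniform} point of the sausage, which is a different conditioning from the CLE setting and would require work. Stage~3 is where you correctly locate the genuine obstruction: the asymptotic independence of the microscopic loop shapes from the macroscopic data $(B([0,\zeta_M]),(Z_j)_j)$. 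Your proposed multiscale coupling via resampling below an intermediate scale is a natural strategy, but, as you yourself note, making it rigorous requires a uniform quantitative non-pinching/decoupling estimate for the component of a sampled point under the rare conditioning that it is disconnected at a small scale. This is precisely the missing ingredient, and it is not supplied by your sketch. In short: your proposal is not a proof but a plausible plan of attack, and it accurately identifies the step at which current techniques fall short --- which is consistent with the paper's decision to state the result as a conjecture.
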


Consider now the $\varepsilon$-neighborhood defined in terms of the conformal radius:
\begin{displaymath}
S_{\varepsilon}^{\rm CR}(\zeta_M) = 
\{ z\in\C\vert \, 
z \text{ disconnected by } B([0,\zeta_M]) \text{ from } \infty,
\CR(z, \C\setminus B([0, \zeta_M]))<\varepsilon\}.
\end{displaymath}
We do not consider the points that are not disconnected by $B([0,\zeta_M])$ from
$\infty$ because the area of the 
$\varepsilon$-neighborhood in the unbounded connected component of
$\C\setminus B([0, \zeta_M])$
is of order $\varepsilon^{2/3 + o(1)}$,
which is negligible compared to any power of
$1/\vert\log\varepsilon\vert$.
Already in Le Gall's expansion one could remove from $S_\varepsilon(\zeta_M)$
the points that are not disconnected from $\infty$ and still get exactly the same
asymptotic expansion.

Let be the following constants,
obtained as expectations w.r.t. $[\Gamma]$ sampled according to $\hat{\mu}^{\rm loop}_{\text{SLE}_{8/3}}$:
\begin{displaymath}
C^{\text{SLE}_{8/3}}_{n,M}
=
\hat{\mu}^{\rm loop}_{\text{SLE}_{8/3}}
\Big(
\Big(
\dfrac{1}{\pi}
\log\Big(\dfrac{\CR(0,\inter{\Gamma})}{d(0,\Gamma)}\Big)
+ \cst (M)
\Big)^{n}
\Big).
\end{displaymath}
By convention, $C^{\text{SLE}_{8/3}}_{0,M}=1$.

\begin{conj}
\label{Conj Wiener sausage CR}
Fix $M>0$ and $\eta>0$.
The following asymptotic expansion holds in
$L^{2}(d\PP,\sigma((B_{t})_{0\leq t< \zeta_M}),H^{-\eta}(\C))$.
For every $N\geq 1$,
\begin{align*}
&\ind_{S_{\varepsilon}^{\rm CR}(\zeta_M)}
\\&
=
\sum_{n=1}^{N} \dfrac{(-1)^{n-1}}{n!}
\hat{\mu}^{\rm loop}_{\text{SLE}_{8/3}}
\Big(
\Big(\frac{1}{\pi}\vert \log (\varepsilon d(0,\Gamma)/\CR(0,\inter{\Gamma}))\vert + \cst (M)\Big)^{-n}
\Big)
:\Theta_{\zeta_M}^{n}:
\, + o(\vert \log\varepsilon\vert^{-N})
\\&
=
\sum_{n=1}^{N}  \dfrac{(-1)^{n-1}}{n!}
\dfrac{1}{\big(\frac{1}{\pi} \vert \log \varepsilon\vert\big)^{n}}
\sum_{k=1}^{n} 
\dfrac{n!(n-1)!}{(n-k)! k! (k-1)!} C^{\text{SLE}_{8/3}}_{n-k,M}\,
:\Theta_{\zeta_M}^{k}:
\, + o(\vert \log\varepsilon\vert^{-N}).
\end{align*}
\end{conj}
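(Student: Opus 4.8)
Here is a proof strategy for Conjecture \ref{Conj Wiener sausage CR}, conditional on Conjecture \ref{Conj MKS SLE 8 3}. The idea is to run Le Gall's second moment method (the proof of Theorem \ref{Thm Le Gall}) for the \emph{Euclidean} sausage, but with the radius $\varepsilon$ replaced, at each point, by a random local scale dictated by the microscopic outer boundary of the Brownian trace; the limit in this local scale is exactly where the MKS measure $\hat{\mu}^{\rm loop}_{\text{SLE}_{8/3}}$ enters.

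First I would record the exact reduction. If $z$ is disconnected from $\infty$ by $B([0,\zeta_M])$, let $\Gamma_z$ denote the boundary of the connected component of $z$ in $\C\setminus B([0,\zeta_M])$, recentred by $-z$. Then $d(z,B([0,\zeta_M])) = d(0,\Gamma_z)$ and $\CR(z,\C\setminus B([0,\zeta_M])) = \CR(0,\inter(\Gamma_z)) = d(z,B([0,\zeta_M]))\,R(z)$, where $R(z) := \CR(0,\inter(\Gamma_z))/d(0,\Gamma_z)\in[1,4]$ by the Koebe quarter theorem. Hence $\ind_{S_{\varepsilon}^{\rm CR}(\zeta_M)}(z) = \ind_{z\ \text{disconnected}}\,\ind_{\{d(z,B([0,\zeta_M]))<\varepsilon/R(z)\}}$: the conformal-radius sausage is the Euclidean sausage at the point-dependent radius $\check\varepsilon(z) = \varepsilon/R(z)$, and $R(z)$ is a functional of the scale-class $[\Gamma_z]$ alone. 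Conditionally on the trajectory and on a point $Z$ of $S_{\check\varepsilon(Z)}(\zeta_M)$, Le Gall's expansion at radius $\check\varepsilon(Z)$ together with \eqref{Eq h M exp} gives, in $L^2$, $\ind_{\{d(\cdot,B)<\check\varepsilon(Z)\}} = \sum_{n=1}^{N}\frac{(-1)^{n-1}}{n!}\,\frac{:\Theta_{\zeta_M}^{n}:}{(\frac{1}{\pi}|\log\varepsilon| + \cst(M) + \frac{1}{\pi}\log R(Z) + o(1))^{n}} + o(|\log\varepsilon|^{-N})$, with $R(Z)\in[1,4]$ so that the correction is a bounded $O(1)$ shift.

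Next I would compute moments of $(\ind_{S_{\varepsilon}^{\rm CR}(\zeta_M)},f)$ against smooth compactly supported $f$, and, for the $H^{-\eta}(\C)$ statement, insert the Bessel potential $\LK_\eta$ of \eqref{Eq Bessel potential} throughout, exactly as elsewhere in the paper. Writing the $p$-th moment as an integral over $p$-tuples, conditioning on the trajectory, and sampling the $p$ points uniformly in the sausage, one reduces everything to the joint law of $(B([0,\zeta_M]),(Z_{j,\varepsilon},R(Z_{j,\varepsilon}))_{j\le p})$; by Conjecture \ref{Conj MKS SLE 8 3} this converges as $\varepsilon\to0$ to $(B([0,\zeta_M]),(Z_j,R_j)_{j\le p})$ with the $R_j$ i.i.d. with law the pushforward of $\hat{\mu}^{\rm loop}_{\text{SLE}_{8/3}}$ under $[\Gamma]\mapsto \CR(0,\inter(\Gamma))/d(0,\Gamma)$, \emph{independent} of the trajectory and of the points, while the points equidistribute according to $\Theta_{\zeta_M}/\zeta_M$. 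Substituting the conditional Le Gall expansion, using that the Sobolev norms of $\ind_{S_{\varepsilon}^{\rm CR}(\zeta_M)}$ are uniformly bounded (so that expansion and averaging over $R$ may be interchanged, splitting the integral near the origin as in the proof of Proposition \ref{Prop exp beta gamma}), yields the first, ``intrinsic'' form of the expansion, with $n$-th coefficient the $\hat{\mu}^{\rm loop}_{\text{SLE}_{8/3}}$-average of $(\frac1\pi|\log\check\varepsilon|+\cst(M))^{-n}$ times $:\Theta_{\zeta_M}^{n}:$. Finally, re-expanding $(\frac1\pi|\log\varepsilon|+\cst(M)+\frac1\pi\log R)^{-n}$ into powers of $1/|\log\varepsilon|$ and averaging over $R$ is purely the change-of-normalization identity \eqref{Eq change norm Laguerre} for generalized Laguerre polynomials of order $-1$ (Proposition \ref{Prop reexp Laguerre}), exactly the mechanism of Section \ref{Subsec BM algeb}; this produces the constants $C^{\text{SLE}_{8/3}}_{n-k,M}$ and the binomial--Laguerre coefficients, hence the second, re-expanded form.

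The main obstacle is Conjecture \ref{Conj MKS SLE 8 3}. Its marginal part --- that the recentred, rescaled outer boundary at a Lebesgue-typical point of the Wiener sausage converges to an $\text{SLE}_{8/3}$ loop with law $\hat{\mu}^{\rm loop}_{\text{SLE}_{8/3}}$ --- should be accessible via Werner's realization of $\text{SLE}_{8/3}$ loops as outer boundaries of Brownian loops \cite{Werner08SLE_8_3_loops} together with a restriction/locality argument in the style of Kemppainen--Werner \cite{KemppainenWerner16CLE}. The genuinely hard point is the asymptotic \emph{independence} of this microscopic loop from the macroscopic trajectory and from the sampled points: one needs a decoupling statement asserting that, in the scaling limit, conditioning on the global shape of $B([0,\zeta_M])$ does not bias the law of the tiny loop surrounding a typical disconnected point. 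A plausible route is to peel the macroscopic pockets off first by a Markov-type argument and then invoke the conformal restriction property of the Brownian loop soup at the smallest scales, but making this quantitative and compatible with the moment bounds above is precisely the missing ingredient; without it, the interchange of the $\varepsilon\to0$ limit with the averaging over $R$ in the previous paragraph cannot be justified, which is why the statement remains conjectural. See also the discussion in Section \ref{Subsec Wiener to FPS}.
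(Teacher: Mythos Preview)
The paper does not prove this statement: it is stated as a conjecture in Section \ref{Subsec Wiener CR}, among the open questions, with no proof or detailed heuristic derivation beyond the surrounding context (Conjecture \ref{Conj MKS SLE 8 3} and the Laguerre algebra of Section \ref{Subsec BM algeb}). So there is no paper's proof to compare against; your proposal is an attempt to supply a conditional argument where the paper offers none.

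As a strategy, your outline is reasonable and aligned with the paper's heuristics. The reduction $\ind_{S_\varepsilon^{\rm CR}}(z)=\ind_{\text{disconnected}}\ind_{d(z,B)<\varepsilon/R(z)}$ with $R(z)=\CR(0,\inter(\Gamma_z))/d(0,\Gamma_z)\in[1,4]$ is correct, and the final re-expansion into powers of $|\log\varepsilon|^{-1}$ via Proposition \ref{Prop reexp Laguerre} is exactly the algebraic mechanism the paper has in mind. You also correctly locate the real obstruction in the asymptotic-independence half of Conjecture \ref{Conj MKS SLE 8 3}.

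One point deserves more care than you give it. When you invoke ``Le Gall's expansion at radius $\check\varepsilon(Z)$'', the radius $\check\varepsilon(Z)=\varepsilon/R(Z)$ is a functional of the same trajectory $B$ whose sausage you are expanding, so Theorem \ref{Thm Le Gall} (stated for deterministic $\varepsilon$) does not apply as a black box. Boundedness $R\in[1,4]$ gives a uniform $o(|\log\varepsilon|^{-N})$ remainder only once you know that Le Gall's bounds are uniform over $\check\varepsilon\in[\varepsilon/4,\varepsilon]$ \emph{and} that the coupling of $R(Z)$ with the fields $:\Theta_{\zeta_M}^n:$ does not spoil the averaging step. The first is a mild strengthening of Le Gall's estimates; the second is, again, exactly the content of Conjecture \ref{Conj MKS SLE 8 3}. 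Your closing acknowledgement that this interchange of limits cannot presently be justified is therefore accurate, and the statement remains, as in the paper, a conjecture.
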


\subsection{The FPS expansion from the Wiener sausage expansion}
\label{Subsec Wiener to FPS}

Given that a first passage set can be represented as the closure of a cluster of
Brownian loops and Brownian boundary-to-boundary excursions in $D$,
a natural question is how to deduce the asymptotic expansion for the FPS from
Le Gall's asymptotic expansion for the Wiener sausage in dimension $2$.
We shall see that there are difficulties in this regard.
For this, one has to chose, either work all along with the Euclidean distance
or work all along with the conformal radius, but not mix the two
(check Conjectures \ref{Conj expansion FPS Eucl} and \ref{Conj Wiener sausage CR}).
The Euclidean distance is maybe simpler, because the Euclidean $\varepsilon$-neighborhood of a cluster
is the union of Euclidean $\varepsilon$-neighborhoods
of individual Brownian trajectories composing the cluster.
For the conformal radius the analogue is not true,
as the conformal radius is a more global variable.

So let $\LP$ be the random countable collection of Brownian-like trajectories composing the FPS $A$:
\begin{displaymath}
A = \overline{\bigcup_{\wp\in\LP}\operatorname{Range}(\wp)}.
\end{displaymath}
For $\delta>0$, denote
\begin{displaymath}
\LP_{\delta} = \{\wp\in \LP\vert \diam(\wp)<\delta\}.
\end{displaymath}
The subset $\LP_{\delta}$ is a.s. finite.
Denote by $S_{\varepsilon}(\LP_{\delta})$
the Euclidean $\varepsilon$-neighborhood of $\LP_{\delta}$.
Let $\Theta_{\LP_{\delta}}$ be the occupation measure of
$\LP_{\delta}$:
\begin{displaymath}
\Theta_{\LP_{\delta}} = 
\sum_{\wp\in \LP_{\delta}}
\Theta_{\wp}.
\end{displaymath}
This is an a.s. finite measure.
Let $:\Theta_{\LP_{\delta}}^{n}:$ be the renormalized powers of $\Theta_{\LP_{\delta}}$.
Several comments here.
\begin{itemize}
\item We regularize $\Theta_{\LP_{\delta}}$ by circle averages.
\item We apply to regularized $\Theta_{\LP_{\delta}}$
the generalized Laguerre polynomials $\Lambda_{n}(x,u)$
\eqref{Eq Lambda}.
\item A question arises: what kind of Green's function to use in the renormalization? 
There is choice, but the outcome $:\Theta_{\LP_{\delta}}^{n}:$ will depend on the choice.
The relation between different ways to renormalize $\Theta_{\LP_{\delta}}^{n}$
is given by the change of normalization indentity for the polynomials $\Lambda_{n}(x,u)$
\eqref{Eq change norm Laguerre}.
A natural choice in the context here is the massless Green's function on
$D$ with $0$ boundary condition rather than the massive Green's function on $\C$
used by Le Gall.
\item In the context of Brownian motion representations of the continuum GFF,
the normalization conventions for the GFF and that for the Brownian motion have to match.
With our convention for the GFF, one would rather use not the standard Brownian motion
(generator $\Delta/2$) but the Brownian motion with generator $\Delta$.
This is just a detail, but this impacts the coefficients in the asymptotic expansion of
Wiener sausages.
\end{itemize}
So, formally speaking, our renormalized powers $:\Theta_{\LP_{\delta}}^{n}:$
correspond to
\begin{displaymath}
\Lambda_{n}\big(\Theta_{\LP_{\delta}},G_{D}(z,z)\big),
\end{displaymath}
where $G_{D}$ is the same Green's function as in the GFF case;
see Section \ref{Subsec Wick}.
Now, the constant order term in $G_{D}(z,z)$ after the logarithmic singularity
is no longer $\cst(M)$ as in \eqref{Eq G M sing}, but
\begin{displaymath}
\dfrac{1}{2\pi}\log \CR(z,D).
\end{displaymath}
This affects the form of the asymptotic expansion for Wiener sausages,
but the two forms are related through the change of normalization formula \eqref{Eq change norm Laguerre} and the reexpansion identity \eqref{Eq Laguerre reexp}.
So, for \textbf{fixed} $\delta>0$, the expansion in $\varepsilon$ for
$\ind_{S_{\varepsilon}(\LP_{\delta})}$ is as follows.

\begin{claim}
\label{Claim Le Gall soup delta}
For simplicity, assume that the condition \eqref{Eq cond small boundary} holds,
that is to say the area of an $\varepsilon$-neighborhood of the boundary $\partial D$ is asymptotically smaller than
any power of $1/\vert\log\varepsilon\vert$.
Fix $\delta\in (0,\diam(D))$ and $\eta>0$.
The following asymptotic expansion holds in
$L^{2}(d\PP,\sigma(\LP),H^{-\eta}(\C))$.
For every $N\geq 1$,
\begin{equation}
\label{Eq exp clus delta}
\ind_{S_{\varepsilon}(\LP_{\delta})}
=
\sum_{n=1}^{N}
\dfrac{(-1)^{n-1}}{\big(\frac{1}{2\pi} \vert \log \varepsilon\vert\big)^{n}}
\sum_{k=1}^{n} 
\dfrac{(n-1)!}{(n-k)! k! (k-1)!} \Big(\dfrac{1}{2\pi}\log\CR(z,D)\Big)^{n-k}\,
:\Theta_{\LP_{\delta}}^{k}:
\, + o(\vert \log\varepsilon\vert^{-N}).
\end{equation}
\end{claim}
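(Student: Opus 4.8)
The plan is to obtain \eqref{Eq exp clus delta} from Le Gall's expansion (Theorem \ref{Thm Le Gall}), in the functional version \eqref{Eq Le Gall exp norm}, applied in two modified settings simultaneously: the single killed Brownian path $B([0,\zeta_M])$ is replaced by the a.s.\ finite collection $\LP_\delta$ of Brownian loops and boundary-to-boundary excursions, and the massive Green's function $G_M$ on $\C$ is replaced by the massless Green's function $G_D$ on $D$ used for the GFF.

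First I would set up the renormalized powers $:\Theta_{\LP_\delta}^n:$. Since $\LP_\delta = \{\wp_1,\dots,\wp_J\}$ is a.s.\ finite, $\Theta_{\LP_\delta} = \sum_i \Theta_{\wp_i}$ and $S_\varepsilon(\LP_\delta)=\bigcup_i S_\varepsilon(\wp_i)$. The powers $:\Theta_{\LP_\delta}^n:$, defined as the $L^2(d\PP,\sigma(\LP),H^{-\eta}(\C))$-limit of $\Lambda_n(\Theta_{\LP_\delta,r},G_D(z,z))$ as $r\to 0$, are the exact analogue of \eqref{Eq renorm mult loc time}; their existence follows by carrying out Le Gall's circle-average estimate for a sum of Brownian trajectories, or via Dynkin's general renormalization scheme \cite{Dynkin1984Polynomials}. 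The only domain-dependent input is the near-diagonal form $G_D(z,w)=\frac{1}{2\pi}\log\frac{1}{\vert w-z\vert}+g_D(z,w)$ with $g_D(z,z)=\frac{1}{2\pi}\log\CR(z,D)$ (from \eqref{Eq G CR}); the finiteness of the loop-soup and excursion intensities of $\{\diam\geq\delta\}$, of order $\log(\diam(D)/\delta)$, supplies moments of all orders for $J$, the diameters and the positions of the $\wp_i$, which is what makes all subsequent bounds integrable.

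Second I would run Le Gall's second-moment argument for the union $S_\varepsilon(\LP_\delta)$: expand the two-point quantities $\langle\ind_{S_\varepsilon(\LP_\delta)}(z)\,:\Theta_{\LP_\delta}^k:(w)\rangle$ and $\langle :\Theta_{\LP_\delta}^m:(z)\,:\Theta_{\LP_\delta}^n:(w)\rangle$ in integer powers of $1/\vert\log\varepsilon\vert$ and assemble the expansion of $\Vert\ind_{S_\varepsilon(\LP_\delta)}-(\text{partial sum})\Vert_{H^{-\eta}(\C)}^2$ as in \cite{LeGallLocTime}. Because the trajectories in a cluster are correlated, I would not feed the cluster directly into Le Gall's path-decomposition identities; instead I would first prove the expansion for an \emph{unconditioned} Poisson ensemble of Brownian loops and boundary excursions of diameter $\geq\delta$, whose occupation-field correlations are explicit (cf.\ Le Jan \cite{LeJan2011Loops}), and then transfer to $\LP_\delta$ by absolute continuity on the finite-dimensional $\sigma$-algebra generated by these macroscopic trajectories. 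The Green's function enters only through its constant term, so Le Gall's expansion holds verbatim with $h_M(\varepsilon)$ replaced by $\frac{1}{2\pi}\vert\log\varepsilon\vert+\frac{1}{2\pi}\log\CR(z,D)$ — the factor $\frac{1}{2\pi}$ rather than $\frac1\pi$ being precisely the normalization of Brownian motion with generator $\Delta$ that matches the GFF convention; re-expanding $(\frac{1}{2\pi}\vert\log\varepsilon\vert+\frac{1}{2\pi}\log\CR(z,D))^{-n}$ into powers of $1/\vert\log\varepsilon\vert$ via the reexpansion identity \eqref{Eq Laguerre reexp} together with the change-of-normalization identity \eqref{Eq change norm Laguerre} for the $:\Theta_{\LP_\delta}^k:$ produces exactly the coefficients $\frac{(n-1)!}{(n-k)!k!(k-1)!}\big(\frac{1}{2\pi}\log\CR(z,D)\big)^{n-k}$ of \eqref{Eq exp clus delta}. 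The boundary-to-boundary excursions contribute an extra piece of $\ind_{S_\varepsilon(\LP_\delta)}$ supported in $\{z\in D:d(z,\partial D)<\varepsilon\}$; under assumption \eqref{Eq cond small boundary} its $H^{-\eta}(\C)$-norm is $o(\vert\log\varepsilon\vert^{-\beta})$ for every $\beta$, so it is absorbed into the remainder, exactly as for $\widetilde{\Ns}_\varepsilon(A)$ in the proof of Theorem \ref{Thm A E FPS}.

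The hard part will be the transfer step: passing from the tractable Poisson ensemble to the boundary cluster $\LP_\delta$ while preserving the $\varepsilon$-expansion, and obtaining bounds uniform enough in the random geometry of $\LP_\delta$ — there is no deterministic bound on $J$, nor on how close two trajectories in the cluster come — to turn the estimates into the asserted $L^2(d\PP,\sigma(\LP),H^{-\eta}(\C))$ statement. Le Gall's method is tailored to a single Markovian path; adapting its cross-term bookkeeping to a correlated finite family, and controlling the self-intersection local times $:\Theta_{\LP_\delta}^n:$ of the cluster uniformly, is where the genuine work lies. By contrast, the Green's-function re-expansion and the boundary term are routine given the identities \eqref{Eq change norm Laguerre}, \eqref{Eq Laguerre reexp} and condition \eqref{Eq cond small boundary}.
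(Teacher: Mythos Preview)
The paper does \emph{not} prove this statement. Claim~\ref{Claim Le Gall soup delta} sits in Section~\ref{Subsec Wiener to FPS}, inside the open-problems section, and the text immediately following it reads: ``Note that Claim~\ref{Eq exp clus delta} is not an immediate consequence of Le Gall's result (Theorem~\ref{Thm Le Gall}). Indeed, one deals with Brownian excursions and Brownian bridges instead of an unconditioned Brownian trajectory. Although a.s.\ finite, the number of trajectories in $\LP_{\delta}$ is still random. Moreover, the trajectories are correlated because of the condition to belong to the same cluster. But given this caveat, the author is pretty much confident that \eqref{Eq exp clus delta} is the correct expansion.'' That is the entirety of the paper's treatment: a statement of belief, an honest list of obstacles, and no proof.

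Your proposal is therefore not competing against a paper proof; it is a sketch of how one might actually establish what the author leaves open. The difficulties you flag --- correlated trajectories, random cardinality, adapting Le Gall's single-path bookkeeping to a finite family --- are exactly the ones the paper names. Your idea of first proving the expansion for an \emph{unconditioned} Poisson ensemble of macroscopic loops and excursions (where Le Jan's machinery gives explicit correlations) and then transferring to the cluster $\LP_\delta$ by absolute continuity is a concrete strategy the paper does not suggest. Whether the absolute-continuity step survives the $L^2$ estimates uniformly in the random geometry is, as you say, the genuine question; the paper offers no guidance there. The algebraic part of your argument --- replacing $h_M(\varepsilon)$ by $\frac{1}{2\pi}\vert\log\varepsilon\vert + \frac{1}{2\pi}\log\CR(z,D)$ and re-expanding via \eqref{Eq change norm Laguerre} and \eqref{Eq Laguerre reexp} to recover the coefficients $\frac{(n-1)!}{(n-k)!k!(k-1)!}$ --- is exactly the mechanism the paper uses elsewhere (Section~\ref{Subsec BM algeb}) and is on solid ground.
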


Note that Claim \ref{Eq exp clus delta} is not an immediate consequence of
Le Gall's result (Theorem \ref{Thm Le Gall}).
Indeed, one deals with Brownian excursions and Brownian bridges instead of an unconditioned Brownian trajectory.
Although a.s. finite, the number of trajectories in $\LP_{\delta}$ is still random.
Moreover, the trajectories are correlated because of the condition to belong to the same cluster.
But given this caveat, the author is pretty much confident that
\eqref{Eq exp clus delta} is the correct expansion.

Now let us compare the expansion \eqref{Eq exp clus delta} to
Conjecture \ref{Conj expansion FPS Eucl},
more precisely to the expansion for $\widetilde{\Ns}_{\varepsilon}^{\rm Eucl}(A)$ there.
Since our $\varepsilon$-neighborhoods are defined through Euclidean distance rather than conformal radius,
this is the right point of comparison.
Then one remarks several important discrepancies.
\begin{itemize}
\item The most important one,
the exponents of $1/\vert\log\varepsilon\vert$ that appear are not the same.
For $S_{\varepsilon}(\LP_{\delta})$ one has integer exponents
$\N\setminus\{0\}$.
For $\widetilde{\Ns}_{\varepsilon}^{\rm Eucl}(A)$ one has half-integer exponents
$\N\setminus\{0\} -1/2$.
So there is a translation of exponents by $-1/2$.
\item Further, the representation theorems (a.k.a. isomorphism theorems)
relate the renormalized intersection local times to even Wick powers of the GFF.
By contrast, in Conjecture \ref{Conj expansion FPS Eucl}
(as well as in Theorem \ref{Thm A E FPS}) appear the (restrictions of) odd Wick powers.
\item The combinatorial coefficients do no match,
although, given that the expectations w.r.t. the MKS measure $\hat{\mu}^{\rm loop}_{\text{SLE}_{4}}$
are black boxes, one does not really know.
\end{itemize}
A good sign though is that powers of $\log\CR(z,D)$ appear in both cases.

Further, in the right-hand side of \eqref{Eq exp clus delta}
one can fix $\varepsilon$ and send $\delta\to 0$,
and see what happens.
What should happen is that for every $n\geq 1$, the field-coefficient corresponding to
$\big(\frac{1}{2\pi} \vert \log \varepsilon\vert\big)^{-n}$,
that is
\begin{displaymath}
\sum_{k=1}^{n} 
\dfrac{(n-1)!}{(n-k)! k! (k-1)!} \Big(\dfrac{1}{2\pi}\log\CR(z,D)\Big)^{n-k}\,
:\Theta_{\LP_{\delta}}^{k}:\, ,
\end{displaymath}
diverges as $\delta\to 0$.
This is easy to verify for the leading coefficient
$\Theta_{\LP_{\delta}}$ ($n=1$).
For $\delta>0$ this is a finite measure,
but the occupation measure of the whole cluster, that is to say the total time of paths composing it,
is a.s. infinite because of the accumulation of small loops.
In general, it has been observed that the renormalization for Brownian loop soups requires an additional layer of renormalization compared to that for a finite number of Brownian trajectories,
so as to remove the ultraviolet divergence induced by the accumulation of small Brownian loops.
See this in \cite{LeJanMarcusRosen17PermWick} in the context of renormalized intersection local times
and in \cite{ABJLMultChaos} in the context of the multiplicative chaos.

So is there a way to connect the expansion for first passage sets to Le Gall's expansion for the Wiener sausage?
One possible approach is as follows.
First one has to take $\delta$ depending on  $\varepsilon$
(i.e. $\delta = \delta(\varepsilon)$)
in an appropriate way to be determined.
Then one writes
\begin{displaymath}
\dfrac{1}{\vert \log\varepsilon\vert^{n}} = 
\dfrac{\vert \log\varepsilon\vert^{-1/2}}{\vert \log\varepsilon\vert^{n-1/2}}.
\end{displaymath}
The $\vert \log\varepsilon\vert^{-1/2}$ in the numerator will serve to tame the divergence of the fields.
In the case $n=1$, one can expect the renormalized positive measure
$\vert \log\varepsilon\vert^{-1/2} \Theta_{\LP_{\delta(\varepsilon)}}$
to converge to a constant times the FPS measure $\nu_{A}$.
For higher order terms $(n\geq 2)$, one most likely needs to rearrange the terms by adding linear combinations of lower order terms so as to get something converging.
The $\vert \log\varepsilon\vert^{-1/2}$ in the numerator may also explain why at the end of the day we get the odd powers of the GFF instead of the even powers.
Indeed, morally speaking, dividing by  $\vert \log\varepsilon\vert^{1/2}$ is like dividing by
$\vert\Phi\vert$, or a regularized version of it at scale $\varepsilon$.

\subsection{Asymptotic expansion of Brownian loop soup clusters for central charge $c\in (0,1)$}
\label{Subsec other c}

The GFF is related via the representation theorems to a Brownian loop soup (BLS) of one specific intensity parameter.
In the literature there are two-different conventions for the parametrization of the intensity of the BLS.
According to the convention due to Lawler-Werner \cite{LawlerWerner2004ConformalLoopSoup}
and Sheffield-Werner \cite{SheffieldWerner2012CLE},
the intensity corresponding to the GFF is $c=1$.
According to the convention due to Le Jan \cite{LeJan2011Loops},
the intensity corresponding to the GFF is $\alpha =1/2$.
The general relation between $c$ and $\alpha$ is 
$c=2\alpha$.
Here we will use the Lawler-Werner and Sheffield-Werner convention
with $c$, since the parameter $c$ also corresponds to a central charge in
conformal field theory, and $c=1$ is the central charge of the GFF.

Here we will discuss the case of BLS of arbitrary intensity parameter $c$.
So let $\LL^{c}_{D}$ denote such a BLS in $D$. It is a Poisson point process of Brownian loops
in $D$ (Brownian bridges from a root to itself conditioned to stay in $D$).
$\LL^{c}_{D}$ contains countably infinitely many Brownian loops.
The family $\LL^{c}_{D}$ also satisfies the conformal invariance (in law) property,
up to time change and rerooting of loops.

Le Jan \cite{LeJan2011Loops} introduced for general $c>0$
the renormalized intersection local times of $\LL^{c}_{D}$,
which we will denote here by
$:\Theta_{\LL^{c}_{D}}^{n}:$ \,.
See also \cite{LeJanMarcusRosen17PermWick}.
Note that already for $n=1$,
that is say at the level of occupation measure,
the field $:\Theta_{\LL^{c}_{D}}:$
is obtained by renormalization and is no longer a positive measure.
Indeed, the positive measure $\Theta_{\LL^{c}_{D}}$ diverges in every open subset of $D$,
and one has to substract its diverging expectation to obtain a converging field
$:\Theta_{\LL^{c}_{D}}:$\, .
For general $n\geq 1$, the renormalization $:\Theta_{\LL^{c}_{D}}^{n}:$
is achieved in \cite{LeJan2011Loops} by using
scale-dependent perturbations of generalized Laguerre polynomials of order $c/2-1$.
For $c=1$, the joint distribution of
$(:\Theta_{\LL^{1}_{D}}^{n}:)_{n\geq 1}$
is the same as that of even Wick powers
$(2^{-n}\,:\Phi^{2n}:)_{n\geq 1}$,
where $\Phi$ is a GFF with $0$ boundary conditions.
This is part of the Brownian representations of the GFF.
For integer values of $c$, $c\in\N\setminus\{0\}$,
the BLS $\LL^{c}_{D}$ is related to the vector-valued GFF with $c$ scalar components,
and $(:\Theta_{\LL^{c}_{D}}^{n}:)_{n\geq 1}$ are distributed as renormalized powers of
the norm-squared of the vector-valued GFF.
Given these relations, we see, for general $c>0$,
the field $:\Theta_{\LL^{c}_{D}}^{n}:$ as an analogue of the even Wick power
$:\Phi^{2n}:$\,.

In \cite{SheffieldWerner2012CLE},
Sheffield and Werner studied the clusters formed by Brownian loops in $\LL^{c}_{D}$.
They showed that there is a phase transition at $c=1$.
For $c>1$, there is only one cluster that is everywhere dense in $D$.
But for $c\in (0,1]$, there are infinitely many different clusters.
Moreover, in this phase, the outer boundaries of outermost clusters are distributed as a
conformal loop ensemble CLE$_{\kappa}$ with
\begin{equation}
\label{Eq rel kappa c}
c = \dfrac{(3\kappa-8)(6-\kappa)}{2\kappa} .
\end{equation}
$c=1$ corresponds to $\kappa=4$ and the limit $c\to 0$ to $\kappa=8/3$.

In \cite{JegoLupuQianFields},
Jego, Lupu and Qian constructed analogues of the GFF $\Phi$ in the subcritical phase
$c\in (0,1)$.
Here we will denote these fields by $\fld_{c}$.
The construction considers considers the clusters formed by Brownian loops in $\LL^{c}_{D}$,
and adds an additional randomness given i.i.d. uniform signs
$\sigma(\clus)\in\{-1,1\}$ per cluster $\clus$ of $\LL^{c}_{D}$.
The restriction of the field $\fld_{c}$ to
$\overline{\clus}$ (topological closure of a cluster $\clus$)
is a finite positive (if $\sigma(\clus)=1$) or negative (if $\sigma(\clus)= -1$)
measure corresponding to a Minkowski content of $\clus$
in the gauge $\vert\log r\vert^{1-c/2 + o(1)} r^{2}$.
It is believed that the $o(1)$ in the exponents actually does not exist, but the proof does not provide such a precision.
Moreover, a signed decomposition of $\fld_{c}$ analogous to \eqref{Eq exc GFF} holds.
For $c=1$, $\fld_{c=1}$ is just the GFF $\Phi$,
but for $c\in (0,1)$, the $\fld_{c}$ are completely new fields
that have never appeared previously in the literature.

Now recall that since Le Jan, we already had analogues of the even Wick powers $:\Phi^{2n}:$,
given by $:\Theta_{\LL^{c}_{D}}^{n}:$\, .
The field $\fld_{c}$ should be though of as
$\sigma\Theta_{\LL^{c}_{D}}^{1-c/2}$,
that is to say a signed fractional power $1-c/2$ of the diverging occupation measure $\Theta_{\LL^{c}_{D}}$.
A further natural question is what would the the analogues of higher odd Wick powers
$:\Phi^{2n+1}:$\, , $n\geq 1$, of the GFF $\Phi$.
To this end, the authors in \cite[Conjecture 9.11]{JegoLupuQianFields}
conjecture the existence of renormalized signed fractional powers
$:\fld_{c} \Theta_{\LL^{c}_{D}}^{n}: \, = \, :\sigma\Theta_{\LL^{c}_{D}}^{n+1-c/2}:$\, $n\geq 1$.
The conjecture is based on a duality relation between the generalized Laguerre polynomials of order
$c/2-1$, $L_{n}^{(c/2-1)}$,
and the generalized Laguerre polynomials of order $1-c/2$, $L_{n}^{(1-c/2)}$.
Note that for $c=1$, the $L_{n}^{(-1/2)}$ are related to even Hermite polynomials
($\He_{2n}(x) = L_{n}^{(-1/2)}(x^{2})$),
and $L_{n}^{(1/2)}$ are related to odd Hermite polynomials
($\He_{2n+1}(x) = x L_{n}^{(1/2)}(x^{2})$).
So, to summarize,
Le Jan constructed in \cite{LeJan2011Loops}
the renormalized integer powers
$n\in\N\setminus/\{0\}$ of the diverging occupation measure $\Theta_{\LL^{c}_{D}}$.
Jego, Lupu and Qian constructed in \cite{JegoLupuQianFields}
the signed fractional power $1-c/2$ of $\Theta_{\LL^{c}_{D}}$,
and further conjectured the existence of 
renormalized signed fractional powers $(n-c/2)_{n\geq 2}$.

These fractional powers are completely new objects that have never been considered previously in the literature.
Unlike the integer powers $:\Theta_{\LL^{c}_{D}}^{n}:$\,
they are supposed to capture the organization of the BLS $\LL^{c}_{D}$
into clusters. 
Here, we will conjecture that the clusters of $\LL^{c}_{D}$
admit asymptotic expansions similar to Le Gall's expansion for the Wiener sausage (Theorem \ref{Thm Le Gall})
and to our expansions for first passage sets and excursion clusters of the GFF
(Theorems \ref{Thm A E FPS} and \ref{Thm A E C j}).
Moreover, the fields appearing in the expansion should be precisely the restrictions
of these fractional powers $:\fld_{c} \Theta_{\LL^{c}_{D}}^{n}:$\, .
Note that the dominant behavior of the $\varepsilon$-neighborhood of a cluster $\clus$ of $\LL^{c}_{D}$
should be, up to a constant,
\begin{displaymath}
\dfrac{\sigma(\clus) \fld_{c \vert \clus}}{\vert\log\varepsilon\vert^{1-c/2}}.
\end{displaymath}
This is proved in \cite{JegoLupuQianFields},
but only with a $\vert\log\varepsilon\vert^{-(1-c/2) + o(1)}$.

\begin{conj}
\label{Conj expansion c}
Fix $c\in (0,1)$.
The hypothetical fractional powers $:\fld_{c} \Theta_{\LL^{c}_{D}}^{n-1}:$ admit restriction to clusters of 
$\LL^{c}_{D}$.
That is to say, given a cluster $\clus$ of $\LL^{c}_{D}$, the field
\begin{displaymath}
\ind_{d(z,\clus)<\delta}\, :\fld_{c} \Theta_{\LL^{c}_{D}}^{n-1}:
\end{displaymath}
converges as $\delta\to 0$, in an appropriate sense (conditional $L^2$), towards a well defined generalized function
supported of $\overline{\clus}$,
which we will denote by $:\fld_{c} \Theta_{\LL^{c}_{D}}^{n-1}:_{\vert \overline{\clus}}$.
By contrast, the renormalized intersection local times $:\Theta_{\LL^{c}_{D}}^{n}:$
do not admit restrictions to clusters.
More precisely the integral
\begin{displaymath}
\int_{d(z,\clus)<\delta}:\Theta_{\LL^{c}_{D}}^{n}:
\end{displaymath}
converges in probability,
as $\delta\to 0$, to $\sigma(\clus)(-1)^{n-1}\infty$.

The $\varepsilon$-neighborhood of a cluster $\clus$ of $\LL^{c}_{D}$,
both for the Euclidean distance and conformal radius,
admits an asymptotic expansion in the $L^2$ sense, as $\varepsilon\to 0$,
into the powers $(\vert \log\varepsilon\vert^{-(n-c/2)})_{n\geq 1}$
of $\vert \log\varepsilon\vert$.
The coefficient of $\vert \log\varepsilon\vert^{-(n-c/2)}$
is a linear combination of fields
$\sigma(\clus):\fld_{c} \Theta_{\LL^{c}_{D}}^{k-1}:_{\vert \overline{\clus}}$
for $k\in\{1,\dots, n\}$.
By contrast, the renormalized intersection local times $:\Theta_{\LL^{c}_{D}}^{n}:$
do not appear in the expansion.
The expansion for the Euclidean distance and that for the conformal radius are related
through the expectations
\begin{displaymath}
\hat{\mu}^{\rm loop}_{\text{SLE}_{\kappa}}
\Big(
\log\Big(
\dfrac{\CR(0,\inter(\Gamma))}{d(0,\Gamma)}
\Big)^{k}
\Big)
\end{displaymath}
w.r.t. the MKS measure $\hat{\mu}^{\rm loop}_{\text{SLE}_{\kappa}}$,
where $\kappa$ and $c$ are related by \eqref{Eq rel kappa c}.
\end{conj}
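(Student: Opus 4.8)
The plan is to transport the machinery of Section~\ref{Sec A E} — which produced Theorem~\ref{Thm A E FPS} — to the Brownian loop soup, replacing Hermite polynomials everywhere by generalized Laguerre polynomials of order $1-c/2$, and replacing the one-dimensional Brownian motion appearing in Theorems~\ref{Thm law CR FPS} and~\ref{Thm ED ALS3} by a Bessel process of dimension $c$. The very first step, though, is to construct the renormalized signed fractional powers $:\fld_c\,\Theta_{\LL^c_D}^{n-1}:$ themselves, i.e.\ to establish \cite[Conjecture~9.11]{JegoLupuQianFields}. For this I would regularize both the occupation field $\Theta_{\LL^c_D}$ and the field $\fld_c$ of \cite{JegoLupuQianFields} by circle averages, apply to the pair the two-variable Laguerre polynomials dual to the $L_n^{(c/2-1)}$ used by Le~Jan \cite{LeJan2011Loops}, and check by a second-moment computation — of the same type as Le~Jan's for $:\Theta^n:$, but now mixing $\fld_c$ with powers of $\Theta_{\LL^c_D}$ — that the regularizations form a Cauchy family in $L^{2}(d\PP,H^{-\eta}(\C))$, possibly after replacing these polynomials by scale-dependent perturbations in the spirit of Dynkin \cite{Dynkin1984Polynomials}.

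Granting the existence of these fields, the second step is the algebraic backbone: the change-of-normalization identity for the two-variable polynomials $\Lambda_n^{(1-c/2)}(x,u)$, the exact analogue of \eqref{Eq change var} and \eqref{Eq change norm Laguerre}. By the umbral-composition viewpoint of Section~\ref{Subsec umbral} such an identity is automatic — the families $(\Lambda_n^{(1-c/2)}(\cdot,u))_{u\in\R}$ form a one-parameter subgroup of $(\mathtt{S}_\R,\odot)$ — and it dictates how $:\fld_c\,\Theta_{\LL^c_D}^{n-1}:$ transforms when one removes a cluster from $D$. Combining this with the Markov/restriction property of $\LL^c_D$ along the outer boundaries of its clusters (the loop soup inside a hole being, up to the sign, a fresh independent loop soup), I would obtain the cluster-induced decomposition of $:\fld_c\,\Theta_{\LL^c_D}^{n-1}:$, the counterpart of Theorem~\ref{Thm decomp FPS Wick}. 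In it the conditional expectation $\psi_{n,\clus}:=\E[:\fld_c\,\Theta_{\LL^c_D}^{n-1}:\,|\,\clus]$ splits, for ``odd-type'' $n$, into $\sigma(\clus)$ times a generalized function supported on $\overline{\clus}$ — the restriction asserted by the conjecture and the object into which the neighborhoods expand — plus a remainder living in the complement and built from $V_\clus(z)=\frac{1}{2\pi}\log(\CR(z,\cdot)/\CR(z,\cdot\setminus\clus))$; the renormalized intersection local times $:\Theta_{\LL^c_D}^n:$ fail to be restrictable exactly because of a divergent $V_\clus^{\,n}$ term with non-integrable coefficient, precisely as in Remark~\ref{Rem non sep} and Corollary~\ref{Cor restr A}.

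The analytic heart is the expansion itself. I would introduce a thin approximation of the cluster — exploring finitely many CLE$_\kappa$ layers, or truncating the small loops of $\LL^c_D$ — which is thin, i.e.\ not charged by $\fld_c$, and first prove the conditional expansion, the analogue of Proposition~\ref{Prop cond series}: conditionally on this approximation, $\PP(z\in\Ns_\varepsilon(\clus)\,|\,\cdot)$ is the tail at $\frac{1}{2\pi}|\log\varepsilon|$ of a hitting time whose density behaves like $t^{-(2-c/2)}e^{-\mathrm{const}/t}$, so that expanding the exponential produces exactly the powers $|\log\varepsilon|^{-(k+1-c/2)}$, $k\geq0$, with combinatorial coefficients read off from the $\Lambda_n^{(1-c/2)}$ (for $c=1$ this recovers the density $\propto t^{-3/2}e^{-v^2/(2t)}$ and the half-integer exponents of Theorem~\ref{Thm A E FPS}). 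This requires the loop-soup analogue of Theorem~\ref{Thm law CR FPS}, namely the law of $\CR(z,D)/\CR(z,D\setminus\clus)$, which I would derive from the conformal restriction structure of $\LL^c_D$ and a Ray--Knight-type isomorphism for its occupation field, identifying it with the hitting time of $0$ by a Bessel process of dimension $c$ started from a mass parameter of $\clus$ seen from $z$ (the analogue of the boundary value $v$ in the GFF case). To remove the conditioning one then needs exponential-in-level control on the event that two points lie in a common cluster — the counterpart of Lemmas~\ref{Lem bound ED}--\ref{Lem ED Gauss}, presumably through an extremal-distance estimate for CLE$_\kappa$ loops — together with the error bounds near the thin set and for $\psi_{n,\clus}-\E[\psi_{n,\clus}\,|\,\mathrm{thin}]$ as in Sections~\ref{Subsec error terms TVS}--\ref{Subsec bound series}; the interchange of the limits $\varepsilon\to0$ and ``level~$\to\infty$'' is then carried out with auxiliary scales chosen as in \eqref{Eq delta b eps}. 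Finally, the Euclidean expansion follows from the conformal-radius one by composing with the microscopic hole shape and averaging, which is where the MKS measure $\hat\mu^{\rm loop}_{\text{SLE}_\kappa}$ (with $\kappa\in(8/3,4)$ related to $c$ by \eqref{Eq rel kappa c}) enters, through the asymptotic independence of the rescaled hole around a uniform point from the macroscopic cluster — the common generalization of Conjectures~\ref{Conj MKS SLE 4} and~\ref{Conj MKS SLE 8 3}. The $|\log\varepsilon|^{-(1-c/2)}$ normalization of the leading term, rather than $|\log\varepsilon|^{-1}$ as in Le~Gall's expansion, is also what should account for the fractional powers replacing the integer-power intersection local times, in line with the heuristic of Section~\ref{Subsec Wiener to FPS}.

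The main obstacle, and the reason this is stated only as a conjecture, is twofold. First, the existence of the fractional powers $:\fld_c\,\Theta_{\LL^c_D}^{n-1}:$ for $n\geq2$ is itself open, and without it there is nothing to expand into. Second, and more seriously, the loop-soup replacements of Theorem~\ref{Thm ED ALS3} and of the Gaussian extremal-distance tail bound of Lemma~\ref{Lem ED Gauss} appear genuinely hard: for the GFF they rest on the exact SLE$_4$/Brownian dictionary, and even there only the case $b-a\in 2\lambda\N$ is proven, whereas for CLE$_\kappa$ clusters at $c\in(0,1)$ no comparably clean one-dimensional encoding of the conformal radius and the extremal distance is currently available.
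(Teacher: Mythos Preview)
The paper does not prove this statement: it is explicitly a conjecture, placed in Section~\ref{Subsec other c} under ``Conjectures and open questions'', and the paper offers no proof or even a sketch beyond the heuristic motivation in the surrounding text. So there is no ``paper's own proof'' to compare against.

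That said, your outline is faithful to the paper's heuristics and in several places goes further. The paper motivates the exponents $|\log\varepsilon|^{-(n-c/2)}$ only by interpolation between the GFF case ($c=1$, half-integer exponents) and the single-Brownian-trajectory limit ($c\to 0$, integer exponents), and by the Minkowski gauge $|\log r|^{1-c/2+o(1)}r^2$ from \cite{JegoLupuQianFields}; it does not propose a mechanism via a Bessel process of dimension $c$ replacing the one-dimensional Brownian motion of Theorem~\ref{Thm law CR FPS}, nor does it identify a concrete thin approximation of clusters to play the role of the TVS. Your plan to transport the architecture of Section~\ref{Sec A E} --- conditional expansion on a thin set, reexpansion identity from a one-parameter umbral subgroup, then removal of the conditioning via extremal-distance bounds --- is exactly the natural template, and your identification of the two genuine obstacles (existence of the fractional powers for $n\geq 2$, and the absence of any analogue of Theorem~\ref{Thm ED ALS3}/Lemma~\ref{Lem ED Gauss} for CLE$_\kappa$ with $\kappa\in(8/3,4)$) matches the paper's own assessment that these are the missing ingredients. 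One point to flag: the law of $\CR(z,D)/\CR(z,D\setminus\clus)$ for a loop-soup cluster is not known to be a Bessel hitting time --- this is itself a conjecture-level statement, not something you can ``derive from conformal restriction and a Ray--Knight isomorphism'' with current tools, so that step carries the same weight as the other open inputs.
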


\subsection{The relation to umbral calculus}
\label{Subsec deep umbral}

The last point we would like to discuss is more open and ``philosophical''.
It concerns the relation between the renormalization in dimension 2 and the umbral calculus \cite{RotaAll73Umbral,Roman84Umbral}.
The later arose out of a systematic study of remarkable identities satisfied by special polynomial sequences,
mostly orthogonal polynomials.
A key notion there is the umbral composition and the group structure on polynomial sequences.
See Section \ref{Subsec umbral}.
In this work we have seen that the change of normalization identities for Hermite polynomials \eqref{Eq change var}
and for generalized Laguerre polynomials of order $-1$ \eqref{Eq change norm Laguerre},
are naturally interpreted in terms of the umbral composition
and they correspond to one-parameter subgroups of the group of polynomial sequences.
Moreover, they also play an important role in the context of renormalization
as they describe how the Wick powers and the renormalized self-intersection local times transform if one changes conventions for the Green's function,
in particular if one changes the constant order term after the logarithmic singularity on the diagonal.
Moreover, these umbral identities \eqref{Eq change var} and \eqref{Eq change norm Laguerre}
are closely related to the asymptotic expansions both in the GFF case (Theorem \ref{Thm A E FPS})
and the Brownian case (Theorem \ref{Thm Le Gall} of Le Gall),
via the reexpansion identities \eqref{Eq reexp w x y} and \eqref{Eq Laguerre reexp}.
See the discussion of Section \ref{Sec algeb}.

Actually, change of normalization identities analogous to \eqref{Eq change norm Laguerre}
hold for generalized Laguerre polynomials of any order,
and all correspond to one-parameter subgroups of the group of polynomial sequences
(one subgroup per order).
In particular, this describes how the renormalized intersection local times of Brownian loop soups transform under a change of Green's function (of the constant order term after the logarithmic singularity).
In the context of Conjecture \ref{Conj expansion c},
we expect reexpansion identities similar to \eqref{Eq reexp w x y} and \eqref{Eq Laguerre reexp},
but with generalized Laguerre polynomials of other orders.

So the ``philosophical'' question is as follows.
What is the ``deep reason'' for the relation between renormalization and umbral calculus?
How far does it go?
Why the few examples we have of renormalized powers of fields are all related to one-parameter subgroups of
the group of polynomial sequences for the umbral composition?

\bibliographystyle{alpha}
\bibliography{titusbib}

\end{document}